\title{\textbf{\textsc{The $L^3$-based strong Onsager theorem}}}
\author{{\textsc{vikram giri, hyunju kwon, and matthew novack}}}
\date{}
\definecolor{ao(english)}{rgb}{0.0, 0.5, 0.0}
\definecolor{applegreen}{rgb}{0.55, 0.71, 0.0}
\definecolor{darkpastelgreen}{rgb}{0.01, 0.75, 0.24}
\definecolor{azure}{rgb}{0.0, 0.5, 1.0}
\definecolor{burgundy}{rgb}{0.5, 0.0, 0.13}
\definecolor{battleshipgrey}{rgb}{0.52, 0.52, 0.51}
\definecolor{bluey}{rgb}{.15, 0, 5}
\definecolor{alizarin}{rgb}{0.82, 0.1, 0.26}
\definecolor{brightpink}{rgb}{1.0, 0.0, 0.5}
\titleformat{\subsection}[runin]
      {\normalfont\bfseries}
      {\thesubsection}
      {0.5em}
      {}
      [.]
\titleformat{\subsubsection}[runin]
      {\normalfont\bfseries}
      {\thesubsubsection}
      {0.5em}
      {}
      [.]
\begin{document}

\maketitle

\begin{abstract}
In this work, we prove the $L^3$-based strong Onsager conjecture for the three-dimensional Euler equations.  Our main theorem states that there exist weak solutions which dissipate the total kinetic energy, satisfy the local energy inequality, and belong to $C^0_t (W^{\sfrac 13-, 3} \cap L^{\infty-})$. More precisely, for every $\upbeta<\sfrac 13$, we can construct such solutions in the space $C^0_t ( B^{\upbeta}_{3,\infty} \cap L^{\frac{1}{1-3\upbeta}} )$.
\end{abstract}

\setcounter{tocdepth}{1}
\tableofcontents

\section{Introduction}
In this article, we consider the three-dimensional incompressible Euler equations 
\begin{equation}\label{eqn:Euler}
    \begin{cases}
    \pa_t u + (u\cdot \na) u + \na p = 0 \\
    \div \, u =0 
    \end{cases}
\end{equation}
posed on the periodic domain $\T^3=[-\pi,\pi]^3$.  These equations describe the evolution of an ideal volume-preserving fluid. The unknowns $u:\T^3\times [0,T] \to \R^3$ and $p:\T^3\times [0,T] \to \R$ represent the fluid velocity and the pressure, respectively. The first equation gives the balance of momentum and is derived from Newton's second law of motion with internal force $-\na p$. The second equation, or divergence-free condition, encodes the incompressibility of the fluid. 

Smooth solutions of the Euler equations satisfy the \emph{local energy equality}
\begin{align}\label{LEE}
    \pa_t\left(\frac12 |u|^2\right) + 
    \div\left(\left(\frac12|u|^2+p \right) u\right) =0 \,,
\end{align}
which follows from taking the inner product of the momentum equation with the velocity. This identity asserts that the rate of change of kinetic energy in an arbitrary local region is balanced by the energy flux and the work done by the pressure through the boundary of the region. Integrating \eqref{LEE} in space, we see that the total kinetic energy is conserved in time:
\begin{align}\label{cons:total:kinetic}
    \frac 12 \int_{\T^3} |u(\cdot, t)|^2 dx = \frac 12 \int_{\T^3} |u(\cdot,0)|^2 dx, \qquad \forall \, t>0 \, . 
\end{align}
General weak solutions, however, need not satisfy \eqref{LEE}. Instead, weak solutions $u\in L^3_{t,x}$ are known to satisfy the following generalized local energy balance in the sense of distributions \cite{DuchonRobert00}:
\begin{align}
    \partial_t \left( \frac 12 |u|^2 \right) + \div \left( \left( \frac{1}{2} |u|^2 + p \right) u \right) = - D[u] \, . \label{thurzday:evening:sunday}
\end{align} 
The Duchon-Robert measure $D[u]$ above is a space-time distribution measuring the dissipation due to possible singularities and is defined by the formula
\begin{equation}\label{dr:measure:euler}
    D[u](x,t) = \lim_{\ell\to 0} \frac 14 \int_{\T^3} \nabla\phi_\ell(z) \cdot \left(u(x+z,t)-u(x,t)\right) \left|u(x+z,t)-u(x,t)\right|^2 \, dz \, ,
\end{equation}
where $\phi$ is any even bump function with unit mass and $\phi_\ell(z)=\ell^{-3}\phi(\ell^{-1}z)$. In the case that $D[u]$ is a non-negative distribution, then the weak solution $u$ is said to satisfy the \emph{local energy inequality}
\begin{align}
    \partial_t \left( \frac 12 |u|^2 \right) + \div \left( \left( \frac{1}{2} |u|^2 + p \right) u \right) = - D[u]  \leq 0 \, . \label{thurzday:evening:sunday:sunday}
\end{align} 
It is straightforward to check that $D[u]\equiv 0$ if there exists $\upbeta>\sfrac 13$ such that $u\in C^0_t B^{\upbeta}_{3,\infty}(\T^3)$, where
\begin{align}\label{eq:bosov}
    \norm{v}_{B_{p,\infty}^\upbeta(\T^3)}
    = \norm{v}_{L^p(\T^3)} + \sup_{|z|>0} \frac{\norm{v(\cdot +z)-v(\cdot)}_{L^p(\T^3)}}{|z|^\upbeta}\, .
\end{align}
The following conjecture addresses the sharpness of this $\sfrac 13$ threshold.
\begin{theorem-non}[\bf $L^3$-based strong Onsager conjecture] Let $\upbeta\in (0,1)$ and $T\in (0,\infty)$. 
\begin{enumerate}[(a)]
    \item\label{heatsie:a} {\bf (Conservation and local energy equality)} For any $\upbeta>\sfrac13$, if a weak solution to the Euler equations belongs to $C^0([0,T]; B^{\upbeta}_{3,\infty}(\T^3))$, then it satisfies the local energy equality \eqref{LEE} in the sense of distributions.
    \item\label{heatsie:b} {\bf (Dissipation and local energy inequality)} For any $0<\upbeta<\sfrac13$, there exist weak solutions to the Euler equations belonging to $C^0([0,T]; B^{\upbeta}_{3,\infty}(\T^3))$ which satisfy the local energy inequality \eqref{thurzday:evening:sunday:sunday} in the sense of distributions and dissipate the total kinetic energy. 
\end{enumerate}
\end{theorem-non}
\noindent  While the proof of part~\eqref{heatsie:a} is known from \cite{DuchonRobert00}, the best result to date for part~\eqref{heatsie:b} is due to De Lellis and the second author and treats the range $0<\upbeta<\sfrac 17$. In this paper, we provide for the first time a proof of part~\eqref{heatsie:b} in the full range $0<\upbeta<\sfrac 13$, via the following theorem.
\begin{theorem}[\bf Dissipation and local energy inequality]\label{thm:main}
For any fixed $\upbeta\in (0,\sfrac{1}{3})$ there exist weak solutions to the Euler equations \eqref{eqn:Euler} which belong to $C^0([0,T];(B^{\upbeta}_{3,\infty}\cap L^{\frac{1}{(1-3\upbeta)}})(\T^3))$, satisfy the local energy inequality \eqref{thurzday:evening:sunday:sunday}, and dissipate the total kinetic energy.
\end{theorem}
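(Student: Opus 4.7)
The plan is to prove Theorem~\ref{thm:main} by a convex integration scheme of Nash type, iterating on quintuples $(u_q, p_q, R_q, \varphi_q, \kappa_q)$ in which $(u_q, p_q, R_q)$ solves the Euler--Reynolds system
\[
\pa_t u_q + \div(u_q \otimes u_q) + \na p_q = \div R_q, \qquad \div u_q = 0,
\]
and the pair $(\varphi_q, \kappa_q)$ records the defect in a relaxed local energy balance
\[
\pa_t \bigl(\tfrac 12 |u_q|^2\bigr) + \div \bigl((\tfrac 12 |u_q|^2 + p_q) u_q\bigr) + \div \varphi_q = - \kappa_q,
\]
subject to $\kappa_q \geq 0$. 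Inductively I would enforce schematic bounds $\|R_q\|_{L^{3/2}} \lesssim \delta_{q+1}$, $\|\varphi_q\|_{L^{3/2}} \lesssim \delta_{q+1}^{3/2}$, $\|u_q\|_{B^\upbeta_{3,\infty}} \lesssim 1$, and $\|u_q\|_{L^{1/(1-3\upbeta)}} \lesssim 1$, where $\delta_q = \lambda_q^{-2\upbeta}$ and $\lambda_q$ grows super-exponentially. Summability of $R_q$ then yields a weak Euler solution; summability of $\varphi_q$ combined with $\kappa_q \geq 0$ upgrades the balance to the local energy inequality \eqref{thurzday:evening:sunday:sunday}; and the uniform bounds on $u_q$ give the stated regularity.

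For the perturbation $w_{q+1} := u_{q+1} - u_q$ I would use intermittent pipe flows supported in tubular neighborhoods of $\lambda_{q+1}^{-1}$-periodic straight lines in finitely many rational directions, with intermittency parameter $r_{q+1} \in (0,1)$ tuned so that the building blocks simultaneously have $L^3$-norm of order $\delta_{q+1}^{\sfrac 12}$ and $L^{1/(1-3\upbeta)}$-norm of order one. This double constraint is the arithmetic origin of the exponent $1/(1-3\upbeta)$ in the statement, and it degenerates exactly at $\upbeta = \sfrac 13$, where the $L^p$ target becomes $L^\infty$ and intermittency disappears. Amplitudes and direction weights are chosen via the standard geometric lemma so that the slow part of $w_{q+1} \otimes w_{q+1}$ cancels $-R_q$, with a divergence-free corrector added. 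The new stress $R_{q+1}$ and current $\varphi_{q+1}$ are then produced by applying synthetic inverse-divergence operators to the transport, oscillation, flow, and corrector errors, the gain coming from oscillation at frequency $\lambda_{q+1}$ and from intermittent $L^p$ scalings.

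The new ingredient specific to the local energy inequality is the construction of $\kappa_{q+1}\geq 0$. From the cubic defect $\pa_t(\tfrac 12 |w_{q+1}|^2) + \div\bigl((\tfrac 12 |w_{q+1}|^2 + p_{q+1} - p_q) u_{q+1}\bigr)$ I would extract a sign-definite slow mean, generated by the coherent kinetic-energy flux along pipe axes, and absorb it into $\kappa_{q+1}$; the remaining oscillatory part is re-gauged into $\varphi_{q+1}$ via an inverse-divergence. The positivity of $\kappa_{q+1}$ forces a careful choice of phases and amplitudes of the pipes, likely via monotone cutoffs acting on the scalar amplitudes that come from the geometric decomposition of $R_q$.

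The main obstacle, which is why this range has resisted previous attempts (strong Onsager having been limited to $\upbeta < \sfrac 17$ in the work of De Lellis and the second author), is that saturating the quadratic Onsager threshold at $\upbeta = \sfrac 13$ while simultaneously controlling the cubic Duchon--Robert current with the correct sign forces a tight compatibility among three scales: the stress $\delta_{q+1}$, the cubic current $\delta_{q+1}^{\sfrac 32}$, and the intermittency $r_{q+1}$. Because the energy defect is cubic rather than quadratic in $w_{q+1}$, any loss from intermittency is amplified threefold, while an inverse-divergence gains only a single factor of $\lambda_{q+1}^{-1}$. Overcoming this will most likely require decoupling the energy-current correction from the main Reynolds iteration, treating the cubic errors in a sub-iteration at a finer frequency so that they can be absorbed without disturbing the sharp $L^3$-based regularity count. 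This decoupling, together with the pointwise enforcement of $\kappa_{q+1}\geq 0$, is where I anticipate essentially all of the new technical content to lie.
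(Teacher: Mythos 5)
Your proposal correctly identifies several genuine ingredients of the proof: an $L^3$-based Nash iteration, intermittent pipe flows with a tuned intermittency parameter $r_q$, the arithmetic origin of the exponent $\sfrac{1}{1-3\upbeta}$ from balancing $L^3$ and $L^\infty$ norms of the building blocks, and the diagnosis that the cubic Duchon--Robert current---amplified by intermittency and gaining only a single inverse divergence---is the bottleneck that previously capped the regularity. However, the key idea that makes the theorem work is missing, and the fix you propose is essentially the one the paper explains \emph{fails}.

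You suggest decoupling the energy-current correction into ``a sub-iteration at a finer frequency.'' This is the strategy of \cite{NV22} for the Euler--Reynolds side, and the paper's subsubsection on partial wavelet sums shows in detail why it cannot be carried over to the relaxed local energy inequality. The constraint that higher-order velocity increments keep maximum frequency $\lambda_{q+1}$ forces the intermittency parameter $r_{q,\alpha}$ of the sub-increments toward $1$ as $\alpha \to 1$, whereas consistency of the current errors forces $r_{q,1^-} \lesssim r_{q+1} \ll 1$. These are incompatible. The paper's resolution is to abandon the partial-Fourier-sum structure entirely: the increment $w_{q+1}$ corrects only the lowest-frequency shell $R_q^q$, $\varphi_q^q$ of the error, lives in frequencies $[\lambda_{q+\sfrac{\bn}{2}}, \lambda_{q+\bn}]$, and therefore \emph{overlaps} in frequency with earlier increments $\hat w_{q+1},\dots,\hat w_{q+\bn-1}$. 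This restores the Goldilocks intermittency self-similarly at every stage, but it means the iterates $u_q$ are partial \emph{wavelet} sums, not partial Fourier sums. Making this work requires two further mechanisms you do not have: intermittent Mikado bundles (a product of a ``bundling pipe'' at scale $\lambda_{q+1}^{-1}$ and a very intermittent pipe at scale $\lambda_{q+\bn}^{-1}$) together with a dodging construction to keep the overlapping-in-frequency increments disjoint in physical space; and a multi-scale \emph{intermittent pressure} $\pi_q = \sum_k \pi_q^k$ carried inductively across many steps, which both replaces the crude device of a spatially constant pressure decrement and supplies pointwise bounds needed throughout.

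A second, smaller gap concerns positivity. You propose to enforce a non-negative defect $\kappa_{q+1}\geq 0$ by extracting a sign-definite slow mean from the cubic energy defect of the perturbation. The paper does not attempt this. Instead, the dissipation $E\geq 0$ is \emph{fixed once} from the base step (coming from the explicit decay of the initial pipe amplitude), and remains unchanged through the whole iteration; positivity is automatic in the limit. What the iteration actually has to control is not a sign condition on a distributional residue but the \emph{inversion of the divergence} on current errors containing the pressure increment $(\partial_t + \hat u_q\cdot\nabla)\sigma_{q+1}$, which requires the anticipated-pressure wavelet decomposition of $\pi_q^k$ to have the right multi-scale structure. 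This is precisely why a scalar $\kappa_q$ would not suffice and the pressure must be resolved into shells $\pi_q^k$ with a scaling law \eqref{eq:ind.pr.anticipated}.
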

\noindent In the next subsection, we present the physical and mathematical motivation and history underlying the $L^3$-based strong Onsager conjecture.

\subsection{Mathematical and physical background}

We begin with the mathematical background, recalling that the Euler equations are formally related to the Navier-Stokes equations
\begin{equation}\label{eqn:NS}
    \begin{cases}
    \pa_t u^{\nu} + (u^{\nu}\cdot \na) u^{\nu} + \na p^{\nu} = \nu \Delta u^{\nu} \\
    \div \, u^{\nu} =0  \, 
    \end{cases}
\end{equation}
through the inviscid limit $\nu\to 0$ (or the infinite Reynolds number limit after nondimensionalization).  The well-known suitable weak solutions $u^\nu$ of \eqref{eqn:NS} are distributional solutions belonging to $L^\infty_t L^2(\T^3) \cap L^2_t W^{1,2}(\T^3)$ which satisfy 
\begin{align}
    \partial_t \left( \frac 12 |u^\nu|^2 \right) + \div \left( \left( \frac{1}{2} |u^\nu|^2 + p^\nu \right) u^\nu - \nu \nabla \frac 12 |u^\nu|^2 \right)  + \nu |\nabla u^\nu |^2 \leq 0 \label{thurzday:evening}
\end{align} 
in the sense of distributions \cite{CKN}.  Since the left-hand side is also equal to $-D[u^\nu]$ \cite{DuchonRobert00}, \eqref{thurzday:evening} is equivalent to the condition $D[u^\nu]\geq 0$.  It is known that the weak solutions to \eqref{eqn:NS} constructed by Leray~\cite{Leray34} do indeed satisfy this sign condition \cite{DuchonRobert00}.  Furthermore, if $u^\nu\rightarrow u$ in the $L^3_{x,t}$ topology, then $u$ must be a weak solution to Euler satisfying the local energy inequality~\eqref{thurzday:evening:sunday:sunday}.  Thus constructing Euler flows satisfying the local energy inequality allows for the possibility that such flows may be obtained as inviscid limits of suitable solutions of Navier-Stokes.

On the other hand, the $L^3$-based strong Onsager conjecture is closely related to the famous original conjecture of Onsager. Onsager observed that any weak solution of \eqref{eqn:Euler} in the space $L^\infty_tC^\alpha_x$ with $\alpha > \sfrac 13$ must conserve the total kinetic energy as in \eqref{cons:total:kinetic}, while for $\alpha < \sfrac 13$, he conjectured that ``turbulent dissipation as described could take place just as readily without the final assistance by viscosity'' \cite{Onsager49}. Remarkably, Onsager's conjecture has been proven in the subsequent decades. The conservation statement in $L^3_t B^{\alpha}_{3,\infty}$ for $\alpha>\sfrac 13$ was partially resolved by Eyink in \cite{Eyink75}, fully proven by Constantin, E, and Titi \cite{ConstantinETiti94}, and extended by Constantin, Cheskidov, Friedlander, and Shvydkoy \cite{CCFS08} and Drivas and Eyink \cite{DE}.  Conversely, Isett \cite{Isett2018} provided the first construction of non-conservative solutions in $C^\alpha$ for $\alpha<\sfrac 13$, Buckmaster, De Lellis, Sz\'ekelyhidi, and Vicol extended Isett's result to dissipative solutions \cite{BDLSV17}, and the first author and Radu proved the two-dimensional Onsager conjecture~\cite{GR}.  The first such pathological constructions are due to Scheffer \cite{Scheffer93} and Shnirelman \cite{Shnirelman00}, while the proofs in \cite{Isett2018} and \cite{BDLSV17} build upon a series of partial results, including \cite{Is2013, BDLISZ15, DaneriSzekelyhidi17}, which are rooted in the seminal works of De Lellis and Sz\'ekelyhidi in \cite{DLS09, DeLellisSzekelyhidi13}. These works connected ideas of Nash \cite{Nash} for isometric embeddings to fluid dynamics, and the technology stemming from \cite{DLS09, DeLellisSzekelyhidi13} is known by the names ``convex integration'' or ``Nash iteration.'' We refer to the surveys \cite{BVReview, DSReview} for a more complete history of the Onsager program. 

Despite the success of the Onsager program, the $L^3$-based ``strong'' version of Onsager's conjecture, or more generally any type of Onsager conjecture incorporating \eqref{thurzday:evening:sunday:sunday}, remained open prior to this work. The previous best result is due to the second author and De Lellis \cite{DK22}, who showed the existence of H\"older continuous weak solutions to the Euler equations in $C^{\upbeta}_{t,x}$ for any $\upbeta<\sfrac17$ which also satisfy the strict local energy inequality \eqref{thurzday:evening}. We also refer to earlier results of De Lellis and Sz\'ekelyhidi \cite{DLeSz2010} and Isett \cite{Is22}, the latter of which formulated the strong $C^0$ Onsager conjecture, which posits for each $\upbeta<\sfrac 13$ (or even for $\upbeta=\sfrac 13$) the existence of weak solutions satisfying the local energy inequality which belong to $L^\infty_t C^{\upbeta}_x$. 

The $L^3$-based strong Onsager conjecture is also strongly motivated by the study of turbulent fluids in the inviscid limit $\nu\rightarrow 0$.  The starting point for this discussion is the \emph{zeroth law of turbulence}, which asserts that the mean energy dissipation rate per unit mass does not vanish in the limit $\nu\rightarrow 0$; more precisely, 
\begin{align}\label{zero:law}
    \lim_{\nu\to 0} \left\langle  \nu|\na u^{\nu}|^2 + D [u^\nu] \right\rangle =  \varepsilon >0 \, .
\end{align}
Here $\langle \cdot \rangle$ denotes either a space-time average or an ensemble average. As a consequence of \eqref{cons:total:kinetic} and the zeroth law, one therefore expects to find weak solutions satisfying \eqref{eqn:Euler} in the sense of distributions, which however do not satisfy \eqref{LEE} or \eqref{cons:total:kinetic}. While the zeroth law has been verified to a great degree in experiments (see discussion in~\cite{BVReview}), and recent progress has been made in the context of the forced Navier-Stokes equations \cite{BDL, BCCDLS}, a full mathematical validation of the zeroth law remains a major open problem in fluid dynamics.

The $L^3$-based strong Onsager conjecture is connected as well to Kolmogorov's 1941 phenomenological theory of turbulence \cite{K2, K3, K1}. Using the zeroth law and the additional assumptions that the statistics of turbulent flows are homogeneous, isotropic, and self-similar,\footnote{For a treatment of K41 based on these assumptions of restored symmetry, we refer to the book of Frisch \cite{Frisch95}.} K41 theory offers predictions for the behavior of the $p^{\rm th}$-order absolute structure functions $S_p(\ell)$, defined by
\begin{align*}
S_p(\ell) := \dashint_0^T \dashint_{\mathbb{S}^2} \dashint_{\T^3} |u^{\nu}(x+\ell z ,t) - u^{\nu}(x,t)|^p \, dx \, dz \, dt \, .
\end{align*}
Specifically, K41-style asymptotic analysis predicts that for $\nu\ll 1$, and for length scales $\ell$ within the inertial range, $S_p(\ell)$ satisfies a scaling relation of the form $S_p(\ell)\sim |\ell|^{\zeta_p}$, where $\zeta_p=\sfrac p3$.  Recalling the Besov spaces $B^{\sfrac 13}_{p,\infty}$ defined in~\eqref{eq:bosov}, this suggests the uniform boundedness of turbulent flows in the Besov space $B_{p, \infty}^{\sfrac 13}$.
The case $p=3$ is special because the longitudinal variant $S^\parallel_3(\ell)$ of the absolute structure function $S_3(\ell)$ obeys a scaling relation known as Kolmogorov's $\sfrac 45$ law, consistent with the prediction that $\zeta_3 = 1$. The $\sfrac 45$ law is strongly supported by experimental evidence \cite[Figure~8.8]{Frisch95}, \cite[Figure~5]{ChenEtAl05}, \cite[Figure~3]{IshiharaEtAl09}, \cite[Figure~1]{ISY20}, indicating that $B^{\sfrac 13}_{3,\infty}$ is a natural function space for turbulent flows. 

Our final motivating factor from the study of turbulence is the phenomenon of intermittency, which may be characterized as deviations from the K41 scaling $S_p(\ell)\sim \sfrac p3$ for $p\neq 3$. When $p<3$, one typically observes that $\sfrac{\zeta_p}p> \sfrac 13$, while for $p>3$, one typically observes that $\sfrac{\zeta_p}p<\sfrac 13$; see \cite[Figure 8.8]{Frisch95}, or \cite[Figure 6]{ISY20} for a recent numerical simulation. These observations suggest that while $B^{\sfrac 13}_{3,\infty}$ is a physically reasonable space for turbulent flows, the H\"older space $C^{\sfrac13}$ in which Onsager's theorem has been proven may not be. In this direction, the third author and Vicol recently proved an intermittent Onsager theorem \cite{NV22} for non-conservative solutions in $C^0_t(H^{\sfrac 12-}\cap L^{\infty-})\subset C^0_tB_{3,\infty}^{\sfrac 13-}$, building upon earlier work of the third author with Buckmaster, Masmoudi, and Vicol \cite{BMNV21}.

We close this subsection with a short discussion of the proof of Theorem~\ref{thm:main}, upon which we expand in the next subsection. Our construction utilizes Nash iteration (convex integration) to generate a sequence of approximate solutions to the Euler equations with errors converging to zero over the course of the iteration. Standard Nash iterations adapted to the Euler equations measure approximate solutions in either $L^2$ or $L^\infty$.  Our \emph{intermittent $L^3$ iteration}, however, measures approximate solutions directly in $L^3$. Furthermore, the approximate velocities in standard Nash iterations are essentially partial Fourier sums, and the frequency separation of the velocity increments added at each step is crucial. This article however introduces a {\it wavelet-inspired scheme} that enables frequency overlap between successive velocity increments. Finally, in contrast to the straightforward construction of pressure in existing Nash iterations for the Euler equations, we must carefully construct an {\it intermittent pressure}, which plays an integral role throughout the proof.

\subsection{Blueprint for an intermittent, wavelet-inspired scheme}
In order to motivate an iteration which produces solutions in $C^0_t B^{\upbeta}_{3,\infty}$, we must identify the main difficulties in the iterations which produce solutions in H\"older spaces $C^\beta_{x,t}$. We recall that in \cite{Is22}, Isett constructed $C_{x,t}^{\sfrac{1}{15}-}$ weak solutions of 3D Euler as a limit of subsolutions $u_q$ to the following system:
\begin{equation}\label{eqn:rel:intro}
    \begin{cases}
    \pa_t u_q + \div \left( u_q \otimes u_q \right) + \na p_q = \div R_q \\
    \partial_t \left( \frac 12 |u_q|^2 \right) + \div \left( \left( \frac 12 |u_q|^2 + p_q \right) u_q \right) \leq (\partial_t + u_q \cdot \nabla) \kappa_q + \div \varphi_q + \div \left( R_q u_q \right) \\
    \div \, u_q =0  \, , 
    \end{cases}
\end{equation}
where $(u\otimes v)_{ij} = u_i v_j$ and $[\div(u\otimes v)]_j = \pa_i (u_iv_j)$.\index{$\otimes$} The first equation is known as the Euler-Reynolds system, while the second one is called the relaxed local energy inequality. The Reynolds stress error $R_q$ is a negative definite symmetric tensor, $\kappa_q = \sfrac 12 \tr R_q$, and $\varphi_q$ is a vector field called the current error. All three terms converge to zero as $q$ goes to $\infty$ in the sense of distributions, thus producing in the limit a weak solution $u$ to 3D Euler which satisfies the local energy inequality. The functions $u_q, R_q, \varphi_q$ are assumed to oscillate at spatial frequencies no larger than $\lambda_q \approx a^{(b^q)}$, where $a$ is sufficiently large and $b>1$ is as small as possible. Abbreviating the mixed $L^\infty_t L^p_x$ norms with simply $\| \cdot \|_p$, the natural inductive estimates for a $C_{x,t}^{\beta}$ scheme are
\begin{equation}\label{airport}
 \left\| u_q \right\|_\infty \les 1 \, , \qquad  \left\| \nabla_x^N \nabla u_q \right\|_\infty \leq \lambda_q^{-\beta + N + 1} \, , \qquad \left\| \nabla_x^N R_q \right\|_\infty \leq \lambda_{q+1}^{-2\beta} \lambda_q^N \, , \qquad \left\| \nabla_x^N \varphi_q \right\|_\infty \leq \lambda_{q+1}^{-3\beta}\lambda_q^N \, .
 \end{equation} 
Note that interpolating the first two bounds shows that $\{u_q\}_{q=1}^\infty$ is uniformly bounded in the $C^\beta$ norm. Then $w_{q+1}=u_{q+1}-u_q$ is constructed to oscillate at frequency $\lambda_{q+1}\approx\lambda_q^b$ and to cancel out the previous step stress/current errors through the following equations:
\begin{align}\label{eqn:correction}
\mathbb{P}_{\leq \la_q}|w_{q+1}|^2w_{q+1} = -2\ph_q, \quad
\mathbb{P}_{\leq \la_q} (w_{q+1}\otimes w_{q+1}) =- R_q\, . 
\end{align}
Eliding for the moment the fact that $R_q$ and $\varphi_q$ are not scalar-valued functions, we define the correction $w_{q+1}=w_{q+1,R} + w_{q+1,\varphi} $, similar to \cite{Is22}, by
$$  w_{q+1,\varphi} \approx  \vartheta_{q+1,\varphi}(x_2,x_3) \vec e_1
(-2\varphi_q)^{\sfrac 13} \, , \quad  w_{q+1,R} \approx \vartheta_{q+1,R}(x_2,x_3)\vec e_1
\left((-2\varphi_q)^{\sfrac 13}\texttt{m}_{\ph,2}^{\sfrac12} + (-R_q)^{\sfrac 12} \right) \, , $$
using products of low-frequency functions and shear flows $\vartheta_{q+1,\varphi}$, $\vartheta_{q+1,R}$ with concentrated frequency support $\approx \la_{q+1}$. Here $\texttt{m}_{\ph,2}\sim 1$ is the square mean $(2\pi)^{-3} \int_{\T^3} \vartheta_{q+1,\ph}^2 $ of the high frequency function $\vartheta_{q+1,\ph}$.  We further impose that the high frequency function $\vartheta_{q+1,R}$ has unit squared mean and zero cubic mean, while $\vartheta_{q+1,\ph}$ has a unit cubic mean, which essentially decouples the equations in \eqref{eqn:correction}. We thus have that the first equation in \eqref{eqn:correction} involves only $w_{q+1,\ph}$, which gives the desired $(-2\ph_q)^{\sfrac13}$. After plugging $w_{q+1,\ph}$ into the second equation, we treat $-\mathbb{P}_{\leq \la_q} (w_{q+1,\ph}\otimes w_{q+1,\ph})$ as another error to be corrected by $w_{q+1,R}$. Thus $w_{q+1,R}$ produces a low frequency part $(-2\varphi_q)^{\sfrac 13}\texttt{m}_{\ph,2}^{\sfrac12} + (-R_q)^{\sfrac 12}$ in \eqref{eqn:correction}.  One naturally assumes that to achieve the optimal regularity for the constructed solution, both terms in the low-frequency portion of $w_{q+1,R}$ have the same size, as given by \eqref{airport}.  

Then $w_{q+1}$ satisfies
$$  \left\| \nabla_x^{N} w_{q+1} \right\|_{\infty} \les \left\| R_q \right\|_\infty^{\sfrac{1}{2}} \lambda_{q+1}^{N} + \left\| \varphi_q \right\|_\infty^{\sfrac 13} \lambda_{q+1}^{N} \les \lambda_{q+1}^{-\beta + N} \, .  $$
Interpolating the bounds for $N=0,1$, we find that $w_{q+1}$ has unit $C^\beta$ norm, as did $u_q$. The new Reynolds stress will then include the error term $R_{q+1, \rm Nash} = \div^{-1} \left( w_{q+1} \cdot \nabla u_q \right)$ (named after the analogous error term in Nash's original isometric embedding iteration \cite{Nash}), which can be estimated by
\begin{align} 
\big{|} \underbrace{\div^{-1}}_{\textnormal{gains $\lambda_{q+1}$}} ( \underbrace{w_{q+1}}_{\lambda_{q+1}^{-\beta}} \cdot \underbrace{\nabla u_q}_{\approx \lambda_q^{1-\beta}} ) \big{|} &\leq \lambda_{q+2}^{-2\beta} \notag \\
\iff \lambda_q^{b(-1-\beta) + 1-\beta + 2\beta b^2} \leq 1 
\iff \beta (2b^2 - b -1) &\leq b-1 \iff \beta \leq \frac{1}{2b+1}  \, . \notag
\end{align}
Thus as $b\rightarrow 1$, $\beta\rightarrow \sfrac 13$, as desired. However, the analogous error term in the local energy inequality, called the Nash current error, only satisfies the estimate
\begin{align} 
\big{|} \underbrace{\div^{-1}}_{\textnormal{gains $\lambda_{q+1}$}} ( \underbrace{\mathbb{P}_{=\lambda_{q+1}} ( w_{q+1} \otimes w_{q+1} )}_{\lambda_{q+1}^{-2\beta}} \, : \,  \underbrace{\nabla u_q}_{\approx \lambda_q^{1-\beta}} ) \big{|} &\leq \lambda_{q+2}^{-3\beta} \notag \\
\iff \lambda_q^{b(-1-2\beta) + 1-\beta + 3\beta b^2} \leq 1 
\iff \beta (3b^2 - 2b -1) &\leq b-1 
\iff \beta \leq \frac{1}{3b+1}  \, . \label{weeeeeeeeeee}
\end{align}
This evident $\sfrac 14$ regularity ceiling is also imposed by several similar current error terms. All the evidence from existing Nash iteration schemes indicates that the above heuristics cannot be improved.  Furthermore, the best $C^\alpha$ result to date suffers from further complications which limit the regularity to $C^{\sfrac 17-}$ \cite{DK22}, suggesting that \emph{even in the most optimistic scenario, Nash iterations are incapable of reaching the $C^{\sfrac 13-}$ threshold for the strong Onsager conjecture and can only reach the threshold $C^{\sfrac 14}$.}

\subsubsection{Intermittent $L^3$ Nash iteration}\label{sss:L3}
The first difference between an intermittent Nash iteration and a homogeneous (opposite of intermittent)\index{homogeneous} iteration is that the high-frequency homogeneous shear flows $\vartheta_{q+1,\varphi} \vec e_1$ and $\vartheta_{q+1,R} \vec e_1$ are replaced by a pair of \emph{intermittent} shear flows $\varrho_{q+1,R} \vec e_1$ and $\varrho_{q+1,\varphi} \vec e_1$ (described in detail in the next subsubsection). Here ``intermittent''\index{intermittent} means that different $L^p_x$ norms satisfy very different bounds, i.e.
\begin{equation}\label{eq:sat:after}
\left\| \nabla_x^N \varrho_{q+1,R} \right\|_p \les r_q^{\frac 2p -1} \la_{q+1}^N \, , \qquad \left\| \nabla_x^N \varrho_{q+1,\varphi} \right\|_p \les r_q^{\frac 2p - \frac 23} \la_{q+1}^N \, , \qquad \textnormal{for some $0 < r_q \ll 1$} \, .  
\end{equation}
Then one naturally defines $w_{q+1}=w_{q+1,R}+w_{q+1,\varphi}$ by
$$  w_{q+1,\varphi} \approx \varrho_{q+1,\varphi} (-2\varphi_q)^{\sfrac 13} \, , \qquad  w_{q+1,R} \approx \varrho_{q+1,R} \left( r_q^{\sfrac 13} (-2\varphi_q)^{\sfrac 13} + (-R_q)^{\sfrac 12} \right) \, .$$
Unlike the $C^0$ iteration, the low frequency in $w_{q+1,R}$ has an additional smallness factor $\texttt{m}_{\ph,2}^{\sfrac12}\sim r_q^{\sfrac13}$ in front of $(2\ph_q)^{\sfrac13}$, due to the intermittency of $\varrho_{q+1,\ph}$. We now explain below that the flexibility afforded by the extra parameter $r_q$ allows our solutions to exceed the $\sfrac 14$ threshold described above. To see this, we must first recall that the iteration in \cite{NV22} required a ``Goldilocks amount'' of intermittency $r_q=(\lambda_q \lambda_{q+1}^{-1})^{\sfrac 12}$ in order to produce a solution in $B^{\sfrac 13-}_{3,\infty}$; any larger or smaller choice of $r_q$ causes the size of $\nabla w_{q+1}$ to grow too quickly as $q\rightarrow \infty$. This value of $r_q$ is determined by the ratio between the frequency of the error $\ph_q$ and the maximum frequency of the intermittent shear flow $\varrho_{q+1,\ph}$ associated to $w_{q+1,\ph}$, raised to the power of $\sfrac12$.
Rather remarkably, we shall see below that the Goldilocks amount of intermittency is \emph{precisely} the minimum amount required in order to make the current Nash error estimate consistent with $B^{\sfrac 13-}_{3,\infty}$ regularity.

We first interpolate the $L^1$ and $L^\infty$ inductive estimates for $R_q$ and $L^2$ and $L^\infty$ inductive estimates for $\nabla u_q$ from \cite{NV22} to posit that
$$ \left\| u_q \right\|_3 \les 1 \ , \qquad  \left\| \nabla^N \nabla u_q \right\|_3 \leq \lambda_q^{-\beta + N + 1} r_{q-1}^{-\sfrac 13} \, , \qquad  \left\| \nabla_x^N R_q \right\|_{\sfrac 32} \leq \lambda_{q+1}^{-2\beta} \la_q^N \, ,  $$
for integers $N\geq 0$, where $\beta<\sfrac 13$ and $u_q\rightarrow u$ in the $B^{\beta-}_{3,\infty}$ topology. We write $\beta-$ to emphasize that $r_{q-1}^{-\sfrac 13}$ incurs a small power loss $\lambda_{q-1}^{\frac{b-1}{6}}$ which disappears as $b\rightarrow 1$. In order to make the low-frequency coefficient $\varphi_q^{\sfrac 13}$ in $w_{q+1,R}$ no larger than $(-R_q)^{\sfrac 12}$, the natural inductive bound for $\varphi_q$ is
$$  \left\| \nabla^N \varphi_q \right\|_{1} \leq \lambda_{q+1}^{-3\beta} r_q^{-1} \lambda_q^N \, .  $$ 
Combining this bound with the sharp $L^p$ decoupling estimate proved in \cite[Lemma~A.1]{GKN23} and the extra factor of $r_q^{\sfrac 13}$ in the definition of $w_{q+1,R}$ above yields the balanced estimates
\begin{align*}
\left\| \nabla^{N} w_{q+1,\varphi} \right\|_3 &\les \left\| \varphi_q \right\|_{1}^{\sfrac 13} \left\| \nabla^{N} \varrho_{q+1,\varphi} \right\|_3 \approx \lambda_{q+1}^{-\beta+N} r_q^{-\sfrac 13}  \, , \\
\left\| \nabla^{N} w_{q+1,R} \right\|_3 &\les \left(\left\| R_q \right\|_{\sfrac 32}^{\sfrac 12} + r_q^{\sfrac 13} \left\| \varphi_q \right\|_1^{\sfrac 13} \right) \left\| \nabla^{N} \varrho_{q+1,R} \right\|_3 \approx \lambda_{q+1}^{-\beta+N} r_q^{-\sfrac 13}  \, .
\end{align*}
Now recalling the structure of $w_{q+1,\varphi} \approx \varrho_{q+1,\varphi} (-2\varphi_q)^{\sfrac 13}$ and using decoupling, H\"older's inequality, and our estimates on $\varrho_{q+1,\varphi}$, $\varphi_q$, and $\nabla u_q$, we may estimate the Nash current error term corresponding to $w_{q+1,\varphi}$ by
\begin{align} 
\| \div^{-1} \mathbb{P}_{=\lambda_{q+1}} (w_{q+1,\varphi}\otimes w_{q+1,\varphi}) \, : \, \nabla u_q \|_1 &\les \big{\|} \div^{-1}{\mathbb{P}_{=\lambda_{q+1}} (\varrho_{q+1,\varphi}^2) } \big{\|}_{1} \big{\|} |\varphi_q|^{\sfrac 23} \nabla u_q \big{\|}_1 \notag \\
&\les \lambda_{q+1}^{-1} r_q^{\sfrac 23} \left\| \varphi_q \right\|_1^{\sfrac 23} \left\| \nabla u_q \right\|_3 \notag \\
&\les \lambda_{q+1}^{-1} r_q^{\sfrac 23} \lambda_{q+1}^{-2\beta} r_q^{-\sfrac 23} \lambda_q^{-\beta + 1} r_{q-1}^{-\sfrac 13} \, . \label{sunday:sunday:sunday:night}
\end{align}
In order for this estimate to meet the desired inductive bound of  $\lambda_{q+2}^{-3\beta} r_{q+1}^{-1}$, we see that we need
\begin{align*}
    \lambda_{q+1}^{-1-2\beta} \lambda_q^{1-\beta} r_{q-1}^{-\sfrac 13} \leq \lambda_{q+2}^{-3\beta} r_{q+1}^{-1}  \qquad \underbrace{\impliedby}_{r_{q-1}^{\sfrac 13} r_{q+1}^{-\sfrac 13} >1}  \qquad  \lambda_{q+1}^{-1-2\beta} \lambda_q^{1-\beta} \leq \lambda_{q+2}^{-3\beta} r_{q+1}^{-\sfrac 23}
\end{align*}
Note crucially that the inequality on the right has gained $r_{q+1}^{-\sfrac 23}$ compared with \eqref{weeeeeeeeeee}. Then using that $r_{q+1}^{-1}=\lambda_q^{\frac{b(b-1)}{2}}$, the inequality on the right is equivalent to
\begin{align*}
    \beta\left( 3b^2 - 2b - b \right) \leq (b-1)\left( 1 + \frac{b}{3} \right) \qquad \iff \qquad \beta \leq \frac{1+\frac{b}{3}}{3b+1} \, ,
\end{align*}
so that $\beta\rightarrow\sfrac 13$ as $b\rightarrow 1$. Similar estimates hold for the Nash current error from $w_{q+1,R}$, as well as for a number of current error terms which faced $C^{\sfrac 14}$ regularity limitations in the $C^0$ iteration. 

We conclude by noting that while the basic scaling considerations above indicate that an $L^3$ iteration inspired by \cite{NV22} has some hope, the techniques from \cite{NV22} would suffer from a number of significant shortcomings if one were to attempt to use them in a proof of the strong $L^3$ Onsager conjecture.  We explain the most immediate of these shortcomings in the next subsubsection.

\subsubsection{Partial wavelet sums}



In order to understand the need for partial wavelet sums in our iteration, we begin by examining the consequences of replacing high-frequency shear flows $\vartheta_{q+1,R}\vec e_1$ with high-frequency, intermittent shear flows $\varrho_{q+1,R}\vec e_1$. Intermittency in Nash iterations dates back to the work of Buckmaster and Vicol \cite{BV19} for the 3D Navier-Stokes equations; we refer to \cite{CheskidovLuo, BCV, Luo} for further developments for the Navier-Stokes equations. The intermittent Mikado flows used in \cite{NV22} were introduced by Modena and Sz\'ekelyhidi in \cite{MS} (see also the homogeneous Mikado flows due to Daneri and Sz\'ekelyhidi \cite{DaneriSzekelyhidi17}).  One should visualize the intermittent Mikado flows $\varrho_{q+1,\varphi}\vec e_1$ or $\varrho_{q+1,R}\vec e_1$ as shear flows supported in thin tubes of diameter $\lambda_{q+1}^{-1}$ around lines in the $\vec e_1$ direction, which have been periodized to scale $(\lambda_{q+1}r_q)^{-1}$. The parameter $r_q=\la_q^{\sfrac 12}\la_{q+1}^{-\sfrac 12}$ thus quantifies both the measure of the support and the $L^p$ norms, and the effective frequencies are contained in the range $[\lambda_{q+1}r_q,\lambda_{q+1}]=[(\lambda_{q}\la_{q+1})^{\sfrac 12},\lambda_{q+1}]$. Thus we see that intermittency \emph{smears out} the frequency support of $w_{q+1}$.

This smearing of frequencies greatly affects nonlinear errors such as the current oscillation error
\begin{align*}
    \div^{-1} \circ \div \left( \varphi_q + (\mathbb{P}_{\leq \la_q} + \mathbb{P}_{>\la_q}) (\sfrac 12|w_{q+1,\varphi}|^2 w_{q+1,\varphi}) \right) \approx - \div^{-1} \mathbb{P}_{\geq \la_q^{\sfrac 12}\la_{q+1}^{\sfrac 12}} \left( \sfrac 12 |\varrho_{q+1,\varphi}|^2 \varrho_{q+1,\varphi} \right) \vec e_1 \cdot \nabla\varphi_q \, .
\end{align*}
In the above approximate equality we have used the form of $w_{q+1,\varphi}=(-2\varphi_q)^{\sfrac 13}\varrho_{q+1,\varphi} \vec e_1$, the identity $\vec e_1\cdot\nabla \varrho_{q+1,\varphi}\equiv 0$, and the heuristic that the leading order behavior of the operator $\div^{-1}$ on a product of high and low frequency terms can be understood by simply applying it to the high frequency term. The maximum frequency of the error term above is $\la_{q+1}$, and the minimum frequency is $\la_q^{\sfrac 12}\la_{q+1}^{\sfrac 12}$.  Then if we attempt to absorb this error term into $\varphi_{q+1}$, we see that
\begin{align*}
    \big{\|} - \underbrace{\div^{-1}\mathbb{P}_{\geq \la_q^{\sfrac 12}\la_{q+1}^{\sfrac 12}}}_{\textnormal{gains $\la_{q}^{-\sfrac 12}\la_{q+1}^{-\sfrac 12}$}} \underbrace{\left( \sfrac 12 |\varrho_{q+1,\varphi}|^2 \varrho_{q+1,\varphi} \right) \vec e_1}_{\textnormal{unit $L^1$ norm}} \cdot \underbrace{\nabla\varphi_q}_{\substack{\textnormal{$L^1$ size} \\ \lambda_{q+1}^{-3\beta} r_q^{-1} }} \big{\|}_{L^1} &\leq \lambda_{q+2}^{-3\beta} r_{q+1}^{-1} \\
    \iff \la_q^{3\beta b^2 - 3\beta b + \frac 12 (1-b) + \frac 12 (1-b)(b-1) }  \leq 1 \iff  \beta &\leq \frac 16 \, .
\end{align*}
Thus intermittency creates errors at frequencies \emph{below} $\la_{q+1}$ which are too large to be absorbed into $\varphi_{q+1}$.

In \cite{NV22}, the analogue of this issue in the Euler-Reynolds system was rectified by performing a further frequency decomposition of $[(\lambda_{q}\la_{q+1})^{\sfrac 12},\lambda_{q+1}]$ and adding further velocity increments, which still have maximum frequency $\la_{q+1}$, to handle the errors at frequencies lower than $\la_{q+1}$.  Attempting such a strategy here leads one to define the \emph{higher order current error} $\varphi_{q,\alpha}$ at frequency $\la_q^{1-\alpha}\la_{q+1}^{\alpha}$ for $\alpha\in[\sfrac 12, 1]$ by
$$ \varphi_{q,\alpha} := -\div^{-1} \mathbb{P}_{\approx\la_q^{1-\alpha}\la_{q+1}^{\alpha}} \left( \sfrac 12 |\varrho_{q+1,\varphi}|^2 \varrho_{q+1,\varphi} \right) \vec e_1 \cdot \nabla\varphi_q  \, , $$
which would be corrected by a \emph{higher order velocity increment} $w_{q+1,\alpha,\varphi}$.  Now let $R_{q,\alpha}$ be the analogous {\it higher order stress error} as in \cite{NV22}, which will be corrected by $w_{q+1,\al,R}$. As before, we then set
\begin{align*}
w_{q+1,\alpha,\varphi}\approx\varrho_{q+1,\al,\ph}(-2\ph_{q,\al})^{\sfrac13}, \quad
w_{q+1,\alpha,R}\approx\varrho_{q+1,\al,R}\left(r_{q,\al}^{\sfrac13}(-2\ph_{q,\al})^{\sfrac13} + (-R_{q,\al})^{\sfrac12}\right)\, ,
\end{align*}
where $r_{q,\alpha}$ is the intermittency parameter for $\varrho_{q+1,\ph,\alpha}$.
To determine $r_{q,\alpha}$, we recall that $|\ph_{q,\alpha}|^{\sfrac13}r_{q,\alpha}^{\sfrac13}$ must be no larger than $|R_{q,\alpha}|^{\sfrac12}$. Assuming that all intermediate higher order errors are corrected as desired, the higher order errors $R_{q,\alpha}$ and $\ph_{q,\alpha}$ with $\al\approx1^-$ will be absorbed to $R_{q+1}$ and $\ph_{q+1}$, respectively, leading to
\begin{align*}
 \la_{q+2}^{-2\be} \approx 
\norm{R_{q+1}}_{L^{\sfrac32}}\approx \norm{R_{q,1^-}}_{L^{\sfrac32}}
 \gtrsim  \norm{\ph_{q,1^-}}_{L^1}^{\sfrac23}r_{q,1^-}^{\sfrac23} \approx \norm{\ph_{q+1}}_{L^1}^{\sfrac32}\approx \la_{q+2}^{-2\be}r_{q+1}^{-\frac23}r_{q,1^-}^{\sfrac23} 
 \implies r_{q,1^-} \lesssim r_{q+1}\ll 1\, . 
\end{align*}
This however stands in contradiction with the restriction that maximum frequency of $w_{q+1,\ph}$ is no larger than $\la_{q+1}$. This is due to the fact that the associated Mikado flows $\varrho_{q+1,\al,\ph}$ have effective frequency support $[\la_{q+1}r_{q,\al}, \la_{q+1}]$, which must remain \emph{above} the frequency $\la_{q}^{1-\al}\la_{q+1}^{\al}$ of the error $\ph_{q,\al}$, leading $r_{q,1^-}\approx 1$ as in \cite{NV22}.
The failure of the strategy from \cite{NV22} suggests lifting the restriction on the maximum frequency of the higher order velocity increments.

To adjust the frequency support of $w_{q+1,\alpha,\ph}$, we turn to the Goldilocks ratio that determines the maximum and minimum frequencies of the Mikado flow $\varrho_{q+1,\al,\ph}$, for given frequency of the corrected error $\ph_{q,\al}$. The Goldilocks ratio suggests setting the maximum frequency of $\varrho_{q+1,\ph,1^-}$ to be $\la_{q+2}$, and the minimum frequency to be $\la_{q+1}^{\sfrac 12}\la_{q+2}^{\sfrac 12}$. The same reasoning also suggests that for general $\alpha$, one should set $r_{q,\al}\approx\left(\la_{q}^{1-\alpha}\la_{q+1}^\alpha\la_{q+1}^{1-\alpha}\la_{q+2}^\alpha\right)^{\sfrac 12}$, so that the maximum frequency of $\varrho_{q+1,\al,\ph}$ is $\la_{q+1}^{1-\al}\la_{q+2}^{\al}$ and the minimum is $\la_{q+1}^{1-\al}\la_{q+2}^{\al}r_{q,\alpha}$.  Interestingly, one may view these choices as a restoration of self-similarity which had been broken by the scheme in \cite{NV22}.  Indeed the choice of $r_{q,\alpha}$ from \cite{NV22} implies that $w_{q+1,\alpha,R}$ was \emph{much} less intermittent than $w_{q+1,R}$ as $\alpha\rightarrow 1$, thus breaking the intermittent self-similarity of the different components of the velocity field.  The natural conclusion of these observations, which in some sense is validated by our analysis in this paper, is that \emph{the local energy inequality imposes intermittent self-similarity by fixing the Goldilocks parameter of intermittency throughout the iteration}. 

Before we delve into the consequences of adjusting the maximum frequency of velocity increments, let us first introduce a basic set-up for our iteration based on the discussion in the previous paragraphs. In this set-up, we treat the construction of $w_{q+1,\alpha,\varphi}$ as a distinct iterative step rather than a sub-step of $q\to q+1$, because
$w_{q+1,\alpha}$ corrects $\varphi_{q,\alpha}$ and $R_{q,\alpha}$ in a manner completely analogous to how $w_{q+1}$ corrected $\varphi_q$ and $R_q$.
We assume the existence of a velocity field $u_q = \hat u_q + (u_q - \hat u_q)$ (where the ``hat'' notation is used to encode frequency information described below), a Reynolds stress $R_q$, a current error $\varphi_q$, a pressure $p_q$, and an intermittent pressure $\pi_q$ which satisfy
\begin{equation}\label{eqn:ER:intro:new}
    \begin{cases}
    \pa_t u_q + \div \left( u_q \otimes u_q \right) + \na p_q = \div \left( R_q - \pi_q \Id \right) \\
    \partial_t \left( \frac 12 |u_q|^2 \right) + \div \left( \left( \frac 12 |u_q|^2 + p_q \right) u_q \right) \leq (\partial_t + \hat u_q \cdot \nabla) \kappa_q + \div \varphi_q + \div \left( (R_q - \pi_q \Id) \hat u_q \right) \\
    \div \, u_q =0  \, , 
    \end{cases}
\end{equation}
where $\kappa_q:=\sfrac 12\tr\left(R_q-\pi_q\Id\right)$. Note that we have substituted the negative definite symmetric tensor $R_q-\pi_q\Id$ for $R_q$ in \eqref{eqn:rel:intro}. The role of the scalar function $\pi_q$ will be discussed in subsection~\ref{ss:ip:intro}. We assume the existence of a large parameter $\bn$ such that\footnote{In a standard Nash iteration, $\bn=1$.  For us $\bn$ will be a large positive integer which however is still fixed independently of $q$.} $\hat u_q$ oscillates at spatial frequencies no larger than $\la_q$ and $u_q - \hat u_q$ oscillates at spatial frequencies in between $\la_{q+1}$ and $\la_{\qbn-1}$.  In general, the subscript $q'$ with a ``hat'' (as in $\hat u_{q'}$) denotes a velocity field with maximum frequency $\la_{q'}$, while the subscript $q'$ and no ``hat'' (as in $u_{q'}$) denotes a velocity field with maximum frequency $\la_{q'+\bn-1}$. Choosing $\beta$ close to $\sfrac 13$, we then inductively assume that
\begin{align}
    \left\| u_q \right\|_3 \les 1 \, , \qquad \left\| \nabla_x^N \nabla u_q \right\|_3 \les \la_{\qbn-1}^{-\beta+1+N} \qquad \iff \qquad \left\| \hat u_{\qbn-1} \right\|_3 \les 1 \, , \qquad \left\| \nabla_x^N \nabla \hat u_{\qbn-1} \right\|_3 \les \la_{\qbn-1}^{-\beta+1+N} \, . \notag
\end{align}
Next, the Reynolds stress $R_q$ may decomposed as $R_q= \sum_{q'=q}^{\qbn-1} R_q^{q'}$, the intermittent pressure $\pi_q$ may be decomposed as $\pi_q = \sum_{q'=q}^{\infty} \pi_q^{q'}$,\footnote{While the error terms stop at $\qbn-1$, or the highest active frequency of an error term, the pressure terms past $\qbn-1$ are required so that the pressure satisfies certain scaling laws specified in subsubsection~\ref{sec:pik:ant}.  See also subsubsection~\ref{ss:ip:intro}.} and the current error $\varphi_q$ may be decomposed as $\varphi_q = \sum_{q'}^{\qbn-1} \varphi_q^{q'}$. The parameter $q'$ encodes the frequency $\la_{q'}$ at which $R_q^{q'}$, $\varphi_q^{q'}$, and $\pi_q^{q'}$ oscillate.  We therefore assume that
\begin{align}\notag
    \left\| \nabla_x^N R_q^{q'} \right\|_{\sfrac 32} + \left\| \nabla_x^N \pi_q^{q'} \right\|_{\sfrac 32} \leq \la_{q'+\bn}^{-2\beta} \la_{q'}^N \, , \qquad \left\| \nabla_x^{N} \varphi_q^{q'} \right\|_1 \leq \la_{q'+\bn}^{-3\beta} r_{q'}^{-1} \la_{q'}^N \, ,
\end{align}
where $r_{q'} = \la_{q'+\half} \la_{q'+\bn}^{-1}\approx (\la_{q'}\la_{q'+\bn})^{\sfrac12}$. 

We then construct $w_{q+1} = \hat w_{\qbn} = u_{q+1}- u_q$\index{$\hat u_q$}\index{$\hat w_q$} using intermittent Mikado flows $\varrho_{\qbn,R}$ and $\varrho_{\qbn,\varphi}$ which have minimum frequency $\la_{q+\half}$ and maximum frequency $\la_\qbn$. Since $w_{q+1}$ is used to correct errors at frequency $\la_q$, these choices adhere to the Goldilocks ratio of intermittency. Furthermore, $w_{q+1}$ is used to correct $R_q^q-\pi_q^q \Id$ and $\varphi_q^q$ while leaving $R_q^{q'}-\pi_q^{q'}\Id$ and $\varphi_q^{q'}$ \emph{intact} for $q'>q$.  The concept of correcting only the lowest frequency error term out of a large collection of error terms at many frequencies distinguishes our scheme from all existing schemes for fluid equations. The net result of adding $w_{q+1}$ will be the creation of new stress and current errors, which will get sorted into bins between $\la_{q+1}$ and $\la_{\qbn}$ and added to $R_q^{q'}$ and $\varphi_q^{q'}$ to form $R_{q+1}$ and $\varphi_{q+1}$. We emphasize that the terms in the partial sum $u_{q+1}=w_{q+1}+w_q + w_{q-1} + \dots$ have overlap in frequency when $|q'-q''|\leq \half$, so that $u_{q+1}$ is not a partial Fourier sum of the limiting solution. Rather, we consider it as a partial wavelet sum and will address the difficulties arising from frequency overlap with spatial support separation.
See Figure~\ref{figure:frequencies} for a schematic of the active frequencies of various functions.  

\begin{figure}
\begin{center}
\begin{tikzpicture}
\begin{scope}[xshift=.75cm, yshift=-2.5cm]
    \draw[line width=1pt] (-10, -3) -- (3.5, -3);
    \draw[line width=1.5pt, black] (-10,-3.2) -- (-10,-2.8);
    \draw[line width=1pt, dotted] (-10,-3) -- (-9.3,-2.3);
    \draw[black] (-9.3,-2.3)node[above=.1cm]{\textnormal{max. freq. of $R_{q}^{q}$, $\varphi_{q}^{q}$}};
    \draw[line width = 1.5pt, black] (3.5,-3.2) -- (3.5,-2.8);
    \draw[line width=1pt, dotted] (3.5,-3) -- (2.75, -2.3);
    \draw[black] (2.3,-2.3)node[above=.1cm]{\textnormal{max. freq. of $w_{q}=\hat w_{\qbn}$}};
    \draw[black] (-10,-3)node[below=.1cm]{$\lambda_{q}$};
    \draw[black] (4,-3)node[below=.1cm]{$\lambda_{\qbn}$};
    \draw[black] (-3.25,-3)node[below=.1cm]{$\lambda_{q+\half}$};
    \draw[line width=1pt, dotted] (-3.25,-3) -- (-3.25, -2.3);
    \draw[black] (-3.25, -2.3)node[above=.1cm]{\textnormal{min. freq. of $w_{q}=\hat w_{\qbn}$}};
    \draw[line width=1.5pt, black] (-3.25,-3.2) --  (-3.25,-2.8);
    \draw[line width=1pt, ao(english)] (-2.5-0*.75,-3.1) --  (-2.5-0*.75,-2.9);
    \draw[line width=1pt, ao(english)] (-2.5-2*.75,-3.1) --  (-2.5-2*.75,-2.9);
    \draw[line width=1pt, ao(english)] (-2.5-3*.75,-3.1) --  (-2.5-3*.75,-2.9);
    \draw[line width=1pt, ao(english)] (-2.5-4*.75,-3.1) --  (-2.5-4*.75,-2.9);
    \draw[line width=1pt, ao(english)] (-2.5-5*.75,-3.1) --  (-2.5-5*.75,-2.9);
    \draw[line width=1pt, ao(english)] (-2.5-6*.75,-3.1) --  (-2.5-6*.75,-2.9);
    \draw[line width=1pt, ao(english)] (-2.5-7*.75,-3.1) --  (-2.5-7*.75,-2.9);
    \draw[line width=1pt, ao(english)] (-2.5-8*.75,-3.1) --  (-2.5-8*.75,-2.9);
    \draw[line width=1pt, ao(english)] (-2.5-9*.75,-3.1) --  (-2.5-9*.75,-2.9);
    \draw[line width=1pt, ao(english)] (-2.5+1*.75,-3.1) --  (-2.5+1*.75,-2.9);
    \draw[line width=1pt, ao(english)] (-2.5+2*.75,-3.1) --  (-2.5+2*.75,-2.9);
    \draw[line width=1pt, ao(english)] (-2.5+3*.75,-3.1) --  (-2.5+3*.75,-2.9);
    \draw[line width=1pt, ao(english)] (-2.5+4*.75,-3.1) --  (-2.5+4*.75,-2.9);
    \draw[line width=1pt, ao(english)] (-2.5+5*.75,-3.1) --  (-2.5+5*.75,-2.9);
    \draw[line width=1pt, ao(english)] (-2.5+6*.75,-3.1) --  (-2.5+6*.75,-2.9);
    \draw[line width=1pt, ao(english)] (-2.5+7*.75,-3.1) --  (-2.5+7*.75,-2.9);
\end{scope}
    \draw[line width=1pt] (-10, -3) -- (3.5, -3);
    \draw[line width=1.5pt, black] (-10,-3.2) -- (-10,-2.8);
    \draw[line width=1pt, dotted] (-10,-3) -- (-9.3,-2.3);
    \draw[black] (-9.3,-2.3)node[above=.1cm]{\textnormal{max. freq. of $R_{q-1}^{q-1}$, $\varphi_{q-1}^{q-1}$}};
    \draw[line width = 1.5pt, black] (3.5,-3.2) -- (3.5,-2.8);
    \draw[line width=1pt, dotted] (3.5,-3) -- (2.75, -2.3);
    \draw[black] (2.8,-2.3)node[above=.1cm]{\textnormal{max. freq. of $w_{q}=\hat w_{\qbn-1}$}};
    \draw[black] (-10,-3)node[below=.1cm]{$\lambda_{q-1}$};
    \draw[black] (4,-3)node[below=.1cm]{$\lambda_{\qbn-1}$};
    \draw[black] (-3.25,-3)node[below=.1cm]{$\lambda_{q+\half-1}$};
    \draw[line width=1pt, dotted] (-3.25,-3) -- (-3.25, -2.3);
    \draw[black] (-3.25, -2.3)node[above=.1cm]{\textnormal{min. freq. of $w_{q}=\hat w_{\qbn-1}$}};
    \draw[line width=1.5pt, black] (-3.25,-3.2) --  (-3.25,-2.8);
    \draw[line width=1pt, ao(english)] (-2.5-0*.75,-3.1) --  (-2.5-0*.75,-2.9);
    \draw[line width=1pt, ao(english)] (-2.5-2*.75,-3.1) --  (-2.5-2*.75,-2.9);
    \draw[line width=1pt, ao(english)] (-2.5-3*.75,-3.1) --  (-2.5-3*.75,-2.9);
    \draw[line width=1pt, ao(english)] (-2.5-4*.75,-3.1) --  (-2.5-4*.75,-2.9);
    \draw[line width=1pt, ao(english)] (-2.5-5*.75,-3.1) --  (-2.5-5*.75,-2.9);
    \draw[line width=1pt, ao(english)] (-2.5-6*.75,-3.1) --  (-2.5-6*.75,-2.9);
    \draw[line width=1pt, ao(english)] (-2.5-7*.75,-3.1) --  (-2.5-7*.75,-2.9);
    \draw[line width=1pt, ao(english)] (-2.5-8*.75,-3.1) --  (-2.5-8*.75,-2.9);
    \draw[line width=1pt, ao(english)] (-2.5-9*.75,-3.1) --  (-2.5-9*.75,-2.9);
    \draw[line width=1pt, ao(english)] (-2.5+1*.75,-3.1) --  (-2.5+1*.75,-2.9);
    \draw[line width=1pt, ao(english)] (-2.5+2*.75,-3.1) --  (-2.5+2*.75,-2.9);
    \draw[line width=1pt, ao(english)] (-2.5+3*.75,-3.1) --  (-2.5+3*.75,-2.9);
    \draw[line width=1pt, ao(english)] (-2.5+4*.75,-3.1) --  (-2.5+4*.75,-2.9);
    \draw[line width=1pt, ao(english)] (-2.5+5*.75,-3.1) --  (-2.5+5*.75,-2.9);
    \draw[line width=1pt, ao(english)] (-2.5+6*.75,-3.1) --  (-2.5+6*.75,-2.9);
    \draw[line width=1pt, ao(english)] (-2.5+7*.75,-3.1) --  (-2.5+7*.75,-2.9);
\end{tikzpicture}
\end{center}
\caption{The top segment depicts the relevant frequencies at stage $q-1$.  The most important frequencies are indicated with black line segments and correspond to the frequency of $R_{q-1}^{q-1}$ and $\varphi_{q-1}^{q-1}$ (which are being corrected), and the minimum and maximum frequency of the velocity increment $w_q=\hat w_{\qbn-1}$ (which corrects $R_{q-1}^{q-1})$, $\varphi_{q-1}^{q-1}$). Note that these three frequencies obey the Goldilocks ratio of intermittency. The green line segments correspond to frequencies of errors $R_{q-1}^{k}$, $\varphi_{q-1}^k$ for $q\leq k \leq \qbn-2$ which are being ignored for the moment.  The bottom segment depicts the same frequencies, but at stage $q$.}\label{figure:frequencies}
\end{figure}
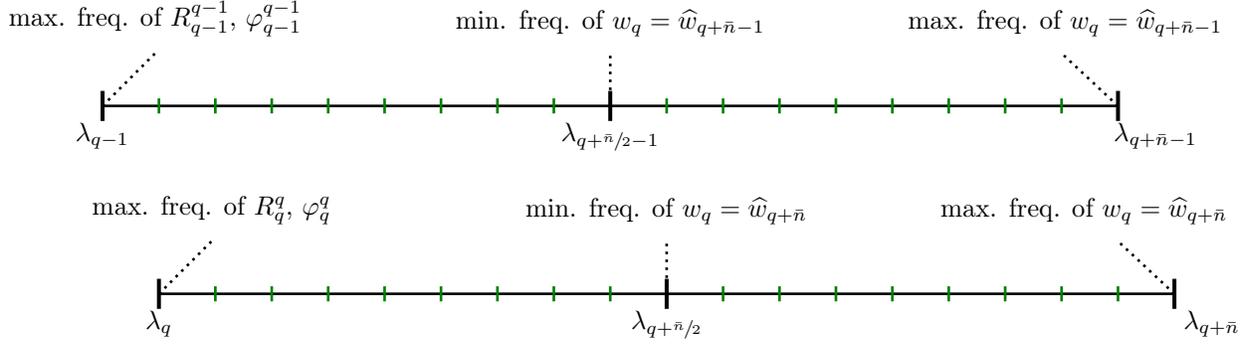

\subsubsection{Dodging techniques}

The inductive set-up described above needs to be complemented with assumptions on spatial support, as well as a methodology for propagating such information throughout the iteration.  To give an example of the kind of support properties we require, let us define the velocity increment $w_{q+1}=w_{q+1,R}+w_{q+1,\varphi}$ by
\begin{align}\label{looks:like:rain}
    w_{q+1,\varphi} = (-2\varphi_q^q)^{\sfrac 13} \varrho_{\qbn,\varphi} \, , \qquad w_{q+1,R} = \left( r_q^{\sfrac 13}(-2\varphi_q^q)^{\sfrac 13} + (R_q^q-\pi_q^q\Id)^{\sfrac 12} \right) \varrho_{\qbn,R} \, ,
\end{align}
where $\varrho_{\qbn,\varphi}$ and $\varrho_{\qbn,R}$ satisfy estimates identical to \eqref{eq:sat:after} after replacing $\la_{q+1}$ with $\la_\qbn$ and using the new definition of $r_q= \la_{q+\half}\la_{\qbn}^{-1}$. Then $w_{q+1,\varphi}$ and $w_{q+1,R}$ satisfy identical estimates, namely
\begin{align*}
\left\| \nabla^{N} w_{q+1,\varphi} \right\|_3 &\les \left\| \varphi_q \right\|_{1}^{\sfrac 13} \left\| \nabla^{N} \varrho_{\qbn,\varphi} \right\|_3 \approx \lambda_{\qbn}^{-\beta+N} r_q^{-\sfrac 13}  \, , \\
\left\| \nabla^{N} w_{\qbn,R} \right\|_3 &\les \left(\left\| R_q^q \right\|_{\sfrac 32}^{\sfrac 12} + r_q^{\sfrac 13} \left\| \varphi_q^q \right\|_1^{\sfrac 13} \right) \left\| \nabla^{N} \varrho_{\qbn,R} \right\|_3 \approx \lambda_{\qbn}^{-\beta+N} r_q^{-\sfrac 13}  \, .
\end{align*}
Now consider the Nash error obtained from adding $w_{q+1,\varphi}$, which we may estimate\footnote{Note the inverse divergence gain of $\la_\qbn$, which is larger than the minimum frequency $\la_{q+\half}$ of $w_{q+1}$. One can test the validity of this estimate by computing the one-dimensional version, where $\div^{-1}$ is simply integration.} by
\begin{align}\notag
    \big{\|} \div^{-1} ( \varrho_{\qbn,\varphi} (\varphi_q^q)^{\sfrac 13} \nabla u_q )  \big{\|}_{\sfrac 32} &\les \big{\|}  \underbrace{\div^{-1} \varrho_{\qbn,\varphi}}_{\textnormal{$L^{\sfrac 32}$ size $\la_\qbn^{-1}r_q^{\sfrac 23}$}} \underbrace{(\varphi_q^q)^{\sfrac 13}}_{\textnormal{$L^3$ size $\la_\qbn^{-\beta}r_q^{-\sfrac 13}$}} \cdot \underbrace{\nabla \hat u_q}_{\textnormal{$L^3$ size $\la_q^{-\beta+1}r_{q-\bn}^{-\sfrac 13}$}}  \big{\|}_{\sfrac 32} \notag\\
    &\qquad + \big{\|}  \underbrace{\div^{-1} \varrho_{\qbn,\varphi}}_{\textnormal{$L^{\sfrac 32}$ size $\la_\qbn^{-1}r_q^{\sfrac 23}$}} \underbrace{(\varphi_q^q)^{\sfrac 13}}_{\textnormal{$L^3$ size $\la_\qbn^{-\beta}r_q^{-\sfrac 13}$}} \cdot \underbrace{\left(\nabla u_q - \nabla \hat u_q \right)}_{\textnormal{$L^3$ size $\la_{\qbn-1}^{-\beta+1}r_{q-1}^{-\sfrac 13}$}}  \big{\|}_{\sfrac 32} \, . \notag
\end{align}
Since this error term oscillates at frequency $\la_{\qbn}$, we expect its size to be $\la_{q+2\bn}^{-2\beta}$ (the analogue of $\delta_{q+2}$ from \cite{NV22}, for example).  After a bit of arithmetic, one may check that the first term satisfies a sharp estimate when $\beta\rightarrow \sfrac 13$ (analogous to $\delta_{q+1}^{\sfrac 12} \delta_{q}^{\sfrac 12}\la_q \la_{q+1}^{-1}\leq \delta_{q+2}$ from a $C^{\sfrac 13 -}$ iteration, which is the size of the Nash error).  The second term, however, is far too large, due to the fact that  $\nabla u_q - \nabla \hat u_q$ has much larger $L^3$ norm than $\nabla \hat u_q$. The only way to close the estimate for the Nash error is then if 
$$  \supp w_{q+1} \cap \supp \left( u_q - \hat u_q \right) = \emptyset  \qquad \impliedby \supp w_{q+1} \cap  \left(\supp \hat w_{q+1} \cup \supp \hat w_{q+2} \dots \cup \supp \hat w_{\qbn-1} \right) = \emptyset \, , $$
where we have recalled that our ``hat'' notation gives that $u_q - \hat u_q = \hat w_{q+1} + \hat w_{q+2} + \dots + \hat w_{\qbn-1}$.  

There is however a clear obstruction to this assertion. Consider the velocity increments $\hat w_{q'}$ defined analogously to \eqref{looks:like:rain} for $q+1 \leq q' \leq q+ \half$. These velocity increments are constructed using intermittent Mikado flows $\varrho_{q',R}$ and $\varrho_{q',\varphi}$ which have pipe spacing $\la_{q'-\half}^{-1}$ and pipe thickness $\la_{q'}^{-1}$. Since the thickness of these pipes is \emph{larger} than the spacing of the pipes we plan to use at step $q$, namely $\la_{q+\half}^{-1}$, there is no way to arrange the support of $\hat w_{\qbn}$ to be disjoint from the support of $\hat w_{q+1}, \dots \hat w_{q+\half}$ without some adjustments to the definition of $\hat w_{q+n}$. We have solved this issue through the creation of a new type of multi-scale, intermittent building block, which we call an \emph{intermittent Mikado bundle}. 

An intermittent Mikado bundle $\mathbb{B}_{\varphi}$ or $\mathbb{B}_{R}$ is a stationary solution of Euler given as a product of two intermittent pipe flows (for example with velocity parallel to $e_1$) such as
\begin{equation}\label{intermittent:pipe:bundle}
    \mathbb{B}_{\varphi}(x,y) := e_1 \varrho_{\qbn,\varphi}(x_2,x_3) \tilde\varrho_{q+1}(x_2,x_3) \, .
\end{equation}
Here we have that $\varrho_{\qbn,\varphi}$ has minimum frequency $\la_{q+\half}$ and maximum frequency $\la_{\qbn}$, exactly as described in the last subsection.  However, $\tilde \varrho_{q+1}$ is essentially a standard, non-intermittent Mikado flow with minimum and maximum frequency $\la_{q+1}$ (so that the frequency support of $\mathbb{B}_\varphi$ is essentially the same as that of $\varrho_{\qbn,\varphi}$). We refer to $\tilde \varrho_{q+1}$ as the \emph{bundling pipe}. The purpose of $\tilde \varrho_{q+1}$ is to enable the support of $\hat w_\qbn$ to be chosen disjointly from the supports of $\hat w_{q+1}, \dots, \hat w_{q+\half}$, all of which are comprised of pipes with thickness \emph{less} than $\la_{q+1}^{-1}$ and spacing \emph{more} than $\la_{q+1}^{-1}$. This disjointness is ensured by \emph{shifting} the supports of the bundling pipes.  The idea that a compactly supported building block enjoys a degree of freedom according to shifts was first utilized in \cite{DK22} and further developed using intermittency in \cite{BMNV21, NV22}. The new development relative to the strategy used in \cite{NV22} is the introduction of the bundling pipe, combined with inductive assumptions on the support of previous velocity increments.  As in \cite{BMNV21} and \cite{NV22}, each bundle is pre-multiplied by space-time cutoff functions $\{a_i\}_{i\in \mathcal{I}}$ which form a partition of unity and localize the bundles to a particular space-time region. The timescale on which a cutoff function $a_i$ is non-zero is inversely proportional to the Lipschitz norm of $\hat u_q$ on the support of  $a_i$. The machinery for these cutoff functions was developed in \cite{BMNV21} and \cite{NV22}. Finally, on the support of each $a_i$, we compose with a local flow map $\Phi_{q,i}$ of $\hat u_q$. Then a more accurate description of $w_{q+1,\varphi}$ would be
\begin{equation} \label{pert:tuezday}
 w_{q+1,\varphi} = \sum_{i}  \left(-2\varphi_q^q\right)^{\sfrac 13} a_i\left( \nabla \hat u_q \right) \mathbb{B}_{i, \varphi} \circ \Phi_{q,i} \, .
\end{equation}
For a pictorial depiction of the support of $\mathbb{B}_{i,\varphi}$, we refer to Figure~\ref{figure:pipes}. 

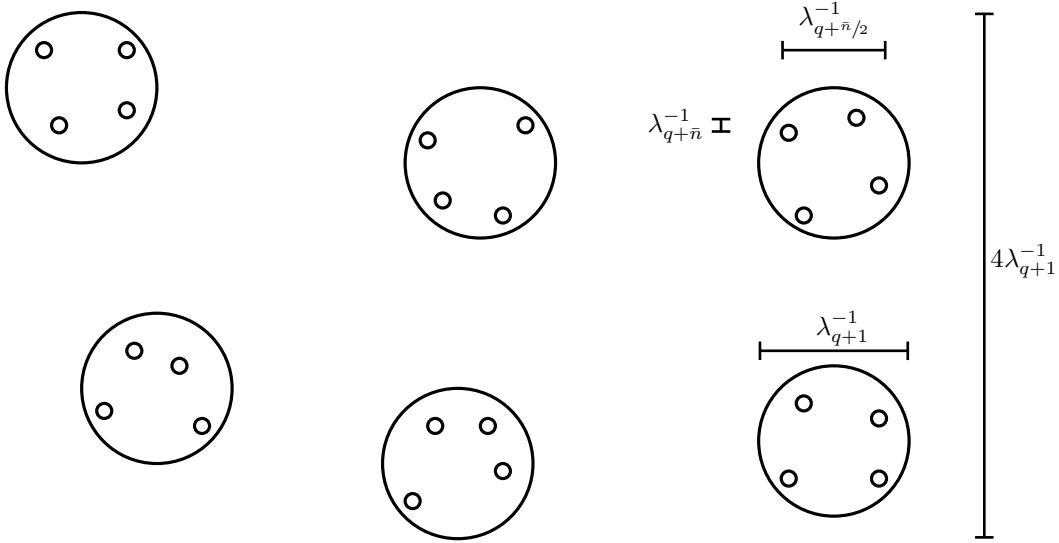
\begin{figure}
\begin{center}
\begin{tikzpicture}
\draw[very thick](-6,0) circle (1);
\draw[very thick](-6+1,-5+1) circle (1);
\draw[very thick](-6.6+.3,-.5) circle (.1);
\draw[very thick](-6.6+.9,-5.5+1.2) circle (.1);
\draw[very thick](-5.4,-.3) circle (.1);
\draw[very thick](-5.4+1,-5.5+1) circle (.1);
\draw[very thick](-6.5,0.5) circle (.1);
\draw[very thick](-6.3+1,-4.5+1) circle (.1);
\draw[very thick](-5.4,0.5) circle (.1);
\draw[very thick](-5.4+.7,-4.5+.8) circle (.1);
\draw[very thick](-1+.3,-1) circle (1);
\draw[very thick](-1,-5) circle (1);
\draw[very thick](-1.6+.4,-.5-1) circle (.1);
\draw[very thick](-1.6,-5.5) circle (.1);
\draw[very thick](-0.4,-.5-1.2) circle (.1);
\draw[very thick](-0.4,-5.1) circle (.1);
\draw[very thick](-1.6+.2,0.5-1.2) circle (.1);
\draw[very thick](-1.3,-4.5) circle (.1);
\draw[very thick](-0.4+.3,0.5-1) circle (.1);
\draw[very thick](-0.6,-4.5) circle (.1);
\draw[very thick](4,-1) circle (1);
\draw[very thick](4,-5+.3) circle (1);
\draw[very thick](3.6,-.5-1.2) circle (.1);
\draw[very thick](3.4,-5.5+.3) circle (.1);
\draw[very thick](4.6,-.5-.8) circle (.1);
\draw[very thick](4.6,-5.5+.3) circle (.1);
\draw[very thick](3.4,0.5-1.1) circle (.1);
\draw[very thick](3.6,-4.5+.3) circle (.1);
\draw[very thick](4.3,0.5-0.9) circle (.1);
\draw[very thick](4.6,-4.7+.3) circle (.1);
\draw[|-|,line width=1pt] (6, 1) -- (6,-6);
\draw[|-|,line width=1pt] (3, -3.5) -- (5,-3.5);
\draw[|-|,line width=1pt] (3.3, .5) -- (4.7,.5);
\draw[|-|,line width=1pt] (2.5, -.6) -- (2.5,-.4);
\draw[black] (7.2,-2.3) node[left=.1cm]{$\small 4\lambda^{-1}_{q+1}$};
\draw[black] (4.7,-3.2) node[left=.1cm]{$\small \lambda^{-1}_{q+1}$};
\draw[black] (4.7,.9) node[left=.1cm]{$\small \lambda^{-1}_{q+\half}$};
\draw[black] (2.5,-.5) node[left=.1cm]{$\small \lambda^{-1}_{\qbn}$};
\end{tikzpicture}
\end{center}
\caption{Depicted here is the cross-section of the support of a collection of intermittent pipe bundles $\mathbb{B}_{i,\varphi}$ (not drawn to scale). The large circles represent the support of various $\varrho_{q+1,i}$'s.  Each of these is shifted to ensure disjointness of $\hat w_{\qbn}$ with $\hat w_{q+1},\dots, \hat w_{q+\half}$. The small circles represent the support of of various $\varrho_{\qbn,i, \varphi}$'s. Each of these is shifted to ensure disjointness of $\hat w_{\qbn}$ with $\hat w_{q+\half+q},\dots,\hat w_{\qbn-1}$.}\label{figure:pipes}
\end{figure}

The final piece of our dodging technology is needed to handle the material derivatives of nonlinear error terms such as
$$  -  \div^{-1} \mathbb{P}_{\neq 0} \div \left( \sfrac 12 |w_{q+\bn,\varphi}|^2 w_{q+\bn,\varphi} \right) \, . $$
Since $w_{\qbn,\varphi}$ is defined using intermittent Mikado bundles with active frequencies in between $\la_{q+\half}$ and $\la_{\qbn}$, we must divide this error term up according to the frequencies in this range.  A standard Littlewood-Paley decomposition $\sum_{q'=q+\half}^{\qbn} \mathbb{P}_{\approx \la_{q'}}$ would however destroy any useful spatial support properties of these error terms.  As a few simple heuristic computations show (similar to those made above for the Nash error), the material derivative $D_{t,{q'-1}}=\pa_q + \hat u_{q'-1}\cdot \nabla$ of the component of this error term at frequency $\la_{q'}$ would be too large unless the error is disjoint from the velocity increments $\hat w_{q+1}, \dots , \hat w_{q'-1}$. In order to ensure this disjointness, we replace the standard Littlewood-Paley decomposition with a \emph{synthetic Littlewood-Paley} decomposition $\sum_{q'=q+\half}^{\qbn} \tP_{\approx \lambda_{q'}}$.  This decomposition uses kernels that are compactly supported in space, so that the support of $\tP_{\approx \lambda_{q'}}$ is contained inside a ball of radius $\approx\lambda_{q'}^{-1}$ around the support of the input. Combined with the spatially localized inverse divergence developed in \cite{BMNV21} and \cite{NV22}, we therefore have that
$$ \supp \hat w_{k} \, \cap \, \supp \left[ \div^{-1} \tP_{\approx \lambda_{q'}}  \div \left( \sfrac 12 |w_{\qbn,\varphi}|^2 w_{\qbn,\varphi}  \right) \right] \qquad \textnormal{for $q+1\leq k \leq q'-1$} \, . $$
Then we can upgrade the natural material derivative estimate for this error term, corresponding to $D_{t,q}$, to $D_{t,q'-1}=D_{t,q} + \left( \hat w_{q+1} + \dots + \hat w_{q'-1} \right)\cdot \nabla$, \emph{for free}. It is simple to check that the synthetic Littlewood-Paley decomposition obeys essentially the same properties as a standard Littlewood-Paley decomposition with respect to integration (inverse divergence) and differentiation.  We have checked these simple computations in the companion paper \cite[section~4.3]{GKN23} and recall them here in the appendix.

\subsubsection{Intermittent pressure}\label{ss:ip:intro}

The primary purpose of our intermittent pressure $\pi_q^q$ is to ensure that $\pi_q^q\Id -R_q^q$ is positive-definite, identical to the role of the pressure decrement $p_{q}-p_{q+1}$ in previous $C^0$ Nash iterations for the Euler equations. In $C^0$ schemes, for example,  $\pi_q^q=2\norm{R_q^q}_{C^0}$ is typically chosen to be constant in space, as in either the Onsager theorem \cite{Isett2018, BDLSV17} or the partial results \cite{Is22, DK22} for the strong $C^0$ Onsager conjecture. In the intermittent iteration from \cite{NV22}, $\pi_q^q$ was chosen essentially as $2|R_q^q|$ and added into the system at step $q+1$. This strategy is acceptable because the pressure adjustment does not contribute to the new error $R_{q+1}$. However, it will not work for the relaxed local energy inequality; as we will explain, this is because we must \emph{invert the divergence} on a current error term containing $\pi_q^q$, which requires quite a bit of structure.  

To illustrate this point, notice that when we add $u_{q+1}-u_q = w_{q+1} =
\hat w_{\qbn}$ into \eqref{eqn:ER:intro:new}, one of the error terms in the relaxed local energy inequality will be 
$$  (\partial_t + u_q \cdot \nabla) \left(|w_{q+1}|^2 + \kappa_q^q \right) =  (\partial_t + \hat u_q \cdot \nabla) \left( |w_{q+1}|^2 + \kappa_q^q \right) \, , $$
where $\kappa_q^q=\sfrac 12 \tr (R_q^q - \pi_q^q\Id)$. Note that we have used \emph{dodging} to assert that $(u_q - \hat u_q)\cdot\nabla|w_{q+1}|^2=0$.
The presence of $\pi_q^q$ arises from its prior construction at {\it step $q$} and subsequent integration into the relaxed system. Consequently, $\ka_q^q$
cancels out the low frequency portion of $|w_{q+1}|^2$, leaving only a high-frequency remainder for which one can invert the divergence. We remark that without the presence of $\kappa_q^q$, $(\pa_q+\hat u_q\cdot\nabla)|w_{q+1}|^2$ is quite threatening since $w_{q+1}$ may be \emph{increasing} the energy along the trajectories of $\hat u_q$ in some localized region.  Referring to \eqref{pert:tuezday}, we see that this will indeed be the case in the regions where
\begin{equation}\label{p ㄹressure:error:intro}
(\pa_t + \hat u_q \cdot \nabla)|w_{q+1,R}|^2= (\pa_t + \hat u_q \cdot \nabla)\left(\left( r_q^{\sfrac 13}(-2\varphi_q^q)^{\sfrac 13} + (R_q^q-\pi_q^q\Id)^{\sfrac 12} \right)^2 a_i^2(\nabla \hat u_q) \right) |\BB_{i,R}|^2 \circ \Phi_{q,i} > 0 \, . 
\end{equation}
Due to the fact that $\Phi_{q,i}$ is only coherent on a timescale determined by the local Lipschitz norm of $\nabla \hat u_q$, we must ``turn off'' the $\BB_{i,R}$ bundle after some time has passed; in other words, $\Dtq(a_i^2)<0$ in some region of space.  But since the $a_i$'s form a partition of unity, that means that there exists some $i'$ such that $\Dtq(a_{i'}^2)>0$ in that region of space. Therefore this term cannot be expected to have the correct sign in general, and so $\kappa_q^q$ serves an essential role.

The intermittent pressure $\pi_q^q$ associated to $R_q^q$, however, is not solely generated at step at step $q$. Instead, its components have been progressively formed in preceding steps and carried over through inductive steps.
To understand the idea, let us consider the construction of a new intermittent pressure $\pi_{q+1}^{q+\bn}$, which guarantees the positive-definiteness of  $\pi_{q+1}^{q+\bn}\Id - R_{q+1}^{q+\bn}$. Following \cite{NV22}, a straightforward choice is $\pi_{q+1}^{q+\bn} =2|R_{q+1}^{q+\bn}|$. Incorporating this new intermittent pressure $\pi_{q+1}^{q+\bn}$ to the Euler-Reynold equations adds $-2|R_{q+1}^{q+\bn}|$ to a new pressure increment $p_{q+1}-p_q$, which subsequently generates a new error in the relaxed local energy inequality,
\begin{align*}
    -2(\pa_t + \hat{u}_q\cdot \na)|R_{q+1}^{q+\bn}|.
\end{align*}
We first remark that handling such an error term in $C^0$ schemes is relatively simple because the pressure increment can be chosen to be constant in space, as discussed ealier. In an intermittent scheme, however, this term is more complex due to the smearing of the frequency support of $R_{q+1}^{q+\bn}$. 
Indeed, it will contain quite a large range of active frequencies which contribute nontrivially to the amplitude of $|R_{q+1}^{q+\bn}|$. While we shall give a heuristic explanation of the active frequencies in $|R_{q+1}^{q+\bn}|$ below, we emphasize that any low frequency portions will likely have to be added in previous steps and carried inductively until they are needed. 

To this end, we analyze $|R_{q+1}^{q+\bn}|$. For simplicity, imagine that $R_{q+1}^{q+\bn}$ only contains the nonlinear error term $\div^{-1}\tP_{\approx\la_{q+\bn}} \div(w_{q+1} \otimes  w_{q+1})$,
and set $ w_{q+1} = (R_{q}^{q})^{\sfrac 12}\BB_{q+\bn}$ where $\mathbb{B}_{q+\bn}$ is a scalar-valued, $L^2$-normalized intermittent function with maximum frequency $\la_{q+\bn}$.  With these simplifications, we have
\begin{equation}\label{simpli:compu}
 |R_{q+1}^{q+\bn}| \approx \big{|} \underbrace{\nabla R_{q}^{q}}_{\textnormal{freq. $\la_{q}$}} \underbrace{\div^{-1}\tP_{\approx\la_{q+\bn}} \mathbb{B}_{q+\bn}^2}_{\textnormal{freq. $\la_{q+\bn}$}} \big{|} =
 \underbrace{\left| \nabla R_{q}^{q} \right| \tP_{=0} \left|\div^{-1}\tP_{\approx\la_{q+\bn}} \mathbb{B}_{q+\bn}^2 \right|}_{=\tP_{\les \la_{q}}|R_{q+1}^{q+\bn}|} + \underbrace{\left| \nabla R_{q}^{q} \right| \tP_{\neq 0} \left|\div^{-1}\tP_{\approx\la_{q+\bn}} \mathbb{B}_{q+\bn}^2 \right|}_{=\tP_{ [\la_{q+\half}, \la_{q+\bn}]}|R_{q+1}^{q+\bn}|}\, .
\end{equation}
As we see, the leading order portion of $R_{q+1}^{q+\bn}$ is composed of both low and high frequency components, but taking the magnitude of the high frequency part generates a low frequency component. Decomposing the magnitude further, we notice that the last term of \eqref{simpli:compu} exhibits a similar structure with the stress errors arising at step $q+1$, and its material derivative can be absorbed to new current errors.
The second-to-last term of \eqref{simpli:compu}, on the other hand, is essentially a rescaled version of $|R_{q}^{q}|$, which is 
therefore \emph{inductive} in nature. 
Repeating the analysis above under the simplification $R_{q'}^{q'} \approx \na R_{q'-\bn}^{q'-\bn} \div^{-1}\tP_{\approx \la_{q'}}\BB_{q'}^2$, we obtain a wavelet decomposition of $|R_{q+1}^{q+\bn}|$ as 
\begin{align*}
    |R_{q+1}^{q+\bn}|
    \approx
    \tP_{ [\la_{q+\half}, \la_{q+\bn}]}|R_{q+1}^{q+\bn}|
    + \frac{\la_{q}}{\la_{q+\bn}} 
    \tP_{ [\la_{q-\frac{\bn}2},\la_{q}]}|R_{q}^{q}| 
    +  \frac{\la_{q-\bn}}{\la_q} 
    \tP_{ [\la_{q-\frac32\bn},\la_{q-\bn]}}|R_{q-\bn}^{q-\bn}| + \cdots \, ,
    \end{align*}
where we terminate the decomposition after finitely many steps, at which the last term of the sum is no longer intermittent and bounded uniformly in space. 
Since the term $\tP_{ [\la_{q+\half}, \la_{q+\bn}]}|R_{q+1}^{q+\bn}|$ is handled at step $q+1$, the pressure $\tP_{ [\la_{q'-\half}, \la_{q'}]}|R_{q'}^{q'}|$ must be added at the same step that $R_{q'}^{q'}$ is created. We then summarize that when we produce a stress error of the form $R_{q'}^{q'}$, we not only add the pressure $|\na R_{q'-\bn}^{q'-\bn}|\tP_{\neq 0}|\div^{-1} \tP_{\approx \la_{q'}}\BB_{q'}^2|$ at the same step of iteration, but also add rescaled\footnote{In practice, due to the presence of other errors in $R_q^q$, the rescaling factor will be raised to the power of $2\be$; see~\eqref{eq:ind.pr.anticipated}.} copies that appear in wavelet decomposition of future stress errors. We refer to these rescaled pressure terms needed for future wavelet decompositions as \emph{anticipated pressure}; see the discussion in subsection~\ref{sec:cutoff:stress}. In general, the collection of all pieces of pressures that are constructed for $R_{q+1}^k$ as above at multiple steps
forms a \emph{intermittent pressure} $\pi_{q+1}^k$.

Finally, we note that at a technical level, we will construct intermittent pressure $\pi_q^k$ associated to $R_q^k$ so that $\pi_q^k \la_k^N$ dominates $|\nabla_x^N R_q^k|$ for $N\rightarrow \infty$ as $\beta\rightarrow \sfrac 13$.  In fact, we will construct $\pi_q^k$ to dominate material derivatives of $R_q^k$, spatial and material derivatives of $\varphi_q^k$ and $ \nabla \hat u_k$, and even spatial and material derivatives of \emph{itself}.  We refer to these bounds as \emph{pointwise bounds}, and the net effect is that in fact almost every estimate in this manuscript can be reduced to a pointwise bound in terms of an intermittent pressure $\pi_q^{k}$, save for the $L^{\sfrac 32}$ and $L^\infty$ norms of $\pi_q^{k}$ itself.  The fact that the pressure plays such a fundamental role in every estimate of this paper contrasts very sharply with every convex integration scheme for fluid equations to date, in which the pressure plays a more passive role.

\subsection{Organization}

Lastly, we provide a brief summary of the contents covered in the subsequent sections. Section \ref{sec:ind} lists the inductive hypotheses, introduces the inductive statement in Proposition~\ref{prop:main}, and presents the proof of Theorem \ref{thm:main} assuming Proposition~\ref{prop:main}. 

The proof of Proposition~\ref{prop:main} is then contained in sections~\ref{stuff:from:part:1}--\ref{sec:intermittent:pressure}.  In particular, section~\ref{stuff:from:part:1} first sets up the inductive proof by defining the intermittent Mikado bundles and cutoff functions needed for the definition of the velocity increment, before defining and estimating the velocity increment and its potentials.  In addition, section~\ref{stuff:from:part:1} defines and estimates most of the new Reynolds stress errors, the transport and current Nash errors (which are closely related to the new Reynolds stress errors), the associated pressure increments, and the mollification errors.  The proofs of the assertions in this section are contained in \cite{GKN23}, as they bear some similarities to the constructions in \cite{BMNV21} and \cite{NV22}.  Next, section~\ref{sec:dodging} introduces the new dodging techniques for intermittent Mikado bundles, which form a crucial component of the wavelet-based scheme. Section~\ref{sec:RLE:errors} then considers the main new components of the current error generated by adding the new velocity increment $w_{q+1}$ to the relaxed local energy inequality. We define and estimate both the current errors and associated pressure increments in this section.  The final substantive portion of the proof of Proposition~\ref{prop:main} is contained in section~\ref{sec:intermittent:pressure}.  Here we gather all pressure increments constructed in the previous sections and define a new pressure increment and a new intermittent pressure, which are shown to satisfy the proper inductive assumptions.  We then finalize the definition of all stress and current errors to conclude the proof of the inductive proposition.

Finally, section~\ref{sec:params} specifies parameter choices and delineates useful inequalities resulting from these choices.  Then in Appendix~\ref{sec:app}, we collect the abstract technical components needed throughout the paper.

\subsection*{Acknowledgements} 
The authors acknowledge the hospitality and working environment at the Institute for Advanced Study during the special year on the h-principle, when they first started working on this project. VG was supported by the NSF under Grants DMS-FRG-1854344 and DMS-1946175 while at Princeton University. HK was supported by the NSF under Grant DMS-1926686 while a member at the IAS and would like to extend gratitude to the employer, the Forschungsinstitut f\"ur Mathematik (FIM) at ETH Z\"urich. MN was supported by the NSF under Grant DMS-1926686 while a member at the IAS.  The authors thank Camillo De Lellis and Vlad Vicol for their commentary on drafts of this manuscript.

\section{Inductive proposition and proof of the main theorem}\label{sec:ind}
The main goal of this section is to state the inductive proposition and prove the main theorem; this is the content of subsection~\ref{ss:friday}.  However, we must first introduce a number of notations in subsection~\ref{sec:not.general}, before listing the inductive assumptions in subsections~\ref{ss:relaxed}--\ref{sec:inductive:secondary:velocity}. Specifically, subsection~\ref{ss:relaxed} introduces the relaxed equations, subsection~\ref{sec:cutoff:inductive} introduces the inductive cutoff functions, subsection~\ref{sec:pi:inductive} introduces the intermittent pressure, subsection~\ref{ss:dodging} introduces the inductive ``dodging'' hypotheses, and subsection~\ref{sec:inductive:secondary:velocity} introduces the inductive velocity bounds.  Throughout subsections~\ref{ss:relaxed}--\ref{sec:inductive:secondary:velocity}, we identify precisely where in the manuscript each inductive assumption is proven.  We shall assume that all inductive assumptions at the $q^{\textnormal{th}}$ step hold on the domain $[-\tau_{q-1},T+\tau_{q-1}]\times \T^3$.

\subsection{General notations and parameters}\label{sec:not.general}

We start by introducing the primary parameters 
$$ \beta\, , \bn\, , b\, , \la_q\, , \delta_q\, , r_q \, , \Ga_q  \, , \varepsilon_\Gamma $$
which appear in the inductive hypotheses. First, we choose an $L^3$-based regularity index $\beta\in[\sfrac17,\sfrac 13)$. Next, we fix $\bn\in 6\N$ (used to describe active frequencies in certain inductive objects; see \eqref{eqn:ER:intro:new} and the subsequent discussion) such that\index{$\beta$} \index{$\bn$}
\begin{equation}
        \beta < \frac 13 \cdot \frac{\sfrac \bn 3}{\sfrac \bn 3 + 2} - \frac{2}{\sfrac \bn 3 +2} \, , \qquad \beta< \frac {2}{3} \cdot \frac{\sfrac{\bn}{2}-1}{\bn} \, . \label{eq:choice:of:bn}
    \end{equation}
Next, we choose $b\in (1,\sfrac{25}{24})$ close to $1$ such that
 \begin{subequations}\label{ineq:b}
    \begin{align}
        \beta < \frac{1}{3b^{\bn}} \cdot \frac{1+b+\dots+b^{\sfrac \bn 3 -1}}{1+b+\dots+b^{\sfrac \bn 3 +1}}& - \frac{2\left(1+(b-1)(1+\cdots+b^{\sfrac \bn 2 -1})^2\right)}{1+b+\dots+b^{\sfrac \bn 3 +1}} \, , \quad  \frac{2}{3b^{\sfrac{\bn}{2}}} \cdot \frac{1+\dots+b^{\sfrac \bn 2 - 2}}{1+\dots+b^{\bn-1}} \label{ineq:b:first} \\
        b^{\bn} < 2& \, , \qquad \frac{(b^{\sfrac \bn 2 - 1}+\dots+b+1)^2}{b^{\sfrac \bn 2 -1}+\dots+b+1} (b-1) < (b-1)^{\sfrac 12} \, . \label{ineq:b:second}
    \end{align}
    \end{subequations}
We now define the frequency parameter $\la_q$, the amplitude parameter $\de_q$, the intermittency parameter $r_q$, and the multi-purpose parameter $\Gamma_q$ by \index{$\la_q$} \index{$\de_q$} \index{$r_q$} \index{$\Ga_q$}
\begin{align}
    &\la_q = 2^{\left \lceil (b^q) \log_2 a \right \rceil}\approx a^{(b^q)} \, , \qquad  \de_q = \la_q^{-2\beta} \, ,
    \label{eq:def:la:de}\\
    r_q = \frac{\la_{q+\half}\Ga_q}{\la_\qbn}&\, , \qquad
    \Ga_q = 2^{\left \lceil \varepsilon_\Gamma \log_2 \left( \frac{\la_{q+1}}{\la_q} \right) \right \rceil}  
    \approx \left( \frac{\la_{q+1}}{\la_q} \right)^{\varepsilon_\Gamma} \approx \la_{q}^{(b-1)\varepsilon_\Gamma} \, .   \label{eq:deffy:of:gamma}
    \end{align}
The large positive integer $a$ and the small positive number $0<\varepsilon_\Gamma\ll (b-1)^2<1$ are defined in \eqref{i:choice:of:a} and \eqref{i:choice:ep} of subsection \ref{sec:para.q.ind}, respectively.  Note that the intermittency parameter $r_q$ is determined by the ``$\sfrac 12$ rule'' as in \cite{NV22}.

We now introduce further parameters
$$ \tau_q \, ,  \Lambda_q\, , \Tau_q\, , \badshaq\, . $$
As described in the introduction, we often decompose $u_q=\hat u_q + (u_q-\hat u_q)$.  One may imagine that the gradient of velocity $\nabla\hat u_{q'}$ will have spatial derivative cost $\approx\la_{q'}$ and $L^3$ norm $ \approx \tau_{q'}^{-1}\approx \de_{q'}^{\sfrac12} r_{q'-\bn}^{-\sfrac13}\la_{q'}$.  It will in fact be necessary to slightly adjust the definition of $\tau_q^{-1}$ using the parameter $\La_q$ (slightly larger than $\la_q$), which accounts for small spatial frequency losses due to mollification, and introduce a parameter $\Tau_q^{-1}$ (much larger than $\tau_q^{-1})$, which accounts for temporal frequency losses due to mollification.  We set
\begin{align}\label{eq:defn:tau}
\la_q < \La_q = {\la_q \Ga_q^{10} } \, , \qquad 
\tau_q^{-1} &= \delta_q^{\sfrac 12} \la_q r_{q-\bn}^{-\sfrac 13} \Ga_q^{35} \ll \Tau_q^{-1} \, ,
\end{align}
and refer to \eqref{v:global:par:ineq} for the precise definition of $\Tau_q$. For the $L^\infty$ norm of $R_q^q$ (and other inductive objects), we use the parameter $\badshaq$, which will satisfy (we refer to \eqref{eq:badshaq:choice} for the precise choice of $\badshaq$)
 \begin{align*}
    \la_q^{\frac 1{\bn}}\les\Ga_q^{\badshaq} \les \la_q^{\frac{12}{\bn}}\, . 
\end{align*}
 
Finally, we will inductively propagate spatial and material derivative estimates, where we use the notation and paramaters
\begin{align*}
    D_{t,q} = \pa_t  + (\hat u_q\cdot \na)\, , \qquad \NcutSmall\, , \Nindt\, , \Nind\, , \Nfin \, .
\end{align*}
The integers $\mathsf{N}_{\bullet}$ above quantify the number of spatial and material derivative estimates propagated inductively and satisfy the ordering (see subsection~\ref{sec:para.q.ind} for the precise choices)
\begin{align*}
1\ll \NcutSmall\ll \Nindt \ll  \Nind \ll \Nfin\, . 
\end{align*}
In particular, $\Nindt$ helps us keep track of both sharp and lossy material derivative estimates.  For this purpose, we use the following notation, which roughly says that ``the first $N_*$ material derivatives cost $\tau^{-1}$, while additional derivatives cost $\Tau^{-1}$.'' We also list a few other notations in the subsequent two remarks.\index{$\MM{n,N_*,\tau^{-1},\Tau^{-1}}$}
\begin{remark}[\bf Geometric upper bounds with two bases]\label{not:M}
    For all $n\geq 0$, we define
    \[\MM{n,N_*,\tau^{-1},\Tau^{-1}} := \tau^{-\min\{n,N_*\}} \Tau^{-\max\{n-N_*,0\}}\,.\]
\end{remark}

\begin{remark}[\bf Space-time norms]\label{rem:notation:space:time}
In the remainder of the paper, we shall always measure objects using uniform-in-time norms $\sup_{t\in[T_1,T_2]}\|\cdot (t)\|$, where $\| \cdot (t)\|$ is any of a variety of norms used to measure functions defined on $\T^3\times[T_1,T_2]$ but restricted to time $t$. In a slight abuse of notation, we shall always abbreviate these space-time norms with simply $\| \cdot \|$.
\end{remark}

\begin{remark}[\bf Space-time balls]\label{rem:notation:space:time:balls}
For any set $\Omega\subseteq \mathbb{T}^3\times \mathbb{R}$, we shall use the notations
\begin{subequations}\label{eq:space:time:balls}
\begin{align}
B(\Omega,\lambda^{-1}) &:= \left\{ (x,t) \, : \, \exists \, (x_0,t) \in \Omega \textnormal{ with } |x-x_0| \leq \lambda^{-1} \right\}\\
B(\Omega,\lambda^{-1},\tau) &:= \left\{ (x,t) \, : \, \exists \, (x_0,t_0) \in \Omega \textnormal{ with } |x-x_0| \leq \lambda^{-1} \, , |t-t_0| \leq \tau \right\}
\end{align}
\end{subequations}
for space and space-time neighborhoods of $\Omega$ of radius $\lambda^{-1}$ in space and $\tau$ in time, respectively. 
\end{remark}

\subsection{Relaxed equations}\label{ss:relaxed}

We assume that there exists a given $q$-independent continuous function $E=E(t,x)\geq 0$ such that the approximate solution $(u_q, p_q, R_q, \ph_q, -\pi_q)$ at the $q^{\textnormal{th}}$ step satisfies the Euler-Reynolds system\index{Euler-Reynolds system}
\begin{align}\label{eqn:ER}
\begin{cases}
\partial_t u_q + \div (u_q \otimes u_q) + {\nabla p_q} = \div(R_q -\pi_q \Id) \\
\div\, u_q = 0
\end{cases}
\end{align}
and the relaxed local energy identity\index{relaxed local energy identity} with dissipation measure $E$\footnote{{The dissipation measure $E$ is the Duchon-Robert measure of the limiting solution.}}
\begin{align}\label{ineq:relaxed.LEI}
\pa_t \left( \frac 12 |u_q|^2 \right)
    + \div\left( \left(\frac 12 |u_q|^2 + p_q\right) u_q\right)
    = (\pa_t + \hat u_q \cdot \na ) \ka_q  
    + \div((R_q-\pi_q\Id) \hat u_q) + \div \ph_q - E\, .
\end{align}
In the above equation, we have set $\ka_q = \sfrac{\tr(R_q -\pi_q \Id)}2$, and we use the decomposition and notations\index{$\hat u_q$}\index{$\hat w_q$}
\begin{equation}\label{eq:hat:no:hat}
u_q = \underbrace{\hat u_{q-1} + \hat w_q}_{=: \hat u_q} + \hat w_{q+1} + \dots + \hat w_{q+\bn-1} =: \hat u_{q+\bn-1} \, 
\end{equation}
for the velocity field.  The stress error $R_q$ has a decomposition
\begin{align}
R_q &= \sum_{k=q}^{q+\bn-1} R_q^k \, . \label{eq:ER:decomp:basic} 
\end{align}\index{$R_q$}
where each $R_q^k$ is a symmetric matrix. The pressure $\pi_q$ has a decomposition
\begin{align}
\pi_q &= \sum_{k=q}^{{\infty}} \pi_q^k \, . \label{eq:pi:decomp:basic}
\end{align}\index{$\pi_q$}
Similarly, the current error $\varphi_q$ has a decomposition
\begin{align}
\ph_q &= \sum_{k=q}^{q+\bn-1} \ph_q^k \,. \label{eq:LEI:decomp:basic} 
\end{align}\index{$\ph_q$}

The Reynolds stress $R_q$ and current error $\varphi_q$ defined above will have frequency support in modes no larger than $\lambda_{q+\bn-1}$ (effectively speaking). We correct the errors which live at frequencies no higher than $\lambda_q$, denoted by $R^q_q$ and $\varphi_q^q$, respectively. More generally, we denote the portions of $R_q$ and $\varphi_q$ with spatial derivative cost $\lambda_{k}$ by $R_q^k$ and $\varphi_q^k$, respectively.  We verify the inductive assumptions in this section in Lemma~\ref{lem:relaxed:new}.

\subsection{Inductive assumptions for velocity cutoff functions}
\label{sec:cutoff:inductive}
Given the intermittent nature of the velocity vector field $\hat u_{q'}$, the cost of its associated material derivative $D_{t,q'}$ can vary significantly across different level sets of the gradient of velocity. To address this issue, we introduce velocity cutoff functions $\psi_{i,q'}$, which are defined inductively. The cutoffs $\psi_{i,q'}$\index{$\psi_{i,q}$} partition the domain into distinct level sets of $\nabla \hat u_{q'}$, where $|\nabla \hat u_{q'}|\approx \tau_{q'}^{-1} \Ga_{q'}^{i}$ on $\supp \psi_{i,q'}$. We first record here the key properties, including a partition of unity property, maximum value $\imax$ needed to index the cutoffs, derivative estimates, and Lebesgue norms. The local $L^\infty$ estimates for velocity increment $\hat w_{q'}$ and velocity $\hat u_{q'}$, obtained as a consequence of the definition of $\psi_{i,q'}$, can be found in subsection \ref{sec:inductive:secondary:velocity}. The concrete construction of $\psi_{i,q+\bn}$ of \eqref{eq:inductive:partition}--\eqref{eq:inductive:timescales} for $q\mapsto q+1$ (i.e., $q'=q+\bn$) is contained in \cite[section~9]{GKN23}, and the verification of the inductive assumptions here is contained as well in \cite[section~9]{GKN23} and recalled in Proposition~\ref{prop:verified:vel:cutoff}.  Finally, we note that all assumptions in subsection~\ref{sec:cutoff:inductive} are assumed to hold for all $q-1\leq q'\leq q+\bn-1$.  

First, we assume that the velocity cutoff functions\index{velocity cutoffs} form a partition of unity:\index{$i$ and $\imax$}
\begin{align}\label{eq:inductive:partition}
    \sum_{i\geq 0} \psi_{i,q'}^6 \equiv 1, \qquad \mbox{and} \qquad \psi_{i,q'}\psi_{i',q'}=0 \quad \textnormal{for}\quad|i-i'| \geq 2 \, .
\end{align}
Second, we assume that there exists an $\imax = \imax(q') \geq 0$, bounded uniformly in $q'$, such that
\begin{align}
\imax(q') &\leq \frac{\badshaq+12}{(b-1)\varepsilon_\Ga} \, ,
\label{eq:imax:upper:lower} \\
\psi_{i,q'} \equiv 0 \quad \mbox{for all} \quad i > \imax(q')\,,
\qquad &\mbox{and} \qquad
\Gamma_{q'}^{\imax(q')} \leq 
\Ga_{q'-\bn}^{\sfrac{\badshaq}{2}+18}
 \delta_{q'}^{-\sfrac 12}r_{q'-\bn}^{-\sfrac {2}3} \, .
\label{eq:imax:old}
\end{align}
For all $0 \leq i \leq \imax$, we assume the following pointwise derivative bounds for the cutoff functions $\psi_{i,q'}$. First, for mixed space and material derivatives and multi-indices $\alpha,\beta \in {\mathbb N}^k$, $k \geq 0$, $0 \leq |\alpha| + |\beta| \leq \Nfin$, we assume that
\begin{align}
&\frac{{\bf 1}_{\supp \psi_{i,q'}}}{\psi_{i,q'}^{1- (K+M)/\Nfin}} \left|\left(\prod_{l=1}^k D^{\alpha_l} D_{t,q'-1}^{\beta_l}\right) \psi_{i,q'}\right| \leq \Gamma_{q'} (\Gamma_{q'}  \lambda_{q'})^{|\alpha|} 
\MM{|\beta|,\NindSmall - \NcutSmall,  \Gamma_{q'-1}^{i+3}  \tau_{q'-1}^{-1}, \Gamma_{q'-1} \Tau_{q'-1}^{-1}} \, .
\label{eq:sharp:Dt:psi:i:q:old}
\end{align}
Next, with $\alpha, \beta,k$ as above, $N\geq 0$ and $D_{q'}:=\hat w_{q'}\cdot\nabla$, we assume that
\begin{align}
&\frac{{\bf 1}_{\supp \psi_{i,q'}}}{\psi_{i,q'}^{1- (N+K+M)/\Nfin}} \left| D^N \left( \prod_{l=1}^k D_{q'}^{\alpha_l} D_{t,q'-1}^{\beta_l}\right)  \psi_{i,q'} \right| \notag\\
&\qquad \leq \Gamma_{q'} ( \Gamma_{q'}  \lambda_{q'})^N
(\Gamma_{q'}^{i-5} \tau_{q'}^{-1})^{|\alpha|}
\MM{|\beta|,\Nindt-\NcutSmall,  \Gamma_{q'-1}^{i+3}  \tau_{q'-1}^{-1}, \Gamma_{q'-1}  \Tau_{q'-1}^{-1}}
\label{eq:sharp:Dt:psi:i:q:mixed:old}
\end{align}
for $0 \leq N+ |\alpha| + |\beta| \leq \Nfin$. Moreover, for $0\leq i \leq \imax(q')$, we assume the $L^1$ bound\index{$\CLebesgue$}
\begin{align}
\norm{\psi_{i,q'}}_{1}  \leq \Gamma_{q'}^{-3i+\CLebesgue} \qquad \mbox{where} \qquad \CLebesgue = \frac{6+b}{b-1} \, .
\label{eq:psi:i:q:support:old}
\end{align}
Lastly, we assume that local timescales dictated by velocity cutoffs at a fixed point in space-time are decreasing in $q$. More precisely, for all $q' \leq q+\bn-1$ and all $q''\leq q'-1$, we assume 
\begin{equation}
    \psi_{i',q'}  \psi_{i'',q''} \not \equiv 0 \quad \implies \quad 
    \tau_{q'} \Gamma_{q'}^{-i'} \leq \tau_{q''} \Gamma_{q''}^{-i'' -25}   \, . \label{eq:inductive:timescales}
\end{equation}
This will be useful when we upgrade material derivatives from $D_{t,q''}$ to $D_{t,q'}$.

\subsection{Inductive bounds on the intermittent pressure}\label{sec:pi:inductive}

The intermittent pressure $\pi_q$ is designed to give pointwise control on errors and velocity increments. Towards this end, we shall use the phrase ``pointwise estimates''\index{pointwise estimates} to refer to such bounds on stress errors, current errors, or velocities in terms of various $\pi$'s. We introduce estimates for $\pi_q$ in subsection~\ref{sec:pik:inductive} and establish precise relations between the intermittent pressure and errors/velocity increments in subsection~\ref{sec:pik:maj}. (The $L^p$ estimates of the errors will follow consequently.) Furthermore, as we discussed in the introduction, the intermittent pressure is constructed to anticipate the low-frequency parts of future pressure increments. We record the relevant properties in subsection~\ref{sec:pik:ant}. All inductive assumptions appearing in subsection~\ref{sec:pi:inductive} will be verified for $q\mapsto q+1$ in Section~\ref{sec:intermittent:pressure}. 

\subsubsection{\texorpdfstring{$L^{\sfrac 32}$}{tpdfs1}, \texorpdfstring{$L^\infty$}{tpdfs2}, and pointwise bounds for \texorpdfstring{$\pi_q^k$}{tpdfs3}} \label{sec:pik:inductive}

We assume that for $q\leq k \leq q+\bn-1$ and $N+M\leq 2\Nind$, $\pi_q^k$ satisfies 
\begin{subequations}\label{eq:pressure:inductive}
\begin{align}
\norm{ \psi_{i,k-1} D^N D_{t,k-1}^M  \pi_q^k }_{\sfrac 32}  
 &\leq \Ga_q\Ga_k \delta_{k+\bn} \Lambda_k^N \MM{M, \NindRt, \Gamma_{k-1}^{i} \tau_{k-1}^{-1} ,  \Tau_{k-1}^{-1} } \, .
\label{eq:pressure:inductive:dtq} \\
\norm{ \psi_{i,k-1} D^N D_{t,k-1}^M \pi_q^k }_{\infty}  
 &\leq \Ga_q {\Gamma_k^{\badshaq+1}} \Lambda_k^N \MM{M, \NindRt, \Gamma_{k-1}^{i} \tau_{k-1}^{-1} ,  \Tau_{k-1}^{-1} } \, , \label{eq:pressure:inductive:dtq:uniform} \\
 \label{eq:ind:pi:by:pi}
    \left|\psi_{i,k-1} D^N D_{t,k-1}^M \pi_q^{k}\right| &\leq \Ga_q {\Gamma_k}\pi_q^k  \Lambda_k^N \MM{M, \NindRt, \Gamma_{k-1}^{i} \tau_{k-1}^{-1} , \Tau_{k-1}^{-1} } \, .
\end{align}
\end{subequations}
For $q+\bn\leq k \leq q+\Npr-1$ and $N+M\leq 2\Nind$, we assume that $\pi_q^k$ satisfies
\begin{subequations}\label{eq:pressure:inductive:largek}
\begin{align}
\norm{ \psi_{i,q{+\bn-1}} D^N D_{t,q{+\bn-1}}^M  \pi_q^k }_{\sfrac 32}  
 &\leq \Ga_q \Ga_{{k}} \delta_{k+\bn} \Lambda_{q+\bn-1}^N \MM{M, \NindRt, \Gamma_{{q+\bn-1}}^{i} \tau_{{q+\bn-1}}^{-1} ,  \Tau_{q+\bn-1}^{-1} }
\label{eq:pressure:inductive:dtq:largek} \\
\norm{ \psi_{i,q{+\bn-1}} D^N D_{t,q{+\bn-1}}^M \pi_q^k }_{\infty}  
 &\leq \Ga_q \Gamma_{{q+\bn-1}}^{\badshaq+1} \Lambda_{q+\bn-1}^N \MM{M, \NindRt, \Gamma_{q+\bn-1}^{i} \tau_{q+\bn-1}^{-1} ,  \Tau_{q+\bn-1}^{-1} } \, , \label{eq:pressure:inductive:dtq:uniform:largek} \\
 \label{eq:ind:pi:by:pi:largek}
    \left|\psi_{i,q+\bn-1} D^N D_{t,q+\bn-1}^M \pi_q^{k}\right| &\leq {\Gamma_q}\pi_q^k  \Lambda_{q+\bn-1}^N \MM{M, \NindRt, \Gamma_{q+\bn-1}^{i} \tau_{q+\bn-1}^{-1} ,  \Tau_{q+\bn-1}^{-1} } \, .
\end{align}
\end{subequations}
The bounds in \eqref{eq:pressure:inductive} and \eqref{eq:pressure:inductive:largek} are verfied in Lemma~\ref{lem:verify.ind.pressure1}.

\subsubsection{Lower and upper bounds for \texorpdfstring{$\pi_q^k$}{tpdfs4}}\label{sec:pik:ant}
For $ k\geq q$, we assume that $\pi_q^{k}$ has the lower bound
\begin{align}\label{low.bdd.pi}
    \pi_q^{k} \geq \de_{k+\bn} \, .
\end{align} 
For all $q+\bn-1 \leq k' < k \leq q+\Npr-1$ (see \eqref{defn:Npr} for the definition of $\Npr$\index{$\Npr$}), we assume that $\pi_q^k$ has the upper bound
\begin{align}\label{ind:pi:upper}
    \pi_q^k \leq \pi_q^{k'} \, .
\end{align}
For all $k\geq q+\Npr$, we assume that
\begin{equation}\label{defn:pikq.large.k}
    \pi_q^k \equiv \Ga_k \de_{k+\bn} \, .
\end{equation}
We finally assume that for all $q\leq q' < q''<\infty$,
\begin{subequations}\label{eq:ind.pr.anticipated}
\begin{align}
    \frac{\de_{q''+\bn}}{\de_{q'+\bn}} \pi_q^{q'} &< 2^{q'-q''}\pi_q^{q''} \,, \qquad \text{if}\, q+\half \leq q'' \label{eq:ind.pr.anticipated.1}\\
    \frac{\de_{q''+\bn}}{\de_{q'+\bn}} \pi_q^{q'} &< \pi_q^{q''} \,,\qquad \text{otherwise} \, . \label{eq:ind.pr.anticipated.2}
\end{align}
\end{subequations}
This final bound says that the $\pi_q^k$'s obey a scaling law which may be roughly translated as ``any $\pi_q^{k+m}$ for $m>0$ can be bounded from below by an appropriately rescaled $\pi_q^k$.''  All the bounds in this subsubsection are verified in Lemma~\ref{lem:lower:upper}.

\subsubsection{Pointwise bounds for errors, velocities, and velocity cutoffs}\label{sec:pik:maj}
We assume that we have the pointwise estimates
\begin{subequations}\label{eq:inductive:pointwise}
\begin{align}
    \label{eq:ind:stress:by:pi}
    \left|\psi_{i,k-1} D^N D_{t,k-1}^M R^k_q\right| &< \Ga_q \Ga_k^{-8} \pi^k_q \Lambda_k^N \MM{M, \NindRt, \Gamma_{k-1}^{i  +20} \tau_{k-1}^{-1} ,  \Tau_{k-1}^{-1}\Ga_{k-1}^{10} } \, ,\\
    \label{eq:ind:current:by:pi}
    \left|\psi_{i,k-1} D^N D_{t,k-1}^M 
    {\ph_q^k}\right|
    &< \Ga_q \Ga_k^{-12}(\pi^k_q)^\frac32 r_k^{-1} \Lambda_k^N \MM{M, \NindRt, \Gamma_{k-1}^{i  +20} \tau_{k-1}^{-1} ,  \Tau_{k-1}^{-1} \Ga_{k-1}^{10} } \, , \\
    \label{eq:ind:velocity:by:pi}
    \left|\psi_{i,k-1} D^N D_{t,k-1}^M \hat w_k \right| &< \Ga_q r_{k-\bn}^{-1} (\pi_q^k)^{\sfrac12} {\Lambda}_k^{N} \MM{M, \NindRt, \Gamma_{k-1}^{i} \tau_{k-1}^{-1} ,  \Tau_{k-1}^{-1} {\Ga_{k-1}^2}}
\end{align}
\end{subequations}
for $q\leq k\leq q+\bn-1$, where the first bound holds for $N+M \leq 2\Nind$, the second bound holds for $N+M\leq \sfrac{\Nind}{4} $, and the third bound {holds for $N+M\leq \sfrac{3\Nfin}{2}$}. The first and second bounds above are verified in Lemma~\ref{lem:verify.ind.pressure2}, where we also verify the bound given below in \eqref{eq:Rnnl:inductive:dtq}. The proof of \eqref{eq:ind:velocity:by:pi} is given in Lemma~\ref{prop:velocity:domination}, where we also verify the bound given below in \eqref{eq:psi:q:q'}.

\begin{remark}[\bf $L^p$ estimates on Reynolds errors from pointwise estimates]\label{sec:stress:inductive}
The estimates on $R^k_q$ in \eqref{eq:ind:stress:by:pi} and the estimates on $\pi_q^k$ in \eqref{eq:pressure:inductive} imply that for $q\leq k \leq q+\bn-1$ and $N+M\leq 2\Nind$, $R_q^k$ satisfies
\begin{subequations}\label{eq:Rn:inductive}
\begin{align}
\norm{ \psi_{i,k-1} D^N D_{t,k-1}^M  R_q^k }_{\sfrac 32}  
 &\leq \Ga_q^2 \Ga_k^{-7} \delta_{k+\bn} \Lambda_k^N \MM{M, \NindRt, \Gamma_{k-1}^{i+20} \tau_{k-1}^{-1} ,  \Tau_{k-1}^{-1}\Gamma_q^{10} } \, ,
\label{eq:Rn:inductive:dtq} \\
\norm{ \psi_{i,k-1} D^N D_{t,k-1}^M R_q^k }_{\infty}  
 &\leq \Ga_q^2 \Ga_k^{-7} {\Gamma_k^{\badshaq}} \Lambda_k^N \MM{M, \NindRt, \Gamma_{k-1}^{i+20} \tau_{k-1}^{-1} ,  \Tau_{k-1}^{-1}\Gamma_q^{10} }\, . \label{eq:Rn:inductive:dtq:uniform}
\end{align}
\end{subequations}
\end{remark}

While the main $L^p$ estimates on the Reynolds stress follow from the pointwise estimates in terms of the pressure, we are forced to assume that $R_q^k$ has a decomposition $R_q^k = R_q^{k,l}+R_q^{k,*}$, where $R_q^{k,*}$ satisfies the stronger bound
\begin{align}
\norm{ D^N D_{t,k-1}^M  R_q^{k,*} }_{\infty}
\leq \Ga_q^2 \Tau_k^{2\Nindt}\delta_{k+2\bn} \Lambda_k^N \MM{M,\Nindt, \tau_{k-1}^{-1},\Tau_{k-1}^{-1}}
\label{eq:Rnnl:inductive:dtq} 
\end{align}
for all $N+M \leq 2\Nind$. The extra superscript $l$ stands for ``local,'' in the sense that $R_q^{k,l}$ is a stress error over which we maintain control of the spatial support (see Hypothesis~\ref{hyp:dodging4}), whereas $\ast$ refers to non-local terms which are negligibly small.  The reader can safely ignore such non-local error terms. 

Finally, we assume that for all $q\leq q'\leq q+\bn-1$,
\begin{align}\label{eq:psi:q:q'}
    \sum_{i=0}^{\imax} \psi_{i,q'}^2 \delta_{q'} r_{q'-\bn}^{-\sfrac23} \Gamma_{q'}^{2i} &\leq {2^{q-q'}} \Ga_{q'} {r_{q'-\bn}^{-2}} \pi_{q}^{q'} \, .
\end{align}
Using the heuristic that $|\nabla \hat u_{q'}| \mathbf{1}_{\supp \psi_{i,q'}} \approx \tau_{q'}^{-1} \Ga_{q'}^i$ and \eqref{eq:defn:tau}, this inequality says that $|\nabla \hat u_{q'}|^2 \leq r_{q'-\bn}^{-2} \la_{q'}^2 \pi_q^{q'}$.

\subsection{Dodging inductive hypotheses}\label{ss:dodging}
In this subsection, we list ``dodging''\index{dodging} inductive hypotheses, which encode the spatial support information which is crucial to our wavelet-inspired scheme. As discussed in the introduction, one component of this is dodging between velocity increments, which is detailed in Hypothesis~\ref{hyp:dodging1}. To construct a new velocity increment with such dodging, it is necessary to keep a record of the density of previous velocity increments, as stated in Hypothesis~\ref{hyp:dodging2}.  Stronger statements than Hypotheses~\ref{hyp:dodging1} and~\ref{hyp:dodging2} will be proved in Section~\ref{sec:dodging} (see Lemma~\ref{lem:dodging}).  As byproducts of the careful construction of the velocity increments, we also have Hypothesis~\ref{hyp:dodging4} and Hypothesis~\ref{hyp:dodging5}, which present dodging properties between velocity increments and stress errors/intermittent pressure. These bounds will be utilized later in the stress current estimate discussed in subsection~\ref{sec:stress:current}. Hypothesis~\ref{hyp:dodging4} for $q\mapsto q+1$ has been already verified in \cite[Lemma~8.16]{GKN23} and is recalled in Lemma~\ref{l:divergence:stress:upgrading}. Lastly, Hypothesis~\ref{hyp:dodging5} for $q\mapsto q+1$ will be verified in subsection~\ref{sec.pr.ind.verify}. We formulate these hypotheses using the notation from Remark~\ref{rem:notation:space:time:balls}.

\begin{hypothesis}[\bf Effective dodging]\label{hyp:dodging1}\index{effective dodging}
For $q',q''\leq q+\bn-1$ that satisfy $0<|q''-q'|\leq \bn-1$, we have that
\begin{equation}\label{eq:ind:dodging}
    B\left(\supp \hat w_{q'} , \lambda_{q'}^{-1}\Gamma_{q'+1} \right) \cap  B\left( \supp \hat w_{q''} , \lambda_{q''}^{-1}\Gamma_{q''+1} \right)= \emptyset \, .
\end{equation}
\end{hypothesis}

\begin{hypothesis}[\textbf{Density and direction of old pipe bundles}]\label{hyp:dodging2}
There exists a $q$-independent constant $\const_D$ such that the following holds.  Let $\bar q', \bar q''$ satisfy $q \leq \bar q'' < \bar q'\leq q+\bn-1$, and set\footnote{The reasoning behind the choice of $d(\bar q', \bar q'')$ is as follows. The set should be small enough that it can be contained in the support of a single $\bar q''$ velocity cutoff.  Since these functions oscillate at frequencies no larger than $\approx\lambda_{q''}$, the first number inside the minimum ensures that this is the case.  The set should also be no larger than the size of a periodic cell for pipes of thickness $\bar q'$, which is ensured by the second number inside the minimum.}
\begin{equation} \label{eq:diam:def}
d(\bar q', \bar q'') :=  \min\left[ (\lambda_{\bar q''}\Gamma_{\bar q''}^7)^{-1} , (\lambda_{\bar q' - \half} \Gamma_{\bar q' - \bn})^{-1} \right] \, .
\end{equation}
Let $t_0\in\R$ be any time and $\Omega\subset\T^3$ be a convex set of diameter at most $d(\bar q', \bar q'')$.  Let $i$ be such that $\Omega \times \{t_0\}\cap \supp \psi_{i,\bar q''} \neq \emptyset$. 
Let $\Phi_{\bar q''}$ be the flow map such that
\begin{align*}
\begin{cases}
\pa_t \Phi_{\bar q''} + \left(\hat u_{\bar q''} \cdot \na \right) \Phi_{\bar q''} = 0\\
\Phi_{\bar q''}(t_0,x) = x \, .
\end{cases}
\end{align*}
We define $\Omega(t)=\Phi_{\bar q''}(t)^{-1}(\Omega)$.\footnote{For any set $\Omega'\subset \T^3$, $\Phi_{\bar q''}(t)^{-1}(\Omega')=\{x\in \T^3: \Phi_{\bar q''}(t,x) \in \Omega'\}$. We shall also sometimes use the notation $\Omega\circ \Phi_{\bar q''}(t)$.} 
Then there exists a set\footnote{Heuristically this set is $\cup_t \suppp_x \hat w_{\bar q'}(\cdot,t) \cap \Omega(t)$, but in order to ensure that $(\partial_t+\hat u_{\bar q''} \cdot \nabla)\mathbf{1}_L\equiv 0$, $L$ does not include any ``time cutoffs" which turn pipes on and off.} $L=L(\bar q',\bar q'', \Omega, {t_0})\subseteq \T^3\times \R$ such that for all $t\in(t_0-\tau_{\bar q''}\Gamma_{\bar q''}^{-i+2},t_0+\tau_{\bar q''}\Gamma_{\bar q''}^{-i+2})$,
\begin{align}\label{eq:ind:dodging2}
    (\partial_t + \hat u_{\bar q''}\cdot\nabla) \mathbf{1}_{L}(t,\cdot) \equiv 0 \qquad \textnormal{and} \qquad \supp_x \hat w_{\bar q'}(x,t) \cap \Omega(t) \subseteq L \cap \{t\} \, .
\end{align}
Furthermore, there exists a finite family of continuously differentiable curves $\{\ell_{j,L}\}_{j=1}^{\const_D}$ of length at most $2d(\bar q', \bar q'')$ which satisfy 
\begin{equation}\label{eq:concentrazion}
    L \cap \{t=t_0\} \subseteq \bigcup_{j=1}^{\const_D} B\left( \ell_{j,L} , 3\la_{\bar q'}^{-1} \right) \, .
\end{equation}
Finally, there exist $q$-independent sets $\Xi_1,\Xi_1', \dots, \Xi_{\bn}, \Xi_{\bn}' \subset \mathbb{Q}^3\cap \mathbb{S}^2$ such that for all curves $\ell_{j,L}$ used to control the support of $\hat w_{\overline q'}$, the tangent vector to the curve $\ell_{j,L}$ belongs to a $\Gamma_{0}^{-1}$-neighborhood of a vector $\xi\in \Xi_{\overline{q}' \, \textnormal{mod} \, \bn} \cup \Xi'_{\overline{q}' \, \textnormal{mod} \, \bn}$.
\end{hypothesis} 
\begin{remark}[\bf Segments of deformed pipes of thickness $\la_{\bar q'}^{-1}$]\label{rem:deformed:pipes}
We will refer to a $3\la_{\bar q'}^{-1}$ neighborhood of a continuously differentiable curve of length at most $2(\la_{\bar q'-\sfrac{\bn}{2}}\Ga_{\bar q'-\bn})^{-1}$ as a ``segment of deformed pipe;'' see Definition~\ref{def:sunday:sunday}. Since $(\la_{\bar q'-\sfrac{\bn}{2}}\Ga_{\bar q'-\bn})^{-1}$ will be the scale to which our high-frequency pipes will be periodized, Hypothesis~\ref{hyp:dodging2} then asserts that at each step of the iteration, our algorithm can use at most a finite number of high-frequency pipe segments inside any single periodic cell.
\end{remark}

\begin{hypothesis}[\bf Stress dodging]\label{hyp:dodging4}
For all $k,q''$ such that $q\leq q'' \leq k-1$ and $q\leq k \leq q+\bn-1$, we assume that
\begin{equation}\label{eq:ind:stressdodging:equiv}
    B\left(\supp \hat w_{q''} , \lambda_{q''}^{-1}\Gamma_{q''+1} \right) \cap  \supp R_{q}^{k,l}= \emptyset \, .
\end{equation}
\end{hypothesis}

\begin{hypothesis}[\bf Pressure dodging]\label{hyp:dodging5}
We assume that for all $q<k\leq q+\bn-1$, $k\leq k'$, and $N+M\leq 2\Nind$,
\begin{subequations}
\begin{align}
\label{eq:pinl:inductive:dtq} 
    \left|\psi_{i,k-1} D^N D_{t,k-1}^M \left(\hat w_{k}\pi_q^{k'}\right)\right| &<
    \Ga_q 
    {\Ga_k^{-100}} \left(\pi_q^k\right)^{\sfrac32} r_k^{-1} \Lambda_k^{N} \MM{M, \NindRt, \Gamma_{k-1}^{i+\blue{1}} \tau_{k-1}^{-1} , \Gamma_{k}^{-1} \Tau_{k}^{-1} } \, .
\end{align}
\end{subequations}
\end{hypothesis}

\subsection{Inductive velocity bounds}
\label{sec:inductive:secondary:velocity}
In this section, we present inductive, local $L^\infty$-bounds for velocity increments and total velocity, which are derived from the construction of velocity cutoffs. Additionally, we introduce velocity increment potentials, which express velocity increments as the $\dpot^{\rm th}$ divergence of the velocity increment tensor potential, up to a small homogeneous error. This representation will be useful to deal with the pressure current error (see subsection~\ref{sec:pressure:current} for more details). All inductive assumptions in subsection~\ref{sec:inductive:secondary:velocity} except for \eqref{est.upsilon.ptwise} at $q\mapsto q+1$ have been verified in \cite[sections~9, 10]{GKN23} (see Proposition~\ref{prop:inductive:velocity:bdd:verified}), and we prove \eqref{est.upsilon.ptwise} for $q\mapsto q+1$ in Lemma~\ref{prop:velocity:domination}.

\subsubsection{Velocities and velocity increments}
In this subsection, we assume that $0\leq q'\leq q+\bn-1$.  First, for $0 \leq i \leq \imax$, $k\geq 1$, $\alpha, \beta \in \N^k$, we assume that
\begin{align}
&\norm{\left( \prod_{l=1}^k D^{\alpha_l} D_{t,q'-1}^{\beta_l} \right) \hat w_{q'} }_{L^\infty(\supp \psi_{i,q'})} \leq \Gamma_{q'}^{i+2}\de_{q'}^{\sfrac12} r_{q'-\bn}^{-\sfrac13} (\la_{q'}\Ga_{q'})^{|\alpha|} \MM{|\beta|,\Nindt, \Gamma_{{q'}}^{i+3}  \tau_{q'-1}^{-1},  \Gamma_{{q'-1}} \Tau_{q'-1}^{-1}}
\label{eq:nasty:D:wq:old}
\end{align}
for $|\alpha|+|\beta|  \leq {\sfrac{3\Nfin}{2}+1}$. We also assume that for $N\geq 0$,
\begin{subequations}\label{eq:nasty}
\begin{align}
&\norm{ D^N \Big( \prod_{l=1}^k D_{q'}^{\alpha_l} D_{t,q'-1}^{\beta_l} \Big) \hat w_{q'}}_{L^\infty(\supp \psi_{i,q'})} \notag\\
&\qquad \leq
(\Gamma_{{q'}}^{i+2}\delta_{q'
}^{\sfrac 12}r_{q'-\bn}^{-\sfrac 13})^{|\alpha|+1} (\la_{q'}\Ga_{q'})^{N+|\alpha|} \MM{|\beta|,\Nindt,  \Gamma_{q'}^{i+3}  \tau_{q'-1}^{-1},  \Gamma_{q'-1} \Tau_{q'-1}^{-1}}   \label{eq:nasty:Dt:wq:old} \\
&\qquad \leq
\Gamma_{q'}^{i+2} \delta_{q'}^{\sfrac 12} r_{q'-\bn}^{-\sfrac 13} (\lambda_{q'}\Ga_{q'})^N (\Gamma_{q'}^{i-5}  \tau_{q'}^{-1})^{|\alpha|}  \MM{|\beta|,\Nindt, \Gamma_{q'}^{i+3}  \tau_{q'-1}^{-1},  \Gamma_{q'-1} \Tau_{q'-1}^{-1}}
\label{eq:nasty:Dt:wq:WEAK:old}
\end{align}
\end{subequations}
whenever $N+|\alpha|+|\beta|\leq  {\sfrac{3\Nfin}{2}+1}$. Next, we assume
\begin{align}
&\norm{\left( \prod_{l=1}^k D^{\alpha_l} D_{t,q'}^{\beta_l} \right) D \hat u_{q'} }_{L^\infty(\supp \psi_{i,q'})}  \leq \tau_{q'}^{-1}\Ga_{q'}^{i-4} (\lambda_{q'}\Ga_{q'})^{|\alpha|} \MM{|\beta|,\Nindt,\Gamma_{q'}^{i-5} \tau_{q'}^{-1},   \Gamma_{q'-1} \Tau_{q'-1}^{-1}}
\label{eq:nasty:D:vq:old}
\end{align}
for $|\alpha|+|\beta| \leq {\sfrac{3\Nfin}{2}}$.  In addition, we assume the lossy bounds
\begin{subequations}\label{eq:bob:old}
\begin{align}
\norm{\left( \prod_{l=1}^k D^{\alpha_l} D_{t,q'}^{\beta_l} \right)  \hat u_{q'}}_{L^\infty(\supp \psi_{i,q'})} &\leq \tau_{q'}^{-1}\Ga_{q'}^{i+2} \la_{q'} (\lambda_{q'}\Ga_{q'})^{|\alpha|} \MM{|\beta|,\Nindt,\Gamma_{q'}^{i-5} \tau_{q'}^{-1},   \Gamma_{q'-1} \Tau_{q'-1}^{-1}}
\label{eq:bob:Dq':old} \\
\left\| D^{|\alpha|} \partial_t^{|\beta|} \hat{u}_{q'} \right\|_{L^\infty} & \leq \Lambda_{q'}^{\sfrac 12} \Lambda_q^{|\alpha|} \Tau_{q'}^{-|\beta|} \, , \label{eq:bobby:old}
\end{align}
\end{subequations}
hold, where the first bounds holds for $|\alpha|+|\beta| \leq \sfrac{3\Nfin}{2}+1$, and the second bound holds for $|\alpha|+|\beta|\leq 2\Nfin$.

\begin{remark}[\bf Upgrading material derivatives for velocity and velocity cutoffs]
\label{rem:D:t:q':orangutan}
We recall from \cite[Remark~2.9]{GKN23} that we have the bound
\begin{align}
&\norm{ D^N  D_{t,q'}^{M}  \hat w_{q'} }_{L^\infty(\supp \psi_{i,q'})} \les \Gamma_{q'}^{i+2} \delta_{q'}^{\sfrac 12} r_{q'-\bn}^{-\sfrac 13} (\la_{q'}\Ga_{q'})^N 
\MM{M,\Nindt, \Gamma_{q'}^{i-5}  \tau_{q'}^{-1},  \Gamma_{q'-1} \Tau_{q'-1}^{-1}}
\label{eq:nasty:Dt:uq:orangutan}
\end{align}
for all $N+M \leq {\sfrac{3\Nfin}{2}+1}$.
We also have
that for all $N+M \leq \Nfin$,
\begin{align}
\frac{{\bf 1}_{\supp \psi_{i,q'}}}{\psi_{i,q'}^{1- (N+M)/\Nfin}} \left| D^N  D_{t,q'}^{M}  \psi_{i,q'} \right| &\leq \Gamma_{q'} (\lambda_{q'}\Ga_{q'})^N
\MM{M,\Nindt-\NcutSmall, \Gamma_{q'}^{i-5} \tau_{q'}^{-1}, \Gamma_{q'-1}  \Tau_{q'-1}^{-1}} \notag\\
&< \Gamma_{q'} (\lambda_{q'}\Ga_{q'})^N \MM{M,\Nindt,\Gamma_{q'}^{i-4}\tau_{q'}^{-1},\Gamma_{q'-1}^2\Tau_{q'-1}^{-1}} \, .
\label{eq:nasty:Dt:psi:i:q:orangutan}
\end{align}
\end{remark}

\subsubsection{Velocity increment potentials}\label{sec:ind.vel.inc.pot}
We assume that for all $q-1 < q' \leq q+\bn -1$ and $\hat w_{q'}$ as in \eqref{eq:hat:no:hat}, there exists a velocity increment potential 
\index{velocity increment potential} $\hat\upsilon_{q'}$\index{$\hat\upsilon_{q'}$} and an error $\hat e_{q'}$ such that $\hat w_{q'}$ can be decomposed as\footnote{See \eqref{i:par:10} of subsection~\ref{sec:para.q.ind} for the definition of $\dpot$.}
\index{$\dpot$}
\begin{align}\label{exp.w.q'}
\hat w_{q'} = \div^{\dpot} \hat \upsilon_{q'} +\hat e_{q'}  \,, 
\end{align}
which written component-wise gives $
\hat w_{q'}^\bullet = 
\pa_{i_1}\cdots \pa_{i_\dpot} \hat\upsilon_{q'}^{(\bullet, i_1, \cdots, i_\dpot)} + \hat e_{q'}^\bullet
$. Next, we assume that $\hat\upsilon_{q'}$ and $\hat e_{q'}$ satisfy
\begin{align}\label{supp.upsilon.e.ind}
B\left(\supp(\hat w_{q''}), \frac14 \la_{q''}\Ga_{q''}^2\right) \cap \left(\supp(\hat \upsilon_{q'}) \cup \supp(\hat e_{q'})\right) = \emptyset 
\end{align}
for any $q+1 \leq q'' < q'$.  In addition, we assume that $\hat\upsilon_{q',k}^\bullet :=\la_{q'}^{\dpot-k}\partial_{i_1}\cdots \partial_{i_k} \hat\upsilon_{q'}^{(\bullet,i_1,\dots,i_\dpot)}$, $0\leq k \leq \dpot$, satisfies the pointwise estimates
\begin{align}
    &\left|\psi_{i,q'-1} D^ND_{t,q'-1}^{M} 
   \hat \upsilon_{q',k}
    \right|< \Ga_q \Ga_{q'}\left(\pi_q^{q'}\right)^{\sfrac12} r_{q'-\bn}^{-1}
     (\la_{q'}{\Ga_{q'}})^N
     \MM{M, \Nindt, \Ga_{q'-1}^i \tau_{q'-1}^{-1}, \Tau_{q'-1}^{-1}\Ga_{q'-1}^2}
    \label{est.upsilon.ptwise}
\end{align}
for $N+M \leq \sfrac{3\Nfin}{2}$. Finally, we assume that for $N+M \leq \sfrac{3\Nfin}{2}$, $\hat e_{q'}$ satisfies the estimates
\begin{align} 
    \norm{D^ND_{t,q'-1}^{M} \hat e_{q'}}_{\infty} 
    \leq \de_{q'+2\bn}^3 \Tau_{q'}^{5\Nindt}\la_{q'}^{-10} 
    (\la_{q'} {\Ga_{q'}})^N
     \MM{M, \Nindt, \tau_{q'-1}^{-1}, \Tau_{q'-1}^{-1}\Ga_{q'-1}^2}
    \label{est.e.inf}\, .
\end{align}

\subsection{Inductive proposition and the proof of Theorem~\ref{thm:main}}\label{ss:friday}
In this section, we introduce the inductive proposition and give a proof of the main theorem. 

\begin{proposition}[\bf Inductive proposition]\label{prop:main}
Fix $\be\in (0, \sfrac13)$, and choose $\bn$ satisfying \eqref{eq:choice:of:bn}, $b\in (1, \sfrac{25}{24})$ satisfying \eqref{ineq:b}, $T>0$, and a continuous positive function $E(t,x)\geq 0$. Then there exist parameters $\varepsilon_\Gamma$, $\badshaq$, $\Npr$, $\NcutSmall$, $\Nindt$, $\Nind$, $\Nfin$, depending only on $\be$, $b$, and $\bn$  (see Section \ref{sec:para.q.ind}) such that 
    we can find sufficiently large $a_*=a_*(b,\be, \bn, T)$ such that for $a\geq a_*(b,\be, \bn, T)$, the following statements hold for any $q\geq 0$. Suppose that an approximate solution $(u_q, p_q, R_q, \ph_q, -\pi_q)$ of the Euler-Reynolds system \eqref{eqn:ER} and the relaxed local energy identity \eqref{ineq:relaxed.LEI} with dissipation measure $E$ on the time interval $[-\tau_{q-1},T+\tau_{q-1}]$ is given, and suppose that there exists a partition of unity $\{\psi_{i,q'}^6\}_{i\geq 0}$ of $[-\tau_{q-1},T+\tau_{q-1}] \times \T^3$ for $q-1\leq q'\leq q+\bn-1$ such that 
    \begin{itemize}
        \item $\psi_{i,q'}$ satisfies \eqref{eq:inductive:partition}--\eqref{eq:inductive:timescales}, and
        \item the velocity $u_q$ and the errors $R_q$, $\ph_q$, and $\pi_q$ may be decomposed as in \eqref{eq:hat:no:hat}--\eqref{eq:LEI:decomp:basic} so that
        \eqref{eq:pressure:inductive}--\eqref{eq:psi:q:q'}, Hypotheses~\ref{hyp:dodging1}--Hypothesis \ref{hyp:dodging5}, \eqref{eq:nasty:D:wq:old}--\eqref{eq:bob:old}, and \eqref{exp.w.q'}--\eqref{est.e.inf} hold. 
    \end{itemize}
     Then there exist a new partition of unity $\{\psi_{i, q+\bn}^6\}_{i\geq 0}$ of $[-\tau_q,T+\tau_q] \times \T^3$ satisfying \eqref{eq:inductive:partition}--\eqref{eq:inductive:timescales} for $q'=q+\bn$, and a new approximate solution $(u_{q+1}, p_{q+1}, R_{q+1}, \ph_{q+1}, -\pi_{q+1})$ satisfying \eqref{eqn:ER} and \eqref{ineq:relaxed.LEI} on $[-\tau_q,T+\tau_q]$ with dissipation measure $E$ satisfying the following conditions.  The approximate solution may be decomposed as in \eqref{eq:hat:no:hat}--\eqref{eq:LEI:decomp:basic} for $q\mapsto q+1$ so that \eqref{eq:pressure:inductive}--\eqref{eq:psi:q:q'} and Hypothesis \ref{hyp:dodging1}--Hypothesis \ref{hyp:dodging5} hold for $q\mapsto q+1$, and \eqref{eq:nasty:D:wq:old}--\eqref{eq:bob:old} and \eqref{exp.w.q'}--\eqref{est.e.inf} hold for $q\mapsto q+1$.
\end{proposition}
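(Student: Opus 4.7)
\textbf{Proof strategy for Proposition~\ref{prop:main}.}
The plan is to realize the three-stage heuristic from the introduction at the level of rigorous inductive objects: first construct the new velocity increment $w_{q+1}=\hat w_\qbn$ via intermittent Mikado bundles, then extract the new Reynolds stress $R_{q+1}$ and new current error $\varphi_{q+1}$ from \eqref{eqn:ER} and \eqref{ineq:relaxed.LEI}, and finally design the new intermittent pressure $\pi_{q+1}$ so that it pointwise dominates all new objects as required by \eqref{eq:inductive:pointwise}, \eqref{eq:pressure:inductive}, and the scaling law \eqref{eq:ind.pr.anticipated}.

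\textbf{Step 1 -- Mollification, cutoffs, bundles, and velocity increment.}
First I would mollify $u_q$, $R_q^q$, $\varphi_q^q$, and $\pi_q^q$ in space and along the flow of $\hat u_q$, absorbing the resulting mollification errors into $R_{q+1}^\qbn$ and $\varphi_{q+1}^\qbn$. Next I construct velocity cutoffs $\psi_{i,\qbn}$ by tracking level sets of $\nabla\hat u_\qbn$ and verify \eqref{eq:inductive:partition}--\eqref{eq:inductive:timescales}; this is done in \cite{GKN23} and recalled in Proposition~\ref{prop:verified:vel:cutoff}. The velocity increment $w_{q+1}=w_{q+1,\varphi}+w_{q+1,R}$ is defined as in \eqref{pert:tuezday} and its analogue for $w_{q+1,R}$, where each intermittent Mikado bundle $\BB_{i,\varphi}$, $\BB_{i,R}$ (cf.~\eqref{intermittent:pipe:bundle}) is the product of an inner intermittent pipe $\varrho_{\qbn,\bullet}$ with frequency range $[\la_{q+\half},\la_\qbn]$ and an outer bundling pipe $\tilde\varrho_{q+1}$ of thickness $\la_{q+1}^{-1}$. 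The inner pipes are shifted so that $\hat w_\qbn$ dodges $\hat w_{q+\sfrac{\bn}{2}+1},\dots,\hat w_{\qbn-1}$, while the bundling pipes are shifted so that $\hat w_\qbn$ dodges $\hat w_{q+1},\dots,\hat w_{q+\sfrac{\bn}{2}}$; Hypothesis~\ref{hyp:dodging2} together with Section~\ref{sec:dodging} guarantees that this shifting is geometrically possible and upgrades the support information to Hypotheses~\ref{hyp:dodging1}--\ref{hyp:dodging2} for $q\mapsto q+1$. A $\dpot$-fold divergence correction via the potential $\hat\upsilon_\qbn$ of subsection~\ref{sec:ind.vel.inc.pot} restores $\div\, u_{q+1}=0$, after which the estimates \eqref{eq:nasty:D:wq:old}--\eqref{eq:bob:old} and \eqref{exp.w.q'}--\eqref{est.e.inf} for $q\mapsto q+1$ follow from \eqref{eq:ind:velocity:by:pi} and the $L^p$ estimates on the bundles.

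\textbf{Step 2 -- Stress and current errors.}
Substituting $u_{q+1}=u_q+w_{q+1}$ into \eqref{eqn:ER} produces the usual family of oscillation, Nash, transport, divergence-correction, and mollification stress errors. Each is frequency-localized by the synthetic Littlewood--Paley projector $\tP_{\approx\la_k}$, $q+1\leq k\leq \qbn$, whose compactly supported kernels preserve dodging, and then dominated pointwise by a rescaled $\pi_q^q$ through \eqref{eq:ind.pr.anticipated} to produce $R_{q+1}^{k,l}$. The analogous insertion into \eqref{ineq:relaxed.LEI} is the main novelty of Section~\ref{sec:RLE:errors}: it produces current oscillation, current Nash, current transport, pressure current, and divergence-correction current errors. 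The crucial pressure-current cancellation comes from $\ka_q^q=\sfrac12\tr(R_q^q-\pi_q^q\Id)$, which removes the low-frequency part of $(\pa_t+\hat u_q\cdot\na)|w_{q+1,R}|^2$, and the current Nash error (which produced the $\sfrac14$ ceiling \eqref{weeeeeeeeeee} in non-intermittent schemes) is saved by the extra $r_q^{\sfrac 23}$ decoupling gain of \eqref{sunday:sunday:sunday:night}, a direct consequence of the Goldilocks intermittency $r_q=\la_{q+\half}\Ga_q\la_\qbn^{-1}$.

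\textbf{Step 3 -- Intermittent pressure and verification; main obstacle.}
In Section~\ref{sec:intermittent:pressure} I would assemble $\pi_{q+1}^k$ for $k\geq q+1$ from (a) the inherited $\pi_q^k$ via \eqref{ind:pi:upper}, (b) contributions built from the new stress and current pieces constructed in Step~2, and (c) anticipated-pressure terms forecasting the low-frequency components generated by future wavelet decompositions as in \eqref{simpli:compu}. The $L^{\sfrac 32}$ and $L^\infty$ bounds \eqref{eq:pressure:inductive}--\eqref{eq:pressure:inductive:largek} and the scaling law \eqref{eq:ind.pr.anticipated} for $q\mapsto q+1$ follow from bundle $L^p$ estimates and the inductive bounds on $R_q^q,\varphi_q^q,\pi_q^q$; Hypothesis~\ref{hyp:dodging5} is then immediate from the support of $\tP_{\approx\la_k}$ combined with the dodging established in Step~1, and \eqref{eq:inductive:pointwise} follows mechanically by construction. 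The main obstacle is precisely this last step: the $\sfrac14$ ceiling of every previous $C^0$ scheme lives in the current Nash error, and our only mechanism for breaking it is the simultaneous interplay of Goldilocks intermittency, spatial dodging via bundles, synthetic Littlewood--Paley localization, and pointwise domination by $\pi_q^k$. The delicate balancing act is to design $\pi_{q+1}^k$ that dominates the new stress errors, current errors, velocity increments, \emph{and itself} under spatial and material derivatives, while still respecting the inductive $L^{\sfrac 32}$ size $\de_{k+\bn}$ that guarantees convergence of the scheme as $q\to\infty$.
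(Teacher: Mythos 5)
Your proposal tracks the paper's own architecture faithfully: Step 1 corresponds to the construction of bundles, cutoffs, the velocity increment, and its potential in Section~\ref{stuff:from:part:1} together with the dodging lemmas of Section~\ref{sec:dodging}; Step~2 to the error decomposition in Section~\ref{sec:RLE:errors}; and Step~3 to the pressure assembly of Section~\ref{sec:intermittent:pressure}. The one feature worth flagging, which you gesture at but do not isolate, is that the pressure increments $\si_{m,q+1}$ produced to dominate the new stress and current errors themselves inject new current errors $\phi_{P1},\phi_{P2}$ via the relaxed local energy inequality, and the argument that this loop terminates (because $\Dtq\si_{m,q+1}$ is dominated by anticipated pressure as in the heuristic \eqref{sunday:eve}) is precisely the point where the paper's bookkeeping in Lemmas~\ref{lem:pr.current.vel.inc}, \ref{lem:currentoscillation:pressure:current}, \ref{lem:ctn:pressure:current}, \ref{lem:currentdivergencecorrector:pressure:current}, \ref{lem:phi:p:1}, and \ref{lem:nezezzary} is concentrated; your sketch should make this closure explicit rather than folding ``pressure current'' into the generic list of new errors.
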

\noindent Assuming for the moment that Proposition~\ref{prop:main} holds, we prove Theorem \ref{thm:main}.

\begin{proof}[\bf Proof of Theorem \ref{thm:main}.]
Let $\upbeta\in (0,\sfrac 13)$ be fixed as in Theorem~\ref{thm:main}.  Since Proposition~\ref{prop:main} holds for $\beta\in (0, \sfrac 13)$, we can choose $\beta \in (\upbeta, \sfrac13)$, $\bn$ large, and $b$ close enough to $1$ such that
\begin{align}
     \upbeta < \beta - (b-1)^{\sfrac 12} \, , \quad \frac{1}{1-3\upbeta} <
      \beta b^{3\bn} \left( \left[ (b-1) \left( (b^{\sfrac{\bn}{2}-1}+ \dots + b + 1)^2 + 2000 b^{\bn} \right) + \frac{4b^{\bn-1}}{1+\dots+b^{\sfrac{\bn}{2}-1}} \right] \right)^{-1} \, ,
    \label{eq:morning:useful}
\end{align}
and \eqref{eq:choice:of:bn} and \eqref{ineq:b} hold. Let $a_*=a_*(b,\beta,\bn,T)$ be as in Proposition~\ref{prop:main}, fix $a\geq a_*$, and define
\begin{align*}
    \Ga_q =1 \quad \text{for } -\bn\leq q\leq -1 \, , \qquad
    r_q =\begin{cases}
    \la_{q+\half}\la_{q+\bn}^{-1} &\text{for } -\half \leq q<0\\
    \la_0^{\sfrac12} \la_{\bn}^{-\sfrac12} &\text{for }-\bn\leq q<-\half \, .
    \end{cases}
\end{align*}

\noindent\texttt{Step 1: Construction of the initial approximate solution $(u_0, p_0, R_0, \ph_0, - \pi_0)$ on $[-1,T+1]$.} \\
We first define
\begin{align*}
    \hat u_{-1} = 0 \, , \quad \hat w_{q'} &= 0 \quad\text{for $0\leq q'\leq \bn-2$} \, ,  
    \qquad  p_0 = \ph_0 =0 \, , \qquad \pi_0^k = \Ga_k \de_{k+\bn} \, , 
    \\
    \psi_{i,q'} &= 
    \begin{cases}
    1 & i=0 \\
    0 & \text{otherwise}
    \end{cases} \quad \text{for } 0\leq q' \leq \bn-1\, , 
\end{align*}
so that
\begin{align*}
   \hat u_{q'} =0 \quad\text{for }0\leq q'\leq \bn-2 \, , \qquad
   \pi_0 = \sum_{k=0}^\infty\Ga_k \de_{k+\bn}\, . 
\end{align*}
Let $\bar\vartheta_{e_3, \la_{\bn-1}, r_{-1}}$ and $\bar\varrho_{e_3, \la_{\bn-1}, r_{-1}}$ be 
smooth functions such that they depend only on the $x_1, x_2$ variables, are $\left(\sfrac{\T^3}{\la_{\half-1}\Ga_{-1})}\right)$-periodic, have support contained in pipes of thickness $\la_{\bn-1}^{-1}$, and satisfy
\begin{align*}
    \bar\varrho_{e_3, \la_{\bn-1}, r_{-1}} 
    = \la_{\bn-1}^{-\dpot} \div^\dpot\bar\vartheta_{e_3, \la_{\bn-1}, r_{-1}}, \quad 
    \norm{\na^n\bar\vartheta_{e_3, \la_{\bn-1}, r_{-1}}}_{L^\infty} +
    \norm{\na^n\bar\varrho_{e_3, \la_{\bn-1}, r_{-1}}}_{L^\infty}
    \leq C_0 \la_{\bn-1}^n r_{-1}^{-1}       
\end{align*}
for all $n\leq 2\Nfin + \dpot$ and some positive constant $C_0=C_0(2\Nfin, \dpot)$. To lighten notation, we abbreviate $\bar\varrho_{e_3, \la_{\bn-1}, r_{-1}}$ and $\bar\vartheta_{e_3, \la_{\bn-1}, r_{-1}}$ by $\rho_0$ and $\theta_0$, respectively. 
We then define
\begin{align*}
    u_0 &= \hat w_{\bn-1} = e(t) \rho_0(x_1, x_2) e_3,
    \quad 
    e(t) := \Ga_{\bn}^{-100}\de_{2\bn} r_{-1}\exp(-\tau_0^{-1}(t+1))
    \\
    R_0 &= e'(t) \begin{pmatrix}  
    0 & 0 & (\la_{\bn-1}^{-\dpot}\div^{\dpot-1}\theta_0)_1\\
    0& 0& (\la_{\bn-1}^{-\dpot}\div^{\dpot-1}\theta_0)_2\\
    (\la_{\bn-1}^{-\dpot}\div^{\dpot-1}\theta_0)_1 & (\la_{\bn-1}^{-\dpot}\div^{\dpot-1}\theta_0)_2 &0
    \end{pmatrix} = R_0^{\bn-1,l}\,  ,
\end{align*}
where $(g)_k$ denotes the $k^{\rm th}$ component of the vector $g$. Notice that $R_0$ is constructed to satisfy $\pa_t u_0 = \div R_0$. 

From the construction, we have 
$\div u_0=\div(|u_0|^2 u_0)=0$ and $\div (u_0 \otimes u_0 ) =0$, so that one can easily see that $(u_0, p_0, R_0, \ph_0, - \pi_0)$ satisfies the Euler-Reynolds system \eqref{eqn:ER} and the relaxed local energy identity \eqref{ineq:relaxed.LEI} with $E(t,x):=-e(t)e'(t) |\rho_0|^2$
on the time interval $[-1, T+1]$.  We will now check that the constructed approximate solution and the partitions of unity satisfy the remaining inductive assumptions appearing in Section \eqref{ss:relaxed}--\eqref{sec:inductive:secondary:velocity} on $[-1, T+1]$.

We first note that for $0\leq q'\leq \bn-1$, it is immediate that $\psi_{i,q'}$ satisfies \eqref{eq:inductive:partition}--\eqref{eq:inductive:timescales}.  Next, letting $R_0^k=0$ for $0\leq k\leq \bn-2$, $R_0^{\bn-1} = R_0^{\bn-1,l}$, and $\ph_0^k=0$ for $0\leq k \leq \bn-1$,
we have the decompositions \eqref{eq:hat:no:hat}--\eqref{eq:LEI:decomp:basic}. Using the convention $B(A, r) =\emptyset$ for the empty set $A$ and setting $\hat \upsilon_{\bn-1} =e(t) \theta_0 e_3$, $\hat \upsilon_{q'}=e_{q''}=0$ for $0\leq q' \leq \bn-2$ and $0\leq q''\leq \bn-1$, we can easily verify \eqref{eq:pressure:inductive}--\eqref{eq:psi:q:q'}, Hypotheses \ref{hyp:dodging1}, \ref{hyp:dodging4}, and \ref{hyp:dodging5}, \eqref{eq:nasty:D:wq:old}, \eqref{eq:nasty}, and \eqref{exp.w.q'}--\eqref{est.e.inf} for $q=0$.  As for Hypothesis \ref{hyp:dodging2}, it is enough to consider $q'=\bn-1$. Since $q''<q=\bn-1$ and $i$ needs to be $0$, recalling that $\hat u_{q''} =0$, we have $\Phi_{q''}(t,x)=x$ and hence $\Omega(t)=\Omega$. Then, \eqref{eq:ind:dodging2} is equivalent to 
\begin{align*}
    \supp \hat w_{\bn-1} \cap \Omega 
    \subset L \cap \Omega \, .
\end{align*}
Therefore, we choose $L$ as the collection of the $(\sfrac{\T}{\la_{\half-1}\Ga_{\half-1}})^3$-periodic pipes of thickness $\la_{\bn-1}^{-1}$ containing the support of $\hat w_{\bn-1}$.  To finish verifying the hypothesis, we may adjust the definition of $\Xi_{\bar q' \textnormal{ mod } \bn}$ so that it contains $e_3$, and adjust the other sets of vector directions so that none of them also contain $e_3$ (). Lastly, considering \eqref{eq:nasty:D:vq:old} and \eqref{eq:bob:old}, it is enough to prove it when $q'=\bn-1$ and $i=0$. Since we have $\hat u_{\bn-1} = \hat w_{\bn-1}$ and 
\begin{align*}
    \norm{\left( \prod_{l=1}^k D^{\alpha_l} \pa_t^{\beta_l} \right)\hat w_{\bn-1} }_{L^\infty(\T^3)}  \leq C_0
    \Gamma_{\bn}^{-100}\de_{2\bn}\la_{\bn-1}^{|\al|} 
    \tau_0^{-|\be|}\, , 
\end{align*}
applying Lemma~\ref{rem:upgrade.material.derivative.end} to $p=\infty$, $v=0$, $w=\hat w_{\bn-1}$, $\Omega=\T^3$, $N_* = \sfrac{7\Nfin}{4}$, we obtain \eqref{eq:nasty:D:vq:old} and \eqref{eq:bob:old}. 
\smallskip

\noindent\texttt{Step 2: From Proposition \ref{prop:main} to Theorem \ref{thm:main}.}  In \texttt{Step 1}, we checked that the inductive assumptions hold at the base case $q=0$ of the induction.  We now apply Proposition~\ref{prop:main} inductively with $E(t,x)=-e(t)e'(t) |\rho_0|^2$ to produce a sequence of approximate solutions $(u_q, p_q, R_q, \ph_q, -\pi_q)$ such that all inductive assumptions hold for all $q\geq 0$. In order to compute the Besov norm of the limiting solution, we will use the inductive bound \eqref{eq:psi:i:q:support:old} for $\psi_{i,q}$, the bounds \eqref{eq:nasty:D:wq:old} for $\hat w_q$ and $\nabla \hat w_q$, and the $q$-independent bound in \eqref{eq:imax:upper:lower} for $\imax$. Then by the definition of $B^{\beta'}_{3,\infty}(\T^3)$ in \eqref{eq:bosov}, interpolation, and the standard characterization of $W^{1,3}(\T^3)$ in terms of difference quotients, we have that
\begin{align}
    \left\| \hat w_{q} \right\|_{C^0 B^{\beta'}_{3,\infty}} \leq \left\| \nabla \hat w_{q} \right\|_{C^0 W^{1,3}}^{\beta'} \left\| \hat w_{q} \right\|_{C^0 L^3}^{(1-\beta')} \les \delta_q^{\sfrac 12} \Ga_q^{\frac{\CLebesgue}{3}+3} r_{q-\bn}^{-\sfrac 13} \la_q^{\beta'}  \leq \la_q^{-\beta + (b-1)\varepsilon_\Gamma \left( \frac{6+b}{b-1} + 3 \right) + \frac 13 \left( \frac{b^{\sfrac{\bn}{2}}-1}{b^{\sfrac{\bn}{2}}} \right) + \beta'}  \, . \label{nighttime:mess}
\end{align}
Using that $0<\varepsilon_\Gamma\ll (b-1)^2$ (see immediately below \eqref{eq:deffy:of:gamma}) and $(b^{\sfrac \bn 2 -1}+ \dots + b + 1)(b-1)<(b-1)^{\sfrac 12}$ (from \eqref{ineq:b:second}), we have that the exponent for $\la_q$ in \eqref{nighttime:mess} is no larger than $-\beta + (b-1)^{\sfrac 12} + \beta'$.  Now from \eqref{eq:morning:useful}, we have that the series $\sum_{q\geq \bn} \hat w_{q}$ is absolutely summable in $C^0_t B^{\upbeta}_{3,\infty}$.  Therefore the limiting velocity field $u = u_0 +\sum_{q\geq \bn} \hat w_{q} \in C^0_t B^{\upbeta}_{3,\infty}$, as desired. Since $R_q, \pi_q \to 0$ in $C^0_t L^{\sfrac32}$, we obtain from the equation $-\Delta p_q = \div\div(u_q\otimes u_q + \pi_q\Id -R_q)$ that the limiting pressure $p\in C^0_t L^{\sfrac32}$. Using in addition that $\ph_q \to 0$ in $C^0_t L^1$, the limiting pair $(u,p)$ therefore solves the Euler equations \eqref{eqn:Euler} and satisfies
\begin{align}\notag
\pa_t\left(\frac12 |u|^2\right)
+ \div\left( \left(\frac12|u|^2+p\right)u\right)
= e(t)e'(t) |\rho_0|^2 \leq 0
\end{align}
in the sense of distributions. In particular, the strict local energy inequality holds in the interior of $\supp(\rho_0)$, which leads to the total kinetic energy dissipation.  

In order to conclude the proof of the theorem, we only need to show that $u\in C^0_t L^{\frac{1}{(1-3\upbeta)}}$. Using the $L^3$ and $L^\infty$ bounds on $\hat w_q$ provided by \eqref{eq:ind:velocity:by:pi} and \eqref{eq:pressure:inductive}, we sum over $0\leq i \leq \imax(q)$ as before, arriving at
\begin{align*}
    \norm{\hat w_q}_{L^3}  \leq C \delta_{q+\bn}^{\sfrac 12}r_{q-\bn}^{-1}
    \Ga_q^5 
    \qquad \mbox{and} \qquad
    \norm{\hat w_q}_{L^\infty}  
    \leq C \Gamma_q^{\sfrac{\badshaq}2 + 5} r_{q-\bn}^{-1} \, ,
    \end{align*}
where the constant $C$ depends only on our upper bound for $\imax(q)$, and so only on $\beta$ and $b$ through \eqref{eq:imax:upper:lower}. Using interpolation, the definition \eqref{eq:badshaq:choice} of $\badshaq$, $\varepsilon_\Gamma\ll(b-1)^2$, and the above established bounds, for $p\in [3,\infty)$ we obtain
\begin{align}
    \left\| \hat w_q \right\|_{L^p} 
    \leq 
    \left\| \hat w_q \right\|_{L^3}^{\frac{3}{p}} 
    \left\| \hat w_q \right\|_{L^\infty}^{1-\frac 3p} 
     &\leq 
     C 
\de_{q+\bn}^{\frac{3}{2p}}
\Ga_q^{\badshaq{\left(\frac12-\frac3{2p}\right)}} \Ga_q^5 r_{q-\bn}^{-1} \notag\\
&\leq C \la_{q-\bn}^{\frac{3}{2p}(-2\be b^{2\bn}) +\left(\frac12-\frac 3{2p}\right) 6 b^{\bn} \left[ (b-1)(b^{\sfrac{\bn}{2}-1}+\dots+b+1)^2 + 2000(b-1) b^{\bn} + \frac{4b^{\bn-1}}{1+\dots+b^{\sfrac{\bn}{2}-1}} \right] } \, ,
\label{bounding:L3}
\end{align}
where the constant $C=C(\be, b)\geq 1$. In order to ensure that the exponent of $\lambda_{q-\bn}$ on the right side of \eqref{bounding:L3} is strictly negative, a short computation shows that it will suffice to choose
\begin{align}
     p < p_*(\beta,b) =:
      \beta b^{2\bn} \left( b^{\bn} \left[ (b-1)(b^{\sfrac{\bn}{2}-1}+ \dots + b + 1)^2 + 2000 (b-1)b^{\bn} + \frac{4b^{\bn-1}}{1+\dots+b^{\sfrac{\bn}{2}-1}} \right] \right)^{-1}
    \,.
    \notag
\end{align}
Then by \eqref{eq:morning:useful}, we can choose $p=(1-3\upbeta)^{-1}$, and so $u\in C^0_t L^{\frac{1}{1-3\upbeta}}$.
\end{proof}

\section{Convex integration set-up and preliminary results}\label{stuff:from:part:1}

The first goal of this section is to set up the proof of Proposition~\ref{prop:main} by defining a premollified velocity increment $w_{q+1}$.  The second goal is to record lemmas and estimates related to the creation of new Euler-Reynolds stress errors.  We do not give the full proofs of these results, opting instead to give a few ideas when necessary and refer to \cite{GKN23} for further details.

First, we will define a premollified velocity increment (still only heuristically, and not with the final notation) by
\begin{align}\label{pert:fried:egg}
    w_{q+1} = w_{q+1,\varphi} + w_{q+1,R} \approx \sum_j a_j(\nabla \hat u_q, \varphi_q^q) \mathbb{B}_{j,\varphi} \circ \Phi_{q,j} + \sum_{j'} a_{j'}(\nabla \hat u_q, \varphi_q^q, R_q^q) \mathbb{B}_{{j'}, R} \circ \Phi_{q,{j'}} \, .
\end{align}
The intermittent Mikado bundles $\mathbb{B}_{j,\bullet}$ are defined in subsection~\ref{subsec:result:par1:1}. Then in subsection~\ref{subsec:result:par1:2}, we define all the components of the coefficient functions $a_\bullet$.  These components include both non-inductive cutoffs, which serve to partition time, measure the level sets of the intermittent pressure and its derivatives, and localize the supports of the bundles, and the inductive cutoffs, which partition the level sets of the gradient of the velocity field.  The flow maps $\Phi_{q,j}$ are then defined and estimated on the intersection of the supports of velocity and time cutoffs. Estimates for the cutoff functions are record throughout, and ``aggregation lemmas'' which serve to combine local pointwise bounds into global $L^p$ bounds are given in subsubsection~\ref{sss:aggregation}.  With these ingredients in hand, we define and estimate the premollified velocity increment precisely in subsection~\ref{sec:vel.inc}.  We also define and estimate the associated pressure increments and current errors created by these pressure increments, as well as the velocity increment potentials, which are necessary for example in Lemma~\ref{lem:nezezzary}. Then sections~\ref{sec:new.pressure.stress}--\ref{subsec:result:par1:6} recall the definitions, estimates, and associated pressure increments of new stress errors, new transport and Nash current errors, and new mollification current errors, respectively.

\subsection{Intermittent Mikado bundles}
\label{subsec:result:par1:1}
In this subsection, we introduce the intermittent Mikado bundles\index{$\pxi$}\index{$\diamond$}\index{$I$}\index{$\xi$}\index{$\BB_{\pxi,\diamond}$}\index{$\WW_{\pxi,\diamond}^I$}
\begin{equation}\label{int:pipe:bundle:short}
    \BB_{\pxi,\diamond} = \rhob_\pxi^\diamond \sum_I \zetab_\xi^{I,\diamond} \WW_{\pxi,\diamond}^I \, \quad \diamond=R,\ph\,.
\end{equation}
The scalar function $\rhob_\pxi^\diamond$ is a mildly intermittent (meaning that in any $L^p$-estimates we can simply appeal to its $L^\infty$ norm) ``bundling pipe'' defined in Proposition~\ref{prop:bundling}. The notation $\diamond$ distinguishes objects used to correct stress errors $R$ from objects used to correct current errors $\varphi$.  We put $\diamond$ in the exponent of $\rhob_\pxi^\diamond$ since $\rhob_\pxi^\diamond$ is defined as the second or third power of a master function.  The vector $\xi$ is the direction of the velocity field of the bundle and satisfies $\xi\cdot \nabla \BB_{\pxi,\diamond}\equiv 0$. We use the notation $\pxi$ as a short-hand for the numerous indices on which the placement of the bundling pipe will depend; these indices will be detailed in Section~\ref{sec:dodging} when we carry out the placement. The direction vectors $\xi$ are chosen to belong to sets $\Xi_j, \Xi_j' \subset (\mathbb{S}^2 \cap \mathbb{Q}^3)$\index{$\Xi$, $\Xi'$}, where $1 \leq j \leq \bn$.  These sets are used in geometric lemmas (\cite[Propositions~4.1 and 4.2]{GKN23}) to engender symmetric tensors or vectors as ``positive convex integrals'' of the simple tensors and vectors, respectively, coming from the geometric lemmas. The next components in the bundle are a strongly anisotropic cutoff function $\zetab_{\xi}^{I,\diamond}$, which is defined in Definition~\ref{def:etab} and used to localize the support of the highly intermittent Mikado flows $\WW_{(\xi), \diamond}^I$, recalled in Propositions~\ref{prop:pipeconstruction} and \ref{prop:pipe.flow.current}. The strongly anisotropic cutoff function is indexed by $I$ and depends on the vector direction $\xi$, but \emph{not} any further indices like the bundling pipe flow, due to the fact that we do not ``place'' the strong anisotropic cutoff functions. These cutoffs also satisfy a normalization property depending on $\diamond$; this is used for example to enact the cubic cancellation in subsection~\ref{ss:ssO:current}.  Finally, we note that in Section~\ref{sec:dodging}, the intermittent Mikado flow $\WW_{(\xi), \diamond}^I$ will be placed carefully on the intersection of the support of an anistropic cutoff $\zetab_\xi^{I,\diamond}$ indexed by $I$, and the supports of further cutoff functions, which we detail later and depend on indices abbreviated with the notation $\pxi$. While one might guess that the Mikado flows for $\diamond=R,\varphi$ are defined as powers of a master flow, in fact certain distinct cancellation properties are needed for $\varphi$ and $R$; see for example Proposition~\ref{prop:pipeconstruction}, item~\eqref{item:pipe:means}, which ensures that the $L^2$-normalized pipes do not create a low-frequency term in the trilinear current oscillation error in subsection~\ref{ss:ssO:current}.

\begin{proposition}[\bf Bundling pipe flows $\rhob_{\xi,\diamond}^k$ for Reynolds and current correctors]\label{prop:bundling}
The master scalar functions $\ov\rhob_{\xi,k}$ for $k\in \{1,\dots,\Ga_q^6\}$ and subsidiary bundling pipe flows $\rhob_{\xi,k,R}:=\ov\rhob_{\xi,k}^3$ for Reynolds correctors and $\rhob_{\xi,k,\varphi}:=\ov\rhob_{\xi,k}^2$ for current correctors defined in \cite[Proposition~4.9]{GKN23} satisfy the following.
\begin{enumerate}[(i)]
    \item\label{i:bundling:1} $\ov\rhob_{\xi,k}$ is simultaneously $\left( \sfrac{\T}{\lambda_{q+1}\Gamma_q^{-4}}\right)^3$-periodic 
    and $\left(\sfrac{\Tthreexi}{\lambda_{q+1}\Gamma_q^{-4} n_\ast}  \right)$-periodic,\footnote{$n_*$ is a positive integer independent of $q$ such that $n_*\xi\in \Z^3$ for all $\xi\in \Xi\cup \Xi'$. We denote by $\T^3_\xi$ a rotation of the standard torus so that it has a face perpendicular to $\xi$.\index{$n_*$} \index{$\T^3_\xi$}}
    and satisfies $\xi \cdot \nabla \ov\rhob_{\xi,k}\equiv 0$.
    \item\label{i:bundling:2} 
    The support of $\ov\rhob_{\xi,k}$ is contained in a pipe (cylinder) centered around a
    line parallel to $\xi$ with the cross-sectional radius $\pi(4\lambda_{q+1}\Gamma_q^{-1} n_*)^{-1}$. Dividing a face of $\left(\sfrac{\Tthreexi}{\lambda_{q+1}\Gamma_q^{-4} n_\ast}  \right)$ perpendicular to $\xi$ into a grid of squares with side-length $2\pi(4\lambda_{q+1}\Gamma_q^{-1} n_*)^{-1}$, there are $\Gamma_q^{6}$ disjoint possible placements of the cylinder, indexed by $k$. In particular, we have $\supp \ov\rhob_{\xi,k} \cap \supp \ov\rhob_{\xi,k'} = \emptyset$ for $k\neq k'$.
    \item\label{i:bundling:3} $\displaystyle \int_{\T^3} \ov\rhob_{\xi,k}^6=1$.
    \item\label{i:bundling:4} For all $n\leq 3\Nfin$, $p\in[1,\infty]$, and $k,k'$,
    \begin{equation}\label{e:fat:pipe:estimates:1}
    \left\| \nabla^n \rhob_{\xi,k,R} \right\|_{L^p(\mathbb{T}^3)} \lesssim \left(\Gamma_q^{-1}\lambda_{q+1}\right)^n \Gamma_q^{-3\left(\frac 2p -1\right)} \, , \qquad \left\| \nabla^n \rhob^k_{\xi,k',\varphi} \right\|_{L^p(\mathbb{T}^3)} \lesssim \left(\Gamma_q^{-1}\lambda_{q+1}\right)^n \Gamma_q^{-3\left(\frac 2p - \frac 23 \right)} \, .
    \end{equation}
\end{enumerate}
\end{proposition}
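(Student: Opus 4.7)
The proof is a direct rescaling/periodization. The plan is to build $\ov\rhob_{\xi,k}$ as a translated copy of a single fixed radial bump, oriented along $\xi$ through the rotated torus $\T^3_\xi$, and then set $\rhob_{\xi,k,R} := \ov\rhob_{\xi,k}^3$ and $\rhob_{\xi,k,\varphi} := \ov\rhob_{\xi,k}^2$ by definition. First, I would fix once and for all a smooth, non-negative, radially symmetric $\Phi \colon \R^2 \to \R_{\geq 0}$ supported in the closed unit disk. For each $\xi \in \Xi\cup\Xi'$, choose an orthonormal basis $\{\xi, \eta, \zeta\}$ of $\R^3$ compatible with $\T^3_\xi$, and define the cross-sectional profile
\begin{align*}
\Psi_\xi(y_\eta, y_\zeta) := \mu\, \Phi\left( \frac{4\lambda_{q+1}\Gamma_q^{-1} n_\ast}{\pi} (y_\eta, y_\zeta) \right),
\end{align*}
so that $\Psi_\xi$ is supported in the disk of radius $\pi(4\lambda_{q+1}\Gamma_q^{-1} n_\ast)^{-1}$, matching the geometry required in (ii). The normalization constant $\mu$ will be fixed at the end to enforce (iii).

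Next, on the cross-section of the fundamental domain of $\sfrac{\T^3_\xi}{\lambda_{q+1}\Gamma_q^{-4} n_\ast}$, I would subdivide into the grid prescribed in (ii); since the cell side $2\pi(4\lambda_{q+1}\Gamma_q^{-1} n_\ast)^{-1}$ is smaller than the cross-sectional period $2\pi(\lambda_{q+1}\Gamma_q^{-4} n_\ast)^{-1}$ by the factor $4\Gamma_q^3$, there is ample room for $\Gamma_q^6$ disjoint placements of a translate of $\Psi_\xi$. Fix such an enumeration by $k \in \{1,\dots,\Gamma_q^6\}$, extend each resulting cross-sectional function constantly in the $\xi$ direction, periodize to $\sfrac{\T^3_\xi}{\lambda_{q+1}\Gamma_q^{-4} n_\ast}$, and descend to $\left(\sfrac{\T}{\lambda_{q+1}\Gamma_q^{-4}}\right)^3 \subset \T^3$. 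The descent is well-defined precisely because $n_\ast\xi \in \Z^3$ makes translation by $2\pi n_\ast$ along $\xi$ a $\T^3$-lattice vector. The result is $\ov\rhob_{\xi,k}$, and properties (i) and (ii) are then immediate: $\xi\cdot\nabla\ov\rhob_{\xi,k}\equiv 0$ by the constant extension, both periodicities are built in, the support geometry is enforced by the rescaling of $\Phi$, and disjointness in $k$ follows from disjointness of grid cells.

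For (iii) and (iv), a single pipe of cross-sectional area $\sim (\lambda_{q+1}\Gamma_q^{-1})^{-2}$ sits inside each cell of cross-sectional area $\sim (\lambda_{q+1}\Gamma_q^{-4})^{-2}$, so the relative support size is $|\supp\ov\rhob_{\xi,k}|/|\T^3|\sim \Gamma_q^{-6}$. Since $\|\ov\rhob_{\xi,k}\|_\infty = \mu$ on this support, condition (iii) becomes $\mu^6\,\Gamma_q^{-6}\sim 1$, which fixes $\mu \sim \Gamma_q$. Then for $m\in\{2,3\}$,
\begin{align*}
\norm{\ov\rhob_{\xi,k}^m}_{L^p(\T^3)}^p \sim \mu^{mp}\cdot |\supp\ov\rhob_{\xi,k}| \sim \Gamma_q^{mp-6},
\end{align*}
so $\norm{\ov\rhob_{\xi,k}^m}_{L^p}\sim \Gamma_q^{m-\sfrac{6}{p}}$, which is exactly $\Gamma_q^{-3(\sfrac{2}{p}-1)}$ for $m=3$ and $\Gamma_q^{-3(\sfrac{2}{p}-\sfrac{2}{3})}$ for $m=2$. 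Each spatial derivative of $\Psi_\xi$ costs $\lambda_{q+1}\Gamma_q^{-1}$ by the scaling, and applying Leibniz to $\rhob_{\xi,k,\bullet} = \ov\rhob_{\xi,k}^m$ preserves this cost with only a combinatorial constant depending on $m$, $\Phi$, and the uniformly bounded number $n\leq 3\Nfin$ of derivatives.

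The only mildly delicate point in the construction is reconciling the two distinct periodicities --- on $\T^3$ and on $\T^3_\xi$ --- for an arbitrary rational direction $\xi\in \Xi\cup\Xi'$. This is precisely the role of the fixed integer $n_\ast$ with $n_\ast\xi\in \Z^3$ for all such $\xi$: the finer $\T^3_\xi$-periodicity at scale $(\lambda_{q+1}\Gamma_q^{-4} n_\ast)^{-1}$ descends consistently to the coarser $\T^3$-periodicity at scale $(\lambda_{q+1}\Gamma_q^{-4})^{-1}$. Beyond this bookkeeping, every estimate reduces to explicit rescaling of a fixed smooth profile, so no genuine analytic obstacle arises.
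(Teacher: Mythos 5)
Your construction is correct and is precisely the standard Mikado-type construction the companion paper \cite{GKN23} uses for these bundling pipes: a fixed radial profile rescaled to cross-sectional radius $\sim \Gamma_q\lambda_{q+1}^{-1}$, extended constantly along $\xi$, placed on a grid of spacing $\sim \Gamma_q^4\lambda_{q+1}^{-1}$ in the cross-section, and periodized via the rational direction $n_*\xi\in\Z^3$. The key numerics all check out: the support fraction $\Gamma_q^{-6}$ forces $\mu\sim\Gamma_q$, which together with $L^p$-interpolation yields $\|\ov\rhob_{\xi,k}^m\|_{L^p}\sim\Gamma_q^{m-6/p}$, matching the stated $\Gamma_q^{-3(2/p-1)}$ for $m=3$ and $\Gamma_q^{-3(2/p-2/3)}$ for $m=2$; the cross-sectional placement count $(4\Gamma_q^3)^2=16\Gamma_q^6 \geq \Gamma_q^6$ gives more than enough disjoint cells to index by $k\in\{1,\dots,\Gamma_q^6\}$; and the derivative cost $\Gamma_q^{-1}\lambda_{q+1}$ follows from the scaling of the profile. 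Since the paper merely cites \cite{GKN23} for the definition and offers no in-line proof, there is nothing to contrast beyond noting that your write-up is the expected argument.
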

\noindent We will later choose $\chib^\diamond_{(\xi)} = \chib_{\xi,m,\diamond}$\index{$\rhob^\diamond_{\pxi}$}  for some $m=m_{(\xi),\diamond}$. The indices $m=m_{(\xi),\diamond}$ encode the placement of the bundling pipes, which will be determined later in Section 
\ref{sec:dodging}.

\begin{definition}[\bf Strongly anisotropic cutoff]\label{def:etab}
For a given vector direction $\xi$, the set of strongly anisotropic cutoffs $\{ \etab_{\xi}^I\}_I$ is
a partition of unity which divides the orthogonal space $\xi^\perp \in \T^3$ into a grid of squares $\mathcal{S}_{I}=\mathcal{S}_{I, \xi}$ of side-length $\approx\lambda_{q+\half}^{-1}$.  Each cutoff satisfies
    \begin{align}\label{eq:sa:summability}
        (\xi\cdot\na) \etab_\xi =0 \, , \quad  \left\| \nabla^N \etab_\xi^I \right\|_\infty \lesssim \lambda_{q+\half}^N \, \forall N\leq 3\Nfin \, , \quad
        \sum_{I} (\etab_\xi^{I})^{6} = 1 \, , \quad  \etab_\xi^{I} =\begin{cases}
        1 &\textnormal{  on  } \frac34 \mathcal{S}_{I}\\
        0 &\textnormal{  outside  } \frac54 \mathcal{S}_{I}
        \end{cases} \, .
    \end{align}
\end{definition}

We now recall the $L^2$-normalized intermittent pipe flows used for the Reynolds corrector $w_{q+1,R}$.

\begin{proposition}[\bf Intermittent pipe flows for Reynolds corrector]
\label{prop:pipeconstruction}
The intermittent pipe flows $\mathcal{W}_{\xi,\la, r}^k$ for Reynold correctors defined in \cite[Proposition~4.5]{GKN23} satisfy the following properties.
\begin{enumerate}[(1)]
\item\label{item:pipe:1}
     The flow $\mathcal{W}_{\xi,\la, r}^k$ has the form $\xi \varrho^k_{\xi,\lambda,r}$ for some vector $\xi\in \mathbb{Q}^3\cap \mathbb{S}^2$ and a scalar function $\varrho^k_{\xi,\lambda,r}$. There exists a pairwise symmetric tensor potential $\vartheta_{\xi, \la, r}^k : \mathbb{R}^2\rightarrow\mathbb{R}^{2^\dpot}$ such that $\varrho^k_{\xi,\lambda,r}=\la^{-\dpot}\div^D(\vartheta_{\xi, \la, r}^k)$.
     
     
     Then there exists $\mathcal{U}^k_{\xi,\lambda,r}:\mathbb{T}^3\rightarrow\mathbb{R}^3$ such that
    if $\{\xi,\xi',\xi''\} \subset \mathbb{Q}^3 \cap \mathbb{S}^2$ form an orthonormal basis of $\R^3$ with $\xi\times\xi'=\xi''$, then we have\footnote{The double index $ii$ indicates that $\div^{\Dpot-2} \left(\vartheta_{\xi,\lambda,r}^k \right)$ is a $2$-tensor, and we are summing over the diagonal components. The factor of $\sfrac 13$ appears because each component on the diagonal of this $3\times 3$ matrix is $\Delta^{-1} \varrho_{\xi,\lambda,r}^{k}$. The formula then follows from the identity $\curl \curl = -\Delta$ for divergence-free vector fields.}
    \begin{equation}
    \mathcal{U}_{\xi,\lambda,r}^k
     =  - \frac 13  \xi' \underbrace{\lambda^{-\Dpot} \xi''\cdot \nabla \left(\div^{\Dpot-2} \left(\vartheta_{\xi,\lambda,r}^k \right)\right)^{ii}}_{=:\varphi_{\xi,\lambda,r}^{\prime \prime k}}
     +  \frac 13
     \xi'' \underbrace{\lambda^{-\Dpot} \xi'\cdot \nabla \left(\div^{\Dpot-2} \left(\vartheta_{\xi,\lambda,r}^k \right)\right)^{ii}
     }_{=:\varphi_{\xi,\lambda,r}^{\prime k}}
     \label{eq:UU:explicit}
        \,, 
    \end{equation}
and thus
\begin{equation}
\label{eq:WW:explicit}
\curl \mathcal{U}^k_{\xi,\lambda,r} = \xi \lambda^{-\Dpot }\div^\Dpot  \left(\vartheta^k_{\xi,\lambda,r}\right) = \xi \varrho^k_{\xi,\lambda,r} =: \mathcal{W}^k_{\xi,\lambda,r}
\,,
\end{equation}
and 
\begin{equation}
    \xi \cdot \nabla \vartheta_{\xi,\lambda,r} =  (\xi \cdot \nabla) \mathcal{W}^k_{\xi,\lambda,r} 
    = (\xi \cdot \nabla )\mathcal{U}^k_{\xi,\lambda,r}
    = 0
    \,.
    \label{eq:derivative:along:pipe}
\end{equation}
    \item\label{item:pipe:2} The sets of functions $\{\varrho_{\xi,\lambda,r}^k\}_{k}$ and $\{\vartheta_{\xi,\lambda,r}^k\}_{k}$ are simultaneously $\left(\frac{\mathbb{T}^3}{\lambda r}  \right)$-periodic and $\left(\frac{\Tthreexi}{\lambda r n_\ast}  \right)$-periodic. 
    Each function in these sets has the support contained in 
    a pipe (cylinder) centered around a 
    line parallel to $\xi$ with the cross-sectional radius $\pi(4\la n_*)^{-1}$. Dividing a face of $\left(\frac{\Tthreexi}{\lambda r n_\ast}  \right)$ perpendicular to $\xi$ into a grid of squares with side-length $2\pi(4\la n_*)^{-1}$, there are $r^{-2}$ possible placements of the cylinder, indexed by $k$. In particular, we have $\supp \varrho_{\xi,\lambda,r}^k\cap \supp\varrho_{\xi,\lambda,r}^{k'}=\emptyset$ when $k\neq k'$ and same holds for $\vartheta_{\xi,\lambda,r}^k$.
    
    \item\label{item:pipe:3} $\mathcal{W}^k_{\xi,\lambda,r}$ is a stationary, pressureless solution to the Euler equations.
    \item\label{item:pipe:4} $\displaystyle{\dashint_{\mathbb{T}^3} \mathcal{W}^k_{\xi,\lambda,r} \otimes \mathcal{W}^k_{\xi,\lambda,r} = \xi \otimes \xi }$.
    \item\label{item:pipe:means}
    $\displaystyle{\dashint_{\mathbb{T}^3} |\mathcal{W}^k_{\xi,\lambda,r}|^2 \mathcal{W}^k_{\xi,\lambda,r} = \dashint_{\T^3} (\varrho_{\xi,\lambda,r}^k)^2 \mathcal{U}^k_{\xi,\lambda,r} = \int_{\T^3} \varrho^k_{\xi,\lambda,r} \mathcal{U}_{\xi,\lambda,r}^k = 0 \, . }$
    \item\label{item:pipe:5} For all $n\leq 3 \Nfin$, 
    \begin{equation}\label{e:pipe:estimates:1}
    {\left\| \nabla^n\vartheta^k_{\xi,\lambda,r} \right\|_{L^p(\mathbb{T}^3)} \lesssim \lambda^{n}r^{\left(\frac{2}{p}-1\right)} }, \qquad {\left\| \nabla^n\varrho^k_{\xi,\lambda,r} \right\|_{L^p(\mathbb{T}^3)} \lesssim \lambda^{n}r^{\left(\frac{2}{p}-1\right)} }
    \end{equation}
    and
    \begin{equation}\label{e:pipe:estimates:2}
    {\left\| \nabla^n\mathcal{U}^k_{\xi,\lambda,r} \right\|_{L^p(\mathbb{T}^3)} \lesssim \lambda^{n-1}r^{\left(\frac{2}{p}-1\right)} }, \qquad {\left\| \nabla^n\mathcal{W}^k_{\xi,\lambda,r} \right\|_{L^p(\mathbb{T}^3)} \lesssim \lambda^{n}r^{\left(\frac{2}{p}-1\right)} }.
    \end{equation}
    \item\label{item:pipe:3.5} We have that $\supp \vartheta_{\xi,\lambda,r}^k \subseteq B\left( \supp\varrho_{\xi,\lambda,r} ,2\lambda^{-1}\right)$.
    \item\label{item:pipe:6} Let $\Phi:\mathbb{T}^3\times[0,T]\rightarrow \mathbb{T}^3$ be the periodic solution to the transport equation
\begin{align}
\label{e:phi:transport}
\partial_t \Phi + v\cdot\nabla \Phi =0\,, 
\qquad 
\Phi|_{t=t_0} &= x\, ,
\end{align}
with a smooth, divergence-free, periodic velocity field $v$. Then
\begin{equation}\label{eq:pipes:flowed:1}
\nabla \Phi^{-1} \cdot \left( \mathcal{W}^k_{\xi,\lambda,r} \circ \Phi \right) = \curl \left( \nabla\Phi^T \cdot \left( \mathcal{U}^k_{\xi,\lambda,r} \circ \Phi \right) \right).
\end{equation}
\end{enumerate}
\end{proposition}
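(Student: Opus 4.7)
The plan is to construct $\mathcal{W}^k_{\xi,\lambda,r}$ explicitly from a single compactly supported scalar master profile on $\mathbb{R}^2$, then rescale, periodize, and lift to $\mathbb{T}^3$; the individual items follow by direct verification and scaling. First I would fix an orthonormal frame $\{\xi,\xi',\xi''\}$ with $\xi\times\xi'=\xi''$ and $n_*\xi,\,n_*\xi',\,n_*\xi''\in\mathbb{Z}^3$, and use coordinates $y=(y',y'')=(\xi'\cdot x,\,\xi''\cdot x)$ in the plane $\xi^\perp$. Pick a smooth bump $g_0:\mathbb{R}^2\to\mathbb{R}$ supported in the disk of radius $\pi/(4n_*)$ and set $\psi_0 := (\Delta_y)^{\Dpot/2} g_0$, defined via a compactly supported convolution kernel for each factor of $\Delta_y^{1/2}$, so that $\supp\psi_0\subseteq\supp g_0$. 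After normalizing, define $\varrho^k_{\xi,\lambda,r}(x) := \lambda r\,\psi_0\bigl(\lambda r(y-y_k)\bigr)$ periodized over the lattice $(\lambda r n_*)^{-1}\mathbb{Z}^2$, where $\{y_k\}$ indexes the $r^{-2}$ disjoint translates inside a fundamental cell of side $(\lambda r)^{-1}$. Then define $\vartheta^k_{\xi,\lambda,r}$ as the symmetric $\Dpot$-tensor whose components are the corresponding dilates of $g_0$ and appropriate partial derivatives, arranged so that $\lambda^{-\Dpot}\div^\Dpot \vartheta^k_{\xi,\lambda,r}=\varrho^k_{\xi,\lambda,r}$. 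The support property~\eqref{item:pipe:3.5} and items~\eqref{item:pipe:1}--\eqref{item:pipe:2} are immediate by construction.

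For the remaining properties, set $\mathcal{W}^k_{\xi,\lambda,r} = \xi\,\varrho^k_{\xi,\lambda,r}$ and define $\mathcal{U}^k_{\xi,\lambda,r}$ by~\eqref{eq:UU:explicit}. Since the trace $(\div^{\Dpot-2}\vartheta^k_{\xi,\lambda,r})^{ii}$ equals, up to normalization, $\lambda^{\Dpot-2}$ times $\Delta_y^{-1}\varrho^k_{\xi,\lambda,r}$, and $\curl\curl=-\Delta$ on divergence-free fields, the identity $\curl\mathcal{U}^k_{\xi,\lambda,r}=\mathcal{W}^k_{\xi,\lambda,r}$ of~\eqref{eq:WW:explicit} follows. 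Item~\eqref{item:pipe:3} is immediate: $\div\mathcal{W}=\xi\cdot\nabla\varrho=0$ by~\eqref{eq:derivative:along:pipe}, and $(\mathcal{W}\cdot\nabla)\mathcal{W}=\varrho(\xi\cdot\nabla)(\xi\varrho)=0$. Item~\eqref{item:pipe:4} is the chosen normalization of $g_0$, since $\dashint\mathcal{W}\otimes\mathcal{W}=(\dashint\varrho^2)\,\xi\otimes\xi$. For the trilinear cancellation~\eqref{item:pipe:means}, all three integrals vanish once one imposes an odd-symmetry condition on $g_0$: this makes $\varrho^k_{\xi,\lambda,r}$ odd under reflection, whence $\dashint\varrho^3=0$, and the remaining two integrals $\dashint\varrho^m\mathcal{U}$ for $m=1,2$ reduce by integration by parts and the derivative structure of $\mathcal{U}$ to similarly symmetric integrals that vanish. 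The $L^p$ bounds~\eqref{e:pipe:estimates:1}--\eqref{e:pipe:estimates:2} follow by pure scaling from unit-scale bounds on $g_0$ and $\psi_0$: dilation by $\lambda r$ together with periodization at scale $(\lambda r)^{-1}$ produces the intermittency factor $r^{2/p-1}$, each derivative costs $\lambda$, and $\mathcal{U}$ carries one more anti-derivative than $\mathcal{W}$. Finally, item~\eqref{item:pipe:6} is the general identity $\nabla\Phi^{-1}\cdot(\curl F)\circ\Phi=\curl\bigl(\nabla\Phi^T\cdot(F\circ\Phi)\bigr)$ for any volume-preserving $\Phi$ and smooth $F$, which follows by expanding both sides in coordinates and using $\det\nabla\Phi=1$; applying it with $F=\mathcal{U}^k_{\xi,\lambda,r}$ yields~\eqref{eq:pipes:flowed:1}.

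The main difficulty is satisfying three competing structural constraints inside a single construction: the cross-section of a single pipe must fit in a disk of radius $\lesssim\lambda^{-1}$; the $r^{-2}$ placements indexed by $k$ must tile a fundamental cell of side $(\lambda r)^{-1}$ without overlap, as required by Proposition~\ref{prop:bundling} and the dodging to be performed in Section~\ref{sec:dodging}; and the potential $\vartheta^k_{\xi,\lambda,r}$, defined through the nonlocal iterated inverse-divergence relation $\lambda^{-\Dpot}\div^\Dpot\vartheta=\varrho$, must nonetheless remain supported within a $2\lambda^{-1}$-enlargement of $\supp\varrho^k_{\xi,\lambda,r}$. This third point cannot be achieved by Fourier inversion and forces a careful use of compactly supported convolution kernels whose $\Dpot$-fold iteration produces $\varrho$ from a compactly supported generator. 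The same kernel choice must then simultaneously respect the symmetry of $g_0$ needed for the cubic cancellations in~\eqref{item:pipe:means}, and the precise algebraic form of $\mathcal{U}$ in~\eqref{eq:UU:explicit} that makes the curl identity~\eqref{eq:WW:explicit} hold together with the Euler stationarity in item~\eqref{item:pipe:3}; threading these constraints through a single family of building blocks is the main technical obstacle.
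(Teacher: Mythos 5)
Your overall strategy — a compactly supported master profile on $\mathbb{R}^2$, suitable dilation and periodization, a tensor potential built from iterated Laplacians of a primitive $g_0$, and direct verification — is the standard template for intermittent pipe flows and is presumably what the companion paper does. However, two of your steps contain errors that would break the proposition as stated.

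\textbf{Scaling is wrong.} You define $\varrho^k_{\xi,\lambda,r}(x):=\lambda r\,\psi_0\bigl(\lambda r(y-y_k)\bigr)$, periodized over a lattice of cell size $\approx(\lambda r n_*)^{-1}$. With this dilation the support of a single pipe has cross-sectional radius $\approx\pi(4n_*\lambda r)^{-1}$, which is comparable to the cell itself. That makes it impossible to fit $r^{-2}$ disjoint translates as item~\eqref{item:pipe:2} demands, and contradicts the stated radius $\pi(4\lambda n_*)^{-1}$. It also kills the $L^p$ estimates of item~\eqref{item:pipe:5}: with your scaling $\|\varrho\|_{L^p}\approx\lambda r$ for all $p$, with no intermittency factor $r^{2/p-1}$. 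The correct normalization is to dilate the profile by $\lambda$ (so the cross-section has radius $\sim\lambda^{-1}$) and to set the amplitude to a constant multiple of $r^{-1}$, so that $\dashint_{\T^3}|\varrho|^2=1$ after periodizing at scale $(\lambda r)^{-1}$; this simultaneously yields $\|\nabla^n\varrho\|_{L^p}\lesssim\lambda^n r^{2/p-1}$, item~\eqref{item:pipe:4}, and the disjointness of the $r^{-2}$ placements. The potential relation $\varrho^k=\lambda^{-\dpot}\div^\dpot\vartheta^k$ also only comes out with the prefactor $\lambda^{-\dpot}$ if the dilation is by $\lambda$.

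\textbf{The symmetry argument for item~\eqref{item:pipe:means} is incomplete.} Writing $\phi=\Delta^{-1}\varrho$ in the plane coordinates $(y',y'')$, one has $\mathcal{U}=-\xi'\partial_{y''}\phi+\xi''\partial_{y'}\phi$. The third integral $\int\varrho\,\mathcal{U}$ vanishes with no symmetry hypothesis at all, since $\int(\Delta\phi)\,\partial_{y'}\phi=-\tfrac12\int\partial_{y'}|\nabla\phi|^2=0$ and likewise for $\partial_{y''}$. You do not need, and should not invoke, a symmetry argument there. The second integral $\int\varrho^2\,\mathcal{U}$ is the one requiring structure, and a single reflection oddness (say $\varrho(-y',y'')=-\varrho(y',y'')$) only kills the component involving $\partial_{y''}\phi$; the component $\int\varrho^2\partial_{y'}\phi$ is then the integral of an even-in-$y'$ function and does not vanish from that symmetry alone. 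What works is either oddness under \emph{both} reflections $y'\mapsto-y'$ and $y''\mapsto-y''$, or radial symmetry of the profile combined with a choice of radial shape making $\int R^3\,r\,dr=0$. As written, your sketch asserts the cancellation without a mechanism that actually produces it.

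The remaining items are handled correctly in spirit: item~\eqref{item:pipe:3} follows from $\xi\cdot\nabla\varrho=0$ and $\div\mathcal{W}=0$; the curl identity~\eqref{eq:WW:explicit} is indeed $\curl(-\xi'\partial_{y''}\phi+\xi''\partial_{y'}\phi)=\xi\Delta\phi$; and item~\eqref{item:pipe:6} is the general transformation law for curl potentials under volume-preserving flows, which is a coordinate computation using $\det\nabla\Phi=1$. The point about compactly supported kernels giving item~\eqref{item:pipe:3.5} is also the right consideration. With the dilation fixed and a correct symmetry, the argument would be sound.
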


We will choose $\WW_{(\xi), \diamond}^I:=\mathcal{W}^{m'}_{\xi,\lambda_{q+\bn},\sfrac{\lambda_{q+\half}\Gamma_q}{\lambda_{q+\bn}}}$ for some $m'=m'_{(\xi),\diamond,I}$. The indices $m'=m'_{(\xi),\diamond,I}$ 
stand for the placement of the pipes, which will be determined later in Section 
\ref{sec:dodging}.  Next, we introduce the $L^3$-normalized intermittent pipe flows.

\begin{proposition}[\bf Intermittent pipe flows for current corrector]
\label{prop:pipe.flow.current}
The intermittent pipe flows $\mathcal{W}_{\xi,\la, r}^k$ for current correctors defined in \cite[Proposition~4.6]{GKN23} satisfy the same properties as in Proposition \ref{prop:pipeconstruction}, but item~\ref{item:pipe:4} is not imposed, and items~\ref{item:pipe:means}--\ref{item:pipe:5} are replaced by
\begin{enumerate}[(1)]
\setcounter{enumi}{4}
    \item\label{item:pipe:means:current}
    $\displaystyle{\dashint_{\mathbb{T}^3} |\mathcal{W}^k_{\xi,\lambda,r}|^2 \mathcal{W}^k_{\xi,\lambda,r} = |\xi|^2\xi} \, $, \quad $\displaystyle \dashint_{\T^3} (\varrho_{\xi,\lambda,r}^k)^2 \mathcal{U}^k_{\xi,\lambda,r} = \dashint_{\T^3} \varrho^k_{\xi,\lambda,r} \mathcal{U}^k_{\xi,\lambda,r} = 0$.
    \item\label{item:pipe:5:current} For all $n\leq 3 \Nfin$, 
    \begin{equation}\label{e:pipe:estimates:1:current}
    {\left\| \nabla^n\vartheta^k_{\xi,\lambda,r} \right\|_{L^p(\mathbb{T}^3)} \lesssim \lambda^{n} r^{\left(\frac{2}{p}-\frac23\right)} }, \qquad {\left\| \nabla^n\varrho^k_{\xi,\lambda,r} \right\|_{L^p(\mathbb{T}^3)} \lesssim \lambda^{n}r^{\left(\frac{2}{p}-\frac23\right)} }
    \end{equation}
    and
    \begin{equation}\label{e:pipe:estimates:2:current}
    {\left\| \nabla^n\mathcal{U}^k_{\xi,\lambda,r} \right\|_{L^p(\mathbb{T}^3)} \lesssim \lambda^{n-1}r^{\left(\frac{2}{p}-\frac23\right)} }, \qquad {\left\| \nabla^n\mathcal{W}^k_{\xi,\lambda,r} \right\|_{L^p(\mathbb{T}^3)} \lesssim \lambda^{n}r^{\left(\frac{2}{p}-\frac23\right)} } \, .
    \end{equation}
\end{enumerate}
\end{proposition}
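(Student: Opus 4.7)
The plan is to mirror the construction used in Proposition \ref{prop:pipeconstruction} almost verbatim, changing only the normalization of the underlying profile function. Concretely, I would start with a smooth scalar profile $\varrho_0 : \mathbb{R}^2 \to \mathbb{R}$ whose support is contained in a disk of radius $\pi/(4n_*)$ around the origin, and then periodize to a $(\mathbb{T}^2)$-periodic function so that the resulting pipe, after being extended trivially in the $\xi$-direction, is $(\mathbb{T}^3/(\lambda r))^3$- and $(\mathbb{T}^3_\xi/(\lambda r n_*))$-periodic. The key change is to choose $\varrho_0$ (and then each placement $\varrho_0^k$ by translation in the cross-section) so that
\begin{equation*}
\dashint_{\mathbb{T}^2} \varrho_0^3 \, dx_\perp = 1, \qquad \dashint_{\mathbb{T}^2} \varrho_0 \, dx_\perp = 0,
\end{equation*}
in contrast to the $L^2$-normalization $\int \varrho_0^2 = 1$ and vanishing cubic mean used in Proposition \ref{prop:pipeconstruction}. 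Such a profile is easy to construct, for instance by taking $\varrho_0 = \eta + c$ on its supporting disk where $\eta$ is a standard bump and $c$ is adjusted to kill the mean while $\eta$ is rescaled to fix the cube integral, with residual moments removed by adding a second bump of opposite sign and disjoint support.

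Next I would define the intermittent profile by the two-dimensional rescaling
\begin{equation*}
\varrho_{\xi,\lambda,r}^k(x) := r^{-\sfrac{2}{3}} \varrho_0^k\!\bigl(\lambda r (x - x_k)_\perp\bigr),
\end{equation*}
where $x_\perp$ denotes projection onto $\xi^\perp$ and $\{x_k\}$ indexes the $r^{-2}$ disjoint placements within a single periodic cell on the grid described in item~\eqref{item:pipe:2}. Setting $\mathcal{W}_{\xi,\lambda,r}^k := \xi \varrho_{\xi,\lambda,r}^k$ gives item~\eqref{item:pipe:1} (through repeated application of $\Delta^{-1}$ together with the commutation $\xi\cdot\nabla\varrho = 0$, which provides the potential $\vartheta_{\xi,\lambda,r}^k$ and the vector potential $\mathcal{U}_{\xi,\lambda,r}^k$ exactly as in \eqref{eq:UU:explicit}--\eqref{eq:derivative:along:pipe}). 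The periodicity, support, and disjointness properties of item~\eqref{item:pipe:2} and the support relation item~\eqref{item:pipe:3.5} are unchanged since the construction is still a cross-sectional rescaling of a compactly supported profile, and item~\eqref{item:pipe:3} (stationary, pressureless Euler) follows directly from $\mathcal{W} = \xi \varrho_{\xi,\lambda,r}^k$ with $\xi \cdot \nabla \varrho_{\xi,\lambda,r}^k \equiv 0$.

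For the new normalization identities \eqref{item:pipe:means:current}, I would compute
\begin{equation*}
\dashint_{\mathbb{T}^3} |\mathcal{W}_{\xi,\lambda,r}^k|^2 \mathcal{W}_{\xi,\lambda,r}^k = |\xi|^2 \xi \dashint_{\mathbb{T}^3} (\varrho_{\xi,\lambda,r}^k)^3 = |\xi|^2 \xi \cdot r^{-2} \cdot r^2 \dashint_{\mathbb{T}^2} \varrho_0^3 = |\xi|^2 \xi,
\end{equation*}
using that the rescaling Jacobian is $r^{-2}$ while the cube of the $r^{-\sfrac{2}{3}}$ prefactor contributes $r^{-2}$ times the $L^3$ integral, which cancels to give $1$. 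The two vanishing integrals involving $\mathcal{U}$ follow from the fact that $\mathcal{U} = \Delta^{-1} \operatorname{curl}(\xi \varrho)$ inherits a representation as a derivative of $\varrho$-polynomials with zero mean (since $\dashint \varrho_0 = 0$), so that the products $\varrho^2 \mathcal{U}$ and $\varrho\mathcal{U}$ integrate to zero over the cross-section after an integration by parts along $\xi^\perp$.

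Finally, the estimates in \eqref{e:pipe:estimates:1:current}--\eqref{e:pipe:estimates:2:current} follow by a direct scaling computation: differentiating $\varrho_{\xi,\lambda,r}^k$ brings down a factor of $\lambda$ per derivative (since the cross-sectional derivatives scale like $\lambda r$, but only $r^{-2}$ placements lie in a periodic cell of size $(\lambda r)^{-1}$, which combined with the pipe thickness $\lambda^{-1}$ gives the claimed gain), and the $L^p$ norm of each rescaled profile is
\begin{equation*}
\|\varrho_{\xi,\lambda,r}^k\|_{L^p(\mathbb{T}^3)} \lesssim r^{-\sfrac{2}{3}} \cdot r^{\sfrac{2}{p}} = r^{\sfrac{2}{p} - \sfrac{2}{3}},
\end{equation*}
with the analogous computation for $\mathcal{U}$ gaining an extra factor of $\lambda^{-1}$ from the inverse Laplacian. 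The same estimates transfer to $\vartheta$ through the identity $\varrho = \lambda^{-\Dpot}\div^\Dpot \vartheta$. The only step that requires genuine care is item~\eqref{item:pipe:6} (the curl identity under transport), but this is identical to the Reynolds case and follows from the divergence-free nature of $v$ together with the chain rule; I would simply invoke the proof from Proposition~\ref{prop:pipeconstruction} verbatim since it never uses the normalization $\int \varrho_0^2 = 1$. The main conceptual obstacle is ensuring that the modified $\varrho_0$ simultaneously satisfies $\dashint \varrho_0 = 0$ and $\dashint \varrho_0^3 = 1$ while remaining supported in a disk of the required radius, which is the one subtle arithmetic of the construction; everything else is scaling and repetition of the earlier argument.
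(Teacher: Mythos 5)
This proposition is recalled from the companion paper: the text explicitly defers to \cite[Proposition~4.6]{GKN23}, and the present paper does not give a proof. Your reconstruction has the right high-level idea (swap the $L^2$ normalization for an $L^3$ normalization, trading the amplitude $r^{-1}$ for $r^{-\sfrac23}$, which makes $p=3$ the neutral exponent in $r^{\sfrac2p-\sfrac23}$), and the organization --- reuse the Reynolds-case machinery for the potentials, periodicity, and transport --- is sound. But two concrete steps do not hold up as written.

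First, your scaling bookkeeping is inconsistent with the geometry you cite. You take a profile $\varrho_0$ supported in a disk of radius $\pi/(4n_*)$ on the unit torus and rescale by $\lambda r$; that yields a pipe of cross-sectional radius $\sim (\lambda r)^{-1}$, whereas item~(2) requires radius $\sim\lambda^{-1}$ inside a periodic cell of side $(\lambda r)^{-1}$. Relatedly, your normalization line $\dashint_{\T^3}(\varrho^k)^3 = r^{-2}\cdot r^2\dashint_{\T^2}\varrho_0^3$ produces a factor $r^2$ that has no source under your stated hypotheses: for a $\T^2$-periodic profile, $\dashint_{\T^2}\varrho_0^3$ is an $r$-independent constant, and the rescaling Jacobian cancels with the shrunken domain. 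The $r^2$ must be the measure of $\supp\varrho_0$ inside the unit cell, which forces the profile to live in a disk of radius $\sim r$ (equivalently: rescale by $\lambda$ and periodize separately at scale $(\lambda r)^{-1}$). Once this is fixed the computation does close, but as written the exponents don't track.

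Second, the claim that $\dashint\varrho^2\mathcal{U} = 0$ "follows after an integration by parts" is not correct as stated. For $\dashint\varrho\,\mathcal{U}$, writing $\varrho = \Delta\psi$ on the cross-section gives $\dashint(\Delta\psi)\partial_j\psi = -\tfrac12\dashint\partial_j|\nabla\psi|^2 = 0$, so that one is automatic. But $\dashint\varrho^2\partial_j\psi = -2\dashint\varrho(\partial_j\varrho)\psi$ has no reason to vanish in general; the standard route is a parity/reflection symmetry of the profile in the cross-sectional plane (so that $\varrho^2$ is even while each $\partial_j\Delta^{-1}\varrho$ is odd in $y_j$), and you never impose any symmetry on $\varrho_0$. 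Without that, one of the three identities in item~(5) is not established. In short: right skeleton, but the support-size/normalization ledger and the symmetry argument need to be made honest.
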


\subsection{Cutoff functions}\label{subsec:result:par1:2}
We introduce and recall basic facts about time cutoffs, flow maps, intermittent pressure cutoffs, mildly and strongly anisotropic checkerboard cutoff functions, velocity cutoff functions, and cutoff aggregation lemmas.  Further details are contained in each subsubsection.

\subsubsection{Time cutoffs}
\label{sec:cutoff:temporal:definitions}
We introduce two collections of basic temporal cutoffs,\index{$\chi_{i,k,q}$}\index{$\tilde\chi_{i,k,q}$} which will be useful for estimating flow maps defined on various regions of space-time.  We refer the reader to \cite[subsection~5.1]{GKN23}, for further details (although these are the simplest cutoffs contained in this manuscript and do not require sophisticated definitions). The first set of temporal cutoff functions $\chi_{i,k,q}(t)$ satisfies
\begin{subequations}\label{eq:chi}
\begin{align}
t\in \supp \chi_{i,k,q} \iff& t\in \left[ (k-1)\sfrac 12 \tau_q \Gamma_{q}^{-i-2}, (k+1) \sfrac 12 \tau_q \Gamma_{q}^{-i-2} \right]  \, , \label{eq:chi:support}\\
|\partial_t^m \chi_{i,k,q}| &\les (\Gamma_{q}^{i+2} \tau_{q}^{-1})^m \qquad \textnormal{for }m\geq 0 \, ,
\label{eq:chi:cut:dt} \\
\chi_{i,k_1,q}(t)\chi_{i,k_2,q}(t) &\equiv 0 \qquad \qquad\textnormal{unless }|k_1-k_2|\leq 1 \,  \label{e:chi:overlap}\\
 \sum_{k \in \Z} \chi_{i,k,q}^6 &\equiv 1 \, .
 \label{eq:chi:cut:partition:unity}
\end{align}
\end{subequations}
The second set of temporal cutoff functions $\tilde\chi_{i,k,q}$ has a slightly longer timescale and is only needed for technical reasons; see Lemma~\ref{lem:dodging}.  They are supported in the set of times $t$ satisfying
\begin{equation}\label{eq:chi:tilde:support}
\left| t-\tau_q \Gamma_{q}^{-i-2} k \right| \leq \tau_q\Gamma_{q}^{-i} \,.
\end{equation}
In addition, if $(i,k)$ and $(\istar,\kstar)$ are such that $\supp \chi_{i,k,q} \cap \supp \chi_{\istar,\kstar,q}\neq\emptyset$ and $\istar\in\{i-1,i,i+1\}$, then as a consequence of these definitions and a sufficiently large choice of $\lambda_0$, we have that $\supp \chi_{i,k,q} \subset \supp \tilde\chi_{\istar,\kstar,q}$.

\subsubsection{Velocity cutoffs}\index{$\psi_{i,q}$}\index{velocity cutoffs}  Recall from subsection~\ref{sec:cutoff:inductive} that one should heuristically think of $\psi_{i,q}$ as the characteristic function of the set where $|\nabla \hat u_{q}| \approx \tau_{q}^{-1}\Ga_{q}^i$. The most important properties of the velocity cutoffs are the partition of unity property in \eqref{eq:inductive:partition}, the maximum value of $i$ in \eqref{eq:imax:upper:lower}, the derivative bounds for the cutoffs in~\eqref{eq:nasty:Dt:psi:i:q:orangutan}, and the estimate~\eqref{eq:psi:q:q'}, which controls a weighted sum of velocity cutoffs (and by extension $\nabla \hat u_q$ itself) by a pressure increment. We will construct the new velocity cutoffs $\psi_{i,\qbn}$ in Proposition~\ref{prop:verified:vel:cutoff} after constructing the velocity increment in Definition~\ref{def:wqbn}.

\subsubsection{Estimates on flow maps}\label{s:deformation}
\label{sec:cutoff:flow:maps}

The first lemma and corollary in this subsubsection assert that forwards and backwards Lagrangian trajectories of the velocity field $ \hat u_{q'}$ restricted to the local Lipschitz timescale determined by the cutoff functions stay contained in the support of a few neighboring velocity cutoff functions. The results are discussed further in \cite[Section~5.2]{GKN23}.

\begin{lemma}[\bf Lagrangian paths don't jump many supports]
\label{lem:dornfelder}
Let $q'\leq q+\bn-1$ and $(x_0,t_0)$ be given. Assume that the index $i$ is such that $\psi_{i,q'}^2(x_0,t_0) \geq \kappa^2$, where $\kappa\in\left[\frac{1}{16},1\right]$. Then the forward flow $(X(t),t) := (X(x_0,t_0;t),t)$ of the velocity field $\hat u_{q'}$ originating at $(x_0,t_0)$ has the property that $\psi_{i,q'}^2(X(t),t) \geq\sfrac{\kappa^2}{2}$ for all $t$ such that $|t - t_0|\leq \tau_{q'} \Gamma_{q'}^{-i+4}$.
\end{lemma}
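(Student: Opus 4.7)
The plan is a standard Grönwall / bootstrap argument along the Lagrangian trajectory. Define $f(t) := \psi_{i,q'}^2(X(t),t)$, so that by hypothesis $f(t_0) \geq \kappa^2 \geq 2^{-8}$. Since $X(t)$ is the forward flow of $\hat u_{q'}$, differentiation along the trajectory yields
\begin{equation*}
f'(t) \;=\; (D_{t,q'}\psi_{i,q'}^2)(X(t),t) \;=\; 2\,\psi_{i,q'}(X(t),t)\,(D_{t,q'}\psi_{i,q'})(X(t),t),
\end{equation*}
where $D_{t,q'} = \partial_t + \hat u_{q'}\cdot\nabla$. The main input is the pointwise inductive bound \eqref{eq:nasty:Dt:psi:i:q:orangutan} applied with $N=0$, $M=1$, which gives, on the support of $\psi_{i,q'}$,
\begin{equation*}
|D_{t,q'}\psi_{i,q'}| \;\leq\; \psi_{i,q'}^{1-1/\Nfin}\,\Gamma_{q'}^{i-3}\tau_{q'}^{-1},
\end{equation*}
and hence the sub-linear differential inequality $|f'(t)| \leq 2\,f(t)^{1-1/(2\Nfin)}\,\Gamma_{q'}^{i-3}\tau_{q'}^{-1}$ whenever $X(t)\in\supp\psi_{i,q'}$.

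The key observation is that this inequality linearizes under the substitution $g(t) := f(t)^{1/(2\Nfin)}$, producing
\begin{equation*}
|g'(t)| \;\leq\; \Gamma_{q'}^{i-3}\tau_{q'}^{-1}/\Nfin.
\end{equation*}
I would then run a standard bootstrap. Let $J\ni t_0$ be the maximal subinterval of $[t_0-\tau_{q'}\Gamma_{q'}^{-i+4},\,t_0+\tau_{q'}\Gamma_{q'}^{-i+4}]$ on which $f\geq\kappa^2/2$. On $J$ we have $\psi_{i,q'}(X(t),t)\geq \kappa/\sqrt{2}>0$, so $X(t)\in\supp\psi_{i,q'}$ and the bound on $|g'|$ applies; integrating over $J$ yields $|g(t)-g(t_0)|\leq \Gamma_{q'}/\Nfin$. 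Comparing this with $g(t_0)(1-2^{-1/(2\Nfin)})\sim g(t_0)\log 2/(2\Nfin)$, which is the exact margin needed to keep $f\geq\kappa^2/2$, the parameter choices in Section~\ref{sec:params} (with $\Nfin$ taken far larger than any power of $\Gamma$ that appears in the iteration) ensure that the decrease of $g$ is strictly below this margin. Consequently $J$ cannot terminate inside the stated interval and must coincide with it, giving the claim.

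The main technical point to verify is the quantitative comparison between the derivative gain $\Gamma_{q'}^{-4}$ built into \eqref{eq:nasty:Dt:psi:i:q:orangutan} and the $\Gamma_{q'}^{+4}$ extension of the time window; the $\psi_{i,q'}^{1-1/\Nfin}$ weight on the right-hand side of the derivative bound is essential, since it converts the crude estimate into the sub-linear ODE inequality above. A slightly sharper route, should the constants require it, is to split $D_{t,q'}=D_{t,q'-1}+\hat w_{q'}\cdot\nabla$ and bound each piece separately using \eqref{eq:sharp:Dt:psi:i:q:mixed:old} and \eqref{eq:nasty:D:wq:old}, then invoke the inductive timescale constraint \eqref{eq:inductive:timescales} on the overlap $\supp\psi_{i,q'}\cap\supp\psi_{i'',q'-1}$ to exchange $\Gamma_{q'-1}^{i''+3}\tau_{q'-1}^{-1}$ for a constant times $\Gamma_{q'}^i\tau_{q'}^{-1}$ with a large negative power of $\Gamma$ to spare.
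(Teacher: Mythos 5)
Your overall strategy — bootstrap along the Lagrangian trajectory, invoke the pointwise inductive bound \eqref{eq:nasty:Dt:psi:i:q:orangutan}, and linearize the sub-linear ODE via $g=f^{1/(2\Nfin)}$ — is the right one, and your first two displayed inequalities are correct. However, the quantitative closure of the bootstrap is where the argument breaks, and you have misidentified what is supposed to close it.

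Using the (sharper) first line of \eqref{eq:nasty:Dt:psi:i:q:orangutan} with $N=0$, $M=1$ gives
\begin{equation*}
|D_{t,q'}\psi_{i,q'}| \;\leq\; \psi_{i,q'}^{1-1/\Nfin}\,\Gamma_{q'}^{i-4}\tau_{q'}^{-1}\qquad\text{on }\supp\psi_{i,q'}\,,
\end{equation*}
hence $|g'|\leq\Gamma_{q'}^{i-4}\tau_{q'}^{-1}/\Nfin$. Integrated over $|t-t_0|\leq\tau_{q'}\Gamma_{q'}^{-i+4}$, this yields $|g(t)-g(t_0)|\leq 1/\Nfin$: the factor $\Gamma_{q'}^{i-4}$ in the derivative cost cancels the $\Gamma_{q'}^{-i+4}$ in the time window \emph{exactly}, with no power of $\Gamma_{q'}$ to spare. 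On the other side, since $g(t_0)=\kappa^{1/\Nfin}\leq 1$ and $1-2^{-x}\leq x\log 2$, the allowed decrease of $g$ is at most $\tfrac{\log 2}{2\Nfin}\approx\tfrac{0.35}{\Nfin}$. Both quantities scale as $\Nfin^{-1}$, so ``taking $\Nfin$ far larger than any power of $\Gamma$'' does nothing: the ratio of integrated bound to margin tends to $2/\log 2\approx 2.9>1$ as $\Nfin\to\infty$. Put differently, working with $\psi$ directly and using $\psi^{1-1/\Nfin}\approx\psi$ on the bootstrap set, Gronwall only gives $\psi(t)\gtrsim\kappa e^{-1}$, not $\psi(t)\geq\kappa/\sqrt{2}$, and $e^{-1}<2^{-1/2}$. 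This is a genuine constant-factor gap, not one that $\Nfin$ can absorb, so the sentence ``the parameter choices \dots ensure that the decrease of $g$ is strictly below this margin'' is where the proof fails.

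Your proposed ``sharper route'' does not repair it. Splitting $D_{t,q'}=D_{t,q'-1}+\hat w_{q'}\cdot\nabla$ and applying \eqref{eq:sharp:Dt:psi:i:q:mixed:old} with $N=0$, $|\alpha|=1$, $|\beta|=0$ bounds the dominant $\hat w_{q'}\cdot\nabla$ contribution by $\psi_{i,q'}^{1-1/\Nfin}\,\Gamma_{q'}\cdot\Gamma_{q'}^{i-5}\tau_{q'}^{-1}=\psi_{i,q'}^{1-1/\Nfin}\Gamma_{q'}^{i-4}\tau_{q'}^{-1}$ — the same cost as before — while the $D_{t,q'-1}$ piece only adds to the total. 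What the argument actually requires is a genuine small factor independent of $\Nfin$: either an extra power of $\Gamma_{q'}^{-1}$ in the material-derivative estimate beyond what the quoted hypotheses supply, or a shorter time window such as $\tau_{q'}\Gamma_{q'}^{-i+3}$ (compare Corollary~\ref{cor:dornfelder}). Without identifying where that small factor comes from — whether from a finer inductive bound in the cutoff construction or from a different comparison — the bootstrap as you have set it up does not close.
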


\begin{corollary}[\bf Backwards Lagrangian paths don't jump many supports]
\label{cor:dornfelder}
Suppose $(x_0,t_0)$ is such that $\psi^2_{i,q'}(x_0,t_0)\geq \kappa^2$, where $\kappa\in\left[\sfrac{1}{16},1\right]$. For $\abs{t-t_0}\leq\tau_{q'}\Gamma_{q'}^{-i+3}$, define $x$ to satisfy $x_0=X(x,t;t_0)$. Then we have that $\psi_{i,q'}(x,t)\neq 0$.
\end{corollary}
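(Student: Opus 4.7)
The plan is to observe that the point $x$ defined implicitly by $x_0 = X(x,t;t_0)$ is precisely the forward Lagrangian image of $x_0$ at time $t$ under the flow of $\hat u_{q'}$; once this identification is made, Corollary~\ref{cor:dornfelder} reduces immediately to Lemma~\ref{lem:dornfelder}.

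First I would invoke the reversibility of the ODE $\partial_t X = \hat u_{q'}(X,t)$. Since $\hat u_{q'}$ is smooth and divergence-free (cf.~\eqref{eq:bob:old}), the flow $\Phi_{s,t}: y \mapsto X(y,s;t)$ is a diffeomorphism of $\T^3$ with inverse $\Phi_{t,s}$, and hence the hypothesis $x_0 = X(x,t;t_0) = \Phi_{t,t_0}(x)$ is equivalent to
\[
x = \Phi_{t_0,t}(x_0) = X(x_0,t_0;t).
\]
Next, using $\psi^2_{i,q'}(x_0,t_0) \geq \kappa^2$ with $\kappa \in [\sfrac{1}{16},1]$, I would apply Lemma~\ref{lem:dornfelder} directly. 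Because $\Gamma_{q'} \geq 1$, the time window $|t-t_0| \leq \tau_{q'}\Gamma_{q'}^{-i+3}$ is contained in the longer window $|t-t_0| \leq \tau_{q'}\Gamma_{q'}^{-i+4}$ on which Lemma~\ref{lem:dornfelder} applies, giving $\psi^2_{i,q'}(X(x_0,t_0;t),t) \geq \sfrac{\kappa^2}{2}$. Substituting $x = X(x_0,t_0;t)$ then yields
\[
\psi^2_{i,q'}(x,t) \;\geq\; \frac{\kappa^2}{2} \;\geq\; \frac{1}{512} \;>\; 0,
\]
which in particular forces $\psi_{i,q'}(x,t) \neq 0$, as desired.

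The only obstacle here is notational rather than analytic: one must correctly invert the implicit definition of $x$ and recognize it as the forward flow image used in Lemma~\ref{lem:dornfelder}. Granting this identification, the corollary is essentially a cosmetic restatement of the previous lemma from a time-reversed viewpoint, with the slightly shorter interval $\tau_{q'}\Gamma_{q'}^{-i+3}$ (versus $\tau_{q'}\Gamma_{q'}^{-i+4}$) providing a harmless buffer of $\Gamma_{q'}$ that absorbs any potential edge case.
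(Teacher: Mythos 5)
Your proposal is correct and takes the natural (and surely the intended) route: invert the implicit definition $x_0 = X(x,t;t_0)$ via the flow group property to get $x = X(x_0,t_0;t)$, note that the time window $\tau_{q'}\Gamma_{q'}^{-i+3}$ is contained in the window $\tau_{q'}\Gamma_{q'}^{-i+4}$ of Lemma~\ref{lem:dornfelder} since $\Gamma_{q'}\geq 1$, and conclude $\psi_{i,q'}^2(x,t)\geq\kappa^2/2>0$. The arithmetic lower bound $\kappa^2/2\geq 1/512$ and the identification of forward flow with the inverse of the backward flow are both handled correctly, so nothing is missing.
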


We can now define and estimate the flow\index{flow maps}\index{$\Phi_{i,k,q}$, $\Phi_{(i,k)}$} of the vector field $\hat u_{q'}$ for $q'\leq q+\bn-1$ on the support of a velocity and time cutoff function. These estimates are also contained in \cite[subsection~5.2]{GKN23} and are proven in a similar manner to all such estimates for convex integration schemes.
\begin{definition}[\bf Flow maps]\label{def:transport:maps} We define $\Phi_{i,k,q'}(x,t)=\Phi_{(i,k)}(x,t)$ by
\begin{equation}\label{e:Phi}
\left\{\begin{array}{l}
(\partial_t + \hat u_{q'} \cdot\nabla) \Phi_{i,k,q'} = 0 \\
\Phi_{i,k,q'}(x,k{\tau_{q'}}\Gamma_{q'}^{-i-2})=x\, .
\end{array}\right.
\end{equation}
\end{definition}
\noindent We denote the inverse of the gradient $D\Phi_{(i,k)}$ by $(D\Phi_{(i,k)})^{-1}$, in contrast to $D\Phi_{(i,k)}^{-1}$, which is the gradient of $\Phi_{(i,k)}^{-1}$.

\begin{corollary}[\bf Deformation bounds]
\label{cor:deformation}
For $k \in \Z$, $0 \leq i \leq  i_{\rm max}$, $q'\leq q+\bn-1$, and $2 \leq N \leq \sfrac{3\Nfin}{2}+1$, we have the following bounds on the set $\Omega_{i,q',k}:=\supp \psi_{i,q'}(x,t){\tilde\chi_{i,k,q'}(t)}$.
\begin{subequations}
\begin{align}
\norm{D\Phi_{(i,k)} - {\rm Id}}_{L^\infty(\Omega_{i,q',k})} + \norm{(D\Phi_{(i,k)})^{-1} - {\rm Id}}_{L^\infty(\Omega_{i,q',k})} &\lesssim \Gamma_{q'}^{-1}
\label{eq:Lagrangian:Jacobian:1}\\
\norm{D^N\Phi_{(i,k)} }_{L^\infty(\Omega_{i,q',k})} + \norm{D^N\Phi^{-1}_{(i,k)} }_{L^\infty(\Omega_{i,q',k})} + \norm{D^{N-1}\left((D\Phi_{(i,k)})^{-1}\right) }_{L^\infty(\Omega_{i,q',k})} & \lesssim \Gamma_{q'}^{-1} (\lambda_{q'}\Ga_{q'})^{N-1} \label{eq:Lagrangian:Jacobian:2}
\end{align}
\end{subequations}
Furthermore, we have the following bounds for $1\leq N+M\leq \sfrac{3\Nfin}{2}$ and $0\leq N'\leq N$:
\begin{subequations}
\begin{align}
\left\| D^{N-N'} D_{t,q'}^M D^{N'+1} \Phi_{(i,k)} \right\|_{L^\infty(\Omega_{i,q',k})} &\leq  (\lambda_{q'}\Ga_{q'})^{N} \MM{M,\NindSmall,\Gamma_{q'}^{i} \tau_{q'}^{-1},\Tau_{q'-1}^{-1}\Gamma_{q'-1}}\label{eq:Lagrangian:Jacobian:5}\\
\left\| D^{N-N'} D_{t,q'}^M D^{N'} (D \Phi_{(i,k)})^{-1} \right\|_{L^\infty(\Omega_{i,q',k})} &\leq (\lambda_{q'}\Ga_{q'})^{N} \MM{M,\NindSmall,\Gamma_{q'}^{i} \tau_{q'}^{-1},{\Tau}_{q'-1}^{-1}\Gamma_{q'-1}} \, . \label{eq:Lagrangian:Jacobian:6}
\end{align}
\end{subequations}
\end{corollary}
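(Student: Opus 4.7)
The plan is to derive all four estimates from Grönwall-type arguments applied to the evolution equation obtained by differentiating the transport equation \eqref{e:Phi} in space, combined with the inductive velocity bounds \eqref{eq:nasty:D:vq:old} and the flow-containment property of Lemma~\ref{lem:dornfelder}. The key observation is that on $\Omega_{i,q',k}$, the time cutoff $\tilde\chi_{i,k,q'}$ confines us to an interval of length $\lesssim \tau_{q'}\Gamma_{q'}^{-i}$ around $t_0=k\tau_{q'}\Gamma_{q'}^{-i-2}$, and Lemma~\ref{lem:dornfelder} together with Corollary~\ref{cor:dornfelder} ensures that forward and backward trajectories of $\hat u_{q'}$ starting in $\Omega_{i,q',k}$ remain in $\supp \psi_{i',q'}$ for some $|i'-i|\leq 1$ throughout this entire interval. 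Consequently, the inductive bound $\|D\hat u_{q'}\|_{L^\infty(\supp\psi_{i,q'})}\lesssim \tau_{q'}^{-1}\Gamma_{q'}^{i-4}$ (and its higher-derivative analogues in \eqref{eq:nasty:D:vq:old}) may be applied along every characteristic passing through $\Omega_{i,q',k}$.

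For \eqref{eq:Lagrangian:Jacobian:1}, I would differentiate \eqref{e:Phi} in space to obtain $D_{t,q'}(D\Phi_{(i,k)}) = -D\Phi_{(i,k)}\cdot D\hat u_{q'}$ with initial value $\mathrm{Id}$ at $t_0$. The product of the Lipschitz bound $\tau_{q'}^{-1}\Gamma_{q'}^{i-4}$ for $D\hat u_{q'}$ and the interval length $\tau_{q'}\Gamma_{q'}^{-i}$ is $\Gamma_{q'}^{-4}$, so integration in time combined with Grönwall yields $|D\Phi_{(i,k)} - \mathrm{Id}|\lesssim \Gamma_{q'}^{-4}\ll \Gamma_{q'}^{-1}$. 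The bound on $(D\Phi_{(i,k)})^{-1} - \mathrm{Id}$ then follows from the identity $(D\Phi)^{-1}-\mathrm{Id} = -(D\Phi)^{-1}(D\Phi-\mathrm{Id})$ and a geometric series. For \eqref{eq:Lagrangian:Jacobian:2}, I would proceed by induction on $N$: applying $D^{N-1}$ to the evolution equation for $D\Phi$ and expanding by Leibniz produces source terms of the form $D^{j}\hat u_{q'}\cdot D^{N-j}\Phi$ for $1\leq j\leq N$. The inductive hypothesis and \eqref{eq:nasty:D:vq:old} supply bounds of size $\tau_{q'}^{-1}\Gamma_{q'}^{i-4}(\lambda_{q'}\Gamma_{q'})^{N-1}$ for the worst source terms, which after integration over a time interval of length $\tau_{q'}\Gamma_{q'}^{-i}$ produce the prefactor $\Gamma_{q'}^{-4}\ll \Gamma_{q'}^{-1}$ and the spatial-derivative cost $(\lambda_{q'}\Gamma_{q'})^{N-1}$. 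The inverse Jacobian estimate follows by differentiating $(D\Phi)(D\Phi)^{-1}=\mathrm{Id}$ inductively.

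For the mixed-derivative estimates \eqref{eq:Lagrangian:Jacobian:5}--\eqref{eq:Lagrangian:Jacobian:6}, I would use the commutator identity $[D_{t,q'},\partial_j]= (\partial_j\hat u_{q'})\cdot \nabla$ iteratively to rewrite any expression $D^{N-N'}D_{t,q'}^M D^{N'+1}\Phi_{(i,k)}$ as a finite sum of products of the form $\bigl(\prod_l D^{\alpha_l}D_{t,q'}^{\beta_l}\hat u_{q'}\bigr)\cdot D^{a}\Phi_{(i,k)}$, since the identity $D_{t,q'}\Phi_{(i,k)}=0$ eliminates any term in which $D_{t,q'}$ falls directly on $\Phi_{(i,k)}$. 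The factors involving $\hat u_{q'}$ are controlled by the full mixed-derivative version of \eqref{eq:nasty:D:vq:old}, while the factors of $D^a\Phi_{(i,k)}$ are controlled by the spatial-derivative bound \eqref{eq:Lagrangian:Jacobian:2} established in the previous step. The inverse version follows analogously from $D_{t,q'}((D\Phi)^{-1})=(D\hat u_{q'})\cdot(D\Phi)^{-1}$. The main obstacle is the combinatorial bookkeeping of the $\MM{M,\NindSmall,\Gamma_{q'}^{i}\tau_{q'}^{-1},\Tau_{q'-1}^{-1}\Gamma_{q'-1}}$ factor from Remark~\ref{not:M}: every material derivative converted by a commutator into a spatial derivative of $\hat u_{q'}$ incurs the sharp cost $\Gamma_{q'}^i\tau_{q'}^{-1}$ only as long as the total number of material derivatives in a single factor does not exceed $\Nindt$, after which the lossy cost $\Tau_{q'-1}^{-1}\Gamma_{q'-1}$ takes over. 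One must carefully verify that the commutator expansion shifts this threshold down by exactly the generation budgeted in Remark~\ref{rem:D:t:q':orangutan}, yielding the stated transition at $\NindSmall$. This accounting is routine but delicate and represents the bulk of the calculation.
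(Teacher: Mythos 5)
Your proposal is correct and follows essentially the same route the paper takes: differentiate the transport equation \eqref{e:Phi} in space to obtain evolution equations for $D\Phi_{(i,k)}$ and $(D\Phi_{(i,k)})^{-1}$, use Lemma~\ref{lem:dornfelder} and Corollary~\ref{cor:dornfelder} to keep Lagrangian trajectories inside $\supp\psi_{i,q'}$ over the lifespan of $\tilde\chi_{i,k,q'}$, apply the inductive bound \eqref{eq:nasty:D:vq:old} (with $\alpha=\beta=0$) together with Gr\"onwall over a time interval of length $\lesssim\tau_{q'}\Gamma_{q'}^{-i}$ to obtain the $\Gamma_{q'}^{-1}$ smallness, and then induct on spatial and material derivative counts using Leibniz and the commutator $[D_{t,q'},\partial_j]=-(\partial_j\hat u_{q'}^\ell)\partial_\ell$; this is exactly the "standard" argument the paper cites from \cite[Section~5.2]{GKN23}. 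A couple of small imprecisions do not affect the argument: Lemma~\ref{lem:dornfelder} keeps trajectories in $\supp\psi_{i,q'}$ itself (one does not need to allow $|i'-i|\leq 1$), and the identity $D_{t,q'}\Phi_{(i,k)}=0$ does not directly annihilate $D_{t,q'}D^a\Phi_{(i,k)}$ for $a\geq 1$ but rather allows one to recursively replace such terms via the evolution equation $D_{t,q'}D\Phi_{(i,k)}=-(D\hat u_{q'})^TD\Phi_{(i,k)}$.
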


\subsubsection{Intermittent pressure cutoffs}\label{sec:cutoff:stress}

In this subsubsection, we record the main properties of the cutoff functions for the intermittent pressure $\pi_\ell$\index{intermittent pressure} obtained from mollifying $\pi_q^q$ (see the mollification estimates in Lemma~\ref{lem:upgrading}). These cutoff functions are rigorously defined and estimated in \cite[Section~5.3]{GKN23}. The definition is essentially a level-set chopping of $\pi_\ell$, which we now discuss.  We shall need estimates for derivatives of error terms on the support of a particular cutoff, and for the derivatives of the cutoff itself; such estimates follow from the pointwise nature of the bounds in \eqref{eq:ind:pi:by:pi} and \eqref{eq:inductive:pointwise}. These pointwise bounds follow from the construction of the intermittent pressure; see the pointwise bounds in Proposition~\ref{lem.pr.invdiv2.c}, \eqref{est.S.by.pr.final2:c} and \eqref{est.S.prbypr.pt:c}.  These bounds in turn follow from the definition of the pressure increment $\sigma_H$; for example let us consider a Reynolds stress error term $H=G \rho\circ\Phi$.  Here $G$ oscillates at a low spatial frequency $\la_G$ and a Lagrangian frequency $\nu$ (corresponding to a material derivative operator $D_t$) and has $L^p$ norm size $\const_{G,p}$ for $p=\sfrac 32, \infty$, $\Phi$ is a flow map, and $\rho$ oscillates at a high frequency $\la_\rho$ and has $L^{p}$ norm size $\const_{\rho,p}$ for $p=\sfrac 32,\infty$: \index{pressure increment}\index{$\NcutSmall$, $\NcutLarge$}
\begin{subequations}\label{eq:fried:egg:evening}
\begin{align}
\pr(\rho) := \left( 
 \left(\const_{\rho,\infty} \Gamma^{-\NcutSmall}\right)^2 + \sum_{N=0}^{\NcutLarge} (\la_\rho \Ga)^{-2N} |D^N \rho|^2 \right)^\frac12 - \const_{\rho,\infty} \Gamma^{-\NcutSmall} \, , \\
\pr(G) := \left( \left( \const_{G,\infty} \Gamma^{-\NcutSmall} \right) + \sum_{N=0}^{\NcutLarge}\sum_{M=0}^{\NcutSmall} (\la_G \Ga)^{-2N}  (\nu\Ga) ^{-2M} |D^N D_{t}^M G|^2 \right)^\frac12 - \const_{G,\infty} \Gamma^{-\NcutSmall}  \, , \\
\sigma_H = \pr(G) \left(  \pr(\rho) - \int_{\T^3} \pr(\rho) \right) \circ \Phi =: \sigma_H^+ - \sigma_H^- \, .
\end{align}
\end{subequations}
We pause to highlight a few aspects of this definition.  First, $\pr(\rho)$ and $\pr(G)$ are supported on the supports of $\rho$ and $G$, respectively, and scale linearly with $\rho$ and $G$.\footnote{Stress errors and pressure errors have the same scaling.  Current errors, however, scale like $(\textnormal{pressure})^{\sfrac 32}$.  Therefore to make a pressure increment for a current error, one uses $L^1$ norms instead of $L^{\sfrac 32}$ norms and then raises the sum to the $\sfrac 13$ power, and the $\const_{\rho,1}$ to the $\sfrac 23$ power, and similarly for $G$.}  Second, note that $\sigma_H$ is effectively mean zero, so that we may invert the divergence.  Next, we point out that the portion $\sigma_H^-$ depends only on the mean of $\pr(\rho)$ but not any further properties of $\rho$, and is therefore effectively low frequency.  Therefore, we may actually dominate $\sigma_H^-$ by \emph{anticipated} pressure, i.e. rescaled versions of old, lower frequency, intermittent pressure; see the bounds in~\eqref{est.pr.vel.inc-}, \eqref{heatsie}, \eqref{eq:ct.p.4}, and \eqref{eq:co.p.4}.  Finally, $\sigma_H^+$ incorporates a weighted sum of at most $\NcutSmall$ material derivatives and $\NcutLarge$ spatial derivatives, where the weight uses a small factor $\Gamma$ (typically $\Ga_q$).  The effect of $\Gamma$ in the sum is that the terms for $N$ and $M$ very large are actually quite \emph{small}, since $G$ for example oscillates at frequency $\lambda_G$, which is less than $\lambda_G \Gamma$. This implies that in fact for $N$ and $M$ larger than $\NcutLarge$ and $\NcutSmall$,
$$  \left| D^N D_t^M H \right| \les \left(  \delta_{\rm tiny} + \sigma_H^+ \right) (\lambda_G \Gamma)^N (\nu \Gamma)^M \, , \qquad \qquad  \mbox{where} \quad  \delta_{\rm tiny} \ll 1 \, . $$
That is, $\sigma_H^+$ dominates $H$ and essentially \emph{all} of its derivatives using a slightly weakened multiplicative derivative cost, up to a negligible constant $\delta_{\rm tiny}$. In a similar way, we can also obtain
\begin{align*}
|D^ND_t^M \si_H^+| \lec  \left(  \delta_{\rm tiny} + \sigma_H^+ \right) (\lambda_G \Gamma)^N (\nu \Gamma)^M\,  ,
\end{align*}
which implies that $\sigma_H^+$ dominates essentially all of its derivatives.

We now recall from \cite[Section~5.3]{GKN23} the key properties of the intermittent stress cutoffs. The properties below include a partition of unity property, a comparability estimate to $\pi_\ell$ itself, control on the maximum index $\jmax$\index{$j$ and $\jmax$} used in the cutoffs, derivative estimates, and Lebesgue norms.

\begin{lemma}[\bf Intermittent pressure cutoffs]\label{lem:D:Dt:Rn:sharp}
There exist cutoff functions\index{$\omega_{j,q}$} $\{\omega_{j,q}^{{6}} \}_{j\geq 0}$ satisfying the following properties.
\begin{enumerate}[(i)]
    \item We have that
    \begin{align}
\sum_{j\geq 0} \omega_{j,q}^{{6}}(t,x) \equiv 1 \, , \qquad \omega_{j,q} \omega_{j',q} \equiv 0 \quad \textnormal{if} \quad |j-j'| >1  \, .
\label{eq:omega:cut:partition:unity}
\end{align}
\item We have the comparability properties
\begin{subequations}
\begin{align}
\sfrac 14 \delta_{q+\bn} \Ga_q^{2j} &\leq {\bf 1}_{\supp (\omega_{j,q})} \pi_\ell \label{pt.est.pi.lem.lower} \\
\sfrac 18 \sum_j \omega_{j,q} \delta_{q+\bn} \Gamma_q^{2j} &\leq \pi_\ell \, .
\label{pt.est.pi.cutoff}
\end{align}
\end{subequations}
\item  There exists $\jmax = \jmax(q) = \inf \left\{ j \, : \, \frac 14 \Ga_q^{2j} \delta_{q+\bn} \geq \Ga_q^{3+\badshaq} \right\}$ which is bounded independently of $q$, and
\begin{align}\label{eq:omega:j:is:zero}
\omega_{j,q} \equiv 0 \qquad \mbox{for all} \qquad j > \jmax \, .
\end{align}
Moreover, we have the bound
\begin{align}\label{ineq:jmax:use}
\Gamma_{q}^{2j_{\rm max}} \leq \de_{q+\bn}^{-1} \Gamma_q^{\badshaq+6} \, .
\end{align}
\item For $q\geq 0$, $0 \leq i \leq \imax$, $0 \leq j \leq \jmax$, and $N + M \leq \Nfin$, we have
\begin{align}
\frac{{\bf 1}_{\supp \psi_{i,q}} |D^N D_{t,q}^M \omega_{j,q}|}{\omega_{j,q}^{1-(N+M)/\Nfin}} 
\les (\Gamma_{q}^5 \Lambda_q)^N \MM{M, \Nindt,\Gamma_{q}^{i+4} \tau_{q}^{-1}, {\Tau}_{q}^{-1}} \, .
\label{eq:D:Dt:omega:sharp}
\end{align}
\item For any $r \geq \sfrac 32$ and $0\leq j \leq \jmax$, we have that 
\begin{align}
\norm{\omega_{j,q}}_{L^r}  \lesssim  \Gamma_{q}^{\frac{3(1-j)}{r}} \, .
\label{eq:omega:support}
\end{align}
\end{enumerate}
\end{lemma}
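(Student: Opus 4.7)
The plan is to build $\omega_{j,q}$ as a smooth level-set partition of the mollified pressure $\pi_\ell$, so that level $j$ picks out the region where $\pi_\ell \sim \delta_{q+\bn}\Gamma_q^{2j}$.  Concretely, I first construct a one-dimensional partition of unity $\{\tilde\omega_j\}_{j\geq 0}$ on $[0,\infty)$ satisfying $\sum_j \tilde\omega_j^{6}\equiv 1$, $\tilde\omega_j\tilde\omega_{j'}\equiv 0$ for $|j-j'|>1$, and $\supp\tilde\omega_j\subseteq[\tfrac14\Gamma_q^{2j},4\Gamma_q^{2(j+1)}]$ for $j\geq 1$ (with $\supp\tilde\omega_0\subseteq[0,4\Gamma_q^{2}]$).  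The crucial feature is the \emph{self-improving} derivative bound
\[
  |\tilde\omega_j^{(n)}(s)|\leq C_n\, s^{-n}\,\tilde\omega_j(s)^{1-n/\Nfin},\qquad 0\leq n\leq \Nfin,
\]
which is arranged by taking $\tilde\omega_j(s)=\tilde\eta_j(s)^{\Nfin/6}$ with $\tilde\eta_j$ a standard logarithmically scaled partition of unity; this is the usual ``$\Nfin$-th power trick.''  I then set $\omega_{j,q}(x,t):=\tilde\omega_j\bigl(\pi_\ell(x,t)/\delta_{q+\bn}\bigr)$.

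Properties \eqref{eq:omega:cut:partition:unity}--\eqref{pt.est.pi.cutoff} are immediate from the one-dimensional partition: the support condition gives \eqref{pt.est.pi.lem.lower}, while \eqref{pt.est.pi.cutoff} follows because at any fixed point at most two indices $j^\ast,j^\ast+1$ contribute, and $\sum_j\omega_{j,q}\Gamma_q^{2j}\leq 2\Gamma_q^{2(j^\ast+1)}\leq 8\Gamma_q^2\,\pi_\ell/\delta_{q+\bn}$, the extra $\Gamma_q^2$ being absorbed by a trivial reindexing of $\tilde\omega_j$.  For \eqref{eq:omega:j:is:zero} and \eqref{ineq:jmax:use}, combine the lower bound on $\supp\omega_{j,q}$ with the $L^\infty$ bound $\|\pi_\ell\|_{L^\infty}\lesssim \Gamma_q^{\badshaq+1}$ from \eqref{eq:pressure:inductive:dtq:uniform} (upgraded to $\pi_\ell$ via mollification), giving $\tfrac14\delta_{q+\bn}\Gamma_q^{2j}\leq C\Gamma_q^{\badshaq+1}$; this forces $\Gamma_q^{2j}\leq \delta_{q+\bn}^{-1}\Gamma_q^{\badshaq+6}$.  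Uniform boundedness of $\jmax$ in $q$ then follows from the fact that $\delta_{q+\bn}^{-1}\Gamma_q^{\badshaq}$ is a fixed power of $\lambda_q$, and $\Gamma_q=\lambda_q^{(b-1)\varepsilon_\Gamma}$, so $\jmax\lesssim (\beta b^{\bn}+\badshaq)/((b-1)\varepsilon_\Gamma)$.

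For the $L^r$ bound \eqref{eq:omega:support}, use Chebyshev with the lower bound $\pi_\ell\geq \tfrac14\delta_{q+\bn}\Gamma_q^{2j}$ on $\supp\omega_{j,q}$ and the $L^{\sfrac32}$ estimate on $\pi_\ell$ from \eqref{eq:pressure:inductive:dtq}:
\[
  |\supp\omega_{j,q}|\leq \Bigl(\tfrac{4}{\delta_{q+\bn}\Gamma_q^{2j}}\Bigr)^{\sfrac32}\|\pi_\ell\|_{\sfrac32}^{\sfrac32}\lesssim \Gamma_q^{3(1-j)},
\]
and the claim follows from $\|\omega_{j,q}\|_{L^r}\leq |\supp\omega_{j,q}|^{1/r}$ for $r\geq\sfrac32$.

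The main technical step, and in my view the principal obstacle, is the self-improving mixed derivative bound \eqref{eq:D:Dt:omega:sharp}.  I plan to proceed by Fa\`a di Bruno applied to $\omega_{j,q}=\tilde\omega_j\bigl(\pi_\ell/\delta_{q+\bn}\bigr)$: a typical term in $D^ND_{t,q}^M\omega_{j,q}$ is a product of $\tilde\omega_j^{(k)}$ evaluated at $\pi_\ell/\delta_{q+\bn}$ times $k$ factors coming from $D^{a_l}D_{t,q}^{b_l}(\pi_\ell/\delta_{q+\bn})$ with $\sum a_l=N$, $\sum b_l=M$, $k\leq N+M$.  The inductive pointwise domination \eqref{eq:ind:pi:by:pi} (transferred to $\pi_\ell$ and upgraded from $D_{t,q-1}$ to $D_{t,q}$ via \eqref{eq:inductive:timescales} and the velocity bounds \eqref{eq:nasty:D:wq:old}) yields
\[
  |D^{a}D_{t,q}^{b}\pi_\ell|\lesssim \Gamma_q^{C}\pi_\ell\,(\Gamma_q^5\Lambda_q)^{a}\MM{b,\Nindt,\Gamma_q^{i+4}\tau_q^{-1},\Tau_q^{-1}}
\]
on $\supp\psi_{i,q}$.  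The product of $k$ such factors gives a power $\pi_\ell^{k}=(\delta_{q+\bn})^k(\pi_\ell/\delta_{q+\bn})^k$ which exactly cancels against the $s^{-k}$ in the self-improving bound on $\tilde\omega_j^{(k)}$, leaving $\tilde\omega_j(\pi_\ell/\delta_{q+\bn})^{1-k/\Nfin}=\omega_{j,q}^{1-k/\Nfin}\leq \omega_{j,q}^{1-(N+M)/\Nfin}$ (using $\omega_{j,q}\leq 1$ and $k\leq N+M$).  Summing over the finitely many Fa\`a di Bruno terms and absorbing the combinatorial constants into $\Gamma_q$ powers yields \eqref{eq:D:Dt:omega:sharp}.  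The delicate bookkeeping of the $\Gamma_q$ powers in the spatial cost $(\Gamma_q^5\Lambda_q)^N$ and in the material derivative cost $\Gamma_q^{i+4}\tau_q^{-1}$ is the main place where care is required; everything else reduces to the structural properties of the one-dimensional partition.
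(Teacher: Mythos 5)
Your proposal is correct and matches the paper's intended construction: the level-set chopping of $\pi_\ell$ via a one-dimensional partition raised to a power of $\Nfin$ (to obtain the self-improving derivative bounds), composed with $\pi_\ell/\delta_{q+\bn}$ and estimated by Fa\`a di Bruno together with the inductive pointwise bound \eqref{eq:pressure:inductive:dtq:pointwise}, is exactly what the paper describes in the exposition preceding the lemma, with the details delegated to \cite{GKN23}. One small note on your verification of \eqref{pt.est.pi.cutoff}: rather than introducing an extra $\Gamma_q^2$ and claiming to absorb it by reindexing (which as stated would not actually remove the constant), it is cleaner to apply the lower bound \eqref{pt.est.pi.lem.lower} at each contributing index separately, giving $\sum_j\omega_{j,q}\delta_{q+\bn}\Gamma_q^{2j}\leq 4\pi_\ell\sum_j\omega_{j,q}\leq 8\pi_\ell$ directly.
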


\subsubsection{Mildly and strongly anisotropic checkerboard cutoffs}
\label{sec:cutoff:checkerboard:definitions}

We now introduce the mildly and strongly anisotropic cutoffs, which localize the support of the intermittent Mikado bundles defined in \eqref{int:pipe:bundle:short}. These cutoffs are always composed with flow maps $\Phi_{i,k,q}$, which ensures that the cutoffs retain their general shape and dimensions on the local Lipschitz timescale.  In addition, a number of summability properties are imposed, which help ensure the quadratic and cubic cancellation properties and more; see subsection~\ref{ss:ssO:current}. Finally, the cutoffs satisfy spatial and material derivative estimates.  Of particular importance is that the anisotropic shape of the cutoffs is adapted to a particular bundle $\BB_{\pxi,\diamond}$, so that the spatial derivative in the (flowed) $\xi$ direction satisfies a better estimate.  This is crucial in subsection~\ref{ss:ssO:current}, for example; see \eqref{eq:osc:c:eq:1}, in which the divergence does \emph{not} land on the costliest objects, due to the fact that the derivative is in the (flowed) $\xi$ direction.  The anisotropy also helps limit the size of the support on which a bundle exists, aiding in the placement; see section~\ref{sec:dodging}. The proofs of these two lemmas are straightforward; we refer to \cite[Lemmas~5.14, 5.15]{GKN23} for further details.\index{$\zeta_{q,\diamond,i,k,\xi,\vecl}$}\index{$\zetab_{\xi}^I$}

\begin{lemma}[\bf Mildly anistropic checkerboard cutoffs]\label{lem:checkerboard:estimates}
Given $q$, $\xi\in\Xi\cup \Xi'$, $i\leq \imax$, and $k\in\mathbb{Z}$, there exist cutoff functions\index{$\vecl$}
\begin{equation}\label{eq:checkerboard:definition}
    \zeta_{q,\diamond,i,k,{\xi},\vec{l}}\,(x,t) = \mathcal{X}_{q,\xi,\vec{l},\diamond}\left(\Phi_{i,k,q}(x,t)\right) \, ,
\end{equation}
which satisfy the following properties.
\begin{enumerate}[(i)]
    \item\label{item:checkeeee} At the time $\tau_{q}\Ga_{q}^{-i-2}$ at which $\Phi_{i,k,q}$ is the identity, the support of $\mathcal{X}_{q,\xi,\vec{l},\diamond}$ is contained in a rectangular prism of dimensions no larger than $\sfrac 34 \lambda_q^{-1}\Gamma_q^{-8}$ in the direction of $\xi_z$, and $\sfrac 34 \Gamma_{q}^5(\lambda_{q+1})^{-1}$ in the directions perpendicular to $\xi_z$.
    \item\label{item:check:1} The material derivative $\Dtq (\zeta_{q,\diamond,i,k,{\xi},\vec{l}})$ vanishes.  
    \item\label{item:check:2} We have the summability properties for all $(x,t)\in \T^3\times \R$; 
    \begin{subequations}\label{eq:checkerboard:partition}
    \begin{align}
    \sum_{\vec{l}} \bigl(\zeta_{q,R,i,k,{\xi},\vec{l}}\,(x,t)\bigr)^{2} &\equiv 1 \, , \label{eq:summy:summ:1} \\
    \sum_{\vecl \, : \, l=l_0} \zeta_{q,\vp,i,k,\xi,\vecl}^2(x,t) &\equiv \mathcal{X}_{q,\xi,l_0}^4(\Phi_{i,k,q}(x,t)) \, , \label{eq:summy:summ:2} \\
    \sum_{\vecl \, : \, l^\perp = l^\perp_0} \zeta_{q,\vp,i,k,\xi,\vecl}^3(x_1,x_2,x_3) &= \mathcal{X}_{q,\xi,l_0^\perp}^3(\Phi_{i,k,q}(x,t)) \, . \label{eq:summy:summ:3}
    \end{align}
    \end{subequations}
    Furthermore, at the time for which $\Phi_{i,k,q}$ is the identity, $\sum \mathbb{P}_{\neq 0} \left( \chi_{q,\xi,l_0^\perp}^3 \right)$ is a $\sfrac{\T^3}{\Ga_q^{-5}\la_{q+1}}$-periodic function which satisfies
    \begin{equation}\label{eq:summy:summ:0}
    \left \langle \sum_{l^\perp} \chi_{q,e_1,l^\perp}^3(x_2,x_3) \right \rangle = c_3 \, .
\end{equation}
Here and in the rest of the article, $\langle \cdot \rangle$\index{$\langle \cdot \rangle$} denotes the spatial average over the periodic domain $\T^3$. 
\item\label{item:check:3}  Let $A=(\nabla\Phi_{(i,k)})^{-1}$. Then for all $N_1+N_2+M\leq \sfrac{3\Nfin}{2}+1$,
    \begin{align}\label{eq:checkerboard:derivatives}
        \bigl\| D^{N_1} \Dtq^M  ({\xi^\ell A_\ell^j \partial_j} )^{N_2} \zeta_{q,\diamond,i,k,\xi,\vec{l}} \bigr\|_{L^\infty\left(\supp \psi_{i,q}\tilde\chi_{i,k,q} \right)} &\lesssim \left(\Gamma_{q}^{-5} \lambda_{q+1} \right)^{N_1} \left(\Gamma_q^8\lambda_{q}\right)^{N_2} \notag\\
        &\qquad \qquad \times \MM{M,\Nindt,\Gamma_{q}^{i}\tau_q^{-1},\Tau_q^{-1}\Gamma_{q}^{-1}} \, .
    \end{align}
\end{enumerate}
\end{lemma}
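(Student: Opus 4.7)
The construction naturally splits into a time-independent checkerboard partition of $\T^3$ adapted to the direction $\xi$, followed by composition with the flow map $\Phi_{i,k,q}$. Fix an orthonormal basis $(\xi,\xi',\xi'')$ with $\xi\in\Xi\cup\Xi'$ rational, let $(y_1,y_2,y_3)$ denote the corresponding coordinates, and write $\vec l = (l, l^\perp)$ where $l\in\Z$ indexes the position in the $\xi$-direction and $l^\perp\in\Z^2$ the perpendicular position. We tile the $\xi$-axis with intervals of length $\tfrac{3}{4}\lambda_q^{-1}\Ga_q^{-8}$ indexed by $l$, and the $\xi^\perp$-plane with squares of side-length $\tfrac{3}{4}\Ga_q^5\lambda_{q+1}^{-1}$ indexed by $l^\perp$. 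The function $\mathcal{X}_{q,\xi,\vec l,\diamond}$ will be supported in the product of the $l$-th interval with the $l^\perp$-th square, which immediately yields item \eqref{item:checkeeee}.

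In the case $\diamond = R$, take standard 1D partitions of unity \emph{of squares}, i.e.\ families $\{\alpha_l(y_1)\}_l$ and $\{\beta_{l^\perp}(y_2,y_3)\}_{l^\perp}$ with $\sum \alpha_l^2 \equiv 1$ and $\sum\beta_{l^\perp}^2\equiv 1$, and set $\mathcal{X}_{q,\xi,\vec l,R}(y) := \alpha_l(y_1)\beta_{l^\perp}(y_2,y_3)$; the identity \eqref{eq:summy:summ:1} is then immediate. For $\diamond=\varphi$, one instead chooses a pair $\{\tilde\alpha_l\}$, $\{\tilde\beta_{l^\perp}\}$ such that the \emph{square} partial sum over $l$ and the \emph{cubic} partial sum over $l^\perp$ equal, respectively, the fourth power of an auxiliary $\mathcal{X}_{q,\xi,l}$ and the cube of an auxiliary $\chi_{q,\xi,l^\perp}$, as demanded by \eqref{eq:summy:summ:2}--\eqref{eq:summy:summ:3}. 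The construction uses a single nonnegative bump $\phi\in C^\infty_c$ with $\int\phi^3 = c_3$ placed on a grid of period $\Ga_q^{-5}\lambda_{q+1}$; the mean identity \eqref{eq:summy:summ:0} then follows from periodicity and this choice of normalization.

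Define $\zeta_{q,\diamond,i,k,\xi,\vec l}(x,t) := \mathcal{X}_{q,\xi,\vec l,\diamond}(\Phi_{i,k,q}(x,t))$. Item \eqref{item:check:1}, $\Dtq \zeta \equiv 0$, is immediate from the defining transport equation \eqref{e:Phi} for $\Phi_{i,k,q}$ and the chain rule. The summability identities of item \eqref{item:check:2} follow from the corresponding spatial identities on $\T^3$ by pointwise evaluation at $\Phi_{i,k,q}(x,t)$, using that at the pivot time $\Phi_{i,k,q}$ is the identity to obtain the periodicity statement in \eqref{eq:summy:summ:0}. For the derivative bound \eqref{eq:checkerboard:derivatives}, the key observation is that $\xi^\ell A_\ell^j \partial_j$ acting on $\mathcal{X}\circ\Phi$ equals $(\xi\cdot\nabla_y \mathcal{X})\circ \Phi$, since $A = (\nabla\Phi)^{-1}$; this replaces the perpendicular ambient cost $\Ga_q^{-5}\lambda_{q+1}$ with the longitudinal cost $\Ga_q^8\lambda_q$. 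Ordinary spatial derivatives $D$ incur the perpendicular cost via Fa\`a di Bruno combined with the deformation bounds \eqref{eq:Lagrangian:Jacobian:1}--\eqref{eq:Lagrangian:Jacobian:2}, while material derivatives $\Dtq$ only act through $\Dtq A$ (since $\Dtq\Phi = 0$), controlled by \eqref{eq:Lagrangian:Jacobian:6}, contributing one factor of $\MM{1,\Nindt,\Ga_q^i\tau_q^{-1},\Tau_q^{-1}\Ga_q^{-1}}$ per material derivative.

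The principal difficulty is the simultaneous construction in the $\diamond=\varphi$ case of 1D partitions satisfying the quadratic and cubic normalization identities \eqref{eq:summy:summ:2}--\eqref{eq:summy:summ:3} with prescribed mean \eqref{eq:summy:summ:0}, since these conditions involve three distinct powers of the same building blocks. One resolves this by fixing the perpendicular cubic partition first so as to enforce \eqref{eq:summy:summ:3} and \eqref{eq:summy:summ:0}, and then designing the longitudinal quadratic partition freely, exploiting that \eqref{eq:summy:summ:2} only constrains the $l$-sum with $l^\perp$ held fixed, so the two indices decouple.
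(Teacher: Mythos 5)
Your construction follows the same plan the paper has in mind: build an anisotropic checkerboard partition in coordinates adapted to $\xi$, with longitudinal length $\approx \lambda_q^{-1}\Gamma_q^{-8}$ and perpendicular side $\approx \Gamma_q^5\lambda_{q+1}^{-1}$, using a product $\mathcal{X}_{q,\xi,\vec l,\diamond}(y) = \alpha_l(y_1)\beta_{l^\perp}(y_2,y_3)$ and composing with the flow map $\Phi_{i,k,q}$. The key algebraic point for item~\eqref{item:check:3} is correctly identified: $\xi^\ell A_\ell^j\partial_j(\mathcal{X}\circ\Phi) = (\xi\cdot\nabla\mathcal{X})\circ\Phi$, and iterating this turns $N_2$ applications of the ``good'' operator into $N_2$ longitudinal derivatives of $\mathcal{X}$, which carry the cheaper cost $\Gamma_q^8\lambda_q$. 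Items~\eqref{item:checkeeee}--\eqref{item:check:2} follow directly from the construction and $\Dtq\Phi_{i,k,q}=0$, as you note.

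Two small precision issues are worth flagging. First, in the $\diamond=\varphi$ case you have the roles of $l$ and $l^\perp$ swapped: \eqref{eq:summy:summ:2} sums over $l^\perp$ (with $l=l_0$ fixed) and demands a \emph{quadratic} normalization of the perpendicular family, while \eqref{eq:summy:summ:3} sums over $l$ (with $l^\perp$ fixed) and demands a \emph{cubic} normalization of the longitudinal family. The clean resolution is to set $\mathcal{X}_{q,\xi,\vec l,\varphi} = \mathcal{X}_{q,\xi,l}^2\,\chi_{q,\xi,l^\perp}$ with $\sum_l \mathcal{X}_{q,\xi,l}^6 \equiv 1$ and $\sum_{l^\perp}\chi_{q,\xi,l^\perp}^2 \equiv 1$; then both identities and \eqref{eq:summy:summ:0} fall out, with $c_3 := \langle\sum_{l^\perp}\chi_{q,e_1,l^\perp}^3\rangle$ a $q$-independent constant determined by the reference bump. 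Second, because $(\xi^\ell A_\ell^j\partial_j)^{N_2}\zeta = ((\xi\cdot\nabla)^{N_2}\mathcal{X})\circ\Phi$ is still of the form $g\circ\Phi$, the operator $\Dtq$ applied to it vanishes identically; thus the material-derivative factor in \eqref{eq:checkerboard:derivatives} is only relevant when the spatial and material derivatives are interleaved (as in the $\prod_l D^{\alpha_l}D_t^{\beta_l}$ notation elsewhere), which is where $[\Dtq, D]$ and $[\Dtq, \xi^\ell A_\ell^j\partial_j]$ bring in $\Dtq A$ via \eqref{eq:Lagrangian:Jacobian:6}. Neither issue affects the soundness of the construction.
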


\begin{lemma}[\bf Strongly anisotropic checkerboard cutoff function]\label{lem:finer:checkerboard:estimates}
The cutoff functions $\etab_\xi^I\circ\Phiik$ defined using $\etab$ from Definition~\ref{def:etab} satisfy the following properties:
\begin{enumerate}[(1)]
    \item\label{item:check:check:1} The material derivative $\Dtq (\etab_\xi^I\circ \Phiik)$ vanishes.
    \item\label{item:check:check:2} For all fixed values of $q,i,k,\xi$, each $t\in\mathbb{R}$, and all $x=(x_1,x_2,x_3)\in\mathbb{T}^3$,
    \begin{equation}\label{eq:checkerboard:partition:check}
    \sum_{I} (\etab_\xi^I \circ \Phiik)^6(x,t) = 1 \, .
    \end{equation}
    \item\label{item:check:check:3} 
    Let $A=(\nabla\Phi_{(i,k)})^{-1}$.  Then for all $N_1+N_2+M\leq \sfrac{3\Nfin}{2}+1$, we have that
    \begin{align}\label{eq:checkerboard:derivatives:check}
        \bigl\| D^{N_1} \Dtq^M  ({\xi^\ell A_\ell^j \partial_j} )^{N_2} \etab_\xi^I \circ \Phiik \bigr\|_{L^\infty\left(\supp \psi_{i,q}\tilde\chi_{i,k,q} \right)} &\lesssim \lambda_{q+\lfloor \sfrac \bn 2 \rfloor}^{N_1}  \MM{M,\Nindt,\Gamma_{q}^{i}\tau_q^{-1},\Tau_q^{-1}\Gamma_{q}^{-1}} \, .
    \end{align}
    \item For fixed $q,i,k,\xi$, we have that
    \begin{align} \label{lem.cardinality}
    \# \left\{(\vecl,I) : \supp \left(\zeta_{q,i,k,\xi,\vecl} \, \etab^I_{\xi} \circ \Phi_{(i,k)}\right) \neq \emptyset \right\} \lec \Ga_q^8 \la_{q} \la_{q+\half}^2 \,.
\end{align}
\end{enumerate}
\end{lemma}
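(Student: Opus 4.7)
The strategy is to verify each of the four items in turn, exploiting that $\etab_\xi^I \circ \Phi_{(i,k)}$ is the pullback by the flow of a stationary, $\xi$-invariant cutoff. Items~(1) and~(2) are immediate: the chain rule gives $\Dtq(\etab_\xi^I \circ \Phi_{(i,k)}) = (\nabla \etab_\xi^I)(\Phi_{(i,k)}) \cdot \Dtq \Phi_{(i,k)} = 0$ using Definition~\ref{def:transport:maps}, while item~(2) follows by composing the partition of unity $\sum_I (\etab_\xi^I)^6 \equiv 1$ from Definition~\ref{def:etab} with $\Phi_{(i,k)}$.

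For item~(3), the key observation is that the directional derivative $\xi^\ell A_\ell^j \partial_j$ in $x$-space corresponds under $\Phi_{(i,k)}$ to the $\xi$-directional derivative in the target, because $A \cdot D\Phi_{(i,k)} = \Id$:
\begin{equation*}
(\xi^\ell A_\ell^j \partial_j)(\etab_\xi^I \circ \Phi_{(i,k)}) = \xi^\ell A_\ell^j (D\Phi_{(i,k)})_j^k (\partial_k \etab_\xi^I) \circ \Phi_{(i,k)} = (\xi \cdot \nabla \etab_\xi^I) \circ \Phi_{(i,k)} = 0,
\end{equation*}
by the anisotropy $\xi \cdot \nabla \etab_\xi = 0$ in Definition~\ref{def:etab}. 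Hence $(\xi^\ell A_\ell^j \partial_j)^{N_2}(\etab_\xi^I \circ \Phi_{(i,k)}) \equiv 0$ for all $N_2 \geq 1$, making the claimed bound trivial in that range. For $N_2 = 0$ and $M \geq 1$, item~(1) together with the linearity of $\Dtq$ gives $\Dtq^M(\etab_\xi^I \circ \Phi_{(i,k)}) \equiv 0$, again reducing the bound to zero. The only substantive case is $N_2 = M = 0$, which reduces to $\|D^{N_1}(\etab_\xi^I \circ \Phi_{(i,k)})\|_{L^\infty} \lesssim \lambda_{q+\lfloor \sfrac \bn 2 \rfloor}^{N_1}$. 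This follows by Fa\`a di Bruno, the bound $\|\nabla^N \etab_\xi^I\|_\infty \lesssim \lambda_{q+\half}^N$ from Definition~\ref{def:etab}, and the deformation estimates \eqref{eq:Lagrangian:Jacobian:1}--\eqref{eq:Lagrangian:Jacobian:2}; since each spatial derivative of $\Phi_{(i,k)}$ costs only $\lambda_q \Gamma_q \ll \lambda_{q+\half}$, the dominant Fa\`a di Bruno term is the one in which all $N_1$ derivatives land on $\etab_\xi^I$ itself.

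Item~(4) is a volumetric count, using that $D\Phi_{(i,k)} = \Id + O(\Gamma_q^{-1})$ preserves volumes up to constants. By Lemma~\ref{lem:checkerboard:estimates}\eqref{item:checkeeee}, each $\zeta_{q,\diamond,i,k,\xi,\vecl}$ is supported in a prism of $\xi$-length $\sim \lambda_q^{-1}\Gamma_q^{-8}$ and transverse area $\sim \Gamma_q^{10}\lambda_{q+1}^{-2}$, while by Definition~\ref{def:etab} the support of $\etab_\xi^I$ is a $\xi$-invariant strip of transverse area $\sim \lambda_{q+\half}^{-2}$. Since $\lambda_{q+1} \leq \lambda_{q+\half}$, the transverse cross-section of a $\zeta$ prism contains $\sim \Gamma_q^{10}\lambda_{q+\half}^2/\lambda_{q+1}^2$ $\etab$-squares, and the total number of $\zeta$ prisms is $\sim \Gamma_q^{-2}\lambda_q\lambda_{q+1}^2$; multiplying these yields $\sim \Gamma_q^8 \lambda_q \lambda_{q+\half}^2$ nonempty pairs $(\vecl,I)$, as claimed. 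The main obstacle is the Fa\`a di Bruno bookkeeping in the $N_2=M=0$ case of item~(3), which relies crucially on the scale separation $\lambda_q \Gamma_q \ll \lambda_{q+\half}$ to ensure that chain-rule contributions from derivatives of $\Phi_{(i,k)}$ remain subleading to those landing on $\etab_\xi^I$.
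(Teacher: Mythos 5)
Your proposal is correct and matches the natural "straightforward" argument the paper alludes to (it defers the details to the companion paper): items (1)–(2) are the chain rule and $\Dtq\Phiik=0$, item (3) exploits that $\xi\cdot\nabla\etab_\xi^I\equiv 0$ makes the estimate vacuous unless $N_2=M=0$ and then reduces to Fa\`a di Bruno with the scale separation $\lambda_q\Gamma_q\ll\lambda_{q+\half}$, and item (4) is the volume count. One small remark: for item (4) you invoke "preserves volumes up to constants," but since $\hat u_q$ is divergence-free the flow $\Phiik$ is exactly measure-preserving, so the only thing needed from \eqref{eq:Lagrangian:Jacobian:1} is control of the shape distortion on the timescale of $\tilde\chi_{i,k,q}$; this is a cosmetic point that does not affect the argument.
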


\subsubsection{Definition of the cumulative cutoff function}
\label{sec:cutoff:total:definitions}
With all the various cutoffs functions defined previously, we introduce the cumulative cutoff functions\index{cumulative cutoff function}\index{$\eta_{i,j,k,\xi,\vecl,\diamond}$}
\begin{align}
 \eta_{i,j,k,\xi,\vec{l},\diamond}\,(x,t) &= \psi_{i,q}^\diamond(x,t) \omega_{j,q}^\diamond(x,t) \chi_{i,k,q}^\diamond(t)\zeta_{q,\diamond,i,k,\xi,\vec{l}}\,(x,t) \label{def:cumulative:current}
  \, .
\end{align}
The $\diamond$ in the superscript is equal to $2$ if $\diamond=\varphi$ (so that they are cubic-summable to $1$) and $3$ if $\diamond=R$ (so that they are square-summable to $1$). We have the following $L^p$ estimates on $\eta_{i,j,k,\xi,\vecl,\diamond}$, which may be proven directly using \eqref{eq:omega:support} and \eqref{eq:psi:i:q:support:old}; we refer to \cite[Lemma~5.17]{GKN23} for details.

\begin{lemma}[\bf Cumulative support bounds for cutoff functions]\label{lemma:cumulative:cutoff:Lp}
For $r_1,r_2\in [1,\infty]$ with $\frac{1}{r_1}+\frac{1}{r_2}=1$ and any $0\leq i \leq \imax$, $0\leq j,\leq \jmax$, $\xi\in\Xi,\Xi'$, and $\diamond=\varphi,R$, we have that for each $t$,
\begin{align}
    \sum_{\vecl} \left| \supp_x \left( \eta_{i,j,k,\xi,\vecl,\diamond}(t,x) \right) \right| &\lesssim \Gamma_{q}^{\frac{-3i + \CLebesgue}{r_1} + \frac{-3j}{r_2}+3} \, . \label{eq:supp:cumul:varphi}
\end{align}
We furthermore have that
\begin{align}
    \sum_{i,j,k,\xi,\vecl,I,\diamond} \mathbf{1}_{\supp \eta_{i,j,k,\xi,\vecl,\diamond} \rhob_\pxi^\diamond \zetab_\xi^I} \approx \sum_{i,j,k,\xi,\vecl,\diamond} \mathbf{1}_{\supp \eta_{i,j,k,\xi,\vecl,\diamond} \rhob_\pxi^\diamond} \lesssim 1 \, . \label{eq:desert:cowboy:sum}
\end{align}
\end{lemma}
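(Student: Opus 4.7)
The plan is to derive both estimates from the support properties and partition-of-unity structure of the five cutoff factors composing $\eta_{i,j,k,\xi,\vec{l},\diamond}$ in \eqref{def:cumulative:current}, namely the velocity cutoff $\psi_{i,q}$, the intermittent pressure cutoff $\omega_{j,q}$, the temporal cutoff $\chi_{i,k,q}$, and the mildly anisotropic checkerboard cutoff $\zeta_{q,\diamond,i,k,\xi,\vec{l}}$.

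For the first estimate \eqref{eq:supp:cumul:varphi}, at each fixed time $t$ the temporal cutoff $\chi_{i,k,q}(t)$ is just a scalar factor, so the spatial support of $\eta_{i,j,k,\xi,\vec{l},\diamond}(t,\cdot)$ lies in the intersection of the spatial supports of $\psi_{i,q}$, $\omega_{j,q}$, and $\zeta_{q,\diamond,i,k,\xi,\vec{l}}$. First I would sum over $\vec{l}$: by item~\eqref{item:check:2} of Lemma~\ref{lem:checkerboard:estimates}, the checkerboard supports have uniformly bounded overlap in $\vec{l}$, so $\sum_{\vec{l}} \mathbf{1}_{\supp \zeta_{\vec{l}}} \lesssim 1$. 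This reduces the task to bounding $|\supp_x \psi_{i,q}(t,\cdot) \cap \supp_x \omega_{j,q}(t,\cdot)|$. Applying H\"older with conjugate exponents $r_1,r_2$ then gives
\[
\int \mathbf{1}_{\supp \psi_{i,q}} \mathbf{1}_{\supp \omega_{j,q}} \;\leq\; |\supp \psi_{i,q}|^{1/r_1} \, |\supp \omega_{j,q}|^{1/r_2}.
\]
Since $\psi_{i,q}$ and $\omega_{j,q}$ are pieces of $6^{\text{th}}$-power partitions of unity (via \eqref{eq:inductive:partition} and \eqref{eq:omega:cut:partition:unity}), they are comparable to $1$ on the bulk of their supports, so the $L^1$ bounds \eqref{eq:psi:i:q:support:old} and \eqref{eq:omega:support} yield $|\supp \psi_{i,q}| \lesssim \Gamma_q^{-3i+\CLebesgue}$ and $|\supp \omega_{j,q}| \lesssim \Gamma_q^{3(1-j)}$. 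Substituting and bounding $\Gamma_q^{3/r_2}\leq \Gamma_q^3$ produces the claimed exponent $\tfrac{-3i+\CLebesgue}{r_1} + \tfrac{-3j}{r_2} + 3$.

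For the second estimate \eqref{eq:desert:cowboy:sum}, I would first argue that the two indicator sums are comparable: for fixed $\xi$ the strongly anisotropic cutoffs satisfy \eqref{eq:checkerboard:partition:check}, so $\sum_I \mathbf{1}_{\supp \etab_\xi^I \circ \Phi_{(i,k)}} \lesssim 1$ with uniformly bounded overlap. The remaining sum is then controlled factor by factor: $\sum_i \psi_{i,q}^6 \equiv 1$, $\sum_j \omega_{j,q}^6 \equiv 1$, $\sum_k \chi_{i,k,q}^6 \equiv 1$, and $\sum_{\vec{l}} \zeta_{\vec{l}}^2 \equiv 1$ each contribute only a bounded overlap constant; the sums over the finite, $q$-independent sets $\xi \in \Xi \cup \Xi'$ and $\diamond \in \{R,\varphi\}$ add bounded factors; and the bundling pipe $\rhob_{\pxi}^\diamond$ (by item~\eqref{i:bundling:2} of Proposition~\ref{prop:bundling}) is supported in a single tube per periodic cell for each choice of placement index, contributing only a bounded multiplicative factor.

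The only genuinely subtle step is passing from the $L^1$ norm bounds on $\psi_{i,q}$ and $\omega_{j,q}$ to honest bounds on the Lebesgue measures of their supports. This is handled by noting that each cutoff, being a piece of a $6^{\text{th}}$-power partition of unity with uniformly finite overlap in its index, is bounded below by a universal constant on all but a negligible portion of its support, so that $|\supp \psi_{i,q}| \lesssim \|\psi_{i,q}\|_1$ and analogously for $\omega_{j,q}$ (where the lower bound follows from the comparability \eqref{pt.est.pi.lem.lower} between $\omega_{j,q}$ and the level-set structure of $\pi_\ell$). With this in hand everything else is a routine bookkeeping of partitions of unity, so the lemma is essentially a combinatorial consequence of the constructions already in place.
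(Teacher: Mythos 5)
Your overall strategy for \eqref{eq:supp:cumul:varphi} is correct: reduce over $\vecl$ by bounded overlap, then apply H\"older with exponents $r_1,r_2$ to $\mathbf{1}_{\supp\psi_{i,q}}\mathbf{1}_{\supp\omega_{j,q}}$, and reduce to measure bounds on $\supp\psi_{i,q}$ and $\supp\omega_{j,q}$. The second estimate \eqref{eq:desert:cowboy:sum} is also handled correctly: bounded overlap in $I$ from \eqref{eq:checkerboard:partition:check}, bounded overlap in $i,j,k,\vecl$ from the respective partition-of-unity statements, finiteness of $\xi$ and $\diamond$, and the observation that multiplying by $\rhob_\pxi^\diamond$ only shrinks the supports.

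However, the step you single out as subtle is where the gap actually lies, and your proposed fix does not close it. You claim that a piece of a $6^{\text{th}}$-power partition of unity ``is bounded below by a universal constant on all but a negligible portion of its support,'' and deduce $|\supp\psi_{i,q}|\lesssim\|\psi_{i,q}\|_1$ and $|\supp\omega_{j,q}|\lesssim\Gamma_q^{3(1-j)}$. This does not follow from \eqref{eq:inductive:partition} or \eqref{eq:omega:cut:partition:unity}. A smooth cutoff from a partition of unity vanishes on the boundary of its support, and nothing in the hypotheses prevents the boundary annulus (where the cutoff is small) from having measure comparable to the full support; indeed $\|\omega_{j,q}\|_{L^r}^r \leq |\supp\omega_{j,q}|$ always, so an $L^r$ bound is a priori \emph{weaker} than a support-measure bound. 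Furthermore, your parenthetical invocation of \eqref{pt.est.pi.lem.lower} is misapplied: it gives a lower bound on $\pi_\ell$ on $\supp\omega_{j,q}$, not a lower bound on $\omega_{j,q}$ itself.

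What does work is Chebyshev, organized so the losses are absorbed by the $\Gamma_q^3$ slack in the target exponent. For $\psi$: by \eqref{eq:inductive:partition}, at any point of $\supp\psi_{i,q}$ we have $\psi_{i-1,q}^6+\psi_{i,q}^6+\psi_{i+1,q}^6=1$, hence $\max_{|i'-i|\leq 1}\psi_{i',q}\geq 3^{-1/6}$, so $\supp\psi_{i,q}\subseteq\bigcup_{|i'-i|\leq 1}\{\psi_{i',q}\geq 3^{-1/6}\}$, and Chebyshev with \eqref{eq:psi:i:q:support:old} gives $|\supp\psi_{i,q}|\lesssim\Gamma_q^{-3i+3+\CLebesgue}$ (the $i-1$ term dominates, costing one factor $\Gamma_q^3$). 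For $\omega$: use \eqref{pt.est.pi.lem.lower} to deduce $\supp\omega_{j,q}\subseteq\{\pi_\ell\geq\sfrac14\delta_{q+\bn}\Gamma_q^{2j}\}$, then Chebyshev with $\|\pi_\ell\|_{\sfrac 32}\lesssim\Gamma_q^2\delta_{q+\bn}$ (from \eqref{eq:pressure:inductive:dtq:upgraded} at $N=M=0$) to get $|\supp\omega_{j,q}|\lesssim\Gamma_q^{3-3j}$ with no further loss. Plugging into H\"older,
\begin{equation*}
\left(\Gamma_q^{-3i+3+\CLebesgue}\right)^{\sfrac 1{r_1}}\left(\Gamma_q^{3-3j}\right)^{\sfrac 1{r_2}}=\Gamma_q^{\frac{-3i+\CLebesgue}{r_1}+\frac{-3j}{r_2}+3}\,,
\end{equation*}
which is exactly \eqref{eq:supp:cumul:varphi}. (Attempting the same covering trick for $\omega$ using only the $L^{\sfrac 32}$ bound \eqref{eq:omega:support} incurs a second factor $\Gamma_q^3$ and would overshoot, so the route through $\pi_\ell$ is genuinely needed.) Note also that the sharper measure bounds you asserted \emph{are} ultimately true, but because in the companion construction the supports are defined as preimages of level sets and the Chebyshev bound is established directly before the $L^p$ bounds are recorded — not because the cutoffs are bounded below on most of their supports.
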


\subsubsection{Cutoff aggregation lemmas}\label{sss:aggregation}

We now introduce several ``aggregation lemmas.''  The basic idea of these lemmas is that since specific error terms include various cutoffs, such as $\psi_{i,q}$, $\omega_{j,q}$, $\zeta_{q,\diamond,i,k,\xi,\vecl}$, or $\zetab_{\xi}^I$, they must be estimated individually first.  This produces bounds which depend on indices $i,j,\vecl,I$.  However, by summing over such indices, we may \emph{aggregate} the localized estimates into global (and thus more useful) pointwise estimates and $L^p$ estimates. The proofs of these aggregation estimates are quite short and follow from direct computation; we refer the reader to \cite[Section~5.6]{GKN23} for details.
\begin{corollary}[\bf Aggregated $L^p$ estimates]\label{rem:summing:partition}
Let $\theta\in{(}0,3]$, and $\theta_1,\theta_2\geq 0$ with $\theta_1+\theta_2=\theta$. Let $H=H_{i,j,k,\xi,\vecl,\diamond}$ or $H=H_{i,j,k,\xi,\vecl,I,\diamond}$ be a function with 
\begin{align}
    \supp H_{i,j,k,\xi,\vecl,\diamond} \subseteq \supp \eta_{i,j,k,\xi,\vecl,\diamond} \qquad \textnormal{or} \qquad \supp H_{i,j,k,\xi,\vecl,I,\diamond} \subseteq \supp  \eta_{i,j,k,\xi,\vecl,\diamond} \zetab_{\xi}^{I,\diamond}\circ\Phiik \, . \label{eq:agg:assump:1}
\end{align}
Let $p\in[1,{\infty)}$ and let $\theta_1,\theta_2\in[0,3]$ be such that $\theta_1+\theta_2=\sfrac 3p$. Assume that there exists $\const_H,N_*,M_*,N_x,M_t$ and $\lambda,\Lambda,\tau,\Tau$ such that
\begin{subequations}
\begin{align}
  \left\| D^N \Dtq^M H_{i,j,k,\xi,\vecl,\diamond} \right\|_{L^p} &\lesssim \sup_{t\in\R} \left( \left| \supp_x \left( \eta_{i,j,k,\xi,\vecl,\diamond} (t,x) \right) \right|^{\sfrac 1p} \right) \notag\\
  &\qquad \qquad \times \const_H \Gamma_q^{\theta_1 i + \theta_2 j}  \MM{N,N_x,\lambda,\Lambda} \MM{M,M_t,\tau^{-1}\Gamma_q^i,\Tau^{-1}} \label{eq:agg:assump:2} \\
  \left\| D^N \Dtq^M H_{i,j,k,\xi,\vecl,I,\diamond} \right\|_{L^p} &\lesssim \sup_{t\in\R} \left( \left| \supp_x \left( \eta_{i,j,k,\xi,\vecl,\diamond} \zetab_\xi^{I,\diamond}\circ\Phiik  (t,x)\right) \right|^{\sfrac 1p} \right) \notag\\
  &\qquad \qquad \times \const_H \Gamma_q^{\theta_1 i + \theta_2 j} \MM{N,N_x,\lambda,\Lambda} \MM{M,M_t,\tau^{-1}\Gamma_q^i,\Tau^{-1}} \,  \label{eq:agg:assump:3}
\end{align}
\end{subequations}
for $N\leq N_*,M\leq M_*$. Then in the same range of $N$ and $M$,
\begin{subequations}
\begin{align}\label{eq:agg:conc:1}
 \left\| \psi_{i,q} \sum_{i',j,k,\xi,\vecl,\diamond} D^N \Dtq^M H_{i',j,k,\xi,\vecl,\diamond} \right\|_{L^p} &\lesssim \Gamma_q^{{3+\theta_1 \CLebesgue}} \const_H \MM{N,N_x,\lambda,\Lambda} \MM{M,M_t,\tau^{-1}\Gamma_q^{i+1},\Tau^{-1}} \\
 \label{eq:agg:conc:2}
 \left\| \psi_{i,q} \sum_{i',j,k,\xi,\vecl,I,\diamond} D^N \Dtq^M H_{i',j,k,\xi,\vecl,I,\diamond} \right\|_{L^p} &\lesssim \Gamma_q^{{3+\theta_1 \CLebesgue}} \const_H \MM{N,N_x,\lambda,\Lambda} \MM{M,M_t,\tau^{-1}\Gamma_q^{i+1},\Tau^{-1}} \, .
\end{align}
\end{subequations}
\end{corollary}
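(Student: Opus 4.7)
The plan is a Hölder-type aggregation in which we first collapse the $i'$-sum to at most three terms, next convert the $L^p$ norm of a sum of nearly disjointly supported pieces into a sum of $L^p$-norms to the $p^{\textnormal{th}}$ power, then apply the hypothesis termwise, and finally perform the remaining summations in a way that exactly matches the pressure weights $\Gamma_q^{\theta_1 i+\theta_2 j}$ to the support bounds in Lemma~\ref{lemma:cumulative:cutoff:Lp}.

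First I would use that $H_{i',j,k,\xi,\vecl,\diamond}\subseteq \supp \eta_{i',j,k,\xi,\vecl,\diamond}\subseteq\supp\psi_{i',q}$ (and the same for the variant with the extra index $I$), together with the fact that $D^ND_{t,q}^M$ preserves the support.  Combined with the partition of unity property \eqref{eq:inductive:partition} giving $\psi_{i,q}\psi_{i',q}\equiv 0$ for $|i-i'|>1$, this collapses the $i'$-sum to $i'\in\{i-1,i,i+1\}$.  Next, by the uniform pointwise bound on the sum of indicators \eqref{eq:desert:cowboy:sum}, we have $|\sum H|^p\lesssim \sum|H|^p$ pointwise (H\"older applied to the at most $O(1)$ nonzero summands at each point), which yields
\begin{equation}\label{eq:plan:aggregate}
\left\|\psi_{i,q}\sum_{i',j,k,\xi,\vecl,(I),\diamond} D^ND_{t,q}^M H\right\|_{L^p}^p\lesssim \sum_{|i'-i|\leq 1}\sum_{j,k,\xi,\vecl,(I),\diamond}\|D^N D_{t,q}^M H\|_{L^p}^p.
\end{equation}
For the version with index $I$, I would sum over $I$ first using the near-disjointness of $\{\zetab_\xi^{I,\diamond}\circ\Phiik\}_I$ from~\eqref{eq:checkerboard:partition:check}, which bounds $\sum_I|\supp_x\eta\,\zetab_\xi^I\circ\Phiik|$ by $|\supp_x \eta|$ up to a constant, reducing both variants to the same estimate.

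Next, plugging the hypothesis \eqref{eq:agg:assump:2} into the right-hand side of \eqref{eq:plan:aggregate} gives a factor $\const_H^p\Gamma_q^{p(\theta_1 i+\theta_2 j)}\MM{N,N_x,\lambda,\Lambda}^p\MM{M,M_t,\tau^{-1}\Gamma_q^i,\Tau^{-1}}^p$ times $|\supp_x\eta|$.  I then sum over $\vecl$ using \eqref{eq:supp:cumul:varphi} with the optimal choice $\tfrac{1}{r_1}=\tfrac{p\theta_1}{3}$, $\tfrac{1}{r_2}=\tfrac{p\theta_2}{3}$ (admissible since $\theta_1+\theta_2=\tfrac{3}{p}$ ensures $\tfrac{1}{r_1}+\tfrac{1}{r_2}=1$).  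This produces $\Gamma_q^{-p\theta_1 i+\tfrac{p\theta_1\CLebesgue}{3}-p\theta_2 j+3}$, which exactly cancels the $\Gamma_q^{p(\theta_1 i+\theta_2 j)}$ weight and leaves $\Gamma_q^{\tfrac{p\theta_1\CLebesgue}{3}+3}$.  The remaining sums over $k$ (bounded overlap by \eqref{e:chi:overlap}), $\xi\in\Xi\cup\Xi'$ (finite set), $\diamond\in\{R,\varphi\}$ and the three $i'$-values contribute only a $q$-independent constant; the sum over $j\in\{0,\dots,j_{\rm max}\}$ produces at worst a factor $j_{\rm max}$ which by \eqref{ineq:jmax:use} is at most polylogarithmic in $\lambda_q$ and therefore absorbed.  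Taking the $p^{\textnormal{th}}$ root yields $\Gamma_q^{\tfrac{3}{p}+\tfrac{\theta_1\CLebesgue}{3}}$, which is bounded by $\Gamma_q^{3+\theta_1\CLebesgue}$ since $p\geq 1$ and $\CLebesgue>0$.

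The material derivative cost is propagated through this argument unchanged, because the cutoffs $\psi_{i,q},\omega_{j,q},\chi_{i,k,q},\zeta_{q,\diamond,i,k,\xi,\vecl},\zetab_\xi^I\circ\Phiik$ are all adapted to the flow of $\hat u_q$, so $D_{t,q}$ falls naturally on $H$ where the hypothesis applies.  The slight loss from $\tau^{-1}\Gamma_q^i$ to $\tau^{-1}\Gamma_q^{i+1}$ in the conclusion comfortably absorbs any $\Gamma_q$-factor that arises in the sums, e.g.\ from including the neighboring velocity cutoffs or from $j_{\rm max}$.  I expect the main obstacle to be purely bookkeeping: verifying that the admissible choice of $r_1,r_2$ makes the powers of $\Gamma_q$ in $i$ and $j$ telescope exactly, and checking the polylogarithmic $j$-sum remains harmless under the upper bound on $j_{\rm max}$.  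Once these bookkeeping items are in place, the aggregation is a one-line application of the $\ell^p$ triangle inequality with nearly disjoint supports.
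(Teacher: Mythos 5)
Your argument is essentially correct and is the natural route: collapse the $i'$-sum to $|i'-i|\leq 1$ using \eqref{eq:inductive:partition}, convert the $L^p$-norm of the sum into a sum of $L^p$-norms to the $p$th power by bounded overlap of supports, feed in \eqref{eq:agg:assump:2} or \eqref{eq:agg:assump:3}, and then choose the Lebesgue exponents $r_1,r_2$ in \eqref{eq:supp:cumul:varphi} so that the $\Gamma_q^{\theta_1 i+\theta_2 j}$ weight telescopes against the support bound. The exponent arithmetic checks out: with $1/r_1=p\theta_1/3$, $1/r_2=p\theta_2/3$ (admissible, since $\theta_1,\theta_2\geq 0$ and $\theta_1+\theta_2=3/p$), after multiplying through by $\Gamma_q^{p(\theta_1 i+\theta_2 j)}$ and taking the $p$th root one obtains $\Gamma_q^{3/p+\theta_1\CLebesgue/3}\leq\Gamma_q^{3+\theta_1\CLebesgue}$. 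The upgrade $\tau^{-1}\Gamma_q^i\rightsquigarrow\tau^{-1}\Gamma_q^{i+1}$ correctly absorbs the shift $i'=i+1$ in the material-derivative weight.

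Two small inaccuracies, neither fatal. First, you invoke \eqref{eq:desert:cowboy:sum} for the bounded overlap of the $\eta_{i,j,k,\xi,\vecl,\diamond}$, but that estimate controls $\sum\mathbf{1}_{\supp\eta\rho}$ and $\sum\mathbf{1}_{\supp\eta\rho\zeta}$, whose supports are \emph{smaller} than $\supp\eta$; a bound on the smaller sets does not imply the same for the larger ones. The correct justification of bounded overlap of the $\eta$'s is just the overlap properties of the individual factors: $\psi_{i,q}\psi_{i',q}\equiv 0$ for $|i-i'|>1$ in \eqref{eq:inductive:partition}, $\omega_{j,q}\omega_{j',q}\equiv 0$ for $|j-j'|>1$ in \eqref{eq:omega:cut:partition:unity}, $\chi_{i,k_1,q}\chi_{i,k_2,q}\equiv 0$ for $|k_1-k_2|>1$ in \eqref{e:chi:overlap}, bounded overlap of the checkerboard cutoffs $\zeta_{q,\diamond,i,k,\xi,\vecl}$, and finiteness of $\Xi\cup\Xi'$ and $\{R,\varphi\}$. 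Second, you describe $j_{\rm max}$ as "at worst polylogarithmic in $\lambda_q$"; in fact Lemma~\ref{lem:D:Dt:Rn:sharp} states that $j_{\rm max}$ is bounded \emph{uniformly in $q$}, which is even stronger, so the $j$-sum contributes only a $q$-independent constant. Neither slip affects the validity of the argument.
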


\begin{corollary}[\bf Aggregated pointwise estimates]\label{lem:agg.pt}
Let $H=H_{i,j,k,\xi,\vecl,\diamond}$ or $H=H_{i,j,k,\xi,\vecl,I,\diamond}$ be a function with 
\begin{align}
    \supp H_{i,j,k,\xi,\vecl,\diamond} \subseteq \supp \eta_{i,j,k,\xi,\vecl,\diamond} \qquad \textnormal{or} \qquad \supp H_{i,j,k,\xi,\vecl,I,\diamond} \subseteq \supp  \eta_{i,j,k,\xi,\vecl,\diamond} \zetab_{\xi}^{I,\diamond}\circ\Phiik \,  \label{eq:aggpt:assump:1}
\end{align}
and let $\varpi = \varpi_{i,j,k,\xi,\vecl,\diamond}$ or $\varpi = \theta_{i,j,k,\xi,\vecl,I,\diamond}$ be a {non-negative} function such that
\begin{align}
    \supp \varpi_{i,j,k,\xi,\vecl,\diamond} \subseteq \supp \eta_{i,j,k,\xi,\vecl,\diamond} \qquad \textnormal{or} \qquad \supp \varpi_{i,j,k,\xi,\vecl,I,\diamond} \subseteq \supp  \eta_{i,j,k,\xi,\vecl,\diamond} \zetab_{\xi}^{I,\diamond} \circ\Phiik\,  \label{eq:aggpt:assump:2}
\end{align}
Let $p\in({0},\infty)$ and assume that there exists $\lambda,\Lambda,\tau$ such that
\begin{subequations}
\begin{align}
  |D^N \Dtq H_{i,j,k,\xi,\vecl,\diamond}| &\lesssim \varpi_{i,j,k,\xi,\vecl,\diamond}^p \MM{N,N_x,\lambda,\Lambda} \MM{N,N_t,\tau^{-1}\Gamma_q^i,\Tau^{-1}} \, \label{eq:aggpt:assump:3} \\
  |D^N \Dtq H_{i,j,k,\xi,\vecl,I,\diamond}| &\lesssim \varpi_{i,j,k,\xi,\vecl,I,\diamond}^p \MM{N,N_x,\lambda,\Lambda} \MM{N,N_t,\tau^{-1}\Gamma_q^i,\Tau^{-1}} \, \label{eq:aggpt:assump:4} 
\end{align}
\end{subequations}
for $N\leq N_*,M\leq M_*$. Then in the same range of $N$ and $M$,
\begin{subequations}
\begin{align}\label{eq:aggpt:conc:1}
 \left|\psi_{i,q} \sum_{i',j,k,\xi,\vecl,\diamond} D^N \Dtq^M H_{i',j,k,\xi,\vecl,\diamond} \right| &\lesssim \left(\sum_{i,j,k,\xi,\vecl,\diamond}\varpi_{i,j,k,\xi,\vecl,\diamond}\right)^p \MM{N,N_x,\lambda,\Lambda} \MM{M,M_t,\tau^{-1}\Gamma_q^{i+1},\Tau^{-1}} \\
 \label{eq:aggpt:conc:2}
 \left| \psi_{i,q} \sum_{i',j,k,\xi,\vecl,I,\diamond} D^N \Dtq^M H_{i',j,k,\xi,\vecl,I,\diamond} \right| &\lesssim \left(\sum_{i,j,k,\xi,\vecl,I,\diamond}\varpi_{i,j,k,\xi,\vecl,I,\diamond}\right)^p \MM{N,N_x,\lambda,\Lambda} \MM{M,M_t,\tau^{-1}\Gamma_q^{i+1},\Tau^{-1}} \, .
\end{align}
\end{subequations}
\end{corollary}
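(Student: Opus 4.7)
The plan is to prove this aggregated pointwise estimate by a pointwise finite-overlap argument: at every space-time point $(x,t)\in\supp\psi_{i,q}$, only a $q$-independent number of summands are nonzero, and each is controlled by the hypothesis \eqref{eq:aggpt:assump:3}--\eqref{eq:aggpt:assump:4}; the conclusion then follows from an elementary inequality relating $\sum\varpi_\alpha^p$ to $(\sum\varpi_\alpha)^p$.

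First I would identify the finite-overlap count. By the partition-of-unity property \eqref{eq:inductive:partition}, on $\supp\psi_{i,q}$ only indices $i'$ with $|i'-i|\leq 1$ contribute, giving at most three values. By \eqref{eq:omega:cut:partition:unity} and \eqref{e:chi:overlap}, at each point at most two $j$ and at most two $k$ contribute (for each fixed $i$). For fixed $(i,k,\xi)$, the sets $\{\supp\zeta_{q,\diamond,i,k,\xi,\vecl}\}_{\vecl}$ form an essentially disjoint cover of space, so at each point only a bounded number of $\vecl$ contribute (and similarly for $I$, by Definition~\ref{def:etab}); the label $\diamond\in\{R,\varphi\}$ and the direction $\xi\in\Xi\cup\Xi'$ both range over $q$-independent finite sets. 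Hence, combining these facts and invoking the support assumption \eqref{eq:aggpt:assump:1} (respectively \eqref{eq:aggpt:assump:2}), there exists a constant $N_0$ depending only on $|\Xi\cup\Xi'|$ and the combinatorial data of the cutoffs---and in particular independent of $q$---such that at every $(x,t)$,
\[
\#\bigl\{(i',j,k,\xi,\vecl,\diamond) : \eta_{i',j,k,\xi,\vecl,\diamond}(x,t)\neq 0,\ |i'-i|\leq 1\bigr\} \leq N_0,
\]
with an identical statement upon including the index $I$ and the factor $\zetab_\xi^{I,\diamond}\circ\Phiik$.

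Next I would apply the hypothesis termwise. Multiplying \eqref{eq:aggpt:assump:3} by $\psi_{i,q}$ and using the restriction $|i'-i|\leq 1$ to replace $\tau^{-1}\Gamma_q^{i'}$ by $\tau^{-1}\Gamma_q^{i+1}$ (with a harmless universal constant absorbed into $\lesssim$) yields, for every index $(i',j,k,\xi,\vecl,\diamond)$ giving a nonzero summand on $\supp\psi_{i,q}$,
\[
\bigl|\psi_{i,q}\, D^N D_{t,q}^M H_{i',j,k,\xi,\vecl,\diamond}\bigr| \lesssim \varpi_{i',j,k,\xi,\vecl,\diamond}^{p}\, \MM{N,N_x,\lambda,\Lambda}\,\MM{M,M_t,\tau^{-1}\Gamma_q^{i+1},\Tau^{-1}},
\]
and likewise with $I$ included. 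Summing over the at most $N_0$ nonzero indices at each point, the conclusion reduces to bounding $\sum_\alpha \varpi_\alpha^{p}$ by $C_p \bigl(\sum_\alpha \varpi_\alpha\bigr)^{p}$.

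Finally, this last inequality is elementary: when $p\geq 1$, convexity of $t\mapsto t^p$ on $[0,\infty)$ gives $\sum_\alpha\varpi_\alpha^p \leq (\sum_\alpha \varpi_\alpha)^p$ with constant $1$; when $0<p<1$, the power-mean inequality applied to the $N_0$-term sum yields $\sum_\alpha\varpi_\alpha^p \leq N_0^{1-p}(\sum_\alpha\varpi_\alpha)^p$. In both cases the constant is independent of $q$ because $N_0$ is. Substituting this into the termwise estimate produces \eqref{eq:aggpt:conc:1}, and the argument with the index $I$ adjoined produces \eqref{eq:aggpt:conc:2}. The only true technicality is the $0<p<1$ case, where one genuinely needs the uniform-in-$q$ count $N_0$; this is the reason the finite-overlap step is carried out first and carefully. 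No Leibniz rule or derivative analysis on cutoffs is needed, since the derivatives have already landed on $H$ in the hypothesis.
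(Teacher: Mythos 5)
Your proof is correct and takes the standard route that the paper gestures at (the paper itself defers to the companion work \cite[Section~5.6]{GKN23}, describing the proof as ``short'' and by ``direct computation''). The three ingredients you isolate --- (i) the uniform-in-$q$ finite-overlap count $N_0$ coming from \eqref{eq:inductive:partition}, \eqref{eq:omega:cut:partition:unity}, \eqref{e:chi:overlap}, and the checkerboard/anisotropic cutoff structure, (ii) the termwise application of the hypothesis with the shift $\Gamma_q^{i'}\le\Gamma_q^{i+1}$ valid on $\supp\psi_{i,q}\cap\supp\psi_{i',q}$ (possible since $|i-i'|\le 1$ and $\MM{\cdot}$ is monotone in the base), and (iii) the elementary comparison $\sum_\alpha\varpi_\alpha^p \lesssim_{N_0,p}(\sum_\alpha\varpi_\alpha)^p$ evaluated pointwise --- are exactly what is needed. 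You correctly note that the bounded overlap is essential only when $0<p<1$, where the constant $N_0^{1-p}$ enters, and that the $q$-independence of $N_0$ is what keeps the implicit constant uniform. One small inaccuracy in phrasing: for $p\ge 1$ the inequality $\sum_\alpha a_\alpha^p\le(\sum_\alpha a_\alpha)^p$ is superadditivity of $t\mapsto t^p$ on $[0,\infty)$ (equivalently $t^p\le t$ for $t\in[0,1]$ applied to $a_\alpha/\sum a$), not convexity --- Jensen gives the reverse comparison after normalization --- but the inequality itself and your use of it are correct, so this does not affect the argument.
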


\begin{corollary}[\bf Aggregated pointwise estimates with $\Ga_q^i$]\label{lem:agg.Dtq}
Let $H=H_{i,j,k,\xi,\vecl,I,\diamond}$ be a function with 
\begin{align}
     \supp H_{i,j,k,\xi,\vecl,\diamond} \subseteq \supp \eta_{i,j,k,\xi,\vecl,\diamond} \qquad \textnormal{or} \qquad \supp H_{i,j,k,\xi,\vecl,I,\diamond} \subseteq \supp  \eta_{i,j,k,\xi,\vecl,\diamond} \zetab_{\xi}^{I,\diamond}\circ\Phiik \,  \label{eq:aggDtq:assump:1}
\end{align}
and let $\varpi$ be a {non-negative} function and assume that there exists $\lambda,\Lambda,\tau,\Tau$ such that for $H = H_{i,j,k,\xi,\vecl,\diamond}$ or $H_{i,j,k,\xi,\vecl,I,\diamond}$
\begin{subequations}
\begin{align}
  \left|D^N \Dtq^M H\right| &\lesssim \tau_q^{-1} \Ga_q^i \psi_{i,q} \varpi \MM{N,N_x,\lambda,\Lambda} \MM{M,M_t,\tau^{-1}\Gamma_q^i,\Tau^{-1}} \, \label{eq:aggDtq:assump:4} 
\end{align}
\end{subequations}
for $N\leq N_*,M\leq M_*$. Then in the same range of $N$ and $M$,
\begin{subequations}
\begin{align}\label{eq:aggDtq:conc:1.0}
 \left|\psi_{i,q} \sum_{i',j,k,\xi,\vecl,\diamond} D^N \Dtq^M H_{i',j,k,\xi,\vecl,\diamond} \right| &\lesssim \Ga_q r_{q}^{-1} \la_q \left(\pi_q^q\right)^{\sfrac12} \varpi \MM{N,N_x,\lambda,\Lambda} \MM{M,M_t,\tau^{-1}\Gamma_q^{i+1},\Tau^{-1}} \\
\label{eq:aggDtq:conc:1}
 \left| \psi_{i,q} \sum_{i',j,k,\xi,\vecl,I,\diamond} D^N \Dtq^M H_{i',j,k,\xi,\vecl,I,\diamond} \right| &\lesssim \Ga_q r_{q}^{-1} \la_q \left(\pi_q^q\right)^{\sfrac12} \varpi \MM{N,N_x,\lambda,\Lambda} \MM{M,M_t,\tau^{-1}\Gamma_q^{i+1},\Tau^{-1}} \, .
\end{align}
\end{subequations}
\end{corollary}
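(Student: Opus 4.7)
The plan is to follow the template of the preceding aggregation results (Corollaries~\ref{rem:summing:partition} and~\ref{lem:agg.pt}), with one extra step at the end that converts the pointwise weight $\tau_q^{-1}\Ga_q^{i+1}\psi_{i,q}$ into the pressure-based prefactor $\Ga_q r_q^{-1}\la_q(\pi_q^q)^{\sfrac12}$ via the inductive hypothesis~\eqref{eq:psi:q:q'}. Both cases of the statement (with and without the extra index $I$) will be handled by the same argument, differing only in which overlap estimate is invoked when summing the non-$i'$ indices.

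First, by the near-disjointness property $\psi_{i,q}\psi_{i',q}\equiv 0$ for $|i-i'|\geq 2$ from \eqref{eq:inductive:partition}, the external factor $\psi_{i,q}$ collapses the $i'$-sum onto the finite set $i'\in\{i-1,i,i+1\}$. For each such $i'$, the pointwise bound \eqref{eq:aggDtq:assump:4} immediately yields
\begin{align*}
\bigl|D^N\Dtq^M H_{i',j,k,\xi,\vecl,\diamond}\bigr| &\les \tau_q^{-1}\Ga_q^{i+1}\psi_{i',q}\,\varpi\,\MM{N,N_x,\la,\La}\MM{M,M_t,\tau^{-1}\Ga_q^{i+1},\Tau^{-1}}\,,
\end{align*}
after absorbing $\Ga_q^{i'}\leq\Ga_q^{i+1}$ into both the explicit prefactor and the material-derivative weight, and the analogous bound holds in the $I$-indexed setting. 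Summing over the remaining indices $(j,k,\xi,\vecl,\diamond)$, and additionally over $I$ in the second case, costs only an $O(1)$ overlap constant, either by \eqref{eq:desert:cowboy:sum} of Lemma~\ref{lemma:cumulative:cutoff:Lp} or, for the $I$-version, by the cardinality bound \eqref{lem.cardinality} of Lemma~\ref{lem:finer:checkerboard:estimates}.

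It remains to replace $\tau_q^{-1}\Ga_q^{i+1}\psi_{i',q}$ by $\Ga_q r_q^{-1}\la_q(\pi_q^q)^{\sfrac12}$. Using the definition $\tau_q^{-1}=\de_q^{\sfrac12}\la_q r_{q-\bn}^{-\sfrac13}\Ga_q^{35}$ from \eqref{eq:defn:tau} and extracting a square root of \eqref{eq:psi:q:q'} at $q'=q$, we obtain pointwise
\begin{align*}
\psi_{i',q}\de_q^{\sfrac12} r_{q-\bn}^{-\sfrac13}\Ga_q^{i'} \les \Ga_q^{\sfrac12}r_{q-\bn}^{-1}(\pi_q^q)^{\sfrac12}\,,
\qquad\text{hence}\qquad
\psi_{i',q}\tau_q^{-1}\Ga_q^{i'} \les \Ga_q^{36}\la_q r_{q-\bn}^{-1}(\pi_q^q)^{\sfrac12}\,.
\end{align*}
A direct computation with \eqref{eq:def:la:de} and \eqref{eq:deffy:of:gamma} shows that $r_q^{-1}/r_{q-\bn}^{-1}$ is a large positive power of $\la_q$ (of order $\la_q^{b^{\bn}-b^{\sfrac\bn2}}$), which dominates any fixed power of $\Ga_q$ provided $a$ is chosen sufficiently large. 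In particular $\Ga_q^{35}r_{q-\bn}^{-1}\leq r_q^{-1}$, which yields the desired bounds \eqref{eq:aggDtq:conc:1.0} and \eqref{eq:aggDtq:conc:1}.

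The only real obstacle is bookkeeping of the polynomial factors of $\Ga_q$ generated by both $\tau_q^{-1}$ and by the square-root step in \eqref{eq:psi:q:q'}. Since the parameter regime of Proposition~\ref{prop:main} arranges $(b-1)\varepsilon_\Ga\ll 1$ while the gap $b^\bn-b^{\sfrac\bn2}$ is macroscopic, this discrepancy is absorbed without difficulty, and no new idea is required beyond those already exploited in the companion aggregation lemmas.
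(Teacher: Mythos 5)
Your argument is essentially correct and follows the natural route for this class of aggregation lemmas: restrict the $i'$-sum to $\{i-1,i,i+1\}$ via \eqref{eq:inductive:partition} and the support assumption, invoke finite pointwise overlap of the remaining cutoffs, and then trade the amplitude factor $\tau_q^{-1}\Ga_q^{i'}\psi_{i',q}$ for $\Ga_q r_q^{-1}\la_q(\pi_q^q)^{\sfrac 12}$ by taking a single-term square root of \eqref{eq:psi:q:q'} at $q'=q$ and using \eqref{eq:defn:tau}. The only genuinely nonobvious ingredient is the inequality $\Ga_q^{C}r_{q-\bn}^{-1}\lesssim r_q^{-1}$ for a fixed $C$, and you correctly identify that this is a comparison between a macroscopic power of $\la_q$ and a tiny $\varepsilon_\Ga$-power; the paper's parameter hierarchy (e.g.\ iterating \eqref{ineq:r's:eat:Gammas}, which gives $r_{q-\bn}/r_q\geq\Ga_q^{\bn(1000+10\CLebesgue)}$ directly) makes this immediate and would have been a cleaner citation.

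A few imprecisions worth flagging, none of which invalidate the argument. First, the exponent you quote for $r_q^{-1}/r_{q-\bn}^{-1}$ is off: in $\log_a$ units (scaled by $b^q$) the gap is $b^{-\sfrac\bn2}+b^{\bn}-1-b^{\sfrac\bn2}=(b^{\sfrac\bn2}-1)^2(b^{\sfrac\bn2}+1)b^{-\sfrac\bn2}\approx\tfrac{\bn^2}{2}(b-1)^2$, not $b^{\bn}-b^{\sfrac\bn2}\approx\tfrac{\bn}{2}(b-1)$; the comparison against $\Ga_q^{C}\approx\la_q^{C(b-1)\varepsilon_\Ga}$ still works since $\varepsilon_\Ga\ll(b-1)^2$, but the phrase ``provided $a$ is chosen sufficiently large'' mischaracterizes the constraint—the exponents themselves must compare favorably, which is ensured by the choice of $\varepsilon_\Ga$, not of $a$. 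Second, the citations for the $O(1)$ overlap are both slightly wrong: \eqref{lem.cardinality} bounds the \emph{total} cardinality of pairs $(\vecl,I)$, which is polynomially large in $\la_q$, not the pointwise overlap; and \eqref{eq:desert:cowboy:sum} involves an intersection with $\supp\rhob_\pxi^\diamond$ that is not present in your support hypothesis \eqref{eq:aggDtq:assump:1}. The correct justification is simply that for each fixed $i'$ the families $\{\omega_{j,q}\}_j$, $\{\chi_{i',k,q}\}_k$, $\{\zeta_{q,\diamond,i',k,\xi,\vecl}\}_\vecl$, and $\{\zetab_\xi^{I,\diamond}\}_I$ are partitions of unity with bounded overlap (see \eqref{eq:omega:cut:partition:unity}, \eqref{e:chi:overlap}, \eqref{eq:checkerboard:partition}, \eqref{eq:sa:summability}), so at a fixed point at most $O(1)$ tuples contribute.
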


\subsection{Premollified velocity increment, premollified velocity increment potential, pressure increment, velocity increment, and new velocity cutoffs}\label{sec:vel.inc}

We are now ready to define the premollified velocity increment $w_{q+1}$ and premollified velocity increment potential $\upsilon_{q+1}$,\index{$w_{q+1}$}\index{$\upsilon_{q+1}$} which we do in the first two subsubsections.  Then, we define the associated pressure increments and pressure current error.  Next, we define the velocity increment, which we denote with $\hat w_{\qbn}$, and the velocity increment potential, which we denote with $\hat \upsilon_{\qbn}$.\index{$\hat w_\qbn$}\index{$\hat \upsilon_\qbn$}  Finally, we define the new velocity cutoffs $\psi_{i,\qbn}$ and record a useful estimate for the velocity cutoffs in terms of the velocity pressure increment.

\subsubsection{Premollified velocity increment}
The premollified velocity increment $w_{q+1}$ will be divided into Reynolds and current correctors, and principal parts and divergence correctors:
\begin{align}\label{defn:w}
    w_{q+1} = w_{q+1,R} + w_{q+1,\varphi} \, , \qquad  w_{q+1}^{(p)} := w_{q+1,R}^{(p)} + w_{q+1,\varphi}^{(p)} \, , \qquad w_{q+1}^{(c)} := w_{q+1,R}^{(c)} + w_{q+1,\varphi}^{(c)} \, .
\end{align}
We will use the intermittent bundles and pipe flows from subsection~\ref{subsec:result:par1:1} to define $w_{q+1}$, which however requires choices of placement for both the bundling pipe $\rhob_\pxi^\diamond$ and $\WW_{\pxi,\diamond}^I$.  We delay this choice until Section~\ref{sec:dodging}, as many properties such as the basic estimates on the size of $w_{q+1}$ do not depend on the choices of placement. The principal portion $w_{q+1,\diamond}^{(p)}$ and divergence corrector portion $w_{q+1,\diamond}^{(c)}$, $\diamond=R,\ph$ by
\begin{subequations}\label{wqplusoneonediamond}
\begin{align}\label{wqplusoneonediamondp}
    w_{q+1,\diamond}^{(p)} = \sum_{i,j,k,\xi,\vecl,I} \underbrace{a_{(\xi),\diamond} \left(\chib_{(\xi)}^{\diamond} \etab^{I,\diamond}_\xi\right) \circ \Phiik  \curl \left( \nabla\Phi_{(i,k)}^T \mathbb{U}_{(\xi),\diamond}^I \circ \Phi_{(i,k)} \right)}_{=: w_{(\xi),\diamond}^{(p),I}} \, ,\\
    \label{wqplusoneonediamondc}
     w_{q+1,\diamond}^{(c)} = \sum_{i,j,k,\xi,\vecl,I} \underbrace{\nabla \left( a_{(\xi),\diamond} \left(\chib_{(\xi)}^\diamond \etab^{I,\ph}_\xi \right) \circ \Phiik \right)  \times 
    \left( \nabla\Phi_{(i,k)}^T \mathbb{U}_{(\xi),\diamond}^I \circ \Phi_{(i,k)} \right)}_{ =: w_{(\xi),\diamond}^{(c),I}}  \, .
\end{align}
\end{subequations}
The coefficient functions $a_{\pxi,\diamond}$\index{$\pxi$}\index{$a_{\pxi,\diamond}$} are defined by
\begin{align}
a_{\xi,i,j,k,\vecl,\ph} &= a_{(\xi),\ph} = 
\delta_{q+\bn}^{\sfrac 12}r_q^{-\sfrac13} \Gamma^{j-1}_{q} \psi_{i,q}^\ph \omega_{j,q}^{\varphi} \chi_{i,k,q}^\ph \zeta_{q,\varphi,i,k,\xi,\vecl}\,
|\na \Phi_{(i,k)}^{-1} \xi|^{-\sfrac23}
\td\gamma_{\xi}\left(\frac{\ph_{q,i,k}}{\delta_{q+\bn}^{\sfrac32}r_q^{-1}\Gamma^{3j-3}_{q}}\right) \, ,
\label{eq:a:xi:phi:def}\\
a_{\xi,i,j,k,\vecl,R}
&=a_{(\xi),R}=\delta_{q+\bn}^{\sfrac 12}\Gamma^{j-1}_{q} \psi_{i,q}^R \omega_{j,q}^R  \chi_{i,k,q}^R \zeta_{q,R,i,k,\xi,\vecl}\, \gamma_{\xi,\Gamma_q^9}\left(\frac{R_{q,i,k}}{\delta_{q+\bn}\Gamma_q^{2j-2}}\right) \, ,
\label{eq:a:xi:def}
\end{align}
where $\xi \in \Xi'_{q \textnormal{ mod } \bn}$ in~\eqref{eq:a:xi:phi:def}, and $\xi \in \Xi_{q \textnormal{ mod } \bn}$ in~\eqref{eq:a:xi:def}.
For convenience we shall often suppress the indices $\xi,i,j,k,\vecl$ with the shorthand notation $\pxi$. We also use the notation $f^\ph:= f^2$ and $f^R := f^3$ for $f=\ph_{i,q}, \om_{j,q}, \chi_{i,k,q}$, to ensure cubic and quadratic normalizations, respectively (recall that the sixth powers of cutoff functions are summable to one, cf.~\eqref{eq:omega:cut:partition:unity} and \eqref{eq:inductive:partition}). The functions $\tilde \gamma_\xi$ and $\gamma_{\xi,\Gamma_q^9}$ are defined in the geometric lemmas \cite[Propositions~4.1 and 4.2]{GKN23}, and they satisfy algebraic identities which enable the cubic and quadratic cancellations, respectively.  In the cubic case,
\begin{equation}\label{eq:alg:id}
    \frac12 \sum_{\xi} \td\gamma^3_{\xi}\left(\frac{\ph_{q,i,k}}{\delta_{q+\bn}^{\sfrac32}r_q^{-1}\Gamma^{3j-3}_{q}}\right) \xi = \frac{\ph_{q,i,k}}{\delta_{q+\bn}^{\sfrac32}r_q^{-1}\Gamma^{3j-3}_{q}}\, , 
\end{equation}
The coefficient functions satisfy the following estimates, which may be proved via the Fa'a di Bruno formula and direct computation.  We refer the reader to \cite[Lemma~6.4]{GKN23} for further details on these estimates.
\begin{lemma}[\bf Coefficient function estimates]
\label{lem:a_master_est_p}
For $N,N',N'',M$ with $N'',N'\in\{0,1\}$ and $N,M \leq {\sfrac{\Nfin} {3}}$, we have the following estimates.
\begin{subequations}\label{e:a_master_est_p}
\begin{align}
&\left\|D^{N-N''} D_{t,q}^M (\xi^\ell A_\ell^h \partial_h)^{N'} D^{N''} a_{\xi,i,j,k,\vec{l},\varphi}\right\|_{ r} \notag\\
&\qquad\lessg \left| \supp \eta_{i,j,k,\xi,\vecl,\varphi} \right|^{\sfrac 1r} \delta_{q+\bn}^{\sfrac 12} r_q^{-\sfrac 13} \Gamma^{j- {1}}_{q}  \left({\Gamma_{q}^{-5}\lambda_{q+1}}\right)^N  \left({\Gamma_q^{  5}\Lambda_{q}}\right)^{N'} \MM{M, \NindSmall, \tau_{q}^{-1}\Gamma_{q}^{i+4}, \Tau_{q}^{-1}}\label{e:a_master_est_p_phi} \, , \\
&\left\| D^{N-N''} D_{t,q}^M (\xi^\ell A_\ell^h \partial_h)^{N'} D^{N''} \left( a_{\xi,i,j,k,\vecl,\varphi} \left( \rhob_{(\xi)}^\varphi \zetab_\xi^{I,\varphi} \right) \circ \Phiik \right) \right\|_{ r}\notag\\
&\qquad\lessg \left| \supp \left( \eta_{i,j,k,\xi,\vecl,\varphi} \zetab_\xi^{I,\varphi} \right) \right|^{\sfrac 1r} \delta_{q+\bn}^{\sfrac 12} r_q^{-\sfrac 13} \Gamma_q^{j {+1}} \left({\lambda_{q+\lfloor \sfrac \bn 2 \rfloor}}\right)^N \left({\Gamma_q^{  5}\Lambda_{q}}\right)^{N'} \MM{M, \NindSmall, \tau_{q}^{-1}\Gamma_{q}^{i+4}, \Tau_{q}^{-1}}\label{e:a_master_est_p_phi:zeta}
\, ,\\
&\left\| D^{N-N''} D_{t,q}^M (\xi^\ell A_\ell^h \partial_h)^{N'} D^{N''} a_{\xi,i,j,k,\vecl,R}\right\|_{ r} \notag\\
&\qquad\lesssim \left| \supp \eta_{i,j,k,\xi,\vecl,R} \right|^{\sfrac 1r} \delta_{q+\bn}^{\sfrac 12} \Gamma_q^{j+ {4}} \left({\Gamma_{q}^{-5}\lambda_{q+1}}\right)^N \left({\Gamma_q^{ {13}}\Lambda_{q}}\right)^{N'} \MM{M, \NindSmall, \tau_{q}^{-1}\Gamma_{q}^{i+ {13}}, \Tau_{q}^{-1} {\Ga_q^8}}\label{e:a_master_est_p_R} \, , \\
&\left\| D^{N-N''} D_{t,q}^M (\xi^\ell A_\ell^h \partial_h)^{N'} D^{N''} \left( a_{\xi,i,j,k,\vecl,R} \left( \rhob_{(\xi)}^R \zetab_\xi^{I,R} \right) \circ \Phiik \right) \right\|_{ r}\notag\\
&\qquad\lessg \left| \supp \left( \eta_{i,j,k,\xi,\vecl,R} \zetab_\xi^{I,R} \right) \right|^{\sfrac 1r} \delta_{q+\bn}^{\sfrac 12} \Gamma_q^{j+ {7}} \left({\lambda_{q+\lfloor \sfrac \bn 2 \rfloor}}\right)^N \left({\Gamma_q^{ {13}}\Lambda_{q}}\right)^{N'} \MM{M, \NindSmall, \tau_{q}^{-1}\Gamma_{q}^{i+ {13}}, \Tau_{q}^{-1} {\Ga_q^8}}\label{e:a_master_est_p_R:zeta} \, .
\end{align}
\end{subequations}
In the case that $r=\infty$, the above estimates give that
\begin{subequations}
\begin{align}
\left\| D^{N-N''} D_{t,q}^M (\xi^\ell A_\ell^h \partial_h)^{N'} D^{N''} a_{\xi,i,j,k,\vec{l},R}\right\|_{ \infty} 
&\lessg {\Gamma_q^{\frac{\badshaq}{2}+ {7}}} \left(\Gamma_{q}^{-5}\lambda_{q+1}\right)^N \notag\\
&\qquad \qquad \times \left({\Gamma_q^{ {13}}\Lambda_{q}} \right)^{N'} \MM{M,\Nindt, \tau_{q}^{-1}\Gamma_{q}^{i+ {13}},\Tau_q^{-1} {\Ga_q^8}} \label{e:a_master_est_p_uniform_R} \, . \\
\left\| D^{N-N''} D_{t,q}^M (\xi^\ell A_\ell^h \partial_h)^{N'} D^{N''} a_{\xi,i,j,k,\vec{l},\varphi}\right\|_{ \infty} 
&\lessg {\Gamma_q^{\frac{\badshaq}{2}+ {2}}} r_q^{-\sfrac 13} \left(\Gamma_{q}^{-5}\lambda_{q+1}\right)^N \notag\\
&\qquad \qquad \times \left({\Gamma_q^8\Lambda_{q}} \right)^{N'} \MM{M,\Nindt, \tau_{q}^{-1}\Gamma_{q}^{i+4},\Tau_q^{-1}} \label{e:a_master_est_p_uniform_phi} \, ,
\end{align}
\end{subequations}
with analogous estimates (incorporating a loss of $\Ga_q^3$ for $\diamond=R$ and $\Ga_q^2$ for $\diamond=\varphi$) holding for the product $a_{(\xi),\diamond}\zetab_{\xi}^{I,\diamond}\rhob_{(\xi)}^\diamond$. Finally, we have the pointwise estimates
\begin{subequations}\label{e:a_master_est_p_pointwise}
\begin{align}
    \left| D^{N-N''} D_{t,q}^M (\xi^\ell A_\ell^h \partial_h)^{N'} D^{N''} a_{\xi,i,j,k,\vecl,R}\right| &\lesssim \Gamma_q^{12} \pi_\ell^{\sfrac 12} \left({\Gamma_{q}^{-5}\lambda_{q+1}}\right)^N \left({\Gamma_q^{ {13}}\Lambda_{q}}\right)^{N'} \MM{M, \NindSmall, \tau_{q}^{-1}\Gamma_{q}^{i+ {13}}, \Tau_{q}^{-1} {\Ga_q^8}} \label{e:a_master_est_p_R_pointwise} \\     \left| D^{N-N''} D_{t,q}^M (\xi^\ell A_\ell^h \partial_h)^{N'} D^{N''} a_{\xi,i,j,k,\vecl,\varphi}\right| &\lesssim \Gamma_q^{ {12}} \pi_\ell^{\sfrac 12} r_q^{-\sfrac{1}{3}} \left({\Gamma_{q}^{-5}\lambda_{q+1}}\right)^N \left({\Gamma_q^{ {5}}\Lambda_{q}}\right)^{N'} \MM{M, \NindSmall, \tau_{q}^{-1}\Gamma_{q}^{i+4}, \Tau_{q}^{-1}} \, . \label{e:a_master_est_p_phi_pointwise}
\end{align}
\end{subequations}
\end{lemma}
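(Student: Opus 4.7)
The plan is to proceed by direct computation: each coefficient function $a_{(\xi),\diamond}$ is an explicit product of factors whose individual spatial and material derivative bounds are already established in the preceding subsections. Expanding $D^{N-N''} D_{t,q}^M (\xi^\ell A_\ell^h \partial_h)^{N'} D^{N''}$ via the Leibniz rule distributes derivatives across $\delta_{q+\bn}^{\sfrac 12}\Gamma_q^{j-1}$, $\psi_{i,q}^\diamond$, $\omega_{j,q}^\diamond$, $\chi_{i,k,q}^\diamond$, $\zeta_{q,\diamond,i,k,\xi,\vecl}$, the Jacobian factor $|\nabla\Phi_{(i,k)}^{-1}\xi|^{-\sfrac 23}$ (present only for $\diamond=\varphi$), and the composition $\tilde\gamma_\xi(\cdot)$ or $\gamma_{\xi,\Gamma_q^9}(\cdot)$. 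For the composition we invoke the Fa\`a di Bruno formula, which reduces the problem to derivative bounds on the normalized arguments $\varphi_{q,i,k}/(\delta_{q+\bn}^{\sfrac 32} r_q^{-1}\Gamma_q^{3j-3})$ and $R_{q,i,k}/(\delta_{q+\bn}\Gamma_q^{2j-2})$; these follow from the inductive pointwise bounds \eqref{eq:ind:stress:by:pi} and \eqref{eq:ind:current:by:pi}, combined with the lower bound $\pi_\ell\gtrsim \delta_{q+\bn}\Gamma_q^{2j}$ on $\supp\omega_{j,q}$ supplied by \eqref{pt.est.pi.lem.lower}, so that every derivative of the normalized argument is controlled by a universal constant times $\MM{\cdot}$-type costs.

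For the individual factors, we apply \eqref{eq:sharp:Dt:psi:i:q:mixed:old} to $\psi_{i,q}$, \eqref{eq:D:Dt:omega:sharp} to $\omega_{j,q}$, \eqref{eq:chi:cut:dt} to $\chi_{i,k,q}$, and \eqref{eq:checkerboard:derivatives} to $\zeta_{q,\diamond,i,k,\xi,\vecl}$; the Jacobian factor and any $D_{t,q}$ acting through $A=(\nabla\Phi_{(i,k)})^{-1}$ are handled by Corollary~\ref{cor:deformation}, specifically \eqref{eq:Lagrangian:Jacobian:1}--\eqref{eq:Lagrangian:Jacobian:6}. The crucial gain is that $\xi^\ell A_\ell^h\partial_h$ is cheap: by construction $\xi\cdot\nabla\bigl(\zeta\circ\Phi_{(i,k)}\bigr)=0$ at time zero and $\Dtq(\zeta\circ\Phi_{(i,k)})=0$, so this derivative costs only $\Gamma_q^5\Lambda_q$ rather than $\Gamma_q^{-5}\lambda_{q+1}$ when it lands on the checkerboard. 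Taking the maximum of the various spatial costs yields the generic rate $\Gamma_q^{-5}\lambda_{q+1}$, while the material derivative rates $\tau_q^{-1}\Gamma_q^{i+4}$ (for $\varphi$) and $\tau_q^{-1}\Gamma_q^{i+13}$ (for $R$, where the extra $\Gamma$-loss arises from the $\gamma_{\xi,\Gamma_q^9}$-composition and must be paid through $\Ga_q^8\Tau_{q-1}^{-1}$ in the upper regime). The support factor $|\supp \eta_{i,j,k,\xi,\vecl,\diamond}|^{\sfrac 1r}$ then produces the $L^r$ bounds; specializing to $r=\infty$ and using \eqref{eq:imax:upper:lower} and \eqref{ineq:jmax:use} to translate $\Gamma_q^j$ into $\Gamma_q^{\sfrac{\badshaq}{2}+c}$ gives \eqref{e:a_master_est_p_uniform_R}--\eqref{e:a_master_est_p_uniform_phi}, while bounding $\delta_{q+\bn}^{\sfrac 12}\Gamma_q^j$ pointwise on $\supp\omega_{j,q}$ by $\Gamma_q^{c}\pi_\ell^{\sfrac 12}$ via \eqref{pt.est.pi.lem.lower} gives \eqref{e:a_master_est_p_pointwise}.

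For the variants \eqref{e:a_master_est_p_phi:zeta} and \eqref{e:a_master_est_p_R:zeta} that include the factor $(\rhob_{(\xi)}^\diamond\etab_\xi^{I,\diamond})\circ\Phi_{(i,k)}$, the same scheme applies, with \eqref{e:fat:pipe:estimates:1} supplying the estimates on $\rhob_{(\xi)}^\diamond$ and \eqref{eq:checkerboard:derivatives:check} the estimates on $\etab_\xi^I$; the dominant spatial cost in a generic direction is now $\lambda_{q+\lfloor\sfrac \bn 2\rfloor}$ coming from the strongly anisotropic cutoff, but $\xi^\ell A_\ell^h\partial_h$ remains cheap since $\xi\cdot\nabla\rhob_{(\xi)}^\diamond=\xi\cdot\nabla\etab_\xi^I=0$. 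The extra powers $\Gamma_q^2,\Gamma_q^3$ in the $j$-exponent simply collect the $L^\infty$ losses of $\rhob$ and $\etab$ relative to their $L^r$ norms. The main difficulty in executing this plan is purely organizational: there are four differentiation counts $(N,N',N'',M)$ and seven or more factors, and one must route the $D^{N''}$ and the remaining spatial derivatives to the factor with the sharpest bound, merge the two material-derivative regimes (the $\tau_{q-1}^{-1}$ regime from $\psi_{i,q}$ and the $\tau_q^{-1}\Gamma_q^{i}$ regime from $\omega_{j,q}$ and $\chi_{i,k,q}$) into a single $\MM{M,\NindSmall,\ldots}$ majorant via \eqref{eq:inductive:timescales}, and finally track the Fa\`a di Bruno sum so that the claimed powers of $\Gamma_q$ emerge exactly. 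The full bookkeeping is carried out in \cite[Lemma~6.4]{GKN23}.
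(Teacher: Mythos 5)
Your proposal is correct and follows essentially the same route the paper indicates (Fa\`a di Bruno for the compositions, Leibniz across the product factors, and the individual spatial/material derivative bounds for each cutoff), which is precisely the strategy deferred to \cite[Lemma~6.4]{GKN23}. One small imprecision: the normalized arguments $\ph_{q,i,k}$ and $R_{q,i,k}$ are built from the \emph{mollified} errors $\ph_\ell$ and $R_\ell$, so the correct inputs are the upgraded pointwise bounds \eqref{eq:inductive:pointwise:upgraded:1}--\eqref{eq:inductive:pointwise:upgraded:2} from Lemma~\ref{lem:upgrading}, not the raw inductive bounds \eqref{eq:ind:stress:by:pi}--\eqref{eq:ind:current:by:pi}; the $\pi_\ell$-domination via \eqref{pt.est.pi.lem.lower} on $\supp\omega_{j,q}$ is otherwise exactly right.
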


\subsubsection{Premollified velocity increment potentials}
We now introduce the premollified velocity increment potentials.  The basic idea is that the premollified velocity increment is defined as a product of high-frequency and low-frequency functions, and therefore can be written as the divergence of a potential which gains a factor of the high frequency.  In this case, the gain is $\la_{\qbn}^{-1}$, which comes from the thickness of the pipe. The premollified velocity increment potentials therefore satisfy a number of properties: their iterated divergence is equal to the premollified velocity increment, up to a small error which is negligible; they are supported in a neighborhood of the support of the premollified velocity increment itself; and each ``inverse divergence'' gains $\la_\qbn^{-1}$. We refer to \cite[Lemma~10.1]{GKN23} for more details.\index{$\upsilon_{q+1}$}  Note that item~\ref{item:eckel:1} gives that $\|w_{q+1}\|_3 \approx \delta_{\qbn}^{\sfrac 12} r_q^{-\sfrac 13}$ and $\|w_{q+1}\|_\infty\approx \Ga_q^{\sfrac{\badshaq}{2}}$, as expected.

\begin{lemma}[\bf Premollified velocity increment potential]\label{lem:rep:vel:inc:potential}
For a given $w_{q+1}^{(\texttt{l})}$, $\texttt{l}=p,c$, as in \eqref{defn:w}, there exists a tensor $\upsilon_{q+1}^{(\texttt{l})}$ and an error $e_{q+1}^{(\texttt{l})}$ such that the following hold.
\begin{enumerate}[(i)] 
    \item Let $\dpot$ be as in \eqref{i:par:10}. Then $w_{q+1}^{(\texttt{l})}$ can be written in terms of $\upsilon_{q+1}^{(\texttt{l})}$ and $e_{q+1}^{(\texttt{l})}$ as
     \begin{align}\label{exp.w.q+1}
w_{q+1}^{({p})} &= \div^{\dpot} \upsilon_{q+1}^{({p})} + e_{q+1}^{({p})}  \, , \qquad 
w_{q+1}^{({c})}= \div^{\dpot}(r_q\Ga_q^{-1} \upsilon_{q+1}^{({c})}) + r_q\Ga_q^{-1}e_{q+1}^{({c})} \, , 
\end{align}
or equivalently notated component-wise as $(w_{q+1}^{(p)})^\bullet = 
\pa_{i_1} \dots \pa_{i_\dpot} \upsilon_{q+1}^{(p, \bullet, i_1, \dots, i_\dpot)} + e_{q+1}^\bullet
$. 

\item $\upsilon_{q+1}^{(\texttt{l})}$ and $e_{q+1}^{(\texttt{l})}$ have the support property\footnote{For any smooth set $\Omega\subset\mathbb{T}^3$, we use $\Omega\circ\Phiik$ to denote the set $\Phiik^{-1}(\Omega)\subset\mathbb{T}^3\times\R$, i.e. the space-time set whose characteristic function is annihilated by $\Dtq$.}
\begin{align}
&\supp(\upsilon_{q+1}^{(\texttt{l})}), \, \supp(e_{q+1}^{(\texttt{l})}) \notag\\ &\subseteq \bigcup_{\xi,i,j,k,\vecl,I,\diamond} \supp \left(\chi_{i,k,q} \zeta_{q,\diamond,i,k,\xi,\vecl} \left(\rhob_{(\xi)}^\diamond \zetab_{\xi}^{I,\diamond} \right)\circ \Phiik \right) \cap
    B\left( \supp \varrho^I_{(\xi),\diamond} , 2 \lambda_\qbn^{-1} \right)\circ \Phiik \, .  \label{supp.upsilon.e.lem}
\end{align}
\item\label{item:eckel:1} For $0\leq k \leq \dpot$,  $(\upsilon_{q+1,k}^{(\texttt{l})})^\bullet :=\la_{q+\bn}^{\dpot-k}\partial_{i_1}\cdots \partial_{i_k} \upsilon_{q+1}^{(\texttt{l}, \bullet,i_1,\dots,i_\dpot)}$,\footnote{If $k=0$, we adopt the convention that $\partial_{i_1}\cdots\partial_{i_k}$is the identity operator.} satisfies the estimates
\begin{subequations}
\begin{align}
    &\norm{\psi_{{i,q}} D^N {D_{t,q}^{M}} 
   \upsilon_{q+1,k}^{(\texttt{l})}
    }_3\leq \Ga_q^{10} \de_{q+\bn}^\frac12 r_{q}^{-\frac13}
     \la_{q+\bn}^N
     \MM{M, \Nindt, \Ga_{q}^{i+14} \tau_{q}^{-1}, \Ga_{q}^{8}\Tau_{q}^{-1}}
    \label{est.new.upsilon.3}
    \\
    &\norm{\psi_{{i,q}} D^N {D_{t,q}^{M}} 
   \upsilon_{q+1,k}^{(\texttt{l})}
    }_\infty \leq 
    \Ga_q^{\frac{\badshaq}2+ 10} r_q^{-1}
    \la_{q+\bn}^{N}
     \MM{M, \Nindt, \Ga_{q}^{i+14} \tau_{q}^{-1}, \Ga_{q}^{8}\Tau_{q}^{-1}}
         \label{est.new.upsilon.inf}
\end{align}
\end{subequations}
for $N\leq \sfrac{\Nfin}4-2\dpot^2$ and $M\leq \sfrac{\Nfin}5$.
\item For $N\leq \sfrac{\Nfin}4-2\dpot^2$ and $M\leq \sfrac{\Nfin}5$, $e_{q+1}^{(\texttt{l})}$ satisfies
\begin{align} 
    \norm{ D^N D_{t,q}^{M} e_{q+1}^{(\texttt{l})}}_{\infty} 
    \leq \delta_{q+3\bn}^3\Tau_{\qbn}^{20\Nindt}\la_{q+\bn}^{-10}
    \la_{q+\bn}^{N}
     \MM{M, \Nindt,  {\tau_{q}^{-1}}, \Ga_{q}^{8}\Tau_{q}^{-1}}
    \label{est.new.e.inf}\, .
\end{align}
\end{enumerate}

\end{lemma}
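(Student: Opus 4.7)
The plan is to build $\upsilon_{q+1}^{(\texttt{l})}$ by exploiting the representation of the intermittent pipe flow as an iterated divergence of a tensor potential, namely $\varrho_{(\xi),\diamond}^I = \lambda_\qbn^{-\dpot}\div^\dpot(\vartheta_{(\xi),\diamond}^I)$ from Proposition~\ref{prop:pipeconstruction}~\eqref{item:pipe:1} (and likewise for the current-type pipes), combined with the $\curl$ identity \eqref{eq:pipes:flowed:1} that turns $\mathbb{U}^I_{(\xi),\diamond}\circ\Phi$ into a potential for the flowed pipe. Writing $w_{q+1,\diamond}^{(p)}$ as a sum of products of low-frequency coefficients $a_{(\xi),\diamond}(\chib_{(\xi)}^\diamond \etab^{I,\diamond}_\xi)\circ\Phiik$ and flowed pipes, I would substitute the iterated-divergence representation of $\mathbb{U}^I_{(\xi),\diamond}\circ\Phiik$ and then repeatedly apply the Leibniz rule to pull $\dpot$ spatial derivatives outside.

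Concretely, for each index tuple $(\xi,i,j,k,\vec l,I,\diamond)$ I would define the principal piece of $\upsilon_{q+1}^{(p)}$ so that $\div^\dpot$ applied to it reproduces the term where all $\dpot$ derivatives fall on the pipe potential $\vartheta_{(\xi),\diamond}^I\circ\Phiik$, thereby recovering $w_{(\xi),\diamond}^{(p),I}$; every Leibniz term where at least one derivative falls on the low-frequency factor $a_{(\xi),\diamond}(\chib_{(\xi)}^\diamond \etab_\xi^{I,\diamond})\circ\Phiik$ or on a $\nabla\Phiik$-type Jacobian is absorbed into $e_{q+1}^{(p)}$. The $(c)$ case is entirely analogous, with the extra $r_q \Gamma_q^{-1}$ factor coming from the gradient on the coefficient in \eqref{wqplusoneonediamondc}; the prefactor ensures matching of norms. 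The support property \eqref{supp.upsilon.e.lem} is then immediate from the fact that $\vartheta$ is supported in a $2\lambda_\qbn^{-1}$-neighborhood of $\varrho$ (Proposition~\ref{prop:pipeconstruction}~\eqref{item:pipe:3.5}) and composition with the flow map preserves supports.

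The estimates \eqref{est.new.upsilon.3}--\eqref{est.new.upsilon.inf} for $\upsilon_{q+1,k}^{(\texttt{l})}$ follow from applying $D^N D_{t,q}^M$, distributing via Leibniz, and then invoking the pointwise/$L^p$ coefficient bounds \eqref{e:a_master_est_p_phi:zeta} and \eqref{e:a_master_est_p_R:zeta} (which bundle together the anisotropic cutoffs, the bundling pipes, and the Jacobian), the deformation bounds \eqref{eq:Lagrangian:Jacobian:5}--\eqref{eq:Lagrangian:Jacobian:6}, and the pipe potential bounds \eqref{e:pipe:estimates:1}. A derivative landing on $\vartheta$ costs $\lambda_\qbn$ (matching the claimed $\lambda_\qbn^N$); material derivatives pass through $\vartheta\circ\Phiik$ freely (since $D_{t,q}\Phiik=0$) and land on coefficients where they cost $\Gamma_q^{i+14}\tau_q^{-1}$ up to index $\Nindt$ and $\Gamma_q^8 \Tau_q^{-1}$ thereafter. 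The $L^3$ bound uses the cumulative support estimate \eqref{eq:supp:cumul:varphi} together with the $\delta_\qbn^{\sfrac12}r_q^{-\sfrac13}\Gamma_q^{j+1}$ amplitude from \eqref{e:a_master_est_p_phi:zeta} and then an aggregation via Corollary~\ref{rem:summing:partition}; the $L^\infty$ bound replaces $\Gamma_q^j$ powers by $\Gamma_q^{\badshaq/2}$ via \eqref{ineq:jmax:use}.

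The main obstacle, and the reason for the extremely small error bound \eqref{est.new.e.inf}, is that each Leibniz term contributing to $e_{q+1}^{(\texttt{l})}$ must trade at least one $\lambda_\qbn$ for a derivative on either $(a_{(\xi),\diamond}\chib_{(\xi)}^\diamond)\circ\Phiik$ (cost $\lesssim\Gamma_q^{13}\Lambda_q$), $\etab_\xi^{I,\diamond}\circ\Phiik$ (cost $\lesssim \lambda_{q+\lfloor \bn/2\rfloor}$), or $\nabla\Phiik$ (cost $\lesssim \lambda_q \Gamma_q$). Since every such term must exhaust \emph{all} $\dpot$ derivatives and we choose $\dpot$ large enough (see \eqref{i:par:10}) so that $(\lambda_{q+\lfloor\bn/2\rfloor}/\lambda_\qbn)^\dpot \leq \delta_{q+3\bn}^3\Tau_\qbn^{20\Nindt}\lambda_\qbn^{-10}$, any gain factor accumulated beats every arising loss. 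The bookkeeping burden — organizing all $\dpot$-fold Leibniz terms, handling the anisotropic strongly-directional derivative $\xi^\ell A_\ell^h\partial_h$ (which gains a factor of $\Gamma_q^5\Lambda_q/\lambda_\qbn$ per hit on $\vartheta$), and verifying that enough smallness survives for every combinatorial term — is the technical heart of the proof; it is essentially the same inverse-divergence accounting carried out in \cite[Lemma~10.1]{GKN23}, to which the full verification is deferred.
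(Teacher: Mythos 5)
Your proposal identifies the right ingredients --- the tensor potentials $\vartheta_{(\xi),\diamond}^I$ from Propositions~\ref{prop:pipeconstruction} and~\ref{prop:pipe.flow.current}, the flowed-pipe representation, the support estimate from item~\eqref{item:pipe:3.5}, and the largeness of $\dpot$ --- but the description of the decomposition $w_{q+1}^{(\ttl)}=\div^\dpot\upsilon_{q+1}^{(\ttl)}+e_{q+1}^{(\ttl)}$ contains a genuine error that would make the bound \eqref{est.new.e.inf} fail.

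You write that after substituting $\varrho_{(\xi),\diamond}^I=\lambda_\qbn^{-\dpot}\div^\dpot\vartheta_{(\xi),\diamond}^I$ and Leibnizing, the principal term (all $\dpot$ derivatives on $\vartheta$) defines $\div^\dpot\upsilon_{q+1}^{(p)}$, while ``every Leibniz term where at least one derivative falls on the low-frequency factor \dots is absorbed into $e_{q+1}^{(p)}$.'' But such a decomposition cannot satisfy \eqref{est.new.e.inf}. A Leibniz term with a single derivative hitting the slow factor (e.g.\ a coefficient $a_{(\xi),\diamond}(\chib_{(\xi)}^\diamond\etab_\xi^{I,\diamond})\circ\Phiik$ or a Jacobian $\nabla\Phiik$) and $\dpot-1$ derivatives on $\vartheta$ is smaller than the principal term by only \emph{one} power of the frequency ratio, roughly $\lambda_{q+\floor{\bn/2}}/\lambda_\qbn$; it is nowhere near the size $\delta_{q+3\bn}^3\Tau_\qbn^{20\Nindt}\lambda_\qbn^{-10}$ required of $e_{q+1}^{(\ttl)}$. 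You then implicitly acknowledge this by asserting at the end that ``every such term must exhaust all $\dpot$ derivatives,'' but this directly contradicts the decomposition you just proposed: the Leibniz terms with $1\le j<\dpot$ derivatives on the slow factor do \emph{not} exhaust $\dpot$ derivatives, and by your own assignment they belong to $e_{q+1}^{(\ttl)}$. The two halves of your argument are mutually inconsistent.

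The correct construction is a genuine iterated inverse divergence: the potential $\upsilon_{q+1}^{(\ttl)}$ must itself be built as a telescoping sum incorporating the partial potentials $\div^{(k)}\vartheta$, $0\le k\le\dpot-1$, contracted against derivatives of the slow coefficient and Jacobians, so that $\div^\dpot\upsilon_{q+1}^{(\ttl)}$ reproduces all the Leibniz terms with $j<\dpot$ derivatives on the slow factor, and only the $j=\dpot$ contribution (schematically $\nabla^\dpot G:\vartheta$, where $G$ denotes the slow coefficient) is left in $e_{q+1}^{(\ttl)}$. That single term does gain $\dpot$ full frequency ratios, which is why the choice of $\dpot$ from \eqref{i:par:10}, calibrated via \eqref{ineq:dpot:1}, finally beats all the losses. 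It is precisely this bookkeeping --- plus the fact that $\partial(\vartheta\circ\Phiik)$ acquires $\nabla\Phiik$-factors, which must be tracked inside $\upsilon_{q+1}^{(\ttl)}$ rather than conjured away --- that is the substance of the proof, and your description is not a faithful outline of it. Additionally, the bounds \eqref{est.new.upsilon.3}--\eqref{est.new.upsilon.inf} must hold for all intermediate partial divergences $\upsilon_{q+1,k}^{(\ttl)}$, $0\le k\le\dpot$, which forces the potential to be built exactly so that each partial divergence carries the same $\delta_\qbn^{\sfrac12}r_q^{-\sfrac13}$ (resp.\ $\Gamma_q^{\sfrac{\badshaq}2}r_q^{-1}$) amplitude after rescaling by $\lambda_\qbn^{\dpot-k}$; this uniformity over $k$ is another consequence of the telescoping structure that your outline does not address.
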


\begin{remark}[\bf Cumulative premollified velocity increment potential]\label{rem:rep:vel:inc:potential}
We let $\upsilon_{q+1} : = \upsilon_{q+1}^{(p)} + r_q\Ga_q^{-1} \upsilon_{q+1}^{(c)}$ and $\upsilon_{q+1,k}^\bullet :=\la_{q+\bn}^{\dpot-k}\partial_{i_1}\cdots \partial_{i_k}
    \upsilon_{q+1}^{( \bullet,i_1,\dots,i_\dpot)}$. As a corollary of Lemma \ref{lem:rep:vel:inc:potential}, we have that $w_{q+1} = \div^{\dpot} \upsilon_{q+1} + e_{q+1}$, where $\upsilon_{q+1}$ and $e_{q+1}$ satisfy the properties \eqref{supp.upsilon.e.lem}--\eqref{est.new.e.inf} (up to implicit constants).
\end{remark}

\subsubsection{
Pressure increment
for premollified velocity increments}
The definition of the velocity pressure increment\index{velocity pressure increment} $\sigma_{\upsilon}$ is quite similar to that from \eqref{eq:fried:egg:evening}, after noticing that the premollified velocity increment has a natural high/low frequency product structure; we refer to \cite[Proposition~7.3]{GKN23} for details. The main difference in the definition is that the \emph{square} of velocity scales like Reynolds stresses and pressure, so we do not need to take the $\sfrac 12$ power on the sum like in \eqref{eq:fried:egg:evening}. The most important properties of the velocity pressure increment are as follows.  First, the square root of the positive portion of the pressure increment dominates the premollified velocity increment potential, as in~\eqref{est.vel.inc.pot.by.pr}; next, the velocity pressure increment 
 can be decomposed into ``positive and negative portions, $\sigma_{\upsilon}^\pm$,'' which satisfy estimates in~\eqref{sunday:sunday:sunday} asserting, among other things, that $\sigma_{\upsilon}^-$ is dominated by anticipated pressure; third, the positive portion of the velocity pressure increment obeys a good spatial support property in \eqref{supp:pr:vel.inc}, which follows from \eqref{supp.upsilon.e.lem} the results of section~\ref{sec:dodging}; and finally, there exists a negligible function of time which handles the mean of $\sigma_\upsilon$ in \eqref{th:billys:3} (see \eqref{sundayz}). Details may be found in \cite[Lemma~10.4]{GKN23}.

\begin{lemma}[\bf Pressure increment] \label{lem:pr.inc.vel.inc.pot}
For $\texttt{l}=p,c$, there exists a pressure increment $\si_{\upsilon^{(\texttt{l})}}=\si_{\upsilon^{(\texttt{l})}}^+ - \si_{\upsilon^{(\texttt{l})}}^-$ associated to the sum $\sum_{k=0}^\dpot \upsilon_{q+1,k}^{(\texttt{l})}$ defined in Lemma \ref{lem:rep:vel:inc:potential} such that the following properties hold.
\begin{enumerate}[(i)]
    \item We have that for all $k=0,1,\dots,\dpot$ and any $0\leq k \leq \dpot$, $N,M \leq \sfrac{\Nfin}5$,
\begin{align}
\left|\psi_{i,q}D^N D_{t,q}^M \upsilon_{q+1,k}^{(\texttt{l})}\right|
    \lec (\si_{\upsilon^{(\texttt{l})}}^+  + \de_{q+3\bn})^{\sfrac12}r_{q}^{-1} (\la_{q+\bn}\Ga_{q+\bn}^{{\sfrac1{10}}})^N\MM{M,\Nindt,\tau_q^{-1}\Ga_q^{i+16},\Tau_q^{-1}\Ga_{q}^9} \, .
    \label{est.vel.inc.pot.by.pr}
\end{align}
\item Set 
\begin{align}\label{defn:si.upsilon}
 \si_{\upsilon}^\pm:= \si_{\upsilon^{(p)}}^\pm
+\si_{\upsilon^{(c)}}^\pm \, , \qquad \si_{\upsilon}= \si_{\upsilon}^+-\si_{\upsilon}^- \, .
\end{align}
Then we have that for all $N \leq \sfrac{\Nfin}5$ and $M\leq \sfrac{\Nfin}5-\NcutSmall$ 
\begin{subequations}\label{sunday:sunday:sunday}
\begin{align}
\left|\psi_{i,q}D^N D_{t,q}^M \si_{\upsilon}^+\right|
    &\lec (\si_{\upsilon}^{+}+ \de_{q+3\bn})
    (\la_{q+\bn}\Ga_{q+\bn}^{\sfrac{1}{10}})^N\MM{M,\Nindt,\tau_q^{-1}\Ga_q^{i+16},\Tau_q^{-1}\Ga_{q}^9}\, , \label{est.pr.vel.inc}\\
\norm{\psi_{i,q}D^N D_{t,q}^M\si_{\upsilon}^+}_{\sfrac32}
    &\leq {\Ga_{q+\bn}^{-9}}\de_{q+2\bn}(\la_{q+\bn}\Ga_{q+\bn}^{\sfrac{1}{10}})^N \MM{M,\Nindt,\tau_q^{-1}\Ga_q^{i+16},\Tau_q^{-1}\Ga_{q}^9}\, , \quad \label{est.pr.vel.inc.32}\\
    \norm{\psi_{i,q}D^N D_{t,q}^M\si_{\upsilon}^+}_{\infty}
    &\leq \Ga_{q+\bn}^{\badshaq-9}(\la_{q+\bn}\Ga_{q+\bn}^{\sfrac{1}{10}})^N \MM{M,\Nindt,\tau_q^{-1}\Ga_q^{i+16},\Tau_q^{-1}\Ga_{q}^9}\, , \quad \label{est.pr.vel.inc.infty}\\
     \norm{\psi_{i,q}D^N D_{t,q}^M\si_{\upsilon}^-}_{\sfrac32}&\leq {\Ga_{q+\bn}^{-9}}\de_{q+2\bn}(\la_{q+\half}\Ga_{q+\half})^N\MM{M,\Nindt,\tau_q^{-1}\Ga_q^{i+16},\Tau_q^{-1}\Ga_{q}^9}\,  , \label{est.pr.vel.inc.-32}\\
     \norm{\psi_{i,q}D^N D_{t,q}^M\si_{\upsilon}^-}_{\infty}&\leq \Ga_{q+\bn}^{\badshaq-9}(\la_{q+\half}\Ga_{q+\half})^N\MM{M,\Nindt,\tau_q^{-1}\Ga_q^{i+16},\Tau_q^{-1}\Ga_{q}^9}\,  , \label{est.pr.vel.inc.-infty}\\
    {\left| \psi_{i,q} D^N D_{t,q}^M \si_{\upsilon}^-\right|}
    &\lec  \pi_\ell \Ga_q^{30}  {r_{q}^{\sfrac 43}} (\la_{q+\half}\Ga_{q+\half})^N\MM{M,\Nindt,\tau_q^{-1}\Ga_q^{i+16},\Tau_q^{-1}\Ga_{q}^9}\, . \label{est.pr.vel.inc-}
\end{align}
\end{subequations}
\item We have that for $q+1\leq q'' \leq \qbn-1$ and $q+1\leq q'\leq q+\half$, 
\begin{align}
    \supp(\si_{\upsilon}^+) \cap B(\hat w_{q''},\la_{q''}^{-1}\Ga_{q''{+1}}) \, , \quad
    \supp(\si_{\upsilon}^-)\cap B( \hat{w}_{q'}, \la_{q'}^{-1}\Ga_{q'{+1}}) = \emptyset \, . \label{supp:pr:vel.inc}
\end{align}
\item\label{i:presh:vel:mean} Define 
\begin{equation}\label{def:bmu:vel:presh}
    \bmu_{\sigma_{\upsilon}}(t) = \int_0^t \left \langle \Dtq \sigma_{\upsilon}  \right \rangle (s) \, ds \, .
\end{equation}
Then we have that for $0\leq M\leq 2\Nind$,
    \begin{align}\label{th:billys:3}
      \left|\frac{d^{M+1}}{dt^{M+1}} \bmu_{\sigma_{\upsilon}} \right| 
      \leq (\max(1, T))^{-1}\delta_{q+3\bn}^2 \MM{M,\Nindt,\tau_q^{-1},\Tau_{q+1}^{-1}} \, .
    \end{align}
\end{enumerate}
\end{lemma}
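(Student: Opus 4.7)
\medskip

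\noindent\textbf{Proof proposal for Lemma~\ref{lem:pr.inc.vel.inc.pot}.}

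The plan is to imitate the pressure-increment recipe outlined in subsection~\ref{sec:cutoff:stress} and equation~\eqref{eq:fried:egg:evening}, adapted to the fact that the velocity potential $\upsilon_{q+1}^{(\texttt{l})}$ scales like a square root of pressure rather than like a pressure itself. Concretely, recall that after the constructions in subsection~\ref{sec:vel.inc}, each contribution to $\upsilon_{q+1,k}^{(\texttt{l})}$ has the high/low structure of a slow coefficient $a_{(\xi),\diamond}$ (estimated in Lemma~\ref{lem:a_master_est_p}) multiplied by flowed potentials of the intermittent Mikado bundle $(\rhob_{(\xi)}^\diamond\zetab_\xi^{I,\diamond})\circ\Phiik \cdot (\nabla\Phiik^T\mathbb{U}^I_{(\xi),\diamond}\circ\Phiik)$ (estimated via Propositions~\ref{prop:pipeconstruction}, \ref{prop:pipe.flow.current}, \ref{prop:bundling}, and Corollary~\ref{cor:deformation}). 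For each such piece, define local ``amplitude'' and ``concentration'' functions
\begin{align*}
G_{\pxi,\diamond} &= \Bigl( \const_{G,1}^2 + \sum_{N\leq \NcutLarge}\sum_{M\leq \NcutSmall} (\Lambda_q\Gamma_q^{13})^{-2N}(\tau_q^{-1}\Gamma_q^{i+16})^{-2M}\,|D^N D_{t,q}^M a_{(\xi),\diamond}|^2\Bigr)^{\sfrac12} - \const_{G,1},\\
\rho_{\pxi,\diamond,I} &= \Bigl( \const_{\rho,1}^2 + \sum_{N\leq \NcutLarge}(\lambda_\qbn\Gamma_q^{\sfrac{1}{10}})^{-2N}\,|D^N(\mathbb{U}^I_{(\xi),\diamond}\rhob_{(\xi)}^\diamond\zetab_\xi^{I,\diamond})|^2\Bigr)^{\sfrac12} - \const_{\rho,1},
\end{align*}
with constants $\const_{G,1},\const_{\rho,1}$ chosen as the relevant $L^{\sfrac32}$-size predicted by Lemma~\ref{lem:a_master_est_p} and the pipe-flow estimates. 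I then set, for each $\texttt{l}\in\{p,c\}$,
\begin{equation*}
    \sigma_{\upsilon^{(\texttt{l})}}^+ := \sum_{\pxi,\diamond,I} G_{\pxi,\diamond}^2 \cdot \bigl(\rho_{\pxi,\diamond,I}\circ\Phiik\bigr)^2,\qquad \sigma_{\upsilon^{(\texttt{l})}}^- := \sum_{\pxi,\diamond,I} G_{\pxi,\diamond}^2\cdot\Bigl\langle \rho_{\pxi,\diamond,I}^2\Bigr\rangle\circ\Phiik,
\end{equation*}
(with the appropriate $r_q\Gamma_q^{-1}$ prefactor in the $\texttt{l}=c$ case to match \eqref{exp.w.q+1}), and $\sigma_\upsilon^\pm$ as in \eqref{defn:si.upsilon}. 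The pointwise dominance \eqref{est.vel.inc.pot.by.pr} then follows from Cauchy--Schwarz: every derivative of $\upsilon_{q+1,k}^{(\texttt{l})}$ can be expanded via Leibniz into pieces each of which appears (up to a harmless $r_q^{-1}$) inside $G_{\pxi,\diamond}\cdot\rho_{\pxi,\diamond,I}\circ\Phiik$, and the support-disjointness encoded by the cutoffs in \eqref{def:cumulative:current} and \eqref{eq:desert:cowboy:sum} ensures that the sum behaves as a single term pointwise.

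For part~(ii), the self-domination bound \eqref{est.pr.vel.inc} follows from the same derivative-counting scheme, noting that since $\sigma_{\upsilon}^+$ is built as quadratic sums of derivatives, any further derivative either lands inside the existing sum (in which case we absorb cost $\Lambda_q\Gamma_q^{13}$ or $\lambda_\qbn\Gamma_q^{\sfrac{1}{10}}$ freely) or lands outside the truncation range (in which case we pay the worse multiplicative cost but the smallness of higher derivatives of $G_{\pxi,\diamond}$ renders the term negligibly small, as captured by $\delta_{q+3\bn}$). The $L^{\sfrac32}$ and $L^\infty$ bounds \eqref{est.pr.vel.inc.32}, \eqref{est.pr.vel.inc.infty} are obtained by direct application of Corollary~\ref{rem:summing:partition} after recording the sharp local $L^{\sfrac32}$ and $L^\infty$ bounds of $G_{\pxi,\diamond}$ from Lemma~\ref{lem:a_master_est_p} and of $\rho_{\pxi,\diamond,I}$ from the pipe estimates \eqref{e:pipe:estimates:1}--\eqref{e:pipe:estimates:2}. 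For the negative piece $\sigma_\upsilon^-$, the frequency is effectively that of $G_{\pxi,\diamond}$ composed with $\Phiik$, explaining the frequency $\lambda_{q+\half}\Gamma_{q+\half}$ in \eqref{est.pr.vel.inc.-32}, \eqref{est.pr.vel.inc.-infty}; bounds follow identically. The key pointwise bound \eqref{est.pr.vel.inc-} is obtained by noting that $\langle\rho_{\pxi,\diamond,I}^2\rangle\lesssim \Gamma_q^{\mathrm{small}} r_q^{\sfrac43}$ (the $L^2$-average of an intermittent pipe), so that $G_{\pxi,\diamond}^2\,\langle\rho^2\rangle$ is majorized by $\pi_\ell\cdot\Gamma_q^{30}r_q^{\sfrac43}$ after invoking the pointwise pressure domination $|a_{(\xi),\diamond}|^2\lesssim\Gamma_q^{24}\pi_\ell$ from \eqref{e:a_master_est_p_R_pointwise}--\eqref{e:a_master_est_p_phi_pointwise}, then applying Corollary~\ref{lem:agg.pt} to turn the sum over $\pxi$ into a single factor of $\pi_\ell$ using \eqref{pt.est.pi.cutoff}.

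Part~(iii) is essentially free: $\sigma_\upsilon^+$ is pointwise supported on $\supp \upsilon_{q+1}^{(\texttt{l})}$, so \eqref{supp.upsilon.e.lem} combined with the dodging statements of Section~\ref{sec:dodging} (Hypothesis~\ref{hyp:dodging1} upgraded to Lemma~\ref{lem:dodging}) gives the first half of \eqref{supp:pr:vel.inc}. For $\sigma_\upsilon^-$, although the high-frequency pipe information is averaged out, the factor $G_{\pxi,\diamond}^2$ retains the support of the cumulative cutoff $\eta_{i,j,k,\xi,\vecl,\diamond}$, which by the construction of $a_{(\xi),\diamond}$ inherits the spatial separation from the lower-index increments $\hat w_{q'}$ for $q+1\leq q'\leq q+\half$; this gives the second half of \eqref{supp:pr:vel.inc}. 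For part~(iv), the function $\bmu_{\sigma_\upsilon}$ is simply the time-integrated mean needed to make $\sigma_\upsilon - \bmu_{\sigma_\upsilon}'$ have zero spatial mean pointwise in time; the derivative bound \eqref{th:billys:3} is a direct consequence of the $L^1$ control on $\sigma_\upsilon^\pm$ given by~\eqref{est.pr.vel.inc.32}, \eqref{est.pr.vel.inc.-32} combined with the smallness of $\delta_{q+2\bn}^{\ }\Gamma_{q+\bn}^{-9}\ll\delta_{q+3\bn}^2$ guaranteed by the parameter choices.

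The main obstacle will be the pointwise bound \eqref{est.pr.vel.inc-}: extracting the sharp smallness $r_q^{\sfrac43}$ rather than merely $r_q$ requires using the correct intermittency scaling of the $L^2$-average $\langle |\mathbb{U}^I_{(\xi),\diamond}|^2\rangle$, rather than just $\|\mathbb{U}\|_\infty^2$, and simultaneously absorbing all the loose factors of $\Gamma_q$ coming from the sums over $i,j,k,\xi,\vecl,I$ into the single factor $\Gamma_q^{30}$. A secondary technical point will be consistently tracking that the material-derivative cost upgrades from $\tau_q^{-1}\Gamma_q^{i+13}$ (inherited from $a_{(\xi),\diamond}$) to the slightly worse $\tau_q^{-1}\Gamma_q^{i+16}$ appearing in the statement, which is absorbed by the same derivative-counting slack that makes $\Dtq G_{\pxi,\diamond}$ controllable by $G_{\pxi,\diamond}$ itself.
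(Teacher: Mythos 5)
Your overall recipe — build $\sigma_\upsilon$ as a product of a slow weight (from the coefficient functions) and a fast weight (from the flowed pipe potentials), aggregate over the disjoint cutoff supports, split off the mean as $\sigma_\upsilon^-$, and use the $r_q\Gamma_q^{-1}$ normalization to treat $\texttt{l}=p,c$ uniformly — matches the heuristic described in subsection~\ref{sec:cutoff:stress}, and the support and aggregation steps are the right tools. (Note that the paper itself defers the full proof to \cite[Lemma~10.4]{GKN23}, so what you are reconstructing is the companion paper's argument; the in-text heuristic is the only thing to calibrate against.) However, there are two substantive issues.

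First, a structural deviation. The paper is explicit that for the velocity pressure increment one should \emph{not} take the $\sfrac12$ power on the truncated derivative sums, precisely because $|w_{q+1}|^2$ (not $|w_{q+1}|$) scales like pressure; see the remark immediately following \eqref{eq:fried:egg:evening}. Your $G_{\pxi,\diamond}$ and $\rho_{\pxi,\diamond,I}$ do take the $\sfrac12$ power, after which you square: $(\!(\Sigma)^{\sfrac12}-C)^2 \neq \Sigma - C^2$. The intended formula is a bilinear one of the form $(\Sigma_a - C_a^2)((\Sigma_\rho - C_\rho^2)\circ\Phiik)$, not the composition you wrote. Besides the algebraic mismatch, the square-root-then-square form complicates the Fa\`a di Bruno bookkeeping needed for \eqref{est.pr.vel.inc} (self-domination), and it is less transparent that the $\langle\rho^2\rangle$ average factors out cleanly in $\sigma_\upsilon^-$. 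Also note that the high-frequency weight should be built from the iterated tensor potential $\vartheta$ (the $\dpot$-th order potential underlying $\varrho$ via Propositions~\ref{prop:pipeconstruction}, \ref{prop:pipe.flow.current}), not from $\mathbb{U}$ alone, since the lemma needs to dominate the whole family $\upsilon_{q+1,k}$ for $0\leq k\leq\dpot$, and those are contractions of $\vartheta$ at various orders.

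Second, and more concretely, your argument for part~(iv) is wrong. You assert that \eqref{th:billys:3} follows from the $L^1$ control and ``the smallness of $\delta_{q+2\bn}\Gamma_{q+\bn}^{-9} \ll \delta_{q+3\bn}^2$.'' That inequality is false: $\delta_q = \lambda_q^{-2\beta}$ decays doubly exponentially in $q$, so $\delta_{q+2\bn} \gg \delta_{q+3\bn} \gg \delta_{q+3\bn}^2$, and $\Gamma_{q+\bn}^{-9}$ (a tiny fractional power of $\lambda$) cannot possibly make up the gap. The $L^{\sfrac32}$-size $\Gamma_{q+\bn}^{-9}\delta_{q+2\bn}$ of $\sigma_\upsilon^\pm$ is many orders of magnitude too large. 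The correct source of smallness is that $\sigma_\upsilon$ is constructed to be \emph{effectively} mean-zero: $\sigma_\upsilon^-$ subtracts the spatial mean of the high-frequency factor composed with the volume-preserving flow, so $\langle\sigma_\upsilon\rangle$ is nonzero only because of an imperfect decoupling between the slow coefficient, the flow map, and the periodization of the pipe. That defect is controlled by the decoupling estimate — see Remark~\ref{rem:est.mean}, which gives a bound like $\lambda_{q+\bn}^{-10}\delta_{q+3\bn}^2\Tau_{q+\bn}^{4\Nindt}\tau_q^{-M}$ — and the extra negative powers of $\lambda_{q+\bn}$ produced there (not the $L^1$ norm of $\sigma_\upsilon$) are what make the bound close. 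Without invoking this decoupling/periodicity mechanism, part~(iv) does not follow.
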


\medskip

\begin{remark}[\bf Pointwise bounds for principal and corrector parts]
From \eqref{exp.w.q+1}--\eqref{est.new.e.inf}, \eqref{est.vel.inc.pot.by.pr}, and \eqref{condi.Nfin0}, we have that for $N, M\leq \sfrac{\Nfin}5$,
\begin{subequations}\label{sunday:morning:1}
\begin{align}
    \left|\psi_{i,q}D^N D_{t,q}^M w_{q+1}^{(p)}\right|
    &\lec (\si_{\upsilon^{(p)}}^+  + \de_{q+3\bn})^{\sfrac12}r_{q}^{-1} (\la_{q+\bn}\Ga_{q+\bn}^{{\sfrac1{10}}})^N\MM{M,\Nindt,\tau_q^{-1}\Ga_q^{i+16},\Tau_q^{-1}\Ga_{q}^9} \, ,
    \label{est.vel.inc.p.by.pr} \\
    \left|\psi_{i,q}D^N D_{t,q}^M w_{q+1}^{(c)}\right|
    &\lec (\si_{\upsilon^{(c)}}^+  + \de_{q+3\bn})^{\sfrac12}\Ga_{q}^{-1} (\la_{q+\bn}\Ga_{q+\bn}^{{\sfrac1{10}}})^N\MM{M,\Nindt,\tau_q^{-1}\Ga_q^{i+16},\Tau_q^{-1}\Ga_{q}^9} \, .
    \label{est.vel.inc.c.by.pr}
\end{align}
\end{subequations} Note that thanks to the factor $r_q\Ga_q^{-1}$ in \eqref{exp.w.q+1}, the bound in \eqref{est.vel.inc.c.by.pr} has an extra gain compared to \eqref{est.vel.inc.p.by.pr}.  We may upgrade \eqref{sunday:morning:1} by applying Lemma~\ref{lem:upgrading.material.derivative}, so that in the same range of $N$ and $M$,
\begin{subequations}\label{sunday:morning:2}
\begin{align}
    \left|\psi_{i,\qbn-1} D^N D_{t,\qbn-1}^M w_{q+1}^{(p)}\right|
    &\lec (\si_{\upsilon^{(p)}}^+  + \de_{q+3\bn})^{\sfrac12}r_{q}^{-1} (\la_{q+\bn}\Ga_{q+\bn}^{{\sfrac1{10}}})^N \notag \\
    &\qquad\quad \times\MM{M,\Nindt,\tau_{\qbn-1}^{-1}\Ga_{\qbn-1}^{i-5},\Tau_{\qbn-1}^{-1}\Ga_{\qbn}^{-1}} \, ,
    \label{est.vel.inc.p.by.pr.upup}\\
    \left|\psi_{i,\qbn-1} D^N D_{t,\qbn-1}^M w_{q+1}^{(c)}\right|
    &\lec (\si_{\upsilon^{(c)}}^+  + \de_{q+3\bn})^{\sfrac12}\Ga_{q}^{-1} (\la_{q+\bn}\Ga_{q+\bn}^{{\sfrac1{10}}})^N \notag \\
    &\qquad\quad \times\MM{M,\Nindt,\tau_{\qbn-1}^{-1}\Ga_{\qbn-1}^{i-5},\Tau_{\qbn-1}^{-1}\Ga_{\qbn}^{-1}} \, .
    \label{est.vel.inc.c.by.pr.upup}
\end{align}
\end{subequations}
\end{remark}

Next, we recall that the pressure increment contributes to $\pi_q$ and hence $\kappa_q$ in \eqref{ineq:relaxed.LEI}, and thus one must account for the current error term $\phi_\upsilon\approx\div^{-1} \mathbb{P}_{\neq 0} \left(\Dtq \sigma_\upsilon \right)$. The first step, roughly speaking, is to decompose $\sigma_\upsilon$ using the synthetic Littlewood-Paley decomposition
\begin{equation}
    \tilde{\mathbb{P}}_{\lambda_{q+\half+1}} (\sigma_\upsilon) + \left( \sum_{k=q+\half+2}^{\qbn+1}  \tilde{\mathbb{P}}_{(\lambda_{k-1},\lambda_k]} (\sigma_\upsilon) \right) + \left( \Id -  \tilde{\mathbb{P}}_{\lambda_{\qbn+1}} \right)(\sigma_\upsilon) \, , \label{eq:decomp:showing:vel} 
\end{equation}
from subsection~\eqref{sec:LP}, obtaining \eqref{decomp:sigma:upsilon}; the term $\sigma_\upsilon^*$ comes from the last term in \eqref{eq:decomp:showing:vel} and is negligibly small, since $\la_{\qbn+1} > \la_\qbn$ and we only expect these functions to oscillate at frequencies no larger than $\la_\qbn$.  Then after applying the localized inverse divergence in Proposition~\ref{prop:intermittent:inverse:div} to $\Dtq$ of each of these terms, the important properties of these error terms are their pointwise estimates, and their spatial support properties, which may be obtained from Proposition~\ref{prop:intermittent:inverse:div} and the results of section~\ref{sec:dodging}. We note that the pointwise estimates in \eqref{pr:current:vel:loc:pt:q} are in terms of \emph{old} intermittent pressure, which prevents a loop of pressure creation and new current error creation.  Both \eqref{eq:decomp:showing:vel} and Proposition~\ref{prop:intermittent:inverse:div} produce negligible nonlocal remainders which we notate with the superscript ``$*$;'' these terms obey extremely strong estimates (as in \eqref{pr:current:vel:nonloc:infty1:q}) but do not obey spatial support properties (similar to \eqref{eq:Rnnl:inductive:dtq}). The proof of Lemma~\ref{lem:pr.current.vel.inc} is contained in \cite[Lemma~10.6]{GKN23}.  The methodology is essentially identical to the pressure increments for the current errors, for example those constructed for the current oscillation error in subsection~\ref{ss:ssO:current}. We refer also to the heuristics in subsection~\ref{sec:new.pressure.stress}, equation~\ref{sunday:eve}.

\begin{lemma}[\bf Current error from the velocity pressure increment]\label{lem:pr.current.vel.inc}
There exists a current error $\phi_{\upsilon}$ generated by $\sigma_\upsilon$ such that the following hold.
\begin{enumerate}[(i)]
\item\label{item:cpi:1} We have the decomposition and equalities 
\begin{subequations}
\begin{align}
\sigma_{\upsilon}= \sigma_{\upsilon}^* + \sum_{m'=q+\half+1}^{\qbn} \sigma_{\upsilon}^{m'} \, , &\qquad 
\phi_{{\upsilon}} = \underbrace{\phi_{{\upsilon}}^*}_{\textnormal{nonlocal}} + \underbrace{\sum_{m'=q+\half+1}^{\qbn} \phi_{{\upsilon}}^{m'}}_{\textnormal{local}} \label{decomp:sigma:upsilon} \\
\div \phi_\upsilon(t,x) = \Dtq \sigma_{ \upsilon}(t,x) &- \bmu_{\sigma_\upsilon}'(t) \, . \label{sundayz}
\end{align}
\end{subequations}
\item For all $N\leq \sfrac{\Nfin}5$ and $M\leq \sfrac{\Nfin}5-\NcutSmall-1$ and $q+\half+1\leq m' \leq q+\bn$,
\begin{align}
\left|\psi_{i,q} D^ND_{t,q}^M\phi_{{\upsilon}}^{m'}\right|
&\lec \Ga_m^{-100}(\pi_q^{m'})^{\sfrac32}r_m^{-1} (\lambda_{m}\Ga_{m'})^N \MM{M,\Nindt, \tau_q^{-1}\Gamma_{{q}}^{i+16},\Tau_q^{-1}\Ga_q^9}
\label{pr:current:vel:loc:pt:q}
\, .
\end{align}
\item For all $N\leq 3\Nind$ and $M\leq 3\Nind$,
\begin{align}
    \norm{D^N\Dtq^M\phi_{{\upsilon}}^{*}}_{\infty}
    \lec \delta_{q+3\bn}^{\sfrac 32} \Tau_{q+\bn}^{2\Nindt} \lambda_{q+\bn+2}^{-10}
    (\lambda_{q+\bn}\Ga_{q+\bn})^N \MM{M,\Nindt, {\tau_q^{-1}},\Tau_q^{-1}\Ga_q^9} \, .
 \label{pr:current:vel:nonloc:infty1:q}
\end{align}
\item For all $q+1\leq q'\leq q+\half$, $q+\half+2\leq m \leq q+\bn $, and $q+1\leq q''\leq m-1$, we have that
\begin{align}
  \supp(\phi_{{\upsilon}}^{q+\half+1})
    \cap B(\hat{w}_{q'}, \la_{q+1}^{-1}\Ga^2_{q})=\emptyset \, , \qquad
    \supp(\phi_{{\upsilon}}^{m}) \cap \supp \hat w_{q''} = \emptyset  \label{supp:pr:vel:q} \, .
\end{align}
\end{enumerate}
\end{lemma}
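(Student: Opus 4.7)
The plan is to obtain $\phi_\upsilon$ by applying a synthetic Littlewood-Paley decomposition followed by the spatially localized inverse divergence to $\Dtq\sigma_\upsilon$ (with the spatial mean subtracted). More precisely, I would write
\begin{align*}
\Dtq\sigma_\upsilon - \bmu_{\sigma_\upsilon}'
= \tP_{\lambda_{q+\half+1}}(\Dtq\sigma_\upsilon - \bmu_{\sigma_\upsilon}')
+ \sum_{m'=q+\half+2}^{\qbn} \tP_{(\lambda_{m'-1},\lambda_{m'}]}(\Dtq\sigma_\upsilon - \bmu_{\sigma_\upsilon}')
+ (\Id-\tP_{\lambda_{\qbn+1}})(\Dtq\sigma_\upsilon-\bmu_{\sigma_\upsilon}'),
\end{align*}
apply the localized inverse divergence of Proposition~\ref{prop:intermittent:inverse:div} to each frequency-localized piece (mean-zero by definition), and set $\phi_\upsilon^{m'}$ to be the resulting vector field for $q+\half+1\leq m' \leq \qbn$, with $\phi_\upsilon^*$ gathering the last term plus all nonlocal remainders produced by the inverse divergence operator. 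The identity \eqref{sundayz} is automatic from this definition and from \eqref{def:bmu:vel:presh}, and the decomposition of $\sigma_\upsilon$ itself follows from applying the same synthetic Littlewood-Paley decomposition directly to $\sigma_\upsilon$.

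For the pointwise estimates \eqref{pr:current:vel:loc:pt:q}, I would combine three ingredients. First, the pointwise bounds \eqref{est.pr.vel.inc}, \eqref{est.pr.vel.inc-} on $\sigma_\upsilon^\pm$ together with their derivative costs convert into sharp bounds on $\Dtq\sigma_\upsilon$, with amplitude controlled by either $\sigma_\upsilon^+ + \delta_{q+3\bn}$ (which is controlled by $\pi_q^\qbn$ via the anticipated pressure scaling \eqref{eq:ind.pr.anticipated} and \eqref{low.bdd.pi}) or, for $\sigma_\upsilon^-$, directly by $\pi_\ell$ which majorizes $\pi_q^{q+\half+1}$. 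Second, the synthetic Littlewood-Paley projector $\tP_{(\lambda_{m'-1},\lambda_{m'}]}$ trades factors of $\lambda_\qbn^N$ for $\lambda_{m'}^N$ at the cost of a large negative power of $\lambda_{m'}/\lambda_\qbn$, which after absorbing into $\Gamma_{m'}^{-100}$ yields the desired frequency $\lambda_{m'}$ in the sharp derivative bound. Third, the inverse divergence gain of $\lambda_{m'}^{-1}$ turns the stress-scale bound $\pi_q^{m'}$ into the current-scale bound $(\pi_q^{m'})^{\sfrac 32} r_{m'}^{-1}$ after incorporating the anticipated pressure scaling (which converts powers of $\delta_{k+\bn}$ into corresponding powers of $r$). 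Material derivative costs at levels up to $\sfrac{\Nfin}{5} - \NcutSmall - 1$ are tracked through the $D_{t,q}^M$ bounds in \eqref{sunday:sunday:sunday}, which produce the $\MM{M,\Nindt,\tau_q^{-1}\Ga_q^{i+16},\Tau_q^{-1}\Ga_q^9}$ structure.

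The support property \eqref{supp:pr:vel:q} follows from the combination of \eqref{supp:pr:vel.inc} and the spatial locality of both the synthetic Littlewood-Paley kernel (essentially compactly supported in a ball of radius $\lambda_{m'}^{-1}$) and the localized inverse divergence. For the $\sigma_\upsilon^+$ contributions, the support remains disjoint from $B(\hat w_{q''},\lambda_{q''}^{-1}\Gamma_{q''+1})$ for $q+1 \leq q'' \leq \qbn-1$ since the $\lambda_{m'}^{-1}$ enlargement at scales $m' \geq q+\half+2$ is smaller than $\lambda_{q''}^{-1}\Gamma_{q''+1}$ with room to spare. For $\sigma_\upsilon^-$, whose support only dodges $\hat w_{q'}$ for $q+1 \leq q' \leq q+\half$, the corresponding $\phi_\upsilon^{q+\half+1}$ term lands at frequency $\lambda_{q+\half+1}$, whose associated $\lambda_{q+\half+1}^{-1}\Gamma^2_q$ enlargement is still dominated by $\lambda_{q+1}^{-1}\Gamma^2_q$. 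Finally, the nonlocal bound \eqref{pr:current:vel:nonloc:infty1:q} for $\phi_\upsilon^*$ comes from applying the Schwartz tails of the Littlewood-Paley kernels and the inverse divergence to a function that is essentially band-limited below $\lambda_\qbn$, absorbing the mismatch into the large negative power $\lambda_{q+\bn+2}^{-10}$.

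The main obstacle is the bookkeeping required to reconcile three distinct pressure scales: the inductive $\pi_q^{m'}$ available for old error terms, the pressure $\pi_\ell$ which naturally appears after mollification, and the anticipated pressure relations \eqref{eq:ind.pr.anticipated} which propagate between frequency levels. The technical difficulty is ensuring that each of the $\sigma_\upsilon^\pm$ contributions, once hit by $\Dtq$, a synthetic LP projector, and the inverse divergence, ends up majorized at the $m'$-th level by $(\pi_q^{m'})^{\sfrac 32} r_{m'}^{-1}$ with the additional small prefactor $\Gamma_{m'}^{-100}$ — the latter being essential to prevent these current errors from dominating the inductive assumption \eqref{eq:ind:current:by:pi} when summed over $m'$.
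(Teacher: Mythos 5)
The overall plan (synthetic Littlewood-Paley decomposition followed by the localized inverse divergence) is the correct skeleton and matches what the paper does, but the mechanism you give for the key pointwise bound \eqref{pr:current:vel:loc:pt:q} does not work.

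You claim that the amplitude of $\Dtq\sigma_\upsilon^+$ (obtained from \eqref{est.pr.vel.inc}) is ``controlled by $\sigma_\upsilon^+ + \delta_{q+3\bn}$, which is controlled by $\pi_q^\qbn$ via \eqref{eq:ind.pr.anticipated} and \eqref{low.bdd.pi}.'' This last step is false pointwise. The pressure increment $\sigma_\upsilon^+$ is a \emph{new} object created at step $q+1$; it is not a summand of the old intermittent pressure $\pi_q^\qbn$, and there is no pointwise domination $\sigma_\upsilon^+ \lesssim \pi_q^\qbn$. The lower bound \eqref{low.bdd.pi} only gives $\pi_q^\qbn \geq \delta_{q+2\bn}$, which says nothing about $\sigma_\upsilon^+$ on the support of the high-frequency pipes, where $\sigma_\upsilon^+$ is much larger than $\delta_{q+2\bn}$ (it has the same concentration as $|\upsilon_{q+1}|^2$). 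What does hold is a comparison of $L^{\sfrac32}$ norms — $\|\sigma_\upsilon^+\|_{\sfrac32} \lesssim \Ga_\qbn^{-9}\delta_{q+2\bn} \lesssim \pi_q^\qbn$ — but that is not enough for \eqref{pr:current:vel:loc:pt:q}.

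The correct route, which the paper follows by appealing to the abstract Proposition~\ref{lem.pr.invdiv2.c} in the same way as for the oscillation current error in Lemma~\ref{lem:currentoscillation:pressure:current}, is to use the product structure $\sigma_\upsilon \approx \pr(G)\,(\pr(\rho)-\langle\pr(\rho)\rangle)\circ\Phi$. Since $\Dtq$ annihilates $\pr(\rho)\circ\Phi$, the material derivative falls only on the low-frequency coefficient $\pr(G)$, and the pointwise input to the inverse-divergence machinery is the majorization of $G$ (a low-frequency coefficient of $\upsilon_{q+1}$) by the \emph{old mollified} pressure $\pi_\ell\lesssim 2\pi_q^q$, cf.\ \eqref{eq:inductive:pointwise:upgraded:3} and \eqref{eq:psi:q:q'}. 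Combining the material derivative cost $r_q^{-1}\lambda_q (\pi_q^q)^{\sfrac12}$, the $\pi_\ell$-domination of the low-frequency factor, the pointwise bound $r_q^{-2}$ on the pipe density, the $\lambda_{m'}^{-2}$ inverse-divergence gains, and the intermittency loss from the synthetic LP shell, one arrives at a bound of the form $(\pi_q^q)^{\sfrac32}\lambda_q\lambda_{m'}^{-1}r_q$, which the anticipated pressure scaling \eqref{eq:ind.pr.anticipated} then converts to $(\pi_q^{m'})^{\sfrac32}r_{m'}^{-1}$. This is exactly the heuristic \eqref{sunday:eve}, and it never invokes a pointwise bound on $\sigma_\upsilon^+$ by old pressure.

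A second, smaller point: the nonlocal remainder $\phi_\upsilon^*$ is not controlled by ``Schwartz tails.'' The synthetic Littlewood-Paley kernels in Definition~\ref{def:synth:LP} are compactly supported; the smallness of $(\Id - \tP_{\lambda_{\qbn+1}})$ applied to a function whose effective frequency support is below $\lambda_\qbn$ comes from the vanishing moments of the kernel, as recorded in \eqref{eq:remainder:estimates}.
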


\subsubsection{Velocity increment, new velocity field, and new velocity cutoffs}
We can now define the velocity increment, velocity increment potential, and new velocity field.\index{$\hat w_q$}\index{$\hat \upsilon_\qbn$}\index{$u_{q+1}$}\index{$\hat u_\qbn$}

\begin{definition}[\bf Definition of $\hat w_\qbn$, $\hat u_\qbn = u_{q+1}$, and $\hat\upsilon_\qbn$]\label{def:wqbn}
Let $\mathcal{\tilde P}_{q+\bn,x,t}$\index{$\mathcal{\tilde P}_{q+\bn,x,t}$} denote a space-time mollifier which is a product of compactly supported kernels at spatial scale $\lambda_{q+\bn}^{-1}\Gamma_{q+\bn-1}^{-\sfrac 12}$ and temporal scale $\Tau_{q+1}^{-1}$.  We assume that both kernels have vanishing moments up to $10\Nfin$ and are $C^{10\Nfin}$ differentiable and define
\begin{equation}\label{def.w.mollified}
    \hat w_{q+\bn} := \mathcal{\tilde P}_{q+\bn,x,t} w_{q+1} \, , \quad u_{q+1} = \hat u_{\qbn} = u_q + \hat w_{\qbn} = \hat u_{\qbn-1} + \hat w_\qbn \, , \quad  \hat \upsilon_{q+\bn,k} := \mathcal{\tilde P}_{q+\bn,x,t} \upsilon_{q+1,k} \, .
\end{equation}\index{$\hat w_\qbn$}\index{$u_{q+1}$}
\end{definition}
\noindent The velocity increment and velocity increment potential satisfy the following pointwise estimates in terms of the velocity pressure increment $\sigma_\upsilon^+$. These follow easily from standard mollification estimates and \eqref{est.vel.inc.c.by.pr}, and we refer to \cite[Proposition~10.7]{GKN23} for details.
\begin{lemma}[\bf Pointwise estimates for $\hat \upsilon_{q+\bn}$]\label{lem:pt:mol:vip}
    For $N+M \leq \sfrac{3\Nfin}{2}$ and $0\leq k \leq \dpot$, we have that $\hat\upsilon_{q+\bn,k}$ satisfies the estimates
\begin{align}
    \left|\psi_{i,q+\bn-1} D^ND_{t,q+\bn-1}^{M} 
   \hat \upsilon_{q+\bn,k}
    \right|&< {\Ga_{q+\bn}} \left(\si_{\upsilon}^+  + 2\de_{q+3\bn} \right)^{\sfrac12} r_{q}^{-1}
      \notag \\
     & \qquad \times(\la_{q+\bn}{\Ga_{q+\bn}})^N \MM{M, \Nindt, \Ga_{q+\bn-1}^i \tau_{q+\bn-1}^{-1}, \Tau_{q+\bn-1}^{-1}{\Ga_{q+\bn-1}^2}} \, . \label{est.upsilon.ptwise.verify}
\end{align}
\end{lemma}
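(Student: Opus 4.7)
The plan is to derive the bound for $\hat\upsilon_{q+\bn,k}$ from the corresponding pointwise estimate \eqref{est.vel.inc.pot.by.pr} for its unmollified counterpart $\upsilon_{q+1,k}$, provided by Lemma~\ref{lem:pr.inc.vel.inc.pot}. The target estimate differs from the source in four ways: the presence of the space-time mollifier $\mathcal{\tilde P}_{q+\bn,x,t}$; the material derivative $D_{t,q+\bn-1}$ in place of $D_{t,q}$; the velocity cutoff $\psi_{i,q+\bn-1}$ in place of $\psi_{i,q}$; and derivative costs $\tau_{q+\bn-1}^{-1}\Ga_{q+\bn-1}^i$ and $\Tau_{q+\bn-1}^{-1}\Ga_{q+\bn-1}^2$ replacing $\tau_q^{-1}\Ga_q^{i+16}$ and $\Tau_q^{-1}\Ga_q^9$. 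All of these changes will be absorbed into the single extra prefactor $\Ga_{q+\bn}$ on the right-hand side.

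First I would handle the mollification. Since $\mathcal{\tilde P}_{q+\bn,x,t}$ is compactly supported at spatial scale $\la_{q+\bn}^{-1}\Ga_{q+\bn-1}^{-\sfrac12}$ and temporal scale $\Tau_{q+1}^{-1}$, and since the majorant $\sigma_\upsilon^+$ also obeys the pointwise bound \eqref{est.pr.vel.inc} with spatial frequency $\la_{q+\bn}\Ga_{q+\bn}^{\sfrac{1}{10}}$ and material-derivative cost $\Tau_q^{-1}\Ga_q^9$, the value of $\sigma_\upsilon^+$ varies by at most a multiplicative constant over the support of the mollifying kernel. Hence its pointwise majorization of $\upsilon_{q+1,k}$ from \eqref{est.vel.inc.pot.by.pr} transfers to $\hat\upsilon_{q+\bn,k}$ at the cost of the additive correction $2\de_{q+3\bn}$ appearing in the statement. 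Spatial derivatives falling on the mollifier kernel cost at most $\la_{q+\bn}\Ga_{q+\bn-1}^{\sfrac12}$, which is safely absorbed into the relaxed cost $\la_{q+\bn}\Ga_{q+\bn}$.

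Next I would upgrade the material derivative and velocity cutoff. Writing $D_{t,q+\bn-1} = D_{t,q} + \sum_{q'=q+1}^{q+\bn-1} \hat w_{q'}\cdot\nabla$ and invoking Lemma~\ref{lem:upgrading.material.derivative}, the transport contributions are controlled pointwise on $\supp\psi_{i,q+\bn-1}$ by the velocity bounds \eqref{eq:nasty:D:wq:old}, while the commutators $[D_{t,q},\mathcal{\tilde P}_{q+\bn,x,t}]$ are rendered negligible by the $10\Nfin$ vanishing moments of the mollifier kernels. The switch from $\psi_{i,q}$ to $\psi_{i,q+\bn-1}$ proceeds by inserting the partition $\sum_{i'}\psi_{i',q}^6 = 1$, noting via \eqref{eq:inductive:timescales} that only finitely many $i'$ are compatible with the given $i$ and that on each such piece the bound $\tau_q^{-1}\Ga_q^{i'+25}\leq \tau_{q+\bn-1}^{-1}\Ga_{q+\bn-1}^i$ converts the sharp timescale. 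The analogous inequality for the lossy timescale converts $\Tau_q^{-1}\Ga_q^9$ into $\Tau_{q+\bn-1}^{-1}\Ga_{q+\bn-1}^2$ by the parameter choices in Section~\ref{sec:params}.

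The main obstacle will be the careful bookkeeping of the $\Gamma$-factors accumulated from the commutators with the space-time mollifier, the transport-correction terms in the material-derivative upgrade, and the partition-of-unity conversion of the velocity cutoff. Each such loss is a fixed power of $\Ga_{q+\bn-1}$ or $\Ga_q$ determined by $\Nindt$, $\NcutSmall$, and the vanishing-moment count of the kernel; the key point is that the choice $\varepsilon_\Gamma \ll (b-1)^2$ together with the global parameter inequalities of Section~\ref{sec:params} ensure that the cumulative loss is dominated by the single prefactor $\Ga_{q+\bn}$ appearing on the right-hand side of \eqref{est.upsilon.ptwise.verify}.
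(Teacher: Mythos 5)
Your outline is essentially the paper's intended approach: mollify the source bound \eqref{est.vel.inc.pot.by.pr} and upgrade the material derivative/velocity cutoff via Lemma~\ref{lem:upgrading.material.derivative}, absorbing all losses into the $\Ga_{q+\bn}$ prefactor. However, your description of the upgrade step misidentifies the mechanism in a way that, if taken literally, would not close. You write that ``the transport contributions are controlled pointwise on $\supp\psi_{i,q+\bn-1}$ by the velocity bounds \eqref{eq:nasty:D:wq:old}.'' If the terms $\hat w_{q'}\cdot\nabla$ for $q+1\leq q'\leq q+\bn-1$ were controlled only by the generic velocity bounds, the resulting material derivative cost would be $\tau_{q+\bn-1}^{-1}\Gamma_{q+\bn-1}^{i+O(1)}$ with large index shifts, not the sharp $\Gamma_{q+\bn-1}^{i}$ required. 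The actual mechanism in Lemma~\ref{lem:upgrading.material.derivative} is the support hypothesis \eqref{supp.F}: on $\supp(\hat\upsilon_{q+\bn,k})$ the intermediate transport velocities vanish identically by dodging, so $D_{t,q+\bn-1}\hat\upsilon_{q+\bn,k}=D_{t,q}\hat\upsilon_{q+\bn,k}$. This requires checking that the mollified potential $\hat\upsilon_{q+\bn,k}=\mathcal{\tilde P}_{q+\bn,x,t}\upsilon_{q+1,k}$ still dodges $\hat w_{q'}$ for $q+1\leq q'\leq q+\bn-1$, which follows from \eqref{supp.upsilon.e.lem} and \eqref{eq:dodging:oldies:prep} because the mollification scale $\lambda_{q+\bn}^{-1}\Gamma_{q+\bn-1}^{-\sfrac12}$ and temporal scale $\Tau_{q+1}^{-1}$ are much smaller than the dodging margins $\la_{q'}^{-1}\Gamma_{q'}$; compare the parallel claim \eqref{eq:dodging:useful:support}. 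Without this verification your invocation of Lemma~\ref{lem:upgrading.material.derivative} is incomplete, because \eqref{supp.F} is an explicit hypothesis of that lemma, and the ``velocity bounds'' path is the wrong route.

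A secondary point: the additive $2\de_{q+3\bn}$ in the target (versus $\de_{q+3\bn}$ in the source) is correct, but your justification that ``$\sigma_\upsilon^+$ varies by at most a multiplicative constant over the support of the kernel'' should invoke specifically the pointwise self-domination bound \eqref{est.pr.vel.inc}, which states that spatial and material derivatives of $\sigma_\upsilon^+$ are bounded by $\sigma_\upsilon^+ + \de_{q+3\bn}$ times the relevant frequency; it is the additive constant $\de_{q+3\bn}$ that appears as the doubled correction after mollification, not a multiplicative one. These are the same ingredients the paper relies on (deferring details to the companion paper), so once you make the dodging step explicit your argument matches the intended proof.
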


With the velocity increment in hand, we can also define the new velocity cutoff functions. The construction of the velocity cutoffs is carried out in \cite[Section~9]{GKN23} and is similar to the construction in \cite{BMNV21}. These velocity cutoffs are defined inductively, which is motivated by the following analogy: propagation of material derivative estimates for $D_{t,q}= \partial_t + \hat u_{q} \cdot \nabla$ in \emph{any} convex integration scheme should be done inductively by decomposing $D_{t,q}= \partial_t + \hat u_q \cdot \nabla = D_{t,q-1} + \hat w_{q} \cdot \nabla$. The main difference with \cite{NV22} is that the Chebyshev inequality used to control the support of $\psi_{i,q}$ utilizes inductive $L^3$ bounds rather than inductive $L^2$ bounds.
\begin{proposition}[\bf New velocity cutoff functions]\label{prop:verified:vel:cutoff}
There exists a set of new velocity cutoff functions $\psi_{i,q+\bn}$ such that all the listed inductive assumptions in subsections~\ref{sec:cutoff:inductive} hold.
\end{proposition}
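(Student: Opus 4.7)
The plan is to follow the inductive cutoff construction developed in \cite{BMNV21} and adapted in \cite{NV22} to an $L^3$-based scheme, with the modifications that pointwise control is expressed in terms of the newly constructed velocity pressure increment $\sigma_{\upsilon}^+$. First I would fix a smooth partition of unity $\{\tilde\psi^6_i\}_{i\geq 0}$ of $[0,\infty)$ adapted to the geometric sequence of thresholds $\{\tau_{q+\bn}^{-1}\Gamma_{q+\bn}^i\}_{i\geq 0}$, and then define the level-set indicator by composing with $|\nabla \hat u_{q+\bn}|$, suitably smoothed along the $\hat u_{q+\bn}$-flow so that the resulting $\psi_{i,q+\bn}$ is materially constant on the local Lipschitz timescale $\tau_{q+\bn}\Gamma_{q+\bn}^{-i-2}$. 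More precisely, as in \cite{BMNV21}, $\psi_{i,q+\bn}$ would be built from mollifications of $\mathbf{1}_{\{|\nabla \hat u_{q+\bn}|\approx \tau_{q+\bn}^{-1}\Gamma_{q+\bn}^i\}}$, composed with the flow $\Phi_{(i,k),q+\bn}$ on each temporal patch, glued via the time cutoffs of subsubsection~\ref{sec:cutoff:temporal:definitions}. The partition of unity \eqref{eq:inductive:partition} is then built in by design.

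Next I would verify the derivative estimates \eqref{eq:sharp:Dt:psi:i:q:old} and \eqref{eq:sharp:Dt:psi:i:q:mixed:old}. For $q'=q+\bn$ these reduce via the Fa\`a di Bruno formula to estimates on iterated $D$ and $D_{t,q+\bn-1}$ derivatives of $\nabla \hat u_{q+\bn}=\nabla \hat u_{q+\bn-1}+\nabla \hat w_{q+\bn}$. The first summand is controlled by inductive \eqref{eq:nasty:D:vq:old}--\eqref{eq:bob:old}, while the second is controlled by \eqref{sunday:morning:2} together with Lemma~\ref{lem:pt:mol:vip} and the mollification in Definition~\ref{def:wqbn}. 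The key point is that on the support of $\psi_{i,q+\bn}$, one has $|\nabla \hat u_{q+\bn}|\approx \tau_{q+\bn}^{-1}\Gamma_{q+\bn}^{i}$, so the costs $\Gamma_{q+\bn-1}^{i+3}\tau_{q+\bn-1}^{-1}$ and $\Gamma_{q+\bn-1}\Tau_{q+\bn-1}^{-1}$ appearing on the right-hand side are absorbed after using the parameter inequalities of section~\ref{sec:params}. Mixed estimates with $D_{q+\bn}=\hat w_{q+\bn}\cdot\nabla$ are obtained by expanding $D_{t,q+\bn}=D_{t,q+\bn-1}+\hat w_{q+\bn}\cdot\nabla$ and using that on $\supp\psi_{i,q+\bn}$ the $L^\infty$ bound on $\hat w_{q+\bn}$ afforded by \eqref{est.vel.inc.p.by.pr.upup}--\eqref{est.vel.inc.c.by.pr.upup} is dominated by $\delta_{q+\bn}^{\sfrac 12}r_q^{-\sfrac13}\Gamma_{q+\bn}^{i+2}$.

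The support bound \eqref{eq:psi:i:q:support:old} follows from a Chebyshev argument, now with the $L^3$-based version: invoking the inductive inequality \eqref{eq:psi:q:q'} and the pointwise pressure domination \eqref{eq:ind:pi:by:pi} applied with $k=q+\bn$, one gets $|\nabla \hat u_{q+\bn}|^2\les r_q^{-2}\lambda_{q+\bn}^2 \pi_{q+1}^{q+\bn}$ on the support of each velocity cutoff, and then Chebyshev with the $L^{\sfrac32}$ bound \eqref{eq:pressure:inductive:dtq} for $\pi_{q+1}^{q+\bn}$ produces the decay $\Gamma_{q+\bn}^{-3i+\const_\ast}$ as in \eqref{eq:psi:i:q:support:old}. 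This argument simultaneously furnishes the bound \eqref{eq:psi:q:q'} at level $q+\bn$ upon choosing the threshold constants correctly. The bound on $\imax(q+\bn)$ in \eqref{eq:imax:upper:lower}--\eqref{eq:imax:old} comes from comparing the largest pointwise value of $|\nabla \hat u_{q+\bn}|$ (which is controlled by the $L^\infty$ bound \eqref{est.pr.vel.inc.infty} of $\sigma_\upsilon^+$, itself bounded by $\Gamma_{q+\bn}^{\badshaq+1}$) against $\tau_{q+\bn}^{-1}\Gamma_{q+\bn}^i$.

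The main obstacle will be the timescale monotonicity \eqref{eq:inductive:timescales} coupling the new $\psi_{i,q+\bn}$ to all earlier $\psi_{i'',q''}$ for $q''\leq q+\bn-1$. This is where the inductive structure must be used with care: one compares, on overlapping supports, the local Lipschitz timescale $\tau_{q+\bn}\Gamma_{q+\bn}^{-i}$ against $\tau_{q''}\Gamma_{q''}^{-i''-25}$, and one must show that the former is smaller. Since $\nabla \hat u_{q+\bn}=\nabla \hat u_{q''}+\nabla(\hat w_{q''+1}+\cdots+\hat w_{q+\bn})$, on overlapping supports one has $\tau_{q+\bn}^{-1}\Gamma_{q+\bn}^i\gtrsim \tau_{q''}^{-1}\Gamma_{q''}^{i''}$ plus contributions from the intermediate increments; combining with the pressure-based pointwise bounds \eqref{est.vel.inc.p.by.pr.upup}--\eqref{est.vel.inc.c.by.pr.upup}, the parameter inequalities from section~\ref{sec:params} (encoding $\varepsilon_\Gamma\ll (b-1)^2$) close the argument, but the bookkeeping of the $\Gamma$-factors across multiple levels of the induction is the most delicate point. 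The detailed verification, essentially identical to \cite[Section~9]{GKN23} after substituting the new pressure-based pointwise bounds, is what establishes the proposition.
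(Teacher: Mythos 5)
Your outline of the verification steps (the Chebyshev argument for the $L^1$ support bound \eqref{eq:psi:i:q:support:old}, the pressure-based domination of $\nabla\hat u_{q+\bn}$ feeding into \eqref{eq:psi:q:q'}, the use of \eqref{sunday:morning:2} and Lemma~\ref{lem:pt:mol:vip} to control $\nabla\hat w_{q+\bn}$, the $L^\infty$ bound on $\sigma_\upsilon^+$ feeding into the $\imax$ bound, and the identification of \eqref{eq:inductive:timescales} as the delicate bookkeeping step) is essentially correct and in line with the proof carried out in \cite[Section~9]{GKN23}. You also correctly identify that the main departure from \cite{NV22} is the replacement of $L^2$-based Chebyshev inequalities with $L^3$-based ones.

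However, your description of the \emph{construction} of $\psi_{i,q+\bn}$ is not the construction of \cite{BMNV21} (or of \cite{GKN23}), despite the attribution. You propose to build $\psi_{i,q+\bn}$ directly from mollifications of $\mathbf{1}_{\{|\nabla\hat u_{q+\bn}|\approx\tau_{q+\bn}^{-1}\Gamma_{q+\bn}^i\}}$, i.e.\ a one-shot level-set chopping of $|\nabla\hat u_{q+\bn}|$. The actual construction is genuinely \emph{recursive over $q'$}: $\psi_{i,q+\bn}$ is assembled from products of the already-constructed $\psi_{m,q+\bn-1}$ with new cutoffs built from a cost function that incorporates iterated mixed derivatives $D^n D_{t,q+\bn-1}^k \hat w_{q+\bn}$ (not merely $\nabla\hat u_{q+\bn}$), with a max-type index constraint tying $m$ to $i$. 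This recursive structure is what the paper's line ``$D_{t,q}=D_{t,q-1}+\hat w_q\cdot\nabla$'' alludes to, and it is not an implementation detail: it is the mechanism that makes the sharp estimates \eqref{eq:sharp:Dt:psi:i:q:old}--\eqref{eq:sharp:Dt:psi:i:q:mixed:old} provable. A direct level-set construction would face the problem that on $\supp\psi_{i,q+\bn}$ one has no a priori control on which $\psi_{i',q+\bn-1}$ supports are intersected (since $\nabla\hat u_{q+\bn-1}=\nabla\hat u_{q+\bn}-\nabla\hat w_{q+\bn}$ can be small or large independently of $|\nabla\hat u_{q+\bn}|$), so the inductive material-derivative bound for $\nabla\hat u_{q+\bn-1}$, which is expressed in terms of the \emph{old} index $i'$, cannot be converted to a bound indexed by the \emph{new} $i$ as \eqref{eq:sharp:Dt:psi:i:q:old} requires. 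The recursive product structure bakes the constraint $m\lesssim i$ into the definition, which resolves this; your sketch does not. Moreover, \eqref{eq:sharp:Dt:psi:i:q:mixed:old} asks for control of arbitrary strings $D^N\prod_l D_{q+\bn}^{\alpha_l}D_{t,q+\bn-1}^{\beta_l}\psi_{i,q+\bn}$, and obtaining the stated $(\Gamma_{q+\bn}^{i-5}\tau_{q+\bn}^{-1})^{|\alpha|}$ weight is automatic under the product construction (via Leibniz and \eqref{eq:nasty:Dt:wq:old}), whereas under a direct level-set definition it requires iterated Fa\`a di Bruno applied to a nested composition that was never set up to absorb the $D_{q+\bn}=\hat w_{q+\bn}\cdot\nabla$ factors at the right cost. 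So while the endpoint verifications you describe are the right ones, the object being verified must be the recursively defined cutoff, not the one you wrote down.
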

\noindent Next, with the new velocity cutoffs in hand, we can verify nearly all of the inductive assumptions for the velocity increment $\hat w_\qbn$ and velocity increment potential $\hat \upsilon_\qbn$.
\begin{proposition}[\bf Inductive bounds for new velocity]\label{prop:inductive:velocity:bdd:verified}
The velocity increment $\hat w_{q+\bn}$, total velocity $\hat u_\qbn$, and velocity increment potential $\hat \upsilon_\qbn$ satisfy \eqref{eq:hat:no:hat} and all the inductive assumptions listed in subsection~\ref{sec:inductive:secondary:velocity}, save for \eqref{est.upsilon.ptwise}, which we verify by hand later in Lemma~\ref{prop:velocity:domination}. 
\end{proposition}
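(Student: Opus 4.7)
The decomposition \eqref{eq:hat:no:hat} holds by construction: Definition~\ref{def:wqbn} sets $u_{q+1} = \hat u_q + \hat w_\qbn = \hat u_\qbn$, which together with the inductive hypothesis on $u_q$ gives the telescoping sum. It thus remains to verify the pointwise and lossy bounds \eqref{eq:nasty:D:wq:old}--\eqref{eq:bob:old}, and the potential decomposition \eqref{exp.w.q'} with the support property \eqref{supp.upsilon.e.ind} and the error estimate \eqref{est.e.inf}, all at the new level $q'=\qbn$.

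The plan is to transfer the pointwise pressure-controlled estimates on the premollified objects $w_{q+1}$ and $\upsilon_{q+1,k}$ across the mollifier $\tilde{\mathcal P}_{\qbn,x,t}$, and then to convert the resulting $\sigma_\upsilon^+$-controlled pointwise bounds into the desired $L^\infty(\supp \psi_{i,\qbn})$ bounds by invoking the construction of the new velocity cutoffs. More concretely, for the velocity-increment bounds we first apply Lemma~\ref{lem:upgrading.material.derivative}-style upgrading to the estimates \eqref{sunday:morning:1} to obtain the $D_{t,\qbn-1}$ pointwise bounds \eqref{sunday:morning:2} for $w_{q+1}^{(p)}$ and $w_{q+1}^{(c)}$, then majorize $\sigma_\upsilon^+$ by \eqref{est.pr.vel.inc.infty} to get absolute $L^\infty$ size $\Ga_\qbn^{\badshaq/2-9/2}r_q^{-1}$, and finally commute the mollifier with $D^N D_{t,\qbn-1}^M$ using the standard decomposition $D_{t,\qbn-1} = \partial_t + \hat u_{\qbn-1}\cdot\nabla$. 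The mollification scales $\la_\qbn^{-1}\Ga_{\qbn-1}^{-1/2}$ and $\Tau_{q+1}^{-1}$ are fine enough that on $\supp \psi_{i,\qbn}$ one gains no loss in $i$: this is precisely how the new velocity cutoffs are built in Proposition~\ref{prop:verified:vel:cutoff}. The partition-of-unity and support properties \eqref{eq:inductive:partition}--\eqref{eq:inductive:timescales} for $\psi_{i,\qbn}$ then feed directly into the bound \eqref{eq:nasty:D:wq:old} by tracking $|\alpha|$ spatial and $|\beta|$ material-derivative costs, while \eqref{eq:nasty} follows by the same scheme after expanding $D_{q'}^{\alpha_\ell}$ as a composition of $\hat w_{q'}\cdot\nabla$'s and using the two-bases bookkeeping of Remark~\ref{not:M}.

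For the velocity bounds \eqref{eq:nasty:D:vq:old} and \eqref{eq:bob:old}, write $\hat u_\qbn = \hat u_{\qbn-1}+\hat w_\qbn$ and use the inductive bound at level $\qbn-1$ (which is part of the hypothesis of Proposition~\ref{prop:main}) together with the just-established bound on $\hat w_\qbn$. The lossy global bound \eqref{eq:bobby:old} follows by summing over $0\le i\le \imax(\qbn)$, using \eqref{eq:imax:upper:lower} to absorb the factor $\Ga_\qbn^{i+2}$ into the modest power $\Lambda_\qbn^{1/2}$ on the right, and then interpolating against the bound \eqref{eq:bob:Dq':old} at $q'=\qbn$. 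For the potential decomposition, Remark~\ref{rem:rep:vel:inc:potential} already provides $w_{q+1} = \div^\dpot \upsilon_{q+1} + e_{q+1}$ for the premollified objects; applying $\tilde{\mathcal P}_{\qbn,x,t}$ to both sides and using that the mollifier commutes with $\div^\dpot$ yields $\hat w_\qbn = \div^\dpot \hat \upsilon_\qbn + \hat e_\qbn$ with $\hat e_\qbn := \tilde{\mathcal P}_{\qbn,x,t} e_{q+1}$, which gives \eqref{exp.w.q'}. The estimate \eqref{est.e.inf} is immediate from \eqref{est.new.e.inf} and the boundedness of $\tilde{\mathcal P}_{\qbn,x,t}$ in $L^\infty$, together with commutation of the mollifier with $D^N$ and the standard $D_{t,\qbn-1}$-commutator bookkeeping. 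The support assertion \eqref{supp.upsilon.e.ind} follows by combining the support assertion \eqref{supp.upsilon.e.lem} of Lemma~\ref{lem:rep:vel:inc:potential} with the dodging arrangement from Section~\ref{sec:dodging}: the premollified supports are already well-separated from $\hat w_{q''}$ for $q+1\le q''<\qbn$, and the mollification in $x$ at scale $\la_\qbn^{-1}\Ga_{\qbn-1}^{-1/2}$ together with temporal mollification at scale $\Tau_{q+1}^{-1}\ll \la_{q''}\Ga_{q''}^2$ inflates these supports by an amount negligible compared to the $\tfrac14 \la_{q''}\Ga_{q''}^2$ buffer demanded by \eqref{supp.upsilon.e.ind}.

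The main technical obstacle is the material-derivative bookkeeping under mollification, i.e.\ showing that $[\tilde{\mathcal P}_{\qbn,x,t}, D_{t,\qbn-1}]$ produces only harmless error terms dominated by $\sigma_\upsilon^+$ on the support of $\psi_{i,\qbn}$ with the correct two-scale loss $\MM{M,\Nindt,\Ga_{\qbn-1}^{i+3}\tau_{\qbn-1}^{-1},\Ga_{\qbn-1}\Tau_{\qbn-1}^{-1}}$. This is handled by the standard commutator-expansion argument writing $[\tilde{\mathcal P}_{\qbn,x,t},\hat u_{\qbn-1}\cdot\nabla] = (\tilde{\mathcal P}_{\qbn,x,t}\hat u_{\qbn-1} - \hat u_{\qbn-1})\cdot\nabla$ plus mollifier-derivative terms, and then exploiting the vanishing moments of the kernel together with the already-verified $\Nfin$-order derivative bounds for $\hat u_{\qbn-1}$. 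Note that \eqref{est.upsilon.ptwise} at level $\qbn$ is postponed since it requires a further majorization by the \emph{new} pressure $\pi_{q+1}^\qbn$, which is not constructed until Section~\ref{sec:intermittent:pressure}; this is exactly why that estimate is deferred to Lemma~\ref{prop:velocity:domination}.
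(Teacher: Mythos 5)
The paper does not actually prove this proposition here: the text explicitly defers to \cite[sections~9,~10]{GKN23}, so there is no in-paper proof to compare your sketch against line by line. Judged on its own merits, your outline captures most of the right ingredients (mollification transfer, pointwise $\sigma_\upsilon^+$ domination, propagation of dodging under the support inflation introduced by $\tilde{\mathcal P}_{\qbn,x,t}$, and the identification $\hat\upsilon_{\qbn}=\tilde{\mathcal P}_{\qbn,x,t}\upsilon_{q+1}$, $\hat e_\qbn=\tilde{\mathcal P}_{\qbn,x,t}e_{q+1}$). However, two points deserve flagging.

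First, your claimed derivation of the global lossy bound~\eqref{eq:bobby:old} does not work as written. ``Summing over $0\le i\le\imax$'' is not meaningful for an $L^\infty$ estimate; you want the worst case $i=\imax$, and at that index the local bound \eqref{eq:bob:Dq':old} carries the amplitude $\tau_{\qbn}^{-1}\Ga_{\qbn}^{\imax+2}\la_{\qbn}$, which is of the order $\la_{\qbn}^{2-\beta}$ and grossly exceeds $\Lambda_{\qbn}^{\sfrac12}$. The prefactors in \eqref{eq:bob:Dq':old} and \eqref{eq:bobby:old} do not match, because the first is a \emph{weak} lossy bound and the second is a sharper uniform bound; neither can be extracted from the other by summing and interpolating. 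The usual route to \eqref{eq:bobby:old} is to interpolate the inductive $L^3$-type size of $\hat u_\qbn$ (essentially order one) against the $\Nfin$-order derivative control and to track the spare factors of $\Ga$ carefully, not to bootstrap from the $i$-local $D_{t,q'}$ bounds.

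Second, the verification of \eqref{eq:nasty:D:wq:old}--\eqref{eq:bob:old} at level $q'=\qbn$ cannot cleanly be sequenced after the construction of $\psi_{i,\qbn}$ as your exposition suggests. In the companion paper the construction of the new cutoff functions (Proposition~\ref{prop:verified:vel:cutoff}) and the verification of the localized velocity bounds are performed simultaneously: the cutoffs are \emph{defined} from level sets of $\nabla\hat u_\qbn$, so the bounds on $\supp\psi_{i,\qbn}$ are in large part built into the construction, and the nontrivial content lies in the Chebyshev-type measure estimate \eqref{eq:psi:i:q:support:old} and the derivative bounds for the cutoffs themselves. Your phrasing that the cutoff properties ``feed directly into'' \eqref{eq:nasty:D:wq:old} is not wrong, but it understates the circularity being resolved. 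Similarly, the passage asserting $\Tau_{q+1}^{-1}\ll\la_{q''}\Ga_{q''}^2$ compares a time scale against a frequency; what is actually needed is that the \emph{spatial} drift over the mollification time window, controlled by $\Tau_{q+1}\cdot\|\hat u_{\qbn-1}\|_{L^\infty}\cdot\|\nabla\Phi\|_{L^\infty}$, is $\ll\la_{q''}^{-1}\Ga_{q''}^2$ (cf.\ the estimate in \texttt{Step~1} of the proof of Lemma~\ref{lem:dodging}). This is a substance-level correction, not just notation.
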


Finally, we can dominate a weighted sum of the velocity cutoffs by the pressure increment $\sigma_{\upsilon}^+$.  This estimate is identical to \cite[Lemma~10.8]{GKN23}.  The proof of this estimate follows from the facts that the velocity pressure increment dominates $\nabla \hat w_{\qbn}$ from \eqref{est.vel.inc.pot.by.pr} and $\pi_{q}^\qbn$ dominates $\nabla \hat u_{\qbn-1}$ from \eqref{eq:psi:q:q'} and \eqref{eq:nasty:D:vq:old}, and so their sum dominates $\nabla \hat u_{\qbn}$, which is itself comparable to the weighted sum of the velocity cutoffs on the left-hand side of \eqref{eq:psi:q:qplusbn:ineq:0:recall}. This result will be used in Lemma~\ref{prop:velocity:domination}.
\begin{lemma}[\bf New inductive cutoffs are dominated by the pressure increment]\label{lem:pr.vel.dom.cutoff}
    The new velocity cutoff functions $\psi_{i,q+\bn}$ satisfy
    \begin{align}
    \sum_{i=0}^{\imax} \psi_{i,q+\bn}^2 \delta_{q+\bn} r_{q}^{-\sfrac 23} \Gamma_{q+\bn}^{2i} &\les  {r_{q}^{-2}} \left(\pi_{q}^{q+\bn} + \si_\upsilon^+ + \de_{q+3\bn}\right) \, .  \label{eq:psi:q:qplusbn:ineq:0:recall}
\end{align}
\end{lemma}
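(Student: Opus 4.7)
The plan is to interpret the left-hand side as a rescaled version of $|\nabla \hat u_{q+\bn}|^2$, split $\nabla \hat u_{q+\bn} = \nabla \hat u_{q+\bn-1} + \nabla \hat w_{q+\bn}$, bound the new increment via the velocity pressure increment $\sigma_\upsilon^+$, and bound the old velocity gradient via the inductive bound \eqref{eq:psi:q:q'} together with an anticipated-pressure comparison.

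First, using the identity $\delta_{q+\bn} r_q^{-\sfrac 23} = \lambda_{q+\bn}^{-2} \Gamma_{q+\bn}^{-70} \tau_{q+\bn}^{-2}$ that comes from the definition of $\tau_{q+\bn}^{-1}$ in \eqref{eq:defn:tau}, I would rewrite the left-hand side as $\lambda_{q+\bn}^{-2} \Gamma_{q+\bn}^{-70} \sum_i \psi_{i,q+\bn}^2 \tau_{q+\bn}^{-2} \Gamma_{q+\bn}^{2i}$. The construction of $\psi_{i,q+\bn}$ in Proposition~\ref{prop:verified:vel:cutoff} partitions the level sets of $|\nabla \hat u_{q+\bn}|$ according to the thresholds $\tau_{q+\bn}^{-1} \Gamma_{q+\bn}^i$, and the companion lower bound on $|\nabla \hat u_{q+\bn}|$ on $\supp \psi_{i,q+\bn}$ for $i \geq 1$ complements the upper bound \eqref{eq:nasty:D:vq:old}. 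This yields the pointwise estimate
\begin{align*}
\sum_i \psi_{i,q+\bn}^2 \tau_{q+\bn}^{-2} \Gamma_{q+\bn}^{2i} \lesssim \Gamma_{q+\bn}^{C}\bigl(|\nabla \hat u_{q+\bn}|^2 + \tau_{q+\bn}^{-2}\bigr)
\end{align*}
for some absolute constant $C$ (the $\tau_{q+\bn}^{-2}$ handles the $i=0$ term, where no such lower bound is available).

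Second, I would split $|\nabla \hat u_{q+\bn}|^2 \le 2|\nabla \hat u_{q+\bn-1}|^2 + 2|\nabla \hat w_{q+\bn}|^2$ and treat the two pieces independently. For the increment, taking $N=1$, $M=0$ in \eqref{est.vel.inc.p.by.pr.upup}--\eqref{est.vel.inc.c.by.pr.upup}, summing against the partition of unity $\sum_i \psi_{i,q+\bn-1}^{12} \equiv 1$, and using that the mollification in Definition~\ref{def:wqbn} costs only negligible $\Gamma$ factors, I obtain $|\nabla \hat w_{q+\bn}|^2 \lesssim (\sigma_\upsilon^+ + \delta_{q+3\bn}) r_q^{-2} \lambda_{q+\bn}^2 \Gamma_{q+\bn}^{\sfrac 15}$; the prefactor $\lambda_{q+\bn}^{-2} \Gamma_{q+\bn}^{-70+C}$ then absorbs the stray $\Gamma$'s and delivers exactly the $\sigma_\upsilon^+ + \delta_{q+3\bn}$ portion of the right-hand side. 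For the old velocity, combining \eqref{eq:nasty:D:vq:old} at $q' = q+\bn-1$ with the partition of unity and the inductive bound \eqref{eq:psi:q:q'} (also at $q' = q+\bn-1$) produces
\begin{align*}
|\nabla \hat u_{q+\bn-1}|^2 \lesssim \lambda_{q+\bn-1}^2\, r_{q-1}^{-2}\, \Gamma_{q+\bn-1}^{C'}\, \pi_q^{q+\bn-1},
\end{align*}
and then I would apply the anticipated pressure bound \eqref{eq:ind.pr.anticipated.1} with $q' = q+\bn-1$, $q'' = q+\bn$ to trade $\pi_q^{q+\bn-1}$ for $(\delta_{q+2\bn-1}/\delta_{q+2\bn})\,\pi_q^{q+\bn}$.

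The main obstacle is verifying that the accumulated factor
\begin{align*}
\left(\frac{\lambda_{q+\bn-1}}{\lambda_{q+\bn}}\right)^{\!2} \cdot \frac{\delta_{q+2\bn-1}}{\delta_{q+2\bn}} \cdot \left(\frac{r_{q-1}}{r_q}\right)^{\!-2} \cdot \Gamma_{q+\bn}^{C''}
\end{align*}
is still bounded by a constant, which reduces to the sharp inequality $\beta < \sfrac 13$ together with the choices of $b$, $\bn$, and $\varepsilon_\Gamma$ from Section~\ref{sec:params}; this is exactly the type of parameter balance that powers the entire iteration. Finally, the leftover term $\lambda_{q+\bn}^{-2} \Gamma_{q+\bn}^{C-70} \tau_{q+\bn}^{-2} = \delta_{q+\bn} r_q^{-\sfrac 23} \Gamma_{q+\bn}^{C-70}$ from the first step is absorbed into $r_q^{-2} \delta_{q+3\bn}$ using the wide separation between $\delta_{q+\bn}$ and $\delta_{q+3\bn}$ (again a direct consequence of $\beta < \sfrac 13$ and a super-exponential Goldilocks gap between the two scales). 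Summing the three contributions yields the claimed pointwise inequality.
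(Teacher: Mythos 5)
Your proposal is correct and follows essentially the same approach that the paper describes in the paragraph preceding Lemma~\ref{lem:pr.vel.dom.cutoff}: comparing the weighted sum of squared cutoffs to (a rescaled version of) $|\nabla\hat u_{q+\bn}|^2$, splitting $\nabla\hat u_{q+\bn}=\nabla\hat u_{q+\bn-1}+\nabla\hat w_{q+\bn}$, bounding the increment via $\sigma_\upsilon^+$ through \eqref{est.vel.inc.pot.by.pr}/\eqref{est.vel.inc.p.by.pr.upup}, and bounding the old gradient via \eqref{eq:psi:q:q'} together with \eqref{eq:nasty:D:vq:old} and the anticipated-pressure scaling \eqref{eq:ind.pr.anticipated.1}. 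The only point worth flagging is that your pointwise lower-bound step (bounding $\sum_i \psi_{i,q+\bn}^2\Gamma_{q+\bn}^{2i}\tau_{q+\bn}^{-2}$ by $|\nabla\hat u_{q+\bn}|^2$ up to $\Gamma$ factors) tacitly relies on the detailed construction of $\psi_{i,q+\bn}$ in \cite{GKN23}, where the level sets are determined by a weighted sum of several velocity derivatives rather than by $|\nabla\hat u_{q+\bn}|$ alone; all the extra terms in that sum satisfy bounds of the same strength, so this only adjusts the absolute constant $C$, but it is not an immediate consequence of the inductive hypotheses \eqref{eq:inductive:partition}--\eqref{eq:inductive:timescales} as written.
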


\subsection{Stress errors}\label{sec:new.pressure.stress}

Upon adding $\hat w_{\qbn}$ to \eqref{eqn:ER}, we must define a new stress error $S_{q+1}$ such that
\begin{align}
\div(S_{q+1}) 
&= \underbrace{(\pa_t + \hat u_q \cdot \na) w_{q+1} + w_{q+1}\cdot\nabla \hat u_q}_{=:\, \div S_{TN}}
+\underbrace{\div \left(\wp_{q+1}\otimes \wp_{q+1} + R_\ell-\pi_\ell\Id\right)}_{=:\, \div S_O} \notag \\
&\qquad + \underbrace{\div \left(\wp_{q+1}\otimes_s \wc_{q+1}+\wc_{q+1}\otimes \wc_{q+1}\right)
}_{=:\, \div S_C} 
+ \underbrace{\div \left( R_q^q - R_\ell + \left( \pi_\ell - \pi_q^q \right) \Id \right) }_{=:\div S_{M1}}
\label{ER:new:error}\\
&\qquad + 
\underbrace{(\pa_t + \hat u_{q}\cdot \na )(\hat w_{q+\bn}-w_{q+1}) 
+ ((\hat w_{q+\bn}-w_{q+1})\cdot \na) \hat u_q + \div(\hat w_{q+\bn}\otimes \hat w_{q+\bn} - w_{q+1}\otimes w_{q+1} )}_{=:\div S_{M2}}
\, . \notag
\end{align}
We have used the notations $R_\ell$ and $\pi_\ell$ to denote the mollifications of $R_q^q$ and $\pi_q^q$; see Lemma~\ref{lem:upgrading}. Also, $u\otimes_s v = u\otimes v + v\otimes u$. \index{$\otimes_s$}
Note also that the transport and Nash error $S_{TN}$ involves only $\hat u_q$, but not $u_q$; this will be justified in Section~\ref{sec:dodging} by showing that $w_{q+1}$ has disjoint support from $u_q - \hat u_q$.  We then define the primitive stress error $\overline R_{q+1} = R_{q}-R_q^q+S_{q+1}$, so that $(u_{q+1}, p_q, \overline R_{q+1}, -(\pi_q-\pi_q^q))$ solves
    \begin{align}
    \partial_t u_{q+1} 
    + \div \left( u_{q+1} \otimes u_{q+1} \right) + \nabla p_q    
    = \div (  -(\pi_q-\pi_q^q)\Id + \ov R_{q+1}) \, , \qquad \div \, u_{q+1} = 0 \, . \label{ER:new:equation}
    \end{align}
Although a few of the terms in \eqref{ER:new:error} are already in divergence form and sufficiently small, in general we must use Proposition~\ref{prop:intermittent:inverse:div} to produce each error term. The definition and estimation of the Reynolds stress error terms is contained in \cite[Section~8]{GKN23}. We note first that the Reynolds stresses live at frequencies between (and including) $\la_{q+1}$ and $\la_\qbn$, which we denote with a superscript ``$m$,'' as in $S_{O}^m$ for the oscillation stress error at frequency $\lambda_m$.  Second, from \eqref{eq:inverse:div}, the inverse divergence first produces a spatially localized term, which we use the superscript ``$l$'' to indicate. Finally, the inverse divergence produces a nonlocal term from \eqref{eq:inverse:div:error:stress}, which we indicate with a superscript ``$*$;'' as usual, the nonlocal terms are negligible and can be ignored. We now summarize the definitions.\index{$\ov R_{q+1}$}\index{$S_{q+1}$}

\begin{definition}[\bf $\ov R_{q+1}$ and $S_{q+1}^m$]\label{def:of:new:stresses} The transport-Nash error $S_{TN}$, oscillation error $S_O$, divergence corrector error $S_C$, and mollification error $S_{M}=S_{M1}+S_{M2}$ can be decomposed into local and nonlocal portions at frequencies $\la_{q+1}$, \dots, $\la_\qbn$ as
\begin{equation}
    S_{\bullet} = \sum_{m=q+1}^{\qbn} S_\bullet^m = \sum_{m=q+1}^{\qbn} S_\bullet^{m,l} + S_\bullet^{m,*} \, .
\end{equation}
Then we define $S_{q+1}^m := S^{m,l}_{q+1}+ S^{m,*}_{q+1}$ for all $q+1 \leq m \leq q+\bn$ by
\begin{subequations}\label{defn:newstress}
\begin{align}
     S^{m,l}_{q+1} &:= S^{m,l}_{O} + S^{m,l}_{TN} + S^{m,l}_{C} + S^{m,l}_M \, , \qquad S^{m,*}_{q+1} := S^{m,*}_{O} + S^{m,*}_{TN} + S^{m,*}_{C} + S^{m,*}_M \, .
\end{align}
\end{subequations}
Recalling \eqref{eq:ER:decomp:basic} and \eqref{eq:Rnnl:inductive:dtq}, we then define the primitive\footnote{We refer to these stress errors as ``primitive'' since we will have to add a few small nonlocal pieces in \eqref{billystrings:eq:1}.  The local piece $R_{q+1}^{m,l}$ requires no such adjustment, and so we do not notate it with a bar.} stress errors by
\begin{align}\label{defn:primitive.stress}
\ov R_{q+1} &:= \sum_{m=q+1}^{q+\bn} \ov R_{q+1}^{m} \, , \qquad
\ov R_{q+1}^m = R_q^m + S_{q+1}^m \\ \label{defn:local.stress}
R_{q+1}^{m,l} &:= R_q^{m,l} + S_{q+1}^{m,l}\, , \qquad \ov R_{q+1}^{m,*} := R_q^{m,*} + S_{q+1}^{m,*}\, .
\end{align}
\end{definition}

Next, associated to each $S_{q+1}^{m,l}$ is a pressure increment $\sigma_{S^m}$, which is defined as in \eqref{eq:fried:egg:evening}. We also define functions of time $\bmu_{\sigma_{S^m}}$ by 
\begin{align}\label{defn:bmu:stress}
\bmu_{\sigma_{S^m}}:= \int_0^t \langle D_{t,q}\si_{S^m}   \rangle(s) ds\, ,  
\end{align}
which will be useful in \eqref{Sunday:Sunday:Sunday}. The pressure increments dominate the local portions of the Reynolds stresses $S_{q+1}^{m,l}$ for frequencies $m> q+\half$, while the local portions at any lower frequencies are actually dominated by anticipated pressure.  We recall the basic estimates and support properties on stresses and their pressure increments in the following lemma, and refer to \cite[Section~8]{GKN23} for details.  The support properties are consequences of the results of Section~\ref{sec:dodging}.
\begin{lemma}[\bf Collected properties of stress error terms and pressure increments]\label{lem:prop.si.pre.stress}
For each $q+\half+1 \leq m \leq q+\bn$, $\si_{S^m}$ satisfies the following properties.
\begin{enumerate}[(i)]
\item For any $0\leq k \leq \dpot$ and $N+M\leq 2\Nind$, we have that
    \begin{subequations}
        \begin{align}
        \left|\psi_{i,q}D^N D_{t,q}^M S^{m,l}_{q+1}\right|
    &\lec \left(\si_{S^m}^+ +  {\de_{q+3\bn}}\right)  (\la_{m}\Ga_m)^N  \label{est.S.m.pt.sim.stress}
    \MM{M, \Nindt, \Ga_q^{i+18} \tau_q^{-1}, \Tau_q^{-1}\Ga_q^9} \, .
    \end{align}
    \end{subequations}
\item For $N, M \leq {\sfrac{\Nfin}{200}}$, we have that
\begin{subequations}
    \begin{align}
        \norm{\psi_{i,q}D^N D_{t,q}^M \si_{S^m}^+}_{\sfrac32}
    &\lec \Ga_m^{-9} \de_{m+\bn}  (\la_{m}\Ga_m)^N 
    \MM{M, \Nindt, \Ga_q^{i+18} \tau_q^{-1}, \Tau_q^{-1}\Ga_q^9} \label{est:si.m+.32.Dtq.stress}\\
    \norm{\psi_{i,q}D^N D_{t,q}^M \si_{S^m}^+}_{\infty}
    &\lec {\Ga_m^{\badshaq-9}}   (\la_{m}\Ga_m)^N 
    \MM{M, \Nindt, \Ga_q^{i+18} \tau_q^{-1}, \Tau_q^{-1}\Ga_q^9} \label{est:si.m+.infty.Dtq.stress}\\
    \left|\psi_{i,q}D^N D_{t,q}^M \si_{S^m}^+\right|
    &\lec \left(\si_{S^m}^+ +  {\de_{q+3\bn}}\right)  (\la_{m}\Ga_m)^N 
    \MM{M, \Nindt, \Ga_q^{i+18} \tau_q^{-1}, \Tau_q^{-1}\Ga_q^9} \label{est:si.m+.pt.stress}\\
    \left|\psi_{i,q}D^N D_{t,q}^M \si_{S^m}^-\right|
    &\lec {\Ga_{q+\half}^{-100}} \pi_q^{q+\half}  (\la_{q+\half}\Ga_{q+\half})^N 
    \MM{M, \Nindt, \Ga_q^{i+18} \tau_q^{-1}, \Tau_q^{-1}\Ga_q^9} \, .  \label{heatsie}
\end{align}
\end{subequations}
\item\label{item:sat:one} $\si_{S^m}$ and $\si_{S^m}^+$ have the support properties
\begin{subequations}
\begin{align}
    B(\supp \hat w_{q'}, \la_{q'}^{-1}{\Ga_{q'+1}})
    \cap \supp\si_{S^m} = \emptyset \qquad \forall q+1\leq q'\leq q+\half  \, , \label{supp.si.m.stress}\\
    B(\supp \hat w_{q'}, \la_{q'}^{-1}{\Ga_{q'+1}})
    \cap \left(\supp \si_{S^m}^+  \cup \supp S^{m,l}_{q+1}\right) = \emptyset \qquad \forall q+1\leq q'\leq m-1\, . \label{supp.si.m+.stress}
\end{align}
\end{subequations}
\item\label{upgrade:item:5} For $0\leq M\leq 2\Nind$, the function of time $\bmu_{\sigma_{S^m}}$ defined in \eqref{defn:bmu:stress} satisfies
\begin{align}\label{est:bmu:stress}
      \left|\frac{d^{M+1}}{dt^{M+1}} \bmu_{\sigma_{S^m}} \right| 
      \lec (\max(1, T))^{-1}\delta_{q+3\bn} \MM{M,\Nindt,\tau_q^{-1},\Tau_{q+1}^{-1}} \, .
    \end{align}
\end{enumerate}
\end{lemma}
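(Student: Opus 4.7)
\textbf{Proof plan for Lemma~\ref{lem:prop.si.pre.stress}.}
The strategy is to process the four families of stress errors---the transport-Nash error $S_{TN}^m$, the oscillation error $S_O^m$, the divergence corrector error $S_C^m$, and the mollification error $S_M^m$---in a uniform template. Each local piece $S_{q+1}^{m,l}$ arises by applying the spatially localized inverse divergence of Proposition~\ref{prop:intermittent:inverse:div} to a sum of products of the form $G\,(\rho\circ\Phiik)$, where $G$ is a low-frequency coefficient assembled from $a_{(\xi),\diamond}$, $R_\ell$, $\nabla\hat u_q$, etc., and $\rho$ inherits the frequency support of an intermittent Mikado bundle, with effective frequency $\lambda_m$. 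For each such elementary product I will define the pressure increment via the template in \eqref{eq:fried:egg:evening}, and then define $\sigma_{S^m}^\pm$ by summing the $(\pm)$-parts over $(i,j,k,\xi,\vecl,I,\diamond)$ using the aggregation Corollaries~\ref{rem:summing:partition} and~\ref{lem:agg.pt}.

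The pointwise bound \eqref{est.S.m.pt.sim.stress} is then immediate from the template: we have $|G|\le\pr(G)$ and $|\rho|\le\pr(\rho)$ pointwise, so $|G\,\rho\circ\Phiik|\le \pr(G)\,\pr(\rho)\circ\Phiik=\sigma_{S^m}^+ + \pr(G)\int\pr(\rho)$, and derivatives transfer freely because the weighted sum defining $\pr(\cdot)$ was built to dominate $\NcutLarge$ spatial and $\NcutSmall$ material derivatives; the same reasoning, combined with Lemma~\ref{lem:upgrading.material.derivative}, yields the pointwise self-domination \eqref{est:si.m+.pt.stress}. The $L^{\sfrac 32}$ and $L^\infty$ bounds \eqref{est:si.m+.32.Dtq.stress}--\eqref{est:si.m+.infty.Dtq.stress} follow because $\sigma_{S^m}^+$ scales linearly with $G\,\rho\circ\Phiik$, so its Lebesgue norms inherit those of the underlying stress term: the $L^{3/2}$ norm of the high-frequency factor $\pr(\rho)\circ\Phiik$ carries the small factor $\Ga_m^{-9}\delta_{m+\bn}/\|\pr(G)\|_{\sfrac 32}$ thanks to the $L^p$ pipe estimates \eqref{e:pipe:estimates:1}--\eqref{e:pipe:estimates:2}, the bundling pipe bounds \eqref{e:fat:pipe:estimates:1}, and the coefficient function bounds of Lemma~\ref{lem:a_master_est_p}.

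The crucial estimate \eqref{heatsie} on $\sigma_{S^m}^-$ requires identifying $\sigma_{S^m}^-=\pr(G)\int\pr(\rho)\circ\Phiik$ as a \emph{purely low-frequency} object whose size is governed by $\pr(G)$ alone. By the construction of the anticipated pressure recalled in subsection~\ref{sec:pik:ant}, $\pi_q^{q+\half}$ is, by design, a rescaling of $\pi_q^{m}$ by the factor $\de_{q+\half+\bn}/\de_{m+\bn}$ (cf.~\eqref{eq:ind.pr.anticipated.1}); since the amplitude of $\pr(G)\int\pr(\rho)$ matches the low-frequency amplitude of $\pi_q^m$, the bound $|\sigma_{S^m}^-|\lesssim \Ga_{q+\half}^{-100}\pi_q^{q+\half}$ follows after absorbing a few powers of $\Ga_q$ using the parameter inequalities in Section~\ref{sec:params}. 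The support properties in item~\ref{item:sat:one} are direct consequences of the construction: $\supp\sigma_{S^m}^+\subseteq\supp\rho$, which by the dodging Lemma~\ref{lem:dodging} in Section~\ref{sec:dodging} is disjoint from $B(\supp\hat w_{q'},\la_{q'}^{-1}\Ga_{q'+1})$ for all $q+1\le q'\le m-1$, while $\supp\sigma_{S^m}$ only inherits the weaker dodging of the low-frequency coefficient $\pr(G)$, giving \eqref{supp.si.m.stress}.

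Finally, item~\ref{upgrade:item:5} is obtained by noting that $\sigma_{S^m}$ is effectively mean-zero by construction---the subtraction $\pr(\rho)-\int\pr(\rho)$ was engineered precisely to kill the mean up to an error controlled by the mollified inverse-divergence remainder. The mean $\langle D_{t,q}\sigma_{S^m}\rangle(s)$ is therefore of size $\le (\max(1,T))^{-1}\delta_{q+3\bn}\MM{M,\Nindt,\tau_q^{-1},\Tau_{q+1}^{-1}}$, and integrating in time yields \eqref{est:bmu:stress}. The main obstacle is the careful bookkeeping across the four stress families---in particular, verifying that the oscillation error $S_O^m$ at frequencies $m\le q+\half$ is \emph{not} controlled by $\sigma_{S^m}^+$ (which would cost too much pressure) but rather by anticipated pressure, and that all four families produce pressure increments which aggregate via Corollary~\ref{rem:summing:partition} without exceeding the $\Ga_m^{-9}\delta_{m+\bn}$ ceiling.
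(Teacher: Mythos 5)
Your overall blueprint---apply the localized inverse divergence to products $G\,(\rho\circ\Phiik)$, build the pressure increment via the template \eqref{eq:fried:egg:evening}, aggregate over the cutoff indices, and propagate pointwise/Lebesgue/support information---does match the intended strategy; note that the paper does not reproduce this proof at all but cites \cite[Section~8]{GKN23} for it, so the task here is to fill in the companion paper's argument.

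However, the pointwise step contains a genuine error: the claim that $|G|\le\pr(G)$ is false. From \eqref{eq:fried:egg:evening}, $\pr(G)=\bigl(\const_{G,\sfrac32}^2+|G|^2+\cdots\bigr)^{\sfrac12}-\const_{G,\sfrac32}$; when $|G|\ll\const_{G,\sfrac32}$ this is $\approx|G|^2/(2\const_{G,\sfrac32})\ll|G|$, so $\pr(G)$ does \emph{not} dominate $|G|$ pointwise. The correct statement---which is exactly the one the paper sketches immediately after \eqref{eq:fried:egg:evening}, and is what Proposition~\ref{lem.pr.invdiv2.c} delivers in its stress version---is $|D^ND_t^M H|\les(\sigma_H^++\de_{\rm tiny})(\lambda_G\Gamma)^N(\nu\Gamma)^M$, with $\de_{\rm tiny}$ (here $\de_{q+3\bn}$) absorbing precisely the regime where $|G|$ is too small for $\pr(G)$ to see it. Your conclusion happens to match the lemma because \eqref{est.S.m.pt.sim.stress} carries the $\de_{q+3\bn}$, but the reasoning you give for it is incorrect; the additive term is not a cosmetic cushion but a structural necessity. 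Relatedly, the identity you write, $\pr(G)\,\pr(\rho)\circ\Phiik=\sigma_{S^m}^++\pr(G)\int\pr(\rho)$, is off by $\sigma_{S^m}^-$: by definition $\pr(G)\,\pr(\rho)\circ\Phiik=\sigma_{S^m}^+$, and $\sigma_{S^m}=\sigma_{S^m}^+-\sigma_{S^m}^-$.

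Two smaller loose ends. For item~\eqref{upgrade:item:5} you invoke ``integrating in time,'' but \eqref{defn:bmu:stress} gives $\bmu_{\sigma_{S^m}}'=\langle D_{t,q}\sigma_{S^m}\rangle$, so $\frac{d^{M+1}}{dt^{M+1}}\bmu_{\sigma_{S^m}}=\frac{d^M}{dt^M}\langle D_{t,q}\sigma_{S^m}\rangle$; the estimate you need is the mean bound \eqref{est:mean.Dtsiph}, combined with a choice of $a_*$ in item~\eqref{i:choice:of:a} large enough to produce the $(\max(1,T))^{-1}$ prefactor---a differentiation argument, not an integration one. For \eqref{heatsie} your appeal to \eqref{eq:ind.pr.anticipated.1} is stated backwards: that inequality bounds a down-rescaled low-index pressure by a higher-index pressure, whereas here one must bound the $L^\infty$-sized coefficient $\pr(G)\int\pr(\rho)$ (which lives at frequency $\la_{q+\half}$ through the bundling pipe) by $\pi_q^{q+\half}$; the relevant inputs are $\eqref{eq:inductive:pointwise:upgraded:1}$--$\eqref{eq:pressure:inductive:dtq:pointwise}$ for $\pr(G)$ together with the rescaling law \eqref{eq:ind.pr.anticipated} in the forward direction, not the one you cite. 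These are fixable, but as written the argument would not compile into a proof.
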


With the previous lemma in hand, we may upgrade the material derivatives using Lemma~\ref{lem:upgrading.material.derivative}. We may also verify Hypothesis~\ref{hyp:dodging4}.  The proof of this lemma follows easily from Lemma~\ref{lem:prop.si.pre.stress}.
\begin{lemma}[\bf Upgrading material derivatives and verifying Hypothesis~\ref{hyp:dodging4}]\label{l:divergence:stress:upgrading}  The new stress errors $S_{q+1}^m=S_{q+1}^{m,l}+S_{q+1}^{m,*}$ satisfy the following.
\begin{enumerate}[(i)]
    \item\label{upgrade:item:1} $R_{q+1}^{m,l}$ satisfies Hypothesis~\ref{hyp:dodging4} with $q$ replaced by $q+1$.
    \item\label{upgrade:item:2} For $q+2\leq m\leq q+\half$, the symmetric stresses $S_{q+1}^{m,l}$ obey the estimates
\begin{align}\label{eq:lo:upgrade:1}
    \left|\psi_{i,m-1} D^N D_{t,m-1}^M S_{q+1}^{m,l} \right| 
    &\lec \Ga^{-50}_{m} \pi_{q}^{m}
    \La_{m}^N \MM{M,\Nindt, \Ga_{m-1}^{i-5}\tau_{m-1}^{-1} , \Tau_{q}^{-1}\Ga_q^9}
\end{align}
for $N,M \leq \sfrac{\Nfin}{10}$.  For the same range of $N,M$, the symmetric stress $S_{q+1}^{q+1,l}$ obeys the estimates
\begin{align}\label{eq:lo:upgrade:2}
    \left|\psi_{i,q} D^N D_{t,q}^M S_{q+1}^{q+1,l} \right| 
    &\lec \Ga^{-50}_{{q+1}} \pi_{q}^{q+1}
    \La_{{q+1}}^N \MM{M,\Nindt, \Ga_{q}^{i+19}\tau_{q}^{-1}, \Tau_{q}^{-1}\Ga_q^9} \, .
\end{align}
\item\label{upgrade:item:3} For $q+\half+1\leq m \leq \qbn$ and $N,M\leq \sfrac{\Nfin}{100}$, the symmetric stresses $S_{q+1}^{m,l}$ obey the estimates
\begin{subequations}
\begin{align}
\left| \psi_{i,m-1} D^N D_{t,m-1}^M S^{m,l}_{q+1} \right| &\lesssim \left( \sigma_{S_O^m}^+ + \sigma_{S_C^{m,l}}^+ + \mathbf{1}_{\{m=\qbn\}} \left( \sigma_{S_{TN}}^+ + \sigma_\upsilon^+ \right) + \delta_{q+3\bn} \right) \notag\\
&\qquad \times (\lambda_{m}\Ga_m)^N \MM{M,\Nindt,\Gamma_{m-1}^{i-5} \tau_{m-1}^{-1}, \Tau_{q}^{-1} \Ga_q^9 }   \, .  \label{eq:stress:Linfty:upgraded}
\end{align}
\end{subequations}
\item\label{upgrade:item:4} For all $q+1\leq m \leq \qbn$ and $N+M\leq 2\Nind$, the symmetric stresses $S_{q+1}^{m,*}$
\begin{align}
\left\| D^N D_{t, m-1}^M S^{m,*}_{q+1} \right\|_{L^\infty}
    &\leq \Ga_{q+1}^2 \Tau_{q+1}^{2\Nindt} \delta_{q+3\bn}^2 \la_m^N \MM{M,\Nindt,\tau_{m-1}^{-1},\Tau_{m-1}^{-1}}  \, .
    \label{eq:nlstress:upgraded}
\end{align}
\end{enumerate}
\end{lemma}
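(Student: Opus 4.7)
\textbf{Proof proposal for Lemma~\ref{l:divergence:stress:upgrading}.}  My plan is to handle the four items essentially in order, with items (ii)--(iv) sharing a common structural idea: start from the $D_{t,q}$-based bounds of Lemma~\ref{lem:prop.si.pre.stress}, then use the dodging support information to upgrade the material derivative operator from $D_{t,q}$ to $D_{t,m-1}$ via the abstract Lemma~\ref{lem:upgrading.material.derivative}.

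For item (i), the decomposition $R_{q+1}^{m,l}=R_q^{m,l}+S_{q+1}^{m,l}$ reduces the verification of Hypothesis~\ref{hyp:dodging4} at step $q+1$ to two checks.  On the one hand, the inductive Hypothesis~\ref{hyp:dodging4} at step $q$ already gives that $R_q^{m,l}$ is disjoint from $B(\supp \hat w_{q''},\lambda_{q''}^{-1}\Gamma_{q''+1})$ for $q\leq q''\leq m-1$, which in particular covers the required range $q+1\leq q''\leq m-1$.  On the other hand, the support property \eqref{supp.si.m+.stress} in Lemma~\ref{lem:prop.si.pre.stress} gives precisely the disjointness of $S_{q+1}^{m,l}\subseteq \supp \sigma_{S^m}^+$ from $B(\supp \hat w_{q'},\lambda_{q'}^{-1}\Gamma_{q'+1})$ for $q+1\leq q'\leq m-1$.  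Combining the two yields Hypothesis~\ref{hyp:dodging4} at step $q+1$.

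For items (ii) and (iii), I would apply Lemma~\ref{lem:upgrading.material.derivative} to the function $S_{q+1}^{m,l}$, using the $D_{t,q}$-bound \eqref{est.S.m.pt.sim.stress} as input together with the dodging information just verified and the velocity bounds for the intermediate increments $\hat w_{q+1},\dots,\hat w_{m-1}$ coming from \eqref{eq:nasty:D:wq:old}.  Dodging ensures that on the support of $S_{q+1}^{m,l}$, the upgraded material derivative operator $D_{t,m-1}=D_{t,q}+\sum_{q'=q+1}^{m-1}\hat w_{q'}\cdot\nabla$ actually costs no more than $D_{t,q}$, since the added advective pieces vanish on the support; this is the mechanism that converts the $\Gamma_q^{i+18}\tau_q^{-1}$ rate in \eqref{est.S.m.pt.sim.stress} into the $\Gamma_{m-1}^{i-5}\tau_{m-1}^{-1}$ rate appearing on the right-hand side of \eqref{eq:lo:upgrade:1}--\eqref{eq:stress:Linfty:upgraded}, with the loss from $i+18$ down to $i-5$ absorbed via \eqref{eq:inductive:timescales}.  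At this point the division into low ($m\leq q+\half$) and high ($m\geq q+\half+1$) frequencies corresponds exactly to which pressure controls $\sigma_{S^m}$: in the low range we discard the positive part via \eqref{est:si.m+.pt.stress} and use the anticipated-pressure bound \eqref{heatsie} for $\sigma_{S^m}^-$, then invoke the scaling \eqref{eq:ind.pr.anticipated.2} to transfer from $\pi_q^{q+\half}$ to $\pi_q^m$ (absorbing the factor $\Gamma_{q+\half}^{-100}$ into $\Gamma_m^{-50}$), while in the high range one simply carries $\sigma_{S_O^m}^+ + \sigma_{S_C^{m,l}}^+$ (plus the additional $\sigma_{S_{TN}}^+,\sigma_\upsilon^+$ when $m=\qbn$) through from \eqref{est:si.m+.pt.stress}, picking up the $\delta_{q+3\bn}$ remainder.

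For item (iv), the same upgrading procedure applies to the nonlocal remainder $S_{q+1}^{m,*}$, except that now we do not have spatial support information and must actually pay the advective cost.  However, the input bound from the inverse-divergence construction of $S_{q+1}^{m,*}$ (which follows from \eqref{eq:inverse:div:error:stress} and appears implicitly inside Lemma~\ref{lem:prop.si.pre.stress}) comes with so many spare factors of $\delta_{q+3\bn}$ and $\lambda_m^{-10}$ (analogous to \eqref{eq:nlstress:upgraded}) that brutal application of \eqref{eq:bobby:old} for each of the extra $\hat w_{q'}\cdot\nabla$ operators costs only $\Lambda_{m-1}^{\sfrac 12}\Tau_{m-1}^{-1}$-type factors which are absorbed into $\Tau_{q+1}^{2\Nindt}\delta_{q+3\bn}^2$.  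The main obstacle, as usual in this manuscript, is not any single computation but rather the careful bookkeeping of constants $\Gamma_q^{\pm c}$ and $\tau,\Tau$-losses across the upgrading step; in particular one must confirm that the loss of $25$ powers of $\Gamma_{m-1}$ between the $i+18$ in the input and the $i-5$ in the output is compatible with \eqref{eq:inductive:timescales} applied between $\psi_{i,q}$ and $\psi_{i',m-1}$, which forces a slight re-indexing of $i$ in the course of the upgrade.
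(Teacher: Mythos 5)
Your high-level strategy—feed the $D_{t,q}$-based bounds into Lemma~\ref{lem:upgrading.material.derivative}, use the dodging hypotheses to make the advective upgrade $D_{t,q}\to D_{t,m-1}$ cost nothing on the local parts, and use the negligible size of the nonlocal parts to pay for the upgrade there—matches the paper's philosophy, and the observation about absorbing the index shift from $i+18$ down to $i-5$ via \eqref{eq:inductive:timescales} is exactly the mechanism at play. However, you have conflated the low-frequency and high-frequency regimes, and this produces two concrete problems.

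First, for item~\eqref{upgrade:item:1}: you cite only \eqref{supp.si.m+.stress} for the disjointness of $S_{q+1}^{m,l}$ from $B(\supp\hat w_{q'},\lambda_{q'}^{-1}\Gamma_{q'+1})$. But Lemma~\ref{lem:prop.si.pre.stress} is stated only for $q+\half+1\le m\le q+\bn$, so \eqref{supp.si.m+.stress} says nothing about the shells $q+2\le m\le q+\half$. Hypothesis~\ref{hyp:dodging4} at level $q+1$ requires dodging for \emph{all} $q+1\le k\le q+\bn$, so the low shells need a separate argument. The source of that dodging is Lemma~\ref{lem:dodging} (in particular \eqref{eq:dodging:oldies}, which gives $B(\supp\hat w_{q'},\sfrac14\lambda_{q'}^{-1}\Gamma_{q'}^2,2\Tau_q)\cap\supp w_{q+1}=\emptyset$) combined with the support-localizing property of the synthetic Littlewood-Paley projectors $\tP_{\lambda_m}$, which enlarge supports by only $\lambda_m^{-1}\le\lambda_{q'}^{-1}$ when $q'\le m-1$. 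You have omitted this.

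Second, for item~\eqref{upgrade:item:2} you say that in the low range you "discard the positive part via \eqref{est:si.m+.pt.stress} and use the anticipated-pressure bound \eqref{heatsie} for $\sigma_{S^m}^-$, then invoke \eqref{eq:ind.pr.anticipated.2}." This cannot be correct because $\sigma_{S^m}$ is simply not defined for $m\le q+\half$: the pressure increments in Lemma~\ref{lem:prop.si.pre.stress} exist only for $m\ge q+\half+1$. The bounds in the range $q+2\le m\le q+\half$ are not obtained by peeling apart a pressure increment; rather, as the paper remarks just before Lemma~\ref{lem:prop.si.pre.stress}, "the local portions at any lower frequencies are actually dominated by anticipated pressure" directly (the inverse divergence applied to the low synthetic Littlewood-Paley shells gains more than $\lambda_{q+\half}^{-1}$, which makes the raw error small enough to sit under $\Gamma_m^{-50}\pi_q^m$ without a new pressure increment; this is established in the companion \cite{GKN23}). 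Your mechanism thus borrows an object that does not exist in the relevant regime, and this gap should be patched by pointing to the direct anticipated-pressure domination for the low shells rather than to the $\sigma_{S^m}^{\pm}$ decomposition.
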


Finally, the pressure increments $\sigma_{S^m}$ produce current errors in exactly the same way the pressure increment $\sigma_\upsilon$ for the velocity produced a current error in Lemma~\ref{lem:pr.current.vel.inc}. We refer to the discussion preceding Lemma~\ref{lem:pr.current.vel.inc}, as well as \cite[Section~8]{GKN23} for further details. Let us pause, however, to explain why one may expect that these current errors do not require a new pressure increment, but are instead dominated by old intermittent pressure.  Consider the portion of the highest shell of the oscillation error given by
\begin{equation}\notag
    \div^{-1}\left(\tP_{\la_{\qbn}} \div \left( w_{q+1,R} \otimes w_{q+1,R} \right)\right)\approx \nabla R_q^q \tP_{\la_\qbn} \, \div^{-1} \left( |\WW_{q+1,R}|^2 \right) \, ,
\end{equation}
where $\tP_{\la_\qbn}$ is a synthetic Littlewood-Paley projector defined in subsection~\ref{sec:LP}. In order to dominate the right-hand side pointwise, we recall \eqref{eq:ind:stress:by:pi} and create a pressure increment $\sigma_{S_O^\qbn}$: 
\begin{equation}\notag
    \sigma_{S_O^\qbn} = \la_q \pi_q^q \mathbb{P}_{\neq 0} \left| \tP_{\la_\qbn} \div^{-1} \left(  |\WW_{q+1,R}|^2 \right) \right| \, . 
\end{equation}
Considering the current error produced by the pressure increment in the highest shell $\tP_{\la_\qbn}$ only,  
we now apply another synthetic Littlewood-Paley projector $\tP_{\la_\qbn}$ to the high frequency object, apply $\Dtq$, and invert the divergence. 
Imagining that $\Dtq$ is analogous to multiplication by $\nabla \hat u_q$, 
what we must estimate is then
\begin{align}
      \left| \nabla \hat u_q \right| \la_q \pi_q^q \div^{-1} \left[ \tP_{\la_\qbn} \left| \tP_{\la_\qbn} \div^{-1} \left(  |\WW_{q+1,R}|^2 \right) \right| \right] \, . \label{eq:sunday:to:bound}
\end{align}
Using \eqref{eq:psi:q:q'} and \eqref{eq:nasty:D:vq:old}, we may control $|\nabla \hat u_q|$ by $\la_q (\pi_q^q)^{\sfrac 12} r_q^{-1}$. Next, we use \eqref{e:pipe:estimates:1} to control $\WW_{q+1,R}$ pointwise by $r_q^{-1}$, and we will gain two factors of $\la_\qbn^{-1}$ from the inverse divergences. Finally, recalling that $r_q^{-2}\approx \la_\qbn \la_q^{-1}$, $\delta_q\approx \la_q^{-\sfrac 23}$, using the pressure scaling law \eqref{eq:ind.pr.anticipated}, and writing everything out, we may bound \eqref{eq:sunday:to:bound} by
\begin{equation}\label{sunday:eve}
\la_q (\pi_q^q)^{\sfrac 12} r_q^{-1} \la_q \pi_q^q r_q^{-2} \la_{\qbn}^{-2} \leq (\pi_q^q)^{\sfrac 32} \la_q \la_\qbn^{-1} r_q  \leq (\pi_q^\qbn)^{\sfrac 32} r_\qbn^{-1} \, . 
\end{equation}
Notice that this bound is \emph{exactly} consistent with \eqref{eq:ind:current:by:pi}, and so the current error coming from the addition of the new pressure increment is dominated by \emph{anticipated} pressure and requires no new pressure increment!  This avoids the possibility of a loop of pressure creation and new current error creation.  The reader who absorbs these heuristics will be able to understand essentially every estimate in this paper related to the intermittent pressures $\pi_q^{q'}$.

\begin{lemma}[\bf Current error from the stress pressure increment]\label{lem:stress:pressure:current.stress}
For every $m\in\{q+\half+1,\dots,q+\bn\}$, the current error $\phi_{{S^{m}}}$ satisfies the following properties.
\begin{enumerate}[(i)]
    \item\label{i:pc:2:ER.stress} We have the decompositions and equalities
    \begin{subequations}
    \begin{align}\label{eq:desert:decomp:ER.stress}
        \phi_{{S^{m}}} &= \phi_{{S^{m}}}^* + \sum_{m'=q+\half+1}^{{m}} \phi_{{S^{m}}}^{m'} \, , \qquad \qquad 
         \phi_{S^{m}}^{m'} = \phi_{S^{m}}^{m',l} + \phi_{S^{m}}^{m',*} \, \\
         \div \phi_{{S^{m}}}
          &= D_{t,q}\si_{{S^{m}}} - \bmu_{\sigma_{S^m}}' \, . \label{Sunday:Sunday:Sunday}
    \end{align}
    \end{subequations}
    \item\label{i:pc:3:ER.stress} For $q+\half+1 \leq m' \leq m$ and $N,M\leq  2\Nind$,
    \begin{subequations}
    \begin{align}
        &\left|\psi_{i,q} D^N \Dtq^M \phi_{S^{m}}^{m',l} \right| < \Ga_{m'}^{-100} \left(\pi_q^{m'}\right)^{\sfrac 32} r_{m'}^{-1} (\la_{m'} \Ga_{m'}^2)^M \MM{M,\Nindt,\tau_q^{-1}\Ga_q^{i+17},\Tau_q^{-1}\Ga_q^9} \label{s:p:c:pt.stress} \\
        &\left\| D^N \Dtq^M \phi_{S^{m}}^{m',*} \right\|_\infty + \left\| D^N\Dtq^M \phi_{S^{m}}^{*} \right\|_\infty < \Tau_\qbn^{2\Nindt} \delta_{q+3\bn}^{\sfrac 32} (\la_{m}\Ga_{m}^2)^N \tau_q^{-M} \label{e:p:c:nonlocal.stress} \, .
    \end{align}
    \end{subequations}
    \item\label{i:pc:4:ER.stress} For all $q+\half+1\leq m' \leq m$ and all $q+1\leq q' \leq m'-1$, 
    \begin{align}
        B\left( \supp \hat w_{q'}, \sfrac 12 \lambda_{q'}^{-1} \Ga_{q'+1} \right) \cap \supp \left( \phi^{m',l}_{S^{m}} \right) = \emptyset \label{s:p:c:supp.stress} \, .
    \end{align}
\end{enumerate}
\end{lemma}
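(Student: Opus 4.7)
The plan is to mirror the construction of Lemma~\ref{lem:pr.current.vel.inc} for the velocity pressure increment, adapted to each stress pressure increment $\sigma_{S^m}$.  First, I would decompose $\sigma_{S^m}$ using the synthetic Littlewood-Paley decomposition from subsection~\ref{sec:LP}, in direct analogy with \eqref{eq:decomp:showing:vel}, writing
\begin{equation}\notag
\sigma_{S^m} = \sigma_{S^m}^* + \sum_{m'=q+\half+1}^{m} \tilde{\mathbb{P}}_{(\lambda_{m'-1},\lambda_{m'}]}(\sigma_{S^m})
\qquad \text{where } \sigma_{S^m}^* := (\Id - \tilde{\mathbb{P}}_{\lambda_m})\sigma_{S^m}\,.
\end{equation}
The remainder $\sigma_{S^m}^*$ is negligibly small since the synthetic projector cutoff exceeds the active frequency of $\sigma_{S^m}$; the bounds on $\sigma_{S^m}^+$, $\sigma_{S^m}^-$ from \eqref{est:si.m+.32.Dtq.stress}--\eqref{heatsie} together with the mollifier/kernel decay of $\tilde{\mathbb{P}}$ will give \eqref{e:p:c:nonlocal.stress} for this piece.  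Apply $D_{t,q}$ to each summand, subtract the mean time function $\bmu_{\sigma_{S^m}}'$ from \eqref{defn:bmu:stress} to render the result mean-free (which is possible by \eqref{est:bmu:stress}), and invert the divergence via the localized inverse divergence of Proposition~\ref{prop:intermittent:inverse:div}.  This produces $\phi_{S^m}^{m',l}$ (local part) at each frequency shell $\lambda_{m'}$ and generates a nonlocal remainder $\phi_{S^m}^{m',*}$ (plus the initial $\phi_{S^m}^*$).  The identity \eqref{Sunday:Sunday:Sunday} follows by summing.

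Next, I would establish the pointwise bound \eqref{s:p:c:pt.stress} using the heuristic laid out in \eqref{eq:sunday:to:bound}--\eqref{sunday:eve}.  Concretely, on the support of $\psi_{i,q}$ and at frequency $\lambda_{m'}$, the composition of operators gives a gain of $\lambda_{m'}^{-1}$ from the inverse divergence; $D_{t,q}$ acts essentially like multiplication by $\nabla \hat u_q$, which is controlled by $\lambda_q (\pi_q^q)^{\sfrac 12} r_q^{-1}\Gamma_q^{i+1}$ via \eqref{eq:psi:q:q'} and \eqref{eq:nasty:D:vq:old} (plus the $\partial_t$ contribution, which is smaller by \eqref{est:si.m+.pt.stress}); and $\sigma_{S^m}^+$ itself is controlled by $\pi_q^q$ at the highest shell (with appropriate rescaling by $\lambda_q\lambda_{m'}^{-1}$ factors at intermediate shells).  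Combining these with the scaling law \eqref{eq:ind.pr.anticipated} for the intermittent pressure and the identity $r_{q}^{-2} \approx \lambda_{q+\bn}\lambda_q^{-1}$, one computes that the resulting expression is bounded by $\Gamma_{m'}^{-100}(\pi_q^{m'})^{\sfrac 32} r_{m'}^{-1}$ times the claimed derivative costs, with substantial room to spare coming from the power of $\Gamma$ absorbed by the definition of $\sigma_{S^m}$.  Material derivatives of $\sigma_{S^m}^+$ beyond $\NcutSmall$ are handled using that $\sigma_{S^m}^+$ dominates its own derivatives (a consequence of the cutoff construction \eqref{eq:fried:egg:evening}), and higher-order spatial and material derivative estimates then follow from the Faà di Bruno-style bounds of Proposition~\ref{prop:intermittent:inverse:div}.

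For the support properties \eqref{s:p:c:supp.stress}, I would invoke the spatial localization built into the synthetic Littlewood-Paley decomposition (kernels supported in balls of radius $\lambda_{m'}^{-1}$) together with the localized inverse divergence, which only enlarges the support of its argument by $\lambda_{m'}^{-1}$.  Thanks to the inductive support property \eqref{supp.si.m+.stress} of $\sigma_{S^m}^+$, each shell $\phi_{S^m}^{m',l}$ lies in a $\lambda_{m'}^{-1}$-neighborhood of $\supp \sigma_{S^m}^+$, which is already disjoint from $B(\supp \hat w_{q'}, \lambda_{q'}^{-1}\Gamma_{q'+1})$ for all $q+1 \leq q' \leq m-1$ by Hypothesis~\ref{hyp:dodging1} and \eqref{supp.si.m+.stress}; the frequency $\lambda_{m'}^{-1}$ neighborhood is much smaller than the dodging buffer $\sfrac 12 \lambda_{q'}^{-1}\Gamma_{q'+1}$ for $q' < m'$, so disjointness is preserved.

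The principal obstacle, as in \cite{GKN23}, is the delicate bookkeeping required to propagate the pointwise bound in terms of \emph{anticipated} pressure $\pi_q^{m'}$ (rather than newly-created pressure) through \emph{both} the inverse divergence producing $\sigma_{S^m}^+$ in the first place and the subsequent inverse divergence producing $\phi_{S^m}^{m',l}$.  The double inverse divergence consumes $\lambda_{m'}^{-2}$, but one must carefully track how this gain interacts with the rescaling factor $\delta_{m'+\bn}/\delta_{q+\bn}$ in \eqref{eq:ind.pr.anticipated.1} and the intermittency ratio $r_{m'}^{-1}r_q$; the calculation in \eqref{sunday:eve} shows that everything balances precisely to yield a bound by $(\pi_q^{m'})^{\sfrac 32}r_{m'}^{-1}$, but the sharpness of this estimate means small errors in the bookkeeping translate directly into failure to close.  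Making this argument rigorous for all $N+M \leq 2\Nind$, with the correct derivative weights involving $\Tau_q^{-1}\Gamma_q^9$ and $\tau_q^{-1}\Gamma_q^{i+17}$, requires carefully invoking Lemma~\ref{lem:upgrading.material.derivative} and the aggregation lemmas of subsubsection~\ref{sss:aggregation} to convert localized pointwise estimates into the stated form.
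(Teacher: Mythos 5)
Your high-level route matches the paper's: the current error is produced by applying a synthetic Littlewood--Paley decomposition, inverting the divergence of the material derivative shell by shell, and using the localization of the synthetic projector together with \eqref{supp.si.m+.stress} and effective dodging to get the support property. The paper itself presents exactly this heuristic in the passage preceding Lemma~\ref{lem:pr.current.vel.inc}, and then says the proof is carried out in \cite[Section~8]{GKN23} via Proposition~\ref{lem.pr.invdiv2.c}; the proof of the analogous Lemma~\ref{lem:currentoscillation:pressure:current} in the text is the model.

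There is one substantive gap in the way you have stated the argument. You propose to apply $\tilde{\mathbb{P}}_{(\lambda_{m'-1},\lambda_{m'}]}$ directly to $\sigma_{S^m}$ as a function on space-time, and then ``apply $D_{t,q}$ to each summand.'' But the synthetic Littlewood--Paley operator is a fixed Eulerian convolution, and $D_{t,q}$ does not commute with it; the commutator $[D_{t,q},\tilde{\mathbb{P}}_{(\lambda_{m'-1},\lambda_{m'}]}]$ involves derivatives of $\hat u_q$ hitting the kernel and is not obviously controlled at the scale $\pi_q^{m'}$ you need. The mechanism the paper uses (encoded in Proposition~\ref{lem.pr.invdiv2.c}) is to exploit the product structure $\sigma_{S^m}^+ = \pr(H)\,\pr(\rho)\circ\Phi$ and apply the synthetic projector to the Lagrangian high-frequency factor $\pr(\rho)$ \emph{before} composing with $\Phi$. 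Since $D_{t,q}\Phi = 0$, the flowed high-frequency factor is annihilated by $D_{t,q}$, so the material derivative lands only on the slow coefficient $\pr(H)$, and the shell decomposition is coherent along the flow with no commutators. This is also the only way the Lagrangian support statement $\supp(\phi_\varphi^{m,l}) \subseteq B(\supp\vartheta, 2\mu_{m-1}^{-1})\circ\Phi$ of \eqref{est.S.pr.p.support:2:c} makes sense, and hence the only way \eqref{s:p:c:supp.stress} can be derived from effective dodging. So the decomposition in your first display should be applied to $\pr(\rho)$ inside the factorization of $\sigma_{S^m}$, not to $\sigma_{S^m}$ itself; with that adjustment, the rest of your outline is the paper's argument.
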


\subsection{Transport and Nash current errors}\label{ss:cur:T:and:N}
In this section, we treat the transport and Nash current errors, which are defined by
\begin{align}\label{eq:curr:trans:redux}
   \div \left(\ov \phi_T + \ov \phi_N\right) &= (\pa_t + \hat u_q \cdot \na ) \left(\frac 12 |w_{q+1}|^2+ \ka_q^q  - \frac {\tr (S_{q+1})}2 \right) \\
   &\qquad + (\na \hat u_q) : \left(w_{q+1}\otimes w_{q+1} + R_q - \pi_q^q \Id -  \ov R_{q+1}\right) - \bmu'_T - \bmu'_N\, .\notag
\end{align}
These error terms are in fact closely related to the new Reynolds stress error terms $S_{q+1}$, as one can see from \eqref{eq:curr:trans:redux}.  For this reason we have included the proofs of the estimates for these error terms in \cite[subsection~8.7]{GKN23}. We however pause to discuss the heuristics behind these error terms, expanding upon the discussion in subsubsection~\ref{sss:L3}. First note that the cost of the material derivative operator in the first term is closely related to the size of $\nabla \hat u_q$ in the second term, and so we expect both terms to have the same size. Let us therefore consider the term 
$$  \left\|  \div^{-1}  \left[ \nabla \hat u_q \, : \, \tP_{\la_{\qbn}} \left( w_{q+1,R} \otimes w_{q+1,R} \right) \right] \right\|_1  \, . $$
From \eqref{eq:nasty:D:vq:old} and \eqref{eq:psi:i:q:support:old}, we have that the $L^3$ norm of $\nabla \hat u_q$ is essentially $\delta_q^{\sfrac 12} r_{q-\bn}^{-\sfrac 13} \la_q$. Next, we expect the inverse divergence to gain $\la_{q+\bn}^{-1}$, due to the presence of the synthetic Littlewood-Paley projector.  Finally, recall that $w_{q+1,R}$ uses $L^2$ normalized pipes multiplied by coefficient functions $a_{\pxi,\diamond}$, which from Lemma~\ref{lem:a_master_est_p} have $L^3$ norm $\delta_\qbn^\frac12$ (after aggregation).  Combining these estimates and using that the high-frequency pipes decouple from the rest of the expression, we need that
\begin{align*}
    \delta_q^{\sfrac 12} r_{q-\bn}^{-\sfrac 13} \la_q \la_\qbn^{-1} \delta_{\qbn} \leq \delta_{q+2\bn} r_{q+\bn}^{-1} \, .
\end{align*}
We can draw an analogy with \eqref{sunday:sunday:sunday:night} by noticing that $\delta_{\qbn}$ behaves essentially like $\delta_{q+1}$, $\delta_{q+2\bn}$ behaves essentially like $\delta_{q+2}$, and we have an extra $\sfrac 23$ power of $r_{\qbn}$ leftover, which behaves essentially like $r_{q+1}$.  Then writing out everything in terms of $\la_q$ as in \eqref{sunday:sunday:sunday:night}, we see that this error term is sharp in $B^{\sfrac 13}_{3,\infty}$.

The following lemma contains the precise estimates satisfied by the transport and Nash current errors.  We refer to \cite[subsection~8.7]{GKN23} for details.  As usual, though, the main properties are as follows: the error terms are contained in frequency shells between (and including) $\la_{q+1}$ and $\la_\qbn$; the shells at $\la_{q+\half+1}$ and above require a pressure increment, while the lower shells are dominated by existing intermittent pressure; the pressure increments satisfy heuristic estimates exactly like those outlined in the above discussion; pressure increments and error terms themselves both satisfy support properties; and finally, nonlocal error terms are negligibly small.

\begin{lemma}[\bf Current error and pressure increment from \eqref{eq:curr:trans:redux}]\label{lem:ct:general:estimate}
There exist vector fields $\ov\phi_{TN}=\ov\phi_T + \ov \phi_N$ and functions of time $\bmu_{TN} = \bmu_T + \bmu_N$ such that \eqref{eq:curr:trans:redux} holds.  Furthermore, we may decompose $\ov\phi_{TN}$ into components
\begin{align}
    \ov \phi_{TN} = \sum_{m=q+1}^\qbn \ov \phi_{TN}^m = \sum_{m=q+1}^\qbn \ov\phi_{TN}^{m,l} + \ov\phi_{TN}^{m,*}
\end{align}
which satisfy the following.
\begin{enumerate}[(i)]
    \item The errors $\ov\phi_{TN}^m$ vanish uniformly for $q+2\leq m' \leq q+\half-1$. Furthermore, the errors $\ov \phi^{m}_{TN}$ for $m=q+1,q+\sfrac{\bn}{2}$ require no pressure increment. More precisely, we have that for $N,M\leq \sfrac{\Nfin}{100}$,
\begin{subequations}
\begin{align}
    \label{eq:ct:lowshell:nopr:1}
    \left|\psi_{i,q} D^N \Dtq^M \ov \phi^{q+1,l}_{TN} \right| &\les \Ga_{m}^{-100} \left(\pi_q^{m}\right)^{\sfrac32} r_{m}^{-1} \la_{m}^N \MM{M,\Nindt, \tau_q^{-1} \Ga_q^{i+15}, \Tau_q^{-1}\Ga_q^8} \, .
\end{align}
\end{subequations}
\item For $q+\half+1 \leq m \leq \bn$, there exists functions $\si_{\ov \phi^m_{TN}} = \si_{\ov \phi^m_{TN}}^+ - \si_{\ov \phi^m_{TN}}^-$ such that
\begin{subequations}
\begin{align}
    \label{eq:ct.p.1}
    \left|\psi_{i,q} D^N \Dtq^M \ov \phi^{m,l}_{{TN}}\right| &\les \left( (\si_{\ov \phi^{m}_{TN}}^+)^{\sfrac 32} r_m^{-1} + {\de_{q+3\bn}^2}\right)  \left(\lambda_{m}\Gamma_q\right)^N \MM{M,\Nindt,\tau_q^{-1}\Gamma_{q}^{i+17},\Tau_q^{-1}{\Ga_q^9}}\\
    \label{eq:ct.p.2}
    \left|\psi_{i,q} D^N \Dtq^M \si_{\ov \phi^m_{TN}}^+ \right| &\les \left(\si_{\ov\phi^m_{TN}}^+ +\de_{q+3\bn}\right) \left(\lambda_{m}\Gamma_q\right)^N \MM{M,\Nindt,\tau_q^{-1}\Gamma_{q}^{i+{18}},\Tau_q^{-1}{\Ga_q^9}}\\
    \label{eq:ct.p.3}
    \norm{\psi_{i,q} D^N \Dtq^M \si_{\ov\phi^m_{TN}}^+}_{\sfrac32} &\les \de_{m+\bn} \Gamma_{m}^{-9} \left(\lambda_{m}\Gamma_q\right)^N \MM{M,\Nindt,\tau_q^{-1}\Gamma_{q}^{i+{18}},\Tau_q^{-1}{\Ga_q^9}}\\
    \label{eq:ct.p.3.1}
    \norm{\psi_{i,q} D^N \Dtq^M \si_{\ov\phi^m_{TN}}^+}_{\infty} &\les \Gamma_{m}^{\badshaq-9} \left(\lambda_{m}\Gamma_q\right)^N \MM{M,\Nindt,\tau_q^{-1}\Gamma_{q}^{i+{18}},\Tau_q^{-1}{\Ga_q^9}}\\
    \label{eq:ct.p.4}
    \left|\psi_{i,q} D^N \Dtq^M \si_{\ov \phi^m_{TN}}^- \right| &\les \left(\frac{\la_q}{\la_{q+\floor{\bn/2}}}\right)^{\sfrac23} \pi_q^q  \left(\lambda_{q+\floor{\bn/2}}\Gamma_q\right)^N \MM{M,\Nindt,\tau_q^{-1}\Gamma_{q}^{i+{18}},\Tau_q^{-1}{\Ga_q^9}}
\end{align}
\end{subequations}
for all $N,M \leq \sfrac{\Nfin}{100}$. 
Furthermore, we have that for $q+1 \leq m' \leq m-1$ and $q+1\leq q'' \leq q+\half$,
\begin{align}\label{eq:ct.p.6}
    \supp \si_{\ov\phi^m_{TN}}^- \cap B\left( \supp \hat w_{q''}, \la_{q''}^{-1}\Ga_{q''+1} \right) =
    \supp \si_{\ov \phi^m_{TN}}^+ \cap B\left(\supp \hat w_{m'}, \la_{m'}^{-1}\Ga_{m'+1} \right) = \emptyset \, .
\end{align}
\item When $m=q+2,\dots,q+\bn$ and $q+1\leq q' \leq m-1$, the local parts satisfy
\begin{align}
B\left( \supp \hat w_{q'}, \lambda_{q'}^{-1} \Gamma_{q'+1} \right) \cap \supp \ov \phi_{TN}^{m,l} = \emptyset \, . \label{ct:support:first}
\end{align}
\item For $m=q+1, \dots, q+\bn-1$ and $N,M\leq \sfrac{\Nind}{4}$, the non-local parts ${\ov \phi}^{m,*}_{O}$ satisfy
\begin{subequations}\label{eq:ctnl:estimate:1}
\begin{align}
\left\| D^N D_{t, q}^M {\ov \phi}^{m,*}_{{TN}} \right\|_{L^\infty}
    &\les \delta_{q+3\bn}^{\sfrac32} (\la_{m}\Ga_{m-1})^{N} \tau_{q}^{-M} \, .
\end{align}
For $q+\bn$ and $N,M\leq \sfrac{\Nind}{4}$, the non-local part ${\ov \phi}^{\qbn,*}_{O}$ satisfies
\begin{align}
\left\| D^N D_{t,\qbn-1}^M {\ov \phi}^{\qbn,*}_{{TN}} \right\|_{L^\infty} &\les \delta_{q+3\bn}^{\sfrac32} (\la_{m}\Ga_{m-1})^{N}\MM{M,\Nindt,\tau_{\qbn-1}^{-1},\Tau_{\qbn-1}^{-1}\Ga_{\qbn-1}} \, .
\end{align}
\end{subequations}
\item For $M \leq 2\Nind$, the time function $\bmu_{TN}$ satisfies
\begin{align}\label{sat:morn:10:57}
\left| \frac{d^{M+1}}{dt^{M+1}} \bmu_{TN} \right| \les \left( \max(1,T) \right)^{-1} \delta_{q+3\bn}^2 \MM{M,\Nindt,\tau_q^{-1},\Tau_{q+1}^{-1}} \, .
\end{align}
\end{enumerate}
\end{lemma}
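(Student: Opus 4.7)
The plan is to start from the right-hand side of \eqref{eq:curr:trans:redux} and recognize that it has precisely the same low-high product structure that was already exploited for the new stress errors in Lemma~\ref{lem:prop.si.pre.stress}. Indeed, writing $w_{q+1}=\sum_{\pxi,I,\diamond} a_{\pxi,\diamond}(\chib^\diamond_{(\xi)}\etab_\xi^{I,\diamond})\circ\Phi_{(i,k)} \curl(\nabla\Phi^T_{(i,k)}\mathbb{U}^I_{(\xi),\diamond}\circ\Phi_{(i,k)})+\text{c.p.}$ and substituting, the first group in \eqref{eq:curr:trans:redux} unfolds into two types of terms: the \emph{transport pieces} $D_{t,q}(\tfrac12|w_{q+1}|^2+\ka_q^q-\tfrac12\tr S_{q+1})$, where the material derivative lands on the low-frequency coefficients (by the transport-parallel identity $D_{t,q}(f\circ\Phi_{(i,k)})=(D_{t,q}f)\circ\Phi_{(i,k)}$ and $\xi\cdot\nabla \mathbb{U}^I_{(\xi),\diamond}\equiv 0$), and the \emph{Nash pieces} $(\nabla\hat u_q):(w_{q+1}\otimes w_{q+1}+R_q-\pi_q^q\Id-\ov R_{q+1})$, where the gradient of $\hat u_q$ serves as the low-frequency factor and $w_{q+1}\otimes w_{q+1}$ provides the high-frequency oscillation.

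First, I would use the algebraic identities built into the definitions of the coefficient functions (the $\tfrac12\tr S_{q+1}$ cancellation arising from the oscillation geometric lemma, and the transport-stress identity hidden in the construction of $S_{TN}$) to show that in fact the structure of the right-hand side is exactly low-frequency coefficient $G$ (with material-derivative cost $\tau_q^{-1}\Ga_q^{i+\cdots}$ on $\supp\psi_{i,q}$) times a \emph{mean-zero} high-frequency factor built from $\WW^I_{(\xi),\diamond}$ and its derivatives, plus harmless nonlocal commutator-type remainders. Next I would apply the synthetic Littlewood-Paley decomposition $\sum_{m=q+1}^{\qbn}\tP_{\approx\la_m}$ from subsection~\ref{sec:LP} to the high-frequency factor, followed by the spatially-localized inverse divergence of Proposition~\ref{prop:intermittent:inverse:div}; this gives the decomposition $\ov\phi_{TN}=\sum_m \ov\phi_{TN}^{m,l}+\ov\phi_{TN}^{m,*}$ and simultaneously explains why $\ov\phi_{TN}^m$ vanishes for $q+2\leq m\leq q+\tfrac{\bn}{2}-1$ (the high-frequency factor is supported in $[\la_{q+\half},\la_\qbn]$).

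At this point the sharp \emph{size} heuristic in \eqref{sunday:sunday:sunday:night}--\eqref{weeeeeeeeeee} takes over: in each shell $\la_m$ the inverse divergence gains $\la_m^{-1}$, the coefficient supplies a factor $\nabla\hat u_q$ (dominated pointwise by $\la_q(\pi_q^q)^{\sfrac 12}r_q^{-1}$ via \eqref{eq:psi:q:q'}, \eqref{eq:nasty:D:vq:old}) and a factor $|w_{q+1}|^2$ (dominated by $a_{\pxi,R}^2(\rhob_\pxi^R)^2\lesssim\pi_\ell r_q^{-2}$), and the pressure scaling law \eqref{eq:ind.pr.anticipated} rescales these estimates into the claimed bound $(\pi_q^m)^{\sfrac 32}r_m^{-1}$ via exactly the same arithmetic displayed in \eqref{sunday:eve}. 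For $m\leq q+\tfrac\bn2$ the rescaled product stays below $\pi_q^m$, giving \eqref{eq:ct:lowshell:nopr:1} with no new pressure; for $q+\half+1\leq m\leq\qbn$ one defines $\sigma_{\ov\phi^m_{TN}}=\sigma^+-\sigma^-$ by the current-error analogue of \eqref{eq:fried:egg:evening}, i.e.\ taking the $\sfrac 13$-power of an $L^1$-weighted sum so that the bounds \eqref{eq:ct.p.1}--\eqref{eq:ct.p.3.1} hold; the low-frequency part $\sigma^-$, depending only on the spatial mean of the high-frequency factor, is then controlled by anticipated pressure $\pi_q^q$ up to a $(\la_q/\la_{q+\floor{\bn/2}})^{\sfrac 23}$ rescaling from \eqref{eq:ind.pr.anticipated}, yielding \eqref{eq:ct.p.4}. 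The support assertions \eqref{eq:ct.p.6}, \eqref{ct:support:first} follow verbatim from the Section~\ref{sec:dodging} placement results and the spatial locality of Proposition~\ref{prop:intermittent:inverse:div}, and the nonlocal bounds \eqref{eq:ctnl:estimate:1} come from the exponentially small remainder of the inverse divergence. Finally, one sets $\bmu_{TN}(t):=\int_0^t\langle\text{RHS of \eqref{eq:curr:trans:redux}}\rangle(s)\,ds$ and uses that the spatial mean of the right-hand side is already of size $\delta_{q+3\bn}^2$ after the cancellations above to obtain \eqref{sat:morn:10:57}.

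The hard part will be implementing the careful \emph{pointwise} version of the Nash-error estimate at the endpoint shell $m=\qbn$, where the $L^3$-based heuristic is genuinely sharp: one must simultaneously use the dodging of $w_{q+1}$ with $u_q-\hat u_q$ (so that $\nabla\hat u_q$ rather than $\nabla u_q$ appears, as explained around \eqref{sunday:sunday:sunday:night}), the Goldilocks choice $r_q=\la_{q+\half}\Ga_q/\la_\qbn$, and the full strength of the pressure-domination hypotheses \eqref{eq:psi:q:q'} and \eqref{eq:ind.pr.anticipated} to close \eqref{eq:ct.p.1}--\eqref{eq:ct.p.2} without losing any power of $\Ga_q$; all other shells and the transport piece enjoy slack.
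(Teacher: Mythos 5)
Your proposal matches the approach the paper outlines — the full proof is deferred to \cite[subsection~8.7]{GKN23}, but your reconstruction (the low-times-high product structure shared with $S_{q+1}$, cancellation via $\kappa_q^q$ and $\tfrac12\tr S_{q+1}$, synthetic Littlewood--Paley binning of the high-frequency factor into shells $[\la_{q+\half},\la_\qbn]$, the spatially localized inverse divergence, pressure increments built by the current-error analogue of \eqref{eq:fried:egg:evening} for $m\geq q+\half+1$, anticipated-pressure domination of $\sigma^-$ and the low shells via \eqref{eq:ind.pr.anticipated}, and the dodging results of Section~\ref{sec:dodging} for the support properties and nonlocal smallness) agrees with the heuristic discussion the paper gives immediately before the lemma. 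You also correctly single out the $m=\qbn$ Nash shell as the sharp endpoint, closed by the Goldilocks choice of $r_q$ together with \eqref{eq:psi:q:q'} and the dodging of $w_{q+1}$ against $u_q-\hat u_q$.
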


Just as for the premollified velocity increment and the new Reynolds stress errors, the pressure increment for the transport and Nash current errors produces a new current error, which is dominated by anticipated pressure.  Applying the same heuristics as in \eqref{sunday:eve} shows that the current error coming from the pressure increment for the transport and Nash current error is dominated by anticipated pressure.  We refer to \cite[Lemma~8.21]{GKN23} for details.

\begin{lemma}[\bf Current error from the transport/Nash current error pressure increment]\label{lem:ctn:pressure:current}
For \\ every $m'\in\{q+\half+1,\dots,q+\bn\}$, there exist current errors $\phi_{{\ov\phi_{TN}^{m'}}}$ associated to the pressure increments $\si_{\ov\phi_{TN}^{m'}}$ and functions $\bmu_{\si_{\ov\phi_{TN}^{m'}}}$ of time
that satisfy the following properties.  
\begin{enumerate}[(i)]
\item\label{i:ctn:pc:2} We have the decompositions and equalities
\begin{subequations}
\begin{align}\label{eq:ctn:desert:decomp}
    \div \phi_{{\ov\phi_{TN}^{m'}}} + \bmu_{\si_{\ov\phi_{TN}^{m'}}}'
    &= D_{t,q}  \si_{\ov\phi_{TN}^{m'}}\, , \\
    \phi_{{\ov\phi_{TN}^{m'}}} = \phi_{{\ov\phi_{TN}^{m'}}}^* + \sum_{m=q+\half+1}^{{m'}} \phi_{{\ov\phi_{TN}^{m'}}}^{m} \, , \qquad 
    \phi_{\ov \phi_{TN}^{m'}}^{m} &= \phi_{\ov \phi_{TN}^{m'}}^{m,l} + \phi_{\ov \phi_{TN}^{m'}}^{m,*} \, .           
\end{align}
\end{subequations}
\item\label{i:pc:3:TN} For $q+\half+1 \leq m \leq m'$ and $N,M\leq  2\Nind$,
\begin{subequations}
\begin{align}
    &\left|\psi_{i,q} D^N \Dtq^M \ov\phi_{\ov \phi_{TN}^{m'}}^{m,l} \right| < \Ga_{m}^{-100} \left(\pi_q^m\right)^{\sfrac 32} r_m^{-1} (\la_m \Ga_m^2)^M \MM{M,\Nindt,\tau_q^{-1}\Ga_q^{i+{18}},\Tau_q^{-1}\Ga_q^9} \, , \label{eq:desert:estimate:1:TN} \\
    &\left\| D^N \Dtq^M \phi_{\ov \phi_{TN}^{m'}}^{m,*} \right\|_\infty  < \Tau_\qbn^{2\Nindt} \delta_{q+3\bn}^{\sfrac 32} (\la_{m'}\Ga_{m'}^2)^N \tau_q^{-M},\label{eq:desert:estimate:21:TN}\\
    &\left\| D^N\Dtq^M \phi_{\ov\phi_{TN}^{m'}}^{*} \right\|_\infty < \Tau_\qbn^{2\Nindt} \delta_{q+3\bn}^{\sfrac 32} (\la_{q+\bn}\Ga_{q+\bn}^2)^N \tau_q^{-M} \label{eq:desert:estimate:22:TN} \, .
\end{align}
\end{subequations}
\item\label{i:pc:4:TN} For all $q+\half+1\leq m \leq m'$ and all $q+1\leq q' \leq m-1$, 
\begin{align}
        B\left( \supp \hat w_{q'}, \sfrac 12 \lambda_{q'}^{-1} \Ga_{q'+1} \right) \cap \supp \left( \phi^{m,l}_{\ov \phi_{TN}^{m'}} \right) = \emptyset \label{eq:desert:dodging:TN} \, .
\end{align}
\item \label{i:pc:5:TN} For $M\leq 2\Nind$, the mean part $\bmu_{\si_{\ov\phi_{TN}^{m'}}}$ satisfies
\begin{align}\label{eq:desert:mean:tn}
    \left|\frac{d^{M+1}}{dt^{M+1}}
    \bmu_{\si_{\ov\phi_{TN}^{m'}}} \right| 
    \leq (\max(1, T))^{-1}\delta_{q+3\bn} \MM{M,\Nindt,\tau_q^{-1},\Tau_{q+1}^{-1}} \, .
\end{align}
\end{enumerate}
\end{lemma}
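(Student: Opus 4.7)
The plan is to construct $\phi_{\ov\phi^{m'}_{TN}}$ in direct analogy with the velocity pressure current error in Lemma~\ref{lem:pr.current.vel.inc} and the stress pressure current error in Lemma~\ref{lem:stress:pressure:current.stress}. Setting $\bmu_{\si_{\ov\phi_{TN}^{m'}}}(t) := \int_0^t \langle D_{t,q}\si_{\ov\phi^{m'}_{TN}}\rangle(s)\,ds$, one decomposes the mean-zero function $D_{t,q}\si_{\ov\phi^{m'}_{TN}} - \bmu_{\si_{\ov\phi^{m'}_{TN}}}'$ via the synthetic Littlewood-Paley projectors $\tP_{\approx\la_m}$ from subsection~\ref{sec:LP} into shells indexed by $q+\half+1\leq m\leq \qbn$, plus a negligible tail above $\la_\qbn$. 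One then inverts the divergence on each piece using Proposition~\ref{prop:intermittent:inverse:div}, which produces each local term $\phi_{\ov\phi_{TN}^{m'}}^{m,l}$ together with a non-local remainder. The local terms, the inverse-divergence of the non-local remainders, and the inverse-divergence of the high-frequency tail are collected into the decomposition \eqref{eq:ctn:desert:decomp}, and the divergence identity follows by construction.

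The central estimate is the pointwise bound \eqref{eq:desert:estimate:1:TN}. On $\supp\psi_{i,q}$, one material derivative costs $\tau_q^{-1}\Ga_q^{i+18}$, so applying $\Dtq$ once to \eqref{eq:ct.p.2} gives a pointwise bound on $\Dtq \si_{\ov\phi_{TN}^{m'}}^+$ by $\tau_q^{-1}\Ga_q^{i+18}(\si_{\ov\phi_{TN}^{m'}}^++\de_{q+3\bn})$. The localized inverse divergence applied to the $\tP_{\approx\la_m}$-shell then gains $\la_m^{-1}$. Using $\tau_q^{-1}\approx\de_q^{\sfrac12}r_{q-\bn}^{-\sfrac13}\la_q\Ga_q^{35}$ from \eqref{eq:defn:tau}, the $L^\infty$ bound \eqref{eq:ct.p.3.1}, and the fact that $\si_{\ov\phi_{TN}^{m'}}^+$ and $\pi_q^m$ both carry $L^{\sfrac 32}$-size $\de_{m+\bn}\Ga_m^{-9}$ up to absolute constants, the resulting $\sigma^+$-contribution is bounded by a constant multiple of $\Ga_m^{-100}(\pi_q^m)^{\sfrac 32}r_m^{-1}$ via the pressure scaling law \eqref{eq:ind.pr.anticipated} and the smallness of $\varepsilon_\Ga$; this is exactly the heuristic \eqref{sunday:eve} that governs every anticipated-pressure computation in the paper. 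The $\sigma^-$-contribution is even simpler because \eqref{eq:ct.p.4} already controls $\sigma^-$ by old pressure $\pi_q^q$ at the lower frequency $\la_{q+\lfloor \sfrac\bn 2\rfloor}$, which by \eqref{eq:ind.pr.anticipated} slots comfortably below $\Ga_m^{-100}(\pi_q^m)^{\sfrac 32}r_m^{-1}$ for every shell $m\geq q+\half+1$.

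The support property \eqref{eq:desert:dodging:TN} follows from \eqref{eq:ct.p.6} together with the spatial localization in Proposition~\ref{prop:intermittent:inverse:div}, which only enlarges supports at the scale $\la_m^{-1}$, safely smaller than the margin $\sfrac 12\la_{q'}^{-1}\Ga_{q'+1}$ for $q'\leq m-1$. The nonlocal estimates \eqref{eq:desert:estimate:21:TN}--\eqref{eq:desert:estimate:22:TN} come from the nonlocal remainder inside Proposition~\ref{prop:intermittent:inverse:div} and from the tail of the synthetic Littlewood-Paley decomposition above $\la_\qbn$, both of which are controlled by a large negative power of $\la_\qbn$ through the vanishing-moment properties of the synthetic kernels. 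Finally, \eqref{eq:desert:mean:tn} is immediate from the definition of $\bmu_{\si_{\ov\phi_{TN}^{m'}}}$ and the $L^{\sfrac 32}$-in-$x$ estimate \eqref{eq:ct.p.3}, exactly paralleling \eqref{th:billys:3} and \eqref{est:bmu:stress}.

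The main technical obstacle is the careful bookkeeping needed to confirm that \emph{no new pressure increment} is required for this current error and that every loss in $\Ga_q^i$, $\tau_q^{-1}$, and $r_q$ is absorbed either by the spatial gain $\la_q/\la_m$ from the localized inverse divergence or by the pressure scaling law \eqref{eq:ind.pr.anticipated}. Once this absorption is verified in the sharpest shell $m=q+\half+1$, the remaining shells follow with room to spare since each additional shell contributes a further gap $\la_{m-1}/\la_m$.
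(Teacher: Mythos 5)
Your high-level plan---emulate Lemma~\ref{lem:pr.current.vel.inc}, Lemma~\ref{lem:stress:pressure:current.stress}, and Lemma~\ref{lem:currentoscillation:pressure:current}, with the decomposition--invert--absorb workflow---is the same route the paper takes, and the components you list (synthetic Littlewood-Paley shells, inverse divergence gain $\la_m^{-1}$, material derivative cost converted to $(\pi_q^q)^{\sfrac 12}$, pressure scaling law \eqref{eq:ind.pr.anticipated}, support from \eqref{eq:ct.p.6}, negligible nonlocal tail) are the right ingredients. However, the unpacking of the central estimate contains a genuine gap. Equation \eqref{eq:desert:estimate:1:TN} is a \emph{pointwise} bound in terms of the function $\pi_q^m$, and you cannot get such a bound from the $L^{\sfrac 32}$ and $L^\infty$ norms of $\si_{\ov\phi_{TN}^{m'}}^+$: those norms are numbers, and $\sigma^+$ is not pointwise comparable to $\pi_q^m$ (its $L^\infty$ size $\Ga_m^{\badshaq-9}$ can exceed that of $\pi_\ell$). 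The mechanism the paper actually uses is the product structure of the pressure increment, $\sigma = \pr(H)\cdot\pr(\rho)\circ\Phi$ from \eqref{heatsie:2}, together with the pointwise domination of the low-frequency factor $\pr(H)$ by the assumed dominator $\pi$ of the input $G$; this is exactly what \eqref{est.S.by.pr.final3:c} in Proposition~\ref{lem.pr.invdiv2.c} Part 4 encodes, and it is this proposition (applied to the transport/Nash input with $\pi\approx\pi_\ell$, followed by Corollary~\ref{lem:agg.Dtq} and the scaling law \eqref{eq:ind.pr.anticipated}), not the bare Proposition~\ref{prop:intermittent:inverse:div} applied to a synthetic-LP decomposition of $D_{t,q}\sigma$, that supplies the pointwise bound. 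Incidentally, the $L^{\sfrac 32}$ size of $\pi_q^m$ is $\Ga_q\Ga_m\de_{m+\bn}$ from \eqref{eq:pressure:inductive:dtq}, not $\de_{m+\bn}\Ga_m^{-9}$.

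The mean estimate is also misattributed. From \eqref{eq:ct.p.3}, $\|\si_{\ov\phi_{TN}^{m}}^+\|_{\sfrac 32}\lesssim\de_{m+\bn}\Ga_m^{-9}$ with $m\leq q+\bn$, so the naive $L^{\sfrac 32}$-based bound on $\langle D_{t,q}\si\rangle$ is of order $\de_{m+\bn}\gg\de_{q+3\bn}$, far too large for \eqref{eq:desert:mean:tn}. The bound \eqref{eq:desert:mean:tn} is in fact a negligibility estimate: it uses that $\sigma=\pr(H)\,(\pr(\rho)\circ\Phi - \langle\pr(\rho)\rangle)$ is effectively mean zero, so $\langle D_{t,q}\sigma\rangle$ is a cancellation/commutator term, bounded by a super-exponentially small factor via \eqref{est:mean.Dtsiph} (the factor $(\La\Ga)^{-K_\circ}$) together with a large choice of $a_*$ to produce the prefactor $\max(1,T)^{-1}$.
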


\subsection{Mollification}\label{subsec:result:par1:6}

In this section, we present two lemmas which estimate various mollification terms.  These lemmas are essentially technical in nature and the reader can safely skip them.  We refer to \cite[Lemmas~3.1, 8.22]{GKN23} for details.

\begin{lemma}[\bf Mollification and upgrading material derivative estimates]\label{lem:upgrading}
Assume that \emph{all} inductive assumptions listed in subsections~\ref{ss:relaxed}-\ref{sec:inductive:secondary:velocity} hold. Let $\Pqxt$\index{$\Pqxt$} be a space-time mollifier for which the kernel is a product of $\mathcal{P}_{q,x}(x)$, which is compactly supported in space at scale $\Lambda_q^{-1}\Gamma_{q-1}^{-\sfrac 12}$, and $\mathcal{P}_{q,t}(t)$, which is compactly supported in time at scale $\Tau_{q-1}\Gamma_{q-1}^{\sfrac 12}$; we further assume that both kernels have vanishing moments up to $10\Nfin$ and are $C^{10\Nfin}$-differentiable. Define
\begin{align}
    &R_\ell = \Pqxt R_q^q \, , \qquad
    \pi_\ell = \Pqxt \pi_q^q \, , \qquad \varphi_\ell = \Pqxt \varphi_q^q 
    \label{def:mollified:stuff}
\end{align}
on the space-time domain $[-\sfrac{\tau_{q-1}}2,T+\sfrac{\tau_{q-1}}2]\times \T^3$. 
For $q'$ such that $q<q'\leq q+\bn-1$, we define $\mathcal{P}_{q',x,t}$\index{$\mathcal{P}_{q',x,t}$} in an analogous way after making the appropriate parameter substitutions, and we set $R_\ell^{q'}=\mathcal{P}_{q',x,t}R_q^{q'}$ and $\pi_\ell^{q'}=\mathcal{P}_{q',x,t}\pi_q^{q'}$. For $q'$ with $q+\bn\leq q' <q+\Npr$, we define $\overline{\mathcal{P}}_{q+\bn-1,x,t}$\index{$\overline{\mathcal{P}}_{q+\bn-1,x,t}$} analogously at the spatial scale $\Lambda_{q+\bn-1}^{-1}\Ga_{q+\bn-1}^{-\sfrac12}$ and temporal scale $\Tau_{q+\bn-1}\Ga_{q+\bn-1}^{-\sfrac12}$ and set $\pi_\ell^{q'} = \overline{\mathcal{P}}_{q+\bn-1,x,t} \pi_q^{q'}$. Then the following hold. 
\begin{enumerate}[(i)]
\item\label{item:moll:two} The inductive assumptions in \eqref{eq:pressure:inductive} are replaced with upgraded bounds for all $N+M\leq \Nfin$:
\begin{subequations}\label{eq:pressure:upgraded}
\begin{align}
\norm{ \psi_{i,q} D^N D_{t,q}^M  \pi_\ell }_{\sfrac 32}  
&\les \Gamma_q^2 \delta_{q+\bn}\left(\Lambda_q\Gamma_q\right)^N \MM{M, \NindRt, \Gamma_{q}^{i} \tau_q^{-1}, \Tau_{q}^{-1} } \, ,
\label{eq:pressure:inductive:dtq:upgraded} \\
\norm{ \psi_{i,q} D^N D_{t,q}^M \pi_\ell }_{\infty} 
&\les \Gamma_q^{2+\badshaq} \left(\Lambda_q\Gamma_q\right)^N \MM{M, \NindRt, \Gamma_{q}^{i} \tau_q^{-1}, \Tau_{q}^{-1} } \, , \label{eq:pressure:inductive:dtq:uniform:upgraded} \\
\left|\psi_{i,q} D^N D_{t,q}^M \pi_\ell\right| &\les \Gamma_q^3 \pi_\ell \left(\Lambda_q\Gamma_q\right)^N \MM{M, \Nindt, \Gamma_{q}^{i} \tau_q^{-1}, \Tau_{q}^{-1} }  \, . \label{eq:pressure:inductive:dtq:pointwise}
\end{align}
\end{subequations}
We record the following additional bounds for $\pi_\ell^k$ with $q<k\leq q+\bn-1$ and $N+M\leq \Nfin$:
\begin{subequations}\label{eq:pressure:upgraded:higher}
\begin{align}
\norm{ \psi_{i,k-1} D^N D_{t,k-1}^M  \pi_\ell^k }_{\sfrac 32}  
&\les \Ga_k^2 \delta_{k+\bn}\left(\Lambda_k\Gamma_{k-1}\right)^N \MM{M, \NindRt, \Gamma_{k-1}^{i+2} \tau_{k-1}^{-1}, \Tau_{k-1}^{-1}\Ga_{k-1} } \, ,
\label{eq:pressure:inductive:dtq:upgraded:higher} \\
\norm{\psi_{i,k-1} D^N D_{t,k-1}^M \pi_\ell^k }_{\infty} 
&\les \Ga_k^{2+\badshaq} \left(\Lambda_k\Gamma_{k-1}\right)^N \MM{M, \NindRt, \Gamma_{k-1}^{i+2} \tau_{k-1}^{-1}, \Tau_{k-1}^{-1}\Ga_{k-1} } \, , \label{eq:pressure:inductive:dtq:uniform:upgraded:higher} \\
\left|\psi_{i,k-1} D^N D_{t,k-1}^M \pi_\ell^k \right| &\leq 2\Ga_k^3 \pi_\ell^k \left(\Lambda_k\Gamma_k\right)^N \MM{M, \Nindt, \Gamma_{k-1}^{i+3} \tau_{k-1}^{-1}, \Tau_{k-1}^{-1} \Ga_{k-1}^2 }  \, . \label{eq:pressure:inductive:dtq:pointwise:higher}
\end{align}
\end{subequations}
For $\pi_\ell^k$ with $q+\bn\leq k<q+\Npr$ and $N+M\leq \Nfin$, we also have that
\begin{subequations}\label{eq:pressure:upgraded:higher:much}
\begin{align}
\norm{ \psi_{i,q+\bn-1} D^N D_{t,q+\bn-1}^M  \pi_\ell^k }_{\sfrac 32} 
&\les \Ga_k^2 \delta_{k+\bn}\left(\Lambda_{q+\bn-1}\Gamma_{q+\bn-1}\right)^N \notag\\
&\qquad\times\MM{M, \NindRt, \Gamma_{q+\bn-1}^{i+2} \tau_{q+\bn-1}^{-1}, \Tau_{q+\bn-1}^{-1}\Ga_{q+\bn-1} } \, ,
\label{eq:pressure:inductive:dtq:upgraded:higher:much} \\
\norm{ \psi_{i,q+\bn-1} D^N D_{t,q+\bn-1}^M  \pi_\ell^k }_{\infty} 
&\les \Ga_{{ q+\bn-1}}^{2+\badshaq} \left(\Lambda_{q+\bn-1}\Gamma_{q+\bn-1}\right)^N\notag\\
&\qquad\times \MM{M, \NindRt, \Gamma_{q+\bn-1}^{i+2} \tau_{q+\bn-1}^{-1}, \Tau_{q+\bn-1}^{-1}\Ga_{q+\bn-1} } \, , \label{eq:pressure:inductive:dtq:uniform:upgraded:higher:much} \\
\left|\psi_{i,q+\bn-1} D^N D_{t,q+\bn-1}^M  \pi_\ell^k \right| &\leq 2\Ga_k^3 \pi_\ell^k \left(\Lambda_{q+\bn-1}\Gamma_{q+\bn-1}^2\right)^N \notag\\
&\qquad\times\MM{M, \Nindt, \Gamma_{q+\bn-1}^{i+3} \tau_{q+\bn-1}^{-1}, \Tau_{q+\bn-1}^{-1}\Ga_{q+\bn-1}^2 }  \, . \label{eq:pressure:inductive:dtq:pointwise:higher:much}
\end{align}
\end{subequations}
We finally record the additional estimate
\begin{equation}\label{ind:pi:lower}
   \frac12 \delta_{q+\bn} \leq \pi_\ell \leq 2\pi_q^q \leq 4\pi_\ell \, , \qquad \frac12 \delta_{k+\bn} \leq \pi_\ell^k \leq 2\pi_q^k \leq 4\pi_\ell^k \, .
\end{equation}
\item\label{item:moll:three} The inductive assumptions in \eqref{eq:ind:stress:by:pi}--\eqref{eq:ind:velocity:by:pi} for $k=q$ are replaced with the following upgraded bounds for all $N+M\leq \Nfin$ in the first two inequalities, and $N+M\leq\sfrac{3\Nfin}{2}$ in the third: 
\begin{subequations}\label{eq:inductive:pointwise:upgraded}
\begin{align}
    \left|\psi_{i,q} D^N D_{t,q}^M R_\ell\right| &\les \Gamma_q^{-7} \pi_\ell \left(\Lambda_q\Gamma_q\right)^N \MM{M, \Nindt, \Gamma_{q}^{i} \tau_q^{-1}, \Tau_{q}^{-1} } \label{eq:inductive:pointwise:upgraded:1} \\
    \left|\psi_{i,q} D^N D_{t,q}^M \varphi_\ell \right| &\les \Gamma_q^{-11} \pi_\ell^{\sfrac 32} r_q^{-1} \left(\Lambda_q\Gamma_q\right)^N \MM{M, \Nindt, \Gamma_{q}^{i} \tau_q^{-1}, \Tau_{q}^{-1} } \label{eq:inductive:pointwise:upgraded:2} \\
    \left|\psi_{i,q} D^N D_{t,q}^M \hat w_k \right| &\les r_{k-\bn}^{-1} \pi_\ell^{\sfrac 12} \left(\Lambda_q\Gamma_q\right)^N \MM{M, \NindRt, \Gamma_{q}^{i} \tau_q^{-1}, \Tau_{q}^{-1} } \, . \label{eq:inductive:pointwise:upgraded:3}
\end{align}
\end{subequations}
For $k$ such that $q<k\leq q+\bn-1$, we have for $N+M\leq \Nfin$ the additional bound
\begin{align}
     \left|\psi_{i,k-1} D^N D_{t,k-1}^M R_\ell^k\right| &\les \Gamma_q^{-7} \pi_\ell^k \left(\Lambda_k\Gamma_k\right)^N \MM{M, \Nindt, \Gamma_{k-1}^{i+{23}} \tau_{k-1}^{-1}, \Tau_{k-1}^{-1} \Ga_{k-1}^{{12}} } \, . \label{eq:inductive:pointwise:upgraded:1:higher}
\end{align}
\item\label{item:moll:four}
For $k$ such that $q<k\leq q+\bn-1$ and $N+M\leq 2\Nind$, we have that
\begin{align}\label{eq:diff:moll:higher:statement}
    &\norm{D^N D_{t,k-1}^M \left(\pi_q^k - \pi_\ell^k\right)}_\infty +  \norm{D^N D_{t,k-1}^M \left(R_q^k - R_\ell^k\right)}_\infty \notag \\ 
    &\qquad \qquad \lec \Gamma_{k+1} \Tau_{k+1}^{4\Nindt} \delta_{k+3\bn}^2 (\Lambda_k \Ga_{k-1})^N \MM{M,\Nindt,\tau_{k-1}^{-1}\Ga_{k-1},\Tau_{k-1}^{-1}\Gamma_{k-1}^{ 11}} \, ,
\end{align}    
and for $k$ with $q+\bn\leq k <q+\Npr$ and $N+M\leq 2\Nind$, 
\begin{align}
    \norm{D^N D_{t,q+\bn-1}^M \left(\pi_q^k - \pi_\ell^k\right)}_\infty 
    &\lec \Gamma_{q+\bn+1} \Tau_{q+\bn+1}^{{4}\Nindt} \delta_{q+4\bn}^2 (\Lambda_{q+\bn-1} \Ga_{q+\bn-1})^N\notag\\
    &\quad\times
    \MM{M,\Nindt,\tau_{q+\bn-1}^{-1}\Ga_{q+\bn-1},\Tau_{q+\bn-1}^{-1}\Gamma_{q+\bn-1}} \, . \label{eq:diff:moll:higher:statement2}
\end{align}
\end{enumerate}
\end{lemma}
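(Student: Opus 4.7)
The proof is a standard but bookkeeping-heavy combination of three ingredients: the basic Young-type bounds for convolution with compactly supported kernels, the vanishing-moments property of $\mathcal{P}_{q,x}$ and $\mathcal{P}_{q,t}$, and the upgrading-of-material-derivative machinery from the appendix which converts $D_{t,q-1}$-bounds into $D_{t,q}$-bounds by expanding $D_{t,q}=D_{t,q-1}+\hat w_q\cdot\nabla$ and applying \eqref{eq:nasty:D:wq:old}--\eqref{eq:nasty:Dt:wq:WEAK:old}. The structure of the proof mirrors the structure of the statement.

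For part (i), I start from the inductive $L^{\sfrac{3}{2}}$ and $L^\infty$ bounds \eqref{eq:pressure:inductive} on $\pi_q^q$, valid for $N+M\leq 2\Nind$ on $\supp\psi_{i,q-1}$ with material derivatives $D_{t,q-1}$. Since $\Pqxt$ commutes with $D$ and $\partial_t$ and has kernel of spatial scale $\Lambda_q^{-1}\Gamma_{q-1}^{-\sfrac12}$ and temporal scale $\Tau_{q-1}\Gamma_{q-1}^{\sfrac12}$, each additional spatial derivative past $2\Nind$ costs at most $\Lambda_q\Ga_q$ and each additional time derivative costs at most $\Tau_{q-1}^{-1}\Ga_{q-1}^{-\sfrac12}$; the kernel being $C^{10\Nfin}$ lets me push $N+M$ all the way up to $\Nfin$. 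To replace $\psi_{i,q-1}$ on the support by $\psi_{i,q}$, I cover $\supp\psi_{i,q}$ by finitely many $\psi_{i',q-1}$ factors (those with $\psi_{i',q-1}\psi_{i,q}\not\equiv 0$) and use the ordering \eqref{eq:inductive:timescales}, which gives $\tau_{q-1}\Ga_{q-1}^{-i'-25}\geq \tau_q\Ga_q^{-i}$; to replace $D_{t,q-1}$ by $D_{t,q}$ I apply the appendix lemma on upgrading material derivatives. The pointwise bound \eqref{eq:pressure:inductive:dtq:pointwise} and the comparability \eqref{ind:pi:lower} come from the fact that $\pi_q^q\geq \delta_{q+\bn}$ by \eqref{low.bdd.pi}, together with the pointwise inductive bound \eqref{eq:ind:pi:by:pi}, which implies that the oscillation of $\pi_q^q$ at scale $\Lambda_q^{-1}$ is a small fraction of $\pi_q^q$ itself, so the mollification does not move the function by more than a factor of $\Ga_q$. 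The bounds \eqref{eq:pressure:upgraded:higher} and \eqref{eq:pressure:upgraded:higher:much} for $\pi_\ell^k$ at higher values of $k$ follow by the identical argument with the parameters shifted from $q$ to $k$, using that the mollifier $\mathcal{P}_{q',x,t}$ (resp.\ $\overline{\mathcal{P}}_{q+\bn-1,x,t}$) is scaled appropriately.

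For part (ii), the upgraded pointwise bounds on $R_\ell$, $\varphi_\ell$, and $\hat w_k$ follow from the corresponding inductive pointwise estimates in \eqref{eq:inductive:pointwise} by the same mollification-plus-upgrade procedure, with one additional observation: a pointwise inequality of the form $|H|\leq \Ga^\alpha \pi_q^q$ implies $|\Pqxt H|\leq \Pqxt(\Ga^\alpha \pi_q^q)= \Ga^\alpha\pi_\ell$ after commuting the convolution past the scalar $\Ga^\alpha$. The one-power-of-$\Ga$ loss from $\Ga_q^{-8}$ to $\Ga_q^{-7}$ in \eqref{eq:inductive:pointwise:upgraded:1} absorbs the comparability loss between $\pi_q^q$ and $\pi_\ell$ encoded in \eqref{ind:pi:lower}.

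For part (iii), which is the delicate part, I use that both $\mathcal{P}_{q,x}$ and $\mathcal{P}_{q,t}$ have vanishing moments up to order $10\Nfin$. A Taylor expansion at a given space-time point $(x_0,t_0)$ against the kernel then gives
\[
|H-\Pqxt H|(x_0,t_0)\lesssim (\Lambda_q^{-1}\Ga_{q-1}^{-\sfrac12})^{10\Nfin}\|\nabla^{10\Nfin}H\|_{L^\infty(B)}+(\Tau_{q-1}\Ga_{q-1}^{\sfrac12})^{10\Nfin}\|\partial_t^{10\Nfin}H\|_{L^\infty(B)},
\]
where $B$ is the space-time support of the kernel centered at $(x_0,t_0)$. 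Converting $\partial_t^{10\Nfin}$ into $D_{t,q-1}^{10\Nfin}$ plus lower-order terms (each involving products of $\hat u_{q-1}$ which are controlled by the velocity bounds of subsection~\ref{sec:inductive:secondary:velocity}) and applying the inductive pointwise bounds gives a prefactor that grows at most like $\Tau_{q-1}^{-10\Nfin}\Lambda_q^{10\Nfin}$, multiplied by $\Gamma_{q-1}^{O(\Nfin)}$ from the expansion. Since the mollifier scales in space and time produce a compensating gain of $\Gamma_{q-1}^{-5\Nfin}$, choosing $\Nfin$ large relative to $\Nind$ (as is done in Section~\ref{sec:params}) leaves a net factor smaller than $\Tau_{k+1}^{4\Nindt}\delta_{k+3\bn}^2\lambda_k^{-10}$, which is the advertised smallness in \eqref{eq:diff:moll:higher:statement}--\eqref{eq:diff:moll:higher:statement2}. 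The material-derivative upgrade from $D_{t,k-1}$ then follows by applying Lemma~\ref{lem:upgrading.material.derivative} directly to the mollification difference, which is well-defined since both $\pi_q^k$ and $\pi_\ell^k$ are now known to satisfy the relevant $D_{t,k-1}$-bounds from parts~(i) and~(ii). The main obstacle throughout is bookkeeping the exponents of $\Gamma_q$, $\Gamma_{q-1}$, and $\Tau_{q-1}^{-1}$ so that all of them stay within the slack provided by the parameter hierarchy $\NcutSmall\ll\Nindt\ll\Nind\ll\Nfin$; once these hierarchies are invoked via the estimates in Section~\ref{sec:params}, the rest of the argument is routine.
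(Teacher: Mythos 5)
The paper does not contain a proof of this lemma; it is deferred to the companion paper (the text explicitly refers the reader to \cite{GKN23} for details). Your proposal therefore cannot be compared line-by-line, but it can be assessed on its own merits, and there is a genuine gap in part~(iii).

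Your sketch of the structure is broadly right: mollify using commutativity of $\Pqxt$ with $D$ and $\partial_t$, repartition from $\psi_{i',q-1}$ to $\psi_{i,q}$ via \eqref{eq:inductive:timescales}, then upgrade $D_{t,q-1}$ to $D_{t,q}$ using Lemma~\ref{rem:upgrade.material.derivative.end}, and in part~(ii) use the monotone-kernel observation that a pointwise bound $|H|\le \Gamma^\alpha\pi_q^q$ passes through convolution. One soft point, which should nevertheless be flagged, is that the mollifier commutes with $\partial_t$ but \emph{not} with $D_{t,q-1}=\partial_t+\hat u_{q-1}\cdot\nabla$; so going from inductive $D_{t,q-1}$ bounds to bounds on $\Pqxt\pi_q^q$ first requires converting $D_{t,q-1}$ into lossy $\partial_t$ estimates (using \eqref{eq:bobby:old}), then mollifying, then converting back, and you elide this three-step dance by invoking commutation and the appendix lemma in one breath. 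This is still a reasonable compression.

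The more serious issue is in part~(iii). You Taylor expand the mollification error to order $10\Nfin$, bounding it by $(\Lambda_q^{-1}\Gamma_{q-1}^{-\sfrac12})^{10\Nfin}\norm{\nabla^{10\Nfin}H}_{L^\infty(B)}$ and a temporal analogue. But the inductive assumptions \eqref{eq:pressure:inductive}, \eqref{eq:inductive:pointwise} control derivatives of $\pi_q^k$ and $R_q^k$ only up to order $N+M\le 2\Nind$, not $10\Nfin$ — and these are derivatives of the \emph{unmollified} quantity, so kernel smoothness cannot be used to close the gap. The correct argument must Taylor expand only to an order $K\lesssim 2\Nind$, at which point the spatial gain per derivative is $\Gamma_{q-1}^{-\sfrac12}$ and the total gain is $\Gamma_{q-1}^{-K/2}\approx\Gamma_{q-1}^{-\Nind}$; the smallness then comes from the parameter inequality \eqref{eq:Nind:darnit}, which makes $(\Gamma_{q-1}\Gamma_q^{-1})^{\Nind/10}$ dwarf $\delta_{q+5\bn}^3\Gamma_q^{-2\badshaq-3}r_q$, not from pushing the Taylor order all the way to $10\Nfin$. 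The vanishing moments up to $10\Nfin$ are a technical cushion ensuring the expansion has no parasitic low-order terms, not a signal to expand to order $10\Nfin$. Relatedly, your phrase ``the mollifier scales in space and time produce a compensating gain of $\Gamma_{q-1}^{-5\Nfin}$'' has a sign problem on the temporal side: the temporal kernel scale is $\Tau_{q-1}\Gamma_{q-1}^{\sfrac12}$, which is \emph{longer} than $\Tau_{q-1}$, so pairing it naively against the lossy $\partial_t$ cost $\Tau_{q-1}^{-1}$ produces a $\Gamma_{q-1}^{+K/2}$ loss, not a gain; the argument must either use the sharp material-derivative cost $\tau_{q-1}^{-1}\Gamma_{q-1}^i$ for the first $\NindRt$ derivatives (which \emph{is} much smaller than $\Tau_{q-1}^{-1}$) or absorb the temporal loss into the spatial gain, neither of which you make explicit.
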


\begin{lemma}[\bf Mollification current errors]\label{lem:moll:curr} There exist current errors $\ov\phi_M^{q+1}$ and $\ov\phi_M^\qbn$ and a function of time $\bmu_M^\qbn$ such that the following hold.
\begin{enumerate}[(i)]
    \item We have the equalities
    \begin{subequations}
    \begin{align}
        \div \ov\phi_M^{q+1} &= \div \left( \varphi_q^q - \varphi_\ell \right) \\
        \div \ov\phi_{M}^{\qbn} + (\bmu_M^\qbn)' &= \frac 12 \div \left( |\hat w_{\qbn}|^2 \hat w_\qbn - |w_{q+1}^2| w_{q+1} \right) + (\hat w_{q+\bn} - w_{q+1}) \cdot (\pa_t u_q + (u_q\cdot \na) u_q + \na p_q )  \notag\\
        &\qquad + \left( (\pa_t + \hat u_q \cdot \nabla) \frac 12 \tr + (\nabla \hat u_q) :  \right) \left(\hat w_{q+\bn}\otimes \hat w_{q+\bn} - w_{q+1}\otimes w_{q+1}\right) \, .
    \end{align}
    \end{subequations}
    \item For all $N+ M\leq \sfrac{\Nind}4$, the mollification errors $\ov \phi_M^{q+1}$ and $\ov \phi_M^{q+\bn}$ satisfy 
\begin{subequations}
    \begin{align}
        \norm{D^N D_{t,q}^M \ov \phi_M^{q+1}}_\infty
        &\leq \delta_{q+3\bn}^{\sfrac32} \lambda_{q+1}^N \MM{M,\Nindt, \tau_{q}^{-1},\Gamma_{q}^{-1}\Tau_{q}^{-1}} \, , \label{est:curr.mollification1}\\
        \norm{ D^N D_{t,q+\bn-1}^M \ov \phi_M^{q+\bn}}_\infty
        &\leq \Ga_\qbn^9 {\delta_{q+3\bn}^{\sfrac 32}}\Tau_\qbn^{2\Nindt} \left(\lambda_{q+\bn}\Gamma_{q+\bn}\right)^N \MM{M, \NindRt, \tau_{q+\bn-1}^{-1}, \Tau_{q+\bn-1}^{-1}\Ga_{\qbn-1} } \, . 
    \label{est:curr.mollification}
    \end{align}
\end{subequations}
\noindent In addition, the mean portion $\bmu_{M}^{\qbn}$ satisfies
\begin{align}\label{eq:sat:evening:6:32}
        \left| \frac{d^{M+1}}{dt^{M+1}}\bmu_{M}^{\qbn} \right|
        \leq (\max(1,T))^{-1} \de_{q+3\bn} \MM{M,\Nindt,\tau_q^{-1},\Tau_{q+1}^{-1}} \quad \text{for }M\leq \sfrac{\Nind}4 \, .
\end{align}
\end{enumerate}
\end{lemma}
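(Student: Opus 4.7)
For the first equality, the natural choice is $\ov\phi_M^{q+1} := \varphi_q^q - \varphi_\ell = (\Id - \Pqxt)\varphi_q^q$, which makes the divergence identity trivial and requires no mean correction. The estimate \eqref{est:curr.mollification1} then reduces to a standard mollification error bound: since $\Pqxt$ has vanishing moments up to $10\Nfin$ and its kernel is compactly supported at scales $\Lambda_q^{-1}\Gamma_{q-1}^{-\sfrac12}$ in space and $\Tau_{q-1}\Gamma_{q-1}^{\sfrac 12}$ in time, Taylor expansion gives pointwise
\[
\left|(\Id-\Pqxt)f\right| \lesssim \left(\Lambda_q^{-1}\Gamma_{q-1}^{-\sfrac 12}\right)^{10\Nfin} \left\|D^{10\Nfin} f\right\|_\infty + \left(\Tau_{q-1}\Gamma_{q-1}^{\sfrac 12}\right)^{10\Nfin} \left\|\partial_t^{10\Nfin} f\right\|_\infty,
\]
and the inductive pointwise bound \eqref{eq:inductive:pointwise:upgraded:2} applied to $\varphi_q^q$ shows that the huge exponent $10\Nfin$ easily absorbs any polynomial losses, yielding the required $\delta_{q+3\bn}^{\sfrac 32}$ smallness with large margin. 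Derivative estimates follow by commuting $\Pqxt$ with $D$ directly and handling $\Dtq$ via the commutator $[\Dtq,\Pqxt]$, which is again controlled by the $C^{10\Nfin}$ regularity of the kernel together with \eqref{eq:bob:old}.

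For the second equality, first substitute \eqref{eqn:ER} into the middle term on the right-hand side to obtain
\[
(\hat w_{q+\bn}-w_{q+1})\cdot(\partial_t u_q + (u_q\cdot\nabla)u_q + \nabla p_q)
= (\hat w_{q+\bn}-w_{q+1})\cdot \div(R_q-\pi_q\Id),
\]
so that every source on the right is a high-frequency product of a mollification difference (either $\hat w_\qbn - w_{q+1}$, $\hat w_\qbn\otimes\hat w_\qbn - w_{q+1}\otimes w_{q+1}$, or $|\hat w_\qbn|^2\hat w_\qbn - |w_{q+1}|^2 w_{q+1}$) with a factor belonging to the inductively controlled family $\{R_q-\pi_q\Id,\nabla\hat u_q, w_{q+1}\}$. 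The plan is then to apply the spatially localized inverse divergence developed in \cite{BMNV21} and \cite{NV22} to each source term separately, after subtracting a space average; the aggregated mean corrections become $(\bmu_M^\qbn)'$. Since the mollifier $\mathcal{\tilde P}_{q+\bn,x,t}$ has vanishing moments up to $10\Nfin$ at scales $\lambda_{q+\bn}^{-1}\Gamma_{q+\bn-1}^{-\sfrac 12}$ and $\Tau_{q+1}^{-1}$, the mollification differences enjoy the same kind of gain already quantified in \eqref{est.new.e.inf} for $e_{q+1}^{(\texttt{l})}$, namely a factor of order $\delta_{q+3\bn}^3 \lambda_\qbn^{-10}\Tau_{q+\bn}^{20\Nindt}$, far smaller than the target $\Tau_\qbn^{2\Nindt}\delta_{q+3\bn}^{\sfrac 32}$ in \eqref{est:curr.mollification}.

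The cubic and quadratic differences are handled via telescoping: writing, e.g.,
\[
|\hat w_\qbn|^2\hat w_\qbn - |w_{q+1}|^2 w_{q+1} = \int_0^1 \partial_s\left[|sw_{q+1}+(1-s)\hat w_\qbn|^2\bigl(sw_{q+1}+(1-s)\hat w_\qbn\bigr)\right]ds,
\]
this becomes $(w_{q+1}-\hat w_\qbn)$ times a quadratic expression in the interpolated velocities, each of which is controlled pointwise exactly as $w_{q+1}$ is by \eqref{sunday:morning:2}. Combined with the analogous identity for the quadratic difference, every source term factors as (mollification-gain)$\times$(inductively bounded quantity), and the inverse-divergence proposition produces $\ov\phi_M^\qbn$ with the claimed bound. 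The estimate \eqref{eq:sat:evening:6:32} for $\bmu_M^\qbn$ is obtained by differentiating in time once, integrating by parts, and exploiting the fact that $\Id - \mathcal{\tilde P}_{q+\bn,x,t}$ annihilates space-time constants.

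The main obstacle will be the sharp material-derivative bookkeeping. Spatial derivatives commute with both $\Pqxt$ and $\mathcal{\tilde P}_{q+\bn,x,t}$, but $D_{t,q}$ and $D_{t,\qbn-1}$ do not; the resulting commutators $[\Dtq,\mathcal{\tilde P}_{q+\bn,x,t}]$ must be iterated up to order $\sfrac{\Nind}{4}$ and each iteration introduces a factor of $\nabla\hat u_q$ or $\hat w_{q'}$ convolved against a derivative of the kernel. Tracking the resulting combinatorics, and verifying that the loss of at most $\Gamma_{\qbn-1}^{i} \tau_{\qbn-1}^{-1}$ per commutator is paid for by the enormous vanishing-moment gain, is the delicate point; once this is done, aggregation of the pointwise estimates yields the global bound in \eqref{est:curr.mollification}.
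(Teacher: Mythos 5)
Your plan for $\ov\phi_M^{q+1}$ is essentially right: set $\ov\phi_M^{q+1}:=\varphi_q^q-\varphi_\ell=(\Id-\Pqxt)\varphi_q^q$ so the divergence identity is automatic and no mean correction is needed, then gain smallness from the vanishing moments of the kernel. The telescoping of the cubic and quadratic differences for $\ov\phi_M^\qbn$, the substitution of the Euler--Reynolds equation for $\partial_t u_q+(u_q\cdot\nabla)u_q+\nabla p_q$, and the use of the localized inverse divergence with a mean correction $(\bmu_M^\qbn)'$ are all the right moves and match the intended structure.

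However, there is a real gap in how you count derivatives. You invoke the full $10\Nfin$ vanishing moments and assert that this ``easily absorbs any polynomial losses,'' and you propose to apply the inductive pointwise bound on $\varphi_q^q$ at order $10\Nfin$. But the inductive bound \eqref{eq:ind:current:by:pi} is only available for $N+M\leq\sfrac{\Nind}{4}$, which is vastly smaller than $10\Nfin$; for derivative orders beyond that you have no control on $\varphi_q^q$ at all, so you cannot Taylor-expand to order $10\Nfin$. The correct argument uses only a moderate, $q$-independent number of moments (of order the parameter $N_g\leq N_c\leq\sfrac{\Nind}{40}$ referenced in the parameter section) and relies on the fact that each derivative trades a factor $\Lambda_q$ from $\varphi_q^q$ against the mollification scale $\Lambda_q^{-1}\Gamma_{q-1}^{-\sfrac12}$, yielding a net gain $\Gamma_{q-1}^{-\sfrac12}$ per derivative; accumulating these gains over $\sim\Nind$ derivatives and invoking the parameter inequality \eqref{eq:Nind:darnit} is what produces the $\delta_{q+3\bn}^{\sfrac32}$ smallness. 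Without identifying this balance explicitly, one cannot conclude the claimed bound. A related but smaller slip: the gain you cite from \eqref{est.new.e.inf} concerns the velocity-potential remainder $e_{q+1}$, which is a different object from the mollification error $\hat w_\qbn-w_{q+1}=(\Pqxt^{q+\bn}-\Id)w_{q+1}$; the latter must be bounded by the same moment-versus-derivative-count argument, not by quoting \eqref{est.new.e.inf}. Finally, the $(\max(1,T))^{-1}$ prefactor in \eqref{eq:sat:evening:6:32} does not come for free from ``annihilating space-time constants'' — it requires the choice of $a_*$ depending on $T$ in item~\eqref{i:choice:of:a}, a point your sketch omits.
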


\section{New velocity increment dodging}\label{sec:dodging}
In our wavelet-inspired scheme scheme, the building block flows are the intermittent Mikado bundles presented in subsection~\ref{subsec:result:par1:1}. The goal of this section is to specify the placement of these bundles.  Specifically, we treat both the ``bundling pipe'' $\rhob^\diamond_\pxi$ and the highly intermittent pipe $\WW^I_{\pxi,\diamond}= \curl \UU^I_{\pxi,\diamond}$ appearing in \eqref{wqplusoneonediamond}.\index{intermittent Mikado bundle} The placement of each intermittent bundle will be chosen to ensure disjointness from various other intermittent Mikado bundles.  In subsection~\ref{ss:straight:pipe}, we consider a single prototypical term from \eqref{wqplusoneonediamond} at the fixed time slice at which the flow map $\Phiik$ is the identity.  This ensures that $\rhob_\pxi^\diamond$ and $\WW_{\pxi,\diamond}^I$ are supported in perfectly straight, periodized cylinders. We then assume that the support of $a_{\pxi,\diamond}$ is inhabited by various \emph{deformed} bundles and show that one can choose the supports of $\rhob_\pxi^\diamond$ and $\WW_{\pxi,\diamond}^I$ to be disjoint from these bundles.  We prove these placement lemmas in abstraction, assuming that $e_3$ is the vector tangent to the pipe we are trying to place and that none of the existing deformed pipes from previous generations have tangent vectors in a neighborhood of $e_3$.  The modifications required to replace $e_3$ with a general vector direction are purely cosmetic.  We note that the required assumptions on the tangent vectors is ensured by geometric lemmas which gives $\bn$ different choices for the set of rational vector directions; see the errata for~\cite{GKN23}.  Then in subsection~\ref{ss:stress:oscillation:2}, we can apply our abstract lemmas to choose the placements for each term in \eqref{wqplusoneonediamond} inductively on the various indices present in the sum.  It is in this subsection that we verify Hypotheses~\ref{hyp:dodging1}--\ref{hyp:dodging2}.

\subsection{Straight pipe dodging}\label{ss:straight:pipe}

Consider a rectangular prism $\Omega_0$, which in practice will be related to the support of the cutoffs $a_{\pxi,\diamond}$ and $\zetab_\xi^{I,\diamond}$ from a single term of~\eqref{wqplusoneonediamond}.  Assume that $\Omega_0$ is inhabited by deformed pipes of thickness $\la_{q+1}^{-1}, \dots \la_{q+\bn-1}^{-1}$, corresponding to $\hat w_{q+1}, \dots, \hat w_{\qbn-1}$, and $\la_\qbn^{-1}$, corresponding to terms from \eqref{wqplusoneonediamond} whose placements have already been specified. If the prism $\Omega_0$ is not too large, and the new bundle we are trying to place is sufficiently sparse, then we can place the new bundle to dodge the existing bundles. Furthermore, the pipes in the new bundle will be placed at distance no smaller than $\la_{q+i}^{-1}\Gamma_{q+i}$ away from a given deformed pipe of thickness $\la_{q+i}^{-1}$. We call this additional property {\it effective dodging}\index{effective dodging}.  To see the importance of this property, consider the portion of the oscillation error given by
$$  \tP_{\la_{q+i}} \div (w_{q+1,R} \otimes w_{q+1,R}) \, , $$
where $\tP_{\la_{q+i}}$ is a synthetic Littlewood-Paley projector defined in~\ref{sec:LP}. By the properties of this operator, the support of this error term will be contained in a $\la_{q+i}^{-1}$ neighborhood of the support of $w_{q+1,R}$.  By effective dodging, this error term will have spatial support which is disjoint from pipes of thickness $\la_{q+1}^{-1},\dots, \la_{q+i-1}^{-1}$.  We will repeatedly use effective dodging in conjunction with the synthetic Littlewood-Paley projectors to ensure good spatial support properties of error terms and pressure increments. 

Our first dodging proposition uses the bundling pipes to dodge pipes with thickness \emph{at least} $\lhalf^{-1}$ and \emph{at most} $\lambda_{q+1}^{-1}$, corresponding to $\hat w_{q+\half}$ and $\hat w_{q+1}$, respectively. We record and prove a statement for $\xi=e_3$ and leave the case for general direction vectors to the reader.  Note importantly that in our actual iteration, the choice of vector directions depends on $q \textnormal{ mod } \bn$, in order to ensure that vector directions from different generations never coincide; this orthogonality is necessary for the technique we use. The argument proceeds by first isolating a single rectangular prism $\Omega_0$ of dimensions $\la_{q+1}^{-1}\Gamma_q^5 \times \la_{q+1}^{-1}\Gamma_q^5\times \lambda_{q}^{-1}\Gamma_{q}^{-8}$ (corresponding to the support of a mildly anisotropic cutoff from Lemma~\ref{lem:checkerboard:estimates}), and choosing the support of the bundling pipe to effectively dodge all given pipes of thicknesses $\la_{q+1}^{-1}, \cdots \la_{q+\floor{\sfrac{\bn}2}}^{-1}$ in the prism. 

\begin{lemma}[\bf Using bundling pipes to dodge very old, thick pipes]\label{lem:coarsepipe}
 Let $\Omega_0$ be a rectangular prism of dimensions $\lambda_{q+1}^{-1}\Gamma_q^{5}\times \lambda_{q+1}^{-1}\Gamma_q^{5} \times \lambda_q^{-1}\Gamma_q^{-8}$. Suppose that there exists a $q$-independent constant $\const_P$ such that at most $\const_P$ segments of deformed pipe (in the sense of Definition~\ref{def:sunday:sunday}) with thickness $\la_{q'+\bn}^{-1}$ and spacing $({\lambda_{q'+\half}\Gamma_{q'}})^{-1}$ for some $q-\bn < q' \leq q-\half$ have non-empty intersection with $\Omega_0$. Suppose furthermore there exist $q$-independent sets $\Xi_1,\Xi_1', \dots, \Xi_{\bn}, \Xi_{\bn}' \subset \mathbb{Q}^3\cap \mathbb{S}^2$ and a constant $\const_{\rm angle}$, which is independent of $q$ but may depend on $\bn$, such that for all curves $\ell$ around which one of the $\const_P$ segments of deformed pipe is concentrated, the tangent vector to the curve $\ell$ belongs to a $\Gamma_{0}^{-1}$-neighborhood of a vector $\xi \in \Xi_{\overline{q}' \, \textnormal{mod} \, \bn} \cup \Xi'_{\overline{q}' \, \textnormal{mod} \, \bn}$, and 
 $$ B_{\const_{\rm angle}}(e_3) \bigcap \bigcup_{q'=q+1}^{q+\sfrac{\bn}{2}} \Xi_{q' \textnormal{ mod } \bn} \cup \Xi'_{q' \textnormal{ mod } \bn} =\emptyset \, ; $$
 that is, the ball of radius $\const_{\rm angle}$ around $e_3$ has empty intersection with the sets $\Xi_{q' \textnormal{ mod } \bn}, \Xi'_{q' \textnormal{ mod } \bn}$. Let $E_0\subset \Omega_0$ denote the support of these deformed segments inside $\Omega_0$. Then for $\diamond=\varphi,R$, there exists $k\in\{1,\dots,\Gamma_q^{6}\}$ and a bundling pipe flow $\rhob_{e_3,k,\diamond}$ defined as in Proposition \ref{prop:bundling} such that
    \begin{align}\label{dodging:rhob}
        B\left( \supp \boldsymbol \chib_{e_3,k,\diamond} , \lambda_{q+1}^{-1} \Gamma_q^2 \right) \cap E_0 = \emptyset \quad \text{i.e.,} \quad         B\left( E_0, \lambda_{q+1}^{-1} \Gamma_q^2 \right) \cap  \supp \boldsymbol \chib_{e_3,k,\diamond} = \emptyset \, .
    \end{align}
\end{lemma}
\begin{proof}
    We first divide the face $[0,\lambda_{q+1}^{-1}\Gamma_q^5]^2$ of the prism into the grid of squares of sidelength $\approx\lambda_{q+1}^{-1}\Gamma_q$, and 
    we will find a set of squares in which we can place a new bundling pipe flow $\rhob_{e_3,k,\diamond}$. Since the set of squares will be placed $(\sfrac{\mathbb{T}}{\la_{q+1}\Gamma_q^{-4}})^2$-periodically, we have from \eqref{i:bundling:2} that
    \begin{align*}
        \text{(the possible number of placement of a set of squares)} = \left(\frac{\text{spacing}}{\text{thickness}}\right)^2
        = \left(\frac{\la_{q+1}^{-1} \Gamma_q^4}{\la_{q+1}^{-1}\Gamma_q} \right)^2 = \Gamma_q^6 \, .
    \end{align*}
    By assumption there exist at most $\const_P$ number of deformed pipe segments in the prism. When we enlarge these segments by a factor of $\lambda_{q+1}^{-1}\Gamma_q^2$ and project the enlarged neighborhood onto the face $[0,\lambda_{q+1}^{-1}\Gamma_q^{5}]^2$, we claim that each projection will be contained in a $\approx \lambda_{q+1}^{-1}\Gamma_q^2$-neighborhood of a curve of length at most $\approx\lambda_{q+1}^{-1}\Gamma_q^5$. This claim will follow from showing that the curve $\ell$ around which the segment of deformed pipe is concentrated intersects the rectangular prism $\Omega_0$ in a curve of length at most $\approx\lambda_{q+1}^{-1}\Gamma_q^5$, and then applying~(4.5) and~(4.6). In order to measure the length of $\ell \cap \Omega_0$, we use that the tangent vector to $\ell$ belongs to $\Gamma_{0}^{-1}$ neighborhood of a vector $\xi$, which itself satisfies $|e_3-\xi| \geq \const_{\rm angle}$, or equivalently, $\langle e_3, \xi \rangle < 1-\delta$ for some $\delta=\delta(\const_{\rm angle})$.  Assuming that $\Gamma_{0}^{-1}$ is sufficiently small, depending on $\const_{\rm angle}$, it is then impossible for $\ell \cap \Omega_0$ to have length longer than a constant multiplied by the width of $\Omega_0$ \emph{in the $e_1$ and $e_2$ directions}, which is $\lambda_{q+1}^{-1}\Gamma_q^5$.  This is a geometric consequence of the fact that the tangent vector to $\ell$ and $e_3$ have inner product bounded from above away from $1$ by a small by quantified amount, which depends on $\const_{\rm angle}$.

It then follows that\footnote{A fully rigorous version of this estimate would utilize a standard covering argument which is predicated on the geometric constraints imposed by Lemma~\ref{lem:axis:control}, or even Definition~\ref{def:sunday:sunday}; we however content ourselves with a slightly heuristic version and refer the reader to \cite[Proposition~4.8]{BMNV21} for further details.}
    \begin{align*}
        &\text{(the number of grid squares occupied by given enlarged segments)}\\
        &\qquad \qquad \les \textnormal{number of segments} \times \frac{\textnormal{area occupied by an enlarged segment}}{\textnormal{area of a grid square}} \\
        &\qquad \qquad \lesssim \const_P
        \times \frac{\lambda_{q+1}^{-2}\Gamma_q^7}{\Gamma_q^2\lambda_{q+1}^{-2}}\\
        &\qquad \qquad =\const_P \Gamma_q^5 \, ,
    \end{align*}
    which is less than $\Gamma_q^6$ for sufficiently large $\la_0$. Therefore, from the pigeonhole principle, there exists a set of squares in which we can place the pipe $\rhob_{e_3,k,\diamond}$ satisfying \eqref{dodging:rhob}. 
\end{proof}

We now use the intermittent pipe flows from Propositions~\ref{prop:pipeconstruction} or \ref{prop:pipe.flow.current} to dodge pipes with thickness \emph{at least} $\lambda_{q+\bn}^{-1}$ and \emph{at most} than $\lambda_{q+\half+1}^{-1}$, corresponding to $\hat w_{\qbn}$ and $\hat w_{q+\half+1}$.  This dodging is carried out on a prism of dimensions roughly $\la_{q+\half}^{-1}\times \la_{q+\half}^{-1}\times \la_q^{-1}$ (corresponding to the support of a product of a mildly anisotropic cutoff from Lemma~\ref{lem:checkerboard:estimates} and a strongly anistropic checkerboard cutoff from Lemma~\ref{lem:finer:checkerboard:estimates}). Combining this result with the previous lemma, we will have successfully dodged pipes of thicknesses in between $\lambda_{q+1}^{-1}$ and $\lambda_{q+\bn}^{-1}$. As before, we present the statement for $\xi=e_3$ and omit further details.

\begin{lemma}[\bf Using very intermittent pipes to dodge newer, less thick pipes]\label{lem:finepipe}
Let $\Omega_1$ be a rectangular prism of dimensions $\la_{q+\half}^{-1} \times \la_{q+\half}^{-1}\times \lambda_{q}^{-1}\Gamma_{q}^{-8}$ with the long side in the $e_3$ direction. Suppose that there exists a $q$-independent constant $\const_P$ such that for each $q''$ with $q-\sfrac \bn 2 < q'' \leq q$ and any convex subset $\Omega'\subset\Omega_1$ with $\textnormal{diam}\left(\Omega'\right) \lesssim \lambda_{{q''}+\half}^{-1}\Gamma_{{q''}}^{-1}$, at most $\const_P\Gamma_{{q''}}$ segments of deformed pipes of thickness $\la_{{q''}+\bn}^{-1}$ and spacing $\lambda_{q''+ \sfrac \bn 2}\Gamma_{q''}^{-1}$ have non-empty intersection with $\Omega'$.  Suppose furthermore there exist $q$-independent sets $\Xi_1,\Xi_1', \dots, \Xi_{\bn}, \Xi_{\bn}' \subset \mathbb{Q}^3\cap \mathbb{S}^2$ and a constant $\const_{\rm angle}$, which is independent of $q$ but may depend on $\bn$, such that for all curves $\ell$ around which one of the $\const_P$ segments of deformed pipe is concentrated, the tangent vector to the curve $\ell$ belongs to a $\Gamma_{0}^{-1}$-neighborhood of a vector $\xi \in \Xi_{\overline{q}' \, \textnormal{mod} \, \bn} \cup \Xi'_{\overline{q}' \, \textnormal{mod} \, \bn}$, and 
 $$ B_{\const_{\rm angle}}(e_3) \bigcap \bigcup_{q'=q+1}^{q+\sfrac{\bn}{2}} \Xi_{q' \textnormal{ mod } \bn} \cup \Xi'_{q' \textnormal{ mod } \bn} =\emptyset \, ; $$
 that is, the ball of radius $\const_{\rm angle}$ around $e_3$ has empty intersection with the sets $\Xi_{q' \textnormal{ mod } \bn}, \Xi'_{q' \textnormal{ mod } \bn}$. For fixed ${q''}$, let $E_{{q''}}$ denote the support of such segments inside $\Omega_1$.  Then for either $\diamond=R$ or $\diamond=\varphi$, there exists $k$ and a corresponding intermittent pipe flow $\WW_{e_3, \diamond}:=\mathcal{W}^k_{\xi,\lambda_{q+\bn},\sfrac{\lambda_{q+\half}\Gamma_q}{\lambda_{q+\bn}}}$ constructed as in Propositions~\ref{prop:pipeconstruction} or \ref{prop:pipe.flow.current} such that for all $q-\half < {q''} \leq q$,
    \begin{align*}
        B \left(\supp \WW_{e_3, \diamond}, \Gamma_{{q''}+\bn}^2\la_{{q''}+\bn}^{-1} \right) \cap E_{{q''}} = \emptyset \quad \text{i.e.,} \quad  B \left( E_{{q''}} , \Gamma_{{q''}+\bn}^2\la_{{q''}+\bn}^{-1} \right) \cap \supp \WW_{e_3, \diamond} = \emptyset \, .
    \end{align*}
\end{lemma}
\begin{proof}
As in the previous lemma, since we want to place a new pipe which enjoys {\it effective dodging} with previously placed deformed pipes, instead of considering the previously placed pipes themselves, we consider a thickened neighborhood of them. More precisely, for a deformed pipe of thickness $2\la_{q+i}^{-1}$, we consider instead a neighborhood of it of thickness $\Gamma_{q+i}^{2}\la_{q+i}^{-1}$ and call these new objects `thickened pipes'.  Then, it is enough to place a new pipe that dodges these thickened pipes, so that a new pipe effectively dodges all previously placed deformed pipes.  
    
We divide the face of $\Omega_1$ into a grid of squares of sidelength $\la_{q+\bn}^{-1}$. Since a new pipe will be placed $(\sfrac{\T}{\la_{q+\half}\Gamma_q})^3$-periodically, we have from Proposition~\ref{prop:pipeconstruction} or \ref{prop:pipe.flow.current} that
\begin{align}\label{card.possible}
    \text{(the possible number of placement of a set of squares)}= \left(\frac{\text{spacing}}{\text{thickness}}\right)^2
    = \left(\frac{\la_{q+\bn}}{\la_{q+\half}\Gamma_q} \right)^2 \, .
\end{align}

Now, we count the number of grid squares occupied by given enlarged segments and compare it to this number. From the assumption that there exists $\const_P$ which controls the density of thickened, deformed pipe segments of thickness $\lambda_{{q''}+\bn}^{-1}$ that can intersect a ball $\Omega'$ of volume $\approx (\lambda_{{q''}+\half}\Gamma_{{q''}})^{-3}$, we have that the total number of thickened pipe segments that can intersect $\Omega_1$ is at most
\begin{align*}
    \const_P \Ga_{{q''}} \times \frac{\textnormal{length of $\Omega_1$}}{\lambda_{{q''}+\half}^{-1}\Gamma_{{q''}}^{-1}} \times \frac{\textnormal{(width of $\Omega_1$})^2}{\min\left(\lambda_{{q''}+\half}^{-1}\Gamma_{{q''}}^{-1},\textnormal{width of $\Omega_1$}\right)^2} \leq \const_P \Ga_{{q''}} \times \frac{\la_{{q''}+\half}\Ga_{{q''}}^3}{\Gamma_q^8\la_{q}} \, .
\end{align*}
When we project all these thickened segments onto the face of $\Omega_1$, each projection will be contained in a $\approx\lambda_{{q''}+\bn}^{-1}\Gamma_{{q''}+\bn}^{2}$-neighborhood of a curve of length at most $\approx\la_{q+\half}^{-1}$; the control on the length of this projected curve is a consequence of~(4.5) and~(4.6) and control on the length of the original curve intersected with $\Omega_1$.   Control of the length of the original curve intersected with $\Omega_1$ \emph{in the case $q'' \neq q$} is a consequence of the fact that $e_3$ is quantitatively orthogonal to the tangent vector of the curve, so that the length of the curve intersected with $\Omega_1$ is proportional to the width of $\Omega_1$ \emph{in the $e_1$ and $e_2$ directions}, which is at most $\lambda_{q+\sfrac \bn 2}^{-1}$; note importantly that the diameter of $\Omega'$ is irrelevant here.  In the case $q'' = q$, the diameter of $\Omega'$ is $\lambda_{q+\sfrac \bn 2}\Gamma_{q}^{-1}$, so that upon projection, the same bound holds for the curve around which a deformed pipe segment contained in $\Omega'$ is concentrated. Therefore, the number of grid squares occupied by each enlarged pipe projection is 
$$ \frac{\textnormal{area occupied by an enlarged segment}}{\textnormal{area of a grid square}} \approx \frac{\lhalf^{-1}\lambda_{{q''}+\bn}^{-1}\Gamma^2_{{q''}+\bn}}{\lambda_{q+\bn}^{-2}} \, . $$
Thus the total number of grid squares covered by the union of all projections is 
\begin{align}\label{card.{q''}}
\sim \sum_{{q''}=q-\half +1}^{q}\const_P \Gamma_{{q''}}
\times
\frac{\la_{{q''}+\half}\Ga_{{q''}}^3}{\lambda_{{q''}+\bn}\Gamma_{{q''}+\bn}^{-{2}}}   \frac{\la_{q+\bn}^2\la_{q+\half}^{-1}}{\Gamma_q^8\la_{q}} \, ,
\end{align}
or the product of the two numbers computed above and summed over ${q''}$.  This number will be less than the the number in \eqref{card.possible} if 
\begin{align*}
    \bn \const_p \Gamma_{q+\bn}^2 \Gamma_q^{-2} \frac{\lambda_{{q''}+\half}\lambda_{q+\half}}{\lambda_q \lambda_{{q''}+\bn}} \leq 1
\end{align*}
for $q-\half +1\leq {q''} \leq q$, which is precisely \eqref{eq:dodging:parameterz}.
\end{proof}

\subsection{Dodging for new velocity increment}
\label{ss:stress:oscillation:2}

Before getting to the main result of this subsection in Lemma~\ref{lem:dodging}, we set up a few preliminaries and notations.  First, recall that in the previous subsection, we ignored the fact that the objects in \eqref{wqplusoneonediamond} contain flow maps $\Phiik$ which depend on time. In order to control the geometry of pipes which are deformed by these flow maps on a velocity field on a local Lipschitz timescale, we recall \cite[Lemma~3.7]{NV22}.

\begin{lemma}[\bf Control on Axes, Support, and Spacing]
\label{lem:axis:control}
Consider a convex neighborhood  of space $\Omega\subset \mathbb{T}^3$. Let $v$ be an incompressible velocity field, and define the flow $X(x,t)$ and inverse $\Phi(x,t)=X^{-1}(x,t)$, which solves $\partial_t \Phi + v\cdot\nabla \Phi =0$ with $\Phi|_{t=t_0} = x$. Define $\Omega(t):=\{ x\in\mathbb{T}^3 : \Phi(x,t) \in \Omega \} = X(\Omega,t)$. For an arbitrary $C>0$, let $\tau>0$ be a timescale parameter and $\Gamma > 3$ a large multiplicative prefactor such that the vector field $v$ satisfies the Lipschitz bound $\sup_{t\in [t_0 - \tau,t_0+\tau]} \norm{\nabla v(\cdot,t) }_{L^\infty(\Omega(t))} \lesssim \tau^{-1} \Gamma^{-2}$. Let $\mathcal{W}^k_{\xi,\lambda,r}:\mathbb{T}^3\rightarrow\mathbb{R}^3$ be a set of straight pipe flows constructed as in Proposition~\ref{prop:pipeconstruction} and Proposition~\ref{prop:pipe.flow.current}
which are $(\sfrac{\mathbb{T}}{\lambda r})^3$-periodic and concentrated in the $\pi(4\la n_*)^{-1}$-neighborhoods of axes $\{A_i\}_{i\in\mathcal{I}}$ oriented in the vector direction $\xi$ for $\xi\in\Xi,\Xi'$ 
Then $\mathcal{W}:=\mathcal{W}^k_{\xi,\lambda,r}(\Phi(x,t)):\Omega(t)\times[t_0-\tau,t_0+\tau]$ satisfies the following conditions:
\begin{enumerate}[(1)]
	\item  We have the inequality
	\begin{equation}\label{eq:diameter:inequality}
	\textnormal{diam}(\Omega(t)) \leq \left(1+\Gamma^{-1}\right)\textnormal{diam}(\Omega) \, .
	\end{equation}
\item Let $x$ and $y$ belong to $A_i\cap\Omega$ for some $i$, where the axes $A_i$ are defined above.  Denote the length of the axis $A_i(t):=X(A_i\cap\Omega,t)$ in between $X(x,t)$ and $X(y,t)$ by $L(x,y,t)$.  Then
    \begin{equation}\label{e:axis:length}
    L(x,y,t) \leq \left(1+\Gamma^{-1}\right)\left| x-y \right| \, .
    \end{equation}
    \item The support of $\mathcal{W}$ is contained in a $\displaystyle\left(1+\Gamma^{-1}\right)\twopi (4n_\ast\lambda)^{-1}$-neighborhood of the set
    \begin{equation}\label{e:axis:union}
       \bigcup_{i} A_i(t) \, .
    \end{equation}
\item $\mathcal{W}$ is ``approximately periodic'' in the sense that for distinct axes $A_i,A_j$ with $i\neq j$, we have
\begin{equation}\label{e:axis:periodicity:1}
    \left(1-\Gamma^{-1}\right) \dist(A_i\cap\Omega,A_j\cap\Omega)
    \leq \dist\left(A_i(t),A_j(t)\right)
    \leq \left(1+\Gamma^{-1}\right) \dist(A_i\cap\Omega,A_j\cap\Omega) \, .
\end{equation}
\end{enumerate}
\end{lemma}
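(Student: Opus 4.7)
My plan is to extract all four conclusions from Gronwall-type estimates applied to the forward flow $X(\cdot,t)$ and its spatial gradient $\nabla X(\cdot,t)$, driven by the smallness $\int_{t_0}^{t} \|\nabla v(\cdot,s)\|_{L^\infty(\Omega(s))}\,ds \le \tau \cdot \tau^{-1}\Gamma^{-2} = \Gamma^{-2}$ available for $|t-t_0|\le\tau$. Since $\Gamma \ge 3$, the elementary inequalities $e^{\Gamma^{-2}} \le 1+\Gamma^{-1}$ and $e^{-\Gamma^{-2}} \ge 1 - \Gamma^{-1}$ will convert the exponential Gronwall factors into the clean multiplicative bounds claimed in \eqref{eq:diameter:inequality}, \eqref{e:axis:length}, and \eqref{e:axis:periodicity:1}.

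The first step handles \eqref{eq:diameter:inequality} and \eqref{e:axis:periodicity:1} simultaneously. For any pair of points $x,y$, the difference $\delta(t) := X(x,t)-X(y,t)$ solves $\dot\delta = v(X(x,t),t) - v(X(y,t),t)$, and the Lipschitz hypothesis combined with the mean value theorem gives the scalar differential inequality $|\dot\delta| \le \tau^{-1}\Gamma^{-2}|\delta|$. Gronwall then yields the two-sided bound $(1-\Gamma^{-1})|x-y| \le |X(x,t)-X(y,t)| \le (1+\Gamma^{-1})|x-y|$ for all admissible $t$. Taking the supremum over $x,y\in\Omega$ proves~\eqref{eq:diameter:inequality}; applying it to representative pairs of points on axes $A_i, A_j$ and passing to the infimum gives~\eqref{e:axis:periodicity:1}. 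This step also records a global Lipschitz estimate on $X(\cdot,t)$ that I will re-use for~\eqref{e:axis:union}.

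For~\eqref{e:axis:length}, I parametrize the straight axis between $x$ and $y$ by arc length as $s\mapsto\gamma(s)$ with tangent $\xi$, so that $L(x,y,t) = \int_0^{|x-y|}|\nabla X(\gamma(s),t)\,\xi|\,ds$. Since $\nabla X$ satisfies the matrix ODE $\partial_t \nabla X = (\nabla v)(X,t)\,\nabla X$ with $\nabla X|_{t=t_0} = \mathrm{Id}$, Gronwall gives $\|\nabla X(\cdot,t)\|_{L^\infty(\Omega)} \le e^{\Gamma^{-2}} \le 1+\Gamma^{-1}$, and the integral bound follows. Finally, for \eqref{e:axis:union}: any $x \in \supp \mathcal{W}(\cdot,t)$ satisfies $\Phi(x,t) \in B(A_i, \pi(4\lambda n_*)^{-1})$ for some $i$, so there exists $z\in A_i$ with $|\Phi(x,t)-z|\le \pi(4\lambda n_*)^{-1}$; applying the Lipschitz bound on $X(\cdot,t)$ obtained in the first step yields $|x-X(z,t)| = |X(\Phi(x,t),t)-X(z,t)|\le (1+\Gamma^{-1})\pi(4\lambda n_*)^{-1}$, and $X(z,t)\in A_i(t)$ by definition. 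I expect no real obstacle; the only subtlety is ensuring that the Lipschitz estimate for $v$ is controlled along every trajectory visited by the flow, but this is precisely the hypothesis evaluated on $\Omega(s)$, and a short bootstrap (using \eqref{eq:diameter:inequality} itself at an intermediate time $s$) confirms that the transported region does not escape the domain on which the hypothesis is posed.
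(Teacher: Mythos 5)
Your overall strategy---Gronwall on the flow and its gradient, converting the exponential factor $e^{\Gamma^{-2}}$ into $1+\Gamma^{-1}$ via $\Gamma>3$---is the standard proof and matches the approach behind the cited result (the paper simply recalls \cite[Lemma~3.7]{NV22} without reproving it). But there is a genuine gap in your first step. You claim that the Lipschitz hypothesis and the mean value theorem give $|\dot\delta|\le\tau^{-1}\Gamma^{-2}|\delta|$ for $\delta(t)=X(x,t)-X(y,t)$. Applying the mean value theorem to $v(\cdot,t)$ between the endpoints $X(x,t),X(y,t)\in\Omega(t)$ requires the segment joining them to lie where $\nabla v$ is controlled; but $\Omega(t)=X(\Omega,t)$ is generally \emph{not} convex even though $\Omega$ is, so the segment may leave $\Omega(t)$. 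Your closing remark about a ``short bootstrap'' misdiagnoses the subtlety---it is not that trajectories escape $\Omega(t)$ (they cannot, by construction), but that chords between points of $\Omega(t)$ need not stay inside it.

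The repair is the one you essentially have in hand from your second step, applied in the right order. First run Gronwall on the matrix ODE $\partial_t\nabla X=(\nabla v\circ X)\nabla X$, $\nabla X|_{t=t_0}=\mathrm{Id}$: since this only invokes $\nabla v$ \emph{along trajectories} (which lie in $\Omega(t)$), there is no convexity issue, and you obtain $\|\nabla X-\mathrm{Id}\|_{L^\infty(\Omega)}\le\int_{t_0}^t\|\nabla v\|\|\nabla X\|\,ds\le\Gamma^{-2}e^{\Gamma^{-2}}\le\Gamma^{-1}$. Then, for $x,y\in\Omega$, write $X(x,t)-X(y,t)=\int_0^1\nabla X(\gamma(s),t)(x-y)\,ds$ where $\gamma(s)=(1-s)y+sx\subset\Omega$ by convexity of $\Omega$ (not $\Omega(t)$); the bound $\|\nabla X-\mathrm{Id}\|\le\Gamma^{-1}$ yields the two-sided estimate $(1-\Gamma^{-1})|x-y|\le|X(x,t)-X(y,t)|\le(1+\Gamma^{-1})|x-y|$, and all four conclusions follow as you outlined. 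Note also that the lower bound in \eqref{e:axis:periodicity:1} genuinely needs the $\|\nabla X-\mathrm{Id}\|\le\Gamma^{-1}$ form (or an estimate on $(\nabla X)^{-1}$), not merely $\|\nabla X\|\le e^{\Gamma^{-2}}$; in your original pointwise-MVT formulation this came for free from the scalar Gronwall inequality, but once you route through $\nabla X$ you must record it explicitly.
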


A consequence of Lemma~\ref{lem:axis:control} is that a set of $(\sfrac{\T}{\lambda r})^3$-periodic intermittent pipe flows which are flowed by a locally Lipschitz vector field on the Lipschitz timescale can be decomposed into ``segments of deformed pipe.''  Furthermore, any neighborhood of diameter $\approx (\lambda r)^{-1}$ contains at most a finite number of such segments of deformed pipe.

\begin{definition}[\bf Segments of deformed pipes]\label{def:sunday:sunday}\index{segments of deformed pipes}
A single ``segment of deformed pipe with thickness $\la^{-1}$ and spacing $(\la r)^{-1}$'' is defined as a $3\lambda^{-1}$ neighborhood of a continuously differentiable curve of length at most $2(\la r)^{-1}$.
\end{definition}

We now recall and set a few notations which will be used in Lemma~\ref{lem:dodging}. We first recall from \eqref{eq:space:time:balls} the notations $B(\Omega,\lambda^{-1})$ and $B(\Omega,\lambda^{-1},\tau)$ for space and space-time balls, respectively, around a space-time set $\Omega$.  Using these notations and the definition of $\hat w_\qbn$ from Definiton~\ref{def:wqbn}, we have that
\begin{equation}
    \supp \hat w_\qbn \subseteq B\left( \supp w_{q+1}, \sfrac 12 \lambda_\qbn^{-1} , \sfrac 12 \Tau_q \right) \, . \label{eq:dodging:useful:support}
\end{equation}
Next, we recall the formula in \eqref{eq:WW:explicit} for an intermittent Mikado flow and the notation in~\eqref{int:pipe:bundle:short} for a intermittent bundle, and we set\index{$\varrho_{\pxi,\diamond}^I$}
\begin{align}\label{eq:pipez:thursday}
    \varrho_{(\xi),\diamond}^{I} := \xi \cdot \WW_{(\xi),\diamond}^I \, .
\end{align}
Next, in slight conflict with \eqref{eq:space:time:balls}, we shall also use the notation
\begin{align}\label{eq:ballz:useful}
    B\left(\supp \varrho_{\pxi,\diamond}^I,\lambda^{-1}\right) := \left\{ x\in\T^3 \, : \, \exists y \in \supp \varrho_{\pxi,\diamond}^I \, , |x-y| \leq \lambda^{-1} \right\}
\end{align}
throughout this section, despite the fact that $\supp\varrho_{\pxi,\diamond}^I$ is not a set in space-time, but merely a set in space. We shall also use the same notation but with $\varrho_{\pxi,\diamond}^I$ replaced by $\rhob_\pxi^\diamond$. Finally, for any smooth set $\Omega\subseteq \mathbb{T}^3$ and any flow map $\Phi$ defined in Definition~\ref{def:transport:maps}, we use the notation 
\begin{equation}\label{eq:flowing:sets}
\Omega \circ \Phi := \left\{(y,t): t\in \R, \Phi(y,t)\in \Omega\right\} = \supp \left(\mathbf{1}_{\Omega}\circ \Phi\right) \, .
\end{equation}
With this definition, $\Omega\circ\Phi$ is the smooth space-time set whose characteristic function is annihilated by $\Dtq$.

We are now ready to verify the main dodging lemma for $w_{q+1}$, which as noted in \eqref{eq:dodging:useful:support} implies similar dodging properties for $\hat w_\qbn$. 

\begin{lemma}[\bf Dodging and preventing self-intersections for $w_{q+1}$ and $\hat w_\qbn$]\label{lem:dodging} 
We construct $w_{q+1}$ so that the following hold.
\begin{enumerate}[(i)]
    \item\label{item:dodging:more:oldies} Let {$q+1\leq q' \leq q+ \sfrac \bn 2$} and fix indices $\diamond,i,j,k,\xi,\vecl$, which we abbreviate by $(\pxi,\diamond)$, for a coefficient function $a_{\pxi,\diamond}$ (c.f.~\eqref{eq:a:xi:phi:def}, \eqref{eq:a:xi:def}).  Then
    \begin{equation}
        B\left( \supp \hat w_{q'}, \frac 12 {\lambda_{q+1}^{-1}\Ga_q^2}, {2 \Tau_q} \right) \cap \supp \left( \tilde \chi_{i,k,q} \zeta_{q,\diamond,i,k,\xi,\vecl} \, \rhob_{\pxi}^{\diamond}\circ \Phi_{(i,k)} \right) = \emptyset \, . \label{eq:oooooldies}
    \end{equation}
    \item\label{item:dodging:1} Let $q'$ satisfy $q+1\leq q' \leq q+\bn-1$, fix indices $(\pxi,\diamond,I)$, and assume that $\Phiik$ is the identity at time $t_{\pxi}$, cf. Definition~\ref{def:transport:maps}. Then we have that
\begin{align}
    B \left(\supp \hat w_{q'}, \frac 14 \lambda_{q'}^{-1} \Gamma_{q'}^2, 2\Tau_{q} \right)  \cap  \supp &\left( \tilde \chi_{i,k,q} \zeta_{q,\diamond,i,k,\xi,\vecl} \left(\rhob_{(\xi)}^\diamond \zetab_{\xi}^{I,\diamond} \right)\circ \Phiik \right) \notag \\
    &\cap
    B\left( \supp \varrho^I_{(\xi),\diamond} , \frac 12 {\lambda_{q'}^{-1} \Gamma_{q'}^2}\right)\circ \Phiik
    = \emptyset \, . \label{eq:dodging:oldies:prep}
    \end{align}
As a consequence we have
\begin{equation}\label{eq:dodging:oldies}
        B\left( \supp \hat w_{q'}, \frac 14 {\lambda_{q'}^{-1} \Gamma_{q'}^2}, 2\Tau_q \right) \cap  \supp w_{q+1} = \emptyset \, .
    \end{equation}
    \item\label{item:dodging:2} Consider the set of indices $\{(\pxi,\diamond,I)\}$, whose elements we use to index the correctors constructed in \eqref{wqplusoneonediamond}, and let $\ttl, \ov \ttl \in \{p,c\}$ denote either principal or divergence corrector parts. Then if $(\ov\diamond,(\ov \xi), \ov I) \neq (\diamond,(\xi),I)$, we have that for any $\ttl, \ov \ttl$,
    \begin{equation}\label{eq:dodging:newbies}
        \supp w_{\pxi,\diamond}^{(\ttl),I} \cap \supp w_{(\ov \xi),\ov \diamond}^{(\ov \ttl), \ov I} = \emptyset \, .
    \end{equation}
    \item\label{item:dodging:zero}  $\hat w_\qbn$ satisfies Hypothesis~\ref{hyp:dodging2} with $q$ replaced by $q+1$.
\end{enumerate}
\end{lemma}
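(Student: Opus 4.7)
The overall strategy is to apply the placement lemmas from subsection~\ref{ss:straight:pipe} to each cumulative cutoff appearing in~\eqref{wqplusoneonediamond}, working at the ``identity time slice'' $t_{\pxi}$ of the flow map $\Phi_{(i,k)}$. At this time slice, the support of $\zeta_{q,\diamond,i,k,\xi,\vecl}$ lives in a rectangular prism of dimensions specified by item~\eqref{item:checkeeee} of Lemma~\ref{lem:checkerboard:estimates}; on that prism we may treat the bundling pipe $\rhob_{\pxi}^{\diamond}$ and the high-frequency pipe $\WW_{\pxi,\diamond}^I$ as straight, $\xi$-aligned, periodized cylinders. The placement will first be carried out on this time slice, and then propagated to the full space-time support $\supp \tilde\chi_{i,k,q} \cap \supp \psi_{i,q}$ using Corollary~\ref{cor:deformation} together with Lemma~\ref{lem:axis:control}, which translates spatial disjointness at time $t_{\pxi}$ into effective disjointness of the deformed pipes on the entire local Lipschitz timescale $\tau_q \Gamma_q^{-i-2}$.

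To prove item~\eqref{item:dodging:more:oldies}, I would fix $(\pxi,\diamond)$ and pull back the spatial support of $\hat w_{q'}$ by $\Phi_{(i,k)}^{-1}$ over the time interval where $\tilde\chi_{i,k,q}\psi_{i,q}$ is non-vanishing. By Hypothesis~\ref{hyp:dodging2} (applied at step $q$ with $\bar q'=q'$, $\bar q''=q$, and $\Omega$ the cross-section of the mildly anisotropic prism), this pullback decomposes into at most $\const_D$ segments of deformed pipe with thickness $\lambda_{q'}^{-1}$ and spacing $(\lambda_{q'-\sfrac{\bn}{2}}\Gamma_{q'-\bn})^{-1}$. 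The prism has dimensions compatible with the hypotheses of Lemma~\ref{lem:coarsepipe}, so we may apply that lemma to select a placement index $k$ for $\rhob_\pxi^\diamond$ achieving effective dodging of all such segments. Propagating via Lemma~\ref{lem:axis:control} with the slightly enlarged tilde time cutoff gives \eqref{eq:oooooldies}.

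For item~\eqref{item:dodging:1}, once the bundling pipe has been placed by item~\eqref{item:dodging:more:oldies}, we further restrict to a single strongly anisotropic prism ($\mathcal{S}_I$ intersected with the bundling pipe). This sub-prism has dimensions $\approx \lambda_{q+\half}^{-1}\times \lambda_{q+\half}^{-1}\times \lambda_q^{-1}\Gamma_q^{-8}$, matching the hypotheses of Lemma~\ref{lem:finepipe}. Again invoking Hypothesis~\ref{hyp:dodging2}, together with the inductive choice \eqref{eq:diam:def} of $d(\bar q',\bar q'')$ (which controls density on sub-balls of the appropriate size), we verify the density assumption on $\const_P \Gamma_{q''}$ segments. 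Lemma~\ref{lem:finepipe} then selects a placement index $m'$ for $\WW_{\pxi,\diamond}^I$ achieving effective dodging of all existing pipes of thickness between $\lambda_{q+\half+1}^{-1}$ and $\lambda_{q+\bn-1}^{-1}$; Lemma~\ref{lem:axis:control} propagates. Combining with item~\eqref{item:dodging:more:oldies} yields \eqref{eq:dodging:oldies:prep} and the consequence \eqref{eq:dodging:oldies}.

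Item~\eqref{item:dodging:2} is handled by an ordering argument: at each stage, the newly placed object $\WW_{\pxi,\diamond}^I$ or $\rhob_{\pxi}^\diamond$ must in addition dodge all objects $\WW_{(\ov\xi),\ov\diamond}^{\ov I}$ or $\rhob_{(\ov\xi)}^{\ov\diamond}$ with $(\ov\pxi,\ov\diamond,\ov I)$ already placed and whose support could intersect the current prism. By the summability property \eqref{eq:desert:cowboy:sum}, the total number of such previously placed objects intersecting any fixed prism is bounded by a $q$-independent constant; we therefore add this constant to the budget in Lemmas~\ref{lem:coarsepipe} and~\ref{lem:finepipe}, which still leaves enough free placement slots once $\lambda_0$ is sufficiently large. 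The mildly and strongly anisotropic checkerboard properties \eqref{eq:checkerboard:partition} and \eqref{eq:checkerboard:partition:check} automatically give disjointness when the indices $(i,k,\vecl,I)$ already force the cutoff supports to be disjoint, so the pigeonhole is only needed among the remaining overlapping indices $(\diamond,\xi,j)$.

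Finally, for item~\eqref{item:dodging:zero}, I would show that Hypothesis~\ref{hyp:dodging2} propagates to $q+1$ by choosing the same constant $\const_D$. Given any convex $\Omega$ of diameter $d(\bar q',\bar q'')$ for indices $q+1\leq \bar q''<\bar q'\leq q+\bn$ at the new stage, the pullback $\Omega(t)$ under the flow of $\hat u_{\bar q''}$ has comparable diameter by \eqref{eq:diameter:inequality}. For $\bar q'\leq q+\bn-1$, the support of $\hat w_{\bar q'}$ inside $\Omega(t)$ is inherited from the previous step and the density bound of $\const_D$ continues to hold. For $\bar q'=q+\bn$, the support of $\hat w_{q+\bn}$ is the union of the newly placed $\WW_{\pxi,\diamond}^I\circ\Phi_{(i,k)}$'s; by the straight-pipe placement on each checkerboard prism together with the bound \eqref{lem.cardinality} and the number of intermittent pipes allowed by Proposition~\ref{prop:pipeconstruction}, the number of segments of deformed pipe with thickness $\lambda_{q+\bn}^{-1}$ and spacing $(\lambda_{q+\half}\Gamma_q)^{-1}$ intersecting $\Omega(t)$ is bounded by an absolute constant, which fixes $\const_D$.

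The main obstacle is the last item: verifying \emph{simultaneously} that the new pipes dodge the old ones \emph{and} that the union of new pipes itself satisfies the density bound with the \emph{same} $\const_D$. This requires the Goldilocks balance \eqref{eq:dodging:parameterz} so that Lemma~\ref{lem:finepipe} produces a genuinely sparse placement within each checkerboard cell, and it requires that the placement indices $k,m'$ can be chosen independently for different $(\pxi,\diamond,I)$ without losing this sparsity — which is precisely what the summability properties~\eqref{eq:desert:cowboy:sum} and the strong anisotropy of $\zetab_\xi^I$ are designed to guarantee.
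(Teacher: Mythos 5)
Your overall strategy — work at the identity time slice $t_{\pxi}$, place $\rhob_\pxi^\diamond$ via Lemma~\ref{lem:coarsepipe} on the mildly anisotropic prism and $\WW_{\pxi,\diamond}^I$ via Lemma~\ref{lem:finepipe} on the strongly anisotropic sub-prism, then propagate disjointness in time via Lemma~\ref{lem:axis:control} and the deformation bounds — is exactly the paper's approach, and your treatment of items~\eqref{item:dodging:more:oldies} and \eqref{item:dodging:1} is essentially right. You also correctly identify the role of Hypothesis~\ref{hyp:dodging2} at step $q$ in supplying the density input $\const_P = \bn\const_D$ for the old pipes.

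The genuine gap is in item~\eqref{item:dodging:2}. You appeal to the summability estimate \eqref{eq:desert:cowboy:sum} to bound the number of already-placed objects intersecting a prism and "add this constant to the budget" in Lemma~\ref{lem:finepipe}. But this does not break the chicken-and-egg problem inherent in the inductive placement: Lemma~\ref{lem:finepipe} requires a density bound on \emph{all} pipes currently present in the prism \emph{before} the next pipe is placed, including the partial sum $w_{z,I}$ of pipes already placed at step $q+1$. The estimate \eqref{eq:desert:cowboy:sum} controls the pointwise overlap of the cutoff supports $\eta_{i,j,k,\xi,\vecl,\diamond}\rhob_\pxi^\diamond\zetab_\xi^I$, but it does not, by itself, translate into a bound of the form "at most $\const_P$ deformed pipe segments of thickness $\lambda_{q+\bn}^{-1}$ and spacing $(\lambda_{q+\half}\Gamma_q)^{-1}$ in any convex set of that periodicity scale" for the \emph{partial} sums, which is precisely what Lemma~\ref{lem:finepipe} consumes. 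The paper closes this loop by introducing a separate auxiliary inductive invariant (Hypothesis~\ref{hyp:dodging22}): one fixes an ordering of the index tuples $(z,I)$ compatible with \eqref{eq:tricky:ordering}, sets up the density hypothesis on the partial sum $w_{z,I}$ with a uniform constant $\mathcal{N}_{\Omega,z,I}\mathcal{C}_{\rm pipe}$, verifies it vacuously at the base case, and then re-verifies it after each placement. Your proposal never states this invariant, and without it the claim that "once $\lambda_0$ is large enough, enough free slots remain" is not justified.

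A secondary weakness is in item~\eqref{item:dodging:zero}. You assert that the density bound $\const_D$ "continues to hold" by citing \eqref{lem.cardinality} and periodicity, but \eqref{lem.cardinality} counts the \emph{total} number of pairs $(\vecl,I)$ with non-trivial support — a number that grows with $q$ — and so does not directly give a $q$-independent $\const_D$. The paper's Step 3 instead carries out a careful local counting argument: first bounding the number of admissible indices $(i,k)$ via the diameter bound and \eqref{eq:inductive:timescales}, then $j,\xi,\diamond$ by the $q$-independent caps $\imax,\jmax$, and finally $(\vecl,I)$ by comparing the diameter of $\Omega(t)$ to the derivative costs $\lambda_{q+1}\Gamma_q^{-5}$ and $\lambda_{q+\half}$ of the checkerboards. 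It then constructs the set $L(q+\bn,\bar q'',\Omega,t_0)$ explicitly from the resulting finite index set and verifies both \eqref{eq:ind:dodging2} and \eqref{eq:concentrazion} — including the subtle point, via \eqref{replaceing:flowmap}, that the characteristic functions of the enlarged pipe supports are annihilated by $D_{t,\bar q''}$, not merely by $D_{t,q}$, because of the dodging already established. Your sketch omits this flow-map replacement argument, which is essential for the first conclusion of \eqref{eq:ind:dodging2}.
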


\begin{remark}[\bf Verifying Hypothesis~\ref{hyp:dodging1}]\label{rem:checking:hyp:dodging:1}
We claim that \eqref{eq:dodging:oldies} and \eqref{eq:dodging:useful:support} imply that Hypothesis~\ref{hyp:dodging1} holds with $q+1$ replacing all instances of $q$.  To check this, we must show that \eqref{eq:ind:dodging} holds for $q',q''\leq \qbn$ and $0<|q'-q''|\leq \bn-1$.  By induction on $q$ and the symmetry of $q''$ and $q'$, the only case we must check is the case that $q+\bn=q''$ and $0<\qbn-q'\leq\bn-1$. But it is a simple exercise in set theory to check that for $q+1\leq q'\leq \qbn-1$, \eqref{eq:dodging:oldies} is equivalent to $\supp \hat w_{q'} \cap B(\supp w_{q+1},\sfrac 14 \lambda_{q'}^{-1}\Ga_{q'}^2,2\Tau_q) = \emptyset$. Then using \eqref{eq:dodging:useful:support} and the inequalities $\lambda_{q'}^{-1}\Ga_{q'}^2 \geq \la_\qbn^{-1}$, $b<2 \implies \Ga_{q'+1}\ll \Ga_{q'}^2$ implies that \eqref{eq:ind:dodging} holds.
\end{remark}

\begin{proof}[Proof of Lemma~\ref{lem:dodging}]
We split the proof up into steps, in which we first carry out some preliminary set-up before verifying item~\eqref{item:dodging:more:oldies}, items~\eqref{item:dodging:1}--\eqref{item:dodging:2}, and finally item~\eqref{item:dodging:zero}.
\smallskip

\noindent\texttt{Step 0: Ordering of cutoff functions and set-up.} Consider all coefficient functions $a_{\xi,i,j,k,\vecl,\diamond}$ utilized at stage ${q+1}$, cf.~\eqref{eq:a:xi:def} and \eqref{eq:a:xi:phi:def}. Using natural numbers $z\in\mathbb{N}$ as indices, we choose an ordering of the tuples $(i,j,k,\xi,\vecl,\diamond)$ such that for any choice of $(i,j,k,\xi,\vecl,\diamond)$ and $(\istar,\jstar,\kstar,\xistar,\vecl^*,\diamond^*)$, we have
\begin{equation}\label{eq:tricky:ordering}
 i<\istar \implies (i,j,k,\xi,\vecl,\diamond) <_{\textnormal{ordering}} (\istar,\jstar,\kstar,\vecl^*,\diamond^*) \, ,
 \end{equation}
where the implied inequality holds for the natural numbers assigned to each tuple in our chosen ordering. This automatically provides an ordering for the coefficient functions $a_{\xi,i,j,k,\vecl,\diamond}$ and associated pipe bundles $\BB_{\pxi,\diamond}\circ\Phiik$. We will place pipe bundles inductively according to this ordering so that all the conclusions in the statement of Lemma~\ref{lem:dodging} hold. \eqref{eq:tricky:ordering} ensures that timescales are decreasing with respect to this ordering and mitigates the fact that the number of possible overlaps between $\psi_{i,q}\chi_{i,k,q}$ and $\psi_{i+1,q}\chi_{i+1,k',q}$ could be of order $\Gamma_q$ (see \ref{eq:chi:support}).\footnote{This is not strictly necessary-- one can always adjust the choice of parameters to accommodate a spare $\Gamma_q$.}  To lighten the notation, we will abbreviate the newly ordered coefficient and cutoff functions and associated intermittent pipe bundles (cf. \eqref{eq:a:xi:phi:def},  \eqref{eq:a:xi:def}, and \eqref{int:pipe:bundle:short}) as 
$$ a_z \, , \psi_z \, , \omega_z \, , \chi_z \, , \zeta_z \, , \qquad  \quad (\BB\circ\Phi)_z = \rhob_z \circ \Phi_z \sum_I ( \zetab_z \WW_z^I ) \circ \Phi_z \, , $$
respectively, where $z\in\mathbb{N}$ corresponds to the ordering. Now for fixed $z$ and $a_z$, we will place $\rhob_z$ and $\WW_z^I$ with two goals in mind.  First, we must dodge the velocity increments $\hat w_{q'}$ for $q+1 \leq q' \leq q+ \half$ and $\hat w_{q''}$ for $q + \half + 1 \leq q'' \leq q+\bn-1$. Second, we must dodge all pipe bundles $(\BB\circ \Phi)_{\hat z}$ with coefficient functions $a_{\hat{z}}$ such that $\hat z < z$ in the aforementioned ordering.
\smallskip

\noindent\texttt{Step 1: Proof of item~\eqref{item:dodging:more:oldies}.} We will apply Lemma~\ref{lem:coarsepipe} with the following choices. We recall that at the time $t_z$ at which $\Phi_z$ is the identity, the cutoff function $\eta_z$ contains a checkerboard cutoff function $\zeta_{z}$ which from Lemma~\ref{lem:checkerboard:estimates}, item~\eqref{item:checkeeee} is contained in a rectangular prism of dimensions no larger than $\sfrac 34 \lambda_q^{-1}\Gamma_q^{-8}$ in the direction of $\xi_z$, and $\sfrac 34 \Gamma_{q}^5(\lambda_{q+1})^{-1}$ in the directions perpendicular to $\xi_z$. Thus we set 
\begin{equation}\notag
\Omega_0 = \supp \zeta_z \cap \{t=t_z\} \, .
\end{equation}
Notice that $\textnormal{diam}(\Omega_0)\leq \lambda_q^{-1}\Gamma_q^{-8}$, which satisfies \eqref{eq:diam:def} for $\bar q', \bar q''$ chosen as $\bar q''=q$ and $\bar q'=q'$ as in \eqref{item:dodging:more:oldies} so that $q+1\leq q' = \bar q' \leq q+\half$. Then by applying Hypothesis~\ref{hyp:dodging2} at level $q$ with $\bar q' = q'$, $\bar q''=q$, $\Omega=\Omega_0$ as defined above, $t_0=t_z$, and $\Phi_{\bar q''} = \Phi_z$, we have that for each $q+1\leq q' \leq q+\half$, there exists a set $L(q',q,\Omega_0,t_z)$ such that \eqref{eq:ind:dodging2} and \eqref{eq:concentrazion} hold. Now we set
$$ E_0 := \bigcup_{q'=q+1}^{q+\half} L(q',q,\Omega_0,t_z) \cap \{t=t_z\} \, , \qquad \mathcal{C}_P = \mathcal{C}_D \bn \, . $$
We now appeal to the conclusion of Lemma~\ref{lem:coarsepipe} to choose a placement for $\rhob_z$ such that
\begin{equation}\label{eq:desert:dodging:0}
B\left( \supp \rhob_z , \lambda_{q+1}^{-1}\Gamma_q^2 \right) \cap E_0 = \emptyset \, .
\end{equation}

An immediate consequence of \eqref{eq:desert:dodging:0}, Hypothesis~\ref{hyp:dodging2}, and \eqref{e:Phi} is that for $t$ such that $|t-t_z|\leq \tau_q\Ga_q^{-i_z+2}$,
\begin{equation}\label{eq:desert:dodging:1}
 \Dtq \left( \mathbf{1}_{B\left(\supp \rhob_z, \la_{q+1}^{-1}\Ga_q^2\right) \circ \Phi_z} \mathbf{1}_{L(q',q,\Omega_0,t_z)} \right)(t,x) \equiv 0 \, 
\end{equation}
in distribution sense.\footnote{Here, we used that for any subset $A\subset \mathbb{T}^3$, 
$\iint \mathbf{1}_{A\circ\Phi} D_{t,q}\phi\, dxdt =
\iint \mathbf{1}_{A}\circ\Phi D_{t,q}\phi \, dxdt
=\lim_{k\to \infty}
\iint \eta_k\circ\Phi D_{t,q}\phi \, dxdt =0
$
holds for any $\phi\in C_c^\infty(\mathbb{T}^3\times\mathbb{R})$, where $\eta_k$ is a sequence of smooth functions converging to $\mathbf{1}_{A}$ in $L^1(\mathbb{T}^3)$.} This in turn implies that in the same range of $t$, 
\begin{equation}
    B\left(\supp \rhob_z, \la_{q+1}^{-1}\Ga_q^2\right) \circ \Phi_z(t) \cap L(q',q,\Omega_0,t_z) \cap \{t=t_z\} = \emptyset \, . \label{eq:friday:dodging:2}
\end{equation}
Next, we claim that \eqref{eq:friday:dodging:2} implies that 
\begin{equation}\label{eq:friday:dodging:1}
    B\left(\supp \rhob_z \circ \Phi_z \cap (\T^3\times \{t\}), \sfrac 34 \la_{q+1}^{-1}\Ga_q^2\right) \cap L(q',q,\Omega_0,t_z) = \emptyset \quad \textnormal{for $|t-t_z|\leq \tau_q\Ga_q^{-i_z+2}$} \, ,
\end{equation}
which we now prove. Indeed, this follows from the fact that on the Lipschitz timescale $\tau_q\Ga_q^{-i_z+2}$, spatial distances can change by at most a multiplicative factor of $(1\pm\Ga_q^{-1})$ due to \eqref{eq:nasty:D:vq:old} (see also Lemma~\ref{lem:axis:control}, which contains similar assertions). Finally, we claim that
\begin{align}
     &B\left( \bigcup_{|t-t_z|\leq \frac 12 \tau_q\Ga_q^{-i_z+2} } \supp \rhob_z \circ \Phi_z \cap (\T^3\times\{t\}) , \frac 12 \la_{q+1}^{-1}\Ga_q^2, 2\Tau_q \right) \notag\\
     &\qquad \qquad \subseteq \bigcup_{|t-t_z|\leq \tau_q\Ga_q^{-i_z+2} }  B\left( \supp \rhob_z \circ \Phi_z \cap (\T^3\times\{t\}) , \sfrac 34 \la_{q+1}^{-1}\Ga_q^2 \right) \, . \label{eq:desert:dodging:2}
\end{align}
Assuming that \eqref{eq:desert:dodging:2} holds, we have then from \eqref{eq:friday:dodging:1}, \eqref{eq:chi:tilde:support}, and Hypothesis~\ref{hyp:dodging2} that
$$  B\left( \supp \left(\tilde\chi_z \zeta_z \rhob_z \circ \Phi_z\right) , \sfrac 12 \la_{q+1}^{-1}\Ga_q^2, 2\Tau_q \right) \cap \supp \hat w_{q'} = \emptyset \, , $$
which is equivalent to \eqref{eq:oooooldies} after using the same sort of set-theoretic reasoning as in Remark~\ref{rem:checking:hyp:dodging:1}. To prove \eqref{eq:desert:dodging:2}, suppose that $(\td x, \td t)$ belongs to the set on the left-hand side of the inclusion in \eqref{eq:desert:dodging:2}.  Then by definition, there exists $(t_0,x_0)$ such that $|\td t - t_0| \leq 2\Tau_q$, $|\td x - x_0|\leq \sfrac 12 \la_{q+1}^{-1}\Ga_q^2$, $|t_0-t_z|\leq \sfrac 12 \tau_q\Ga_q^{-i_z+2}$, and $(x_0,t_0)\in\supp \rhob_z\circ\Phi_z \cap (\T^3\times \{t_0\})$. Then from \eqref{eq:imax:old} and \eqref{v:global:par:ineq}, we have that $|\td t-t_z|\leq \tau_q\Ga_q^{-i_z+2}$. So we need to find $x'$ such that $(x',\td t)\in \supp \rhob_z\circ\Phi_z\cap (\T^3\times\{\td t\})$ and $|x'-\td x| <\sfrac 34 \la_{q+1}^{-1}\Ga_q^2$. Now from \eqref{e:Phi}, \eqref{eq:bobby:old}, Corollary~\ref{cor:deformation}, and \eqref{v:global:par:ineq}, we have that
\begin{align*}
    \left\| {\Phi_z}(\td t, \cdot) - \Phi_z(t_0,\cdot) \right\|_{L^\infty(\T^3)} &\les \Tau_q \left\| \partial_t \Phi_z \right\|_{L^\infty\left( \T^3\times  (t_z-\tau_q\Ga_q^{-i_z+2}, t_z+\tau_q\Ga_q^{-i_z+2})\right)} \\
    &\les \Tau_q \left\| \hat u_q \right\|_{\infty} \left\| \nabla \Phi_z \right\|_{L^\infty\left( \T^3\times  (t_z-\tau_q\Ga_q^{-i_z+2}, t_z+\tau_q\Ga_q^{-i_z+2})\right)} \\
    &\les \Ga_q^{-1} \la_{q+1}^{-1} \, .
\end{align*}
Therefore, although it may not be the case that $(x_0,\td t) \in \supp \rhob_z \circ \Phi_z \cap (\T^3\times \{\td t\})$, there must exist $x'$ such that $|x'-x_0|\leq \Ga_q^{-\sfrac 12}\la_{q+1}^{-1}$ and $(x',\td t) \in \supp \rhob_z \circ \Phi_z \cap (\T^3\times \{\td t\})$.\footnote{We have that $(x_0,t_0)\in \supp \rhob_z\circ \Phi_z$ if and only if $\Phi_z(x_0,t_0)\in \supp \rhob_z$. Using the bound on the difference between $\Phi_z(\td t)$ and $\Phi_z(t_0)$, we may say that although $\Phi_z(x_0,\td t)$ is not necessarily in the support of $\rhob_z$, it is very close.}  Since $|x'-\td x|\leq |x'-x_0|+|x_0-\td x| \leq \Ga_q^{-\sfrac 12}\la_{q+1}^{-1} + \sfrac 12 \la_{q+1}\Ga_q^2 < \sfrac 34\la_{q+1}^{-1}\Ga_q^2$, we have thus concluded the proof of \eqref{eq:desert:dodging:2}.

\medskip

\noindent\texttt{Step 2: Proofs of items~\eqref{item:dodging:1} and \eqref{item:dodging:2}.}  In order to proceed with this portion of the proof, we assume inductively that a version of Hypothesis~\ref{hyp:dodging2} holds for the portion of $w_{q+1}$ already constructed. More precisely, we extend the ordering on $z \in \mathbb{N}$ from \texttt{Step 0} to ordered pairs $(z,I) \in \mathbb{N}^2$ such that $\hat z < z \implies (\hat z, \hat I) < (z,I)$ for any $\hat I, I$ (that is, we fix $z$ and finish placing an entire bundle for all its various values of $I$ before moving to different $\hat z$).  We thus assume the following inductive hypothesis.
\begin{hypothesis}[\bf Density of already placed pipe bundles in $w_{q+1}$]\label{hyp:dodging22}
There exists a geometric constant $\mathcal{C}_{\rm pipe}$ such that the following holds. Fix $z$ and set
\begin{align*}
    \mathfrak{s}_{\hat z, \hat I}(t) &:= \supp \left[\chi_{\hat i,\hat k,q} \zeta_{(\hat \xi)} \left(\rhob^{\diamond}_{(\hat \xi)}\zetab^{\hat I,\diamond}_{\hat \xi}\right) \circ \Phi_{(\hat i, \hat k)}\right] \cap  B\left(\varrho^{\hat I}_{(\hat \xi),\diamond} , \frac 12 {\lambda_{q+\bn}^{-1} \Gamma_{q}^2}\right) \circ \Phi_{(\hat i, \hat k)}\cap (\T^3 \times \{t\})
    \, , \\
    w_{z,I} &:= \sum_{(\hat z,\hat I) < (z,I)} {a_{\hat z} (\rhob_{\hat z} \zetab_{\hat z}^{\hat I} \WW^{\hat I}_{\hat z})\circ\Phi^{\hat I}_{\hat z}} \,, \qquad \mathfrak{S}_{z, I}(t) 
    := \bigcup_{(\hat z,\hat I) < (z,I)} \mathfrak{s}_{\hat z, \hat I}(t) \, .
\end{align*}
Let $i_z$ be the value of $i$ corresponding to $z$ and $a_z$, and let $t_0$ be any time and $\Omega \subset \mathbb{T}^3$ be a convex set with diameter at most $\left(\la_{q+\half}\Gamma_{q}\right)^{-1}$ such that $\Omega \times \{t_0\} \cap \supp \psi_{i_z,q} \neq \emptyset$. Let $\Phi$ solve $\Dtq\Phi=0$ with initial data $\Phi|_{t=t_0}=\Id$. We set $\Omega(t) = \Phi(t)^{-1}(\Omega)$ and 
\begin{align*}
    \mathcal{N}_{\Omega,z,I} = \# \left\{(\hat z,\hat I) < (z,I) \, : \, \exists t\in [t_0 - \tau_q \Ga_q^{-i_z-2}, t_0 + \tau_q \Ga_q^{-i_z-2}] \textnormal{ with } \mathfrak{s}_{\hat z, \hat I}(t) \cap \Omega(t) \neq \emptyset \right\} \, .
\end{align*}
Then there exists an $\Omega$-dependent set $L_{(z,I)}\subseteq\Omega$ consisting of at most $\mathcal{N}_{\Omega,z,I} \mathcal{C}_{\rm pipe}$ segments of deformed pipe segments with thickness $\la_{q+\bn}^{-1}$ such that for all $t\in [t_0 - \tau_q \Gamma_q^{-i_z-2}, t_0 + \tau_q \Gamma_q^{-i_z-2}]$,
\begin{align}\label{eq:ind:dodging22}
    \left[ \supp w_{z,I}(\cdot, t) \cap \Omega(t) \right] \subseteq \left[ \mathfrak{S}_{z,I}(t) \cap \Omega(t) \right] \subseteq \left[ \Phi(t)^{-1}(L_{(z,I)}) \cap \Omega(t) \right] \, .
\end{align}
\end{hypothesis}
One should understand this hypothesis as asserting that at all steps in the construction of $w_{q+1}$, there is no more than a finite number of pipe segments of thickness $\la_\qbn^{-1}$ in any set of diameter proportional to the size of a periodic cell. 
Indeed, $\mathcal{N}_{\Omega,z,I}$ is bounded independently of $z$, $I$, and hence $q$. From the finite maximal cardinality of the indices $(\hat i,\hat  j, \hat \xi, \hat  \diamond)$ and decreasing time scale with respect to the ordering \eqref{eq:tricky:ordering}, the indices $(\hat i,\hat  j, \hat k, \hat \xi, \hat  \diamond)$ takes a finite number, independent of $z$, $I$, and $q$. Fix these indices and we now count the remaining indices $(\hat \vecl, \hat I)$. Since $\Phiik$ and $\Phi$ are advected by the same velocity field $\hat u_q$, recalling Definition \ref{eq:checkerboard:definition}, it is enough to count the indices to have $\mathfrak{s}_{\hat z, \hat I}(t) \cap \Omega(t) \neq \emptyset$ at some fixed time $\bar t\in \supp \chi_{\hat i, \hat k, q}\cap [t_0 - \tau_q \Gamma_q^{-i_z-2}, t_0 + \tau_q \Gamma_q^{-i_z-2}]$. From the diameter bound on $\Omega$ and Lemma \ref{lem:axis:control}, we have $\text{diam}(\Omega(\bar t))\leq 2(\la_{q+\half}\Ga_q)^{-1}$, while the spatial derivative costs of $\zeta_{(\hat\xi)}$ and $\zetab_{\hat \xi}^{\hat I, \hat \diamond}\circ\Phi_{(\hat i, \hat k)}$ are $\la_{q+1}\Ga_q^{-5}$ and $\la_{q+\half}$, respectively, from \eqref{eq:checkerboard:derivatives} and \eqref{eq:checkerboard:derivatives:check}. Since the inverse of the derivative costs are much 
greater than $2(\la_{q+\half}\Ga_q)^{-1}$, only for finite number of indices $\hat\vecl$ and $\hat I$, the intersection $\mathfrak{s}_{\hat z, \hat I}(\bar t) \cap \Omega(\bar t) \neq \emptyset$ occurs, where the number is independent of $z$, $I$, and $q$. Therefore, we can set an upper bound of $\mathcal{N}_{\Omega,z,I}$ as a geometric constant $\const$. Lastly, we note that Hypothesis~\ref{hyp:dodging22} is vacuously true in the base case where $(z,I)$ is the smallest element in our ordering.


We will now justify the application of Lemma~\ref{lem:finepipe}. We recall from \eqref{eq:a:xi:phi:def} and \eqref{eq:a:xi:def} that at the time $t_z$ at which $\Phi_z$ is the identity, $a_z \rhob_{z} \zetab_{ z}^{ I} $ contains both a strongly anisotropic checkerboard cutoff function $\zetab_{z}^{I}$ and a mildly anistropic checkerboard cutoff function $\zeta_z$. The support of the product of these cutoff functions is contained in a rectangular prism of dimensions no larger than $\sfrac 34\lambda_q^{-1}\Gamma_q^{-8}$ in the direction of $\xi_z$ from item~\eqref{item:checkeeee} of Lemma~\ref{lem:checkerboard:estimates}, and $\lambda_{q+\floor{\bn/2}}^{-1}$ in the directions perpendicular to $\xi_z$ from Definition~\ref{def:etab}.  Thus we can contain the support of $a_z\rhob_z\zetab_z^I$ at time $t_z$ inside a prism of dimensions $\sfrac 34 \lambda_+q^{-1}\Ga_q^{-8}$ and $\lambda_{q+\floor{\bn/2}}^{-1}$, and so we set 
$$ \Omega_1=\supp \zeta_z \zetab_{z}^{I} \cap \{t=t_z\} \, . $$
By applying Hypothesis~\ref{hyp:dodging2} at level $q$ with $\bar q' = q''$ for each $ q+\half \leq q'' \leq \qbn -1$, $\bar q'' = q$, $\Omega'\subset\Omega_1$ any convex subset of diameter at most $(\lambda_{q''-\bn+\half}\Gamma_{q''-\bn})^{-1}$ (which satisfies \eqref{eq:diam:def}), $t_0=t_z$, and $\Phi_{\bar q''}$ as defined in Hypothesis~\ref{hyp:dodging2}, we have that there exists a set $L(q'',q,\Omega',t_z)$ such that \eqref{eq:ind:dodging2} and \eqref{eq:concentrazion} hold. We therefore see that the density condition of Lemma~\ref{lem:finepipe} is verified with 
$$ \mathcal{C}_P= \mathcal{C}_{\rm pipe} \const + \bn \const_D $$
from Hypothesis~\ref{hyp:dodging2}, which contributes the second term counting the number of old pipe segments belonging to $\hat w_{q''}$ for $q + \half + 1 \leq q'' \leq q+\bn-1$, and our inductive Hypothesis~\ref{hyp:dodging22}, which contributes the first term counting the number of current pipe segments belong to $w_{z,I}$.  We define $E_{q''}$ (including the endpoint case $q''=q+\bn$ which contains already placed pipes from $w_{z,I}$) as in Lemma~\ref{lem:finepipe} so that it contains the support of $\hat w_{q''}$ inside $\Omega_1$ if $q''<q+\bn$, and $w_{z,I}$ inside $\Omega_1$ in the endpoint case.

Now appealing to the conclusion of Lemma~\ref{lem:finepipe}, we may choose the support of $\WW^I_z=(\WW\circ\Phi)^I_z|_{t=t_z}$ so that for $q+\half+1\leq q''\leq q+\bn-1$,
\begin{align*}
    \suppp_x \, (\WW\circ\Phi)^I_z|_{t=t_z} \cap  B\left( \supp \hat w_{q''}(\cdot, t_z), \lambda_{q''}\Ga_{q''}^2 \right) \cap \Omega = \emptyset  \, , \\
    \suppp_x \, (\WW\circ\Phi)^I_z|_{t=t_z} \cap  B\left( {\mathfrak{S}_{z, I}(t_z)} , \lambda_{q+\bn}^{-1}\Gamma_{q+\bn}^2 \right) \cap \Omega = \emptyset \, . 
\end{align*}
Reasoning as in the final portion of \texttt{Step 1} and assuming for the moment that Hypothesis~\ref{hyp:dodging22} can be propagated throughout the construction of $w_{q+1}$, we have that the first assertion implies \eqref{eq:dodging:oldies:prep} and therefore also the weaker assertion \eqref{eq:dodging:oldies}. In addition, the second assertion verifies \eqref{eq:dodging:newbies} at $t=t_z$, and \eqref{eq:dodging:newbies} at all times follows from a similar type of argument (in fact simpler since no expansion by $2\Tau_q$ in time is required), but with Hypothesis~\ref{hyp:dodging22} replacing Hypothesis~\ref{hyp:dodging2}. 

We now verify that Hypothesis~\ref{hyp:dodging22} has been preserved by the placement of $\WW_{z}^I$. Note that we have placed $\left(\la_{q+\half} \Ga_q\right)^{-1}$-periodic straight pipes at time $t=t_z$ and have composed them with a diffeomorphism $\Phi_z$. Furthermore, this diffeomorphism and the vector field $\hat u_q$ obey the conditions and conclusions of Lemma~\ref{lem:axis:control} on the support of $a_{z} \rhob_{z} \zetab_{ z}^{ I}$.  Thus we have that there exists $\const_{\rm pipe}$ such that for any convex set $\Omega'$ of diameter at most $\left(\la_{q+\half} \Ga_q\right)^{-1}$ and for any time $t$, 
\begin{align*}
\supp (\WW \circ \Phi)^I_z(\cdot, t) \cap \Omega'(t) \cap \supp((a_{z} (\rhob_{z} \zetab_{ z}^{ I})\circ\Phi_z)(\cdot, t) 
\end{align*}
is contained in at most $\const_{\rm pipe}$ deformed pipe segments. Now fix a convex set $\Omega$ and a time $t_0$ as in Hypothesis~\ref{hyp:dodging22}, and let $(z,I)^+$ denote the next element after $(z,I)$ in our ordering. Then if $\Omega(t)$ has empty intersection with the spatial support of $a_{z} (\rhob_{z} \zetab_{z}^{ I})\circ\Phi_z$ for all times $t$, we define $L_{(z,I)^+}=L_{(z,I)}$. If not, we set
$$ L_{(z,I)^+} = L_{(z,I)} \cup \supp_x ( \WW \circ \Phi)_z^I(t_0) \cap \Omega  \, . $$
Then to see that we have verified Hypothesis~\ref{hyp:dodging22}, in particular \eqref{eq:ind:dodging22}, we use that all flow maps $\Phi_{\hat i,\hat k}$ and $\Phiik$ are advected by the same velocity field $\hat u_q$ and the observation above concerning the support of $(\WW\circ \Phi)_z^I$.
\smallskip


\noindent\texttt{Step 3: Proof of item~\eqref{item:dodging:zero}, or Hypothesis~\ref{hyp:dodging2} for $q+1$.} In order to verify Hypothesis~\ref{hyp:dodging2} at level $q+1$, we must consider $\bar q' = q+\bn$ and any $\bar q'' $ such that $q+1\leq \bar q'' < q+\bn$, any convex set $\Omega$ of diameter\footnote{Hypothesis~\ref{hyp:dodging2} in fact allows for sets of smaller diameter, for which the results follow trivially by embedding a smaller set into a set of the largest possible diameter.}
$$d(q+\bn,\bar q'') = \min\left[ (\lambda_{\bar q''}\Gamma_{\bar q''}^7)^{-1} , (\lambda_{q+\half}\Gamma_q)^{-1} \right] \, , $$
any time $t_0$, and any $i''$ such that $\Omega\times\{t_0\} \cap \supp \psi_{i'', \bar q''}\neq \emptyset$.  Given these choices and the flow map $\Phi_{\bar q''}$ as defined in Hypothesis~\ref{hyp:dodging2}, we must define $L(q+\bn,\bar q'',\Omega,t_0)$ satisfying \eqref{eq:ind:dodging2} and \eqref{eq:concentrazion}. We divide the proof into substeps, in which we first count the number of cutoff functions which can overlap with $\Omega(t)$, before defining $L(\qbn,\bar q'', \Omega,t_0)$ and verifying \eqref{eq:ind:dodging2} and \eqref{eq:concentrazion} in the second substep.
\smallskip

\noindent\texttt{Step 3a:  Counting overlap.} Let us pick any fixed but arbitrary $x\in\Omega$ and set
\begin{equation}\label{eq:sat:aft}
\tilde \Omega = B\left(x,3d(\qbn,\bar q'') + 3\la_{\qbn}^{-1}\right) \, . 
\end{equation}
Note that $\tilde\Omega$ contains a ball of radius $3\lambda_\qbn^{-1}$ around $\Omega$. We claim that the cardinality of the set of indices $(\xi,i,j,k,\vecl,\diamond,I)$ such that
\begin{align}
\suppp \left[ \chi_{i,k,q} \zeta_{q,\diamond,i,k,\xi,\vecl} \left( \rhob_\pxi^\diamond \zetab_\xi^{I,\diamond} \right)\circ \Phiik \right] &\cap B\left( \varrho_{\pxi,\diamond}^I, 3\la_\qbn^{-1} \right)\circ \Phiik \notag\\
&\cap \tilde\Omega\circ \Phi_{\bar q''} \cap \left[ \T^3 \times \{ |t-t_0|\leq 2\tau_{\bar q''}\Ga_{\bar q''}^{-i''+2} \} \right] \neq \emptyset  \label{friday:dodging:3} 
\end{align}
is bounded by a finite, $q$-independent constant $\const_{\rm counting}$. We first count the possible values for $i$ and $k$.  From the diameter bound on $\Omega$, \eqref{eq:sharp:Dt:psi:i:q:old}, Lemma~\ref{lem:dornfelder}, and Corollary~\ref{cor:dornfelder}, we have that $\tilde\Omega\circ \Phi_{\bar q''}\subseteq \displaystyle\cup_{\texttt{i}=i''-1}^{i''+1}\suppp \psi_{\texttt{i},\bar q''}$ restricted to $\T^3 \times \{ |t-t_0|\leq 2\tau_{\bar q''}\Ga_{\bar q''}^{-i''+2} \}$.
From \eqref{eq:inductive:timescales}, we have that if $t$ is in the same range as above, $(x,t)\in\tilde\Omega\circ \Phi_{\bar q''}$, and $\psi_{i,q}(x,t)\neq 0$, then it must be the case that $2\tau_{\bar q''}\Ga_{\bar q''}^{-i''+2}\leq \tau_q\Ga_q^{-i-7}$.  Thus if $i$ is such that $\psi_{i,q}(x,t)\neq 0$ at some $(x,t)\in\tilde\Omega\circ \Phi_{\bar q''}$ for $t$ in the same range as above, from \eqref{eq:chi:support} there exist at most two values of $k$ such that $\chi_{i,k,q}$ satisfies
\begin{align*}
\suppp \left(\psi_{i,q} \chi_{i,k,q}\right) \cap \tilde\Omega\circ \Phi_{\bar q''} 
\cap \left[ \T^3 \times \{ |t-t_0|\leq 2\tau_{\bar q''}\Ga_{\bar q''}^{-i''+2} \} \right]
\neq \emptyset 
\end{align*}  
Next, recall that we have bounds $\imax$ and $\jmax$ for the number of values of $i$ and $j$, and $\xi$ and $\diamond$ take a finite number of $q$-independent values.

In order to conclude the proof of the claim concerning intersections with \eqref{friday:dodging:3}, it only remains to count $\vecl$ and $I$ for fixed $(\xi, i, j, k, \diamond)$. Since $i$ and $k$ are fixed, we can drop the time cutoff $\chi_{i,k,q}$ and consider in the intersection in the time interval $[t_0 -2\tau_{\bar q''}\Ga_{\bar q''}^{-i''+2}, t_0 + 2\tau_{\bar q''}\Ga_{\bar q''}^{-i''+2}]\cap \supp\chi_{i,k,q}$. We then observe that from \eqref{eq:chi:tilde:support} and \eqref{eq:dodging:oldies:prep}, 
\begin{align}
    D_{t,\bar q''} &\mathbf{1}_{\left\{\zeta_{\mathfrak{i}} \left( \rhob_{\mathfrak{i}} \zetab_{\mathfrak{i}} \right) \circ \Phi_{\mathfrak{i}} \cap  B\left(\varrho_{\mathfrak{i}}, 3\la_{\qbn}^{-1} \right) \circ \Phi_{\mathfrak{i}}\right\}} \notag\\
    &= D_{t,q} \mathbf{1}_{\left\{\zeta_{\mathfrak{i}} \left( \rhob_{\mathfrak{i}} \zetab_{\mathfrak{i}} \right) \circ \Phi_{\mathfrak{i}} \cap  B\left(\varrho_{\mathfrak{i}}, 3\la_{\qbn}^{-1} \right) \circ \Phi_{\mathfrak{i}}\right\}} 
     + \left(D_{t,\bar q''} - \Dtq \right) \mathbf{1}_{\left\{\zeta_{\mathfrak{i}} \left( \rhob_{\mathfrak{i}} \zetab_{\mathfrak{i}} \right) \circ \Phi_{\mathfrak{i}} \cap  B\left(\varrho_{\mathfrak{i}}, 3\la_{\qbn}^{-1} \right) \circ \Phi_{\mathfrak{i}}\right\}} = 0 \, .
     \label{replaceing:flowmap}
\end{align}
on $\T^3 \times \{ |t-t_0|\leq 2\tau_{\bar q''}\Ga_{\bar q''}^{-i''+2} \}$.
Here, recalling \eqref{eq:checkerboard:definition}, the first term vanishes since $\Dtq \Phi_{\mathfrak{i}}\equiv 0$, while the second term vanishes due to dodging. It follows that the set $\left(\suppp  \left( \mathcal{X}_{q,\xi,\vecl,\diamond} \rhob_\pxi^\diamond \zetab_\xi^{I,\diamond} \right)\cap B\left( \varrho_{\pxi,\diamond}^I, 3\la_\qbn^{-1} \right)\right) \circ \Phiik$ remains the same even though the deformation is replaced by the one induced by the vector field $\hat u_{\bar q''}$. Therefore, applying the same argument to get the upper bound of $\mathcal{N}_{\Omega, z, I}$ in \texttt{Step 2}, we can count the remaining indices at some fixed time and conclude the proof the claim concerning the cardinality of the set of indices satisfying \eqref{friday:dodging:3}.

We define 
$$  \mathfrak{I}_{\Omega,t_0} = \left\{ (\xi,i,j,k,\vecl,\diamond,I) \textnormal{ such that \eqref{friday:dodging:3} holds} \right\} $$
and note that its cardinality is bounded by the $q$-independent constant $\const_{\rm counting}$. In the remainder of the proof we shall abbreviate a tuple of indices $(\xi,i,j,k,\vecl,\diamond,I)$ with $\mathfrak{i}$ and use $\mathfrak{i}$ as a subscript/superscript on any cutoff functions or flow maps which are part of $w_{q+1}$.
\smallskip

\noindent\texttt{Step 3b: Defining $L$ and checking \eqref{eq:ind:dodging2} and \eqref{eq:concentrazion}.} We now define
\begin{align*}
    L(q+\bn,\bar q'',\Omega, t_0) =  
    \tilde \Omega\circ \Phi_{\bar q''} 
    \bigcap \left[ \bigcup_{\mathfrak{I}_{\Omega,t_0}}  \supp \left[ \zeta_{\mathfrak{i}} \left( \rhob_{\mathfrak{i}} \zetab_{\mathfrak{i}} \right) \circ \Phi_{\mathfrak{i}} \right] \cap  B\left(\varrho_{\mathfrak{i}}, 3\la_{\qbn}^{-1} \right) \circ \Phi_{\mathfrak{i}} \right] \, .  
\end{align*}
The first claim easily follows from \eqref{replaceing:flowmap} and $D_{t,\bar q''}\Phi_{\bar q''}=0$.


In order to prove the second claim in \eqref{eq:ind:dodging2}, we first note that by the definition of $\mathfrak{I}_{\Omega,t_0}$, $L(\qbn,\bar q'',\Omega, t_0)$ contains $\supp w_{q+1} \cap \, \Omega\circ\Phi_{\bar q''}$.  Then due to the fact that in the definition of $L$ we have enlarged the support of each $\varrho_{\mathfrak{i}}$, the fact that $\tilde\Omega(t):=\Phi_{\bar q''}(t)^{-1}(\tilde\Omega)$ contains a ball of radius $2\lambda_\qbn^{-1}$ around $\Omega(t)$ for all $|t-t_0| \leq \tau_{\bar q''}\Ga_{\bar q''}^{-i''+2}$, and the fact that \eqref{friday:dodging:3} has doubled the timescale over which overlap is being considered, we have that the second claim in \eqref{eq:ind:dodging2} follows from Definition~\ref{def:wqbn}.

Next, we must check \eqref{eq:concentrazion}.  Note that at time $t_0$, $L$ is defined using intermittent pipe bundles which have been deformed on the Lipschitz timescale $2\tau_{\bar q''}\Ga_{\bar q''}^{-i''+2} \leq \tau_q \Ga_{q}^{-i-7}$.  Note furthermore that due to \texttt{Step 3a}, we are only considering $\const_{\rm counting}$ many such bundles, and that due to the fact that the diameter of $\tilde\Omega(t)$ is bounded on the Lipschitz timescale by a constant times the size $\la_{q+\half}^{-1}\Ga_q^{-1}$ of a periodic cell, each bundle may only contribute a $q$-independent number $\const_\xi$ of deformed pipe segments.  We set $\const_{D} = \const_\xi\const_{\rm counting}$, which we emphasize is independent of $q$, concluding the proof of \eqref{eq:concentrazion}.

Finally, we have that the last part of Hypothesis~\ref{hyp:dodging2}, which gives assumptions on the tangent vectors to the curves used to define $L$, is satisfied from~\eqref{eq:a:xi:phi:def},~\eqref{eq:a:xi:def}, and~\eqref{eq:Lagrangian:Jacobian:1}.  
\end{proof}

\section{Error estimates for the relaxed local energy inequality}\label{sec:RLE:errors}

In this section, we estimate the main error terms in the relaxed local energy inequality.  In subsection~\ref{ss:rle:new}, we carefully identify each error term.  Then in subsections~\ref{ss:ssO:current}--\ref{ss:dce:rle}, we estimate the oscillation current error, linear current error, stress current error,  and divergence corrector current errors, respectively.  Finally, we summarize the results and upgrade the material derivatives in subsection~\ref{upgrade:rle:ss}.

\subsection{Defining new current error terms}\label{ss:rle:new}
We will define $\ov \phi_{q+1}$ by adding $\hat w_{q+\bn}$ to $u_q$ on the left-hand side of \eqref{ineq:relaxed.LEI} and collecting new errors generated by the addition. Recall that in \eqref{ER:new:equation} we added $\hat w_\qbn$ to the Euler-Reynolds system and obtained the equation
\begin{align}\label{ER:new:equation:redux}
    \pa_t u_{q+1} + \div \left( u_{q+1} \otimes u_{q+1} \right) + \nabla p_q = \div\left( \ov R_{q+1} - (\pi_q-\pi_q^q)\Id \right)
\end{align}
for $ u_{q+1} = u_q + \hat w_\qbn$, where
\begin{subequations}
\begin{align}
    \ov R_{q+1} &= R_q - R_q^q + S_{q+1} \label{eq:ovR:def:redux} \\
    \div S_{q+1} &= \pa_t \hat w_\qbn + u_q \cdot \nabla \hat w_\qbn + \hat w_\qbn \cdot \nabla u_q + \div \left( \hat w_\qbn \otimes \hat w_\qbn + R_q^q - \pi_q^q\Id \right) \, . \label{ER:new:error:redux}
\end{align}
\end{subequations}
We recall that $\kappa_q = \sfrac 12 \tr(R_q-\pi_q\Id)$, $\kappa_q^q = \sfrac 12 \tr(R_q^q-\pi_q^q\Id)$ and set\footnote{We are using the definition of $S_{M1}$ from \eqref{ER:new:error} to achieve the third equality.}
\begin{subequations}
\begin{align}
  \overline{\kappa}_{q+1} &:= \frac12 {\tr(\ov R_{q+1}-(\pi_q-\pi_q^q)\Id) } =\kappa_q - \kappa_q^q + \frac12\tr(S_{q+1})
 =\kappa_q - \kappa_\ell + \frac12\tr(S_{q+1}-S_{M1})
  \, , \label{eq:def:ov:kappa} \\
  \overline\ph_{q+1} &= \ph_q-\ph_q^q +\overline{\phi}_{q+1} \, , \label{eq:def:ov:phi}
\end{align}
\end{subequations}
where $\div \ov \phi_{q+1}$ will include the new errors. We now introduce a function of time $\bmu_{\ov\phi_{q+1}}(t)$ to account for the fact that the new errors may not have zero mean for each time and expect to obtain
\begin{align}\label{RHS.LEI}
  &\pa_t \left( \frac 12 |u_{q+1}|^2 \right) + \div \left( \left( \frac 12 |u_{q+1}|^2 + p_q \right)u_{q+1} \right) \notag\\
  &\qquad \qquad = (\pa_t + \hat u_{q+1} \cdot \na )
    {\overline{\ka}_{q+1}}
+ \div\left((
\ov R_{q+1}
-\pi_q\Id+\pi_q^q\Id)  \hat u_{q+1}\right) + \div \overline\ph_{q+1} + \bmu_{\ov\phi_{q+1}} -E \, .
\end{align}
Towards this end, we first note that since $\div \, \hat u_q =0$, we have
\begin{align}
    &\div \left( (R_q-\pi_q\Id) \hat u_q \right) + \hat u_q \cdot \div \left( \ov R_{q+1} - R_q + \pi_q^q \Id \right) \notag \\
    &\qquad = \div \left( (\ov R_{q+1} - \pi_q \Id + \pi_q^q \Id) \hat u_q \right) + \nabla \hat u_q : \left( R_q - \pi_q^q \Id - \ov R_{q+1} \right) \, . \label{eq:desert:helpfull}
\end{align}
We now add and subtract $\div\left( (\ov R_{q+1}-\pi_q \Id + \pi_q^q\Id) \hat u_{q+1} \right)$ in the second to last equality below to obtain
\begin{align}
\pa_t &\left( \frac 12 |u_{q} + \hat w_{q+\bn}|^2 \right)
    + \div\left( \left(\frac 12 |u_{q} + \hat w_{q+\bn}|^2 + p_q\right) (u_{q} + \hat w_{q+\bn})\right)\notag\\
&\underset{\eqref{ineq:relaxed.LEI}}{=} (\pa_t + \hat u_q \cdot \na ) \ka_q + \div ((R_q-\pi_q \Id) \hat u_q) + \div \ph_q
+ (\pa_t + u_q \cdot\na )\left(\frac 12 |\hat w_{q+\bn}|^2\right) + \div \left(\frac 12 |\hat w_{q+\bn}|^2 \hat w_{q+\bn}\right)\notag\\
&\qquad+ \hat w_{q+\bn} \cdot \left(\pa_t u_q + (u_q\cdot \na) u_q + \na p_q \right) + \na u_q : \hat w_{q+\bn}\otimes \hat w_{q+\bn}   \notag\\
&\qquad+ u_q \cdot \left(\pa_t \hat w_{q+\bn} + (u_q \cdot \na) \hat w_{q+\bn} + (\hat w_{q+\bn}\cdot \na) u_q+ \div (\hat w_{q+\bn}\otimes \hat w_{q+\bn})\right)-E
\notag\\
&\underset{\eqref{eq:dodging:oldies}}{=} (\pa_t + \hat u_q \cdot \na ) \ka_q + \div ((R_q-\pi_q \Id) \hat u_q) + \div \ph_q
+ (\pa_t + \hat u_q \cdot\na )\left(\frac 12 |\hat w_{q+\bn}|^2\right) + \div \left(\frac 12 |\hat w_{q+\bn}|^2 \hat w_{q+\bn}\right)\notag\\
&\qquad+ \hat w_{q+\bn} \cdot \left(\pa_t u_q + (u_q\cdot \na) u_q + \na p_q \right) + \na \hat u_q : \hat w_{q+\bn}\otimes \hat w_{q+\bn}   \notag\\
&\qquad+ \hat u_q \cdot \left(\pa_t \hat w_{q+\bn} + (u_q \cdot \na) \hat w_{q+\bn} + (\hat w_{q+\bn}\cdot \na) u_q+ \div (\hat w_{q+\bn}\otimes \hat w_{q+\bn})\right)-E
\notag\\
&\underset{\substack{\eqref{eq:ovR:def:redux}, \\ \eqref{ER:new:error:redux}}}{=} (\pa_t + \hat u_q \cdot \na ) \left(\frac 12 |\hat w_{q+\bn}|^2 + \ka_q  \right) 
+ \div( (R_q-\pi_q \Id) \hat u_q)+ \div \left(\frac 12 |\hat w_{q+\bn}|^2 \hat w_{q+\bn}+ \ph_q\right) \notag\\
&\qquad 
+ \hat w_{q+\bn} \cdot \left(\pa_t u_q + (u_q\cdot \na) u_q + \na p_q\right) 
+ \na \hat u_q : \hat w_{q+\bn}\otimes \hat w_{q+\bn} 
 + \hat u_q \cdot \div (\ov R_{q+1}-R_q + \pi_q^q \Id)-E
\notag\\
&\underset{\eqref{eq:desert:helpfull}}{=} (\pa_t + \hat u_q \cdot \na ) \left(\frac 12 |\hat w_{q+\bn}|^2+ \ka_q  \right)+ \div \left(\frac 12 |\hat w_{q+\bn}|^2 \hat w_{q+\bn}+ \ph_q\right) + \div ((\ov R_{q+1}-\pi_q\Id+\pi_q^q\Id)\hat u_{q+1}) \notag\\
&\qquad 
+  \hat w_{q+\bn} \cdot (\pa_t u_q + (u_q\cdot \na) u_q + \na p_q)
- \div \left((\ov R_{q+1}-\pi_q\Id+\pi_q^q\Id) (\hat u_{q+1}- \hat u_q)\right) \notag\\
&\qquad + \na \hat u_q : (\hat w_{q+\bn}\otimes \hat w_{q+\bn} + R_q -\pi_q^q \Id - \ov R_{q+1})-E \notag \\
&= (\pa_t + \hat u_{q+1} \cdot \na ) {\overline{\ka}_{q+1}}
+ \div\left((
\ov R_{q+1}
-\pi_q\Id+\pi_q^q\Id)  \hat u_{q+1}\right) + \div \overline\ph_{q+1} + \bmu_{\ov\phi_{q+1}}-E
\, .
 \label{LHS.LEI}
\end{align}
We see that the final quantity on the right-hand side of \eqref{LHS.LEI} will hold provided that
\begin{align}
\div &\ov \phi_{q+1} + {\bmu'_{\ov\phi_{q+1}}} \notag\\
&\underset{\eqref{eq:def:ov:phi}}{=} (\pa_t + \hat u_q \cdot \na ) \left(\frac 12 |\hat w_{q+\bn}|^2+ \ka_q  \right) - (\pa_t + \hat u_{q+1}\cdot\nabla) \ov\kappa_{q+1} \notag\\
&\qquad 
- \div \left((\ov R_{q+1}-\pi_q\Id+\pi_q^q\Id) (\hat u_{q+1}- \hat u_q)\right) +  \hat w_{q+\bn} \cdot (\pa_t u_q + (u_q\cdot \na) u_q + \na p_q) \notag\\
&\qquad + \na \hat u_q : (\hat w_{q+\bn}\otimes \hat w_{q+\bn} + R_q -\pi_q^q \Id - \ov R_{q+1}) + \div \left(\frac 12 |\hat w_{q+\bn}|^2 \hat w_{q+\bn}+ \ph_q^q \right) \notag\\
&\underset{\eqref{eq:def:ov:kappa}}{=} (\pa_t + \hat u_q \cdot \nabla)\left( \frac 12 |\hat w_\qbn|^2 + \kappa_q^q - \frac 12 \tr (S_{q+1}) \right) + (\pa_t + \hat u_q \cdot \nabla) \ov \kappa_{q+1} - (\pa_t + \hat u_{q+1}\cdot\nabla) \ov\kappa_{q+1} \notag\\
&\qquad 
- \div \left((\ov R_{q+1}-\pi_q\Id+\pi_q^q\Id) (\hat u_{q+1}- \hat u_q)\right) +  \hat w_{q+\bn} \cdot (\pa_t u_q + (u_q\cdot \na) u_q + \na p_q) \notag\\
&\qquad + \na \hat u_q : (\hat w_{q+\bn}\otimes \hat w_{q+\bn} + R_q -\pi_q^q \Id - \ov R_{q+1}) + \div \left(\frac 12 |\hat w_{q+\bn}|^2 \hat w_{q+\bn}+ \ph_q^q \right) \notag\\
&\underset{\eqref{eq:def:ov:kappa}}{=} \underbrace{(\pa_t + \hat u_q \cdot \na ) \left(\frac 12 |w_{q+1}|^2+ {\ka_q^q}  - \frac12\tr(S_{q+1})\right)}_{=:\div \ov\phi_T + \bmu'_T} 
\notag\\
&\quad -
\underbrace{ \div ( (\hat u_{q+1}- \hat u_q) \overline\ka_{q+1})
 - \div \left( \left( \ov R_{q+1} - (\pi_q- \pi_q^q)\Id \right)(\hat u_{q+1}- \hat u_q)\right)}_{=:\div \ov\phi_R} \notag \\
 &\quad
+\underbrace{ w_{q+1} \cdot (\pa_t  u_q + ( u_q\cdot \na)  u_q + \na p_q)}_{=:\div \ov\phi_{L} + \bmu'_L }
+ \underbrace{\na \hat u_q : (w_{q+1}\otimes w_{q+1} + R_q {-\pi_q^q\Id} - \ov R_{q+1} )}_{=:\div \ov\phi_{N} +\bmu'_N} \, , \notag\\
&\quad +\underbrace{\div \left(\frac 12 |w_{q+1}^{(p)}|^2 w_{q+1}^{(p)}+ \ph_\ell \right)}_{=:\div\ov\phi_O} + \underbrace{\div\left( \frac{1}{2} |w_{q+1}^{(c)}|^2 w_{q+1}^{(p)} +( w_{q+1}^{(c)} \cdot w_{q+1}^{(p)} ) w_{q+1}^{(p)}  +  \frac{1}{2} |w_{q+1}|^2 w_{q+1}^{(c)}  \right)}_{=: \div \ov\phi_C} \notag \\
&\quad + \underbrace{\div(\ph_q^q-\ph_\ell)
+ (\pa_t + \hat u_q \cdot \na ) \frac 12\left(|\hat w_{q+\bn}|^2 - |w_{q+1}|^2  \right)
+ \frac 12\div \left( |\hat w_{q+\bn}|^2 \hat w_{q+\bn}-|w_{q+1}|^2 w_{q+1}\right)}_{=:\div \ov\phi_{M1} + \bmu_{M1}'} \notag \\
&\quad  +\underbrace{(\hat w_{q+\bn}-w_{q+1}) \cdot (\pa_t u_q + (u_q\cdot \na) u_q + \na p_q) 
  + \na \hat u_q : (\hat w_{q+\bn}\otimes \hat w_{q+\bn}-w_{q+1}\otimes w_{q+1})}_{=:\div \ov\phi_{M2} + \bmu_{M2}'} \, , \notag
\end{align}
where $\bmu_T, \bmu_N, \bmu_L, \bmu_{M1}, \bmu_{M2}$ are functions of time only and are given by
\begin{subequations}
\label{eq:meanz:def}
\begin{align}
    \bmu_T(t) &:= \int_0^t \left\langle (\pa_t + \hat u_q \cdot \na ) \left(\frac 12 |w_{q+1}|^2+ \ka_\ell  - \frac12\tr(S_{q+1})\right) \right\rangle(s) \, ds \\
    \bmu_N(t) &:= \int_0^t \left\langle \na \hat u_q : (w_{q+1}\otimes w_{q+1} + R_q -  R_{q+1} ) \right\rangle(s) \, ds \\
    \bmu_L(t) &:= \int_0^t \left\langle  w_{q+1} \cdot (\pa_t \hat u_q + (\hat u_q\cdot \na) \hat u_q + \na p_q) \right\rangle(s) \, ds \\
    \bmu_{M1}(t)&:=
    \int_0^t \left\langle
(\pa_t + \hat u_q \cdot \na ) \frac 12\left(|\hat w_{q+\bn}|^2 - |w_{q+1}|^2  \right)
    \right\rangle(s) \, ds\\
    \bmu_{M2}(t)&:=
    \int_0^t \left\langle
(\hat w_{q+\bn}-w_{q+1}) \cdot (\pa_t u_q + (u_q\cdot \na) u_q + \na p_q) \right\rangle(s) \, ds\notag\\
  &\quad+ \int_0^t \left\langle\na \hat u_q : (\hat w_{q+\bn}\otimes \hat w_{q+\bn}-w_{q+1}\otimes w_{q+1})
    \right\rangle(s) \, ds \, .
\end{align}
\end{subequations}
The notation $\langle \cdot \rangle$ denotes the spatial average of a tensor, and we define $\bmu_{\ov\phi_{q+1}}:= \bmu_T+ \bmu_N+\bmu_L+\bmu_{M1}+\bmu_{M2}$.

With these definitions in hand, we can rewrite \eqref{LHS.LEI} 
for $(u_{q+1}, p_q, \ov R_{q+1}, \ov \ph_{q+1}, -(\pi_q-\pi_q^q))$
as
\begin{align}
    \pa_t &\left( \frac 12 |u_{q+1}|^2 \right)
    + \div\left( \left(\frac 12 |u_{q+1}|^2 + p_q\right) u_{q+1}\right) \notag\\
    &=(\pa_t + \hat u_{q+1} \cdot \na ) (\overline{\ka}_{q+1}+ {\bmu_{\ov\phi_{q+1}}})
+ \div\left( {(\ov R_{q+1}-(\pi_q-\pi_q^q)\Id)}  \hat u_{q+1}\right) + \div \overline\ph_{q+1} -E\, . \label{eq:LEI:new}
\end{align}
The primitive current error $\overline{\phi}_{q+1}$\index{$\overline{\phi}_{q+1}$} will consist of $\overline{\phi}_{q+1}^k$, so that $\overline{\phi}_{q+1}=\sum_{q+1}^{q+\bn} {\overline{\phi}_{q+1}^k}$.

\subsection{Oscillation current error}\label{ss:ssO:current}
Recalling the definition of $\BB_{(\xi),\ph}$ and $\BB_{(\xi),R}$ from \eqref{int:pipe:bundle:short}, we have
\begin{align*}
(\BB_a\BB_b \BB_b)_{(\xi),\ph}
&= \chib_{(\xi),\vp}^{3} \sum_I (\etab_\xi^I)^{6}\mathbb{P}_{\neq 0} \left[(\WW_{(\xi),\ph}^I)_a (\WW_{(\xi),\ph}^I)_b
(\WW_{(\xi),\ph}^I)_b\right]
+\mathbb{P}_{\neq 0}\chib_{(\xi),\vp}^{3}
\xi_a\xi_b\xi_b
+ \xi_a\xi_b\xi_b\\
(\BB_a\BB_b \BB_b)_{(\xi),R}
&= \chib_{(\xi),R}^{3} \sum_I (\etab_\xi^I)^{9}\mathbb{P}_{\neq 0} \left[ (\WW_{\pxi,R}^I)_a (\WW_{\pxi,R}^I)_b
(\WW_{\pxi,R}^I)_b \right]
\end{align*}
where we used $\langle |\WW^I_{\pxi,\ph}|^2 \WW^I_{\pxi,\ph} \rangle =|\xi|^2\xi$ from Proposition~\ref{prop:pipe.flow.current} item~\eqref{item:pipe:means:current},
$\langle \chib_{(\xi),\vp}^3 \rangle =1$ from Proposition~\ref{prop:bundling} item~\eqref{i:bundling:3}, $\sum_I (\etab_\xi^I)^6=1$ from \eqref{eq:sa:summability}, and $\langle |\WW^I_{\pxi,R}|^2 \WW^I_{\pxi,R} \rangle =0$ from Proposition~\ref{prop:pipeconstruction} item~\eqref{item:pipe:means}. Using that all cross-terms from $|w_{q+1}^{(p)}|^2 w_{q+1}^{(p)}$ (as defined in \eqref{defn:w}) vanish due to Lemma~\ref{lem:dodging} item~\eqref{item:dodging:2}, and the fact that $(\nabla\Phiik^{-1} \xi) \cdot \nabla $ gives zero when applied to $(\WW_{\pxi,\diamond}^I \rhob_{\pxi,\diamond}\zetab_\xi^I) \circ \Phiik$, it then follows that
\begin{align}
    \frac 12 &\div\left(  |w_{q+1}^{(p)}|^2w_{q+1}^{(p)} \right) \notag\\
&= \frac 12 \div \left[ \sum_{i,j,k,\xi,\vecl,I,\diamond} a_{\pxi,\diamond}^3 \left(\chib_{\pxi,\diamond}^{3}(\etab_\xi^I)^{3\diamond} \right) \circ \Phiik \left| \nabla \Phiik^{-1} \WW_{\pxi,\diamond}^I \circ \Phiik\right|^2 \nabla \Phiik^{-1}\WW_{\pxi,\diamond}^I \circ \Phiik \right] \notag\\
&= \frac 12 \sum_{\xi, i,j,k,\vecl} \div \left( a_{(\xi),\ph}^3 |\na \Phi_{(i,k)}^{-1} \xi|^2 \na\Phi_{(i,k)}^{-1}\xi \right) + \sum_{\xi, i,j,k,\vecl} b_{(\xi),\ph} \mathbb{P}_{\neq 0}\left( \chib_{\pxi,\vp}^3 \right)(\Phi_{(i,k)}) \notag \\
&\qquad\quad + \sum_{\xi, i,j,k,\vecl,I,\diamond} b_{(\xi),\diamond}
\left(\chib_{\pxi,\diamond}^{3}(\etab_\xi^I)^{3\diamond}
\mathbb{P}_{\neq 0} (\varrho_{\pxi,\diamond}^I)^3\right)(\Phi_{(i,k)}) \, , \label{eq:osc:c:eq:1}
\end{align}
where the $\gamma$ component of $b_{\pxi,\diamond}$ is given by $b_{(\xi),\diamond}^\gamma = \sfrac 12 \xi_a\pa_\gamma \left(  a_{(\xi),\diamond}^3  |\na \Phi_{(i,k)}^{-1} \xi|^2 (\na\Phi_{(i,k)}^{-1})_a^\gamma \right)$, $\varrho_{\pxi,\diamond}$ is the pipe density defined in \eqref{int:pipe:bundle:short}, and we are using the notation $3\diamond$ in the power of $\zetab_\xi^I$ as a stand-in for $6$ or $9$ in the current and Reynolds cases, respectively. 

By the choice of $a_{(\xi),\ph}$, the first term cancels out $\ph_\ell$ up to a higher-frequency error term. Indeed, using \eqref{eq:a:xi:phi:def}, \eqref{eq:alg:id}, \eqref{eq:summy:summ:3} to yield a formula for the summation of $\zeta_{q,\vp,i,k,\xi,\vecl}^3$, \eqref{eq:summy:summ:0}, \eqref{eq:inductive:partition} at level $q$, \eqref{eq:chi:cut:partition:unity} at level $q$, and \eqref{eq:omega:cut:partition:unity}, we have that
\begin{align}
    \frac 12 \sum_{\xi, i,j,k,\vecl}
    &a_{(\xi),\ph}^3 |\na \Phi_{(i,k)}^{-1} \xi|^2 (\na\Phi_{(i,k)}^{-1})_a^\gamma \xi_a \notag \\ 
    &= \frac 12 \delta_{q+\bn}^{\sfrac 32}r_q^{-1}\sum_{\xi, i,j,k, \vecl} \Gamma^{3(j-1)}_{q} \psi_{i,q}^6 \omega_{j,q}^6 \chi_{i,k,q}^6 \zeta_{q,\vp,i,k,\xi,\vecl}^3 \td\gamma_{\xi}^3\left(\frac{\ph_{q,i,k}}{\delta_{q+\bn}^{\sfrac32}r_q^{-1}\Gamma^{3(j-1)}_{q}}\right) (\na\Phi_{(i,k)}^{-1})_a^\gamma \xi_a \notag \\
    &= \frac 12 \delta_{q+\bn}^{\sfrac 32}r_q^{-1}\sum_{\xi, i,j,k,l^\perp} \Gamma^{3(j-1)}_{q} \psi_{i,q}^6 \omega_{j,q}^6 \chi_{i,k,q}^6 \mathcal{X}_{q,\xi,l^\perp}^3 \circ \Phi_{(i,k)} \td\gamma_{\xi}^3\left(\frac{\ph_{q,i,k}}{\delta_{q+\bn}^{\sfrac32}r_q^{-1}\Gamma^{3(j-1)}_{q}}\right) (\na\Phi_{(i,k)}^{-1})_a^\gamma \xi_a \notag \\
    &= \sum_{\xi, i,j,k} \psi_{i,q}^6 \omega_{j,q}^6 \chi_{i,k,q}^6 {c_3} \left(-{\frac1{c_3}\ph^\gamma_\ell}\right) \notag \\
    &\quad + \frac 12 \sum_{\xi, i,j,k, l^\perp} \underbrace{\delta_{q+\bn}^{\sfrac 32}r_q^{-1} \Gamma^{3(j-1)}_{q} \psi_{i,q}^6 \omega_{j,q}^6 \chi_{i,k,q}^6 \td\gamma_{\xi}^3\left(\frac{\ph_{q,i,k}}{\delta_{q+\bn}^{\sfrac32}r_q^{-1}\Gamma^{3(j-1)}_{q}}\right) (\na\Phi_{(i,k)}^{-1})_a^\gamma \xi_a}_{=: \, 2 \td b^\gamma_{(\xi)}} \left(\mathbb{P}_{\neq 0} \mathcal{X}^3_{q,\xi,l^\perp}\right) \circ \Phi_{(i,k)} \notag \\
    &= -(\ph_\ell)^\gamma + \sum_{\xi, i,j,k,l^\perp} \td b^\gamma_{(\xi)} \left(\mathbb{P}_{\neq 0} \mathcal{X}^3_{q,\xi,l^\perp}\right) \left(\Phi_{(i,k)}\right)\, . \label{eq:osc:c:eq:2}
\end{align}

The inverse divergence of the remaining terms will form new current errors. We first recall the synthetic Littlewood-Paley decomposition (cf. Section \ref{sec:LP}). Since $\varrho_{\pxi,\diamond}^{I}$ is defined on the plane $\xi^\perp$ and is periodized to scale $\left(\lambda_{q+\bn}r_{q}\right)^{-1} {=(\la_{q+\floor{\bn/2}}\Ga_q)^{-1}}$ from~\eqref{int:pipe:bundle:short}, and Propositions~\ref{prop:pipeconstruction}, \ref{prop:pipe.flow.current}, we can decompose $\mathbb{P}_{\neq 0}$ in front of $(\varrho_{\pxi,\diamond}^{I})^3$ into
$$\mathbb{P}_{\neq 0}
   = \tP_{\la_{q+\floor{n/2}{+1}}}^\xi\mathbb{P}_{\neq 0}
    + {\sum_{m={q+\floor{n/2}+{2}}}^{q+\bn+1} }
    \tP_{(m-1, m]}^\xi + (1-\tP_{q+\bn+1}^\xi) \, . $$
Assuming we can apply the inverse divergence operator from Proposition~\ref{prop:intermittent:inverse:div} (with the adjustments set out Remark~\ref{rem:pointwise:inverse:div} for pointwise bounds), we define
\begin{subequations}
\label{eq:osc:c:eq:3}
\begin{align}
    \overline{\phi}^{q+1}_O
    &:=(\divH + \divR) 
    \sum_{\xi, i,j,k,\vecl} \underbrace{b_{(\xi),\ph} \left(\mathbb{P}_{\neq 0} \chib_{\pxi,\vp}^3\right)(\Phi_{(i,k)})}_{=:\,t_{i,j,k,\xi,\vecl,\ph}^{q+1}}\label{osc.current.low} \\
    & \qquad\qquad + (\divH + \divR) \sum_{\xi, i,j,k,l^\perp} \underbrace{\pa_\gamma \td b^\gamma_{(\xi)} \left(\mathbb{P}_{\neq 0} \mathcal{X}^3_{q,\xi,l^\perp}\right) \left(\Phi_{(i,k)}\right)}_{=:\, \td t^{q+1}_{i,j,k,\xi,l^\perp,\ph}}
    \label{osc.current.low.1}\\
    \overline{\phi}^{q+\floor{n/2}+1}_O
    &:=(\divH + \divR) 
    \sum_{\xi, i,j,k,\vecl,I,\diamond} \underbrace{b_{(\xi),\diamond}
\left(\chib_{\pxi,\diamond}^{3}(\etab_\xi^I)^{3\diamond}
\tP^{\xi}_{q+\floor{n/2}+1}\mathbb{P}_{\neq 0} (\varrho_{\pxi,\diamond}^I)^3\right)(\Phi_{(i,k)})}_{=:\,t_{i,j,k,\xi,\vecl,I,\diamond}^{q+\floor{n/2}+1}}
     \label{osc.current.med0}\\
    \overline{\phi}^{m}_O
    &:=(\divH + \divR) 
    \sum_{\xi, i,j,k,\vecl,I,\diamond} \underbrace{b_{(\xi),\diamond}
\left(\chib_{\pxi,\diamond}^{3}(\etab_\xi^I)^{3\diamond}
\tP^{\xi}_{(m-1,m]}\mathbb{P}_{\neq 0} (\varrho_{\pxi,\diamond}^I)^3\right)(\Phi_{(i,k)})}_{=:\,t_{i,j,k,\xi,\vecl,I,\diamond}^{m}}
     \label{osc.current.med1}\\
    \overline{\phi}^{q+\bn}_O
    &:=
    \sum_{m=q+\bn}^{q+\bn+1}(\divH + \divR)
    \sum_{\xi, i,j,k,\vecl,I,\diamond} \underbrace{b_{(\xi),\diamond}
\left(\chib_{\pxi,\diamond}^{3}(\etab_\xi^I)^{3\diamond}
\tP^{\xi}_{(m-1,m]}\mathbb{P}_{\neq 0} (\varrho_{\pxi,\diamond}^I)^3\right)(\Phi_{(i,k)})}_{=\,t_{i,j,k,\xi,\vecl,I,\diamond}^{m}}
    \label{osc.current.high}\\
    &\quad +
   (\divH + \divR)\left[
    \sum_{\xi, i,j,k,\vecl,I,\diamond} b_{(\xi),\diamond}
\left(\chib_{\pxi,\diamond}^{3}(\etab_\xi^I)^{3\diamond}
(1-\tP_{q+\bn+1}^\xi)\mathbb{P}_{\neq 0} (\varrho_{\pxi,\diamond}^I)^3\right)(\Phi_{(i,k)})
    \right]\, ,
    \label{osc.current.high2}
\end{align}
\end{subequations}
where \eqref{osc.current.med1} is defined for $q+\half +1 < m < q+\bn$. We justify these applications and record estimates on the outputs in the following Lemma.

\begin{lemma}[\bf Current error and pressure increment]\label{lem:oscillation.current:general:estimate}
There exist current errors $\ov \phi_O^{m}$ for $m=q+1,\dots,q+\bn$ and pressure increments $\sigma_{\ov \phi_O^{m}}^+=\sigma_{\ov \phi_O^{m}}-\sigma_{\ov \phi_O^{m}}^-$ for $m=q+\half+1,\dots,q+\bn$ such that the following hold.
\begin{enumerate}[(i)]
\item\label{item:osc:c:1} We have the equality
$$ \frac 12 \div \left( |w_{q+1}^{(p)}|^2  w_{q+1}^{(p)} + \ph_\ell \right) = \sum_{m=q+1}^{q+\bn}\div \ov \phi_O^m = \sum_{m=q+1}^\qbn \div \left( \ov \phi_O^{m,l}+ \ov \phi_O^{m,*} \right) \, . $$ 
\item The lowest shell has no pressure increment; more precisely, $\sigma^+_{\ov \phi_O^{q+1}} \equiv 0$, and for $N,M\leq \sfrac{\Nfin}{100}$,
    \begin{align}
         \left|\psi_{i,q} D^N \Dtq^M \ov\phi^{q+1}_{O} \right| &< \Ga_q^{65} \La_q \la_{q+1}^{-1} (\pi_q^q)^{\sfrac 32} r_q^{-1} \la_{q+1}^N \MM{M,\Nindt, \tau_q \Ga_q^{i+14}, \Ga_q^{8}\Tau_q^{-1}} \, . \label{eq.cur.osc.low.1}
    \end{align}
\item For all $m=q+\half+1,\dots,q+\bn$, we have that
\begin{subequations}
\begin{align}
\label{eq:co.p.1}
\left|\psi_{i,q} D^N \Dtq^M \ov\phi^{m,l}_{O}\right| &\les \left( (\si_{\ov \phi^{m}_O}^+)^{\sfrac 32}r_m^{-1} + \de_{q+3\bn}^{2}\right) \left(\lambda_m\Gamma_q\right)^N \MM{M,\Nindt,\tau_q^{-1}\Gamma_{q}^{i+15},\Tau_q^{-1}\Ga_q^9}  \\
\label{eq:co.p.2}
\left|\psi_{i,q} D^N \Dtq^M \si_{\ov\phi^{m}_O}^+\right| &\les  \left(\si_{\ov\phi^m_O}^+ +\de^2_{q+3\bn}\right) \left(\lambda_{m}\Gamma_q\right)^N \MM{M,\Nindt,\tau_q^{-1}\Gamma_{q}^{i+16},\Tau_q^{-1}\Ga_q^9}\\
\label{eq:co.p.3}
\norm{\psi_{i,q} D^N \Dtq^M \si_{\ov\phi^{m}_O}^+}_{\sfrac32} &< \de_{m+\bn} \Gamma_{m}^{-9} \left(\lambda_{m}\Gamma_q\right)^N \MM{M,\Nindt,\tau_q^{-1}\Gamma_{q}^{i+16},\Tau_q^{-1}\Ga_q^9}\\
 \label{eq:co.p.3.5}
\norm{\psi_{i,q} D^N \Dtq^M \si_{\ov\phi^{m}_O}^+}_{\infty} &< \Ga_{m}^{\badshaq-9} \left(\lambda_{m}\Gamma_q\right)^N \MM{M,\Nindt,\tau_q^{-1}\Gamma_{q}^{i+16},\Tau_q^{-1}\Ga_q^9} \\
\label{eq:co.p.4}
\left|\psi_{i,q} D^N \Dtq^M \si_{\ov\phi^{m}_O}^-\right| &< \left(\frac{\la_q}{\la_{q+\floor{\bn/2}}}\right)^\frac23 \pi_q^q  \left(\lambda_{q+\floor{\bn/2}}\Gamma_q\right)^N \MM{M,\Nindt,\tau_q^{-1}\Gamma_{q}^{i+16},\Tau_q^{-1}\Ga_q^9}
\end{align}
\end{subequations}
for all $N,M \leq \sfrac{\Nfin}{100}$. Furthermore, we have that for all $m'\geq q+\half+1$,
\begin{equation}\label{eq:o.p.6}
\begin{split}
B\left( \supp \hat w_{q'}, \lambda_{q'}^{-1} \Gamma_{q'+1} \right) \cap \supp \ov \phi^{m,l}_O &= \emptyset \qquad \forall q+1\leq q' \leq  m-1\\
B\left( \supp \hat w_{q'}, \lambda_{q'}^{-1} \Gamma_{q'+1} \right) \cap \supp (\si_{\ov \phi^{m'}_O}^+) &= \emptyset \qquad \forall q+1\leq q' \leq  m'-1\\
B\left( \supp \hat w_{q'}, \lambda_{q+1}^{-1} \Gamma_{q}^2 \right) \cap \supp (\si_{\ov \phi^{m'}_O}^-) &= \emptyset \qquad \forall q+1\leq q' \leq  q+\half \, .
\end{split}
\end{equation}
\item For all $m=q+1,\dots,q+\bn$ and $N,M \leq 2\Nind$, the non-local part $\bar\phi^{m,*}_{O}$ satisfies
\begin{align}
\left\| D^N D_{t, q}^M \bar\phi^{m,*}_{O} \right\|_{L^\infty}
    &\leq \Tau_\qbn^{2\Nindt}\delta_{q+3\bn}^{\sfrac32}\la_{m}^{N}\tau_{q}^{-M}\, .
    \label{eq:osc.current:estimate:1}
\end{align}
\end{enumerate}
\end{lemma}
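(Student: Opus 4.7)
The plan is to proceed directly from the cubic expansion \eqref{eq:osc:c:eq:1} and the algebraic cancellation \eqref{eq:osc:c:eq:2}, which together rewrite $\tfrac12\div(|w_{q+1}^{(p)}|^2 w_{q+1}^{(p)} + \phi_\ell)$ as the divergence of a sum of terms of the form (low-frequency coefficient)$\,\cdot\,$(high-frequency, flowed, mean-zero bundling or pipe cube). I would then apply the synthetic Littlewood--Paley decomposition $\mathbb{P}_{\neq 0}=\tP^\xi_{\lambda_{q+\half+1}}\mathbb{P}_{\neq 0}+\sum_{m=q+\half+2}^{q+\bn+1}\tP^\xi_{(m-1,m]}+(1-\tP^\xi_{\lambda_{q+\bn+1}})$ to the high-frequency pipe cubes $\mathbb{P}_{\neq 0}(\varrho^I_{(\xi),\diamond})^3$ as in \eqref{eq:osc:c:eq:3}, and apply the localized inverse divergence of Proposition~\ref{prop:intermittent:inverse:div} shell by shell, with outer function $b_{(\xi),\diamond}\circ\Phi_{(i,k)}$ (respectively $\partial_\gamma \tilde b^\gamma_{(\xi)}\circ\Phi_{(i,k)}$ in the cancellation remainder) and inner function the shell-projected pipe cube composed with $\Phi_{(i,k)}$. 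This immediately produces $\ov\phi_O^m=\ov\phi_O^{m,l}+\ov\phi_O^{m,*}$ satisfying item~\eqref{item:osc:c:1} and the nonlocal bound \eqref{eq:osc.current:estimate:1}; the local parts $\ov\phi_O^{m,l}$ inherit the sharp gain $\lambda_m^{-1}$ and the pointwise bound \eqref{eq:co.p.1} after inserting the coefficient bounds of Lemma~\ref{lem:a_master_est_p} (with one extra spatial derivative for $b_{(\xi),\diamond}$), the deformation estimates of Corollary~\ref{cor:deformation}, and the pipe-cube bounds from \eqref{e:pipe:estimates:1} and \eqref{e:pipe:estimates:1:current}. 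For the lowest shell $m=q+1$, the bundling cube $\mathbb{P}_{\neq 0}\chib^3_{(\xi),\varphi}$ is non-intermittent with $L^\infty$ size $\lesssim 1$ and frequency $\lambda_{q+1}\Gamma_q^{-4}$, so that after inverse divergence and multiplication by $|b_{(\xi),\varphi}|\lesssim \Gamma_q^{12}\pi_\ell^{1/2}\Lambda_q\Gamma_q$ (and the analogous bound for the $\tilde t^{q+1}$ piece) the resulting bound \eqref{eq.cur.osc.low.1} is already dominated by anticipated pressure $(\pi_q^q)^{3/2}r_q^{-1}$ at frequency $\lambda_{q+1}$ via \eqref{eq:ind.pr.anticipated}, so no new pressure increment is needed --- this is the exact analog of the low-shell bound \eqref{eq:ct:lowshell:nopr:1}.

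For the intermediate and top shells $q+\half+1\leq m\leq q+\bn$, the pressure increment $\sigma_{\ov\phi_O^m}=\sigma^+_{\ov\phi_O^m}-\sigma^-_{\ov\phi_O^m}$ is built using the current-error analog of the recipe in \eqref{eq:fried:egg:evening}: with $G=b_{(\xi),\diamond}$ measured in $L^1$ and $\rho=\tP^\xi_{(m-1,m]}\mathbb{P}_{\neq 0}(\varrho^I_{(\xi),\diamond})^3$ measured in $L^1$, I form the weighted sums-of-squared-derivatives analogous to \eqref{eq:fried:egg:evening} and raise to the $1/3$ power (so that the resulting object scales like $(\textnormal{pressure})^{3/2}$, consistent with current-error scaling), then extract $\sigma^+$ as the product of $\pr(G)^{2/3}$ with the mean-zero part of $\pr(\rho)\circ\Phi_{(i,k)}$, and $\sigma^-$ as the product with the mean of $\pr(\rho)$. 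The sharp bounds \eqref{eq:co.p.2}--\eqref{eq:co.p.3.5} then follow exactly as in Lemma~\ref{lem:prop.si.pre.stress} and Lemma~\ref{lem:ct:general:estimate}, using the $\xi$-anisotropic differentiation built into the mildly and strongly anisotropic cutoffs \eqref{eq:checkerboard:derivatives}--\eqref{eq:checkerboard:derivatives:check}. The anticipated-pressure bound \eqref{eq:co.p.4} then follows because $\sigma^-$ depends only on $\langle \pr(\rho)\rangle$, which is determined purely by low-frequency information at frequency $\lambda_q$ and by the effective minimum frequency $\lambda_{q+\half}$ of $\rho$; the pressure scaling law \eqref{eq:ind.pr.anticipated} then produces precisely the factor $(\lambda_q/\lambda_{q+\half})^{2/3}\pi_q^q$, in direct parallel with \eqref{heatsie} and \eqref{eq:ct.p.4}.

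The support statements in \eqref{eq:o.p.6} follow from two observations. First, both $\supp(\ov\phi_O^{m,l})$ and $\supp(\sigma^+_{\ov\phi_O^m})$ are contained in a $\lambda_m^{-1}$-neighborhood of $\supp(w_{q+1}^{(p)})$, since the kernels of $\tP^\xi_{(m-1,m]}$ and of the spatially localized inverse divergence are both compactly supported at scale $\lambda_m^{-1}$; this neighborhood is disjoint from $B(\supp\hat w_{q'},\lambda_{q'}^{-1}\Gamma_{q'+1})$ for all $q+1\leq q'\leq m-1$ by Lemma~\ref{lem:dodging}~\eqref{item:dodging:1}. Second, $\supp(\sigma^-_{\ov\phi_O^m})$ is contained in the support of the bundling-pipe cube, which by Lemma~\ref{lem:dodging}~\eqref{item:dodging:more:oldies} dodges all $\hat w_{q'}$ for $q+1\leq q'\leq q+\half$. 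The main obstacle will be the careful bookkeeping of the \emph{anisotropic} derivative bounds in \eqref{eq:co.p.2}--\eqref{eq:co.p.3.5}: the sharper cost $\lambda_m$ for derivatives in generic directions (rather than $\lambda_m \Gamma_q^{-5}$, which would be the naive thickness of the pipe-cube projection) is available only because the checkerboards $\zetab_\xi^I\circ\Phi_{(i,k)}$ and $\zeta_{q,\diamond,i,k,\xi,\vecl}$ annihilate the flowed $\xi$-directional derivative $\xi^\ell A_\ell^j\partial_j$, and one must verify that this anisotropy survives both the application of the localized inverse divergence and the construction of $\sigma^\pm$ in order to recover the weighted material-derivative costs $\tau_q^{-1}\Gamma_q^{i+15}$ and $\tau_q^{-1}\Gamma_q^{i+16}$ in the precise form stated.
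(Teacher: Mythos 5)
Your proposal matches the paper's proof in all essential respects: the cubic expansion and trilinear cancellation from \eqref{eq:osc:c:eq:1}--\eqref{eq:osc:c:eq:2}, the synthetic Littlewood--Paley shell decomposition of $\mathbb{P}_{\neq 0}(\varrho^I_{(\xi),\diamond})^3$, shell-by-shell application of the localized inverse divergence (Proposition~\ref{prop:intermittent:inverse:div} for the non-intermittent lowest shell and the remainder, Proposition~\ref{lem.pr.invdiv2.c} for the shells $m \geq q+\half+1$), the $L^1$-based, cube-root pressure-increment recipe scaled for current errors, the anticipated-pressure domination of $\sigma^-$ via \eqref{eq:ind.pr.anticipated}, and the dodging conclusions from Lemma~\ref{lem:dodging} for the support properties. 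The only discrepancies are immaterial implementation details (e.g., the $L^\infty$ size of the bundling cube is $\Gamma_q^6$ rather than $\lesssim 1$, and the paper additionally splits the $m=q+\half+2$ shell in half to absorb a frequency loss), none of which change the argument.
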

\begin{proof}
The equality in \eqref{item:osc:c:1} follows from \eqref{eq:osc:c:eq:1}--\eqref{eq:osc:c:eq:3}, assuming for the moment that all quantities in \eqref{eq:osc:c:eq:3} are well-defined.  We now split the proof up into cases.  We first treat $\ov \phi_O^{q+1}$ as defined in \eqref{osc.current.low}--\eqref{osc.current.low.1} and prove \eqref{eq.cur.osc.low.1} and \eqref{eq:osc.current:estimate:1} for $m=q+1$. Next, we treat \eqref{osc.current.med0} and prove \eqref{eq:co.p.1}--\eqref{eq:osc.current:estimate:1} for $m=q+\half+1$ using Proposition~\ref{prop:intermittent:inverse:div} in conjunction with Remark~\ref{rem:pointwise:inverse:div}.  Afterwards we treat the intermediate shells from \eqref{osc.current.med1} and a portion of the last shell \eqref{osc.current.high}, and prove \eqref{eq:co.p.1}--\eqref{eq:osc.current:estimate:1} for $q+\half+2\leq m \leq q+\bn$ using Proposition~\ref{lem.pr.invdiv2.c}.  Finally, we treat \eqref{osc.current.high2}, which will be absorbed to the nonlocal part of the current error. We will therefore prove \eqref{eq:osc.current:estimate:1} using Proposition~\ref{prop:intermittent:inverse:div}. We fix the following choices throughout the proof,
\begin{align*}
    &v= \hat u_q \, , \quad \Phi=\Phiik \, , \quad \const_v = \La_q^{\sfrac12}\, , \quad
    \nu'=\Tau_q^{-1}\Ga_q^8 \, ,\quad  
    \la'=\Ga_q^{13}\La_q\, , \quad
    \\
     &2M_*=N_*=\frac{\Nfin}{3} \, , \quad M_t=\Nindt \, , \quad M_\circ = N_\circ = 2\Nind \, , \qquad K_\circ \textnormal{ as in \eqref{i:par:9.5}/\eqref{ineq:K_0}}
\end{align*}
while the remaining parameters will vary depending on the case.
\smallskip

\noindent\texttt{Case 1a:} Analysis for \eqref{osc.current.low}.
Fix $\xi$, $i$, $j$, $k$, and $\vecl$. In order to check the low-frequency assumptions in Part 1, high-frequency assumptions in Part 2, and nonlocal assumptions in Part 4 of Proposition~\ref{prop:intermittent:inverse:div}, we set 
\begin{subequations}
\begin{align}
    &p=\infty \, , \quad G= b_{\pxi,\varphi} \, , \quad \const_{G,\infty} = r_q^{-1} \delta_\qbn^{\sfrac 32} \Ga_q^{3j+34} \La_q \, , \quad
     \la=\Ga_q^{13}\La_q \, , \quad \nu=\tau_q^{-1}\Ga_q^{i+13} \, , \quad
     \pi=\Ga_q^{36}\pi_\ell^{\sfrac 32}r_q^{-1} \La_q\Ga_q^{10}  \, , \notag\\
    &\varrho = \mathbb{P}_{\neq 0}\chib_{(\xi),\ph}^{ 3} \, , \quad \vartheta^{i_1i_2\dots i_{\dpot-1}i_\dpot} = \delta^{i_1i_2\dots i_{\dpot-1}i_\dpot} \Delta^{-\sfrac \dpot 2}\varrho \, , \quad \const_{\ast,1}=\const_{\ast,\infty} = \Ga_q^6{\la_{q+1}^\alpha} \, , \quad \mu = \la_{q+1}\Ga_q^{-4} \, , \notag\\
    &\Upsilon=\Upsilon'=\la_{q+1}\Ga_q^{-4} \, , \quad \La=\la_{q+1}\Ga_q^{-1} \, , \quad \Ndec \textnormal{ as in \eqref{i:par:9}/\eqref{condi.Ndec0}} \, , \quad \dpot \textnormal{ as in \eqref{i:par:10}/\eqref{ineq:dpot:1}} \, , \notag
\end{align}
\end{subequations}
where $\alpha$ is chosen as in \eqref{eq:choice:of:alpha}.
Then we have that \eqref{eq:inv:div:NM}--\eqref{eq:inverse:div:DN:G} are satisfied by definition and by \eqref{e:a_master_est_p_phi}, \eqref{eq:DDpsi2}--\eqref{eq:DDv} hold from Corollary~\ref{cor:deformation} and \eqref{eq:nasty:D:vq:old} at level $q$, \eqref{eq:inv:div:extra:pointwise} holds from \eqref{e:a_master_est_p_phi_pointwise}, \eqref{item:inverse:i}--\eqref{item:inverse:ii} hold by definition and item~\ref{i:bundling:1} from Proposition~\ref{prop:bundling}, \eqref{eq:DN:Mikado:density} holds due to standard Littlewood-Paley theory, \eqref{eq:inverse:div:parameters:0} holds by definition and by \eqref{condi.Nfin0}, \eqref{eq:inverse:div:parameters:1} holds due to \eqref{condi.Ndec0}, \eqref{eq:inv:div:wut} holds by \eqref{condi.Nfin0}, \eqref{eq:inverse:div:v:global}--\eqref{eq:inverse:div:v:global:parameters} hold from Remark~\ref{rem:lossy:choices}, and \eqref{eq:riots:4} holds from \eqref{ineq:dpot:1}. 

From \eqref{eq:inverse:div} and \eqref{eq:inverse:div:error:stress}, we have that \eqref{osc.current.low} is well-defined.  From \eqref{eq:divH:formula}, \eqref{eq:inverse:div:sub:main}, \eqref{eq:inv:div:extra:conc}, \eqref{ind:pi:lower}, and \eqref{condi.Nfin0}, we have that for $N,M\leq \sfrac{\Nfin}{7}$, 
\begin{align*}
    \left| D^N D_{t,q}^M \mathcal{H}\, t_{i,j,k,\xi,\vecl,\varphi}^{q+1} \right| &\lesssim \Ga_q^{60} (\pi_q^q)^{\sfrac 32} \La_q r_q^{-1} \lambda_{q+1}^{-1} \lambda_{q+1}^N \MM{M,\Nindt,\tau_q^{-1}\Ga_q^{i+13},\Tau_q^{-1}\Ga_q^8} \, .
\end{align*}
Notice that from \eqref{item:div:local:i}, the support of $\divH t^{q+1}_{i,j,k,\xi,\vecl,\varphi}$ is contained in the support of $t_{i,j,k,\xi,\vecl}$, which is contained inside the support of $\eta_{i,j,k,\xi,\vecl,\varphi}$ from the definition of $b_{\pxi,\varphi}$.  Thus we may apply Corollary~\ref{lem:agg.pt} with $H = \mathcal{H}t^{q+1}_{i,j,k,\xi,\vecl,\varphi}$, $\varpi=\Ga_q^{60}(\pi_q^q)^{\sfrac 32}r_q^{-1}\lambda_{q+1}^{-1}\Lambda_q\mathbf{1}_{\supp \eta_{i,j,k,\xi,\vecl,\varphi}}$, and $p=1$ to deduce that
\begin{align}\label{eq:cur:osc:ugh:1}
   \left| \psi_{i,q} D^N \Dtq^M \sum_{i',j,k,\xi,\vecl,\varphi} \mathcal{H}\, t_{i',j,k,\xi,\vecl,\varphi}^{q+1} \right| \lesssim \Ga_q^{60}\pi_\ell^{\sfrac 32} \La_q r_q^{-1}\lambda_{q+1}^{-1} \MM{M,\Nindt,\tau_q^{-1}\Ga_q^{i+14},\Tau_q^{-1}\Ga_q^8} \, .
\end{align}
From \eqref{eq:inverse:div:error:stress:bound} and summing over the values of $i,j,k,\xi,\vecl,\diamond$ which may be non-zero at a fixed point in time using \eqref{eq:inductive:partition} and \eqref{eq:imax:upper:lower} to control $i$, \eqref{e:chi:overlap} to control $k$, \eqref{ineq:jmax:use} to control $j$, Lemma~\ref{lem:checkerboard:estimates} to control $\vecl$, and using that $\xi$ takes only finitely many values, we have from \eqref{eq:inverse:div:error:stress:bound} and Remark~\ref{rem:lossy:choices} that for all $M_0,N_0\leq 2\Nind$,
\begin{align}\label{eq:cur:osc:ugh:2}
\left| D^N \Dtq^M \sum_{i,j,k,\xi,\vecl,\varphi} \divR t_{i,j,k,\xi,\vecl,\varphi}^{q+1} \right| \leq \la_\qbn^{-2} \delta_{q+3\bn}^{\sfrac 32} \Tau_\qbn^{2\Nindt} \la_{q+1}^N \tau_q^{-M} \, .
\end{align}
Combining \eqref{eq:cur:osc:ugh:1}--\eqref{eq:cur:osc:ugh:2} and using \eqref{ind:pi:lower}, we have an estimate consistent with \eqref{eq.cur.osc.low.1}, and an estimate consistent with \eqref{eq:osc.current:estimate:1} for $m=q+1$.
\medskip

\noindent\texttt{Case 1b:} Analysis for \eqref{osc.current.low.1}. 
Fix $\xi$, $i$, $j$, and $k$. In order to check the low-frequency assumptions in Part 1, high-frequency assumptions in Part 2, and nonlocal assumptions in Part 4 of Proposition~\ref{prop:intermittent:inverse:div}, we set 
\begin{subequations}
\begin{align}
    &p=\infty \, , \quad G= \partial_\gamma \td b_{\pxi}^\gamma \, , \quad \const_{G,\infty} = r_q^{-1} \delta_\qbn^{\sfrac 32} \Ga_q^{3j+34} \La_q \, , \quad \la=\Ga_q^{13}\La_q \, , \quad \nu=\tau_q^{-1}\Ga_q^{i+13} \, , \quad  \pi=\Ga_q^{36}\pi_\ell^{\sfrac 32}r_q^{-1} \La_q\Ga_q^{10}  \, , \notag\\
    &\varrho = \mathbb{P}_{\neq 0} \sum_{l^\perp} \mathcal{X}_{q,\xi,l^\perp}^{ {3}} \, , \quad \vartheta^{i_1i_2\dots i_{\dpot-1}i_\dpot} = \delta^{i_1i_2\dots i_{\dpot-1}i_\dpot} \Delta^{-\sfrac \dpot 2}\varrho \, , \quad \const_{\ast,1}=\const_{\ast,\infty} = {\la_{q+1}^{\alpha}} \, , \quad \mu = \const_\Gamma\la_{q+1}\Ga_q^{-5} \, , \notag\\
    &\Upsilon=\Upsilon'=\La=\la_{q+1}\Ga_q^{-5} \, , \quad \Ndec \textnormal{ as in \eqref{i:par:9}/\eqref{condi.Ndec0}} \, , \quad \dpot \textnormal{ as in \eqref{i:par:10}/\eqref{ineq:dpot:1}}  \, . \notag
\end{align}
\end{subequations}
Then we have that \eqref{eq:inv:div:NM}--\eqref{eq:inverse:div:DN:G} are satisfied by definition and by \eqref{eq:nasty:Dt:psi:i:q:orangutan} at level $q$, \eqref{eq:D:Dt:omega:sharp}, \eqref{eq:chi}, Corollary~\ref{cor:deformation} at level $q$, and \eqref{e:a_master_est_p_pointwise}, \eqref{eq:DDpsi2}--\eqref{eq:DDv} hold from Corollary~\ref{cor:deformation} and \eqref{eq:nasty:D:vq:old} at level $q$, \eqref{eq:inv:div:extra:pointwise} holds from \eqref{pt.est.pi.cutoff} and the same estimates which justified \eqref{eq:inv:div:NM}--\eqref{eq:inverse:div:DN:G}, \eqref{item:inverse:i}--\eqref{item:inverse:ii} hold from Lemma~\ref{lem:checkerboard:estimates}, \eqref{eq:DN:Mikado:density} holds due to standard Littlewood-Paley theory, \eqref{eq:inverse:div:parameters:0} holds by definition and by \eqref{condi.Nfin0}, \eqref{eq:inverse:div:parameters:1} holds due to \eqref{condi.Ndec0}, \eqref{eq:inv:div:wut} holds by \eqref{condi.Nfin0}, \eqref{eq:inverse:div:v:global}--\eqref{eq:inverse:div:v:global:parameters} hold from Remark~\ref{rem:lossy:choices}, and \eqref{eq:riots:4} holds from \eqref{ineq:dpot:1}. 

At this point, the remainder of the argument is essentially identical to that of \texttt{Case 1a}.  Indeed, the only differences are that the support of the localized output is contained inside the support of $\psi_{i,q}\omega_{j,q}\chi_{i,k,q}$, and so instead of appealing to the abstract aggregation lemma, we may appeal directly to \eqref{eq:inductive:partition} and \eqref{eq:omega:cut:partition:unity}. Eschewing further details, we have concluded the analysis of $\phi_O^{q+1}$ and proven \eqref{eq.cur.osc.low.1} and \eqref{eq:osc.current:estimate:1} at level $m=q+1$.
\smallskip

\noindent\texttt{Case 2: }Analysis for \eqref{osc.current.med0}.
Fix $\xi$, $i$, $j$, $k$, $\vecl$, $I$, and $\diamond$. In order to check the low-frequency, preliminary assumptions in Part 1 of Proposition~\ref{lem.pr.invdiv2.c}, we set
\begin{align}
    &p=1,\infty \, , \quad G_{\vp} = b_{\pxi,\vp} \left(\rhob_{\pxi,\vp}^3 (\zetab_\xi^I)^{3\vp} \right) \circ \Phiik \, , \quad G_{R} = r_q^{-1} b_{\pxi,R} \left(\rhob_{\pxi,R}^3 (\zetab_\xi^I)^{3R} \right) \circ \Phiik \, , \notag\\
    &\const_{G_\diamond,1} = \delta_\qbn^{\sfrac 32} r_q^{-1} \Ga_q^{3j+40}\Lambda_q \left| \supp \left( \eta_{i,j,k,\xi,\vecl,\diamond} \zetab_\xi^{I,\diamond} \right) \right|  {+ \la_{q+\bn}^{-15}} \, , \quad \const_{G_\diamond,\infty} = \delta_\qbn^{\sfrac 32} r_q^{-1} \Ga_q^{3j+40}\Lambda_q \, ,  \notag\\
    &\la = \la_{q+\half} \, , \quad  \nu=\tau_q^{-1}\Ga_q^{i+13} \, , \quad   \pi=\Ga_q^{35}\pi_\ell \La_q^{\sfrac 23} \, , \quad r_G=r_q \, .  \label{eq:desert:choices}
\end{align}
Then we have that \eqref{eq:inv:div:NM} is satisfied by definition, \eqref{eq:inverse:div:DN:G} is satisfied by \eqref{e:a_master_est_p_phi:zeta}, \eqref{e:a_master_est_p_R:zeta},  Corollary~\ref{cor:deformation}, \eqref{e:fat:pipe:estimates:1}, and Definition~\ref{def:etab}, \eqref{eq:DDpsi2}--\eqref{eq:DDv} hold from Corollary~\ref{cor:deformation} and \eqref{eq:nasty:D:vq:old} at level $q$, and \eqref{eq:inv:div:extra:pointwise} holds from \eqref{e:a_master_est_p_pointwise}. In order to check the high-frequency, preliminary assumptions in Part 1 of Proposition~\ref{lem.pr.invdiv2.c}, we set
\begin{subequations}
\begin{align}
    &\varrho_R = \tilde{\mathbb{P}}_{q+\half+1}^\xi \mathbb{P}_{\neq 0} (\varrho^I_{\pxi,R})^3 r_q \, , \quad \varrho_\vp = \tilde{\mathbb{P}}_{q+\half+1}^\xi \mathbb{P}_{\neq 0} (\varrho^I_{\pxi,\vp})^3 \, , \quad \vartheta_\diamond^{i_1i_2\dots i_{\dpot-1}i_\dpot} = \delta^{i_1i_2\dots i_{\dpot-1}i_\dpot} \Delta^{-\sfrac \dpot 2}\varrho_\diamond \, , \notag\\
    &\const_{\ast,1} = \Gamma_q  {\la_{q+\half+1}^{\alpha}}\, , \quad \const_{\ast,\infty} = \left( \frac{\lambda_{q+\half+1}}{\lambda_{q+\half}\Ga_q} \right)^{2} {\la_{q+\half+1}^{\alpha}} \, , \quad \mu = \Upsilon=\Upsilon'=\la_{q+\half}\Ga_q \, , \quad \La=\la_{q+\half+1} \, ,  \notag\\
    &\Ndec \textnormal{ as in \eqref{i:par:9}/\eqref{condi.Ndec0}} \, , \quad \dpot \textnormal{ as in \eqref{i:par:10}/\eqref{ineq:dpot:1}} \, \notag
\end{align}
\end{subequations}
Now \eqref{item:inverse:i}--\eqref{item:inverse:ii} hold by definition and from
\eqref{lem:special:cases} and \eqref{int:pipe:bundle:short}
(which specify the minimum frequency of $\mu=\la_{q+\half} {\Ga_q}$), \eqref{eq:DN:Mikado:density} holds due to Propositions~\ref{prop:pipeconstruction} and \ref{prop:pipe.flow.current} and estimate \eqref{eq:lowest:shell:inverse} from Lemma~\ref{lem:special:cases} applied with $\lambda r = \mu$, $\lambda=\lambda_\qbn$, $\lambda_0=\lambda_{q+\half {+1}}$, $\rho=\varrho_\diamond$, and $q=1$, \eqref{eq:inverse:div:parameters:0} holds by definition and by \eqref{condi.Nfin0}, \eqref{eq:inverse:div:parameters:1} holds due to \eqref{condi.Ndec0}, \eqref{eq:inv:div:wut} holds by \eqref{condi.Nfin0}, \eqref{eq:inverse:div:v:global}--\eqref{eq:inverse:div:v:global:parameters} hold from Remark~\ref{rem:lossy:choices}, and \eqref{eq:riots:4} holds from \eqref{ineq:dpot:1}. In order to check the additional assumptions in Part 2 of Proposition~\ref{lem.pr.invdiv2.c}, we set
\begin{align}
    &N_{**} \textnormal{ as in \eqref{i:par:10}} \,, \quad \NcutLarge, \NcutSmall \textnormal{ as in \eqref{i:par:6}} \, , \quad \Ga=\Ga_q^{ {\frac 1{10}}} \, , \quad \delta_{\rm tiny} = \delta^2_{q+3\bn} \, , \quad r_\phi = r_{q+\half+1} \, , \label{eq:desert:choices:1}\\
    &\delta_{\phi,p}^{\sfrac 32} = \const_{G_\diamond,p}\const_{*,p}(\lambda_{q+\half}\Ga_q)^{-1}r_{q+\half+1} \, , \quad \bm = 1 \, , \quad \mu_0 = \lambda_{q+\half+1}\Ga_q^{-1} \, , \quad \mu_{\bm} = \mu_1= \lambda_{q+\half+1}\Ga_q^{2} \, . \notag
\end{align}
Then \eqref{i:st:sample:wut:c}--\eqref{i:st:sample:wut:wut:c} hold from \eqref{condi.Nfin0}, \eqref{i:st:sample:wut:wut:wut:c} holds from \eqref{ineq:Nstarstar:dpot}, \eqref{eq:sample:prop:de:phi:c} holds by definition, \eqref{eq:sample:prop:Ncut:1:c} holds from \eqref{condi.Ncut0.1}, \eqref{eq:sample:prop:Ncut:2:c} holds from \eqref{condi.Ncut0.2}, \eqref{eq:sample:prop:Ncut:3:c} holds from \eqref{condi.Nfin0}, \eqref{eq:sample:prop:decoup:c} holds from \eqref{condi.Ndec0}, \eqref{eq:sample:prop:par:00:c} holds by definition, \eqref{eq:sample:prop:parameters:0:c} holds by definition and immediate computation, \eqref{eq:sample:riots:4:c} holds due to \eqref{ineq:dpot:1}, and \eqref{eq:sample:riot:4:4:c} holds due to \eqref{ineq:Nstarz:1}.

From \eqref{eq:inverse:div:error:stress:bound} and summing over the values of $i,j,k,\xi,\vecl,\diamond,I$ which may be non-zero at a fixed point in time in a manner similar to that from \texttt{Case 1a}, we have from \eqref{eq:inverse:div:error:stress:bound} and Remark~\ref{rem:lossy:choices} that for all $M_\circ,N_\circ\leq 2\Nind$,
\begin{align}\label{eq:cur:osc:ugh:2:redux}
\left| D^N \Dtq^M \sum_{i,j,k,\xi,\vecl,\diamond,I} \divR t_{i,j,k,\xi,\vecl,I,\diamond}^{q+\half+1} \right| \leq \la_\qbn^{-2} \delta_{q+3\bn}^{\sfrac 32} \Tau_\qbn^{2\Nindt} \la_{q+1}^N \tau_q^{-M} \, .
\end{align}
This verifies \eqref{eq:osc.current:estimate:1} at level $m=q+\half+1$. From \eqref{d:press:stress:sample:c}--\eqref{est.S.by.pr.final2:c} and \eqref{condi.Nfin0}, we have that there exists a pressure increment $\sigma_{ \divH t_{i,j,k,\xi,\vecl,I,\diamond}^{q+\half+1}} =\sigma_{ \divH t_{i,j,k,\xi,\vecl,I,\diamond}^{q+\half+1}}^+-\sigma_{ \divH t_{i,j,k,\xi,\vecl,I,\diamond}^{q+\half+1}}^-$ such that for $N,M\leq \sfrac{\Nfin}{7}$, 
\begin{align}\label{eq:desert:cowboy:1}
    \left| D^N \Dtq^M \divH t_{i,j,k,\xi,\vecl,I,\diamond}^{q+\half+1}  \right| \lesssim \left( \left( \sigma_{ \divH t_{i,j,k,\xi,\vecl,I,\diamond}^{q+\half+1}}^+ \right)^{\sfrac 32} r_{q+1}^{-1} + \delta_{q+3\bn}^2 \right) (\lambda_{q+\half+1}\Ga_q)^N \MM{M,\Nindt,\tau_q^{-1}\Ga_q^{i+14},\Tau_q^{-1}\Ga_q^{9}} \, .
\end{align}
From \eqref{eq:inverse:div:linear} and \eqref{est.S.pr.p.support:1:c}, we have that 
\begin{align}
    \supp \left( \sigma_{ \divH t_{i,j,k,\xi,\vecl,I,\diamond}^{q+\half+1}}^+ \right) \subseteq \supp \left( \divH t_{i,j,k,\xi,\vecl,I,\diamond}^{q+\half+1} \right) \subseteq \supp \left( a_{\pxi,\diamond} \left(\rhob_\pxi^\diamond \zetab_\xi^I \right)\circ\Phiik \right) \, . \label{eq:ocdc:support} 
\end{align}

Now define
\begin{align}\label{eq:desert:pressure:def}
    \sigma_{\ov\phi_O^{q+\half+1}}^{\pm} = \sum_{i,j,k,\xi,\vecl,I,\diamond} \sigma_{ \divH t_{i,j,k,\xi,\vecl,I,\diamond}^{q+\half+1}}^{\pm} \, . 
\end{align}
Then \eqref{eq:oooooldies} gives that \eqref{eq:o.p.6} is satisfied for $m'=q+\half+1$. 
From \eqref{eq:desert:cowboy:1}, \eqref{eq:desert:cowboy:sum}, \eqref{eq:inductive:partition}, and Corollary~\ref{lem:agg.pt} with 
\begin{align}
&H=\divH t_{i,j,k,\xi,\vecl,I,\diamond}^{q+\half+1} \, , \qquad \varpi=\left[\left(\sigma_{ \divH t_{i,j,k,\xi,\vecl,I,\diamond}^{q+\half+1}}^+\right)^{\sfrac 32}r_{q+\half+1}^{-1}+\delta_{q+3\bn}^2\right] \mathbf{1}_{\supp a_{\pxi,\diamond}(\rhob_\pxi^\diamond\zetab_\xi^I)\circ\Phiik }  \,  , \qquad p=1 \, , \notag 
\end{align}
we have that for $N,M\leq \sfrac{\Nfin}{7}$, 
\begin{align}\label{eq:desert:cowboy:2}
    \left| \psi_{i,q} D^N \Dtq^M \sum_{i',j,k,\xi,\vecl,I,\diamond} \divH t_{i',j,k,\xi,\vecl,I,\diamond}^{q+\half+1}  \right| &\lesssim \left( \left( \sigma_{\ov\phi_O^{q+\half+1}}^+ \right)^{\sfrac 32} r_{q+\half+1}^{-1} + \delta_{q+3\bn}^2 \right) \notag\\
    &\qquad \times (\lambda_{q+\half+1}\Ga_q)^N \MM{M,\Nindt,\tau_q^{-1}\Ga_q^{i+15},\Tau_q^{-1}\Ga_q^{9}} \, .
\end{align}
In combination with the bound in \eqref{eq:cur:osc:ugh:2:redux}, we have that \eqref{eq:co.p.1} is satisfied for $m=q+\half+1$. From \eqref{est.S.prbypr.pt:c}, \eqref{condi.Nfin0}, and \eqref{condi.Nindt}, we have that for $N,M\leq \sfrac{\Nfin}{7}$,
\begin{align}
     \left| D^N \Dtq^M \sigma_{ \divH t_{i,j,k,\xi,\vecl,I,\diamond}^{q+\half+1}}^+ \right| \lesssim \left( \sigma_{ \divH t_{i,j,k,\xi,\vecl,I,\diamond}^{q+\half+1}}^+ + \delta_{q+3\bn}^2 \right) (\lambda_{q+\half+1}\Ga_q)^N \MM{M,\Nindt,\tau_q^{-1}\Ga_q^{i+15},\Tau_q^{-1}\Ga_q^{9}} \, . \label{eq:desert:cowboy:4}
\end{align}
From \eqref{eq:desert:cowboy:4}, \eqref{eq:desert:cowboy:sum}, \eqref{eq:inductive:partition}, and Corollary~\ref{lem:agg.pt} with 
\begin{align}
&H=\sigma^+_{\divH t_{i,j,k,\xi,\vecl,I,\diamond}^{q+\half+1}} \, , \qquad \varpi= \left[ H + \delta_{q+3\bn}^2 \right] \mathbf{1}_{\supp a_{\pxi,\diamond}(\rhob_\pxi^\diamond\zetab_\xi^I)\circ\Phiik}  \,  , \qquad p=1 \, , \notag 
\end{align}
we have that \eqref{eq:co.p.2} is satisfied for $m=q+\half+1$.

Next, from \eqref{est.S.pr.p:c}, we have that
\begin{align}
    \left\| \sigma^{\pm}_{\divH t_{i,j,k,\xi,\vecl,I,\diamond}^{q+\half+1}} \right\|_{\sfrac 32} &\les \left(\delta_\qbn r_q^{-\sfrac 23} \Ga_q^{2j+30} \Lambda_q^{\sfrac 23} \left| \supp \left( \eta_{i,j,k,\xi,\vecl,\diamond} \zetab_\xi^{I,\diamond} \right) \right|^{\sfrac 23} 
     {+\la_{q+\bn}^{-10}}\right)
    (\lambda_{q+\half}\Ga_q)^{-\sfrac 23} r_{q+\half+1}^{\sfrac 23} \notag \, .
\end{align}
Now from \eqref{eq:desert:pressure:def}, \eqref{eq:desert:ineq}, and Corollary~\ref{rem:summing:partition} with $\theta=2$, $\theta_1=0$, $\theta_2=2$, $H=\sigma^{\pm}_{\divH t_{i,j,k,\xi,\vecl,I,\diamond}^{q+\half+1}}$, and $p=\sfrac 32$, we have that
\begin{align}
    \left\| \psi_{i,q} \sigma^{\pm}_{\ov \phi_O^{q+\half+1}} \right\|_{\sfrac 32} &\les \delta_\qbn r_q^{-\sfrac 23} \Ga_q^{33} \Lambda_q^{\sfrac 23}  (\lambda_{q+\half}\Ga_q)^{-\sfrac 23} r_{q+\half+1}^{\sfrac 23} \notag\\
    &\leq \delta_{q+\bn+\half+1} \Ga_{q+\half+1}^{-10} \, . \notag 
\end{align}
Combined with \eqref{eq:co.p.2}, this verifies \eqref{eq:co.p.3} at level $q+\half+1$.  Arguing now for $p=\infty$ from \eqref{est.S.pr.p:c} and using \eqref{ineq:jmax:use}, we have that
\begin{align}
    \left\| \sigma^{\pm}_{\divH t_{i,j,k,\xi,\vecl,I,\diamond}^{q+\half+1}} \right\|_{\infty} &\les \delta_\qbn r_q^{-\sfrac 23}\Ga_q^{2j+30}\La_q^{\sfrac 23} \left( \frac{\la_{q+\half+1}}{\la_{q+\half}\Ga_q} \right)^{\sfrac 43} (\la_{q+\half}\Ga_q)^{-\sfrac 23} r_{q+\half+1}^{\sfrac 23} \\
    &\les \Ga_q^{36+\badshaq} \left( \frac{\la_{q+\half+1}}{\la_{q+\half}\Ga_q} \right)^{\sfrac 43} \Lambda_q^{\sfrac 23}  (\lambda_{q+\half}\Ga_q)^{-\sfrac 23}
    \leq \Ga_{q+\half+1}^{\badshaq-11}
    \notag \, .
\end{align}
Now from \eqref{eq:desert:pressure:def} and Corollary~\ref{lem:agg.pt} with $H=\sigma^{\pm}_{\divH t_{i,j,k,\xi,\vecl,I,\diamond}^{q+\half+1}}$, $\varpi=\Ga_{q+\half+1}^{\badshaq-11}\mathbf{1}_{\supp a_{\pxi,\diamond}(\rhob_\pxi^\diamond\zetab_\xi^I)\circ\Phiik}$ and $p=1$, we have that
\begin{align}
    \left\| \psi_{i,q} \sigma^{\pm}_{\ov \phi_O^{q+\half+1}} \right\|_\infty 
    &\leq \Ga_{q+\half+1}^{\badshaq-10} \, . \notag 
\end{align}
Combined again with \eqref{eq:co.p.2}, this verifies \eqref{eq:co.p.3.5} at level $q+\half+1$. 

Finally, from \eqref{est.S.prminus.pt:c}, \eqref{ind:pi:lower}, \eqref{condi.Nindt}, \eqref{ineq:r's:eat:Gammas}, and \eqref{condi.Nfin0}, we have that for $N,M\leq \sfrac{\Nfin}{7}$, 
\begin{align}
    \left| D^N \Dtq^M \sigma^{-}_{\divH t_{i,j,k,\xi,\vecl,I,\diamond}^{q+\half+1}} \right| &\les \left(\frac{r_{q+\half+1}}{r_q} \right)^{\sfrac 23} \Ga_q^{28} \pi_q^q \La_q^{\sfrac 23} \la_{q+\half}^{-\sfrac 23} (\la_{q+\half}\Ga_q)^N \MM{M,\Nindt,\tau_q^{-1}\Ga_q^{i+15},\Tau_q^{-1}\Ga_q^9} \notag\\
    &\leq \Ga_q^{-10}\left( \frac{\la_q}{\la_{q+\half}} \right)^{\sfrac 23} \pi_q^q (\la_{q+\half}\Ga_q)^N \MM{M,\Nindt,\tau_q^{-1}\Ga_q^{i+15},\Tau_q^{-1}\Ga_q^9} \, . \notag 
\end{align}
Applying \eqref{eq:desert:pressure:def} and Corollary~\ref{lem:agg.pt} with $H=\sigma^{-}_{\divH t_{i,j,k,\xi,\vecl,I,\diamond}^{q+\half+1}}$, $\varpi=\left( \frac{\la_q}{\la_{q+\half}} \right)^{\sfrac 23} \pi_q^q\Ga_q^{-1} \mathbf{1}_{\supp a_{\pxi,\diamond}(\rhob_\pxi^\diamond \zetab_\xi^I)\circ\Phiik}$ and $p=1$, we have that \eqref{eq:co.p.4} is verified at level $m=q+\half+1$.
\smallskip

\noindent\texttt{Case 3:} Analysis for  \eqref{osc.current.med1} and \eqref{osc.current.high}.
Fix $\xi$, $i$, $j$, $k$, $\vecl$, $I$, and $\diamond$. In order to check the low-frequency, preliminary assumptions in Part 1 of Proposition~\ref{lem.pr.invdiv2.c}, we may use the exact same choices as in \eqref{eq:desert:choices}.  In order to check the high-frequency, preliminary assumptions in Part 1 of Proposition~\ref{lem.pr.invdiv2.c}, we set
\begin{subequations}
\begin{align}
    &\varrho_R = \left( \tP_{(q+\half+1, q+\half+{\sfrac32}]}^\xi +\tP_{(q+\half+{\sfrac32}, q+\half+2]}^\xi \right) (\varrho^I_{\pxi,R})^3 r_q \quad \textnormal{if} \quad m=q+\half+2 \, , \notag\\
    &\varrho_\vp = \left( \tP_{(q+\half+1, q+\half+{\sfrac32}]}^\xi +\tP_{(q+\half+{\sfrac32}, q+\half+2]}^\xi \right) (\varrho^I_{\pxi,\vp})^3 \quad \textnormal{if} \quad m=q+\half+2 \notag\\
    &\varrho_R = \tilde{\mathbb{P}}_{(m-1,m]}^\xi  (\varrho^I_{\pxi,R})^3 r_q \, , \quad \varrho_\vp = \tilde{\mathbb{P}}_{(m-1,m]}^\xi  (\varrho^I_{\pxi,\vp})^3 \quad \textnormal{ if } \quad m\neq q+\half+2 \notag\\
    &\vartheta_\diamond^{i_1i_2\dots i_{\dpot-1}i_\dpot} \text{ given by Lemma~\ref{lem:LP.est}}, , \quad \const_{\ast,1} = 1 \, , \quad \const_{\ast,\infty} = \left( \frac{\min(\lambda_m,\lambda_\qbn)}{\lambda_{q+\half}\Ga_q} \right)^{2} \, , \quad \mu =\la_{q+\half}\Ga_q \, , \notag\\
    &\Upsilon = \lambda_{q+\half+1} \, , \quad \Upsilon'=\La=\la_{q+\half+\sfrac 32} \quad \textnormal{for the first projector if } \quad m=q+\half+2 \, ,  \notag\\
    &\Upsilon = \lambda_{q+\half+\sfrac 32} \, , \quad \Upsilon'=\La=\la_{q+\half+2} \quad \textnormal{for the second projector if } \quad m=q+\half+2 \, ,  \notag\\
    &\Upsilon = \lambda_{m-1} \, , \quad \Upsilon'=\La=\min\left(\la_{m},\lambda_{q+\bn}\right) \quad \textnormal{if} \quad m\neq q+\half+2 \, ,  \notag\\
    &\Ndec \textnormal{ as in \eqref{i:par:9}/\eqref{condi.Ndec0}} \, , \quad \dpot \textnormal{ as in \eqref{i:par:10}/\eqref{ineq:dpot:1}} , . \notag
\end{align}
\end{subequations}
Now \eqref{item:inverse:i}--\eqref{item:inverse:ii} of the high-frequency assumptions of Proposition~\ref{prop:intermittent:inverse:div} hold by definition and from \eqref{lem:LP.est} and \eqref{int:pipe:bundle:short} as before, \eqref{eq:DN:Mikado:density} holds due to Propositions~\ref{prop:pipeconstruction} and \ref{prop:pipe.flow.current} and estimate \eqref{eq:LP:div:estimates} from Lemma~\ref{lem:LP.est} applied with the obvious choices, \eqref{eq:inverse:div:parameters:0} holds by definition, by \eqref{condi.Nfin0}, and by our extra splitting in the case $m=q+\half+2$, and \eqref{eq:inverse:div:parameters:1} and \eqref{eq:inv:div:wut}--\eqref{eq:riots:4} hold after appealing to the same parameter inequalities as the previous case. In order to check the additional assumptions in Part 2 of Proposition~\ref{lem.pr.invdiv2.c}, we set
\begin{align}
    &N_{**} \textnormal{ as in \eqref{i:par:10}/\eqref{ineq:Nstarz:1}} \,, \quad \NcutLarge, \NcutSmall \textnormal{ as in \eqref{i:par:6}/\eqref{condi.Ncut0}} \, , \quad \Ga=\Ga_q^{\frac 1{10}} \, , \quad \delta_{\rm tiny} = \delta^2_{q+3\bn} \, , \quad r_\phi = r_{\min(m,\qbn)} \, , \notag\\
    &\delta_{\phi,p}^{\sfrac 32} = \const_{G_\diamond,p}\const_{*,p}\Upsilon'\Upsilon^{-2}r_{\min(m,\qbn)} \, , \quad \mu_0 = \lambda_{q+\half+1} \, , \quad \mu_1 = \lambda_{q+\half+\sfrac 32}\Ga_q^2 \, , \notag\\
    &\mu_{m'} = \lambda_{q+\half+{m'}}\Ga_q^2 \quad \textnormal{if} \quad 2\leq m' \leq \half+1 \, , \notag\\
    &\bm = {1} \quad \textnormal{for the first projector if} \quad  m=q+\half+2 \, , \notag \\
    &\bm = {2} \quad \textnormal{for the second projector if} \quad  m=q+\half+2 \, , \notag \\
    &\bm = {m-q-\half} \quad \textnormal{if} \quad  m>q+\half+2 \, . \label{eq:more:desert:choices}
\end{align}
Then \eqref{i:st:sample:wut:c}--\eqref{eq:sample:prop:decoup:c} hold after appealing to the same inequalities as in the previous case, \eqref{eq:sample:prop:par:00:c} holds by definition, \eqref{eq:sample:prop:parameters:0:c} holds by definition and immediate computation, and \eqref{eq:sample:riots:4:c}--\eqref{eq:sample:riot:4:4:c} hold as in the previous case.

First, we have that \eqref{eq:osc.current:estimate:1} at level $m'$ for $q+\half+2\leq m' \leq q+\bn$ is satisfied by an argument essentially identical to that of the previous case.  Next, from \eqref{d:press:stress:sample:c}--\eqref{est.S.by.pr.final2:c} and \eqref{condi.Nfin0}, we have that for $q+\half+2\leq m \leq q+\bn+1$, there exists a pressure increment $\sigma_{ \divH t_{i,j,k,\xi,\vecl,I,\diamond}^{m}}^+$ such that for $N,M\leq \sfrac{\Nfin}{7}$, 
\begin{align}
    \left| D^N \Dtq^M \divH t_{i,j,k,\xi,\vecl,I,\diamond}^{m}  \right| &\lesssim \left( \left( \sigma_{ \divH t_{i,j,k,\xi,\vecl,I,\diamond}^{m}}^+ \right)^{\sfrac 32} r_{\min(m,\qbn)}^{-1} + \delta_{q+3\bn}^2 \right)  \notag\\
    &\qquad \qquad \times (\min(\lambda_m,\lambda_{q+\bn})\Ga_q)^N \MM{M,\Nindt,\tau_q^{-1}\Ga_q^{i+14},\Tau_q^{-1}\Ga_q^{9}} \, .  \label{eq:desert:cowboy:1:redux}
\end{align}
From \eqref{eq:inverse:div:linear}, \eqref{est.S.pr.p.support:1:c}, and \eqref{eq:LP:div:support}, we have that 
\begin{align}
    \supp \left( \sigma_{ \divH t_{i,j,k,\xi,\vecl,I,\diamond}^{m}}^+ \right) \subseteq \supp \left( \divH t_{i,j,k,\xi,\vecl,I,\diamond}^{m} \right) \subseteq \supp \left( a_{\pxi,\diamond} \left(\rhob_\pxi^\diamond \zetab_\xi^I \right)\circ\Phiik \right) \cap B\left( \supp \varrho_{\pxi,\diamond}^I, \lambda_{m-1}^{-1} \right) \, . \label{eq:desert:support} 
\end{align}

Now define
\begin{align}
    \sigma_{\ov\phi_O^{m}}^{\pm} = \sum_{i,j,k,\xi,\vecl,I,\diamond} \sigma_{ \divH t_{i,j,k,\xi,\vecl,I,\diamond}^{m}}^{\pm} \quad \textnormal{if} \quad m\neq q+\bn \, , \label{eq:desert:pressure:def:redux} \\
    \sigma_{\ov\phi_O^{q+\bn}}^{\pm} = \sum_{m'=q+\bn}^{q+\bn+1} \sum_{i,j,k,\xi,\vecl,I,\diamond} \sigma_{ \divH t_{i,j,k,\xi,\vecl,I,\diamond}^{m'}}^{\pm} \quad \textnormal{if} \quad m=q+\bn \, , \label{eq:desert:pressure:def:redux:redux} 
\end{align}
Then \eqref{eq:oooooldies}--\eqref{eq:dodging:oldies} and \eqref{eq:desert:support} give that \eqref{eq:o.p.6} is satisfied for $q+\half+2\leq m'\leq q+\bn$. 
From \eqref{eq:desert:cowboy:1:redux}, \eqref{eq:desert:cowboy:sum}, \eqref{eq:inductive:partition}, and Corollary~\ref{lem:agg.pt} with 
\begin{align}
&H=\divH t_{i,j,k,\xi,\vecl,I,\diamond}^{m} \, , \qquad \varpi=\left[\left(\sigma_{ \divH t_{i,j,k,\xi,\vecl,I,\diamond}^{m}}^+\right)^{\sfrac 32}r_{\min(m,\qbn)}^{-1}+\delta_{q+3\bn}^2\right] \mathbf{1}_{\supp a_{\pxi,\diamond}(\rhob_\pxi^\diamond\zetab_\xi^I))\circ\Phiik}  \,  , \qquad p=1 \, , \notag 
\end{align}
we have that for $N,M\leq \sfrac{\Nfin}{7}$ and $q+\half+2\leq m <q+\bn$, 
\begin{align}\label{eq:desert:cowboy:2:redux}
    \left| \psi_{i,q} D^N \Dtq^M \sum_{i',j,k,\xi,\vecl,I,\diamond} \divH t_{i',j,k,\xi,\vecl,I,\diamond}^{m}  \right| &\lesssim \left( \left( \sigma_{\ov\phi_O^{m}}^+ \right)^{\sfrac 32} r_{\min(m,\qbn)}^{-1} + \delta_{q+3\bn}^2 \right) \notag\\
    &\qquad \times (\lambda_{m}\Ga_q)^N \MM{M,\Nindt,\tau_q^{-1}\Ga_q^{i+15},\Tau_q^{-1}\Ga_q^{9}} \, .
\end{align}
An analogous statement holds if $m=q+\bn$, with the only change being the extra summation needed on the left-hand side, which leads to \eqref{eq:co.p.1} for $q+\half+2\leq m \leq \qbn$. From \eqref{est.S.prbypr.pt:c}, \eqref{condi.Nfin0}, and \eqref{condi.Nindt}, we have that for $N,M\leq \sfrac{\Nfin}{7}$,
\begin{align}
     \left| D^N \Dtq^M \sigma_{ \divH t_{i,j,k,\xi,\vecl,I,\diamond}^{m}}^+ \right| \lesssim \left( \sigma_{ \divH t_{i,j,k,\xi,\vecl,I,\diamond}^{m}}^+ + \delta_{q+3\bn}^2 \right) (\min(\lambda_{m},\la_\qbn)\Ga_q)^N \MM{M,\Nindt,\tau_q^{-1}\Ga_q^{i+15},\Tau_q^{-1}\Ga_q^{9}} \, . \label{eq:desert:cowboy:4:redux}
\end{align}
From \eqref{eq:desert:cowboy:4:redux}, \eqref{eq:desert:cowboy:sum}, \eqref{eq:inductive:partition}, and Corollary~\ref{lem:agg.pt} with 
\begin{align}
&H=\sigma^+_{\divH t_{i,j,k,\xi,\vecl,I,\diamond}^{m}} \, , \qquad \varpi= \left[ H + \delta_{q+3\bn}^2 \right] \mathbf{1}_{\supp a_{\pxi,\diamond}(\rhob_\pxi^\diamond\zetab_\xi^I)\circ\Phiik}  \,  , \qquad p=1 \, , \notag 
\end{align}
we have that \eqref{eq:co.p.2} is satisfied for $q+\half+2\leq m \leq q+\bn$.

Next, from \eqref{est.S.pr.p:c}, we have that
\begin{align}
    \left\| \sigma^{\pm}_{\divH t_{i,j,k,\xi,\vecl,I,\diamond}^{m}} \right\|_{\sfrac 32} &\les \left(\delta_\qbn r_q^{-\sfrac 23} \Ga_q^{2j+30} \Lambda_q^{\sfrac 23} \left| \supp \left( \eta_{i,j,k,\xi,\vecl,\diamond} \zetab_\xi^{I,\diamond} \right) \right|^{\sfrac 23} + \la_{q+\bn}^{-10}
    \right)
    \left(\lambda_{m-1}^2\lambda_m^{-1}\right)^{-\sfrac 23} r_{\min(m,\qbn)}^{\sfrac 23} \notag \, .
\end{align}
Now from \eqref{eq:desert:pressure:def:redux}--\eqref{eq:desert:pressure:def:redux:redux}, \eqref{eq:desert:ineq}, and Corollary~\ref{rem:summing:partition} with $\theta=2$, $\theta_1=0$, $\theta_2=2$, $H=\sigma^{\pm}_{\divH t_{i,j,k,\xi,\vecl,I,\diamond}^{m}}$, and $p=\sfrac 32$, we have that
\begin{align}
    \left\| \psi_{i,q} \sigma^{\pm}_{\ov \phi_O^{m}} \right\|_{\sfrac 32} &\les \delta_\qbn r_q^{-\sfrac 23} \Ga_q^{33} \Lambda_q^{\sfrac 23}  \left(\lambda_{m-1}^2\lambda_m^{-1}\right)^{-\sfrac 23} r_{\min(m,\qbn)}^{\sfrac 23} \notag\\
    &\leq \delta_{m+\bn} \Ga_{m}^{-10} \, . \notag 
\end{align}
Combined with \eqref{eq:co.p.2}, this verifies \eqref{eq:co.p.3} for $q+\half+2\leq m' \leq q+\bn$.  Arguing now for $p=\infty$ from \eqref{est.S.pr.p:c}, we have that
\begin{align}
    \left\| \sigma^{\pm}_{\divH t_{i,j,k,\xi,\vecl,I,\diamond}^{m}} \right\|_{\infty} &\les \delta_\qbn r_q^{-\sfrac 23}\Ga_q^{2j+30}\La_q^{\sfrac 23} \left( \frac{\min(\la_m,\la_\qbn)}{\la_{q+\half}\Ga_q} \right)^{\sfrac 43} \left(\lambda_{m-1}^2\lambda_m^{-1}\right)^{-\sfrac 23} r_{\min(m,\qbn)}^{\sfrac 23}  \notag \, .
\end{align}
Now from \eqref{eq:desert:pressure:def:redux}--\eqref{eq:desert:pressure:def:redux:redux},  \eqref{ineq:jmax:use}, \eqref{eq:par:div:2}, and Corollary~\ref{lem:agg.pt} with $H=\sigma^{\pm}_{\divH t_{i,j,k,\xi,\vecl,I,\diamond}^{m}}$, $\varpi=\mathbf{1}_{\supp a_{\pxi,\diamond}(\rhob_\pxi^\diamond\zetab_\xi^I)\circ\Phiik}$ and $p=1$, we have that
\begin{align}
    \left\| \psi_{i,q} \sigma^{\pm}_{\ov \phi_O^{m}} \right\|_\infty &\les  \Ga_q^{36+\badshaq} \left( \frac{\min(\la_{m},\la_\qbn)}{\la_{q+\half}\Ga_q} \right)^{\sfrac 43} \Lambda_q^{\sfrac 23}  \left(\lambda_{m-1}^2\lambda_m^{-1}\right)^{-\sfrac 23} r_{\min(m,\qbn)}^{\sfrac 23}r_q^{-\sfrac23} \notag\\
    &\leq \Ga_{q+\half+1}^{\badshaq-10} \, . \notag 
\end{align}
Combined again with \eqref{eq:co.p.2}, this verifies \eqref{eq:co.p.3.5} at level $q+\half+2\leq m' \leq q+\bn$. 

Finally, from \eqref{est.S.prminus.pt:c}, \eqref{ind:pi:lower}, \eqref{condi.Nindt}, \eqref{ineq:r's:eat:Gammas}, and \eqref{condi.Nfin0}, we have that for $N,M\leq \sfrac{\Nfin}{7}$ and $q+\half+3\leq m \leq q+\bn+1$, 
\begin{align}
    \left| D^N \Dtq^M \sigma^{-}_{\divH t_{i,j,k,\xi,\vecl,I,\diamond}^{m}} \right| &\les \left(\frac{r_{\min(m,\qbn)}}{r_q} \right)^{\sfrac 23} \Ga_q^{40} \pi_q^q \La_q^{\sfrac 23} \left(\lambda_{m-1}^2\lambda_m^{-1}\right)^{-\sfrac 23} (\la_{q+\half}\Ga_q)^N \MM{M,\Nindt,\tau_q^{-1}\Ga_q^{i+15},\Tau_q^{-1}\Ga_q^9} \notag\\
    &\leq \Ga_q^{-10}\left( \frac{\la_q}{\la_{q+\half}} \right)^{\sfrac 23} \pi_q^q (\la_{q+\half}\Ga_q)^N \MM{M,\Nindt,\tau_q^{-1}\Ga_q^{i+15},\Tau_q^{-1}\Ga_q^9} \, . \notag 
\end{align}
A similar inequality holds for $m=q+\half+2$ after using the extra splitting of the Littlewood-Paley projector to mitigate the loss from $\lambda_{m-1}^{-1}\lambda_m$.  Then applying \eqref{eq:desert:pressure:def:redux}--\eqref{eq:desert:pressure:def:redux:redux} and Corollary~\ref{lem:agg.pt} with $H=\sigma^{-}_{\divH t_{i,j,k,\xi,\vecl,I,\diamond}^{m}}$, $\varpi=\Ga_q^{-10}\left( \frac{\la_q}{\la_{q+\half}} \right)^{\sfrac 23} \pi_q^q \mathbf{1}_{\supp a_{\pxi,\diamond}(\rhob_\pxi^\diamond \zetab_\xi^I)\circ\Phiik}$ and $p=1$, we have that \eqref{eq:co.p.4} is verified.
\smallskip

\noindent\texttt{Case 4:} Analysis for \eqref{osc.current.high2}. We expect that the error term in \eqref{osc.current.high2} is vanishingly small due to the Littlewood-Paley projector on the cubed pipe density.  Therefore no pressure increment will be necessary, and we do not even need a local portion of the inverse divergence.  We thus apply Proposition~\ref{prop:intermittent:inverse:div} with $p=\infty$ and the following choices.  The low-frequency assumptions in Part 1 are exactly the same as the $L^\infty$ low-frequency assumptions in the previous two steps.  For the high-frequency assumptions, we recall the choice of $N_{**}$ from \eqref{i:par:10}/\eqref{ineq:Nstarz:1} and set
\begin{align}
    &\varrho_\ph = (\Id - \tilde{\mathbb{P}}_{q+\bn+1}^\xi ) \mathbb{P}_{\neq 0} \left( \varrho_{\pxi,\ph}^I \right)^3 \, , \quad 
    \varrho_R = (\Id - \tilde{\mathbb{P}}_{q+\bn+1}^\xi ) \mathbb{P}_{\neq 0} \left( \varrho_{\pxi,R}^I \right)^3 r_q\, , \quad
    \vartheta^{i_1i_2\dots i_{\dpot-1}i_\dpot}_\diamond = \delta^{i_1i_2\dots i_{\dpot-1}i_\dpot} \Delta^{-\sfrac \dpot 2}\varrho_\diamond \, , \notag\\
    &\mu = \Upsilon=\Upsilon' = \lambda_{q+\half}\Ga_q \, , \quad \Lambda=\lambda_{q+\bn} \, , \quad \const_{*,\infty} = \left( \frac{\lambda_\qbn}{\lambda_{q+\bn+1}} \right)^{N_{**}} \lambda_\qbn^3 \, , \quad \Ndec \textnormal{ as in \eqref{i:par:9}/\eqref{condi.Ndec0}} \, , \quad \dpot=0 \, . \notag
\end{align}
Then we have that item~\ref{item:inverse:i} is satisfied by definition, item~\ref{item:inverse:ii} is satisfied as in the previous steps, \eqref{eq:DN:Mikado:density} is satisfied using Propositions~\ref{prop:pipeconstruction} and \ref{prop:pipe.flow.current} and \eqref{eq:remainder:inverse} from Lemma~\ref{lem:special:cases}, \eqref{eq:inverse:div:parameters:0} is satisfied by definition and as in the previous steps, and \eqref{eq:inverse:div:parameters:1} is satisfied by \eqref{condi.Ndec0}.  For the nonlocal assumptions, we choose $M_\circ,N_\circ=2\Nind$ so that \eqref{eq:inv:div:wut}--\eqref{eq:inverse:div:v:global:parameters} are satisfied as in Case 1, and \eqref{eq:riots:4} is satisfied from \eqref{ineq:Nstarz:1}. We have thus satisfied all the requisite assumptions, and we therefore obtain nonlocal bounds very similar to those from the previous steps, which are consistent with \eqref{eq:osc.current:estimate:1} at level $q+\bn$.  We omit further details.
\end{proof}

\begin{lemma}[\bf Pressure current]\label{lem:currentoscillation:pressure:current}
For every $m'\in\{q+\half+1,\dots,q+\bn\}$, there exist a current error $\phi_{{\ov\phi_O^{m'}}}$ associated to the pressure increment $\si_{\ov\phi_O^{m'}}$ defined by Lemma \ref{lem:oscillation.current:general:estimate} and a function of time $\bmu_{\phi_{{\ov\phi_O^{m'}}}}$ which satisfy the following properties.  
\begin{enumerate}[(i)]
    \item\label{i:pc:2} We have the decompositions and equalities
    \begin{subequations}
    \begin{align}\label{eq:desert:decomp}
        \phi_{{\ov\phi_O^{m'}}} &= \phi_{{\ov\phi_O^{m'}}}^* + \sum_{m=q+\half+1}^{{m'}} \phi_{{\ov\phi_O^{m'}}}^{m}, \quad 
         \phi_{\ov \phi_O^{m'}}^{m} = \phi_{\ov \phi_O^{m'}}^{m,l} + \phi_{\ov \phi_O^{m'}}^{m,*} \, \\
         \div \phi_{{\ov\phi_O^{m'}}} &+ \bmu_{\si_{\phi_{{\ov\phi_O^{m'}}}}}'\, 
          = D_{t,q}\si_{{\ov\phi_O^{m'}}}\, . \label{eq:desert:decomp:_}
    \end{align}
    
    \end{subequations}
    \item\label{i:pc:3} For $q+\half+1 \leq m \leq m'$ and $N,M\leq  2\Nind$,
    \begin{subequations}
    \begin{align}
        &\left|\psi_{i,q} D^N \Dtq^M \phi_{\ov \phi_O^{m'}}^{m,l} \right| < \Ga_{m}^{-100} \left(\pi_q^m\right)^{\sfrac 32} r_m^{-1} (\la_m \Ga_m^2)^M \MM{M,\Nindt,\tau_q^{-1}\Ga_q^{i+17},\Tau_q^{-1}\Ga_q^9} \label{eq:desert:estimate:1} \\
        &\left\| D^N \Dtq^M \phi_{\ov \phi_O^{m'}}^{m,*} \right\|_\infty + \left\| D^N\Dtq^M \phi_{\ov\phi_O^{m'}}^{*} \right\|_\infty < \Tau_\qbn^{2\Nindt} \delta_{q+3\bn}^{\sfrac 32} (\la_{m'}\Ga_{m'}^2)^N \tau_q^{-M} \label{eq:desert:estimate:2} \, .
    \end{align}
    \end{subequations}
    \item\label{i:pc:4} For all $q+\half+1\leq m \leq m'$ and all $q+1\leq q' \leq m-1$, 
    \begin{align}
        B\left( \supp \hat w_{q'}, \sfrac 12 \lambda_{q'}^{-1} \Ga_{q'+1} \right) \cap \supp \left( \phi^{m,l}_{\ov \phi_O^{m'}} \right) = \emptyset \label{eq:desert:dodging} \, .
    \end{align}
    \item \label{i:pc:5} 
    The function of time $\bmu_{\si_{\phi_{{\ov\phi_O^{m'}}}}}$
    satisfies that for $M\leq 2\Nind$, 
    \begin{align}\label{eq:desert:mean}
        \bmu_{\sigma_{\ov\phi_O^{m'}}}(t) = \int_0^T \left\langle D_{t,q}\si_{{\ov\phi_O^{m'}}}\right\rangle (s) \, ds \, , \quad
        \left|\frac{d^{M+1}}{dt^{M+1}} \bmu_{\si_{\phi_{{\ov\phi_O^{m'}}}}} \right| 
        \leq (\max(1, T))^{-1}\delta_{q+3\bn} \tau_q^{-M} \, .
    \end{align}
\end{enumerate}
\end{lemma}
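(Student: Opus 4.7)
The strategy mirrors that of Lemma~\ref{lem:pr.current.vel.inc}, Lemma~\ref{lem:stress:pressure:current.stress}, and Lemma~\ref{lem:ctn:pressure:current}, and follows the heuristic presented after \eqref{eq:sunday:to:bound}--\eqref{sunday:eve}. The plan is to write $D_{t,q} \sigma_{\ov\phi_O^{m'}}$ as the divergence of a current error plus a mean correction, where the divergence is constructed shell-by-shell using a synthetic Littlewood-Paley decomposition followed by the localized inverse divergence of Proposition~\ref{prop:intermittent:inverse:div}. First, using the synthetic Littlewood-Paley decomposition from subsection~\ref{sec:LP},
\[
\sigma_{\ov\phi_O^{m'}} = \tP_{\lambda_{q+\half+1}} \sigma_{\ov\phi_O^{m'}} + \sum_{m=q+\half+2}^{m'} \tP_{(\lambda_{m-1},\lambda_m]} \sigma_{\ov\phi_O^{m'}} + (\Id - \tP_{\lambda_{m'}}) \sigma_{\ov\phi_O^{m'}} =: \sum_{m} \sigma_{\ov\phi_O^{m'}}^{m} + \sigma_{\ov\phi_O^{m'}}^{*},
\]
where the tail piece $\sigma_{\ov\phi_O^{m'}}^{*}$ is negligible because $\sigma_{\ov\phi_O^{m'}}$ only carries effective frequencies up to $\lambda_{m'}$. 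I would then define $\phi_{\ov\phi_O^{m'}}^{m} := \mathcal{H}\bigl(D_{t,q} \sigma_{\ov\phi_O^{m'}}^{m} - \bigl\langle D_{t,q}\sigma_{\ov\phi_O^{m'}}^{m}\bigr\rangle\bigr)$, with a corresponding nonlocal remainder $\phi_{\ov\phi_O^{m'}}^{m,*}$, and set $\bmu_{\sigma_{\phi_{\ov\phi_O^{m'}}}}(t) := \int_0^t \langle D_{t,q}\sigma_{\ov\phi_O^{m'}}\rangle(s)\,ds$, which accounts for the nonzero mean produced at each step and delivers \eqref{eq:desert:decomp}--\eqref{eq:desert:decomp:_}.

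Next, for the pointwise estimate \eqref{eq:desert:estimate:1}, the inputs to Proposition~\ref{prop:intermittent:inverse:div} will be $G = D_{t,q}\sigma_{\ov\phi_O^{m'}}$, whose pointwise bound in terms of $\sigma_{\ov\phi_O^{m'}}^{+} + \delta_{q+3\bn}$ is supplied by \eqref{eq:co.p.2}, and the high-frequency pipe density at scale $\lambda_m$, using the synthetic LP kernel estimates from Lemma~\ref{lem:LP.est} to gain $\lambda_m^{-1}$ from the inverse divergence while keeping spatial support localized to a $\lambda_m^{-1}$-neighborhood of the support of $\sigma_{\ov\phi_O^{m'}}^{+}$. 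Combining the $\Ga_m^{-9}$ gain from \eqref{eq:co.p.3}--\eqref{eq:co.p.3.5}, the $\lambda_{m-1}^{-1}\lambda_m$ loss from the high-frequency projector, and the $\tau_q^{-1}\Ga_q^{i+16}$ cost of $D_{t,q}$ from \eqref{eq:co.p.2}, the overall pointwise size of $\phi_{\ov\phi_O^{m'}}^{m,l}$ is (schematically)
\[
\tau_q^{-1} \cdot \Ga_m^{-9}\delta_{m+\bn} \cdot \lambda_m^{-1} \cdot r_m^{-1} \lesssim \Ga_m^{-100} (\pi_q^m)^{\sfrac 32} r_m^{-1},
\]
where the last inequality exploits the pressure scaling \eqref{eq:ind.pr.anticipated} and the definitions \eqref{eq:def:la:de}--\eqref{eq:deffy:of:gamma} to trade the $\Ga_m$ prefactors for arbitrarily small powers of $\Ga_m$ and absorb $\tau_q^{-1}$ into $(\pi_q^m)^{\sfrac 32}r_m^{-1}$; this verifies \eqref{eq:desert:estimate:1} after aggregating over cutoffs as in Corollaries~\ref{rem:summing:partition}--\ref{lem:agg.pt}. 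The nonlocal estimate \eqref{eq:desert:estimate:2} follows from \eqref{eq:inverse:div:error:stress:bound} combined with the rapid decay of the LP tail piece, just as in Case~4 of the proof of Lemma~\ref{lem:oscillation.current:general:estimate}.

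For the dodging property \eqref{eq:desert:dodging}, I would rely on the fact that both the localized inverse divergence of Proposition~\ref{prop:intermittent:inverse:div} and the synthetic LP projectors from subsection~\ref{sec:LP} preserve spatial support up to an $O(\lambda_{m}^{-1})$ neighborhood; combined with the support property \eqref{eq:o.p.6} of $\sigma_{\ov\phi_O^{m'}}^{+}$ (and the fact that $\Ga_{q'+1}/2 \gg \lambda_{m}^{-1}/\lambda_{q'}^{-1}$-type losses are easily absorbed by our parameter choices for $q' \leq m-1$), this gives the required disjointness from $\hat w_{q'}$. Finally, the mean estimate \eqref{eq:desert:mean} follows by moving the time derivative outside the spatial average: $(d^{M+1}/dt^{M+1})\bmu = d^{M}/dt^{M}\langle D_{t,q}\sigma_{\ov\phi_O^{m'}}\rangle$, where $\partial_t = D_{t,q} - \hat u_q\cdot\nabla$; on the mean, integration by parts kills the transport piece, so the estimate reduces to an application of \eqref{eq:co.p.2}, giving a bound of size $\delta_{m'+\bn}\tau_q^{-M-1}$, which is much smaller than $(\max(1,T))^{-1}\delta_{q+3\bn}\tau_q^{-M}$ by parameter choices.

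The main obstacle in this plan is the careful bookkeeping that confirms domination by \emph{anticipated} pressure $\pi_q^m$ (rather than a new pressure increment needing to be created), i.e., verifying the $\Ga_m^{-100}$ gain in \eqref{eq:desert:estimate:1}. The gain of $\lambda_m^{-1}$ from the inverse divergence must precisely balance the additional derivative cost of $D_{t,q}$ when traded against the scaling law \eqref{eq:ind.pr.anticipated}; this is the analogue of the sharp computation \eqref{sunday:eve} and requires the frequency $\lambda_m$ of each shell to match the frequency of the corresponding $\pi_q^m$ in the scaling. Once this heuristic is rigorously encoded via the pressure scaling and Corollary~\ref{lem:agg.pt}, the rest of the proof is a matter of stringing together the already-established support and derivative estimates.
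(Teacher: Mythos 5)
Your proposal identifies the right ingredients (a synthetic Littlewood--Paley decomposition into shells, localized inverse divergence on $D_{t,q}$ of each shell, pressure scaling to absorb the output into anticipated $\pi_q^m$), but there is a genuine structural gap in how you propose to apply them. You would apply the synthetic LP decomposition \emph{globally} to $\sigma_{\ov\phi_O^{m'}}$ and then invert the divergence of $D_{t,q}$ applied to the pieces. The problem is that $\sigma_{\ov\phi_O^{m'}}$ is a \emph{sum} over indices $(i,j,k,\xi,\vec l,I,\diamond)$ of terms of the form $\pr(H)\,\pr(\rho)\circ\Phi_{(i,k)}$, each with its \emph{own} flow map $\Phi_{(i,k)}$. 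The synthetic LP projector from Definition~\ref{def:synth:LP} acts in a fixed (unflowed) two-dimensional coordinate frame adapted to $\xi$, and the inverse divergence machinery of Proposition~\ref{prop:intermittent:inverse:div} requires the low/high frequency product structure $G\,\varrho\circ\Phi$ with a single flow map. Applying $\tP_{(\lambda_{m-1},\lambda_m]}$ to the whole aggregate destroys both of these structures, so the subsequent application of $\divH$ is not justified as written.

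The paper instead works term-by-term: for each index tuple, the pressure increment $\sigma^+_{\divH t_{i,j,k,\xi,\vec l,I,\diamond}^{m'}}$ has exactly the form $r_\phi^{2/3}\pr(H)\,\pr(\rho)\circ\Phi$ that Part~4 of Proposition~\ref{lem.pr.invdiv2.c} is designed to handle; that proposition internally applies the synthetic LP decomposition to $\pr(\rho)$ \emph{before} composing with the flow map, and then applies $\divH$, producing the shell-by-shell current errors $\phi^{\iota}_{\divH t_{\ldots}^{m'}}$ and the mean correction simultaneously. The $\phi_{\ov\phi_O^{m'}}^{m}$ in the lemma are then obtained by summing the $\phi^\iota$ over indices, with \eqref{est.S.by.pr.final3:c} furnishing the per-term pointwise bound and the aggregation corollaries (in particular Corollary~\ref{lem:agg.Dtq}, since the coefficient $\tau_q^{-1}\Ga_q^{i}$ from $D_{t,q}$ must be converted into $r_q^{-1}\lambda_q(\pi_q^q)^{1/2}$) giving \eqref{eq:desert:estimate:1}. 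Your schematic amplitude count $\tau_q^{-1}\Ga_m^{-9}\delta_{m+\bn}\lambda_m^{-1}r_m^{-1}$ compresses this but is not the right bookkeeping: you need the $L^{3/2}\to L^\infty$ intermittency loss $(\min(\lambda_m,\lambda_\qbn)\Ga_q/\lambda_{q+\half})^{4/3}$, the low-frequency pointwise control $\pi_\ell$ for the $3/2$-power structure, and the precise inverse-divergence gain $\lambda_{m-1}^{-2}\lambda_m$ at each shell---this is carried out in \eqref{eq:desert:parsing:2} and \eqref{eq:desert:parsing:3} and is exactly where the sharp cancellation analogous to \eqref{sunday:eve} lives.

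Two smaller issues: your mean estimate argument ``integration by parts kills the transport piece, so it reduces to \eqref{eq:co.p.2}'' is not how the paper proceeds; the mean bound comes from the dedicated estimate \eqref{est:mean.Dtsiph} in Proposition~\ref{lem.pr.invdiv2.c} Part~4, together with a sufficiently large choice of $a_*$ to supply the $(\max(1,T))^{-1}$ prefactor. And for the dodging \eqref{eq:desert:dodging}, the paper uses \eqref{est.S.pr.p.support:2:c} combined with \eqref{eq:oooooldies}--\eqref{eq:dodging:oldies}, tracking the $\lambda_{q+\half+m(\iota)-1}^{-1}\Ga_q^{-2}$-neighborhood precisely, rather than a blanket ``$O(\lambda_m^{-1})$-neighborhood plus parameter choices''; your appeal to \eqref{eq:o.p.6} is correct in spirit but you also need the support property of the current-error output of Proposition~\ref{lem.pr.invdiv2.c}, not just that of $\sigma^+$.
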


\begin{proof}
We follow the case numbering from Lemma~\ref{lem:oscillation.current:general:estimate}.  Note that the only cases which require a pressure increments are \texttt{Cases 2} and \texttt{3}, which correspond to the analysis of \eqref{osc.current.med0}--\eqref{osc.current.high}. 

\noindent\texttt{Case 2: }In this case, we recall from \eqref{eq:desert:choices:1} that we have chosen $\bm=1$ in item~\ref{i:st:sample:8:c}, $\mu_0=\lambda_{q+\half+1}\Ga_q^{-1}$, and $\mu_{\bm}=\mu_1=\lambda_{q+\half+1}\Ga_q^2$. We therefore have from \eqref{S:pr:current:dec} that
\begin{align}
    \phi_{ \divH t_{i,j,k,\xi,\vecl,I,\diamond}^{q+\half+1}} 
    = \phi_{ \divH t_{i,j,k,\xi,\vecl,I,\diamond}^{q+\half+1}}^* + \phi_{ \divH t_{i,j,k,\xi,\vecl,I,\diamond}^{q+\half+1}}^{0} + \phi_{ \divH t_{i,j,k,\xi,\vecl,I,\diamond}^{q+\half+1}}^{1} \, . \notag
\end{align}
and define the current error
$\phi_{\ov\phi_O^{q+\half+1}}:= \sum_{i,j,k,\xi,\vecl,I,\diamond} \phi_{\ov\phi_{i,j,k,\xi,\vecl,I,\diamond}^{q+\half+1}}$ which has a decomposition into
\begin{subequations}
\begin{align}
      \phi_{\ov\phi_O^{q+\half+1}}^* = \sum_{i,j,k,\xi,\vecl,I,\diamond} \phi_{\ov\phi_{i,j,k,\xi,\vecl,I,\diamond}^{q+\half+1}}^* \, ,  \quad   \phi_{\ov\phi_O^{q+\half+1}}^{q+\half+1}=\sum_{\substack{i,j,k,\xi,\vecl,I,\diamond \\ \iota=0,1}} \phi_{\ov\phi_{i,j,k,\xi,\vecl,I,\diamond}^{q+\half+1}}^{\iota} 
\end{align}
\end{subequations}
which satisfies \eqref{eq:desert:decomp:_} from \eqref{d:cur:error:stress:sample:c}. 
We make a further decomposition into the local and nonlocal parts, $\phi_{\ov \phi_O^{q+\half+1}}^{q+\half+1} = \phi_{\ov \phi_O^{q+\half+1}}^{q+\half+1,l} + \phi_{\ov \phi_O^{q+\half+1}}^{q+\half+1,*}$ from item~\eqref{sample2.item3:c}.

In order to check \eqref{eq:desert:estimate:1}, we recall the parameter choices from \texttt{Case 2} of the previous lemma and apply Part 4 of Proposition~\ref{lem.pr.invdiv2.c}, specifically \eqref{est.S.by.pr.final3:c}.  We then have from \eqref{condi.Nfin0} and \eqref{ind:pi:lower} that for each $i,j,k,\xi,\vecl,I,\diamond,\iota$ and $M,N\leq 2\Nind$, (after appending a superscript $l$ to refer to the local portion)
\begin{align}
    \left| D^N \Dtq^M \phi_{\ov\phi_{i,j,k,\xi,\vecl,I,\diamond}^{q+\half+1}}^{\iota,l} \right| &\leq {\tau_q^{-1} \Ga_q^{i+60}} {\pi_q^q} {\La_q^{\sfrac 23} \la_{q+\half}^{-\sfrac 23}} {\left( \frac{r_{q+\half+1}}{r_q} \right)^{\sfrac 23}} \left( \frac{\la_{q+\half+1}\Ga_q}{\la_{q+\half}} \right)^{\sfrac 43} \la_{q+\half}^{-1} \notag\\
    &\qquad \qquad \times (\la_{q+\half+1}\Ga_q^{  2})^N \MM{M,\Nindt-\NcutSmall-1,\tau_{q}^{-1}\Ga_q^{i+14},\Tau_q^{-1}\Ga_q^9} \, . \label{eq:desert:parsing}
\end{align}
Next, from \eqref{est.S.pr.p.support:2:c} and \eqref{eq:ocdc:support}, we have that
\begin{align}
    \supp \left( \phi_{\ov\phi_{i,j,k,\xi,\vec,I,\diamond}^{q+\half+1}}^{\iota,l} \right) &\subseteq B\left( \divH t^{q+\half+1}_{i,j,k,\xi,\vecl,I,\diamond}, 2\lambda_{q+\half+1}\Ga_q^{-1} \right) \notag\\
    &\subseteq B\left( \supp \left( a_{\pxi,\diamond}(\varrho_\pxi^\diamond\zetab_\xi^I)\circ\Phiik \right), 2\lambda_{q+\half+1}\Ga_q^{-1} \right) \,. \notag
\end{align}
Then applying \eqref{eq:oooooldies}, we have that \eqref{eq:desert:dodging} is verified for $m=q+\half+1$. Returning to the proof of \eqref{eq:desert:estimate:1}, we can now apply Corollary~\ref{lem:agg.Dtq} with
\begin{align*}
    H= \phi_{\ov\phi_{i,j,k,\xi,\vecl,I,\diamond}^{q+\half+1}}^{\iota,l} \, , \qquad \varpi = \Ga_q^{50} \pi_\ell \La_q^{\sfrac 23} \left( \frac{r_{q+\half+1}}{r_q} \right)^{\sfrac 23} \la_{q+\half}^{-\sfrac 23} \left( \frac{\la_{q+\half+1}\Ga_q}{\la_{q+\half}} \right)^{\sfrac 43} \la_{q+\half}^{-1}\mathbf{1}_{\supp a_{\pxi,\diamond}(\rhob_\pxi^\diamond \zetab_\xi^I)\circ\Phiik} \, .
\end{align*}
From \eqref{eq:aggDtq:conc:1}, \eqref{condi.Nindt}, \eqref{ind:pi:lower}, \eqref{eq:ind.pr.anticipated}, \eqref{condi.Nfin0}, and \eqref{ineq:in:the:morning}, we have that
\begin{align}
    &\left| \psi_{i,q} \sum_{i',j,k,\xi,\vecl,I,\diamond,\iota} \divH \left( \Dtq \sigma_{ \divH t_{i,j,k,\xi,\vecl,I,\diamond}^{q+\half+1}}^{\iota} \right) \right|\notag\\
    &\quad \lesssim \underbrace{\Ga_q r_q^{-1} \la_q \left( \pi_q^q \right)^{\sfrac 12}}_{\textnormal{cost of $\Dtq$}} \underbrace{\pi_q^q}_{\substack{\textnormal{dominates $\sfrac 23$ power} \\ \textnormal{of low-freq. coeff's}}} \underbrace{\La_q^{\sfrac 23}\la_{q+\half}^{-\sfrac 23} }_{\textnormal{$\sfrac 23$ power of freq. gain}} \underbrace{\Ga_q^{51} \left( \frac{r_{q+\half+1}}{r_q} \right)^{\sfrac 23}}_{\textnormal{lower order}} \underbrace{\left( \frac{\la_{q+\half+1}\Ga_q}{\la_{q+\half}} \right)^{\sfrac 43}}_{\textnormal{intermittency loss}} \underbrace{\la_{q+\half}^{-1}}_{\textnormal{inv. div. gain}} \notag\\
    &\quad \qquad \times (\la_{q+\half+1}\Ga_q^{ 2})^N \MM{M,\Nindt-\NcutSmall-1,\tau_{q}^{-1}\Ga_q^{i+15},\Tau_q^{-1}\Ga_q^9} \notag\\
    &\quad \les \Ga_q r_q^{-1} \la_q \left(\pi_q^{q+\half+1} \frac{\delta_{q+\bn}}{\delta_{q+\half+1+\bn}}\right)^{\sfrac 32} \la_q^{\sfrac 23} 
     \la_{q+\half}^{-\sfrac 23} \left( \frac{\la_{q+\half+1}\Ga_q}{\la_{q+\half}} \right)^{\sfrac 43} \la_{q+\half}^{-1} \notag\\
    &\qquad \qquad \times (\la_{q+\half+1}\Ga_q^{ 2})^N \MM{M,\Nindt-\NcutSmall-1,\tau_{q}^{-1}\Ga_q^{i+15},\Tau_q^{-1}\Ga_q^9} \notag\\
    &\quad \leq \Ga_q^{-150} r_q^{-1} 
    \left( \pi_q^{q+\half+1} \right)^{\sfrac 32} (\la_{q+\half+1}\Ga_q^{ 2})^N \MM{M,\Nindt,\tau_{q}^{-1}\Ga_q^{i+16},\Tau_q^{-1}\Ga_q^9} \label{eq:desert:parsing:2} 
\end{align}
for all $N,M\leq 2\Nind$, which verifies \eqref{eq:desert:estimate:1} at level $q+\half+1$. In order to achieve \eqref{eq:desert:estimate:2}, we appeal to \eqref{est.S.by.pr.final4:c}--\eqref{est.S.by.pr.final.star:c}, the choice of $K_\circ$ in item~\eqref{i:par:9.5}, \eqref{ineq:dpot:1}, and \eqref{condi.Nfin0}. Finally, the proof of \eqref{eq:desert:mean} follows from \eqref{est:mean.Dtsiph} in a very similar way, the only difference being that we need a large choice of $a_*$ in item~\eqref{i:choice:of:a} in order to have the advantageous prefactor of $\max(1,T)^{-1}$.
\smallskip

\noindent\texttt{Case 3: }In this case we consider the higher shells from the oscillation error.  The general principle is that the estimate will only be sharp in the $m=m'=\qbn$ double endpoint case, for which the intermittency loss is most severe.  We now explain why this is the case by parsing estimates \eqref{eq:desert:parsing} and \eqref{eq:desert:parsing:2}. We incur a material derivative cost of $\tau_q^{-1}\Ga_q^{i+60}$, which is converted into $r_q^{-1}\lambda_q (\pi_q^q)^{\sfrac 12}$ using \eqref{eq:psi:q:q'} and the rough definition of $\tau_q^{-1}=\delta_q^{\sfrac 12}\lambda_q r_q^{-\sfrac 13}$, or equivalently Corollary~\ref{lem:agg.Dtq}. The rescaled size of the high-frequency coefficients from the oscillation error is always $1$ (see the choices of $\const_{*,1}$ from the last lemma), and remains so upon being raised to the $\sfrac 23$ power in~\eqref{heatsie:2}. The low-frequency coefficient function from a trilinear oscillation error incurs a derivative cost of $\lambda_q$ (which we have grouped with ``frequency gain") and is dominated by $(\pi_\ell)^{\sfrac 32}r_q^{-1}$, at which point the $r_q^{-1}$ is scaled out due to the $L^1-L^{\sfrac 32}$ scaling balance between current and stress errors (see \eqref{est.S.by.pr.final2:c}, \eqref{est.S.prminus.pt:c}).  The negative power in the frequency gain is determined by which shell of the oscillation error is being considered.  The lower order terms may essentially be ignored.  Next, we have an $L^{\sfrac 32}\rightarrow L^\infty$ intermittency loss, which is used to pointwise dominate the high-frequency portion of the pressure increment using the $L^{\sfrac 32}$ norm and prevent a loop of new current error and new pressure creation. Finally, we have an inverse divergence gain depending on which synthetic Littlewood-Paley shell of the pressure increment we are considering. The net effect is that the ``frequency gain" upgrades the $\pi_\ell$ to $\pi_q^{m}$ since $m\leq m'$, the half power of $\pi_q^q$ is upgraded using $\la_q^{\sfrac 13}$ from the cost of $\Dtq$ and $\la_{m}^{-\sfrac 13}$ from the inverse divergence gain, and the remaining $\la_q^{\sfrac 23}\la_{m}^{-\sfrac 23}$ is strong enough to absorb the intermittency loss, with a perfect balance in the case
$$  m=m'=\qbn \qquad \implies \qquad \left( \frac{\la_{\qbn}}{\la_{q+\half}}  \right)^{\sfrac 43} \la_q^{\sfrac 23} \la_{\qbn}^{-\sfrac 23} \approx 1 \, . $$

In order to fill in the details, we now recall the choices of $\bm$ and $\mu_m$ from \eqref{eq:more:desert:choices}.  For the sake of brevity we ignore the slight variation in the case of the first projector for $m'=q+\half+2$ and focus on the second projector for $m'=q+\half+2$ and the other cases $q+\half+2<m'\leq \qbn$. We have from \eqref{d:cur:error:stress:sample:c} that
\begin{align}
    \phi_{ \divH t_{i,j,k,\xi,\vecl,I,\diamond}^{m'}} 
    = \phi_{ \divH t_{i,j,k,\xi,\vecl,I,\diamond}^{m'}}^* + \sum_{\iota=0}^{{m'-q-\half}} \phi_{ \divH t_{i,j,k,\xi,\vecl,I,\diamond}^{m'}}^{\iota} \, . \notag
\end{align}
and define the current error $\phi_{\ov\phi_O^{m'}} := \sum_{i,j,k,\xi,\vecl,I,\diamond} \phi_{ \divH t_{i,j,k,\xi,\vecl,I,\diamond}^{m'}}$ which has a decomposition into
\begin{align}
   \phi_{\ov\phi_O^{m'}}^* &= \sum_{i,j,k,\xi,\vecl,I,\diamond} \phi_{ \divH t_{i,j,k,\xi,\vecl,I,\diamond}^{m'}}^* \, , \qquad   \phi_{\ov\phi_O^{m'}}^{q+\half+1} = \sum_{{i,j,k,\xi,\vecl,I,\diamond}} \phi_{ \divH t_{i,j,k,\xi,\vecl,I,\diamond}^{m'}}^{0} \, , \notag \\
   \phi_{\ov\phi_O^{m'}}^{q+\half+2} &= \sum_{\substack{i,j,k,\xi,\vecl,I,\diamond \\ \iota=1,2}} \phi_{ \divH t_{i,j,k,\xi,\vecl,I,\diamond}^{m'}}^{\iota} \, , \notag\\
   \sigma_{\ov\phi_O^{m'}}^{q+\half+m} &= \sum_{\substack{i,j,k,\xi,\vecl,I,\diamond \\ \iota=m}} \phi_{ \divH t_{i,j,k,\xi,\vecl,I,\diamond}^{m'}}^{\iota} \quad \textnormal{if $q+\half+m = q+\half+\iota {\leq} m'$} \, . \notag 
\end{align}
As in the previous case, we make further decomposition into the local and nonlocal parts, $\phi_{\ov\phi_O^{m'}}^{q+\half+m} = \phi_{\ov\phi_O^{m'}}^{q+\half+m,l} + \phi_{\ov\phi_O^{m'}}^{q+\half+m, *}$ using \eqref{sample2.item3:c}.
We have thus verified \eqref{eq:desert:decomp} and \eqref{eq:desert:decomp:_} immediately from these definitions and from \eqref{d:cur:error:stress:sample:c} and item~\ref{sample2.item3:c}. In order to check \eqref{eq:desert:estimate:1}, we define the temporary notation $m(\iota)$ to make a correspondence between the value of $\iota$ above and the superscript on the left-hand side, which determines which bin the current errors go into.  Specifically, we set $m(0)=1$, $m(1)=m(2)=2$, $m(\iota)=\iota$ if $q+\half+\iota {\leq} m'$.
Then from Part 4 of Proposition~\ref{lem.pr.invdiv2.c}, specifically \eqref{est.S.by.pr.final3:c}, and \eqref{condi.Nfin0}, we have that for each $i,j,k,\xi,\vecl,I,\diamond,\iota$ and $M,N\leq 2\Nind$,
\begin{align}
    &\left| D^N \Dtq^M \divH  \left( \Dtq \sigma^\iota_{\divH t^{m'}_{i,j,k,\xi,\vecl,I,\diamond}} \right) \right| \notag\\
    &\quad \lec \tau_q^{-1} \Ga_q^{i+60} \pi_q^q \La_q^{\sfrac 23} \left( \frac{r_{q+\half+m(\iota)}}{r_q} \right)^{\sfrac 23} \left(\la_{m'-1}^{-2} \la_{m'}\right)^{\sfrac 23} \left( \frac{\min(\la_{q+\half+m(\iota)},\la_\qbn)\Ga_q}{\la_{q+\half}} \right)^{\sfrac 43}  \notag\\
    &\qquad \times \la_{q+\half+m(\iota)-1}^{-2} \la_{q+\half+m(\iota)} \left(\min(\la_{q+\half+m(\iota)},\la_{m'})\Ga_q^{ 2}\right)^N \MM{M,\Nindt-\NcutSmall-1,\tau_{q}^{-1}\Ga_q^{i+14},\Tau_q^{-1}\Ga_q^9} \, . \notag 
\end{align}
Next, from \eqref{est.S.pr.p.support:2:c} and \eqref{eq:desert:support}, we have that
\begin{align}
    \supp \left( \divH \left( \Dtq \sigma^\iota_{\divH t^{m'}_{i,j,k,\xi,\vecl,I,\diamond}} \right) \right) &\subseteq B\left( \divH t^{m'}_{i,j,k,\xi,\vecl,I,\diamond}, 2\lambda_{q+\half+m(\iota)-1}\Ga_q^{-2} \right) \notag\\
    &\subseteq B\left( \supp \left( a_{\pxi,\diamond}(\varrho_\pxi^\diamond\zetab_\xi^I)\circ\Phiik \rho_{\pxi,\diamond}^I  \right), \lambda_{m-1}^{-1}+2\lambda_{q+\half+m(\iota)-1}\Ga_q^{-2} \right) \,. \notag
\end{align}
Then applying \eqref{eq:oooooldies}, we have that \eqref{eq:desert:dodging} is verified for $m=q+\half+m(\iota)$. Returning to the proof of \eqref{eq:desert:estimate:1}, we can now apply Corollary~\ref{lem:agg.Dtq} with
\begin{align*}
    H&= \divH \left( \Dtq \sigma^\iota_{\divH t^{m'}_{i,j,k,\xi,\vecl,I,\diamond}} \right) \, , \notag\\
    \varpi &= \Ga_q^{60} \pi_q^q \left(
    \frac{\La_q\la_{m'}}{ \la_{m'-1}^{2}}\cdot \frac{r_{q+\half+m(\iota)}}{r_q} 
    \right)^{\sfrac 23} \left( \frac{\min(\la_{q+\half+m(\iota)},\la_\qbn)\Ga_q}{\la_{q+\half}} \right)^{\sfrac 43}  \frac{\la_{q+\half+m(\iota)}}
    {\la_{q+\half+m(\iota)-1}^{2}}
    \mathbf{1}_{\supp a_{\pxi,\diamond}(\rhob_\pxi^\diamond \zetab_\xi^I)\circ\Phiik}
    \, .
\end{align*}
From \eqref{ineq:r's:eat:Gammas}, \eqref{eq:aggDtq:conc:1}, \eqref{condi.Nindt}, \eqref{ind:pi:lower}, \eqref{eq:ind.pr.anticipated}, \eqref{condi.Nfin0}, and \eqref{ineq:in:the:afternoon}, we have that
\begin{align}
    &\left| \psi_{i,q} \sum_{i',j,k,\xi,\vecl,I,\diamond} \divH \left( \Dtq \sigma_{ \divH t^{m'}_{i,j,k,\xi,\vecl,I,\diamond}}^{\iota} \right) \right|\notag\\
    &\quad \lesssim \Ga_q r_q^{-1} \la_q \left( \pi_q^q \right)^{\sfrac 12} \Ga_q^{60} \pi_q^q \La_q^{\sfrac 23}  \left( \frac{r_{q+\half+m(\iota)}}{r_q} \right)^{\sfrac 23} \left(\la_{m'-1}^{-2} \la_{m'}\right)^{\sfrac 23} \left( \frac{\min(\la_{q+\half+m(\iota)},\la_\qbn)\Ga_q}{\la_{q+\half}} \right)^{\sfrac 43} \notag\\
    &\qquad \times \la_{q+\half+m(\iota)-1}^{-2} \la_{q+\half+m(\iota)} \left(\min(\la_{q+\half+m(\iota)},\la_{m'})\Ga_q^{ 2}\right)^N \MM{M,\Nindt,\tau_{q}^{-1}\Ga_q^{i+16},\Tau_q^{-1}\Ga_q^9} \notag\\
    &\quad \les \Ga_q r_q^{-1} \la_q \left(\pi_q^{q+\half+m(\iota)} \frac{\delta_{q+\bn}}{\delta_{q+\half+m(\iota)+\bn}} \right)^{\sfrac 32} \La_q^{\sfrac 23} \left(\la_{m'-1}^{-2} \la_{m'}\right)^{\sfrac 23} \left( \frac{\min(\la_{q+\half+m(\iota)},\la_\qbn)\Ga_q}{\la_{q+\half}} \right)^{\sfrac 43} \notag\\
    &\qquad \times \la_{q+\half+m(\iota)-1}^{-2} \la_{q+\half+m(\iota)} \left(\min(\la_{q+\half+m(\iota)},\la_{m'})\Ga_q^{ 2}\right)^N \MM{M,\Nindt,\tau_{q}^{-1}\Ga_q^{i+16},\Tau_q^{-1}\Ga_q^9} \notag\\
    &\quad \leq \Ga_q^{-150} r_q^{-1} \left( \pi_q^{q+\half+m(\iota)} \right)^{\sfrac 32} \left(\min(\la_{q+\half+m(\iota)},\la_{m'})\Ga_q^{ 2}\right)^N \MM{M,\Nindt,\tau_{q}^{-1}\Ga_q^{i+16},\Tau_q^{-1}\Ga_q^9} \label{eq:desert:parsing:3}
\end{align}
for all $N,M\leq 2\Nind$, which verifies \eqref{eq:desert:estimate:1} at level $m>q+\half+1$. In order to achieve \eqref{eq:desert:estimate:2} and \eqref{eq:desert:mean}, we appeal to \eqref{est.S.by.pr.final4:c}--\eqref{est.S.by.pr.final.star:c}, \eqref{est:mean.Dtsiph}, the choice of $K_\circ$ in item~\eqref{i:par:9.5}, \eqref{ineq:dpot:1}, and \eqref{condi.Nfin0}.

\end{proof}

\subsection{Linear current error}

\begin{lemma}[\bf Definition and basic estimates]\label{lem:lin:current:error}
There exists a current error $\ov\phi_{L}=\ov\phi_{L}^{q+\bn}$ and a function of time $\bmu_L$ such that the following hold.
\begin{enumerate}[(i)]
    \item We have the equality and decomposition
    \begin{align}
        \div \ov\phi_L^{q+\bn} + \bmu_L' &=  w_{q+1} \cdot \left( \partial_t u_q + u_q \cdot \nabla u_q + \nabla p_q \right)  \, , \qquad 
        \ov\phi_L^{q+\bn} = \ov\phi_L^{q+\bn,l} + \ov\phi_L^{q+\bn, *} \, . \label{eckel:equalities}
    \end{align}
    \item For all $N+M\leq \sfrac{\Nind}{4}$, we have that
    \begin{subequations}
    \begin{align}
        \left| \psi_{i,q+\bn-1} D^N D_{t,\qbn-1}^M \ov\phi_L^{q+\bn,l} \right| &\leq \Ga_{q+\bn}^{-100} \left( \left(\pi_q^{q+\bn}\right)^{\sfrac 32} + \left(\sigma_{\upsilon}^+\right)^{\sfrac 32} \right) r_{q+\bn}^{-1} \notag\\
     &\quad \times (\la_\qbn\Ga_\qbn)^N \MM{M,\Nindt,\tau_{q+\bn-1}\Ga_{\qbn-1}^{i{+4}},\Tau_{\qbn-1}^{-1}\Ga_{\qbn-1}^2} \label{eckel:local:est} \\
    \left\| D^N D_{t,\qbn-1}^M \ov\phi_L^{q+\bn,*} \right\|_\infty &\leq \delta_{q+2\bn}^{\sfrac 32} (\la_\qbn\Ga_\qbn)^N \MM{M,\Nindt,\tau^{-1}_{\qbn-1},\Tau_{\qbn-1}^{-1}\Ga_{\qbn-1}^2} \, .  \label{eckel:nonlocal:est}
    \end{align}
    \end{subequations}
     \item For all $q+1\leq q'\leq q+\qbn-1$, we have that
     \begin{align}
         \supp \left( \phi_L^{q+\bn, l} \right) \cap B\left( \hat w_{q'} , \Ga_{q'-1} \la_{q'}^{-1} \right) = \emptyset \, . \label{eckel:support}
     \end{align}

    \item The time function $\bmu_L$ satisfies $\bmu_L' = \langle  w_{q+1} \cdot \left( \partial_t u_q + u_q \cdot \nabla u_q + \nabla p_q \right)\rangle$ and
    \begin{align}\label{eq:sat:evening}
        \left| \frac{d^{M+1}}{dt^{M+1}}\bmu_L \right|
        \leq (\max(1,T))^{-1} \de_{q+3\bn} \MM{M,\Nindt,\tau_q^{-1},\Tau_{q+1}^{-1}} \quad \text{for }M\leq \sfrac{\Nind}4 \, .
    \end{align}
    \end{enumerate}
\end{lemma}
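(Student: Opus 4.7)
\textbf{Proof proposal for Lemma~\ref{lem:lin:current:error}.} The plan is to rewrite the integrand using the Euler--Reynolds equation for $u_q$ and then apply the inverse divergence machinery shell by shell, treating $w_{q+1}$ as the high-frequency factor and $\div(R_q^k - \pi_q^k\Id)$ as the low-frequency factor. From \eqref{eqn:ER} and $\div u_q = 0$, we have the pointwise identity
\[
    \partial_t u_q + (u_q\cdot\nabla)u_q + \nabla p_q = \div(R_q - \pi_q\Id) = \sum_{k=q}^{q+\bn-1} \div R_q^k - \sum_{k=q}^{\infty} \nabla \pi_q^k,
\]
so the linear current is a sum $\sum_k w_{q+1}\cdot\div R_q^k - \sum_k w_{q+1}\cdot\nabla\pi_q^k$. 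For each term I would substitute the potential representation $w_{q+1} = \div^\dpot \upsilon_{q+1} + e_{q+1}$ from Lemma~\ref{lem:rep:vel:inc:potential} and Remark~\ref{rem:rep:vel:inc:potential}, integrate by parts $\dpot$ times to transfer derivatives onto the low-frequency factor, and then invoke the sample inverse divergence Proposition~\ref{lem.pr.invdiv2.c} with $G$ equal to (differentiated) $R_q^k$ or $\pi_q^k$ and high-frequency data built from $\upsilon_{q+1}$. The output splits naturally into a local piece (giving $\ov\phi_L^{q+\bn,l}$) and a nonlocal remainder (giving $\ov\phi_L^{q+\bn,*}$), and the contribution from $e_{q+1}$ can be absorbed into $\ov\phi_L^{q+\bn,*}$ using the tiny bound \eqref{est.new.e.inf}.

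For the parameter choices in Proposition~\ref{lem.pr.invdiv2.c} I would use, per shell $k$, the low-frequency data $G\sim R_q^k$ or $\pi_q^k\Id$ with $\la = \la_k$, $\nu = \tau_{k-1}^{-1}\Ga_{k-1}^{i}$, and the pointwise/$L^\infty$ sizes supplied by the inductive assumptions \eqref{eq:pressure:inductive}--\eqref{eq:ind:stress:by:pi}; the high-frequency data come from $\upsilon_{q+1,k}$ with minimum frequency $\la_{q+\half}$, maximum frequency $\la_\qbn$, and pointwise bound $\les(\sigma_\upsilon^+ + \delta_{q+3\bn})^{\sfrac12}r_q^{-1}$ via \eqref{est.vel.inc.pot.by.pr}. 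A short scaling computation then yields the heuristic estimate
\[
    \left|\ov\phi_L^{q+\bn,l}\right| \;\lesssim\; \sum_{k\geq q}(\sigma_\upsilon^+ + \pi_q^k)^{\sfrac12}\,\pi_q^k\,r_q^{-1}\,\frac{\la_k}{\la_\qbn},
\]
and using the pressure scaling law \eqref{eq:ind.pr.anticipated} to dominate $(\pi_q^k)^{\sfrac 32}\la_k/\la_\qbn$ by $(\pi_q^\qbn)^{\sfrac 32}$ times a small factor that absorbs $r_\qbn^{-1}/r_q^{-1}$, together with the analogous $(\sigma_\upsilon^+)^{\sfrac32}$ branch, gives precisely \eqref{eckel:local:est} with the factor $\Ga_{q+\bn}^{-100}$. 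The non-local estimate \eqref{eckel:nonlocal:est} follows from the sharp nonlocal bounds in Proposition~\ref{prop:intermittent:inverse:div} combined with $\dpot$ and $K_\circ$ chosen as in \eqref{i:par:9.5}--\eqref{i:par:10}, ensuring the factor $\la_\qbn^{-10}$ from \eqref{est.new.e.inf} dominates.

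The support property \eqref{eckel:support} is immediate: the localized inverse divergence outputs are supported in $B(\supp \upsilon_{q+1},2\la_\qbn^{-1}) \subset B(\supp w_{q+1},3\la_\qbn^{-1})$ by \eqref{supp.upsilon.e.lem}, and then dodging \eqref{eq:dodging:oldies} from Lemma~\ref{lem:dodging} excludes the $\Ga_{q'+1}\la_{q'}^{-1}$-neighborhoods of $\hat w_{q'}$ for all $q+1\leq q'\leq \qbn-1$. Finally, $\bmu_L$ is defined as in \eqref{eq:meanz:def} so that $\ov\phi_L^{q+\bn}$ is effectively mean-zero (so that it can indeed be written as a divergence plus mean); the bound \eqref{eq:sat:evening} on $\bmu_L^{(M+1)}$ follows by taking $M$ material derivatives under the spatial average, applying the $L^1$-integrated form of \eqref{eckel:local:est}--\eqref{eckel:nonlocal:est}, and exploiting the prefactor $(\max(1,T))^{-1}$ absorbed from the large choice of $a_*$ in \eqref{i:choice:of:a}. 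The material derivatives $D_{t,q}$ that come out of the inverse divergence are upgraded to $D_{t,\qbn-1}$ using Lemma~\ref{lem:upgrading.material.derivative}, exactly as in the upgrading step of Lemma~\ref{l:divergence:stress:upgrading}.

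The main obstacle will be the careful bookkeeping of the pressure scaling inequality: one must check that the combined loss from $(\la_k/\la_\qbn)^{\dpot+1}$ after integration by parts, the ratio $\delta_{k+\bn}/\delta_{q+2\bn}$ from \eqref{eq:ind.pr.anticipated}, and the intermittency defect $r_\qbn/r_q$ together give a net factor strictly smaller than $\Ga_{q+\bn}^{-100}$ uniformly in $k$, including at the endpoint $k=\qbn-1$ and for the tail sum $k \geq q+\Npr$ where $\pi_q^k = \Ga_k \delta_{k+\bn}$ is used. This is a delicate parameter check but parallel to the computations already carried out for the transport/Nash current error in Lemma~\ref{lem:ct:general:estimate}, and should be verifiable using \eqref{ineq:r's:eat:Gammas}, \eqref{ineq:in:the:afternoon}, and \eqref{eq:ind.pr.anticipated}.
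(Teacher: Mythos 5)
Your outline—invoke the Euler–Reynolds equation to replace $\partial_t u_q + (u_q\cdot\nabla)u_q + \nabla p_q$ by $\div(R_q-\pi_q\Id)$, use the velocity potential $w_{q+1}=\div^\dpot\upsilon_{q+1}+e_{q+1}$, invert the divergence, get the support property from \eqref{supp.upsilon.e.lem} and dodging, and close the estimate with the pressure scaling law and Cauchy–Schwarz—matches the paper's high-level architecture. However, there are concrete technical gaps.

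First, you invoke Proposition~\ref{lem.pr.invdiv2.c}, which is the wrong tool on two counts. That proposition, like Proposition~\ref{prop:intermittent:inverse:div}, requires the high-frequency factor to be a fixed, $\sfrac{\T}{\mu}$-periodic function $\varrho$ composed with a single flow map $\Phi$. The premollified velocity increment $w_{q+1}$ and its potential $\upsilon_{q+1}$ are sums over many indices $(i,j,k,\xi,\vecl,I,\diamond)$ with different flow maps $\Phi_{(i,k)}$, so that decoupling-based framework simply does not apply here. Moreover, Proposition~\ref{lem.pr.invdiv2.c} is designed to manufacture a new pressure increment, which the linear current error does not require—it is dominated by the already existing $\pi_q^{q+\bn}$ and $\sigma_\upsilon^+$. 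The correct tool is Lemma~\ref{rem:no:decoup:inverse:div2} (inverse divergence without flow map), which takes $G\varrho$ with a tensor potential $\varrho = \div^\dpot\vartheta$, uses a pointwise bound on $\vartheta$ in place of an $L^p$ decoupling estimate, and integrates by parts as in \eqref{defn.divHR:noflow}; this is precisely what you informally describe as ``integrate by parts $\dpot$ times,'' so your intuition is right but the citation is wrong and the flow-map assumption would fail.

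Second, you apply the inverse divergence with the inductive quantities $R_q^k$, $\pi_q^k$ directly. The paper instead mollifies and uses $R_\ell^k$, $\pi_\ell^k$ from Lemma~\ref{lem:upgrading}, which is necessary: the inverse-divergence lemma demands $N_*\geq\dpot$ derivatives on the low-frequency factor, but the inductive bounds \eqref{eq:pressure:inductive} and \eqref{eq:ind:stress:by:pi} hold only for $N+M\leq 2\Nind\ll\dpot$, whereas the mollified bounds \eqref{eq:pressure:upgraded} and \eqref{eq:inductive:pointwise:upgraded} hold for $N+M\leq\Nfin$. The differences $w_{q+1}\cdot\div(R_q^k-R_\ell^k)$ and $w_{q+1}\cdot\nabla(\pi_q^k-\pi_\ell^k)$, which are tiny by \eqref{eq:diff:moll:higher:statement}, are peeled off into the nonlocal remainder $\ov\phi_{L3}^{q+\bn}$. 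Without this step your derivative bookkeeping does not close.

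Third, a small misunderstanding: the tail $\sum_{k\geq q+\Npr}w_{q+1}\cdot\nabla\pi_q^k$ vanishes identically because $\pi_q^k\equiv\Ga_k\de_{k+\bn}$ is constant in space for $k\geq q+\Npr$ (see \eqref{defn:pikq.large.k}); there is nothing to estimate there. Finally, the essential preliminary step is the upgrading of $D_{t,q}$ to $D_{t,m-1}$ on $\upsilon_{q+1}$ for each shell $m$, which the paper obtains from the dodging identity $\hat w_m\cdot\nabla\upsilon_{q+1}\equiv 0$ (via \eqref{supp.upsilon.e.lem} and Lemma~\ref{lem:dodging}), not from Lemma~\ref{lem:upgrading.material.derivative}; this must be done before applying the inverse divergence so that the material derivative cost $\nu=\tau_{m-1}^{-1}\Ga_{m-1}^{i+\cdots}$ is the right one for each shell. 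Your final remark about checking \eqref{ineq:r's:eat:Gammas}, \eqref{ineq:in:the:afternoon}, \eqref{eq:ind.pr.anticipated} correctly identifies the closing parameter verification, though the paper's argument uses \eqref{la.beats.de}, \eqref{eq:ind.pr.anticipated}, \eqref{ind:pi:lower}, and \eqref{ind:pi:upper}.
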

\begin{proof}
\texttt{Step 0: Splitting the error term and upgrading material derivatives. }We use the Euler-Reynolds system \eqref{eqn:ER}, the mollified stresses and pressures from  \eqref{eq:inductive:pointwise:upgraded:1:higher} and \eqref{eq:pressure:upgraded:higher}, respectively, and the formula for the premollified velocity increment potential from Remark~\ref{rem:rep:vel:inc:potential} to split the error
\begin{align}
    w_{q+1} \cdot &\left( \partial_t u_q + u_q \cdot \nabla u_q + \nabla p_q \right) 
    = w_{q+1} \cdot \div \left( R_q - \pi_q \Id \right) \notag\\
    &= \underbrace{(w_{q+1}-e_{q+1}) \cdot \div \left( \sum_{m=q}^{q+\bn-1} R_\ell^m - \sum_{m=q}^{q+\Npr-1}\pi_\ell^m \Id \right)}_{=\div\ov\phi_{L1}^{q+\bn}+\bmu_{L1}'}
    + \underbrace{e_{q+1} \cdot \div \left( \sum_{m=q}^{q+\bn-1} R_\ell^m - \sum_{m=q}^{q+\Npr-1}\pi_\ell^m \Id \right)}_{=\div\ov\phi_{L2}^{q+\bn}+\bmu_{L2}'} \notag\\
    &\qquad 
    + \underbrace{ w_{q+1} \cdot \div \left( \sum_{m=q}^{q+\bn-1} (R_q^m - R_\ell^m) - \sum_{m=q}^{q+\Npr-1}(\pi_q^m - \pi_\ell^m) \Id \right) }_{=\div\ov\phi_{L3}^{q+\bn}+\bmu_{L3}'} \, . \notag 
\end{align}
Notice that the tail of the sum $\sum_{q+\Npr}^\infty \pi_q^k$ of $\pi_q$ does not appear because of \eqref{defn:pikq.large.k}. The term $\ov\phi_{L1}^{q+\bn}$ is the main term and requires sharp estimates from the inverse divergence operator from Lemma~\ref{rem:no:decoup:inverse:div2}, while $\ov\phi_{L2}^{q+\bn}$ and $\ov\phi_{L3}^{q+\bn}$ may be estimated much more brutally. 
\smallskip

\noindent\texttt{Step 1: Estimating the main local term.} In order to estimate $\ov\phi_{L1}^{q+\bn}$, we need to upgrade the material derivative estimates on $w_{q+1}-e_{q+1}=\div^\dpot\upsilon_{q+1}$.  Towards this end, we claim that on $\supp \psi_{i,m}$ and for all $N\leq \sfrac{\Nfin}{4}-2\dpot^2$, $M\leq \sfrac{\Nfin}{5}$, and $q+1\leq m \leq \qbn-1$,
\begin{align}
\la_\qbn^{\dpot-k}\left|
D^N D_{t,m}^M \partial_{i_1}\cdots\partial_{i_k} \upsilon_{q+1}\right|
    \lec (\si_{\upsilon}^+  + \de_{q+3\bn})^{\sfrac12}r_{q}^{-1} (\la_{q+\bn}\Ga_{q+\bn})^N\MM{M,\Nindt,\tau_m^{-1}\Ga_m^{i-5},\Tau_q^{-1}\Ga_{q}^9} \, .
    \label{est.vel.inc.pot.by.pr.upgraded:new}
\end{align}
This estimate follows from dodging; more precisely, we appeal to \eqref{item:eckel:1} and \eqref{supp.upsilon.e.lem} from Lemma~\ref{lem:rep:vel:inc:potential}, 
and \eqref{item:dodging:1} from Lemma~\ref{lem:dodging} 
to assert that for all $m=q+1,\dots,q+\bn-1$, $\hat w_{m} \cdot \nabla \upsilon_{q+1}\equiv 0$. Then using \eqref{est.vel.inc.pot.by.pr} from Lemma~\ref{lem:pr.inc.vel.inc.pot} and applying \eqref{eq:inductive:partition} and \eqref{eq:inductive:timescales} concludes the proof. 

We now fix $i\leq \imax$ and $m =q+1, q+2,\dots,q+\bn-1$ (the cases $m=q$ and $m\geq q+\bn$ will require minor modifications) and apply Lemma~\ref{rem:no:decoup:inverse:div2} with the following choices:
\begin{align*}
    &G= \div \left( R_\ell^m - \pi_\ell^m \Id \right)^\bullet \, , \quad \varrho = (w_{q+1}-e_{q+1})^\bullet \, , \quad \vartheta = \upsilon_{q+1}^\bullet \, , \quad v=\hat u_{m-1} \, , \quad \lambda' = \Lambda_m \Ga_m \, ,  \\
    &\nu' = \Tau_{m-1}^{-1}\Ga_{m-1}^{12} \, , \quad \nu = \tau_{m-1}^{-1}\Ga_{m-1}^{i+23} \, , \quad N_* = \sfrac{\Nfin}{4}-2\dpot^2 \, , \quad M_* = \sfrac{\Nfin}{5} \, , \quad \dpot \textnormal{ as in \eqref{i:par:10}/\eqref{ineq:dpot:1}} \, ,  \\
    &\pi' = \left( \sigma_{\upsilon}^+ + \delta_{q+3\bn} \right)^{\sfrac 12}r_q^{-1} \, ,\quad \Omega= \supp \psi_{i,m-1} \, , \quad \pi = 2\Ga_m^3 \pi_\ell^m \La_m\Ga_m \, , \quad M_t = \Nindt \, , \\
    &\la = \La_m \Ga_m \, , \quad \Upsilon = \La = \la_{q+\bn}\Ga_\qbn \, , \quad M_\circ = N_\circ = 3\Nind \, , \quad K_\circ \textnormal{ as in \eqref{i:par:9.5}/\eqref{ineq:K_0}} \, . 
\end{align*}
Then we have that \eqref{eq:DDv} is satisfied by \eqref{eq:nasty:D:vq:old}, \eqref{eq:inv:div:extra:pointwise:noflow} is satisfied from \eqref{eq:pressure:inductive:dtq:pointwise:higher} and \eqref{eq:inductive:pointwise:upgraded:1:higher}, \eqref{eq:inv:div:extra:pointwise2:noflow} is satisfied from \eqref{est.vel.inc.pot.by.pr} and \eqref{est.vel.inc.pot.by.pr.upgraded:new}, and \eqref{parameter:noflow} is satisfied by definition and by \eqref{condi.Nfin0}. 

We then conclude from~\eqref{eq:inv:div:pointwise:local} that for all $N\leq \sfrac{\Nfin}{4}-2\dpot^2-\dpot$ and $M\leq \sfrac{\Nfin}{5}$,
\begin{align}
    &\left|\mathbf{1}_{\supp \psi_{i,m-1}} D^N D_{t,m-1}^M \divH\left( (w_{q+1}-e_{q+1})^\bullet \div \left( R_\ell^m - \pi_\ell^m \Id \right)^\bullet \right) \right| \notag\\
    & \qquad \les \Ga_m^3 \pi_\ell^m \La_m \Ga_m \left( \sigma_{\upsilon}^+ + \delta_{q+2\bn} \right)^{\sfrac 12}r_q^{-1} \la_{q+\bn}^{-1} (\la_\qbn \Ga_\qbn)^N \MM{M,\Nindt,\tau_{m-1}^{-1}\Ga_{m-1}^{i+4},\Tau_{m}^{-1}\Ga_{m-1}^2} \, . \label{eq:eckel:almost:ready}
\end{align}
From \eqref{eq:div:no:flow:support} and \eqref{supp.upsilon.e.lem}, we have that 
\begin{align}\label{eq:eckel:almost:ready:support}
    \supp \left( \divH\left( (w_{q+1}-e_{q+1})^\bullet \div \left( R_\ell^m - \pi_\ell^m \Id \right)^\bullet \right) \right) \subseteq \supp \upsilon_{q+1} \, ,
\end{align}
which leads to \eqref{eckel:support} from \eqref{supp.upsilon.e.lem} and \eqref{eq:dodging:oldies:prep}. Indeed, from \eqref{supp.upsilon.e.lem} and Lemma~\ref{lem:axis:control}, we have
\begin{align*}
\supp(\upsilon_{q+1}) 
&\subseteq \bigcup_{\xi,i,j,k,\vecl,I,\diamond} \supp \left(\chi_{i,k,q} \zeta_{q,\diamond,i,k,\xi,\vecl} \left(\rhob_{(\xi)}^\diamond \zetab_{\xi}^{I,\diamond} \right)\circ \Phiik \right) \cap
    B\left( \supp \varrho^I_{(\xi),\diamond}\circ\Phiik , {3} \lambda_\qbn^{-1} \right) \, .  
\end{align*}
In order to have effective dodging with $\hat w_{q'}$, $q+1\leq q'\leq q+\half$, we appeal to \eqref{eq:dodging:oldies:prep}.  Also, using \eqref{eq:eckel:almost:ready}--\eqref{eq:eckel:almost:ready:support} and appealing to a similar dodging and upgrading argument which produced the bound \eqref{est.vel.inc.pot.by.pr.upgraded:new}, we have that 
for all $N\leq \sfrac{\Nfin}{4}-2\dpot^2-\dpot$ and $M\leq \sfrac{\Nfin}{5}$,
\begin{align}
    &\left|\mathbf{1}_{\supp \psi_{i,\qbn-1}} D^N D_{t,\qbn-1}^M \divH\left( (w_{q+1}-e_{q+1})^\bullet \div \left( R_\ell^m - \pi_\ell^m \Id \right)^\bullet \right) \right| \notag\\
    &\qquad \les \Ga_m^3 \pi_\ell^m \La_m \Ga_m \left( \sigma_{\upsilon}^+ + \delta_{q+2\bn} \right)^{\sfrac 12}r_q^{-1} \la_{q+\bn}^{-1} (\la_\qbn\Ga_\qbn)^{N} \MM{M,\Nindt,\tau_{m-1}^{-1}\Ga_{m-1}^{i+4},\Tau_{m-1}^{-1}\Ga_{m-1}^2} \notag\\
    & \qquad \leq \Ga_\qbn^{-101} \pi_q^{q+\bn} \left( \sigma_{\upsilon}^+ + \delta_{q+2\bn} \right)^{\sfrac 12}r_q^{-1} (\la_\qbn\Ga_\qbn)^{N} \MM{M,\Nindt,\tau_{\qbn-1}^{-1}\Ga_{\qbn-1}^{i-5},\Tau_{\qbn-1}^{-1}\Ga_{\qbn-1}^2} \, , \label{eq:eckel:ready}
\end{align}
where we have used \eqref{eq:ind.pr.anticipated}, \eqref{ind:pi:lower}, and \eqref{la.beats.de} to conclude the last line. Finally, the estimates in \eqref{eq:sat:evening} follow from Remark~\ref{rem:est.mean}, and we omit the details now and in the rest of this proof.

In the case $m=q$, we make slight changes in the choices of $v$, $\pi$, $\Omega$, $\nu$, and $\nu'$ based on \eqref{eq:pressure:inductive:dtq:pointwise} and \eqref{eq:nasty:D:vq:old}. Then, applying the same reasoning as for the case $m=q+1$ for example, we find that in fact \eqref{eq:eckel:ready} holds for $m=q$.  In the cases $q+\bn\leq m<q+\Npr$, we set $G= \div \left(\pi_\ell^m\Id\right)^\bullet$, $v = \hat u_{q+\bn-1}$, and make suitable changes to the parameters and functions based on \eqref{eq:pressure:inductive:dtq:pointwise:higher:much} and \eqref{eq:nasty:D:vq:old} for $q'= q+\bn-1$.  Concluding as before, we find that
\begin{align}
    &\left|\mathbf{1}_{\supp \psi_{i,\qbn-1}} D^N D_{t,\qbn-1}^M \divH\left( (w_{q+1}-e_{q+1})\cdot \div \left(  \pi_\ell^m\Id \right) \right) \right| \notag\\
     &\qquad \les \Ga_m^3 \pi_\ell^m \La_{q+\bn-1} \Ga_{q+\bn-1}^2 \left( \sigma_{\upsilon}^+ + \delta_{q+2\bn} \right)^{\sfrac 12}r_q^{-1} \la_{q+\bn}^{-1} \notag\\ 
     &\qquad\qquad \times
     (\la_\qbn\Ga_\qbn)^{N} \MM{M,\Nindt,\tau_{q+\bn-1}^{-1}\Ga_{q+\bn-1}^{i+4},\Tau_{q+\bn-1}^{-1}\Ga_{q+\bn-1}^2} \notag\\
     &\qquad \leq \Ga_\qbn^{-101} \pi_q^{q+\bn} \left( \sigma_{\upsilon}^+ + \delta_{q+2\bn} \right)^{\sfrac 12}r_q^{-1}  
     (\la_\qbn\Ga_\qbn)^{N} \MM{M,\Nindt,\tau_{q+\bn-1}^{-1}\Ga_{q+\bn-1}^{i+4},\Tau_{q+\bn-1}^{-1}\Ga_{q+\bn-1}^2}
     \label{eq:eckel:ready2}
\end{align}
for all $N\leq \sfrac{\Nfin}{4}-2\dpot^2-\dpot$ and $M\leq \sfrac{\Nfin}{5}$. In the last inequality, we have used \eqref{ind:pi:lower}, \eqref{ind:pi:upper}, and \eqref{eq:ind.pr.anticipated} to write that $\pi_\ell^m \Lambda_{q+\bn-1}\la_{q+\bn}^{-1} \leq 2\pi_q^{q+\bn-1} \Lambda_{q+\bn-1}\la_{q+\bn}^{-1}\leq \pi_q^{q+\bn} \Gamma_{q+\bn}^{-150}$.

We can now set
$$ \ov\phi_L^{q+\bn,l} =  \divH\left( (w_{q+1}-e_{q+1})\cdot \div \left(\sum_{m=q}^{\qbn-1} R_\ell^m - \sum_{m=q}^{\qbn+\Npr-1}\pi_\ell^m \Id \right)^\bullet \right) \, , $$
which is well-defined over various values of $i$ since the algorithm used to define $\divH$ is independent of the value of $i$. Summing the estimate in \eqref{eq:eckel:ready}--\eqref{eq:eckel:ready2} over the various values of $m$ and using Cauchy-Schwarz, \eqref{low.bdd.pi}, and \eqref{condi.Nfin0} gives \eqref{eckel:local:est}.
\smallskip

\noindent\texttt{Step 2: Estimating the main nonlocal term and remainder terms.}  Due to the negligible quality of these error terms, we shall omit most of the details in this portion of the proof.  We first finish the application of Lemma~\ref{rem:no:decoup:inverse:div2} to the main terms by setting up the nonlocal assumptions and output in Part 3. In the case of $q+1\leq m\leq q+\bn-1$, we 
have that for $N,M\leq 3\Nind$, 
\begin{subequations}\label{eckel:nonlocal:prep:1}
\begin{align}\label{eckel:nonlocal:prep:1.1}
   \left\| D^N D_{t,m-1}^M \divR \left( (w_{q+1}-e_{q+1})^\bullet \div (R_\ell^m - \pi_\ell^m \Id)^\bullet \right) \right\|_{\infty} &\leq 
   \Tau_{\qbn}^{2\Nindt} \delta_{q+3\bn}^{\sfrac 32} 
   (\la_\qbn\Ga_\qbn)^N \Tau_{m-1}^{-M} \, .
\end{align}
In the remaining cases, we have that for $N,M\leq 3\Nind$,
\begin{align}\label{eckel:nonlocal:prep:1.25}
   \left\| D^N D_{t,q}^M \divR \left( (w_{q+1}-e_{q+1})^\bullet \div (R_\ell - \pi_\ell \Id)^\bullet \right) \right\|_{\infty} &\leq 
   \Tau_{\qbn}^{2\Nindt} \delta_{q+3\bn}^{\sfrac 32} 
   (\la_\qbn\Ga_\qbn)^N \Tau_{q}^{-M} \\
\label{eckel:nonlocal:prep:1.5}
   \left\| D^N D_{t,q+\bn-1}^M \divR \left( (w_{q+1}-e_{q+1})^\bullet \div (\pi_\ell^m \Id)^\bullet \right) \right\|_{\infty} &\leq 
   \Tau_{\qbn}^{2\Nindt} \delta_{q+3\bn}^{\sfrac 32} 
   (\la_\qbn\Ga_\qbn)^N
   \Tau_{q+\bn-1}^{-M}
\end{align}
\end{subequations}
for $q+\bn\leq m <q+\Npr$. We will upgrade the material derivatives at the end of \texttt{Step 2}. 

Next, we must treat the second error identified in \texttt{Step 0}, namely the remainder term which includes $e_{q+1}$. This is easily handled using Lemma~\ref{rem:no:decoup:inverse:div2}. In the cases $q+1\leq m \leq q+\bn-1$, we 
\eqref{eq:inverse:div:error:stress:bound:no:flow} gives that
\begin{subequations}\label{eckel:nonlocal:prep:2}
\begin{align}
    \left\| D^N D_{t,m-1}^M \divR \left( \div \left( R_\ell^m - \pi_\ell^m \Id \right)^\bullet e_{q+1}^\bullet \right) \right\|_\infty &\les \Tau_\qbn^{5\Nindt} \delta_{q+3\bn}^3 (\Tau_{m-1}^{-1}\Ga_{m-1}^2)^{M} (\la_\qbn\Ga_\qbn)^N 
    \label{eckel:nonlocal:prep:2.1}
\end{align}
for $N,M\leq 3\Nind$. Similarly, we have that for $N,M\leq 3\Nind$ and $q+\bn\leq m <q+\Npr$, 
\begin{align}
    &\left\| D^N D_{t,q}^M \divR \left( \div \left( R_\ell - \pi_\ell \Id \right)^\bullet e_{q+1}^\bullet \right) \right\|_\infty 
    \les \Tau_\qbn^{5\Nindt} \delta_{q+3\bn}^3 (\la_\qbn\Ga_\qbn)^N
    \MM{M, \Nindt, \tau_q^{-1},\Tau_{q}^{-1}\Ga_{q}^2 }
     \label{eckel:nonlocal:prep:2.25}\\
    & \left\| D^N D_{t,q+\bn-1}^M \divR \left( \div \left(  \pi_\ell^m \Id \right)^\bullet e_{q+1}^\bullet \right) \right\|_\infty \notag\\
    &\qquad\les \Tau_\qbn^{4\Nindt} \delta_{q+3\bn}^3 (\la_\qbn\Ga_\qbn)^N
     \MM{M, \Nindt, \tau_{q+\bn-1}^{-1}, \Tau_{q+\bn-1}^{-1}\Ga_{q+\bn-1}^2}
    \label{eckel:nonlocal:prep:2.5}
\end{align}    
\end{subequations}

Finally, we must treat the third error identified in \texttt{Step 0}, namely the remainder term which includes the differences between mollified and inductive stresses and pressures. This again is a simple application of Lemma~\ref{rem:no:decoup:inverse:div2}. In the case of $q\leq m\leq q+\bn-1$, we
have from \eqref{eq:inverse:div:error:stress:bound:no:flow} that for $N,M\leq 3\Nind$,
\begin{subequations}\label{eckel:nonlocal:prep:3}
\begin{align}
    &\left\| D^N D_{t,m-1}^M \divR \left( \div \left( (R_q^m - R_\ell^m) - (\pi_q^m-\pi_\ell^m) \Id \right)^\bullet w_{q+1}^\bullet \right) \right\|_\infty \notag\\ 
    &\qquad\leq \Tau_\qbn^{3\Nindt+1} \delta_{q+3\bn}^3 (\Tau_{m-1}^{-1}\Ga_{m-1}^2)^{M} (\la_\qbn\Ga_\qbn)^N 
    \label{eckel:nonlocal:prep:3.1}
\end{align}
In the remaining cases, we have
\begin{align}
    &\left\| D^N D_{t,q}^M \divR \left( \div \left( (R_q^q - R_\ell) - (\pi_q^q-\pi_\ell) \Id \right)\cdot w_{q+1} \right) \right\|_\infty \notag\\ 
    &\qquad\leq \Tau_\qbn^{2\Nindt+1} \delta_{q+3\bn}^3  (\la_\qbn\Ga_\qbn)^N \MM{M,\Nindt,\tau_{q}^{-1},\Tau_{q}^{-1}\Ga_{q}^2}\label{eckel:nonlocal:prep:3.25} \\
    &\left\| D^N D_{t,q+\bn-1}^M \divR \left( \div \left( (\pi_q^m-\pi_\ell^m) \Id \right) w_{q+1} \right) \right\|_\infty \notag\\ 
    &\qquad\leq \Tau_\qbn^{2\Nindt+1} \delta_{q+3\bn}^3 (\la_\qbn\Ga_\qbn)^N\MM{M,\Nindt,\tau_{q+\bn-1}^{-1},\Tau_{q+\bn-1}^{-1}\Ga_{q+\bn-1}^2}  
    \label{eckel:nonlocal:prep:3.5}
\end{align}
for $N, M\leq 3\Nind$ and $q+\bn\leq m< q+\Npr$.
\end{subequations}

We can now set
\begin{align*}
    \ov\phi_L^{q+\bn,*} &= \divR \left[ (w_{q+1}-e_{q+1})\cdot \div \left[\sum_{m=q}^{\qbn-1}  R_\ell^m - \sum_{m=q}^{\qbn+\Npr-1}  \pi_\ell^m \Id\right] \right] \\
    &\qquad \qquad + \divR \left[\left[\sum_{m=q}^{\qbn-1}  R_\ell^m - \sum_{m=q}^{\qbn+\Npr-1}  \pi_\ell^m \Id\right]\cdot e_{q+1} \right] \\
    &\qquad \qquad \qquad  + \divR \left[ \div \left[ \sum_{m=q}^{\qbn-1}(R_q^m - R_\ell^m) - \sum_{m=q}^{\qbn+\Npr-1}(\pi_q^m-\pi_\ell^m) \Id \right]\cdot w_{q+1} \right] \, .
\end{align*}
We must now upgrade the material derivatives in the estimates \eqref{eckel:nonlocal:prep:1.1}, \eqref{eckel:nonlocal:prep:2.1}, \eqref{eckel:nonlocal:prep:3.1} in order to match the bound in \eqref{eckel:nonlocal:est}.  Specifically, we apply Lemma~\ref{rem:upgrade.material.derivative.end} with $p=\infty$, $N_x=N_t=\infty$, $N_*=\sfrac{\Nind}{4}$, $\Omega=\T^3\times\mathbb{R}$, $v=\hat u_{m-1}$, $w=\hat u_{\qbn-1} - \hat u_{m-1}$, and parameter choices according to \eqref{eq:nasty:D:wq:old}, which verifies \eqref{eq:cooper:w}, parameter choices according to \eqref{eq:nasty:D:vq:old}, which verifies \eqref{eq:cooper:2:v:0}, and parameter choices according to \eqref{eckel:nonlocal:prep:1}--\eqref{eckel:nonlocal:prep:3}, which verify \eqref{eq:cooper:2:f:0}.  We however emphasize the choice of $\const_f=\Tau_\qbn^{\Nindt+1}\delta_{q+3\bn}^3$, which can be used to absorb lossy material derivative estimates. We then have from \eqref{eq:cooper:f:mat} that \eqref{eckel:nonlocal:est} holds, concluding the proof.
\end{proof}

\subsection{Stress current error}\label{sec:stress:current}
From subsection~\ref{ss:rle:new}, the stress current error is given by 
$$ -\ov\phi_R =(\hat u_{q+1}- \hat u_q) \overline \ka_{q+1}
+ \left( \ov R_{q+1} - (\pi_q-\pi_q^q)\Id\right) (\hat u_{q+1}- \hat u_q) \, .$$ 
Next, we recall from \eqref{eq:hat:no:hat} that $\hat u_{q+1}- \hat u_q = \hat w_{q+1}$, and that from \eqref{eq:def:ov:kappa},
$$\overline \ka_{q+1} = \frac12 \tr (-\pi_q \Id + \pi_q^q \Id + \ov R_{q+1}) = -\frac32 (\pi_q - \pi_q^q) + \frac12 \tr \ov R_{q+1}\, .$$

\begin{lemma}[\bf Definition and basic estimates]\label{lem:current:stressss}
The current error $\ov\phi_R$ satisfies the following.
\begin{enumerate}[(i)]
    \item We have the decomposition
    \begin{equation}\label{eq:stress:current:decomp}
       \ov\phi_R = \ov\phi_{R}^{q+1,l} + \sum_{m=q+1}^{\qbn} \ov\phi_{R}^{m,*} \, . 
    \end{equation}
    \item  For all $N,M$ such that $N+M \leq \sfrac{\Nind}{4}$ and $q+1 \leq m \leq  \qbn$, we have that
    \begin{subequations}
    \begin{align}
     \left|\psi_{i,q} D^N D_{t,q}^M \ov\phi_R^{q+1,l}\right| &\leq \Ga_{q+1}^{-99} \left(\pi_q^{q+1} \right)^{\sfrac32} r_{q+1}^{-1} \Lambda_{q+1}^N \MM{M,\Nindt,\Gamma_{q}^{i+20} \tau_q^{-1}, \Gamma_{q}^{10}\Tau_q^{-1} } \label{eq:sc:loc:est} \\
    \norm{ D^N D_{t,m-1}^M \ov\phi_R^{m,*} }_{L^\infty} &\leq \delta_{q+3\bn}^2 \La_m^N \MM{M,\Nindt,\tau_{m-1}^{-1},\Tau_{m-1}^{-1}} \, . \label{eq:sc:nonloc:est}
    \end{align}
    \end{subequations}
    
    \item The local part $\ov\phi_R^{q+1,l}$ has the support property, 
    \begin{align}\label{eq:sc:loc:supp}
        B\left( \supp \hat w_{q}, \lambda_{q}^{-1} \Gamma_{q+1} \right) \cap \supp \ov \phi^{q+1,l}_O &= \emptyset\, . 
    \end{align}
\end{enumerate}
\end{lemma}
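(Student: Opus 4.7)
The plan is to expand $\ov\phi_R$ term-by-term and route each piece directly into $\ov\phi_R^{q+1,l}$ or $\ov\phi_R^{m,*}$ without invoking any inverse divergence, since every piece is already a vector field. Substituting $\hat u_{q+1}-\hat u_q = \hat w_{q+1}$ and $\ov\ka_{q+1} = -\tfrac{3}{2}(\pi_q-\pi_q^q) + \tfrac{1}{2}\tr \ov R_{q+1}$ gives
\begin{align*}
    -\ov\phi_R = -\tfrac{5}{2}(\pi_q-\pi_q^q)\hat w_{q+1} + \tfrac{1}{2}\tr(\ov R_{q+1})\hat w_{q+1} + \ov R_{q+1}\hat w_{q+1}\,.
\end{align*}
I would then decompose $\pi_q - \pi_q^q = \sum_{k=q+1}^{\infty}\pi_q^k$, split the pressure sum at $k = q+\Npr$ using~\eqref{defn:pikq.large.k}, and further use $\ov R_{q+1}^m = R_{q+1}^{m,l} + \ov R_{q+1}^{m,*}$, so that each resulting term has a natural home.

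For the local piece, the stress products $\hat w_{q+1}R_{q+1}^{m,l}$ and $\tr(R_{q+1}^{m,l})\hat w_{q+1}$ for $m\geq q+2$ vanish identically by Hypothesis~\ref{hyp:dodging4} at level $q+1$ (verified in Lemma~\ref{l:divergence:stress:upgrading}\eqref{upgrade:item:1}), leaving only the $m=q+1$ contribution. Multiplying the velocity bound~\eqref{eq:ind:velocity:by:pi} by the upgraded stress bound~\eqref{eq:lo:upgrade:2} produces a size $\Gamma_q\Gamma_{q+1}^{-50}r_{q+1-\bn}^{-1}(\pi_q^{q+1})^{3/2}$, comfortably below the target $\Gamma_{q+1}^{-99}r_{q+1}^{-1}(\pi_q^{q+1})^{3/2}$ because the ratio $r_{q+1-\bn}^{-1}/r_{q+1}^{-1}$ is super-polynomially small in $q$. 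The pressure contribution $\sum_{k=q+1}^{q+\Npr-1}\hat w_{q+1}\pi_q^k$ is controlled term-by-term by pressure dodging~\eqref{eq:pinl:inductive:dtq} with $k=q+1$, yielding the uniform prefactor $\Gamma_q\Gamma_{q+1}^{-100}(\pi_q^{q+1})^{3/2}r_{q+1}^{-1}$ on each summand; the finitely many summands are absorbed into $\Gamma_{q+1}^{-99}$ for $a$ large. All derivative and material-derivative counts transfer directly from the same inductive bounds, giving~\eqref{eq:sc:loc:est}.

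The nonlocal parts $\ov\phi_R^{m,*}$ collect $\hat w_{q+1}\ov R_{q+1}^{m,*}$ for $m=q+1,\dots,\qbn$, together with the pressure tail $\hat w_{q+1}\sum_{k\geq q+\Npr}\Gamma_k\delta_{k+\bn}$ (assigned, say, to $m=q+1$). The nonlocal stress bound~\eqref{eq:nlstress:upgraded}, the inherited bound~\eqref{eq:Rnnl:inductive:dtq}, and the velocity bound~\eqref{eq:nasty:D:wq:old} deliver pointwise smallness well below $\delta_{q+3\bn}^2$. To match the $D_{t,m-1}$ derivative required by~\eqref{eq:sc:nonloc:est}, I would invoke Lemma~\ref{rem:upgrade.material.derivative.end} with $v=\hat u_q$ and $w = \hat u_{m-1}-\hat u_q = \hat w_{q+1}+\dots+\hat w_{m-1}$, pointwise-bounded via~\eqref{eq:nasty:D:wq:old}; any lossy factors are swallowed by the $\Tau$-smallness prefactor. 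Finally,~\eqref{eq:sc:loc:supp} is immediate from $\supp \ov\phi_R^{q+1,l} \subseteq \supp \hat w_{q+1}$ combined with Hypothesis~\ref{hyp:dodging1} at level $q+1$ (see Remark~\ref{rem:checking:hyp:dodging:1}) applied to the pair $q''=q$, $q'=q+1$. The main obstacle is not any single estimate but the bookkeeping: splitting the pressure sum at $q+\Npr$ so that the dodgeable terms feed the local piece while the constant tail is routed to the nonlocal piece, and ensuring the $r_q$ and $\Gamma_q$ exponents align; once this is tracked, every individual bound follows directly from the inductive assumptions.
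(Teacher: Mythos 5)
Your approach matches the paper's almost exactly: both expand $\ov\phi_R$ directly without any inverse divergence, eliminate all but the $m=q+1$ local stress contribution via dodging, bound the pressure terms via Hypothesis~\ref{hyp:dodging5}, and close the estimates with the inductive velocity bound together with the $r$-ratio smallness encoded in \eqref{ineq:r's:eat:Gammas}. Two small variations in your route are harmless and arguably cleaner: the paper separately invokes Hypothesis~\ref{hyp:dodging4} at level $q$ (plus $R_q^{\qbn,l}\equiv 0$) for the old stresses and \eqref{supp.si.m+.stress} for the new ones, whereas you invoke Hypothesis~\ref{hyp:dodging4} at level $q+1$ directly on $R_{q+1}^{m,l}$, which covers all $m\geq q+2$ in one shot; and your split of the pressure sum at $q+\Npr$ (tail to the nonlocal part) is unnecessary since \eqref{eq:pinl:inductive:dtq} already applies to every $k'\geq q+1$ and $\Npr$ is $q$-independent, but it does no harm. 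The one thing you should add: $R_{q+1}^{q+1,l}=R_q^{q+1,l}+S_{q+1}^{q+1,l}$, and \eqref{eq:lo:upgrade:2} bounds only the second summand; for the inherited piece $R_q^{q+1,l}$ you need the inductive bound \eqref{eq:ind:stress:by:pi} at level $q$, which gives the weaker prefactor $\Ga_q\Ga_{q+1}^{-8}\pi_q^{q+1}$ rather than $\Ga_{q+1}^{-50}$, so for that term essentially the entire $\Ga_{q+1}^{-99}$ gain must come from $r_{q+1-\bn}^{-1}/r_{q+1}^{-1}$ (which is still more than enough).
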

\begin{proof}
Recalling \eqref{eq:ER:decomp:basic}, \eqref{eq:Rnnl:inductive:dtq}, \eqref{defn:newstress}, and \eqref{defn:primitive.stress}, we define
\begin{align}
    -\ov\phi_R^{q+1,l} &= \hat w_{q+1} \left[ -\frac 32 (\pi_q-\pi_q^q) + \sum_{m=q+1}^\qbn \frac 12 \tr \ov R_{q+1}^{m,l} \right] + \left[ -(\pi_q - \pi_q^q)\Id + \sum_{m=q+1}^\qbn \ov R_{q+1}^{m,l} \right] \hat w_{q+1} \notag\\
    &= \hat w_{q+1} \left[ -\frac 32 (\pi_q-\pi_q^q) + \sum_{m=q+1}^\qbn \frac 12 \tr \left( R_q^{m,l} + S_{q+1}^{m,l} \right) \right] + \left[ -(\pi_q-\pi_q^q)\Id + \sum_{m=q+1}^\qbn R_q^{m,l} + S_{q+1}^{m,l} \right] \hat w_{q+1} \label{eq:local:sc:def} \\
    -\ov\phi_R^{m,*} &= \hat w_{q+1} \frac12 \tr \ov R_{q+1}^{m,*} + \ov R_{q+1}^{m,*} \hat w_{q+1}
    \qquad 
    \textnormal{ for $q+1\leq m \leq \qbn$} \, .  \label{eq:nonlocal:sc:def}
\end{align}
In order to prove \eqref{eq:sc:loc:est} for $\ov\phi_R^{q+1,l}$, it suffices to prove the estimate for the second term from the second line of \eqref{eq:local:sc:def}, as it is clear that the first term will obey identical estimates. We first consider the term with the stresses, before handling the term with the pressures next. The crucial first step is to employ \emph{dodging} to eliminate most of the terms from \eqref{eq:local:sc:def}. Specifically, we have from \eqref{eq:ER:decomp:basic} (which gives that $R_q^{\qbn,l}\equiv 0$) and \eqref{eq:ind:stressdodging:equiv} that 
$$  \left( \sum_{m=q+1}^\qbn R_q^{m,l} \right) \hat w_{q+1} = R_q^{q+1,l} \hat w_{q+1} \, . $$
Therefore, we have from \eqref{eq:ind:stress:by:pi}, \eqref{eq:Rnnl:inductive:dtq} and \eqref{eq:ind:velocity:by:pi} that for $N+M\leq 2\Nind$, 
\begin{align*}
     \left|\psi_{i,q} D^N D_{t,q}^M R_{q}^{q+1,l} \hat w_{q+1}\right| &\leq 
     \sum_{N_1=0}^N \sum_{M_1=0}^M 
     \sum_{i' = i-1}^{i+1}
     \left|\psi_{i,q} D^{N_1} D_{t,q}^{M_1}R_{q+1}^{q+1,l}\right| \left|\psi_{i',q} D^{N-N_1} D_{t,q}^{M-M_1}\hat w_{q+1}\right|\\
     &\leq \pi_q^{q+1} (\pi_q^{q+1})^{\sfrac12} r_{q-\bn+1}^{-1} \Lambda_{q+1}^N \MM{M,\Nindt,\Gamma_{q}^{i+20} \tau_q^{-1}, \Tau_q^{-1}\Ga_q^{10}}\\
     &\leq \Ga_{q+1}^{-101} \left(\pi_q^{q+1}\right)^{\sfrac32} r_{q+1}^{-1} \Lambda_{q+1}^N \MM{M,\Nindt,\Gamma_{q}^{i+20} \tau_q^{-1}, \Tau_q^{-1}\Ga_q^{10}} \, ,
\end{align*}
where we have used \eqref{ineq:r's:eat:Gammas} to achieve the final inequality.

In order to prove a similar estimate for the term with pressures, we appeal to \eqref{eq:pi:decomp:basic} and \eqref{eq:pinl:inductive:dtq} to write that for $N+M \leq 2\Nind$,
\begin{align}
    \left| \psi_{i,q} D^N \Dtq^M \left[ (\pi-\pi_q^q) \hat w_{q+1} \right] \right| &= \left|\psi_{i,q} D^N \Dtq^M \left[ \left(\sum_{k=q+1}^{\infty } \pi_q^k \right) \hat w_{q+1} \right] \right| \notag\\
    &\les \Ga_q\Ga_{q+1}^{-100} (\pi_q^{q+1})^{\sfrac 32} r_{q+1}^{-1} \La_{q+1}^N \MM{M,\Nindt, \Ga_q^{i+20}\tau_q^{-1},\Ga_q^{10}\Tau_q^{-1}} \, . \notag 
\end{align}
Combined with the previous estimate, this concludes the proof of \eqref{eq:sc:loc:est} for terms from \eqref{eq:local:sc:def} which involve stresses $R_q^{m,l}$ and pressure. 

In order to prove \eqref{eq:sc:loc:est} for terms from \eqref{eq:local:sc:def} which involve stresses $S_{q+1}^{m,l}$ defined in \eqref{defn:newstress}, we again employ the dodging results from \eqref{supp.si.m+.stress} to write that
$$  \sum_{m=q+1}^\qbn S_{q+1}^{m,l} \hat w_{q+1} = S_{q+1}^{q+1,l} \hat w_{q+1} \, . $$
Then from \eqref{eq:lo:upgrade:2} and \eqref{eq:ind:velocity:by:pi}, we have that for $N+M\leq 2\Nind$, 
\begin{align*}
     \left|\psi_{i,q} D^N D_{t,q}^M S_{q+1}^{q+1,l} \hat w_{q+1}\right| &\leq 
     \sum_{N_1=0}^N \sum_{M_1=0}^M \sum_{i'=i-1}^{i+1}
     \left|\psi_{i,q} D^{N_1} D_{t,q}^{M_1} S_{q+1}^{q+1,l} \right| \left|\psi_{i',q} D^{N-N_1} D_{t,q}^{M-M_1}\hat w_{q+1}\right|\\
     &\les \Ga_{q+1}^{-50} \pi_q^{q+1} (\pi_q^{q+1})^{\sfrac12} r_{q-\bn+1}^{-1} \Lambda_{q+1}^N \MM{M,\Nindt,\Gamma_{q}^{i+20} \tau_q^{-1}, \Tau_q^{-1}\Ga_q^{10}}\\
     &\leq \Ga_{q+1}^{-101} \left(\pi_q^{q+1}\right)^{\sfrac32} r_{q+1}^{-1} \Lambda_{q+1}^N \MM{M,\Nindt,\Gamma_{q}^{i+20} \tau_q^{-1}, \Tau_q^{-1}\Ga_q^{10}} \, ,
\end{align*}
concluding the proof of \eqref{eq:sc:loc:est}.

Lastly, the nonlocal estimate \eqref{eq:sc:nonloc:est} follows immediately from \eqref{eq:ind:velocity:by:pi}, \eqref{eq:Rnnl:inductive:dtq}, \eqref{eq:nlstress:upgraded}, and immediate computation. We omit further details. Also, the support property \eqref{eq:sc:loc:supp} can be easily obtained from the definition \eqref{eq:local:sc:def} of $\ov\phi_R^{q+1,l}$ and Hypothesis \ref{hyp:dodging1}. 
\end{proof}

\subsection{Divergence correctors}\label{ss:dce:rle}

\newcommand{\werc}{w_{q+1,R}^{(c)}}
\newcommand{\wercdiamond}{w_{q+1,\diamond}^{(c)}}
\newcommand{\werpdiamond}{w_{q+1,\diamond}^{(p)}}
\newcommand{\werp}{w_{q+1,R}^{(p)}}
\newcommand{\wphic}{w_{q+1,\varphi}^{(c)}}
\newcommand{\wphip}{w_{q+1,\varphi}^{(p)}}

\begin{lemma}[\bf Divergence corrector current error and the associated pressure increment]\label{lem:corrector.current:general:estimate}

There exist current errors $\ov \phi_C^{m}=\ov \phi_C^{m,l}+ \ov \phi_C^{m,*}$ and pressure increments $\sigma_{\ov \phi_C^{m}}=\sigma_{\ov \phi_C^{m}}^+ -\sigma_{\ov \phi_C^{m}}^-$ for $m=q+\half+1,\dots,q+\bn$ such that the following hold.
\begin{enumerate}[(i)]
\item\label{item:cor:c:1} We have the equality
$$ \div\left( \frac{1}{2} |w_{q+1}^{(c)}|^2 w_{q+1}^{(p)} +( w_{q+1}^{(c)} \cdot w_{q+1}^{(p)} ) w_{q+1}^{(p)}  +  \frac{1}{2} |w_{q+1}|^2 w_{q+1}^{(c)}  \right) = \sum_{m=q+\half+1}^{q+\bn} \div \phi_C^{m} \, . $$ 
\item For all $N,M \leq 2\Nind$, we have that
\begin{subequations}\label{eq:cc.p}
\begin{align}
\label{eq:cc.p.1}
\left|\psi_{i,q} D^N \Dtq^M \ov\phi^{m,l}_C\right| &\les(\si_{\ov \phi^{m}_C}^+ + \de_{q+3\bn} )^{\sfrac 32}r_m^{-1}  \left(\lambda_m\Gamma_q\right)^N \MM{M,\Nindt,\tau_q^{-1}\Gamma_{q}^{i+15},\Tau_q^{-1}\Ga_q^9}  \\
\label{eq:cc.p.1.5}
\left|\psi_{i,q} D^N \Dtq^M \ov\phi^{q+\bn}_C\right| &\les
(\si_{\ov \phi^{q+\bn,l}_C}^+
+\si_{\upsilon}^+
+ \de_{q+3\bn} )^{\sfrac 32}r_m^{-1}
 \left(\lambda_{q+\bn}\Gamma_q\right)^N \MM{M,\Nindt,\tau_q^{-1}\Gamma_{q}^{i+15},\Tau_q^{-1}\Ga_q^9}\, ,  
\end{align}
where the first estimate holds for $m=q+\half+1,\dots, q+\bn-1$, and $\si_{\upsilon}^+$ is defined as in Lemma \ref{lem:pr.inc.vel.inc.pot} in the second estimate. In addition, for all $m=q+\half+1,\dots,q+\bn$ and $N,M\leq {\sfrac{\Nfin}{200}}$, we have that
\begin{align}
\label{eq:cc.p.2}
\left|\psi_{i,q} D^N \Dtq^M \si_{\ov\phi^{m}_C}^+\right| &\les  \left(\si_{\ov\phi^{m}_C}^+ +\de^2_{q+3\bn}\right) \left(\lambda_{m}\Gamma_q\right)^N \MM{M,\Nindt,\tau_q^{-1}\Gamma_{q}^{i+16},\Tau_q^{-1}\Ga_q^9}\\
\label{eq:cc.p.3}
\norm{\psi_{i,q} D^N \Dtq^M \si_{\ov\phi^{m}_C}^+}_{\sfrac32} &< \de_{m+\bn} \Gamma_{m}^{-9} \left(\lambda_{m}\Gamma_q\right)^N \MM{M,\Nindt,\tau_q^{-1}\Gamma_{q}^{i+16},\Tau_q^{-1}\Ga_q^9}\\
 \label{eq:cc.p.3.5}
\norm{\psi_{i,q} D^N \Dtq^M \si_{\ov\phi^{m}_C}^+}_{\infty} &< \Ga_{m}^{\badshaq-9} \left(\lambda_{m}\Gamma_q\right)^N \MM{M,\Nindt,\tau_q^{-1}\Gamma_{q}^{i+16},\Tau_q^{-1}\Ga_q^9} \\
\label{eq:cc.p.4}
\left|\psi_{i,q} D^N \Dtq^M \si_{\ov\phi^{m}_C}^-\right| &< 
\pi_{q}^{q+\half}
\left(\lambda_{q+\floor{\bn/2}}\Gamma_q\right)^N \MM{M,\Nindt,\tau_q^{-1}\Gamma_{q}^{i+16},\Tau_q^{-1}\Ga_q^9}
\end{align}
\end{subequations}
Finally, we have that for all $m = q+\half+1,\dots,\qbn$,
\begin{subequations}\label{eq:cc.p.6}
    \begin{align}
    \label{eq:cc.p.6.1}
        B\left( \supp \hat w_{q'}, \lambda_{q'}^{-1} \Gamma_{q'+1} \right) \cap
\supp \ov \phi^{ m,l}_C  &= \emptyset\qquad \forall q+1\leq q' \leq  m-1\\
\label{eq:cc.p.6.2}
B\left( \supp \hat w_{q'}, \lambda_{q'}^{-1} \Gamma_{q'+1} \right) \cap \supp (\si_{\ov \phi^{m}_C}^+) &= \emptyset \qquad \forall q+1\leq q' \leq  m-1\\
\label{eq:cc.p.6.3}
B\left( \supp \hat w_{q'}, \lambda_{q+1}^{-1} \Gamma_{q}^2 \right) \cap \supp (\si_{\ov \phi^{m}_C}^-) &= \emptyset \qquad \forall q+1\leq q' \leq  q+\half \, .
    \end{align}
\end{subequations}
\item For all $m=q+\half + 1,\dots,q+\bn$ and $N,M \leq 2\Nind$, the non-local part $\bar\phi^{m,*}_C$ satisfies
\begin{align}
\left\| D^N D_{t, q}^M \bar\phi^{m,*}_C \right\|_{L^\infty}
    &\leq \Tau_\qbn^{2\Nindt}\delta_{q+3\bn}^{\sfrac32}\la_{m}^{N}\tau_{q}^{-M}\, ,
    \label{eq:cor.current:estimate:1}
\end{align}
\end{enumerate}
\end{lemma}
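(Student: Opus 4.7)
The plan is to mirror the structure of the oscillation current error argument in Lemma~\ref{lem:oscillation.current:general:estimate}, exploiting the fact that the corrector $w_{q+1}^{(c)}$ carries an additional smallness factor $r_q\Ga_q^{-1}$ relative to the principal part $w_{q+1}^{(p)}$, as can be read off from \eqref{exp.w.q+1}. First I would expand each of the three triple products cubically in the pipe-bundle indices $(\xi,i,j,k,\vecl,I,\diamond)$, using Lemma~\ref{lem:dodging}~\eqref{item:dodging:2} to kill all off-diagonal cross-terms. What remains is a collection of terms of the schematic form (low-frequency coefficients) $\times$ $|\WW_{(\xi),\diamond}^I|^2 \WW_{(\xi),\diamond}^I$, where the low-frequency coefficients contain an extra factor of $r_q\Ga_q^{-1}$ (for one corrector), $r_q^2\Ga_q^{-2}$ (for two correctors), or $r_q^3\Ga_q^{-3}$ (for three correctors), compared to the purely principal oscillation error.

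Next I would split each cubic term at the high-frequency level via the synthetic Littlewood-Paley decomposition $\tP_{q+\half+1}^\xi + \sum_{m=q+\half+2}^{\qbn+1} \tP_{(m-1,m]}^\xi + (\Id - \tP_{\qbn+1}^\xi)$ in the variable orthogonal to $\xi$, exactly as in \eqref{eq:osc:c:eq:3}. For each shell indexed by $m$ with $q+\half+1 \leq m \leq q+\bn$, I would then apply Proposition~\ref{lem.pr.invdiv2.c} with parameter choices essentially identical to \texttt{Cases 2} and \texttt{3} of the proof of Lemma~\ref{lem:oscillation.current:general:estimate}, the only differences being: (i) the low-frequency coefficient $G$ is a product of coefficient functions and flow-composed Jacobians coming from the corrector structure \eqref{wqplusoneonediamondc}, so the low-frequency size $\const_{G,p}$ gains at least one factor of $r_q\Ga_q^{-1}$; (ii) the high-frequency input $\varrho_\diamond$ is a product of pipe densities such as $(\varrho^I_{(\xi),\diamond})^2\varphi^{\prime\prime\, I}_{(\xi),\diamond}$ arising from \eqref{eq:UU:explicit}. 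The support properties \eqref{eq:cc.p.6.1}--\eqref{eq:cc.p.6.2} follow from \eqref{eq:oooooldies}--\eqref{eq:dodging:oldies} and the localization in \eqref{item:div:local:i} of the inverse divergence, while \eqref{eq:cc.p.6.3} follows from the negative-part support of the pressure increment given by \eqref{est.S.pr.p.support:1:c} and the bundling-pipe dodging \eqref{eq:oooooldies}. The aggregation over indices is handled by Corollaries~\ref{rem:summing:partition} and \ref{lem:agg.pt}, as before.

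The estimates \eqref{eq:cc.p.1}--\eqref{eq:cc.p.4} then follow from \eqref{est.S.by.pr.final2:c}, \eqref{est.S.prbypr.pt:c}, \eqref{est.S.pr.p:c}, and \eqref{est.S.prminus.pt:c}, with the extra $r_q$-factors making every estimate strictly better than the corresponding one in Lemma~\ref{lem:oscillation.current:general:estimate}. In particular the $\si^-$ estimate \eqref{eq:cc.p.4} no longer needs the frequency-ratio prefactor $(\la_q/\la_{q+\lfloor\bn/2\rfloor})^{2/3}$ present in \eqref{eq:co.p.4}, since the extra $r_q\Ga_q^{-1}$ from the corrector is enough to bound the result by a single $\pi_q^{q+\half}$. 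The nonlocal bound \eqref{eq:cor.current:estimate:1} is produced by the remainder output of Proposition~\ref{prop:intermittent:inverse:div} via \eqref{eq:inverse:div:error:stress:bound}, using the ample choice of $K_\circ$ from \eqref{i:par:9.5} and the parameter inequalities \eqref{condi.Nfin0}, and is essentially identical to \eqref{eq:osc.current:estimate:1}.

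The one genuinely new feature is the $m=q+\bn$ endpoint \eqref{eq:cc.p.1.5}, where $\sigma_\upsilon^+$ must appear on the right-hand side. The reason is that the divergence corrector $w_{q+1}^{(c)}$ is tied directly to the velocity potential $\upsilon_{q+1}$ through \eqref{exp.w.q+1}, so when we bound it pointwise by \eqref{est.vel.inc.c.by.pr}, we obtain $(\si_{\upsilon^{(c)}}^++\de_{q+3\bn})^{1/2}\Ga_q^{-1}$ rather than a pure pressure term. At the highest shell $m=q+\bn$, the frequency gain from the inverse divergence is only $\la_\qbn^{-1}$, which is exactly balanced against the intermittency loss $(\la_\qbn/\la_{q+\half})^{4/3}$ and so leaves no spare room to reabsorb the $\si_\upsilon^+$ contribution into an anticipated pressure. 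The main obstacle in executing the plan will therefore be the careful bookkeeping at this endpoint: one must combine the pressure increment $\si_{\ov\phi_C^{q+\bn,l}}^+$ constructed from the inverse-divergence output with the independently defined $\si_\upsilon^+$ from Lemma~\ref{lem:pr.inc.vel.inc.pot}, and verify that the combined object still respects the dodging and derivative estimates inherited from \eqref{est.vel.inc.pot.by.pr} and \eqref{supp:pr:vel.inc}.
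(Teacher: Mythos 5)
There is a genuine gap: your plan treats the divergence corrector error as a straightforward rescaled copy of the oscillation error, but the difficult part of this lemma is precisely that the outer divergence $\partial_m$ does not obviously reduce to a cheap "flowed $\xi$-direction" derivative when divergence correctors are present. For the oscillation error $\div(|w^{(p)}|^2 w^{(p)})$, the free index $m$ in the divergence is carried by $(\nabla\Phi^{-1}\xi)^m$, and since every high-frequency factor ($\varrho$, $\rhob$, $\zetab$) satisfies $\xi\cdot\nabla(\cdot)=0$, the outer derivative costs only $\Lambda_q\Gamma_q^{O(1)}$. For the terms in \eqref{eq:dc:current:case:2} involving one corrector, $\wercdiamond$ contains $\nabla\Phi^T\,\UU\circ\Phi$ whose direction lies along $\xi',\xi''$, so the outer $\partial_m$ is \emph{not} a priori a good operator; if it lands on the pipe density, it costs $\lambda_{\qbn}$, and the extra smallness $r_q\Ga_q^{-1}=\lambda_{q+\half}/\lambda_\qbn$ you invoke is nowhere near enough to compensate (the product $r_q\Ga_q^{-1}\lambda_\qbn\approx\lambda_{q+\half}$ is large). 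You cannot set up Proposition~\ref{lem.pr.invdiv2.c} in $G\,\varrho\circ\Phi$ form until you have first distributed the divergence in a way that certifies the outer derivative is cheap.

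The paper resolves this with two algebraic moves you omit entirely. First, it isolates the terms in \eqref{eq:dc:current:case:1} (two or more correctors) which are small enough to go directly into $\ov\phi_C^{\qbn,l}$; your proposal treats all terms uniformly. Second, and more importantly, for the single-corrector terms \eqref{eq:dc:current:case:2} it observes that after writing $\div(\werpdiamond(\wercdiamond\cdot\werpdiamond))$ in explicit index form, one can commute $\xi^\ell A_\ell^m$ past $\partial_m$ to expose the good operator; and for $\div(\wercdiamond|\werpdiamond|^2)$ it splits the gradient of the coefficient into parts along $\xi$ (good, costs $\Lambda_q\Gamma_q^{O(1)}$) and along $\xi',\xi''$ (bad), then uses the alternating property of the Levi-Civita tensor to show that the bad--bad contractions $\epsilon_{mpr}\partial_p\Phi^n(\xi')^n\partial_r\Phi^s(\xi')^s$ vanish, while the surviving cross-terms $\epsilon_{mpr}\partial_p\Phi^n(\xi')^n\partial_r\Phi^s(\xi'')^s$ are \emph{parallel to} $\xi^\ell A_\ell^m$, so that even the apparently bad derivative is in fact the good one. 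Only after this structural reduction can the synthetic Littlewood-Paley splitting and Proposition~\ref{lem.pr.invdiv2.c} be applied shell-by-shell as you describe. Your endpoint discussion of $\si_\upsilon^+$ at $m=\qbn$ is broadly right in spirit, but the rest of the argument does not close without the Levi-Civita cancellation.
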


\begin{lemma}[\bf Pressure current]\label{lem:currentdivergencecorrector:pressure:current}
For every $m'\in\{q+\half+1,\dots,q+\bn\}$, there exists a current error 
$\phi_{\ov \phi_C^{m'}}$ associated to the pressure increment $\si_{\ov \phi_C^{m'}}$ in the sense of
\begin{align}
     \div \phi_{{\ov\phi_C^{m'}}} &= \Dtq \sigma_{\ov\phi_C^{m'}} - \int_{\T^3} \Dtq \sigma_{\ov\phi_C^{m'}}(t,x') \,dx' \, .
\end{align}
The current error $\phi_{\ov\phi_C^{m'}}$ has a decomposition 
$$\phi_{{\ov\phi_C^{m'}}} = \phi_{\ov\phi_C^{m'}}^* +  \sum_{m=q+\half+1}^{{m'}} \phi_{{\ov\phi_C^{m'}}}^{m}
= \phi_{\ov\phi_C^{m'}}^* +  \sum_{m=q+\half+1}^{{m'}} \phi_{{\ov\phi_C^{m'}}}^{m,l}+ \phi_{{\ov\phi_C^{m'}}}^{m,*} \, , $$
where the local parts $\phi_{{\ov\phi_C^{m'}}}^{m,l}$ and the nonlocal parts $\phi_{{\ov\phi_C^{m'}}}^{m,*}$ and $\phi_{\ov\phi_C^{m'}}^*$ satisfy the following properties.
\begin{enumerate}[(i)]
\item\label{i:pc:3:dc} For $q+\half+1 \leq m \leq m'$ and $N,M\leq  2\Nind$,
    \begin{subequations}
    \begin{align}
        &\left|\psi_{i,q} D^N \Dtq^M \phi_{\ov \phi_C^{m'}}^{m,l} \right| < \Ga_{m}^{-100} \left(\pi_q^m\right)^{\sfrac 32} r_m^{-1} (\la_m \Ga_m^2)^M \MM{M,\Nindt,\tau_q^{-1}\Ga_q^{i+17},\Tau_q^{-1}\Ga_q^9} \label{eq:desert:estimate:1:dc} \\
        &\left\| D^N \Dtq^M \phi_{\ov \phi_C^{m'}}^{m,*} \right\|_\infty  \left\| D^N \Dtq^M \phi_{\ov\phi_C^{m'}}^* +  \right\|_{\infty} 
        < \Tau_\qbn^{2\Nindt} \delta_{q+3\bn}^{\sfrac 32} (\la_m\Ga_m^2)^N \tau_q^{-M} \label{eq:desert:estimate:2:dc} \, .
    \end{align}
    \end{subequations}
    \item\label{i:pc:4:dc} For all $q+\half+1\leq m \leq m'$ and all $q+1\leq q' \leq m-1$, 
    \begin{align}
        B\left( \supp \hat w_{q'}, \sfrac 12 \lambda_{q'}^{-1} \Ga_{q'+1} \right) \cap \supp  \phi^{m,l}_{\ov \phi_C^{m'}}  = \emptyset \label{eq:desert:dodging:dc} \, .
    \end{align}
     \item \label{i:pc:5:dc} For $M\leq 2\Nind$, the mean part $\langle D_{t,q}\si_{{\ov\phi_C^{m'}}}\rangle$ satisfies
    \begin{align}\label{eq:desert:mean:dc}
        \left|\frac{d^M}{dt^M}\langle D_{t,q}\si_{{\ov\phi_C^{m'}}}\rangle\right| 
        \leq (\max(1, T))^{-1}\delta_{q+3\bn} \MM{M, \Nindt, \tau_q^{-1}, \Tau_q^{-1}\Ga_q^9} \, .
    \end{align}
\end{enumerate}
\end{lemma}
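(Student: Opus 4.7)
\medskip

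\noindent\textbf{Proof proposal.} The strategy mirrors the analysis of the oscillation pressure current in Lemma~\ref{lem:currentoscillation:pressure:current} (and similarly Lemma~\ref{lem:ctn:pressure:current} and Lemma~\ref{lem:pr.current.vel.inc}), using the heuristic explained around~\eqref{sunday:eve}: the new current error generated by transporting a pressure increment is dominated by \emph{anticipated} pressure, so no further pressure increment is required. First I would invoke Lemma~\ref{lem:corrector.current:general:estimate} to import the decomposition $\sigma_{\ov\phi_C^{m'}} = \sigma_{\ov\phi_C^{m'}}^+ - \sigma_{\ov\phi_C^{m'}}^-$ together with the pointwise bound \eqref{eq:cc.p.2}, the $L^{\sfrac 32}$ and $L^\infty$ bounds \eqref{eq:cc.p.3}--\eqref{eq:cc.p.3.5}, the pointwise bound \eqref{eq:cc.p.4} on the negative part, and the support properties \eqref{eq:cc.p.6.2}--\eqref{eq:cc.p.6.3}. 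These play the role of the analogous estimates \eqref{eq:co.p.2}--\eqref{eq:co.p.4} and \eqref{eq:o.p.6} used in Lemma~\ref{lem:currentoscillation:pressure:current}.

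Next, I would apply the synthetic Littlewood-Paley decomposition from Section~\ref{sec:LP} to split
\begin{equation*}
\sigma_{\ov\phi_C^{m'}}^+ \;=\; \sum_{m=q+\half+1}^{m'} \tilde{\mathbb{P}}_{(\lambda_{m-1},\lambda_m]}\sigma_{\ov\phi_C^{m'}}^+ \;+\; (\mathrm{Id}-\tilde{\mathbb{P}}_{\lambda_{m'}})\sigma_{\ov\phi_C^{m'}}^+ \,,
\end{equation*}
and similarly for $\sigma_{\ov\phi_C^{m'}}^-$ (whose lowest shell is placed at $\lambda_{q+\half}$, consistent with \eqref{eq:cc.p.4}). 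Then, on each shell I would apply $\Dtq$ and invoke the localized inverse divergence Proposition~\ref{prop:intermittent:inverse:div} (as in \texttt{Cases~2}--\texttt{3} of the proof of Lemma~\ref{lem:oscillation.current:general:estimate}) with $G$ built from the low-frequency coefficients of $\sigma_{\ov\phi_C^{m'}}^\pm$, $\varrho$ the shell-localized high-frequency factor, and $\vartheta$ provided by Lemma~\ref{lem:LP.est}. The local output at frequency shell $m$ defines $\phi_{\ov\phi_C^{m'}}^{m,l}$, the nonlocal remainder at that shell defines $\phi_{\ov\phi_C^{m'}}^{m,*}$, and the tail $(\mathrm{Id}-\tilde{\mathbb{P}}_{\lambda_{m'}})$ contributes $\phi_{\ov\phi_C^{m'}}^*$; this yields the decomposition and the distributional identity in the statement up to a function of time, which is exactly the mean \eqref{eq:desert:mean:dc}.

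To verify the pointwise bound \eqref{eq:desert:estimate:1:dc}, I would trace the same cost balance executed in \eqref{eq:desert:parsing}--\eqref{eq:desert:parsing:2} and \eqref{eq:desert:parsing:3}: the material derivative costs $\tau_q^{-1}\Ga_q^{i+\mathrm{const}}$, which is converted via Corollary~\ref{lem:agg.Dtq} into $r_q^{-1}\lambda_q(\pi_q^q)^{\sfrac 12}$; the low-frequency factor contributes a further $\pi_q^q$; the Littlewood-Paley gain on the high-frequency factor is $(\lambda_{m-1}^2\lambda_m^{-1})^{-1}$ (or $\lambda_{q+\half}^{-1}$ for the negative part); one inverse-divergence gain of $\lambda_m^{-1}$; and an intermittency loss $(\lambda_m/\lambda_{q+\half})^{\sfrac 43}$. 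Combining with the anticipated pressure scaling \eqref{eq:ind.pr.anticipated}, with \eqref{ind:pi:lower}, \eqref{ineq:r's:eat:Gammas}, and the parameter inequalities \eqref{ineq:in:the:morning}--\eqref{ineq:in:the:afternoon}, these balance exactly against $\Ga_m^{-100}(\pi_q^m)^{\sfrac 32}r_m^{-1}$, as in the oscillation case. The nonlocal bound \eqref{eq:desert:estimate:2:dc} follows from \eqref{eq:inverse:div:error:stress:bound} together with the $L^\infty$ bound \eqref{eq:cc.p.3.5} and the choice of $K_\circ$ in \eqref{i:par:9.5}; the mean estimate \eqref{eq:desert:mean:dc} follows from Remark~\ref{rem:est.mean} after taking $a_*$ sufficiently large to recover the $(\max(1,T))^{-1}$ factor. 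Finally, the dodging \eqref{eq:desert:dodging:dc} follows from the fact that the localized inverse divergence at frequency $\lambda_m$ enlarges spatial supports by at most $\lambda_m^{-1}$ combined with the synthetic Littlewood-Paley support property (subsection~\ref{sec:LP}) and the input support properties \eqref{eq:cc.p.6.2}--\eqref{eq:cc.p.6.3} for $\sigma_{\ov\phi_C^{m'}}^\pm$.

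The main obstacle will be book-keeping the parameter exponents in the balance argument, particularly for the top shell $m=\qbn$ where \eqref{eq:cc.p.1.5} includes the extra velocity pressure increment $\sigma_\upsilon^+$; this requires applying the same parsing as in \eqref{eq:desert:parsing:3} but using the refined input bound \eqref{eq:cc.p.1.5} and verifying that the corresponding rescaled pressure still fits within $\pi_q^{m}$ after invoking Lemma~\ref{lem:pr.vel.dom.cutoff}. Once this balance is verified at the double endpoint $m=m'=\qbn$, all intermediate shells follow \emph{a fortiori} as in the oscillation case.
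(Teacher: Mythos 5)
Your proposal is essentially correct and follows the same strategy as the paper, so I will only note one structural point. The paper does not prove Lemma~\ref{lem:currentdivergencecorrector:pressure:current} separately from Lemma~\ref{lem:corrector.current:general:estimate}: both are handled in a single combined proof by one application of the abstract machinery in Proposition~\ref{lem.pr.invdiv2.c}, whose Part~3 and Part~4 produce the pressure increment $\sigma_{\ov\phi_C^{m'}}$ and the pressure current $\phi_{\ov\phi_C^{m'}}$ \emph{simultaneously} from the same inputs $(G,\varrho,\Phi)$. The proposition internalizes precisely the synthetic Littlewood-Paley shell decomposition, the application of $\Dtq$, and the localized inverse divergence that you describe, so your proposal is really an unpacked version of the same argument. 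Presenting it as ``import $\sigma$ from Lemma~\ref{lem:corrector.current:general:estimate}, then decompose it with a fresh synthetic LP, then apply $\Dtq\circ\div^{-1}$'' would also work, because on a synthetic LP shell $\Dtq\sigma^+$ retains the low/high product structure (the high-frequency factor $\pr(\rho)\circ\Phi$ is annihilated by $\Dtq$), but it forces you to re-derive support and estimate information that Proposition~\ref{lem.pr.invdiv2.c} already packages. The one place where your references are slightly off is the top-shell term \eqref{current:c:high4}: the paper does not close this via Lemma~\ref{lem:pr.vel.dom.cutoff} but instead treats the two divergence-corrector factors $\hat\upsilon_{1,\diamond}$ and $\hat\upsilon_{3,\diamond}$ in direct analogy with the velocity-potential pressure increment and its current error from Lemmas~\ref{lem:pr.inc.vel.inc.pot} and \ref{lem:pr.current.vel.inc}, exploiting the extra gains from $r_q$ and $\UU_{(\xi),\diamond}^I$. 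Otherwise, your identification of the cost balance, the role of the anticipated pressure scaling \eqref{eq:ind.pr.anticipated}, and the parameter inequalities \eqref{ineq:in:the:morning}--\eqref{ineq:in:the:afternoon} is accurate.
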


\begin{proof}[Proof of Lemma \ref{lem:corrector.current:general:estimate}--\ref{lem:currentdivergencecorrector:pressure:current}]

\noindent\texttt{Step 1: Analyze the error}. We first decompose 
\begin{align}
    \frac{1}{2} |w_{q+1}^{(c)}|^2 w_{q+1}^{(p)} &+ ( w_{q+1}^{(c)} \cdot w_{q+1}^{(p)} ) w_{q+1}^{(p)}  +  \frac{1}{2} |w_{q+1}|^2 w_{q+1}^{(c)} \notag\\
    &= \frac{1}{2} |w_{q+1}^{(c)}|^2 w_{q+1}^{(p)} + \frac 12 w_{q+1}^{(c)} |w_{q+1}^{(c)}|^2 + \left(w_{q+1}^{(c)}\cdot w_{q+1}^{(p)}\right) w_{q+1}^{(c)} \label{eq:dc:current:case:1} \\
    &\quad + ( w_{q+1}^{(c)} \cdot w_{q+1}^{(p)} ) w_{q+1}^{(p)} + \frac 12 |w_{q+1}^{(p)}|^2 w_{q+1}^{(c)} \, . \label{eq:dc:current:case:2}
\end{align}
The first set \eqref{eq:dc:current:case:1} of terms is simpler because each term has two divergence correctors and thus will be absorbed directly into $\ov\phi^{q+\bn,l}_C$. The second set \eqref{eq:dc:current:case:2} is more delicate, so we now rewrite this term using a few algebraic identities similar to the divergence corrector error terms in the Euler-Reynolds system ({cf.~\cite[Lemma~8.10]{GKN23}}). 

Taking the divergence operator to the first term in \eqref{eq:dc:current:case:2} and using $\UU_{\pxi,\diamond}^{I,s}$ to denote the $s$ component of the vector field $\UU_{\pxi,\diamond}^I$ (the potential for $\WW_{\pxi,\diamond}^{I,s}$), we have that
\begin{align}
    &\div\left( \werpdiamond (\wercdiamond \cdot \werpdiamond) \right)\nonumber\\
    &=\xi^\ell A_\ell^m \partial_m \biggl{(} \xi^\theta A_\theta^n a_{(\xi),\diamond}^2 \left( \rhob_{(\xi)}^\diamond \zetab_\xi^{I,\diamond} \varrho_{(\xi),\diamond}^I \right)^2\circ\Phiik   \epsilon_{npr} \partial_p \left( a_{(\xi),\diamond} \left( \rhob_{(\xi)}^\diamond \zetab_\xi^{I,\diamond} \right)\circ\Phiik \right) \partial_r \Phiik^s \UU_{(\xi),\diamond}^{I,s}\circ\Phiik
  \biggr{)}  \nonumber\\
  &=: (\mathbf{C}_0)_{(\xi),\diamond}^{I} \left((
  \varrho_{(\xi),\diamond}^I )^2\UU_{(\xi),\diamond}^{I,s}\right)\circ\Phiik \, . \label{dc:curr:mess:0} 
\end{align} 
As we see in the second and the third line, for the time being we omit the summation over the indices $\diamond,i,j,k,\xi,\vecl,I$ for convenience, until we reintroduce them. In the first equality, observe that we have commuted $\xi^\ell A_\ell^m$ with $\partial_m$ so that we see the good differential operator $\xi^\ell A_\ell^m \partial_m$, which can only cost $\Lambda_q\Gamma_q^{13}$ from Lemma~\ref{lem:a_master_est_p}. This is because it can never land on a high-frequency object (any of $\rhob_{(\xi)}^\diamond$, $\zetab_\xi^{I,\diamond},\varrho_{(\xi),\diamond}^I,\UU_{(\xi),\diamond}^I$). In particular, this term can be written in the form appearing in the second equality. We will treat this term similar to the oscillation current error.

We now introduce the following notation:
\begin{align*}
    a_{(\xi),\diamond}^{p, \rm good} &:= \partial_p \Phiik^n \xi^n  \xi^{\ell} A_\ell^j \partial_j \left( a_{(\xi),\diamond} \left( \rhob_{(\xi)}^\diamond \zetab_\xi^{I,\diamond} \right)\circ \Phiik\right) \\     
    a_{(\xi),\diamond}^{p, \rm bad} &:= \partial_p \Phiik^n (\xi^\prime)^n (\xi^\prime)^{\ell} A_\ell^j \partial_j \left( a_{(\xi),\diamond} \left( \rhob_{(\xi)}^\diamond \zetab_\xi^{I,\diamond} \right)\circ \Phiik\right) + \partial_p \Phiik^n (\xi^{\prime \prime})^n (\xi^{\prime \prime})^\ell A_\ell^j \partial_j \left( a_{(\xi),\diamond} \left( \rhob_{(\xi)}^\diamond \zetab_\xi^{I,\diamond} \right)\circ \Phiik\right)
     \,. \notag
\end{align*}

Next, we write the divergence of the second term in \eqref{eq:dc:current:case:2} as
\begin{align*}
    &\div\left( \wercdiamond |\werpdiamond|^2 \right) \notag\\
    &\quad = \left( \epsilon_{m p r}   \left( a_{(\xi),\diamond}^{p,\rm bad} + a_{(\xi),\diamond}^{p,\rm good} \right) \partial_r \Phi^s (\mathbb{U}_{(\xi),\diamond}^I)^s \circ \Phiik \,  a^2_{(\xi),\diamond} (\rhob_{(\xi)}^\diamond\zetab_\xi^{I,\diamond}\varrho_{(\xi),\diamond}^I)^2\circ \Phiik \xi^\ell   A_{\ell}^j \xi^n A_n^j  \right)\\
    &\quad =: \mathbf{V}_3 + 
    \mathbf{V}_4
    \, ,
\end{align*}
The term inside of the divergence in $\mathbf{V}_4$ enjoys properties identical to the terms in \eqref{eq:dc:current:case:1}; indeed, the good differential operator in $a_{(\xi),\diamond}^{p,\rm good}$ only costs $\Lambda_q\Gamma_q^{13}$, 
and so 
we absorb these terms into $\phi_C^{q+\bn,l}$.  
On the other hand, we deal with the term $\mathbf{V}_3$ using \eqref{eq:UU:explicit} to expand
\begin{align}
    \mathbf{V}_3& = \partial_m \biggl{[} \epsilon_{mpr} \biggl{(} \partial_p \Phiik^n (\xi^\prime)^n (\xi^\prime)^{\ell} A_\ell^j \partial_j \left( a_{(\xi),\diamond} \left( \rhob_{(\xi)}^\diamond \zetab_\xi^{I,\diamond} \right)\circ \Phiik\right) \notag\\
    &\qquad\qquad + \partial_p \Phiik^n (\xi^{\prime \prime})^n (\xi^{\prime \prime})^\ell A_\ell^j \partial_j \left( a_{(\xi),\diamond} \left( \rhob_{(\xi)}^\diamond \zetab_\xi^{I,\diamond} \right)\circ \Phiik\right) \biggr{)} \notag\\
    &\qquad \times  \frac 13 \partial_r \Phiik^s \left( -(\xi')^s \varphi_\xi^{''} + (\xi'')^s \varphi_\xi' \right)  \circ \Phiik \,  a^2_{(\xi),\diamond} (\rhob_{(\xi)}^\diamond\zetab_\xi^{I,\diamond}\varrho_{(\xi),\diamond}^I)^2\circ \Phiik \xi^\ell   A_{\ell}^j \xi^n A_n^j \biggr{]} \, . \notag 
\end{align}
Note importantly that this term includes factors of either $\partial_p \Phiik^n (\xi')^n$ or $\partial_p \Phiik^n (\xi'')^n$ from $a_{(\xi),\diamond}^{p,\rm bad}$ and $\partial_r\Phiik^s (\xi')^s$ or $\partial_r\Phiik^s (\xi'')^s$ from $\UU_{(\xi),\diamond}^{I,s}$. We immediately see from the alternating property of the Levi-Civita tensor that the terms including
$$ \epsilon_{mpr} \left( \partial_p\Phiik^n (\xi')^n \partial_r \Phiik^s (\xi')^s + \partial_p\Phiik^n (\xi'')^n \partial_r \Phiik^s (\xi'')^s \right) $$
vanish. Thus we only have to consider the cross terms, for example the term
\begin{align}
&\partial_m \biggl{[} \epsilon_{mpr} \partial_p \Phiik^n (\xi^\prime)^n (\xi^\prime)^{\ell} A_\ell^j \partial_j \left( a_{(\xi),\diamond} \left( \rhob_{(\xi)}^\diamond \zetab_\xi^{I,\diamond} \right)\circ \Phiik\right) \notag\\
&\qquad\qquad \times \frac 13 \partial_r \Phiik^s (\xi'')^s \varphi'_\xi \circ \Phiik \,  a^2_{(\xi),\diamond} (\rhob_{(\xi)}^\diamond\zetab_\xi^{I,\diamond}\varrho_{(\xi),\diamond}^I)^2\circ \Phiik \xi^\ell   A_{\ell}^j \xi^n A_n^j \biggr{]} \, . \label{dc:messy:curr}
\end{align}
Our first claim is that the vector indexed by $m$ inside the parentheses is actually \emph{parallel} to $\xi^\ell A_\ell^m$, which means that we have the good differential operator!  To see this, we write
\begin{align*}
   \epsilon_{mpr} \partial_p \Phi^n (\xi')^n \partial_r \Phi^s (\xi'')^s \qquad &\parallel \qquad \xi^\ell A_\ell^m \\
   \iff \epsilon_{mpr} \partial_p \Phi^n (\xi')^n \partial_r \Phi^s (\xi'')^s \partial_m \Phi^k \qquad &\parallel \qquad \xi^\ell A_\ell^m \partial_m \Phi^k = \xi^k \\
   \iff \epsilon_{mpr} \partial_p \Phi^n (\xi')^n \partial_r \Phi^s (\xi'')^s \partial_m \Phi^k (\xi')^k &= \epsilon_{mpr} \partial_p \Phi^n (\xi')^n \partial_r \Phi^s (\xi'')^s \partial_m \Phi^k (\xi'')^k = 0 \, .
\end{align*}
But the last two expressions are again equal to zero by the alternating property of the Levi-Civita tensor! Thus we have shown that the $\partial_m$ on the outside of the expression in \eqref{dc:messy:curr} will only cost $\Lambda_q\Gamma_q^{13}$, and furthermore that it \emph{cannot} land on $\varrho_{(\xi),\diamond}^I\circ \Phiik$ or $\varphi'_\xi\circ \Phiik$ (which is a component of $\UU_{(\xi),\diamond}^I$). Therefore, we write
\begin{align}
    \mathbf{V}_3
&= (\mathbf{C}_{31})_{(\xi),\diamond}^I
(\varphi'_\xi(\varrho_{(\xi),\diamond}^I)^2) \circ \Phiik
+ (\mathbf{C}_{32})_{(\xi),\diamond}^I
(\varphi''_\xi(\varrho_{(\xi),\diamond}^I)^2) \circ \Phiik
\end{align}
where $(\mathbf{C}_{3r})_{(\xi),\diamond}^I$, $r=1,2$, are defined by
\begin{align*}
    (\mathbf{C}_{31})_{(\xi),\diamond}^I
    &:=\frac 13\xi_\ell A_\ell^m\partial_m \biggl{[}
    \biggl{[} 
    \epsilon_{mpr} \partial_p 
    \Phiik^n (\xi^\prime)^n 
    (\xi'')^s
    \biggr{]}_{\parallel\xi}
    (\xi^\prime)^{\ell} A_\ell^j \partial_j \left( a_{(\xi),\diamond} \left( \rhob_{(\xi)}^\diamond \zetab_\xi^{I,\diamond} \right)\circ \Phiik\right) \notag\\
&\hspace{4cm} 
\times  \partial_r \Phiik^s    a^2_{(\xi),\diamond} (\rhob_{(\xi)}^\diamond\zetab_\xi^{I,\diamond})^2\circ \Phiik \xi^\ell   A_{\ell}^j \xi^n A_n^j \biggr{]} \\
  (\mathbf{C}_{32})_{(\xi),\diamond}^I
    &:=\frac 13\xi_\ell A_\ell^m\partial_m \biggl{[}
    \biggl{[} 
    \epsilon_{mpr} \partial_p 
    \Phiik^n (\xi'')^n 
    (\xi^\prime)^s
    \biggr{]}_{\parallel\xi}
    (\xi'')^{\ell} A_\ell^j \partial_j \left( a_{(\xi),\diamond} \left( \rhob_{(\xi)}^\diamond \zetab_\xi^{I,\diamond} \right)\circ \Phiik\right) \notag\\
&\hspace{4cm} 
\times  \partial_r \Phiik^s    a^2_{(\xi),\diamond} (\rhob_{(\xi)}^\diamond\zetab_\xi^{I,\diamond})^2\circ \Phiik \xi^\ell   A_{\ell}^j \xi^n A_n^j \biggr{]}\, , 
\end{align*}
and $[f_m]_{\parallel \xi}$ denotes the $m^{\rm th}$ component of the projection of $f$ onto the vector $\xi^\ell A_{\ell}^\bullet$. The analysis of $\mathbf{V}_3$ will then mimic exactly the analysis of \eqref{dc:curr:mess:0}, since we have a good differential operator in $\partial_m$ and one costly differential operator $\partial_j$ landing on $\zetab_\xi^{I,\diamond}$. 
\smallskip

\noindent\texttt{Step 2: Define the current error, pressure increment, and current error, and verify their properties.} Based on the analysis above, we now define the current errors as
\begin{subequations}
\begin{align}
    \bar\phi_C^{q+\half+1}
    &:= (\divH + \divR)\left[(\mathbf{C}_{0})_{(\xi),\diamond}^{I,s}
\tilde{\mathbb{P}}_{\lambda_{q+\half+1}}\left((
  \varrho_{(\xi),\diamond}^I )^2\UU_{(\xi),\diamond}^{I,s}\right)\circ\Phiik \right]
  \label{current:c:low1}
  \\
  &\quad+ (\divH + \divR)\left[
  (\mathbf{C}_{31})_{(\xi),\diamond}^I
\tilde{\mathbb{P}}_{\lambda_{q+\half+1}}(\varphi'_\xi(\varrho_{(\xi),\diamond}^I)^2) \circ \Phiik\right]\label{current:c:low2}\\
&\quad+ (\divH + \divR)\left[(\mathbf{C}_{32})_{(\xi),\diamond}^I
\tilde{\mathbb{P}}_{\lambda_{q+\half+1}}(\varphi''_\xi(\varrho_{(\xi),\diamond}^I)^2) \circ \Phiik\right] \label{current:c:low3}
\end{align}
\end{subequations}
for the lowest shell,
\begin{subequations}
\begin{align}
\bar\phi_C^{m}
    &:= (\divH + \divR)\left[(\mathbf{C}_{0})_{(\xi),\diamond}^{I,s}
\tilde{\mathbb{P}}_{(\lambda_{m-1},\lambda_{m}]}
\left((
  \varrho_{(\xi),\diamond}^I )^2\UU_{(\xi),\diamond}^{I,s}\right)\circ\Phiik \right]\label{current:c:mid1}\\
  &\quad+ (\divH + \divR)\left[
  (\mathbf{C}_{31})_{(\xi),\diamond}^I
\tilde{\mathbb{P}}_{(\lambda_{m-1},\lambda_{m}]}(\varphi'_\xi(\varrho_{(\xi),\diamond}^I)^2) \circ \Phiik\right]\label{current:c:mid2}\\
&\quad + (\divH + \divR)\left[(\mathbf{C}_{32})_{(\xi),\diamond}^I
\tilde{\mathbb{P}}_{(\lambda_{m-1},\lambda_{m}]}(\varphi''_\xi(\varrho_{(\xi),\diamond}^I)^2) \circ \Phiik\right]\label{current:c:mid3}
\end{align}
\end{subequations}
for $q+\half +1 <m <q+\bn$, and
\begin{subequations}
\begin{align}
\bar\phi_C^{q+\bn}
    &:= \sum_{m=q+\bn}^{q+\bn+1} (\divH + \divR)\left[(\mathbf{C}_{0})_{(\xi),\diamond}^{I,s}
\left(\tilde{\mathbb{P}}_{(\lambda_{m-1},\lambda_{m}]} 
+\left( \Id - \tilde{\mathbb{P}}_{\lambda_{q+\bn+1}} \right)
\right)
\left((
  \varrho_{(\xi),\diamond}^I )^2\UU_{(\xi),\diamond}^{I,s}\right)\circ\Phiik \right]
  \label{current:c:high1}
  \\
  &\quad+
 \sum_{m=q+\bn}^{q+\bn+1} (\divH + \divR)\left[(\mathbf{C}_{31})_{(\xi),\diamond}^I
\left(\tilde{\mathbb{P}}_{(\lambda_{m-1},\lambda_{m}]} 
+\left( \Id - \tilde{\mathbb{P}}_{\lambda_{q+\bn+1}} \right)
\right)(\varphi'_\xi(\varrho_{(\xi),\diamond}^I)^2) \circ \Phiik\right]\label{current:c:high2}\\
&\quad+ \sum_{m=q+\bn}^{q+\bn+1}
(\divH + \divR)\left[(\mathbf{C}_{32})_{(\xi),\diamond}^I
\left(\tilde{\mathbb{P}}_{(\lambda_{m-1},\lambda_{m}]} 
+\left( \Id - \tilde{\mathbb{P}}_{\lambda_{q+\bn+1}} \right)
\right)(\varphi''_\xi(\varrho_{(\xi),\diamond}^I)^2) \circ \Phiik\right]\label{current:c:high3}\\
&\quad + \eqref{eq:dc:current:case:1}
+\epsilon_{\bullet p r}    a_{(\xi),\diamond}^{p,\rm good} \partial_r \Phi^s (\mathbb{U}_{(\xi),\diamond}^I)^s \circ \Phiik \,  a^2_{(\xi),\diamond} (\rhob_{(\xi)}^\diamond\zetab_\xi^{I,\diamond}\varrho_{(\xi),\diamond}^I)^2\circ \Phiik \xi^\ell   A_{\ell}^j \xi^n A_n^j  \, . \label{current:c:high4}
\end{align}
\end{subequations}
The terms involved with $\divR$ or $\Id - \tilde{\mathbb{P}}_{\lambda_{q+\bn+1}}$ go into the non-local parts while the rest goes into the local parts. Indeed, in the case of \eqref{current:c:low1}, \eqref{current:c:mid1}, \eqref{current:c:high1} for example, fix indices $\xi, i, j, k, \vecl, I$, and set
when $\diamond = \ph$
\begin{align*}
G_\ph  = \frac{(\mathbf{C}_{0})_{(\xi),\ph}^{I,s}}{\la_\qbn} \,, \quad 
    \frac{\varrho_\ph}{\la_\qbn} = 
    \begin{cases}
    \tilde{\mathbb{P}}_{\lambda_{q+\half+1}}\left((
  \varrho_{(\xi),\ph}^I )^2\UU_{(\xi),\ph}^{I,s}\right) &\text{ for } \eqref{current:c:low1}\\
  \tilde{\mathbb{P}}_{(\la_{m-1}, \la_m]}\left((
  \varrho_{(\xi),\ph}^I )^2\UU_{(\xi),\ph}^{I,s}\right) &\text{ for } \eqref{current:c:mid1}, \text{ 
 the first term of }\eqref{current:c:high1}\\
    (\Id -\tilde{\mathbb{P}}_{\lambda_{q+\bn +1}})
    \left((
  \varrho_{(\xi),\ph}^I )^2\UU_{(\xi),\ph}^{I,s}\right) &\text{ for the second term of }\eqref{current:c:high1},
    \end{cases}
\end{align*}
and when $\diamond = R$, 
\begin{align*}
G_R  = \frac{(\mathbf{C}_{0})_{(\xi),R}^{I,s}}{ \la_{q+\bn}r_q}  \, , \quad 
    \frac{\varrho_R}{\la_\qbn} = 
    \begin{cases}
    r_q\tilde{\mathbb{P}}_{\lambda_{q+\half+1}}\left((
  \varrho_{(\xi),R}^I )^2\UU_{(\xi),R}^{I,s}\right) &\text{ for } \eqref{current:c:low1}\\
  r_q\tilde{\mathbb{P}}_{(\la_{m-1}, \la_m]}\left((
  \varrho_{(\xi),R}^I )^2\UU_{(\xi),R}^{I,s}\right) &\text{ for } \eqref{current:c:mid1}, \text{ 
 the first term of }\eqref{current:c:high1}\\
    r_q(\Id -\tilde{\mathbb{P}}_{\lambda_{q+\bn +1}})
    \left((
  \varrho_{(\xi),R}^I )^2\UU_{(\xi),R}^{I,s}\right) &\text{ for the second term of }\eqref{current:c:high1} \, . 
    \end{cases}
\end{align*}
Notice that $\varrho_{(\xi),\diamond}^{I} (\mathbb{U}_{(\xi),\diamond}^{I})^s$ has zero mean from \eqref{item:pipe:means} of Proposition \ref{prop:pipeconstruction} and \eqref{item:pipe:means:current} of Proposition \ref{prop:pipe.flow.current}. 
The rest of parameters and the functions are chosen the same as in \texttt{Case~2}---\texttt{Case~4} of the proof of Lemma \ref{lem:oscillation.current:general:estimate}.  The assumptions in \eqref{eq:inverse:div:DN:G} and \eqref{eq:DN:Mikado:density} of Proposition \ref{prop:intermittent:inverse:div} can be verified using Lemma~\ref{lem:a_master_est_p}, Lemma~\ref{lem:special:cases}, Lemma~\ref{lem:LP.est},
item~\eqref{item:pipe:5} from Proposition~\ref{prop:pipeconstruction} and item~\eqref{item:pipe:5:current} from Proposition~\ref{prop:pipe.flow.current}, and we leave the details to the reader; the rest of the conditions of the inverse divergence are then satisfied exactly as the oscillation error, and we omit further details. We note only that the support properties for both $G_\diamond$ and $\rho_\diamond$ are also the same as in the oscillation error, and so we can expect the same support (and dodging) properties to hold for the output of the inverse divergence in this case.

Thus we can apply the inverse divergence from Proposition~\ref{prop:intermittent:inverse:div}. With these choices, we also apply Proposition~\ref{lem.pr.invdiv2.c} to construct the associated pressure increments and pressure currents. Note that as in \texttt{Case 3} of the proof of Lemma \ref{lem:oscillation.current:general:estimate}, 
when $m= q+\half+2$, we split the synthetic Littlewood-Paley operator $\tilde{\mathbb{P}}_{(\la_{m-1}, \la_{m}]}$ further into $\tilde{\mathbb{P}}_{(\la_{q+\half+1}, \la_{q+\half+\sfrac32}]}+\tilde{\mathbb{P}}_{(\la_{q+\half+\sfrac32}, \la_{q+\half+2}]}$ and apply the propositions to each of them. The analysis of \eqref{current:c:low2}, \eqref{current:c:low3}, \eqref{current:c:mid2}, \eqref{current:c:mid3}, 
\eqref{current:c:high2}, \eqref{current:c:high3} is similar; we replace $\UU_{(\xi),\diamond}^{I,s}$ by $\ph'_\xi$ or $\ph''_\xi$ and $(\mathbf{C}_{0})_{(\xi),\diamond}^{I,s}$ by $(\mathbf{C}_{31})_{(\xi),\diamond}^{I}$ or $(\mathbf{C}_{32})_{(\xi),\diamond}^{I}$. As a result, we get the same conclusion as that for the oscillation current error. More precisely, we can verify \eqref{eq:cc.p.1}--\eqref{eq:cor.current:estimate:1} for $q+\half+1\leq m <q+\bn$, \eqref{eq:desert:estimate:1}--\eqref{eq:desert:dodging} for $q+\half+1\leq m' <q+\bn$, and these properties associated to \eqref{current:c:high1}--\eqref{current:c:high3}.

Lastly, we consider \eqref{current:c:high4}. 
From \eqref{est.vel.inc.p.by.pr} and \eqref{est.vel.inc.c.by.pr}, the error terms in \eqref{eq:dc:current:case:1} satisfy
\begin{align*}
    \left|\psi_{i.q}D^N D_{t,q}^M \eqref{eq:dc:current:case:1}\right| \les
    (\si_{\upsilon}^+ + {\de_{q+3\bn}})^{\sfrac32}r_q^{-1}
(\la_{q+\bn}\Ga_{q+\bn}^{{\sfrac1{10}}})^N\MM{M,\Nindt,\tau_q^{-1}\Ga_q^{i+16},\Tau_q^{-1}\Ga_{q}^9}\, . 
\end{align*}
Therefore, \eqref{eq:dc:current:case:1} does not contribute to the pressure increment $\si_{\ov \phi_{C}^{m,l}}$. Recalling \eqref{eq:dodging:oldies}, one can also verify \eqref{eq:cc.p.6} for \eqref{eq:dc:current:case:1}. On the other hand, the remaining term in \eqref{current:c:high4} generates a new pressure increment. To deal with this, we fix values of $i,j,k,\xi,\vecl,I,\diamond$ and define the functions $\hat\upsilon_{b,\diamond} = \hat\upsilon_{b, i,j,k,\xi,\vecl,I,\diamond}$ as
\begin{align*}
    \hat\upsilon_{1,\diamond}&:= r_q^{\sfrac13}
    a_{(\xi),\diamond} \left( \rhob_{(\xi)}^\diamond \zetab_{\xi}^{I,\diamond} \varrho_{(\xi),\diamond}^{I} \right)\circ \Phiik\\
    \hat\upsilon_{3,\diamond}&:= r_q^{-\sfrac13}\epsilon_{\bullet p r}    a_{(\xi),\diamond}^{p,\rm good} \partial_r \Phi^s  \xi^\ell   A_{\ell}^j \xi^n A_n^j \mathbb{U}_{(\xi),\diamond}^{I,s} \circ \Phiik\, .
\end{align*}
The pressure increments and pressure currents associated to the $\hat\upsilon_{1,\diamond}$ and $\hat\upsilon_{3,\diamond}$ can be constructed very similarly to those for the premollified velocity increment potential in Lemmas~\ref{lem:pr.inc.vel.inc.pot} and \ref{lem:pr.current.vel.inc}. In fact, they satisfy better estimates due to the gains from $r_q$ and $\mathbb{U}_{(\xi),\diamond}^{I,s}$. These can now be collected together to yield the desired pressure increment and pressure current estimates associated to \eqref{current:c:high4}. We leave the details to the interested reader. 

\end{proof}

\subsection{Upgrading material derivatives}\label{upgrade:rle:ss}
\begin{definition}[Definition of $\ov\phi_{q+1}$ and $\ov\varphi_{q+1}$]
Recalling Lemmas~\ref{lem:oscillation.current:general:estimate}, \ref{lem:ct:general:estimate}, \ref{lem:lin:current:error}, \ref{lem:current:stressss}, \ref{lem:corrector.current:general:estimate}, and \ref{lem:moll:curr}, we define $\ov\phi_{q+1}= \sum_{m=q+1}^{\qbn} \ov\phi_{q+1}^m$ and $\ov\phi_{q+1}^m = \ov\phi_{q+1}^{m,l}+\ov\phi_{q+1}^{m,*}$ for $q+1\leq m \leq  \qbn$ by
\begin{subequations}
\begin{align}
    \ov\phi_{q+1}^{m,l} &= \ov\phi_O^{m,l} + \ov\phi_W^{m,l} + \ov\phi^{m,l}_{TNC} + \ov\phi_{S_O^{m,l}}^l + \ov\phi_{S_{TN}^{m,l}}^l + \ov\phi_{S_{C1}^{m,l}}^l + \ov\phi_{S_{M2}^{m,l}}^l + \mathbf{1}_{m=\qbn}\ov\phi_L^{\qbn,l} + \mathbf{1}_{m=q+1}\ov\phi_R^{q+1,l}  + \ov\phi_C^{m,l} \\
    \ov\phi_{q+1}^{m,*} &= \ov\phi_O^{m,*}+ \ov\phi_W^{m,*} + \ov\phi^{m,*}_{TNC} + \ov\phi_{S_O^{m,l}}^* + \ov\phi_{S_{TN}^{m,l}}^* + \ov\phi_{S_{C1}^{m,l}}^* + \ov\phi_{S_{M2}^{m,l}}^* + \ov\phi_{S_O^{m,*}} + \ov\phi_{S_{TN}^{m,*}} + \ov\phi_{S_{C1}^{m,*}} + \ov\phi_{S_{M2}^{m,*}} \notag\\
    &\qquad \qquad + \mathbf{1}_{m=\qbn}\ov\phi_L^{\qbn,*} + \ov\phi_R^{m,*} + \ov\phi_C^{m,*} + \ov\phi_{M}^{m}     \end{align}
\end{subequations}
Here, any undefined terms are taken to be $0$.  We then define the primitive current error $\ov \varphi_{q+1}$ by
\begin{align}\label{defn:primitive:current}
    \ov\varphi_{q+1} := \sum_{m=q+1}^{\qbn} \ov\varphi_{q+1}^m \, , \qquad \ov\varphi_{q+1}^m = \varphi_q^m + \ov\phi_{q+1}^m \, ,
\end{align}
which we note is consistent with \eqref{eq:def:ov:phi}.
\end{definition}

\begin{lemma}[\bf Upgrading material derivatives]\label{lem:oscillation.current:general:estimate2}
The new current errors $\ov\phi_{q+1}^{m}=\ov\phi_{q+1}^{m,l}+\ov\phi_{q+1}^{m,*}$ satisfy the following. For $N+M\leq \sfrac{\Nind}{4}$, we have that
    \begin{align}
    \left| \psi_{i,q+\half-1} D^N D_{t,q+\half-1}^M \ov\phi^{q+\half}_{q+1} \right| 
    &\lesssim \Gamma_{q+\half}^{-50}  \pi_q^{q+\half} r_{q+\half}^{-1} \Lambda_{q+\half}^N \MM{M, \Nindt, \tau_{q+\half-1}^{-1}\Gamma_{q+\half-1}^{i-5},\Tau_{q}^{-1}\Ga_q^{11}}  \, . \label{eq:osc.current.upgraded}
    \end{align}
For the same range of $N+M$, the current error $\ov\phi_{q+1}^{q+1}$ obeys the estimate
    \begin{align}
    \left| \psi_{i,q} D^N D_{t,q}^M \ov\phi^{q+1}_{q+1} \right| 
    &\lesssim \Gamma_{q+1}^{-50} \pi_q^{q+1} r_{q+1}^{-1} \Lambda_{m}^N \MM{M, \Nindt, \tau_{q}^{-1}\Gamma_{q}^{i+20}, \Tau_{q}^{-1}\Ga_q^{{10}}}  \, . \label{eq:osc.current.upgraded.special}
\end{align}
Finally, we have that for $q+\half+1\leq m \leq \qbn$ and the same range of $N+M$,
\begin{align}
    \left| \psi_{i,m-1} D^N D_{t,m-1}^M \ov\phi^{m}_{q+1} \right| 
    &\lesssim \left(\si_{m,q+1}^+ + \mathbf{1}_{m=\qbn}\Ga_\qbn^{-50}\pi_q^{\qbn} +  {\de_{q+3\bn}} \right)^{\sfrac32} r_m^{-1} \notag\\
    &\qquad \qquad \times (\la_{m}\Ga_m)^N 
    \MM{M, \Nindt, \Ga_q^{i+18} \tau_q^{-1}, \Tau_q^{-1}\Ga_q^{11}} \, . \label{eq:osc.current.upgraded.general}
\end{align}
\end{lemma}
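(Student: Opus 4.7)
The plan is to apply the abstract material derivative upgrading lemma Lemma~\ref{lem:upgrading.material.derivative} separately to each constituent of $\ov\phi_{q+1}^{m,l}$, and then to assemble the resulting pointwise bounds into the three claimed estimates. The key input is the spatial dodging that every local ingredient of $\ov\phi_{q+1}^{m,l}$ enjoys with respect to $\hat w_{q+1},\dots,\hat w_{m-1}$, which makes the difference $D_{t,m-1}-\Dtq=(\hat w_{q+1}+\cdots+\hat w_{m-1})\cdot\nabla$ act trivially on each term. In consequence, the sharp material derivative cost $\tau_{m-1}^{-1}\Ga_{m-1}^{i-5}$ appropriate to $\supp\psi_{i,m-1}$ replaces the original cost $\tau_q^{-1}\Ga_q^{i+\const}$ on $\supp\psi_{i,q}$, exactly as was carried out for the stress errors in Lemma~\ref{l:divergence:stress:upgrading}.

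First I would catalogue the ingredients entering $\ov\phi_{q+1}^{m,l}$ together with the sharp $\Dtq$ bound and support property each has already been proven to satisfy: $\ov\phi_O^{m,l}$ from \eqref{eq:co.p.1} and \eqref{eq:o.p.6}; $\ov\phi_{TN}^{m,l}$ from \eqref{eq:ct.p.1} and \eqref{ct:support:first}; the velocity-pressure current $\ov\phi_W^{m,l}=\phi_{\upsilon}^{m}$ from \eqref{pr:current:vel:loc:pt:q} and \eqref{supp:pr:vel:q}; the collected pressure-induced currents $\ov\phi_{TNC}^{m,l}$, i.e., $\phi_{\ov\phi_O^{m'}}^{m,l}, \phi_{\ov\phi_{TN}^{m'}}^{m,l}, \phi_{\ov\phi_C^{m'}}^{m,l}$ from \eqref{eq:desert:estimate:1}, \eqref{eq:desert:estimate:1:TN}, \eqref{eq:desert:estimate:1:dc} and supports \eqref{eq:desert:dodging}, \eqref{eq:desert:dodging:TN}, \eqref{eq:desert:dodging:dc}; the stress pressure currents $\phi_{S^{m'}}^{m,l}$ from \eqref{s:p:c:pt.stress} and \eqref{s:p:c:supp.stress}; the divergence corrector $\ov\phi_C^{m,l}$ from \eqref{eq:cc.p.1} and \eqref{eq:cc.p.6.1}; the mollification pieces $\ov\phi_M^{q+1}, \ov\phi_M^{\qbn}$ from \eqref{est:curr.mollification1}--\eqref{est:curr.mollification}; plus the endpoint contributions $\ov\phi_L^{\qbn,l}$ from \eqref{eckel:local:est}, \eqref{eckel:support} and $\ov\phi_R^{q+1,l}$ from \eqref{eq:sc:loc:est}, \eqref{eq:sc:loc:supp}. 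Each ingredient is supported disjointly from $B(\supp\hat w_{q'},\frac 12\lambda_{q'}^{-1}\Ga_{q'+1})$ for all $q+1\leq q'\leq m-1$, so Lemma~\ref{lem:upgrading.material.derivative} applies on $\supp\psi_{i,m-1}$ with $v=\hat u_q$, $w=\hat u_{m-1}-\hat u_q=\hat w_{q+1}+\cdots+\hat w_{m-1}$, parameters taken from \eqref{eq:nasty:D:wq:old}--\eqref{eq:nasty}, and the ingredient's own parameters; the dodging annihilates $w\cdot\nabla$ acting on the ingredient and thus preserves the sharp spatial/pressure weight while converting the temporal cost to the claimed $\MM{M,\Nindt,\tau_{m-1}^{-1}\Ga_{m-1}^{i-5},\Tau_{m-1}^{-1}\Ga_{m-1}^{\const}}$ form.

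To obtain \eqref{eq:osc.current.upgraded.general} at generic $q+\half+1\leq m\leq\qbn$, define
\begin{align*}
\si_{m,q+1}^+ := \si_{\ov\phi_O^m}^+ + \si_{\ov\phi_{TN}^m}^+ + \si_{\ov\phi_C^m}^+ + \si_{S_O^m}^+ + \si_{S_{TN}^m}^+ + \si_{S_{C1}^m}^+ + \si_{S_{M2}^m}^+\, .
\end{align*}
The pressure-carrying ingredients are each bounded by $(\si_{m,q+1}^+ + \delta_{q+3\bn})^{\sfrac 32}r_m^{-1}$ times the claimed derivative factors, while the pressure-currents $\phi_\star^{m,l}$ are bounded by the \emph{anticipated} pressure $\Ga_m^{-100}(\pi_q^m)^{\sfrac 32}r_m^{-1}$. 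At the endpoint $m=\qbn$ the linear current $\ov\phi_L^{\qbn,l}$ contributes exactly $\Ga_\qbn^{-100}(\pi_q^\qbn)^{\sfrac 32}r_\qbn^{-1}$, which matches the indicator term in \eqref{eq:osc.current.upgraded.general}. For $m=q+1$ and $m=q+\half$ no new pressure increment is created; every ingredient is controlled by anticipated pressure through \eqref{eq.cur.osc.low.1}, \eqref{eq:ct:lowshell:nopr:1}, \eqref{eq:sc:loc:est}, and the $\phi_\star^{m,l}$ bounds (with $\pi_q^{q+1}$ or $\pi_q^{q+\half}$ in place of $\pi_q^m$), yielding \eqref{eq:osc.current.upgraded.special} and \eqref{eq:osc.current.upgraded} respectively. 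The nonlocal pieces $\ov\phi_{q+1}^{m,*}$ do not dodge but are negligibly small of order $\delta_{q+3\bn}^{\sfrac 32}\Tau_\qbn^{2\Nindt}$ and can be upgraded crudely by the same lemma, as was done for stresses in Lemma~\ref{l:divergence:stress:upgrading}.

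The main obstacle is bookkeeping rather than analysis: one must check that the dodging window $q+1\leq q'\leq m-1$ in each ingredient's support property matches the one required by Lemma~\ref{lem:upgrading.material.derivative}, and that the various $\Ga_q^{10}, \Ga_q^{11}, \Ga_q^{8}$ losses from the original $\Dtq$ bounds combine into the final $\Ga_q^{11}$ of \eqref{eq:osc.current.upgraded} and $\Ga_q^{10}$ of \eqref{eq:osc.current.upgraded.special}. No fundamentally new estimate beyond this assembly is required.
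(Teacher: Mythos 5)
Your proposal follows essentially the same route as the paper's proof: the paper likewise treats the $m=q+1$ case as immediate (no upgrading needed) and for $m>q+1$ applies Lemma~\ref{lem:upgrading.material.derivative} ingredient by ingredient, using the dodging conclusions proven alongside each error term in Lemmas~\ref{lem:oscillation.current:general:estimate}, \ref{lem:ct:general:estimate}, \ref{lem:lin:current:error}, \ref{lem:current:stressss}, \ref{lem:corrector.current:general:estimate}, and \ref{lem:moll:curr}, together with the trivial upgrade of the minuscule nonlocal parts, in parallel with items~\eqref{upgrade:item:2}--\eqref{upgrade:item:4} of Lemma~\ref{l:divergence:stress:upgrading}. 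The only slip worth flagging: your displayed definition of $\si_{m,q+1}^+$ omits the endpoint term $\mathbf{1}_{\{m=\qbn\}}\sigma_\upsilon^+$ that is present in \eqref{defn:si.m}, and consequently your claim that $\ov\phi_L^{\qbn,l}$ contributes ``exactly $\Ga_\qbn^{-100}(\pi_q^\qbn)^{\sfrac 32}r_\qbn^{-1}$'' is inaccurate --- by \eqref{eckel:local:est} that bound also carries $(\sigma_\upsilon^+)^{\sfrac 32}$, which needs the $\sigma_\upsilon^+$ summand inside $\si_{\qbn,q+1}^+$ to be absorbed; the $\pi_q^\qbn$ piece goes into the indicator term and the $\sigma_\upsilon^+$ piece into $\si_{\qbn,q+1}^+$.
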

\begin{proof}[Proof of Lemma~\ref{lem:oscillation.current:general:estimate2}]
We have that \eqref{eq:osc.current.upgraded.special} follows immediately from \eqref{eq.cur.osc.low.1}, \eqref{eq:ct:lowshell:nopr:1}, \eqref{eq:ctnl:estimate:1}, \eqref{eq:sc:loc:est}, \eqref{eq:sc:nonloc:est}, \eqref{est:curr.mollification1}, \eqref{low.bdd.pi}, \eqref{eq:ind.pr.anticipated}, and \eqref{la.beats.de}.  In order to prove the remaining estimates, we appeal to Lemma~\ref{lem:upgrading.material.derivative}.  The proof is very similar to the proofs of items~\eqref{upgrade:item:2}--\eqref{upgrade:item:4} of Lemma~\ref{l:divergence:stress:upgrading}, and so we omit most of the details. The basic idea is however that nonlocal error terms can be upgraded trivially using the minuscule amplitude, and the local error terms can be upgraded using the dodging conclusions that have been included in  Lemmas~\ref{lem:oscillation.current:general:estimate}, \ref{lem:ct:general:estimate}, \ref{lem:lin:current:error}, \ref{lem:current:stressss}, \ref{lem:corrector.current:general:estimate}, and \ref{lem:moll:curr}.
\end{proof}

\section{New intermittent pressure}\label{sec:intermittent:pressure}
In this section, we first define a new {pressure increment}\index{pressure increment} $\si_{q+1}$, which leads to a new pressure $p_{q+1}$, and a new intermittent pressure\index{intermittent pressure} $\pi_{q+1}$. We then replace the Euler-Reynolds system and the relaxed local energy inequality for $(u_{q+1}, p_q, \overline{R}_{q+1}, \overline\ph_{q+1}, -(\pi_q-\pi_q^q))$ with the Euler-Reynolds system and relaxed local energy inequality for $(u_{q+1}, p_{q+1}, R_{q+1}, \ph_{q+1}, -\pi_{q+1})$ by defining new stress errors $R_{q+1}$ and new current errors $\ph_{q+1}$. In fact adding the pressure increment to the pressure at the $q^{\rm th}$ step creates some current errors, called pressure current errors.  These will be absorbed into the new current error $\ph_{q+1}$. This section also verifies all inductive assumptions related to a new intermittent pressure and new stress/current errors, specifically the assumptions in subsection~\ref{ss:relaxed}, \ref{sec:pi:inductive}, Hypothesis \ref{hyp:dodging5}, and \eqref{est.upsilon.ptwise}. 

\subsection{New pressure increment and new intermittent pressure}\label{sec:new.pressure}

In this subsection, we define a new pressure increment $-\si_{q+1} = p_{q+1}-p_q$ and introduce a new intermittent pressure $\pi_{q+1}$.

We collect the pressure increments generated by new errors and new velocity increment potentials. Recall that Lemma~\ref{lem:pr.inc.vel.inc.pot} defined a pressure increment ($\sigma_\upsilon$) associated to premollified velocity increment potentials, subsection~\ref{sec:new.pressure.stress} recalled pressure increments ($\sigma_{S^m}$) associated to various stress errors, and Lemmas~\ref{lem:oscillation.current:general:estimate}, \ref{lem:ct:general:estimate}, and \ref{lem:corrector.current:general:estimate} defined pressure increments ($\sigma_{\ov\phi_O^m}$, $\sigma_{\ov\phi_{TN}^m}$, and $\sigma_{\ov\phi_C^M}$) associated to various current errors. Then fixing $m$ such that $q+\half+1 \leq m \leq q+\bn$, we define\index{$\si_{m,q+1}$}
\begin{align}
    \si_{m,q+1} &:= \si_{S^{m}} + \si_{\ov \phi^{m}_O} + \si_{\ov\phi^{m}_{TN}} + \si_{\ov\phi^{m}_C} +\mathbf{1}_{\{m=\qbn\}} \sigma_{\upsilon} \,.  \label{defn:si.m}
\end{align}
Recalling that every pressure increment referenced above has a decomposition $\si_\bullet = \si_\bullet^+ - \si_\bullet^-$, we define $\si_{m,q+1}^+$\index{$\si_{m,q+1}^{\pm}$} and $\si_{m,q+1}^-$ in the obvious way. 

Next, associated to each pressure increment $\sigma_\bullet$ listed above is a function of time $\bmu_{\sigma_\bullet}$ which satisfies $\bmu_{\sigma_\bullet}' = \langle \Dtq \sigma_\bullet \rangle$ (see Lemma~\ref{l:divergence:stress:upgrading} and Lemmas~\ref{lem:pr.inc.vel.inc.pot},  \ref{lem:currentoscillation:pressure:current}, \ref{lem:ctn:pressure:current}, and \ref{lem:currentdivergencecorrector:pressure:current}), and so we define\index{$\bmu_{m,q+1}$}
\begin{align}
    \bmu_{m,q+1} &:= \bmu_{\sigma_{S^{m}}} + \bmu_{\sigma_{\ov \phi^{m}_O}} + \bmu_{\sigma_{\ov\phi^{m}_{TN}}}  + \bmu_{\ov\phi^{m}_C} +\mathbf{1}_{\{m=\qbn\}} \bmu_{\sigma_\upsilon} \, . \label{defn:bmu.m}
\end{align} \index{$\bmu_{m,q+1}$}
Furthermore, recall that Lemma~\ref{lem:pr.current.vel.inc} defined a current error associated to velocity pressure increments, subsection~\ref{sec:new.pressure.stress} recalled current errors associated to various stress error pressure increments, and Lemmas~\ref{lem:currentoscillation:pressure:current}, \ref{lem:ctn:pressure:current}, and \ref{lem:currentdivergencecorrector:pressure:current} defined current errors associated to various current error pressure increments. Then fixing $m,m'$ such that $q+\half+1 \leq m' \leq m \leq \qbn$, we define
\begin{subequations}\label{defn:phi.m.m'}
\begin{align}
    \phi^{m',l}_{m,q+1} &:= \phi^{m',l}_{{S^{m}}} + \phi^{m',l}_{{\ov \phi^{m}_O}} + \phi^{m',l}_{{\ov\phi^{m}_{TN}}} + \phi^{m',l}_{{\ov\phi^{m}_C}} +\mathbf{1}_{\{m=\qbn\}}  \phi^{m',l}_{\sigma_\upsilon}\\
    \phi^{m',*}_{m,q+1} &:= \phi^{m',*}_{{S^{m}}} + \phi^{m',*}_{{\ov \phi^{m}_O}} + \phi^{m',*}_{{\ov\phi^{m}_{TN}}} + \phi^{m',*}_{{\ov\phi^{m}_C}} +\mathbf{1}_{\{m=\qbn\}} \phi^{m',*}_{\upsilon}  \notag \\
   &\qquad \qquad + \mathbf{1}_{\{m'=m\}} \bigg{(} \phi^{*}_{{S^{m}}} + \phi^{*}_{{\ov \phi^{m}_O}} + \phi^{*}_{{\ov\phi^{m}_{TN}}} + \phi^{*}_{{\ov\phi^{m}_C}} +\mathbf{1}_{\{m=\qbn\}} \phi^{*}_{\upsilon} \bigg{)} \, .
\end{align}
\end{subequations}
Now we set\index{$\phi_{m,q+1}$}
\begin{equation}\label{def:phi:m:qplus}
    \phi_{m,q+1} := \sum_{m'=q+\half{+1}}^m \phi^{m',l}_{m,q+1} + \phi^{m',*}_{m,q+1} \, ,
\end{equation}
so that the aforementioned lemmas along with \eqref{Sunday:Sunday:Sunday} give the equality 
\begin{align}\label{exp.Dtq.si}
\div \phi_{m,q+1} = \Dtq \si_{m,q+1} - \bmu'_{m,q+1}
=\Dtq \si_{m,q+1}-\langle \Dtq \si_{q+1,m} \rangle \, .
\end{align}
By appealing to the lemmas mentioned above along with Lemma~\ref{lem:prop.si.pre.stress}, we have that the $\si_{m,q+1}$'s satisfy the properties listed in the following lemma; we refer the reader to Sections~ \ref{sec:new.pressure.stress}, and \ref{sec:RLE:errors} for more details.

\begin{lemma}[\bf Collected properties of error terms and pressure increments]\label{lem:prop.si.pre}
For each $q+\half+1 \leq m \leq q+\bn$, $\si_{m,q+1}$ satisfies the following properties.
\begin{enumerate}[(i)]
\item For any $0\leq k \leq \dpot$, we have that
    \begin{subequations}
        \begin{align}
        \left|\psi_{i,q}D^N D_{t,q}^M S^{m,l}_{q+1}\right|
    &\lec \left(\si_{m,q+1}^+ +  {\de_{q+3\bn}}\right)  (\la_{m}\Ga_m)^N  \label{est.S.m.pt.sim}
    \MM{M, \Nindt, \Ga_q^{i+18} \tau_q^{-1}, \Tau_q^{-1}\Ga_q^9}  \\
    \left|\psi_{i,q}D^N D_{t,q}^M \bar \phi^{m,l}_{q+1} \right|
    &\lec \left(\si_{m,q+1}^+ + \mathbf{1}_{m=\qbn}\Ga_\qbn^{-50}\pi_q^{\qbn} 
    +  {\de_{q+3\bn}} \right)^{\sfrac32} r_m^{-1} \notag\\
    &\qquad \qquad \times (\la_{m}\Ga_m)^N 
    \MM{M, \Nindt, \Ga_q^{i+18} \tau_q^{-1}, \Tau_q^{-1}\Ga_q^9} 
    \label{est.phi.m.pt.sim}
    \end{align}
    \end{subequations}
where the first bound holds for $N+M\leq 2\Nind$, and the second bound holds for $N+M\leq \sfrac{\Nind}{4}$. 
\item For $N, M \leq {\sfrac{\Nfin}{200}}$, we have that
\begin{subequations}
    \begin{align}
        \norm{\psi_{i,q}D^N D_{t,q}^M \si_{m,q+1}^+}_{\sfrac32}
    &\lec \Ga_m^{-9} \de_{m+\bn}  (\la_{m}\Ga_m)^N 
    \MM{M, \Nindt, \Ga_q^{i+18} \tau_q^{-1}, \Tau_q^{-1}\Ga_q^9} \label{est:si.m+.32.Dtq}\\
    \norm{\psi_{i,q}D^N D_{t,q}^M \si_{m,q+1}^+}_{\infty}
    &\lec {\Ga_m^{\badshaq-9}}   (\la_{m}\Ga_m)^N 
    \MM{M, \Nindt, \Ga_q^{i+18} \tau_q^{-1}, \Tau_q^{-1}\Ga_q^9} \label{est:si.m+.infty.Dtq}\\
    \left|\psi_{i,q}D^N D_{t,q}^M \si_{m,q+1}^+\right|
    &\lec \left(\si_{m,q+1}^+ +  {\de_{q+3\bn}}\right)  (\la_{m}\Ga_m)^N 
    \MM{M, \Nindt, \Ga_q^{i+18} \tau_q^{-1}, \Tau_q^{-1}\Ga_q^9} \label{est:si.m+.pt}\\
    \left|\psi_{i,q}D^N D_{t,q}^M \si_{m,q+1}^-\right|
    &\lec {\Ga_{q+\half}^{-100}} \pi_q^{q+\half}  (\la_{q+\half}\Ga_{q+\half})^N 
    \MM{M, \Nindt, \Ga_q^{i+18} \tau_q^{-1}, \Tau_q^{-1}\Ga_q^9} \, .
\end{align}
\end{subequations}
\item $\si_{m,q+1}$ and $\si_{m,q+1}^+$ have the support properties
\begin{subequations}
\begin{align}
    B(\supp \hat w_{q'}, \la_{q'}^{-1}{\Ga_{q'+1}})
    \cap \si_{m,q+1} = \emptyset \qquad \forall q+1\leq q'\leq q+\half  \, , \label{supp.si.m}\\
    B(\supp \hat w_{q'}, \la_{q'}^{-1}{\Ga_{q'+1}})
    \cap \si_{m,q+1}^+ = \emptyset \qquad \forall q+1\leq q'\leq m-1\, . \label{supp.si.m+}
\end{align}
\end{subequations}
\end{enumerate}
\end{lemma}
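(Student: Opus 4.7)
The plan is to prove Lemma~\ref{lem:prop.si.pre} by unpacking the definition~\eqref{defn:si.m} of $\sigma_{m,q+1}$ and assembling the analogous bounds that have already been established for each of the five constituent pressure increments $\sigma_{S^m}$ (Lemma~\ref{lem:prop.si.pre.stress}), $\sigma_{\ov\phi^m_O}$ (Lemma~\ref{lem:oscillation.current:general:estimate}), $\sigma_{\ov\phi^m_{TN}}$ (Lemma~\ref{lem:ct:general:estimate}), $\sigma_{\ov\phi^m_C}$ (Lemma~\ref{lem:corrector.current:general:estimate}), and, when $m=\qbn$, $\sigma_\upsilon$ (Lemma~\ref{lem:pr.inc.vel.inc.pot}). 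Each constituent comes with its own nonnegative decomposition $\sigma_\bullet=\sigma_\bullet^+-\sigma_\bullet^-$, so the same decomposition passes to $\sigma_{m,q+1}$, and crucially every $\sigma_\bullet^+$ is a pointwise summand of $\sigma_{m,q+1}^+$. No new inverse divergence, material-derivative upgrade, or dodging argument is needed; the proof is essentially a triangle-inequality bookkeeping exercise.

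For item (ii), the four estimates on $\sigma_{m,q+1}^+$ follow termwise from the $L^{3/2}$, $L^\infty$, and pointwise estimates \eqref{est:si.m+.32.Dtq.stress}--\eqref{est:si.m+.pt.stress}, \eqref{eq:co.p.2}--\eqref{eq:co.p.3.5}, \eqref{eq:ct.p.2}--\eqref{eq:ct.p.3.1}, \eqref{eq:cc.p.2}--\eqref{eq:cc.p.3.5}, and (for $m=\qbn$) \eqref{est.pr.vel.inc}--\eqref{est.pr.vel.inc.infty}, where the pointwise bound by $\sigma_{m,q+1}^++\delta_{q+3\bn}$ is upgraded using the inequality $\sigma_\bullet^+\le\sigma_{m,q+1}^+$. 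The pointwise bound on $\sigma_{m,q+1}^-$ combines \eqref{heatsie}, \eqref{eq:co.p.4}, \eqref{eq:ct.p.4}, \eqref{eq:cc.p.4}, and \eqref{est.pr.vel.inc-}; the $(\la_q/\la_{q+\half})^{2/3}$ factor that appears in the oscillation and transport/Nash contributions is absorbed into the prefactor $\Gamma_{q+\half}^{-100}\pi_q^{q+\half}$ via the pressure scaling law \eqref{eq:ind.pr.anticipated.2}. Item (iii) is read off directly from the corresponding support statements \eqref{supp.si.m.stress}--\eqref{supp.si.m+.stress}, \eqref{eq:o.p.6}, \eqref{eq:ct.p.6}, \eqref{eq:cc.p.6.2}--\eqref{eq:cc.p.6.3}, and \eqref{supp:pr:vel.inc}.

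The only nontrivial step is item (i), specifically \eqref{est.phi.m.pt.sim}: recalling the decomposition of $\ov\phi^{m,l}_{q+1}$ in subsection~\ref{upgrade:rle:ss}, one must combine the pointwise bounds \eqref{eq:co.p.1}, \eqref{eq:ct.p.1}, \eqref{eq:cc.p.1}--\eqref{eq:cc.p.1.5}, \eqref{est:curr.mollification1}--\eqref{est:curr.mollification}, the linear error \eqref{eckel:local:est}, the stress-current error \eqref{eq:sc:loc:est}, and the various pressure currents \eqref{s:p:c:pt.stress}, \eqref{eq:desert:estimate:1}, \eqref{eq:desert:estimate:1:TN}, \eqref{eq:desert:estimate:1:dc}, and \eqref{pr:current:vel:loc:pt:q}. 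Each main contribution has the form $(\sigma_\bullet^+)^{3/2}r_m^{-1}$ and is directly dominated by $(\sigma_{m,q+1}^+)^{3/2}r_m^{-1}$ by monotonicity; the pressure-current contributions at scales $m'\le m$ have the form $\Ga_{m'}^{-100}(\pi_q^{m'})^{3/2}r_{m'}^{-1}$, and for $m'<m$ the scaling law \eqref{eq:ind.pr.anticipated.1} together with the $\Ga_{m'}^{-100}$ gain and $\Ga_{m'}^{-1}r_m r_{m'}^{-1}\le 1$ allows their absorption into $\delta_{q+3\bn}^{3/2}r_m^{-1}$. The distinguished $m'=m=\qbn$ contribution (from $\ov\phi_L^{\qbn,l}$ and $\phi_\upsilon^{\qbn,l}$) cannot be absorbed and is precisely what produces the extra $\mathbf{1}_{\{m=\qbn\}}\Ga_\qbn^{-50}\pi_q^\qbn$ term in \eqref{est.phi.m.pt.sim}. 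The main obstacle is this last arithmetic step, where one must carefully verify that for every $m'<m$, the combined gain from $\Ga_{m'}^{-100}$, the rescaling \eqref{eq:ind.pr.anticipated.1}, and the $r$-factor ratio is strong enough to overcome the loss $\delta_{m'+\bn}/\delta_{m+\bn}$ and close against $\delta_{q+3\bn}^{3/2}$.
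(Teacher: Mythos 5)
Your overall strategy matches the paper's exactly: the paper's own ``proof'' of this lemma is a single sentence before the statement (``By appealing to the lemmas mentioned above along with Lemma~\ref{lem:prop.si.pre.stress}\dots''), so the lemma is intended as a bookkeeping collection of the constituent bounds from Lemmas~\ref{lem:prop.si.pre.stress}, \ref{lem:oscillation.current:general:estimate}, \ref{lem:ct:general:estimate}, \ref{lem:corrector.current:general:estimate}, and (for $m=\qbn$) \ref{lem:pr.inc.vel.inc.pot}. Your treatment of items (ii) and (iii), including the use of \eqref{eq:ind.pr.anticipated} to convert the $(\la_q/\la_{q+\half})^{2/3}\pi_q^q$ prefactors appearing in \eqref{eq:co.p.4} and \eqref{eq:ct.p.4} into $\Gamma_{q+\half}^{-100}\pi_q^{q+\half}$, is correct and is the intended argument.

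However, your discussion of \eqref{est.phi.m.pt.sim} contains a gap. You include the pressure-current error terms \eqref{s:p:c:pt.stress}, \eqref{eq:desert:estimate:1}, \eqref{eq:desert:estimate:1:TN}, \eqref{eq:desert:estimate:1:dc}, and \eqref{pr:current:vel:loc:pt:q} among the constituents of $\ov\phi^{m,l}_{q+1}$, and then claim that for $m'<m$ their pointwise size $\Gamma_{m'}^{-100}(\pi_q^{m'})^{3/2}r_{m'}^{-1}$ ``allows their absorption into $\delta_{q+3\bn}^{3/2}r_m^{-1}$.'' This does not work as stated: $\pi_q^{m'}$ is a nonconstant function bounded below by $\delta_{m'+\bn}$ and whose $L^\infty$ size is $\approx\Gamma_{m'}^{\badshaq+1}$, so no amount of $\Gamma_{m'}^{-100}$ gain can reduce $(\pi_q^{m'})^{3/2}$ to the tiny constant $\delta_{q+3\bn}^{3/2}$ pointwise. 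In fact those pressure-current pieces are not part of $\ov\phi^{m,l}_{q+1}$ at all --- they are aggregated separately into $\phi_{P1}$ via \eqref{defn:phi.m.m'} and \eqref{eq:monday:morning}, then estimated on their own terms in Lemma~\ref{lem:phi:p:1} and reassembled in Lemma~\ref{lem:verify.ind.pressure2}. The bound \eqref{est.phi.m.pt.sim} only needs to cover the oscillation \eqref{eq:co.p.1}, transport/Nash \eqref{eq:ct.p.1}, divergence-corrector \eqref{eq:cc.p.1}--\eqref{eq:cc.p.1.5}, mollification \eqref{est:curr.mollification1}--\eqref{est:curr.mollification}, stress-current \eqref{eq:sc:loc:est}, and (at $m=\qbn$) linear \eqref{eckel:local:est} errors. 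Once the pressure currents are excluded, the only term not dominated by $(\sigma_{m,q+1}^+)^{3/2}r_m^{-1}+\delta_{q+3\bn}^{3/2}r_m^{-1}$ is the $\Ga_{q+\bn}^{-100}(\pi_q^{q+\bn})^{3/2}$ piece from \eqref{eckel:local:est}, which is the true (and only) source of the $\mathbf{1}_{\{m=\qbn\}}\Ga_\qbn^{-50}\pi_q^\qbn$ term --- exactly as you correctly surmised for that piece.
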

\begin{remark}[\bf Upgrading material derivatives]\label{upgrade.mat.sim.plus}
As a consequence of \eqref{est:si.m+.32.Dtq}, \eqref{est:si.m+.infty.Dtq}, \eqref{est:si.m+.pt}, and \eqref{supp.si.m+},
we may apply Lemma \ref{lem:upgrading.material.derivative}
to $F= F^l = \si_{m,q+1}^{\pm}$ to upgrade the material derivative estimates.  In particular, we obtain that
\begin{subequations}
    \begin{align}
        \norm{\psi_{i,m-1}D^N D_{t,m-1}^M \si_{m,q+1}^{+}}_{\sfrac32}
        &\lec {\Ga_m^{-9}} \de_{m+\bn} (\la_m\Ga_m)^N 
        \MM{M, \Nindt, \Ga_{m-1}^{i-5} \tau_{m-1}^{-1}, \Tau_{m-1}^{-1}\Ga_{m-1}^{-1}} \\
        \norm{\psi_{i,m-1}D^N D_{t,m-1}^M \si_{m,q+1}^{+}}_{\infty}
        &\lec \Ga_m^{\badshaq-9}  (\la_m\Ga_m)^N
        \MM{M, \Nindt, \Ga_{m-1}^{i-5} \tau_{m-1}^{-1}, \Tau_{m-1}^{-1}\Ga_{m-1}^{-1}} \label{est.td.si.m.pl}\\
        \left|\psi_{i,m-1}D^N D_{t,m-1}^M \si_{m,q+1}^+\right|
    &\lec \left(\si_{m,q+1}^+ +  {\de_{q+3\bn}}\right)  (\la_{m}\Ga_m)^N 
    \MM{M, \Nindt, \Ga_{m-1}^{i-5} \tau_{m-1}^{-1}, \Tau_{m-1}^{-1}\Ga_{m-1}^{-1}}\label{est.td.si.m.pl.ptwise} \\
    \left|\psi_{i,q+\half-1}D^N D_{t,q+\half-1}^M \si_{m,q+1}^-\right|
    &\lec {\Ga_{q+\half}^{-100}} \pi_q^{q+\half}  (\la_{q+\half}\Ga_{q+\half})^N  \notag\\
    &\qquad \qquad \times 
    \MM{M, \Nindt, \Ga_{q+\half-1}^{i-5} \tau_{q+\half-1}^{-1}, \Tau_{q+\half-1}^{-1}\Ga_{q+\half-1}^{-1}} \, .
    \label{est.td.si.m.pi.-}
    \end{align}
\end{subequations}
    for ${N, M \leq \sfrac{\Nfin}{200}}$. Then, applying Lemma \ref{lem:cooper:2} to $v= \hat u_{m-1}$, $f = \si_{m,q+1}^{\pm}$, $p=\infty$, and $\Omega = \supp(\psi_{i,m-1})$ (or $\Omega = \mathcal{N}(x)$ where $\mathcal{N}(x)$ is a closed neighborhood of $x$ contained in $\supp(\psi_{i,m-1})$), we have
    \begin{subequations}
         \begin{align}
        \norm{D^{N} D_{t,m-1}^M \na \si_{m,q+1}^{+}}_{L^\infty(\supp \psi_{i,m-1})}
        &\lec \Ga_m^{\badshaq-9}  (\la_m\Ga_m)^{N+1} 
        \MM{M, \Nindt, \Ga_{m-1}^{i-5} \tau_{m-1}^{-1}, \Tau_{m-1}^{-1}\Ga_{m-1}^{-1}} \label{est.td.si.m.pl:comm}\\
    \left| \psi_{i,m-1} D^{N} D_{t,m-1}^M \na \si_{m,q+1}^+ \right|
    &\lec \left(\si_{m,q+1}^+ +  {\de_{q+3\bn}}\right)  (\la_{m}\Ga_m)^{N+1} \notag\\
    &\qquad  \times \MM{M, \Nindt, \Ga_{m-1}^{i-5} \tau_{m-1}^{-1}, \Tau_{m-1}^{-1}\Ga_{m-1}^{-1}}\label{est.td.si.m.pl.ptwise:comm}\\
    \left|\psi_{i,q+\half-1}D^N D_{t,q+\half-1}^M \nabla \si_{m,q+1}^-\right|
    &\lec {\Ga_{q+\half}^{-100}} \pi_q^{q+\half}  (\la_{q+\half}\Ga_{q+\half})^{N+1}  \notag\\
    &\qquad  \times 
    \MM{M, \Nindt, \Ga_{q+\half-1}^{i-5} \tau_{q+\half-1}^{-1}, \Tau_{q+\half-1}^{-1}\Ga_{q+\half-1}^{-1}} \, ,
    \label{ineq:billystrings:1}
    \end{align}
    \end{subequations}
for $N < \sfrac{\Nfin}{200}$ and $M \leq \sfrac{\Nfin}{200}$.
\end{remark}

\begin{definition}[\bf Pressure increment $\sigma_{q+1}$\index{$\sigma_{q+1}$, $\sigma_{q+1}^k$} and decomposition into $\sigma_{q+1}^k$]\label{def:new:pressure}
Define constants
\begin{align}\label{defn:A}
    a_{m,q,k} := 2^{k-q-1}\left(\frac{\de_{k+\bn}}{\de_{m+\bn}}\right){\Ga_m^9}, \quad A_{m,q} = \sum_{k=m}^{{q+\Npr}} a_{m,q,k}\, ,
\end{align}
where $\Npr$ is chosen in item~\eqref{i:par:4.5} in subsection \ref{sec:para.q.ind}. With these constants in hand, we define \index{$\si_{q+1}^\pm$}
\begin{align}
    \label{def.new.si}
    \si_{q+1} &:= \sum_{m=q+\half+1}^{q+\bn} \underbrace{A_{m,q} \si_{m,q+1}}_{=: \td \si_{m,q+1}}, \qquad \si_{q+1}^\pm := \sum_{m=q+\half+1}^{q+\bn} \underbrace{A_{m,q} \si_{m,q+1}^\pm}_{=: \td \si_{m,q+1}^\pm} \, .
\end{align} \index{$\td \si_{m,q+1}$, $\td \si_{m,q+1}^\pm$}
Then reorganizing terms in $\si_{q+1}$ based on amplitude, have that
\begin{align}
    \si_{q+1} = \sum_{k=q+\half}^{q+\Npr} \si_{q+1}^k\, ,  \label{eq:rewriting:new:si}
\end{align}
where
\begin{align}
    \si_{q+1}^{q+{\half}}=-\si_{q+1}^- \, , \qquad \si_{q+1}^k
    &=\sum_{m=q+\half {+1}}^{\min(k, q+\bn)} a_{m,q,k} \si_{m,q+1}^+, \quad\text{for all }q+\half+1 \leq k \leq q+\Npr \, .  \label{def:sigma:qplus:k}
\end{align}
\end{definition}

As a direct consequence of Lemma~\ref{lem:prop.si.pre} and Definition~\ref{def:new:pressure}, we have that $\si_{q+1}^k$ satisfies the following properties.

\begin{lemma}[\bf Properties of $\sigma_{q+1}$ and $\sigma_{q+1}^k$]\label{lem:saturday:one}
For all $q+\half \leq k \leq q+\Npr$, the pressure increment $\si_{q+1}^k$ has the following properties. 
\begin{enumerate}[(i)]
\item \label{item.pr.1} $\si_{q+1}^k$ has the support property
\begin{align}\label{supp.sik}
    B(\supp \hat w_{q'}, \la_{q'}^{-1}{\Ga_{q'+1}})
    \cap \supp(\si_{q+1}^k) = \emptyset \qquad \forall q+1\leq q'\leq q+\half \, . 
\end{align}
\item \label{item.pr.2} 
For all $q+\half+1 \leq k \leq q+\bn$ and $N,M\leq \sfrac{\Nfin}{200}$, we have that $\si_{q+1}^k$ satisfies
\begin{subequations}
\begin{align}
    \norm{\psi_{i,k-1}D^N D_{t,k-1}^M \si_{q+1}^k}_{\sfrac32}
    &\lec \de_{k+\bn}  (\lambda_{k}\Ga_{k})^N 
    \MM{M, \Nindt, \Ga_{k-1}^{i-3} \tau_{k-1}^{-1}, \Tau_{k-1}^{-1}\Ga^{-1}_{k-1}} \, , \label{est:sik.32}\\
    \norm{\psi_{i,k-1}D^N D_{t,k-1}^M \si_{q+1}^k}_{\infty}
    &\lec \Ga_{k}^{\badshaq}   (\lambda_{k}\Ga_{k})^N 
    \MM{M, \Nindt, \Ga_{k-1}^{i-3} \tau_{k-1}^{-1}, \Tau_{k-1}^{-1}\Ga^{-1}_{k-1}} \, . \label{est:sik.infty}
\end{align}
\end{subequations}
For all $q+\bn+1 \leq k \leq q+\Npr$ and $N, M \leq \sfrac{\Nfin}{200}$, we have that $\si_{q+1}^k$ satisfies
\begin{subequations}
\begin{align}
    \norm{\psi_{i,\qbn}D^N D_{t,\qbn}^M \si_{q+1}^k}_{\sfrac32}
    &\lec \de_{k+\bn}  (\lambda_{\qbn}\Ga_{\qbn})^N 
    \MM{M, \Nindt, \Ga_\qbn^{i-3} \tau_\qbn^{-1}, \Tau_\qbn^{-1}\Ga^{-1}_\qbn} \,  \label{est:sik.32.higher} \\
    \norm{\psi_{i,\qbn}D^N D_{t,\qbn}^M \si_{q+1}^k}_{\infty}
    &\lec \Ga_{\qbn}^{\badshaq}  (\lambda_{\qbn}\Ga_{\qbn})^N 
    \MM{M, \Nindt, \Ga_\qbn^{i-3} \tau_\qbn^{-1}, \Tau_\qbn^{-1}\Ga^{-1}_\qbn} \, . \label{est:sik.infty.higher}
\end{align}
\end{subequations}
\item \label{item.pr.3} For $q+\half+1 \leq k \leq q+\bn$ and $0\leq k'\leq \dpot$, we have that
\begin{subequations}
\begin{align}
    \left|\psi_{i,q}D^N D_{t,q}^M S^{k}_{q+1}\right|
    &\leq \Ga_k^{-8} \left(\si_{q+1}^k + \de_{q+2\bn}\right)  (\lambda_k\Ga_k)^N  \MM{M, \Nindt, \Ga_q^{i+18} \tau_q^{-1}, \Tau_q^{-1}\Ga_q^9} \, , \label{est.S.k.pt.sik} \\
    \left|\psi_{i,q}D^N D_{t,q}^M \bar \phi^k_{q+1} \right|
    &\leq \Ga_k^{-13} \left(\si_{q+1}^k + \mathbf{1}_{m=\qbn}\Ga_\qbn^{-50}\pi_q^{\qbn} 
    +\de_{q+2\bn}
    \right)^{\sfrac32} r_k^{-1} \notag\\
    &\qquad \qquad \times (\lambda_k\Ga_k)^N 
    \MM{M, \Nindt, \Ga_q^{i+18} \tau_q^{-1}, \Tau_q^{-1}\Ga_q^9} \, ,
    \label{est.phi.k.pt.sik}
    \end{align}
\end{subequations}
where the first bound holds for $N+M\leq 2\Nind$, and the second bound holds for $N+M\leq \sfrac{\Nind}{4}$. 
\item \label{item.pr.3.5} For $q+\half+1\leq k \leq q+\Npr$ and $N,M\leq \sfrac{\Nfin}{200}$ and $q+1 \leq k' \leq \min(k-1,\qbn)$, we have that
\begin{align}
    \left|\psi_{i,k'} D^N D_{t,k'}^M \si_{q+1}^k\right|
    &\lec  (\si_{q+1}^k  + \Ga_q^{-100} \de_{k+\bn}) (\min(\lambda_k\Ga_k, \la_\qbn \Ga_\qbn))^N   \MM{M, \Nindt, \Ga_{k'}^{i-3} \tau_{k'}^{-1}, \Tau_{k'}^{-1}\Ga^{-1}_{k'}}  \, . \label{est.si.k.pt}
\end{align}
For the same range of $N$ and $M$,
we have that
\begin{align}
    \left|\psi_{i,q+\half-1}D^N D_{t,q+\half-1}^M \si_{q+1}^{q+\half}\right|
    &\leq \Ga_{q+\half}^{-25} \pi_{q}^{q+\half} (\lambda_{q+\half}\Ga_{q+\half})^N\\ &\qquad\times \MM{M, \Nindt, \Ga_{q+\half-1}^{i-3} \tau_{q+\half-1}^{-1}, \Tau_{q+\half-1}^{-1}\Ga_{q+\half-1}^{-1}} \, . \label{est.si-.pt}
\end{align}    
\item \label{item.pr.4}  For all $q+\half+1 \leq k \leq k' \leq q+\Npr$, we have that
\begin{align}\label{eq:ind.pr.inc.anticipated}
    \frac{\de_{k'+\bn}}{\de_{k+\bn}} \si_{q+1}^k \leq 2^{k-k'} \si_{q+1}^{k'} \, .
\end{align}
\item \label{item.pr.5} For all $q+\bn \leq k' \leq k \leq q+\Npr$, we have that
\begin{align}\label{pr.inc.upper}
     \si_{q+1}^k \leq \si_{q+1}^{k'} . 
\end{align}
\end{enumerate}
\end{lemma}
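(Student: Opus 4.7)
The plan is to unwind the definition $\sigma_{q+1}^k = \sum_{m=q+\half+1}^{\min(k,q+\bn)} a_{m,q,k} \sigma_{m,q+1}^+$ for $k \geq q+\half+1$ (and $\sigma_{q+1}^{q+\half} = -\sum_m A_{m,q} \sigma_{m,q+1}^-$), feed in the bounds on $\sigma_{m,q+1}^\pm$ from Lemma~\ref{lem:prop.si.pre} and Remark~\ref{upgrade.mat.sim.plus}, and exploit the algebraic structure of the constants $a_{m,q,k} = 2^{k-q-1}(\delta_{k+\bn}/\delta_{m+\bn})\Gamma_m^9$. Item~\eqref{item.pr.1} is immediate: \eqref{supp.si.m+} gives effective dodging of $\sigma_{m,q+1}^+$ against $\hat w_{q'}$ for $q+1 \leq q' \leq m-1$, which covers all $q' \leq q+\half$ since $m \geq q+\half+1$; for $k=q+\half$, the analogous conclusion for $\sigma_{m,q+1}^-$ follows from \eqref{supp.si.m} and the non-negativity of $\sigma_{m,q+1}^\pm$ (both are built as products of the non-negative quantities in \eqref{eq:fried:egg:evening}).

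For items~\eqref{item.pr.2} and~\eqref{item.pr.3.5}, the central identity is $\sum_{m=q+\half+1}^{\min(k,q+\bn)} a_{m,q,k} \Gamma_m^{-9} \delta_{m+\bn} \les \delta_{k+\bn}$, which follows by direct substitution and cancels exactly the prefactor in the $L^{\sfrac 32}$ estimate of Remark~\ref{upgrade.mat.sim.plus}. The analogous cancellation at $L^\infty$ produces the factor $\Gamma_k^{\badshaq}$ (using $\Gamma_m \leq \Gamma_k$ together with the doubly-exponential decay of $\delta_{k+\bn}/\delta_{m+\bn}$ beating the slower growth of $\Gamma_m^{\badshaq}$ for $a$ large), while the corresponding pointwise bound \eqref{est.td.si.m.pl.ptwise} sums directly to the right-hand side of \eqref{est.si.k.pt} modulo the additive $\delta_{q+3\bn}$. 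To pass from the native material derivative $D_{t,m-1}$ in Remark~\ref{upgrade.mat.sim.plus} to the target level $D_{t,k'}$ or $D_{t,\qbn}$ appearing in these items, I invoke Lemma~\ref{lem:upgrading.material.derivative} with $v = \hat u_{m-1}$ and $w$ equal to the appropriate velocity increment; the dodging input required to absorb the correction $w\cdot\nabla$ is \eqref{supp.si.m+} for levels strictly below $m$, and Hypothesis~\ref{hyp:dodging1} at level $q+1$ (already verified via Remark~\ref{rem:checking:hyp:dodging:1}) for the remaining levels. Item~\eqref{item.pr.3} is the key consistency check: the single-term comparison $\Gamma_k^9 \sigma_{k,q+1}^+ \leq \sigma_{q+1}^k$, valid because $a_{k,q,k} = \Gamma_k^9$ and all other summands are non-negative, converts \eqref{est.S.m.pt.sim}--\eqref{est.phi.m.pt.sim} into the desired domination of $S^{k}_{q+1}$ and $\bar\phi^{k}_{q+1}$ by $\sigma_{q+1}^k$.

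Items~\eqref{item.pr.4} and~\eqref{item.pr.5} reduce to the algebraic identity $\frac{\delta_{k'+\bn}}{\delta_{k+\bn}} a_{m,q,k} = 2^{k-k'} a_{m,q,k'}$, immediate from the definition. For~\eqref{item.pr.4}, the index set $\{m : q+\half+1 \leq m \leq \min(k,q+\bn)\}$ is contained in $\{m : q+\half+1 \leq m \leq \min(k',q+\bn)\}$ whenever $k \leq k'$, and non-negativity of $\sigma_{m,q+1}^+$ yields the inequality term by term. For~\eqref{item.pr.5}, both sums range over the same set $q+\half+1 \leq m \leq q+\bn$, so the conclusion reduces to $a_{m,q,k} \leq a_{m,q,k'}$, i.e. $2^{k-k'} \leq \delta_{k'+\bn}/\delta_{k+\bn} = (\lambda_{k+\bn}/\lambda_{k'+\bn})^{2\beta}$; since $\lambda_j \approx a^{b^j}$ grows doubly exponentially in $j$, this holds for $a$ sufficiently large, as ensured by \eqref{i:choice:of:a}.

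The main obstacle is careful bookkeeping rather than genuine analysis: precisely matching the powers of $\Gamma_k$ between the source estimates in Remark~\ref{upgrade.mat.sim.plus} and the target estimates \eqref{est:sik.32}--\eqref{est:sik.infty.higher}, and correctly invoking Lemma~\ref{lem:upgrading.material.derivative} to propagate the material-derivative estimates across the various cutoff levels that appear in item~\eqref{item.pr.2}. The dodging hypotheses \eqref{supp.si.m+} and Hypothesis~\ref{hyp:dodging1} were designed precisely to supply the inputs needed for these upgrades, so the analytic content is confined to the standard application of previously established machinery.
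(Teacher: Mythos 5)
There is a genuine gap in your treatment of the material-derivative upgrade in items~\eqref{item.pr.2} and~\eqref{item.pr.3.5}. You invoke Lemma~\ref{lem:upgrading.material.derivative} to pass from $D_{t,m-1}$ to $D_{t,k-1}$, and for the intermediate velocity increments $\hat w_{q'}$ with $m\leq q'\leq k-1$ you propose to supply the required support hypothesis~\eqref{supp.F} via Hypothesis~\ref{hyp:dodging1}. But Hypothesis~\ref{hyp:dodging1} only asserts disjointness of $\hat w_{q'}$ from $\hat w_{q''}$; it says nothing about $\supp\sigma_{m,q+1}^+$. The only support information available for $\sigma_{m,q+1}^+$ is~\eqref{supp.si.m+}, which gives dodging of $\hat w_{q'}$ only for $q'\leq m-1$. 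For $m\leq q'\leq k-1$ the pipes of $\hat w_{q'}$ generically intersect $\supp\sigma_{m,q+1}^+$ (they live at higher frequencies and are not constructed to avoid the $m^{\textnormal{th}}$ shell of the pressure increment), so the support hypothesis of Lemma~\ref{lem:upgrading.material.derivative} fails, and that lemma simply cannot be applied at target level $k-1$ when $k>m$. Note also that the interface you describe (``with $v=\hat u_{m-1}$ and $w$ equal to the appropriate velocity increment'') does not match Lemma~\ref{lem:upgrading.material.derivative}, which has no $v,w$ parameters and always starts from $D_{t,q}$.

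The correct tool here, and what the paper actually uses, is Lemma~\ref{rem:upgrade.material.derivative.end} together with~\eqref{eq:inductive:timescales}. Remark~\ref{upgrade.mat.sim.plus} has already used the \emph{sharp} upgrade (Lemma~\ref{lem:upgrading.material.derivative} with the dodging~\eqref{supp.si.m+}) to reach $D_{t,m-1}$; the further passage from $D_{t,m-1}$ to $D_{t,k-1}$ is a \emph{relaxation}, not a sharpening. One writes $D_{t,k-1}=D_{t,m-1}+(\hat w_m+\cdots+\hat w_{k-1})\cdot\nabla$ and applies Lemma~\ref{rem:upgrade.material.derivative.end}, which needs no dodging at all: the extra advection terms, being bounded in $L^\infty(\supp\psi_{i,q'})$ by $\tau_{q'}^{-1}\Gamma_{q'}^{i+O(1)}$ from~\eqref{eq:nasty:D:wq:old}, cost at most $\tau_{k-1}^{-1}\Gamma_{k-1}^{i+O(1)}$ after using the timescale monotonicity~\eqref{eq:inductive:timescales}, which is exactly the material-derivative weight allowed in~\eqref{est:sik.32}--\eqref{est:sik.infty.higher} and~\eqref{est.si.k.pt}. (For $q+\bn+1\leq k\leq q+\Npr$ the target level is $\qbn$, so one additionally uses~\eqref{eq:inductive:timescales} and~\eqref{eq:nasty:D:vq:old} with $q'=\qbn$, which Propositions~\ref{prop:verified:vel:cutoff} and~\ref{prop:inductive:velocity:bdd:verified} have already made available at this point.) The remaining steps of your argument — the use of~\eqref{supp.si.m+} for item~\eqref{item.pr.1}, the cancellation $a_{m,q,k}\Gamma_m^{-9}\delta_{m+\bn}\approx 2^{k-q-1}\delta_{k+\bn}$ for the amplitude bounds, the single-term comparison $a_{k,q,k}\sigma_{k,q+1}^+\leq\sigma_{q+1}^k$ for item~\eqref{item.pr.3}, and the algebraic rescaling identity for items~\eqref{item.pr.4}--\eqref{item.pr.5} — all agree with the paper's argument.
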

\begin{proof}[Proof of Lemma~\ref{lem:saturday:one}]\texttt{Proof of item~\eqref{item.pr.1}.} The proof of this item is immediate from Definition~\ref{def:new:pressure} and item~\eqref{item:sat:one} from Lemma~\ref{lem:prop.si.pre}.
\smallskip

\noindent\texttt{Proof of item~\eqref{item.pr.2}.} We first consider the estimates for $q+\half \leq k \leq \qbn$. From Remark~\ref{upgrade.mat.sim.plus}, which ensures that every $\sigma_{m,q+1}^+$ has size $\Ga_m^{-9}\delta_{m+\bn}$ in $L^{\sfrac 32}$, and Definition~\ref{def:new:pressure}, which ensures that the term in $\sigma_{q+1}^k$ coming from $\sigma_{m,q+1}$ has been rescaled by a factor of $\delta_{k+\bn}\delta_{m+\bn}^{-1}\Ga_m^{-9}$, we have that \eqref{est:sik.32} holds when $N=M=0$. Similarly, when $N=M=0$, we have that \eqref{est:sik.infty} holds since $\Ga_k^{\badshaq}$ is increasing in $k$. In order to prove the versions of these estimates which involve derivatives, we must use Lemma~\ref{rem:upgrade.material.derivative.end} and \eqref{eq:inductive:timescales} (at level $q$ since we do not require $D_{t,\qbn-1}$) to upgrade the estimates in Remark~\ref{upgrade.mat.sim.plus}, since $\sigma_{q+1}^k$ is comprised of rescaled versions of $\sigma_{m,q+1}$ for $m\leq k$ which came with $D_{t,m-1}$ estimates. We omit further details and simply note that the material derivative cost and the assumptions required in \eqref{eq:cooper:w} follow from \eqref{eq:nasty:D:vq:old} at level $q$ (i.e. we apply \eqref{eq:nasty:D:vq:old} for $q'\leq \qbn-1$), and that the pointwise bounds follow from the usual trick of choosing $\Omega$ to be a neighborhood centered at a point $(x,t)$ and then shrinking the diameter of $\Omega$ to zero and using continuity. Finally, the proofs of the estimates for $q+\bn+1\leq k \leq q+\Npr$ are quite similar, except that we have to use \eqref{eq:inductive:timescales} and \eqref{eq:nasty:D:vq:old} at level $q+1$ (i.e. $q'=\qbn$), both of which have been already verified in Proposition~\ref{prop:verified:vel:cutoff} and Proposition~\ref{prop:inductive:velocity:bdd:verified}.
\smallskip

\noindent\texttt{Proof of item~\eqref{item.pr.3}.}
To obtain \eqref{item.pr.3}, we use \eqref{est.S.m.pt.sim}, \eqref{est.phi.m.pt.sim}, 
\eqref{defn:A}, and \eqref{def.new.si} to give the inequality $\sigma_{q+1}^k \geq \sigma_{k,q+1}^+ 2^{k-q-1}\Ga_k^9$, nonlocal estimates for $S_{q+1}^{k,*}$ from Lemma~\ref{l:divergence:stress:upgrading}, nonlocal estimates for current errors from Section~\ref{sec:RLE:errors}, and spare factors of $\Ga_k^{-\sfrac 12}$ to absorb implicit constants.
\smallskip

\noindent\texttt{Proof of item~\eqref{item.pr.3.5}.}  In order to prove \eqref{est.si.k.pt}, we first prove the estimate when no derivatives have been applied. First note that $\delta_{q+3\bn}\delta_{k+\bn}\delta_{m+\bn}^{-1}\Ga_m^9 \leq \delta_{k+\bn}\Ga_q^{-100}$, since $m\leq \qbn$ so that the definition of $\delta_{q'}$ and \eqref{eq:prepping:badshaq} can absorb $\Ga_q^{-500}$.  Then since both sides are linear in $\sigma_{q+1}^k$, the rescalings involved in the definition of $\sigma_{q+1}^k$, \eqref{est.td.si.m.pl.ptwise}, and the inequality just noted give the proper amplitude bound. At this point we must upgrade material derivative in a manner analogous to that which is required to prove the $L^{\sfrac 32}$ and $L^\infty$ bounds from item~\eqref{item.pr.2} of Lemma \ref{lem:saturday:one}, and so omit further details. In order to prove \eqref{est.si-.pt}, first note that $a_{m,q,k}$ is at most $\Ga_m^{10}$ if $k=m$ and we choose $a_0$ sufficiently large, and $a_{m,k,q}\ll 1$ if $k>m$. Then using \eqref{def:sigma:qplus:k}, \eqref{def.new.si}, and the fact that $\Ga_q^2 > \Ga_{\qbn}$ since $b^{\bn}<2$ from \eqref{ineq:b:second}, the estimate without derivatives follows from \eqref{est.td.si.m.pi.-}. Upgrading material derivatives then follows in the usual way, and we omit further details.
\smallskip

\noindent\texttt{Proof of item~\eqref{item.pr.4}.} Since $k'\geq k$, we have from \eqref{def:sigma:qplus:k} that
\begin{align*}
    \frac{\de_{k'+\bn}}{\de_{k+\bn}} \si_{q+1}^k &=  \sum_{m=q+\half+1}^{\min(k,q+\bn)} \frac{\de_{k'+\bn}}{\de_{k+\bn}} a_{m,q,k} \si_{m,q+1}^+ 
    = \sum_{m=q+\half+1}^{\min(k,q+\bn)} 2^{k-q-1} \frac{\de_{k'+\bn}}{\de_{m+\bn}}\Ga_m^9 \si_{m,q+1}^+\\
    &= 2^{k-k'} \sum_{m=q+\half+1}^{\min(k,q+\bn)} a_{m,q,k'} \si_{m,q+1}^+ 
    \leq 2^{k-k'} \si_{q+1}^{k'}\, .
\end{align*}
\smallskip

\noindent\texttt{Proof of item~\eqref{item.pr.5}.} This estimate follows from the observation in the proof of the previous item that $a_{m,k,q}\ll 1$ if $k>m$, \eqref{eq:rewriting:new:si}, \eqref{def:sigma:qplus:k}, and a large choice of $a_0$ which can be used to absorb implicit constants.
\end{proof}

Finally, we can define the new intermittent pressure $\pi_{q+1}$.

\begin{definition}[\bf New intermittent pressure $\pi_{q+1}$\index{$\pi_{q+1}$, $\pi_{q+1}^k$} and decomposition into $\pi_{q+1}^k$]\label{def:new:presh}
We define $\pi_{q+1}^k$, $k\geq q+1$, by
\begin{align}
    \label{def:new.pr.piece}
    \pi_{q+1}^k &:= \pi_q^k +
    \si_{q+1}^k+2^{k-q-1}\de_{k+\bn} \qquad \textnormal{for  $q+\half\leq k \leq q+\Npr$} \, , \qquad \\
    \pi_{q+1}^k &:=\pi_q^k \qquad \textnormal{for $q+1 \leq k \leq q+\half-1$, $q+\Npr+1\leq k < \infty$} \, . \notag
\end{align}
Then $\pi_{q+1} = \sum_{k=q+1}^\infty \pi_{q+1}^k$ satisfies
\begin{align}\label{defn:new.pr}
    \pi_{q+1} = \pi_q-\pi_q^q+
\sigma_{q+1} + \sum_{k=q+\half}^{q+\Npr} 2^{k-q-1}\de_{k+\bn}\,. 
\end{align}
\end{definition}

\subsection{Inductive assumptions on the new intermittent pressure}\label{sec.pr.ind.verify}

In this section, we verify the inductive assumptions on $\pi_{q+1}^k$ which are required in subsections~\ref{sec:pi:inductive}--\ref{sec:inductive:secondary:velocity}.

\begin{lemma}[\bf $L^{\sfrac 32}$, $L^\infty$, and pointwise bounds on $\pi_{q+1}^k$]\label{lem:verify.ind.pressure1}
The inductive assumptions \eqref{eq:pressure:inductive} and \eqref{eq:pressure:inductive:largek}
are verified at step $q+1$.
\end{lemma}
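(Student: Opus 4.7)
The plan is to split according to the definition \eqref{def:new:presh} of $\pi_{q+1}^k$ and verify the three inequalities in \eqref{eq:pressure:inductive} and \eqref{eq:pressure:inductive:largek} piece-by-piece. For the trivial ranges, if $q+1\leq k\leq q+\half-1$ then $\pi_{q+1}^k=\pi_q^k$, so the bounds at level $q+1$ follow immediately from the inductive assumptions at level $q$, with one spare factor of $\Gamma_{q+1}/\Gamma_q=\Gamma_q^{b-1}$ available on the right-hand side to absorb constants; and if $k\geq q+\Npr+1$ then $\pi_{q+1}^k=\pi_q^k\equiv\Gamma_k\delta_{k+\bn}$ from \eqref{defn:pikq.large.k}, where all three bounds hold trivially.

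For the main range $q+\half\leq k\leq q+\bn$ we would write $\pi_{q+1}^k=\pi_q^k+\sigma_{q+1}^k+2^{k-q-1}\delta_{k+\bn}$ and treat the three summands separately against the cutoffs $\psi_{i,k-1}$. The $L^{\sfrac32}$ and $L^\infty$ estimates \eqref{eq:pressure:inductive:dtq} and \eqref{eq:pressure:inductive:dtq:uniform} at level $q+1$ then drop out by adding the inductive bounds for $\pi_q^k$ (from $q$) to the new bounds \eqref{est:sik.32}--\eqref{est:sik.infty} for $\sigma_{q+1}^k$ and absorbing the harmless constant $2^{k-q-1}\delta_{k+\bn}$ using \eqref{low.bdd.pi}. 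For the pointwise bound \eqref{eq:ind:pi:by:pi} at level $q+1$, the inductive bound gives $|D^ND_{t,k-1}^M\pi_q^k|\lesssim\Gamma_q\Gamma_k\pi_q^k\,(\cdots)$; the pointwise bound \eqref{est.si.k.pt} for $\sigma_{q+1}^k$ when $k\geq q+\half+1$ and \eqref{est.si-.pt} when $k=q+\half$ provide the corresponding control of $\sigma_{q+1}^k$ in terms of itself plus lower-order constants; and the constant term $2^{k-q-1}\delta_{k+\bn}$ is annihilated by every derivative. One then recognizes that, except in the degenerate case $k=q+\half$ (where $\sigma_{q+1}^{q+\half}=-\sigma_{q+1}^{-}$ is signed), the definition gives $\pi_{q+1}^k\geq\pi_q^k+2^{k-q-1}\delta_{k+\bn}$, so each right-hand side is bounded by $\Gamma_{q+1}\Gamma_k\pi_{q+1}^k(\cdots)$ with room to spare from the $\Gamma_{k}^{\mathsf{b}+1}/\Gamma_k$ margin; for $k=q+\half$, the smallness factor $\Gamma_{q+\half}^{-25}$ in \eqref{est.si-.pt} ensures $\pi_{q+1}^{q+\half}\geq\tfrac12\pi_q^{q+\half}+2^{-1}\delta_{q+\half+\bn}$, which then dominates each term by a similar margin.

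For \eqref{eq:pressure:inductive:largek}, the range is $q+\bn+1\leq k\leq q+\Npr$ and the cutoffs and material derivatives are $\psi_{i,q+\bn}$, $D_{t,q+\bn}$---which is where the real work lies. The $\sigma_{q+1}^k$ part is handled directly by items \eqref{est:sik.32.higher}--\eqref{est:sik.infty.higher} of Lemma~\ref{lem:saturday:one}, but the $\pi_q^k$ part is only known to us with the old cutoffs $\psi_{i,q+\bn-1}$ and operator $D_{t,q+\bn-1}$. The plan is to promote the $q+\bn-1$ estimates into $q+\bn$ estimates by writing $D_{t,q+\bn}=D_{t,q+\bn-1}+\hat w_{q+\bn}\cdot\nabla$ and invoking the upgrading machinery of Lemma~\ref{lem:upgrading.material.derivative} (or the terminal Lemma~\ref{rem:upgrade.material.derivative.end}) with $w=\hat w_{q+\bn}$, using the verified velocity bounds \eqref{eq:nasty:D:wq:old} for $\hat w_{q+\bn}$ from Proposition~\ref{prop:inductive:velocity:bdd:verified} together with the decreasing-timescale property \eqref{eq:inductive:timescales} at $q'=q+\bn$ established in Proposition~\ref{prop:verified:vel:cutoff}. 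This is the main obstacle: one must check that the cost paid in going from $D_{t,q+\bn-1}$ to $D_{t,q+\bn}$ (namely an extra factor bounded by $\Gamma_{q+\bn-1}^{i+3}\tau_{q+\bn-1}^{-1}$ per material derivative and an extra $\Gamma_{q+\bn}\lambda_{q+\bn}$ per spatial derivative landing on $\hat w_{q+\bn}$) still fits inside the right-hand side of \eqref{eq:pressure:inductive:largek}; this is where the parameter choices \eqref{condi.Nfin0}--\eqref{ineq:b} and the ``slack'' in the $\Gamma$ exponents are essential. After this upgrade, the three bounds for $\pi_q^k$ at level $q+1$ match those of Lemma~\ref{lem:saturday:one} for $\sigma_{q+1}^k$, and adding them plus the trivial constant gives the required estimates on $\pi_{q+1}^k=\pi_q^k+\sigma_{q+1}^k+2^{k-q-1}\delta_{k+\bn}$, completing the verification.
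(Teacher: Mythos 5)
Your proposal follows the same decomposition as the paper's proof (split by the range of $k$, then argue term-by-term on $\pi_{q+1}^k=\pi_q^k+\sigma_{q+1}^k+2^{k-q-1}\delta_{k+\bn}$ in the nontrivial ranges, using Lemma~\ref{lem:saturday:one} for the $\sigma_{q+1}^k$ contribution), so the route is the same; but the paper's own proof is extremely terse and does not say explicitly how the $\pi_q^k$ contribution is handled in the range $q+\bn+1\leq k\leq q+\Npr$, where the cutoff/material-derivative pair must shift from $\psi_{i,q+\bn-1},D_{t,q+\bn-1}$ (what the inductive hypothesis at step $q$ gives) to $\psi_{i,q+\bn},D_{t,q+\bn}$ (what is required at step $q+1$). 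You correctly identify this as the real work. One point to sharpen: for this step, Lemma~\ref{lem:upgrading.material.derivative} is the wrong tool, since it presupposes a decomposition into a local part satisfying the dodging hypothesis \eqref{supp.F} with respect to $\hat w_{q'}$, and $\pi_q^k$ has no such dodging property relative to $\hat w_{q+\bn}$ (nor is it even decomposed into local and nonlocal parts). The correct upgrade is via the unconditional Lemma~\ref{rem:upgrade.material.derivative.end}, applied with $v=\hat u_{q+\bn-1}$, $w=\hat w_{q+\bn}$, $\Omega$ a small neighborhood inside $\supp\psi_{i,q+\bn}$, feeding in the verified velocity bounds \eqref{eq:nasty:D:wq:old} at $q'=q+\bn$ (Proposition~\ref{prop:inductive:velocity:bdd:verified}) and the new-cutoff timescale ordering \eqref{eq:inductive:timescales} at $q'=q+\bn$ (Proposition~\ref{prop:verified:vel:cutoff}); this is exactly the mechanism the paper uses in the parallel part of the proof of Lemma~\ref{lem:saturday:one}, item~\eqref{item.pr.2}. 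Your treatment of the signed case $k=q+\half$ via \eqref{est.si-.pt} and the lower bound $\pi_{q+1}^{q+\half}\geq\tfrac12\pi_q^{q+\half}$ matches the paper, as does absorbing the spare constant $2^{k-q-1}\delta_{k+\bn}$ using \eqref{low.bdd.pi}. The remaining details (amplitude-matching under the $\Ga_q\to\Ga_{q+1}$ gap, the boundary cases $k=q+\bn$ and $k=q+\Npr$) are handled as you sketch.
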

\begin{proof}
We first consider \eqref{eq:pressure:inductive:dtq}--\eqref{eq:ind:pi:by:pi}. In the case that $q+1\leq k \leq q+\half-1$, we have that $\pi_{q+1}^k = \pi_q^k$ from Definition~\ref{def:new:presh}, so that the desired estimates follow trivially from inductive assumptions \eqref{eq:pressure:inductive:dtq}--\eqref{eq:ind:pi:by:pi} at step $q$. In the case that $q+\half\leq k \leq q+\bn$, we have from Definition~\ref{def:new:presh} that $\pi_{q+1}^k = \pi_q^k + \si_{q+1}^k + 2^{k-q-1}\de_{k+\bn}$. Therefore, the desired estimates follow from the inductive assumptions and Lemma~\ref{lem:saturday:one}.  In order to get \eqref{eq:pressure:inductive:dtq:largek}--\eqref{eq:ind:pi:by:pi:largek}, we have from Definition~\ref{def:new:presh} that $\pi_{q+1}^k=\pi_q^k+\sigma_{q+1}^k+2^{k-q-1}\delta_{k+\bn}$.  Then the desired estimates follow again from the inductive assumptions and Lemma~\ref{lem:saturday:one}.
\end{proof}

\begin{lemma}[\bf Lower and upper bounds for $\pi_{q+1}^k$]\label{lem:lower:upper} Inductive assumptions \eqref{low.bdd.pi}--\eqref{eq:ind.pr.anticipated} are verified at step $q+1$.
\end{lemma}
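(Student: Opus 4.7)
The plan is to verify each of the four inductive assumptions at step $q+1$ by decomposing $\pi_{q+1}^k$ according to Definition~\ref{def:new:presh} into the inherited piece $\pi_q^k$, the new pressure increment $\sigma_{q+1}^k$, and the geometric correction $2^{k-q-1}\delta_{k+\bn}$. Each of the three components will be controlled separately: the inherited piece by the inductive assumptions on $\pi_q^k$ at step $q$, the new piece by Lemma~\ref{lem:saturday:one}, and the geometric correction by direct computation, exploiting the fact that $\delta_{k+\bn}=\lambda_{k+\bn}^{-2\beta}$ decays super-exponentially in $k$ while $2^{k}$ grows only exponentially.

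For the lower bound \eqref{low.bdd.pi}, I would split into cases on the range of $k$. For $q+1 \leq k \leq q+\half-1$ or $k > q+\Npr$, the equality $\pi_{q+1}^k = \pi_q^k$ reduces the claim to the inductive lower bound. For $q+\half+1 \leq k \leq q+\Npr$, the non-negativity of $\sigma_{q+1}^k$, which is visible from \eqref{def:sigma:qplus:k} as a positive combination of the $\sigma_{m,q+1}^+$'s, together with the positive geometric correction, gives $\pi_{q+1}^k \geq \pi_q^k \geq \delta_{k+\bn}$ by induction. The remaining case $k = q+\half$ uses the pointwise estimate \eqref{est.si-.pt}, which gives $|\sigma_{q+1}^{q+\half}| \leq \Gamma_{q+\half}^{-25}\pi_q^{q+\half}$, so that $\pi_{q+1}^{q+\half} \geq (1-\Gamma_{q+\half}^{-25})\pi_q^{q+\half} + 2^{\half-1}\delta_{q+\half+\bn} \geq \delta_{q+\half+\bn}$ once $a_0$ is chosen large enough to make $\Gamma_{q+\half}^{-25} < \tfrac{1}{2}$. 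For the upper bound \eqref{ind:pi:upper} with $q+\bn \leq k' < k \leq q+\Npr$, I would sum three monotonicities: the inductive bound $\pi_q^k \leq \pi_q^{k'}$, the bound \eqref{pr.inc.upper} giving $\sigma_{q+1}^k \leq \sigma_{q+1}^{k'}$ on this range, and the elementary geometric estimate $2^{k-q-1}\delta_{k+\bn} \leq 2^{k'-q-1}\delta_{k'+\bn}$, which holds since $\delta_{k+\bn}/\delta_{k'+\bn}$ beats $2^{k-k'}$ comfortably. Property \eqref{defn:pikq.large.k} at step $q+1$ is immediate from Definition~\ref{def:new:presh}, since for $k \geq q+1+\Npr$ we have $\pi_{q+1}^k = \pi_q^k = \Gamma_k\delta_{k+\bn}$ by the inductive assumption.

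The scaling law \eqref{eq:ind.pr.anticipated} is verified by case analysis depending on whether $q'$ and $q''$ fall in the unmodified ranges or the modified range $[q+\half,q+\Npr]$. In each routine sub-case, the inductive scaling law for $\pi_q^{q'}$ versus $\pi_q^{q''}$ combines with \eqref{eq:ind.pr.inc.anticipated} for the $\sigma_{q+1}$ pieces and the elementary scaling of the $2^{k-q-1}\delta_{k+\bn}$ terms (which in fact becomes an equality after multiplication by $\delta_{q''+\bn}/\delta_{q'+\bn}$) to yield the desired inequality, with a spare factor absorbed against a large choice of $a_0$. A minor extra care is needed when $q'' \geq q+\Npr+1$, where $\pi_{q+1}^{q''} = \Gamma_{q''}\delta_{q''+\bn}$ and one relies on the $\Gamma_{q''}$ prefactor plus the super-exponential decay of $\delta_{\cdot+\bn}$ between $q+\Npr$ and $q''$ to cover the bound.

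The hard part will be the boundary case $q'' = q+\half$ with $q+1 \leq q' \leq q+\half-1$. At step $q+1$ the threshold for the stronger ``otherwise'' branch \eqref{eq:ind.pr.anticipated.2} has shifted from $q+\half$ to $q+1+\half$, so this pair now demands the stronger inequality $\frac{\delta_{q+\half+\bn}}{\delta_{q'+\bn}}\pi_{q+1}^{q'} < \pi_{q+1}^{q+\half}$ with no $2^{q'-q''}$ factor, whereas at step $q$ the same pair sat in the weaker branch \eqref{eq:ind.pr.anticipated.1}. The inductive hypothesis thus gives only
\begin{align*}
\frac{\delta_{q+\half+\bn}}{\delta_{q'+\bn}}\pi_q^{q'} < 2^{q'-q-\half}\pi_q^{q+\half} \leq \tfrac{1}{2}\pi_q^{q+\half},
\end{align*}
since $q' \leq q+\half-1$; combined with $|\sigma_{q+1}^{q+\half}| \leq \Gamma_{q+\half}^{-25}\pi_q^{q+\half}$ from \eqref{est.si-.pt} and the fact that $\pi_{q+1}^{q'} = \pi_q^{q'}$ on this range, this chains into
\begin{align*}
\frac{\delta_{q+\half+\bn}}{\delta_{q'+\bn}}\pi_{q+1}^{q'} < \tfrac{1}{2}\pi_q^{q+\half} < (1-\Gamma_{q+\half}^{-25})\pi_q^{q+\half} + 2^{\half-1}\delta_{q+\half+\bn} \leq \pi_{q+1}^{q+\half},
\end{align*}
closing the induction provided $a_0$ is large enough to make $\Gamma_{q+\half}^{-25} < \tfrac{1}{2}$.
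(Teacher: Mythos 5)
Your proposal is substantially correct and follows the paper's route, with one genuine shortcut and one small gap. The shortcut is in your handling of \eqref{low.bdd.pi} on the range $q+\half+1 \leq k \leq q+\Npr$: you observe directly that $\sigma_{q+1}^k = \sum_{m=q+\half+1}^{\min(k,q+\bn)} a_{m,q,k}\,\sigma_{m,q+1}^+ \geq 0$ (each $\sigma_{m,q+1}^+$ is a product of non-negative $\pr$-terms by construction, and each $a_{m,q,k}>0$), so $\pi_{q+1}^k \geq \pi_q^k \geq \delta_{k+\bn}$ by the step-$q$ hypothesis. The paper instead postpones this range until after \eqref{eq:ind.pr.anticipated} is verified at step $q+1$, then chains the scaling law against the $k=q+\half$ lower bound. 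Your direct argument is cleaner and avoids the internal ordering dependency; it is logically sound since the paper's own later sub-case explicitly invokes the very same non-negativity fact. Your treatment of the boundary case $q''=q+\half$ in \eqref{eq:ind.pr.anticipated} --- using the spare $2^{q'-q''}\leq \sfrac12$ from the weaker inductive branch to absorb the $|\sigma_{q+1}^{q+\half}| \leq \Gamma_{q+\half}^{-25}\pi_q^{q+\half}$ correction from \eqref{est.si-.pt} --- is essentially identical to the paper's calculation, and your observation that the $2^{k-q-1}\delta_{k+\bn}$ terms rescale to an exact equality in the routine sub-cases is accurate.

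The gap is in the upper bound \eqref{ind:pi:upper}: you invoke ``the inductive bound $\pi_q^k \leq \pi_q^{k'}$'' across the full range $q+\bn \leq k' < k \leq q+\Npr$, but the step-$q$ hypothesis only covers $k \leq q+\Npr - 1$. The endpoint $k = q+\Npr$ must be handled separately: the paper substitutes $\pi_q^{q+\Npr} \equiv \Gamma_{q+\Npr}\delta_{q+\Npr+\bn}$ from \eqref{defn:pikq.large.k} and then runs the same chain of inequalities. This is a small fix, not a conceptual error, but as written your argument does not close at that endpoint. One further minor imprecision: in the case $q'' \geq q+\Npr+1$ of \eqref{eq:ind.pr.anticipated}, the mechanism is not really the decay of $\delta_{\cdot+\bn}$ between $q+\Npr$ and $q''$ but rather the $L^\infty$ bound on $\pi_{q+1}^{q'}$ together with the parameter inequality \eqref{defn:Npr} that relates $\Gamma_{q+\Npr}$, $\Lambda_{q+\bn}$, and $\Gamma_{q+\Npr+1}$; the paper's displayed chain makes this explicit.
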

\begin{proof}
In order to prove \eqref{low.bdd.pi} at level $q+1$, we first consider the cases when $q+1\leq k \leq q+\half-1$.  In these cases the inductive assumption \eqref{low.bdd.pi} and Definition~\ref{def:new:presh} imply that
\begin{align*}
    \pi_{q+1}^k = \pi_{q}^k \geq \de_{k+\bn} \, .
\end{align*}
For the case $k=q+\half$, we use \eqref{est.si-.pt} and \eqref{def:new.pr.piece} to write that
\begin{align*}
    \pi_{q+1}^{q+\half} = \pi_q^{q+\half}
    +\si_{q+1}^{q+\half} +2^{\half-1}\de_{q+\half+\bn}
    \geq \de_{q+\half+\bn} \, ,
\end{align*}
concluding the proof of \eqref{low.bdd.pi} at level $q+1$. For the remaining cases, we use \eqref{eq:ind.pr.anticipated} at the level of $q+1$, so that we postpone the proof to the end. 

Next, from Definition~\ref{def:new:presh}, the inductive assumption \eqref{ind:pi:upper}, and \eqref{pr.inc.upper}, we have that for $\qbn\leq k' < k < q+\Npr$, 
\begin{align*}
    \pi_{q+1}^k &= \pi_q^k + \sigma_{q+1}^k + 2^{k-q-1} \delta_{k+\bn}
    \leq \pi_q^{k'} + \sigma_{q+1}^{k'} + \delta_{k'+\bn} \leq \pi_{q+1}^{k'} \, .
\end{align*}
In the endpoint case when $k=q+\Npr$, we use that $\pi_q^{k+\Npr}\equiv \Ga_{q+\Npr}\delta_{q+\Npr+\bn}$ from \eqref{defn:pikq.large.k}, in which case a similar string of inequalities then concludes the proof that \eqref{ind:pi:upper} is satisfied at level $q+1$.

From Definition~\ref{def:new:presh} and \eqref{defn:pikq.large.k} at level $q$, we have that
\begin{align*}
    \pi_{q+1}^k = \Ga_k \de_{k+\bn}
\end{align*}
for $k\geq q+\Npr +1$, so that the inductive assumption \eqref{defn:pikq.large.k} for $q+1$ holds true.

Finally, we must prove \eqref{eq:ind.pr.anticipated} at level $q+1$. We split into cases depending on the value of $q''$. If $q''\geq q+\Npr+1$, then we have from Definition~\ref{def:new:presh}, the Sobolev inequality applied to \eqref{eq:pressure:inductive:dtq}, \eqref{eq:pressure:inductive:dtq:largek}, and \eqref{est:sik.32}, and \eqref{defn:Npr} that
\begin{align*}
    \frac{\de_{q''+\bn}}{\de_{q'+\bn}} \pi_{q+1}^{q'} &\leq \frac{\de_{q''+\bn}}{\de_{q'+\bn}} \left(\pi_q^{q'} + \mathbf{1}_{\{q+\half \leq q' \leq q+\Npr\}} \max(0,\si_{q+1}^{q'}) +2^{\Npr}\de_{q'+\bn}\right) \\
    &\leq \frac{\de_{q''+\bn}}{\de_{q'+\bn}} \left(\|\pi_q^{q'}\|_\infty + \mathbf{1}_{\{q+\half \leq q' \leq q+\Npr\}} \|\si_{q+1}^{q'}\|_\infty \right)   {+2^{\Npr}\de_{q''+\bn}}\\
    &\leq  \de_{q''+\bn}  \left(\Ga_q\Ga_{q+\Npr}\La_{q+\bn}^3
     + 2^{\Npr} \right) 
    \leq \Ga_q^{-1}\Ga_{q+\Npr+1}\de_{q''+\bn} \\
    &\leq 2^{q'-q''} \Ga_{q''}  \delta_{q''+\bn} 
    = 2^{q'-q''} \pi_{q+1}^{q''} \, .
\end{align*}
Note that in the inequalities above, we have assumed a large choice of $a_0$ to absorb the implicit constant.  Next, in the cases when $ q+1+\half\leq q'' \leq q+\Npr$, we first note that $\si_{q+1}^{q''} \geq 0$ and $\sigma_{q+1}^{q+\half}\leq 0$ since all the minus portions of the pressure have been absorbed into $\sigma_{q+1}^{q+\half}$ in \eqref{eq:rewriting:new:si} and \eqref{def:sigma:qplus:k}.  As a consequence of these facts, Definition~\ref{def:new:presh}, \eqref{eq:ind.pr.anticipated}, and \eqref{eq:ind.pr.inc.anticipated}, we have that
\begin{align*}
    \frac{\de_{q''+\bn}}{\de_{q'+\bn}} \pi_{q+1}^{q'} &\leq \frac{\de_{q''+\bn}}{\de_{q'+\bn}} \left(\pi_q^{q'} + \mathbf{1}_{\{q+\half \leq q' \leq q+\Npr\}}\max\{0,\si_{q+1}^{q'}\}  {+2^{q'-q-1}\de_{q'+\bn}}\right)\\ &\leq 2^{q'-q''} \left(\pi_q^{q''}+\max\{0,\si_{q+1}^{q''}\}  {+2^{q''-q-1}\de_{q''+\bn}}\right) \\
    &= 2^{q'-q''}\pi_{q+1}^{q''} \, .
\end{align*}
Now for the case $q''=q+\half$, we only must consider $q'\leq q+\half-1$, and so from Definitions~\ref{def:new:pressure} and \ref{def:new:presh} and \eqref{est.si-.pt},
\begin{align*}
    \frac{\de_{q+\half+\bn}}{\de_{q'+\bn}} \pi_{q+1}^{q'} &= \frac{\de_{q+\half+\bn}}{\de_{q'+\bn}} \pi_q^{q'}
    \leq 2^{q'-q+\half}\pi_q^{q+\half}\\
    &= \left(\pi_q^{q+\half} + \si_{q+1}^{q+\half}\right) + \left(\left(2^{q'-q+\half}-1\right)\pi_q^{q+\half} - \si_{q+1}^{q+\half}\right)\\
    &\leq \pi_{q+1}^{q+\half} + \left(- \sfrac12 + \Ga_{q+\half}^{-25}\right)\pi_q^{q+\half}\\
    &\leq  \pi_{q+1}^{q+\half} \, .
\end{align*}
In the final cases $q''<q+\half$, we have from Definition~\ref{def:new:presh} and inductive assumption \eqref{eq:ind.pr.anticipated} that
\begin{align*}
    \frac{\de_{q''+\bn}}{\de_{q'+\bn}} \pi_{q+1}^{q'} &= \frac{\de_{q''+\bn}}{\de_{q'+\bn}} \pi_q^{q'} \leq \pi_q^{q''} = \pi_{q+1}^{q''} \, ,
\end{align*}
concluding the proof of \eqref{eq:ind.pr.anticipated} at level $q+1$.

Lastly, we consider \eqref{low.bdd.pi} for
$k>q+\half$. We first note that from \eqref{low.bdd.pi} for $q+1\leq k \leq q+\half$ and \eqref{eq:ind.pr.anticipated} at level $q+1$, we have that for all $q+\half+1 \leq k' < \infty$, 
\begin{equation}\notag
    \pi_{q+1}^{k'} > 2^{k'-q-\half} \frac{\delta_{k'+\bn}}{\delta_{q+\half+\bn}} \pi_{q+1}^{q+\half} > \delta_{k'+\bn} \, ,
\end{equation}
and so
\begin{equation}\notag
    \pi_{q+1}^k \geq \delta_{k+\bn} \qquad \forall q+1 \leq k < \infty \, .
\end{equation}
\end{proof}

\begin{lemma}[\bf Pressure dominates velocity]\label{prop:velocity:domination}
The inductive assumptions in \eqref{eq:ind:velocity:by:pi}, \eqref{eq:psi:q:q'}, and \eqref{est.upsilon.ptwise} are verified at level $q+1$.
\end{lemma}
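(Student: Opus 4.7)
The plan is to verify the three assertions at step $q+1$ by separating, in each case, the inherited indices $k$ or $q'$ (those strictly less than $q+\bn$) from the newly created index ($k=q+\bn$ or $q'=q+\bn$), and in each case reducing the newly created index to bounds already proved in Lemma~\ref{lem:pt:mol:vip}, Lemma~\ref{lem:pr.vel.dom.cutoff}, and the pointwise bounds \eqref{est.vel.inc.p.by.pr.upup}--\eqref{est.vel.inc.c.by.pr.upup}, combined with the crucial fact that $\pi_{q+1}^{q+\bn}$ dominates $\sigma_{\upsilon}^+$. Indeed, from Definitions~\ref{def:new:pressure}--\ref{def:new:presh} we have
\begin{equation*}
    \pi_{q+1}^{q+\bn} = \pi_q^{q+\bn} + \sigma_{q+1}^{q+\bn} + 2^{\bn-1}\delta_{q+2\bn} \geq a_{q+\bn,q,q+\bn}\,\sigma_{q+\bn,q+1}^+ \geq \Gamma_{q+\bn}^{\,9}\,\sigma_{\upsilon}^+,
\end{equation*}
and combined with the lower bound $\pi_{q+1}^{q+\bn}\geq \delta_{q+2\bn}$ from Lemma~\ref{lem:lower:upper}, this gives
\begin{equation*}
    \sigma_{\upsilon}^+ + \delta_{q+3\bn} \lesssim \Gamma_{q+\bn}^{-9}\pi_{q+1}^{q+\bn}.
\end{equation*}

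For \eqref{eq:ind:velocity:by:pi}, the inherited range $q+1\leq k \leq q+\bn-1$ is immediate: $\hat w_k$ is the same function as at step $q$, and the inductive bound at step $q$ combined with $\pi_{q+1}^{k}\geq \pi_q^k$ (which follows from Definition~\ref{def:new:presh} and the fact that $\sigma_{q+1}^k\geq 0$ for $k>q+\half$, while $\pi_{q+1}^k=\pi_q^k$ for $k\leq q+\half-1$) upgrades the prefactor from $\Gamma_q$ to $\Gamma_{q+1}$ with a factor of $a_0^{(b-1)\varepsilon_\Gamma}$ to spare. For $k=q+\bn$, I would write $\hat w_{q+\bn}=\mathcal{\tilde P}_{q+\bn,x,t}(w_{q+1}^{(p)}+w_{q+1}^{(c)})$, apply standard mollifier estimates to \eqref{est.vel.inc.p.by.pr.upup}--\eqref{est.vel.inc.c.by.pr.upup} (which cost at most $\Gamma_{q+\bn-1}^{\sfrac 12}$), and then use the inequality displayed above together with $r_q = r_{(q+\bn)-\bn}$ to match the required form.

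For \eqref{eq:psi:q:q'}, the range $q+1\leq q'\leq q+\bn-1$ follows directly from the inductive bound at step $q$ and $\pi_{q+1}^{q'}\geq \pi_q^{q'}$, with the geometric factor $2^{q-q'}$ upgrading to $2^{q+1-q'}$ without loss. The only nontrivial case is $q'=q+\bn$, where Lemma~\ref{lem:pr.vel.dom.cutoff} yields
\begin{equation*}
    \sum_{i=0}^{\imax}\psi_{i,q+\bn}^{2}\,\delta_{q+\bn}\,r_q^{-\sfrac 23}\,\Gamma_{q+\bn}^{2i}\lesssim r_q^{-2}\bigl(\pi_q^{q+\bn}+\sigma_\upsilon^++\delta_{q+3\bn}\bigr),
\end{equation*}
and the displayed inequality above turns this into $2^{-\bn+1}\Gamma_{q+\bn}\,r_q^{-2}\pi_{q+1}^{q+\bn}$, as required.

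Finally, for \eqref{est.upsilon.ptwise}, the inherited cases $q+1\leq q'\leq q+\bn-1$ again use $\hat\upsilon_{q'}$ unchanged and $\pi_{q+1}^{q'}\geq \pi_q^{q'}$, absorbing the prefactor bump with $(b-1)\varepsilon_\Gamma$ room. For $q'=q+\bn$, Lemma~\ref{lem:pt:mol:vip} gives
\begin{equation*}
    \bigl|\psi_{i,q+\bn-1}D^N D_{t,q+\bn-1}^M \hat\upsilon_{q+\bn,k}\bigr|\lesssim \Gamma_{q+\bn}(\sigma_{\upsilon}^++2\delta_{q+3\bn})^{\sfrac 12}r_q^{-1}(\lambda_{q+\bn}\Gamma_{q+\bn})^{N}\mathcal{M}\bigl(\cdots\bigr),
\end{equation*}
and substituting the displayed inequality above gives a $\Gamma_{q+\bn}\cdot\Gamma_{q+\bn}^{-\sfrac 92}(\pi_{q+1}^{q+\bn})^{\sfrac 12}$ prefactor, which comfortably sits beneath $\Gamma_{q+1}\Gamma_{q+\bn}(\pi_{q+1}^{q+\bn})^{\sfrac 12}$. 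The only real bookkeeping point, and hence the mildest obstacle, is verifying that the material-derivative costs in the bounds produced by Lemmas~\ref{lem:pt:mol:vip}, \ref{lem:pr.vel.dom.cutoff} and \eqref{est.vel.inc.p.by.pr.upup}--\eqref{est.vel.inc.c.by.pr.upup} (which involve $D_{t,q}$ and various $\Gamma_q$ powers on $\tau_q^{-1}$ and $\Tau_q^{-1}$) match the $D_{t,q+\bn-1}$-framed assumptions in \eqref{eq:ind:velocity:by:pi}, \eqref{eq:psi:q:q'}, and \eqref{est.upsilon.ptwise}: this is done by invoking Lemma~\ref{lem:upgrading.material.derivative} with the support/dodging information from \eqref{supp:pr:vel.inc}, exactly as in the proof of the analogous upgrade in Proposition~\ref{prop:inductive:velocity:bdd:verified}, and using the ordering \eqref{eq:inductive:timescales} to convert $\tau_q^{-1}\Gamma_q^{i+c}$ into $\tau_{q+\bn-1}^{-1}\Gamma_{q+\bn-1}^{i}$ with a favorable exponent.
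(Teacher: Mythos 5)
Your overall structure matches the paper's: you separate the inherited indices $q+1 \le q' \le q+\bn-1$ from the newly created one $q' = q+\bn$, exploit $\pi_{q+1}^{q+\bn} \gtrsim \Gamma_{q+\bn}^{9}\sigma_\upsilon^+$ via $a_{q+\bn,q,q+\bn}$ in Definitions~\ref{def:new:pressure}--\ref{def:new:presh}, and invoke Lemma~\ref{lem:pt:mol:vip}, Lemma~\ref{lem:pr.vel.dom.cutoff}, and the pointwise bounds on $w_{q+1}^{(p)}$, $w_{q+1}^{(c)}$. The paper does verify \eqref{est.upsilon.ptwise} first and then pipes $\hat w_\qbn = \hat\upsilon_{\qbn,\dpot} + \hat e_\qbn$ (using \eqref{exp.w.q'} and \eqref{est.e.inf}) into \eqref{eq:ind:velocity:by:pi} rather than re-mollifying \eqref{est.vel.inc.p.by.pr.upup}--\eqref{est.vel.inc.c.by.pr.upup}, which avoids re-deriving the mollifier/material-derivative commutation already packaged in Lemma~\ref{lem:pt:mol:vip}; your alternative would in principle work but is not ``standard,'' since the space-time mollifier does not simply commute with $D_{t,q+\bn-1}^M$.

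There is one genuine gap in the inherited range. You claim $\pi_{q+1}^k \ge \pi_q^k$ for all $q+1 \le k \le q+\bn-1$, justifying it by ``$\sigma_{q+1}^k\geq 0$ for $k>q+\half$, while $\pi_{q+1}^k=\pi_q^k$ for $k\leq q+\half-1$.'' This parenthetical silently skips the index $k=q+\half$, where the claim is false: from \eqref{def:sigma:qplus:k} we have $\sigma_{q+1}^{q+\half} = -\sigma_{q+1}^- \le 0$, so $\pi_{q+1}^{q+\half}$ may be strictly less than $\pi_q^{q+\half}$. The correct statement, which the paper establishes via \eqref{est.si-.pt} (giving $\left|\sigma_{q+1}^{q+\half}\right| \le \Gamma_{q+\half}^{-25}\pi_q^{q+\half}$) together with \eqref{def:new.pr.piece}, is only $\pi_{q+1}^{q+\half} \ge \tfrac12 \pi_q^{q+\half}$. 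Since the same missing factor propagates into your verification of \eqref{eq:psi:q:q'} and \eqref{est.upsilon.ptwise} at $q'=q+\half$, you need to insert this half-power bound and then observe that the $\tfrac12$ loss is absorbed by the $\Gamma_q \to \Gamma_{q+1}$ prefactor upgrade (as you note you have ``$a_0^{(b-1)\varepsilon_\Gamma}$ to spare''); the rest of your argument then closes.
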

\begin{proof}
\texttt{Step 1: Verification of \eqref{est.upsilon.ptwise} at level $q+1$.} From \eqref{est.upsilon.ptwise.verify} and the definition of $\sigma_q^\qbn$ in Definition~\ref{def:new:pressure}, which give an extra prefactor of $\Ga_\qbn^9$, we have that
\begin{align*}
\left|\psi_{i,q+\bn-1}D^N D_{t,q+\bn-1}^M  \hat\upsilon_{\qbn,k'} \right|
    &\leq \Ga_{q+\bn}^{-4} \left(\si_{q+1}^{q+\bn} + \de_{q+2\bn}\right)^{\sfrac12} r_{q}^{-1} \\
    &\qquad\times (\lambda_{q+\bn}\Ga_{q+\bn})^N 
    \MM{M, \Nindt, \Ga_{q+\bn-1}^{i} \tau_{q+\bn-1}^{-1}, \Ga_{q+\bn-1}^{2}\Tau_{q+\bn-1}^{-1}}
\end{align*}
for all $N+M \leq \sfrac{3\Nfin}{2}$.  Then using the definition of $\pi_{q+1}^{q+\bn}$ from Definition~\ref{def:new:presh} gives the proof of \eqref{est.upsilon.ptwise} for $q'=\qbn$; in fact we retain the extra smallness prefactor of $\Ga_\qbn^{-4}$, which we shall use in the next step of this proof.  In order to verify \eqref{est.upsilon.ptwise} for $q\leq q' \leq \qbn-1$, we appeal to Definition~\ref{def:new:presh} for the definition of $\pi_{q+1}^{q'}$.  Noticing that $\pi_{q+1}^{q'} \geq \pi_{q}^{q'}$ for all  $q'\geq q+1$ except for $q'= q+\half$, we have that the verification of \eqref{est.upsilon.ptwise} for $q\leq q' \leq \qbn-1$, $q' \neq q+\half$ is trivial.  In the case $q' = q+\half$, we use \eqref{est.si-.pt} and \eqref{def:new.pr.piece} to write that
$$  \pi_{q+1}^{q+\half} \geq \sfrac 12 \pi_{q}^{q+\half}  \, , $$
from which \eqref{est.upsilon.ptwise} follows using the increase from $\Ga_q$ to $\Ga_{q+1}$ in \eqref{est.upsilon.ptwise} at level  $q$ versus level $q+1$, respectively.
\smallskip

\noindent\texttt{Step 2: Verification of \eqref{eq:ind:velocity:by:pi} at level $q+1$.} We first consider the cases $q+1\leq k \leq q+\bn-1$.  From the  same reasoning as above, which showed that $\pi_{q+1}^{k} \geq \sfrac 12 \pi_q^k$ for $q+1\leq k \leq q+\bn-1$, we have that \eqref{eq:ind:velocity:by:pi} trivially holds.  In the case $k=\qbn$, we use \eqref{exp.w.q'} at level $q'=q+\bn$ (verified in Proposition~\ref{prop:inductive:velocity:bdd:verified}) and \eqref{est.upsilon.ptwise} for $q'=q+\bn$, $k=\dpot$ (which we just verified with extra factor gain), \eqref{est.e.inf} at level $q'=\qbn$ (verified in Proposition~\ref{prop:inductive:velocity:bdd:verified}), and \eqref{def:new.pr.piece} to write that for $N+M\leq \sfrac{3\Nfin}{2}$
\begin{align*}
    \left| \psi_{i,\qbn-1} D^N D_{t,\qbn-1}^M \hat w_\qbn \right| &= \left| \psi_{i,\qbn-1} D^N D_{t,\qbn-1}^M \left( \hat \upsilon_{\qbn,\dpot} + \hat e_{\qbn} \right) \right| \\
    &\leq \Ga_\qbn^{-3} \left( (\pi_{q+1}^{\qbn})^{\sfrac 12} r_q^{-1} + \delta_{q+3\bn}^3 \right) (\la_\qbn \Ga_\qbn)^N \\
    &\qquad \qquad \times \MM{M, \Nindt, \Ga_{q+\bn-1}^{i} \tau_{q+\bn-1}^{-1}, \Ga_{q+\bn-1}^{2}\Tau_{q+\bn-1}^{-1}} \\
    &\leq (\pi_{q+1}^{\qbn})^{\sfrac12} r_q^{-1} (\la_\qbn \Ga_\qbn)^N \MM{M, \Nindt, \Ga_{q+\bn-1}^{i} \tau_{q+\bn-1}^{-1}, \Ga_{q+\bn-1}^{2}\Tau_{q+\bn-1}^{-1}} \, ,
\end{align*}
which verifies \eqref{eq:ind:velocity:by:pi} at level $q+1$ with $q'=\qbn$.
\smallskip

\noindent\texttt{Step 3: Verification of \eqref{eq:psi:q:q'} for $q'=q+\bn$.} We will prove that
\begin{align}
    \sum_{i=0}^{\imax} \psi_{i,q+\bn}^2 \delta_{q+\bn} r_{q}^{-\sfrac 23} \Gamma_{q+\bn}^{2i} &\les  {r_{q}^{-2}} \pi_{q+1}^{q+\bn} \,  \label{eq:psi:q:qplusbn:ineq:0}
\end{align}
for a $q$-independent implicit constant, from which \eqref{eq:psi:q:q'} for $q'=q+\bn$ follows by using the extra factor of $\Ga_\qbn$ to absorb the implicit constant and the powers of $2$. 

Note that from \eqref{defn:si.m} and the subsequent sentence, which shows that $\sigma_{\qbn,q+1}^+ \geq \sigma_{\upsilon}^+$, Definition~\ref{def:new:pressure}, which shows that $\sigma_{q+1}^\qbn \geq \sigma_{\qbn,q+1}^+$, and Definition~\ref{def:new:presh}, which shows that $\pi_{q+1}^\qbn \geq \sigma_{q+1}^\qbn + \delta_{q+3\bn}$, we have that $\sigma_\upsilon^+ + \delta_{q+3\bn} \leq \pi_{q+1}^\qbn$. Thus, using \eqref{def:new.pr.piece}, we see that \eqref{eq:psi:q:qplusbn:ineq:0} follows from \eqref{eq:psi:q:qplusbn:ineq:0:recall} of Lemma~\ref{lem:pr.vel.dom.cutoff} and, consequently, we have \eqref{eq:psi:q:q'} at level $q+1$ with $q'=\qbn$.
\smallskip

\noindent\texttt{Step 4: Verification of \eqref{eq:psi:q:q'} for $q+1 \leq q' \leq \qbn-1$.} Recall that in \texttt{Step 1}, we showed that
\begin{equation}\label{eq:billystrings:p}
    \pi_{q+1}^{q'} \geq \sfrac 12 \pi_q^{q'} \qquad  q' \geq q+1 \, .
\end{equation}
Therefore we may use \eqref{eq:psi:q:q'} at level $q$ to write that
\begin{align*}
    \sum_{i=0}^{\imax} \psi_{i,q'}^2 \delta_{q'} r_{q'-\bn}^{-\sfrac23} \Gamma_{q'}^{2i} &\leq 2^{q-q'} \Ga_{q'} r_{q'-\bn}^{-2} \pi_{q}^{q'}\\
    &= 2^{q+1-q'}\Ga_{q'} r_{q'-\bn}^{-2} \left( \sfrac 12\pi_{q}^{q'} \right)\\
    &\leq 2^{q+1-q'}\Ga_{q'} r_{q'-\bn}^{-2} \pi_{q+1}^{q'} \, ,
\end{align*}
concluding the proof of \eqref{eq:psi:q:q'} at level $q+1$.
\end{proof}

\begin{lemma}[\bf Pressure dodging at level $q+1$] \index{pressure dodging}
Hypothesis \ref{hyp:dodging5} is verified at step $q+1$. 
\end{lemma}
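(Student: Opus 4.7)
The plan is to verify the bound
\begin{align*}
\left|\psi_{i,k-1} D^N D_{t,k-1}^M \left(\hat w_{k}\pi_{q+1}^{k'}\right)\right| &<
\Ga_{q+1} \Ga_k^{-100} \left(\pi_{q+1}^k\right)^{\sfrac32} r_k^{-1} \Lambda_k^{N} \MM{M, \NindRt, \Gamma_{k-1}^{i+1} \tau_{k-1}^{-1} , \Gamma_{k}^{-1} \Tau_{k}^{-1} }
\end{align*}
for all $q+1<k\leq \qbn$ and $k\leq k'$, by splitting $\pi_{q+1}^{k'}$ via Definition~\ref{def:new:presh} as
\begin{align*}
\pi_{q+1}^{k'} = \pi_q^{k'} + \mathbf{1}_{\{q+\half \leq k' \leq q+\Npr\}}\left(\sigma_{q+1}^{k'} + 2^{k'-q-1}\delta_{k'+\bn}\right),
\end{align*}
applying the Leibniz rule and bounding each of the three resulting summands separately. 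Throughout, I will repeatedly use the comparison $\pi_q^k \leq 2\pi_{q+1}^k$ (which for $k=q+\half$ follows from \eqref{est.si-.pt}, \eqref{def:new.pr.piece}, and a large choice of $a_0$; and for all other $k$ follows because $\sigma_{q+1}^k\geq 0$ by \eqref{def:sigma:qplus:k}) together with $\Gamma_q \leq \Gamma_{q+1}$.

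For the first summand $\hat w_k \pi_q^{k'}$: when $q+2\leq k\leq \qbn-1$, I invoke the inductive Hypothesis~\ref{hyp:dodging5} at level $q$ directly and upgrade $\pi_q^k$ to $\pi_{q+1}^k$ as above. When $k=\qbn$, the inductive hypothesis does not apply, and I instead combine the newly verified velocity bound \eqref{eq:ind:velocity:by:pi} from Lemma~\ref{prop:velocity:domination} (at level $q+1$, with $q'=\qbn$) together with the pressure pointwise bound \eqref{eq:ind:pi:by:pi:largek}, absorbing the resulting factor $(\pi_q^\qbn)^{\sfrac12}\pi_q^{k'} r_q^{-1}$ into $(\pi_{q+1}^\qbn)^{\sfrac32}r_\qbn^{-1}$ via the anticipated-pressure scaling \eqref{eq:ind.pr.anticipated} (which provides $\pi_q^{k'}\leq (\delta_{k'+\bn}/\delta_{\qbn+\bn})\pi_q^{\qbn}\cdot 2^{\qbn-k'}$) and the intermittency relation \eqref{ineq:r's:eat:Gammas} (which lets $r_{q}^{-1}$ be traded for $r_\qbn^{-1}$ at the cost of a small power of $\Gamma$ that can be absorbed into $\Gamma_\qbn^{-100}$). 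The third summand $\hat w_k \cdot 2^{k'-q-1}\delta_{k'+\bn}$ is handled trivially: the velocity estimate and $\delta_{k'+\bn}\leq \delta_{k+\bn}\leq \pi_{q+1}^k$ (using \eqref{low.bdd.pi}) give far more room than required after absorbing constants into the spare $\Gamma_\qbn^{-100}$ factor.

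For the main summand $\hat w_k \sigma_{q+1}^{k'}$, I expand $\sigma_{q+1}^{k'} = \sum_{m=q+\half+1}^{\min(k',\qbn)} a_{m,q,k'}\sigma_{m,q+1}^+$ from Definition~\ref{def:new:pressure}. For $m\geq k+1$, the support property \eqref{supp.si.m+} (combined with Hypothesis~\ref{hyp:dodging1} at level $q+1$, already verified in Lemma~\ref{lem:dodging}) forces $\hat w_k \sigma_{m,q+1}^+\equiv 0$, eliminating these terms. For $q+\half+1\leq m\leq k$, I will use the pointwise bound \eqref{est:si.m+.pt} on $\sigma_{m,q+1}^+$ together with the velocity pointwise bound \eqref{eq:ind:velocity:by:pi} on $\hat w_k$, exploiting the identity $a_{m,q,k'}\geq \Gamma_m^9(\delta_{k'+\bn}/\delta_{k+\bn})\cdot a_{m,q,k}$ and the linearity of $\sigma_{q+1}^k$ in $\sigma_{m,q+1}^+$ to obtain the pointwise bound $a_{m,q,k'}\sigma_{m,q+1}^+ \lesssim \Gamma_m^{-9}(\delta_{k'+\bn}/\delta_{k+\bn})\sigma_{q+1}^k \leq \Gamma_m^{-9}\pi_{q+1}^k$. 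Summing in $m$ uses that $\Gamma_m$ is geometrically increasing in $m$, so the sum is controlled by $\Gamma_{q+\half+1}^{-9}$, which combines with the $\Gamma_\qbn^{-100}$-absorbing freedom provided by \eqref{ineq:r's:eat:Gammas} to yield the desired $\Gamma_{q+1}\Gamma_k^{-100}$ prefactor.

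The main obstacle will be coherently matching material-derivative levels in the Leibniz expansion: the velocity estimate \eqref{eq:ind:velocity:by:pi} is phrased with $\psi_{i,k-1}$ and $D_{t,k-1}$, whereas the pointwise pressure estimates \eqref{est:si.m+.pt} and \eqref{est.si.k.pt} come with $\psi_{i,q}$ and $D_{t,q}$ (or $D_{t,q+\half-1}$ for the negative piece). To reconcile, I will apply Lemma~\ref{lem:upgrading.material.derivative} to upgrade the pressure estimates from $D_{t,q}$ to $D_{t,k-1}$, using the dodging statements \eqref{supp.si.m+} and \eqref{supp.sik} (which ensure $\hat w_{q''}\cdot\nabla \sigma_{m,q+1}^+\equiv 0$ for $q+1\leq q''\leq m-1$) together with the timescale ordering \eqref{eq:inductive:timescales} so that the upgrade costs only harmless factors absorbed into the $\Gamma_k^{-100}$ prefactor. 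The final bookkeeping, particularly keeping track of $r$-ratios between $r_{k-\bn}^{-1}$ (from the velocity) and $r_k^{-1}$ (required on the right-hand side), relies on the careful parameter inequalities in Section~\ref{sec:params}; after these manipulations the claimed inequality follows.
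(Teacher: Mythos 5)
Your overall route — splitting $\pi_{q+1}^{k'}$ into $\pi_q^{k'}$, $\sigma_{q+1}^{k'}$, and the constant term, invoking the inductive Hypothesis~\ref{hyp:dodging5} at level $q$ for the first piece, using dodging to truncate the $m$-sum in the second, and treating the constant trivially — matches the paper's case analysis, and your treatment of $k=\qbn$ and of material-derivative upgrading is sound. But there are two concrete errors.

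First, the ``identity'' $a_{m,q,k'}\geq\Gamma_m^9(\delta_{k'+\bn}/\delta_{k+\bn})\,a_{m,q,k}$ is false: from \eqref{defn:A} one has $a_{m,q,k'}/a_{m,q,k}=2^{k'-k}\delta_{k'+\bn}/\delta_{k+\bn}$, which for $k'=k$ equals $1$, not $\geq\Gamma_m^9$. Moreover, even if it held, a \emph{lower} bound on $a_{m,q,k'}$ cannot yield the \emph{upper} bound $a_{m,q,k'}\sigma_{m,q+1}^+\lesssim\Gamma_m^{-9}(\delta_{k'+\bn}/\delta_{k+\bn})\sigma_{q+1}^k$; the inequality direction is reversed. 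The correct and simpler observation (which the paper uses in \eqref{billystring:4}) is that $a_{m,q,k'}\leq a_{m,q,k}$ whenever $k'\geq k$, so that $a_{m,q,k'}\sigma_{m,q+1}^+\leq a_{m,q,k}\sigma_{m,q+1}^+\leq\sigma_{q+1}^k$. There is no $\Gamma_m^{-9}$ gain from the $m$-sum; the $\Gamma_{q+1}\Gamma_k^{-100}$ prefactor is produced entirely by trading $r_{k-\bn}^{-1}$ from the velocity bound into $r_k^{-1}$ via \eqref{ineq:r's:eat:Gammas}, which you mention but treat as secondary rather than as the actual source of smallness.

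Second, the expansion $\sigma_{q+1}^{k'}=\sum_{m}a_{m,q,k'}\sigma_{m,q+1}^+$ from \eqref{def:sigma:qplus:k} only applies for $k'\geq q+\half+1$; at the admissible endpoint $k'=q+\half$ one has $\sigma_{q+1}^{q+\half}=-\sigma_{q+1}^-$, which is a genuinely different object. There you should instead note directly from \eqref{supp.sik} that $\hat w_k\,\sigma_{q+1}^{q+\half}\equiv 0$ for $q+1<k\leq q+\half$, as the paper does in its Case 2. With these two corrections your proposal reproduces the paper's argument.
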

\begin{proof} We must show that for all $q+1<k\leq q+\bn$, $k\leq k'$, and $N+M\leq 2\Nind$,
\begin{subequations}
\begin{align}
\notag
    \left|\psi_{i,k-1} D^N D_{t,k-1}^M \left(\hat w_{k}\pi_{q+1}^{k'}\right)\right| &<
    \Ga_{q+1} 
    {\Ga_k^{-100}} \left(\pi_{q+1}^k\right)^{\sfrac32} r_k^{-1} \Lambda_k^{N} \MM{M, \NindRt, \Gamma_{k-1}^{i+{1}} \tau_{k-1}^{-1} , \Gamma_{k}^{-1} \Tau_{k}^{-1} } \, .
\end{align}
\end{subequations}
We divide up the proof into cases based on the value of $k'$.
\smallskip

\noindent\texttt{Case 1: $q+1<k \leq k' < q+\half$.} From \eqref{def:new.pr.piece}, we have that $\pi_{q+1}^{k'} = \pi_q^{k'}$, and so using Hypothesis~\ref{hyp:dodging5} at level $q$ and \eqref{eq:billystrings:p}, we have that for $N+M\leq  2\Nind$,
\begin{align*}
    \left|\psi_{i,k-1} D^N D_{t,k-1}^M \left(\hat w_{k}\pi_{q+1}^{k'}\right)\right| &=     \left|\psi_{i,k-1} D^N D_{t,k-1}^M \left(\hat w_{k}\pi_{q}^{k'}\right)\right| \\
    &< \Ga_q 
    {\Ga_k^{-100}} \left(\pi_q^k\right)^{\sfrac32} r_k^{-1} \Lambda_k^{N} \MM{M, \NindRt, \Gamma_{k-1}^{i+{1}} \tau_{k-1}^{-1} , \Gamma_{k}^{-1} \Tau_{k}^{-1} } \\
    &< \Ga_{q+1} 
    {\Ga_k^{-100}} \left(\pi_{q+1}^k\right)^{\sfrac32} r_k^{-1} \Lambda_k^{N} \MM{M, \NindRt, \Gamma_{k-1}^{i+{1}} \tau_{k-1}^{-1} , \Gamma_{k}^{-1} \Tau_{k}^{-1} } \, .
\end{align*}

\noindent\texttt{Case 2: $q+1 < k \leq k' = q+\half$. } In this case, we have from \eqref{def:new.pr.piece} that $\pi_{q+1}^{q+\half} = \pi_q^{q+\half} + \sigma_{q+1}^{q+\half}+2^{\half-1}\delta_{q+\half+\bn}$. 
Then considering just the contribution $\hat w_k \pi_{q}^{q+\half}$ to $\hat w_k \pi_{q+1}^{q+\half}$ from the first term, we have from Hypothesis~\ref{hyp:dodging5} at level $q$ and \eqref{eq:billystrings:p} that for $N+M\leq 2\Nind$,
\begin{align*}
    \left|\psi_{i,k-1} D^N D_{t,k-1}^M \left(\hat w_{k}\pi_{q}^{q+\half}\right)\right| &< \Ga_q 
    {\Ga_k^{-100}} \left(\pi_q^k\right)^{\sfrac32} r_k^{-1} \Lambda_k^{N} \MM{M, \NindRt, \Gamma_{k-1}^{i+{1}} \tau_{k-1}^{-1} , \Gamma_{k}^{-1} \Tau_{k}^{-1} } \\
    &< \sfrac 12 \Ga_{q+1} 
    {\Ga_k^{-100}} \left(\pi_{q+1}^k\right)^{\sfrac32} r_k^{-1} \Lambda_k^{N} \MM{M, \NindRt, \Gamma_{k-1}^{i+{1}} \tau_{k-1}^{-1} , \Gamma_{k}^{-1} \Tau_{k}^{-1} } \, .
\end{align*}
Next, we have from \eqref{supp.sik} that $\hat w_k \sigma_{q+1}^{q+\half}\equiv 0$ for $q+1<k \leq q+\half$, and so we may ignore the contribution from $\sigma_{q+1}^{q+\half}$.  Finally, in order to bound the contribution coming from the constant term $\delta_{q+\half+\bn}$, we use \eqref{low.bdd.pi} and \eqref{eq:ind:velocity:by:pi} at level $q+1$ and \eqref{ineq:r's:eat:Gammas} to write that
\begin{align*}
    \left|\psi_{i,k-1} D^N D_{t,k-1}^M \left(\hat w_{k} 2^{\sfrac \bn 2 -1} \delta_{q+\half+\bn} \right)\right| &\leq 2^{\sfrac \bn 2 -1} \Ga_{q+1} r_{k-\bn}^{-1} (\pi_{q+1}^k)^{\sfrac 12} \delta_{k+\bn} \\
    &\qquad \qquad \times \Lambda_k^{N} \MM{M, \NindRt, \Gamma_{k-1}^{i+\blue{1}} \tau_{k-1}^{-1} , \Gamma_{k}^{-1} \Tau_{k}^{-1} } \\
    &< \sfrac 12 \Ga_{q+1} 
    {\Ga_k^{-100}} \left(\pi_{q+1}^k\right)^{\sfrac32} r_k^{-1} \Lambda_k^{N} \MM{M, \NindRt, \Gamma_{k-1}^{i+{1}} \tau_{k-1}^{-1} , \Gamma_{k}^{-1} \Tau_{k}^{-1} } \, .
\end{align*}
\smallskip

\noindent\texttt{Case 3: $q+1<k \leq k'$, $q+1 < k \leq \qbn$, and $q+\half+1 \leq k' < \infty$.} From Definition~\ref{def:new:presh}, we have that in these cases, either $\pi_{q+1}^k = \pi_q^k + \sigma_{q+1}^k + 2^{k-q-1}\delta_{k+\bn}$ or $\pi_{q+1}^k = \pi_q^k$.  We therefore first make a few preliminary calculations to help bound the contributions from $\pi_q^k$ and $2^{k-q-1}\delta_{k+\bn}$ before dividing up further into subcases.  We first recall \eqref{low.bdd.pi} at level $q+1$, \begin{equation}\label{ineq:sunday:afternoon:1}
    \pi_{q+1}^k \geq \delta_{k+\bn} \qquad \forall q+1 \leq k < \infty \, .
\end{equation}
Then, we have from \eqref{eq:ind:velocity:by:pi} at level $q+1$ that for all $q+1<k \leq k'\leq\qbn-1$, $k\leq k'$,  
\begin{align}
    \left|\psi_{i,k-1} D^N D_{t,k-1}^M \left(\hat w_{k} \delta_{k'+\bn} \right)\right| &\leq \Ga_{q+1} r_{k-\bn}^{-1} (\pi_{q+1}^k)^{\sfrac 12} \delta_{k+\bn} \Lambda_k^{N} \MM{M, \NindRt, \Gamma_{k-1}^{i+{1}} \tau_{k-1}^{-1} , \Gamma_{k}^{-1} \Tau_{k}^{-1} } \notag \\
    &< \sfrac 13 \Ga_{q+1} 
    {\Ga_k^{-100}} \left(\pi_{q+1}^k\right)^{\sfrac32} r_k^{-1} \Lambda_k^{N} \MM{M, \NindRt, \Gamma_{k-1}^{i+{1}} \tau_{k-1}^{-1} , \Gamma_{k}^{-1} \Tau_{k}^{-1} } \, . \label{billystrings:1}
\end{align}

Next, we have from Hypothesis~\ref{hyp:dodging5} at level $q$ and \eqref{eq:billystrings:p} that for $q+1< k \leq \qbn-1$ and $k \leq k' < \infty$,
\begin{align}
    \left|\psi_{i,k-1} D^N D_{t,k-1}^M \left(\hat w_{k} \pi_{q}^{k'}\right)\right| &< \Ga_q
    {\Ga_k^{-100}} \left(\pi_q^k\right)^{\sfrac32} r_k^{-1} \Lambda_k^{N} \MM{M, \NindRt, \Gamma_{k-1}^{i+{1}} \tau_{k-1}^{-1} , \Gamma_{k}^{-1} \Tau_{k}^{-1} } \notag  \\
    &< \sfrac 13 \Ga_{q+1} 
    {\Ga_k^{-100}} \left(\pi_{q+1}^k\right)^{\sfrac32} r_k^{-1} \Lambda_k^{N} \MM{M, \NindRt, \Gamma_{k-1}^{i+{1}} \tau_{k-1}^{-1} , \Gamma_{k}^{-1} \Tau_{k}^{-1} } \, . \label{billystrings:2}
\end{align}
We claim that the above estimate holds in addition for $k=\qbn$ and $k\leq k' < \infty$. Indeed from \eqref{eq:ind:pi:by:pi:largek} and \eqref{ind:pi:upper} at level $q$, \eqref{eq:ind:velocity:by:pi} at level $q+1$, \eqref{eq:billystrings:p}, and \eqref{ineq:r's:eat:Gammas}, we have that 
\begin{align}
    \left|\psi_{i,q+\bn-1} D^N D_{t,q+\bn-1}^M \left(\hat w_{\qbn} \pi_{q}^{k'}\right)\right| &\les \Ga_{q+1} r_q^{-1} (\pi_{q+1}^{\qbn})^{\sfrac 12} \Ga_q
    \pi_q^{k'}  \Lambda_\qbn^{N} \MM{M, \NindRt, \Gamma_{\qbn-1}^{i+{1}} \tau_{\qbn-1}^{-1} , \Gamma_{\qbn}^{-1} \Tau_{\qbn}^{-1} } \notag  \\
    &< \Ga_{q+1} 
    {\Ga_k^{-101}} \left(\pi_{q+1}^{\qbn}\right)^{\sfrac32} r_\qbn^{-1} \Lambda_\qbn^{N} \MM{M, \NindRt, \Gamma_{\qbn-1}^{i+{1}} \tau_{\qbn-1}^{-1} , \Gamma_{\qbn}^{-1} \Tau_{\qbn}^{-1} } \, . \label{billystrings:3}
\end{align}

Finally, we use Definition~\ref{def:new:pressure}, equations~\eqref{defn:A}--\eqref{def:sigma:qplus:k} and the dodging ensured by \eqref{supp.si.m+} to write that for $q+1< k \leq \qbn$, $k \leq k'$, and $q+\half+1 \leq k' < \infty$, 
\begin{align*}
    \left| \psi_{i,k-1} D^N D_{t,k-1}^M \left( \hat w_k \sigma_{q+1}^{\min(k', q+\bn)} \right) \right| &= \left| \sum_{m=q+\half +1}^{k'}  \psi_{i,k-1} D^N D_{t,k-1}^M  \left(a_{m,q,k'}\hat w_{k} \si_{m,q+1}^+ \right) \right| \\
    &= \left| \sum_{m=q+\half+1}^{k} \psi_{i,k-1} D^N D_{t,k-1}^M  \left(a_{m,q,k'}\hat w_{k} \si_{m,q+1}^+ \right) \right| \, .
\end{align*}
Then using \eqref{eq:ind:velocity:by:pi} and \eqref{eq:inductive:timescales} at level $q+1$, \eqref{est:si.m+.pt}, and \eqref{supp.si.m+}, we have that the quantity above is controlled by
\begin{align}
    &\sum_{N_1=0}^{N} \sum_{M_1=0}^{M} 
    \left| \psi_{i,k-1} D^{N_1} D_{t,k-1}^{M_1}  \hat w_{k}\right| \left| \sum_{m=q+\half +1}^{k} a_{m,q,k'} \mathbf{1}_{\supp \psi_{i,k-1}}  D^{N-N_1} D_{t,k-1}^{M-M_1}   \si_{m,q+1}^+  \right| \notag
    \\
    &\lec \sum_{N_1,M_1}
    \left|\psi_{i,k-1} D^{N_1} D_{t,k-1}^{M_1}  \hat w_{k}\right|
     \sum_{i': \psi_{i',q}\psi_{i,k-1}\not\equiv 0} \sum_{m=q+\half +1}^k a_{m,q,k}
     \mathbf{1}_{\supp \psi_{i',q}} \left|D^{N-N_1} D_{t,q}^{M-M_1}   \si_{m,q+1}^+ \right| \notag \\
    &\lec \Ga_{q+1}  r_{k-\bn}^{-1} (\pi_{q+1}^k)^{\sfrac 12} (\si_{q+1}^k + \Ga_q^{-100}\de_{k+\bn})
    \Lambda_{k}^N \MM{M, \NindRt, \Gamma_{k-1}^{i} \tau_{k-1}^{-1} , \Gamma_{k}^{-1} \Tau_{k}^{-1} } \notag \\
    &\leq \Ga_k^{-101} \left( \pi_{q+1}^k \right)^{\sfrac 32} r_k^{-1}\Lambda_{k}^N \MM{M, \NindRt, \Gamma_{k-1}^{i} \tau_{k-1}^{-1} , \Gamma_{k}^{-1} \Tau_{k}^{-1} } \, . \label{billystring:4}
\end{align}
Here, the second inequality follows from $0\leq a_{m,q,k'} \leq a_{m,q,k}$ because of $k\leq k'$, and the third inequality follows from the proof of \eqref{est.si.k.pt}. 
\smallskip

\noindent\texttt{Case 3a: $q+1 < k \leq \qbn$, $k \leq k'$, and $q+\half+1 \leq k' \leq q+\Npr$.} In these cases, we have from Definition~\ref{def:new:presh} that $\pi_{q+1}^k=\pi_q^k + \sigma_{q+1}^k + 2^{k-q-1}\delta_{k+\bn}$. Then combining \eqref{billystrings:1}, \eqref{billystrings:2}, \eqref{billystrings:3}, and \eqref{billystring:4} concludes the proof.
\smallskip

\noindent\texttt{Case 3b: $q+1 < k \leq \qbn$, $k \leq k'$, and $q+\Npr+1 \leq k' <\infty$.} In these cases, we have from Definition~\ref{def:new:presh} that $\pi_{q+1}^k=\pi_q^k$. Then \eqref{billystrings:2} gives the desired estimate.
\end{proof}

\subsection{The Euler-Reynolds system and the relaxed LEI adapted to new pressure}

In this section, we upgrade the Euler-Reynold system \eqref{ER:new:equation} and the relaxed local energy inequality \eqref{eq:LEI:new} adapted to the new pressure $p_{q+1} = p_q - \si_{q+1}$ and the new intermittent pressure $\pi_{q+1}$ defined in subsections~\ref{sec:new.pressure}. 
\begin{lemma}[Relaxed equations at level $q+1$]\label{lem:relaxed:new}
The inductive assumptions \eqref{eqn:ER}--\eqref{eq:LEI:decomp:basic} are satisfied at level $q+1$.
\end{lemma}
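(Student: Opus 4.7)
The plan is to assemble the final relaxed equations at level $q+1$ from the primitive identities \eqref{ER:new:equation:redux} and \eqref{eq:LEI:new}. Starting from those identities, which hold for $(u_{q+1}, p_q, \overline R_{q+1}, \overline \varphi_{q+1}, -(\pi_q-\pi_q^q))$, I would substitute $p_q = p_{q+1} + \sigma_{q+1}$ (from Definition~\ref{def:new:pressure}) and $\pi_q - \pi_q^q = \pi_{q+1} - \sigma_{q+1} - C$ (from \eqref{defn:new.pr}, where $C := \sum_{k=q+\half}^{q+\Npr} 2^{k-q-1}\delta_{k+\bn}$ is a spatial constant) to rewrite everything in terms of $(u_{q+1}, p_{q+1}, R_{q+1}, \varphi_{q+1}, -\pi_{q+1})$. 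The Euler-Reynolds system \eqref{eqn:ER} at level $q+1$ is then immediate upon setting $R_{q+1} := \overline R_{q+1}$ (modulo a small nonlocal correction described below), since $\nabla p_q + \div((\pi_q-\pi_q^q)\Id) = \nabla p_{q+1} + \div(\pi_{q+1}\Id)$ and $C$ contributes nothing to the divergence.

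For the LEI, the analogous substitution gives $\overline\kappa_{q+1} = \kappa_{q+1} + \tfrac32 \sigma_{q+1} + \tfrac32 C$, so the right-hand side of \eqref{eq:LEI:new} differs from the level-$(q+1)$ form of \eqref{ineq:relaxed.LEI} by the additional contributions $\tfrac32(\partial_t + \hat u_{q+1}\cdot\nabla)\sigma_{q+1}$, $\div(\sigma_{q+1}(\hat u_{q+1} - u_{q+1}))$, and $\bmu_{\overline \phi_{q+1}}'$. The dodging estimate \eqref{supp.sik} applied at $q'=q+1$ gives $\hat w_{q+1}\cdot\nabla\sigma_{q+1}\equiv 0$, so $(\partial_t + \hat u_{q+1}\cdot\nabla)\sigma_{q+1} = D_{t,q}\sigma_{q+1}$; then summing the pressure-current identity \eqref{exp.Dtq.si} with weights $A_{m,q}$ from \eqref{defn:A} yields $D_{t,q}\sigma_{q+1} = \div\phi_{q+1}^{\mathrm{pr}} + \bmu_{\sigma_{q+1}}'$, where $\phi_{q+1}^{\mathrm{pr}} := \sum_m A_{m,q}\phi_{m,q+1}$ and $\bmu_{\sigma_{q+1}} := \sum_m A_{m,q}\bmu_{m,q+1}$. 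Since $\sigma_{q+1}(\hat u_{q+1} - u_{q+1}) = -\sigma_{q+1}\sum_{q'=q+2}^{\qbn}\hat w_{q'}$ is already a product of a scalar with divergence-free vector fields, I would set
\[\varphi_{q+1} := \overline\varphi_{q+1} + \tfrac 32 \phi_{q+1}^{\mathrm{pr}} - \sigma_{q+1}(u_{q+1} - \hat u_{q+1}),\]
and bin by maximum active frequency $m$ using the existing frequency decompositions of $\overline\varphi_{q+1}$ in \eqref{defn:primitive:current}, of $\phi_{m,q+1}$ in \eqref{def:phi:m:qplus}--\eqref{defn:phi.m.m'}, and of $\sigma_{q+1}^k$ in \eqref{def:sigma:qplus:k}, producing the decomposition \eqref{eq:LEI:decomp:basic} at step $q+1$.

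The hard part will be handling the residual spatially-constant time function $\tfrac32\bmu_{\sigma_{q+1}}' + \bmu_{\overline\phi_{q+1}}'$, which cannot be written as the divergence of a periodic vector field. The remedy is to augment $R_{q+1}^{\qbn,*}$ by $\mu(t)\Id$, where $\mu'(t) := \bmu_{\sigma_{q+1}}'(t) + \tfrac23\bmu_{\overline\phi_{q+1}}'(t)$ and $\mu(0)$ is chosen so that $|\mu|$ is of order $\delta_{q+3\bn}\Tau_{\qbn}^{2\Nindt}$; since $\div(\mu(t)\Id) = 0$ the Euler-Reynolds system is preserved, while $\kappa_{q+1}$ shifts by exactly $\tfrac32\mu(t)$, so $(\partial_t+\hat u_{q+1}\cdot\nabla)\kappa_{q+1}$ gains precisely $\tfrac32\mu'(t)$, canceling the mean. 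The key technical check is that the modified $R_{q+1}^{\qbn,*}$ still obeys the inductive bound \eqref{eq:Rnnl:inductive:dtq} at level $q+1$: this will follow by combining the mean estimates \eqref{th:billys:3}, \eqref{est:bmu:stress}, \eqref{sat:morn:10:57}, \eqref{eq:sat:evening}, \eqref{eq:sat:evening:6:32}, \eqref{eq:desert:mean}, \eqref{eq:desert:mean:tn}, and \eqref{eq:desert:mean:dc} with the size of $A_{m,q}$ and the $O(\bn)$-bounded cardinality of the sum, where the factor $\max(1,T)^{-1}$ appearing in all those mean estimates is what permits the integrated quantity $\mu(t)$ itself (not just $\mu'(t)$) to be absorbed into $\Gamma_{q+1}^{2}\Tau_\qbn^{2\Nindt}\delta_{q+3\bn}$. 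I would conclude by defining $R_{q+1}^m := \overline R_{q+1}^m$ for $q+1\leq m < \qbn$ and $R_{q+1}^\qbn := \overline R_{q+1}^\qbn + \mu(t)\Id$ (with $\mu(t)\Id$ grouped into the nonlocal part), so that \eqref{eq:ER:decomp:basic} follows and the decomposition \eqref{eq:pi:decomp:basic} at step $q+1$ is already provided by Definition~\ref{def:new:presh}.
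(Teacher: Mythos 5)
Your proposal is correct and follows essentially the same route as the paper: substitute the new pressure and intermittent pressure into \eqref{ER:new:equation:redux} and \eqref{eq:LEI:new}, use the dodging property \eqref{supp.sik} to reduce $(\partial_t+\hat u_{q+1}\cdot\nabla)\sigma_{q+1}$ to $D_{t,q}\sigma_{q+1}$, invoke the weighted pressure-current identity \eqref{exp.Dtq.si} to identify $\phi_{P1}$, and absorb the residual spatially constant time function into $R_{q+1}^{\qbn,*}$ as a multiple of $\Id$. Your additional verification of the nonlocal bound \eqref{eq:Rnnl:inductive:dtq} is correct but not strictly part of this lemma (the paper defers it to Lemma~\ref{lem:verify.ind.pressure2}), and the coefficient $\mu' = \bmu_{\sigma_{q+1}}' + \tfrac23\bmu_{\ov\phi_{q+1}}'$ together with the role of the $(\max(1,T))^{-1}$ prefactors are exactly as in the paper.
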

\begin{proof}
We first set a few notations and definitions.  Referring to \eqref{defn:bmu.m}, \eqref{defn:phi.m.m'}, and Definition~\eqref{def:new:presh}, we first define $\phi_{P1}$ and $\bmu'_{P1}$ by
\begin{align}
     \bmu'_{P1} := \frac32\sum_{m=q+\half{+1}}^{q+\bn} A_{m,q} \bmu'_{m,q+1} \, , \qquad
     \phi_{P1} := \frac32\sum_{m=q+\half{+1}}^{q+\bn} A_{m,q} \phi_{m,q+1} \, . \label{eq:monday:morning}
\end{align}
From the definition \eqref{def.new.si} of $\si_{q+1}$ and \eqref{exp.Dtq.si}, we therefore have that
\begin{align}\label{si.phi.bmu}
\frac32 \Dtq \si_{q+1} = \div \phi_{P1} + \bmu'_{P1} \, .
\end{align}
In particular, $\bmu'_{P1}=\frac32\langle \Dtq \si_{q+1} \rangle$. Recalling \eqref{defn:local.stress} and \eqref{eq:meanz:def}, we now define \index{$R_{q+1}$}
\begin{equation}\label{eq:sat:night:7:42}
R^{q+\bn,*}_{q+1} = \ov R^{q+\bn,*}_{q+1} + \frac23 (\bmu_T + \bmu_N + \bmu_L+ \bmu_{M1} + \bmu_{M2} + \bmu_{P1}) \Id \, , 
\end{equation}
and we do not modify the local part of the Reynolds stress that was defined in \eqref{defn:local.stress}, nor do we modify any of the nonlocal portions for $q+1\leq m \leq \qbn-1$.  We then define a new stress error at step $q+1$ by
\begin{align}
    R_{q+1} &= \ov R_{q+1} + \frac23 (\bmu_T + \bmu_N + \bmu_L+ \bmu_{M1} + \bmu_{M2} + \bmu_{P1}) \Id \underset{\eqref{eq:meanz:def}}{=} \ov R_{q+1} + \frac 23 \left( \bmu_{\ov\phi_{q+1}} + \bmu_{P1} \right) \Id \, , \label{billystrings:eq:1}
\end{align}\index{$\bmu_{\ov\phi_{q+1}}$} \index{$\bmu_{P1}$}
which verifies \eqref{eq:ER:decomp:basic}.

Recalling \eqref{defn:new.pr}, \eqref{ER:new:equation} and the new pressure \index{$p_{q+1}$}
\begin{align}\label{new:pressure}
    p_{q+1} &= -\sigma_{q+1} + {p_{q}} \,,
\end{align}
we now have that
$(u_{q+1}, p_{q+1}, R_{q+1}, -\pi_{q+1})$ solves the Euler-Reynolds system
\begin{align}
    \partial_t u_{q+1} 
    + \div \left( u_{q+1} \otimes u_{q+1} \right) + \nabla {p_{q+1}}    
    = \div (  -\pi_{q+1}\Id + R_{q+1})
   \, , \label{ER:new:equation:final}
\end{align}
where we have used that the constant term in \eqref{defn:new.pr} and the terms with functions of time $\bmu_\bullet$ in $R_{q+1}$ vanish inside of the divergence.  Thus we have verified \eqref{eqn:ER}.  Note that we have also verified \eqref{eq:pi:decomp:basic} as well.  Recalling \eqref{def.w.mollified}, we have in addition that \eqref{eq:hat:no:hat} is verified, and so it only remains to check \eqref{eq:LEI:decomp:basic} and \eqref{ineq:relaxed.LEI} at level $q+1$.

Let us set \index{$\ka_{q+1}$}
\begin{align}
    \ka_{q+1} = \frac12\tr(R_{q+1}-\pi_{q+1}\Id)) \, . \label{billys:1} 
\end{align}
Then we can now rewrite \eqref{eq:LEI:new} as the relaxed local energy inequality for $(u_{q+1}, p_{q+1}, R_{q+1}, -\pi_{q+1}, \ph_{q+1})$ adapted to the upgraded stress error $R_{q+1}$ and the new pressure $\pi_{q+1}$. Specifically, we have that \index{$\phi_{P1}$} \index{$\phi_{P2}$}
\begin{align}
    &\pa_t \left( \frac 12 |u_{q+1}|^2 \right)
    + \div\left( \left(\frac 12 |u_{q+1}|^2 + p_{q+1}\right) u_{q+1}\right) \notag\\
    &\underset{\eqref{eq:LEI:new},\eqref{new:pressure}}{=}(\pa_t + \hat u_{q+1} \cdot \na ) (\overline{\ka}_{q+1}+ \bmu_{\ov\phi_{q+1}})
    + \div\left( {(\ov R_{q+1}-(\pi_q-\pi_q^q)\Id)}  \hat u_{q+1}\right) \notag\\
    &\qquad \qquad + \div \overline\ph_{q+1} -E  - \div \left( \sigma_{q+1} u_{q+1} \right) \notag \\
    &\underset{\substack{\eqref{eq:def:ov:kappa}, \\ \div \hat u_{q+1} \equiv 0 \\ \nabla \bmu_{\bullet}\equiv 0}}{=}(\pa_t + \hat u_{q+1} \cdot \na ) \left( \frac 12 \tr \left( \ov R_{q+1} - \left( \pi_q - \pi_q^q + \sigma_{q+1} + \sum_{k=q+\half}^{q+\Npr} 2^{k-q-1} \delta_{k+\bn} - \frac 23 \left( \bmu_{\ov\phi_{q+1}} + \bmu_{P1} \right) \right)\Id \right) \right) \notag\\
    &\qquad\qquad \qquad  + \div\left( \left((\ov R_{q+1}-\left( \pi_q-\pi_q^q + \sigma_{q+1} + \sum_{k=q+\half}^{q+\Npr} 2^{k-q-1} \delta_{k+\bn}  - \frac 23 \left(\bmu_{\ov\phi_{q+1}} + \bmu_{P1} \right) \right)\Id\right)  \hat u_{q+1}\right) \notag\\
    &\qquad \qquad\qquad \qquad  + \div \overline\ph_{q+1} -E  - \div \left( \sigma_{q+1} (u_{q+1} - \hat u_{q+1}) \right) + \frac 32 (\pa_t + \hat u_{q+1}\cdot\nabla) \sigma_{q+1} - \bmu'_{P1} \notag \\
   &\underset{\substack{\eqref{defn:new.pr},\eqref{billystrings:eq:1}, \\ \eqref{billys:1}}}{=} (\pa_t + \hat u_{q+1} \cdot \na ) \ka_{q+1} + \div\left( {(R_{q+1}-\pi_{q+1}\Id)}  \hat u_{q+1}\right) \notag \\
   &\qquad \qquad\qquad \qquad  + \div \overline\ph_{q+1} -E  - \div \left( \sigma_{q+1} (u_{q+1} - \hat u_{q+1}) \right) + \frac 32 \underbrace{(\pa_t + \hat u_{q+1}\cdot\nabla)}_{=\pa_t + (\hat u_q + \hat w_{q+1})\cdot \nabla} \sigma_{q+1} - \bmu'_{P1} \notag \\
    &\underset{\eqref{supp.sik},\eqref{eq:rewriting:new:si}}{=} (\pa_t + \hat u_{q+1} \cdot \na ) \ka_{q+1} + \div\left( {(R_{q+1}-\pi_{q+1}\Id)}  \hat u_{q+1}\right) \notag\\
    &\qquad \qquad \qquad  + \div \ov \varphi_{q+1} - E - \underbrace{\div (\si_{q+1}(u_{q+1}-\hat u_{q+1}))}_{=:\div \phi_{P2}} + \underbrace{\frac32 (\pa_t + \hat u_{q} \cdot \nabla ) \si_{q+1} - \bmu'_{P1}}_{= \div \phi_{P1}} \notag\\
    &\qquad = (\pa_t + \hat u_{q+1} \cdot \na ) \ka_{q+1}
    + \div\left( {(R_{q+1}-\pi_{q+1}\Id)}  \hat u_{q+1}\right) +  \underbrace{ \div \ph_{q+1}}_{= \div(\ov\varphi_{q+1} +  \phi_{P1} + \phi_{P2})} - E
    \, . \label{eq:LEI:semi.final}
\end{align}\index{$\phi_{q+1}$}
Thus we have verified \eqref{ineq:relaxed.LEI}, and recalling \eqref{defn:primitive:current}, we have that \eqref{eq:LEI:decomp:basic} is verified as well.
\end{proof}

\subsection{Pressure current error}\label{sec:pressure:current}
In this subsection, we analyze the new pressure current errors\index{pressure current error} $\phi_{P1}$ and $\phi_{P2}$ defined in \eqref{eq:LEI:semi.final}.

\subsubsection{Pressure current error I}
Recalling the definition of $\phi_{m,q+1}$ from \eqref{defn:phi.m.m'}--\eqref{exp.Dtq.si} and the definition of $\phi_{P1}$ from \eqref{eq:monday:morning}, we decompose $\phi_{P1}$ into \index{$\phi_{P1}$}
\begin{align}
     \phi_{P1} &= \frac32\sum_{m=q+\half{+1}}^{q+\bn} \sum_{m'=q+\half+1}^m \left( A_{m,q} {\phi^{m',l}_{m,q+1}} + A_{m,q}{\phi^{m', *}_{m,q+1}} \right) \nonumber\\
    &= \frac32 \sum_{m'=q+\half{+1}}^{q+\bn} \left( \sum_{m=m'}^{q+\bn} A_{m,q} \phi^{m',l}_{m,q+1} + \sum_{m=m'}^{q+\bn}A_{m,q}\phi^{m', *}_{m,q+1} \right) \notag\\
    &=: \sum_{m'=q+\half{+1}}^{q+\bn} \phi^{m',l}_{P1} + \phi_{P1}^{m',*} \, .
    \label{eq.def.upgrade1}
\end{align}

\begin{lemma}[\bf Properties of $\phi_{P1}$]\label{lem:phi:p:1}
For all $q+\half{+1} \leq m \leq q+\bn$, the terms $\phi_{P1}^{m,l}$ and $\phi_{P1}^{m,*}$ satisfy the following properties.
\begin{enumerate}[(i)]
    \item The local part $\phi_{P1}^{m,l}$ satisfies 
\begin{subequations}
\begin{align}
    \left|\psi_{i,q} D^N \Dtq^M \phi_{P1}^{m,l}\right| &< \Ga_{m}^{-80} (\pi_{q}^{m})^{\sfrac32} {r_{m}^{-1}} \Lambda_m^N \MM{M,\Nindt,\tau_q^{-1}\Ga_q^{i+18}, \Tau_q^{-1}\Ga_q^{11}}
        \label{est:phi.up1.pt}\\
         \supp \phi_{P1}^{m,l} &\cap B(\supp \hat w_{q'}, \la_{q'}^{-1} {\Ga_{q'}}) = \emptyset, \qquad q+1\leq q' \leq m-1
         \label{supp.current.p1}
    \end{align}
    \end{subequations}
    for all $N,M <\sfrac{\Nfin}{200}$. 
    \item For all $N, M \leq 2\Nind$, the non-local part $\phi_{P1}^{m,*}$ satisfies
    \begin{align}
  \norm{\psi_{i,q} D^N \Dtq^M \phi_{P1}^{m,*}}_\infty &\lec
        \delta_{q+3\bn}^{\sfrac 32} \Tau_{q+\bn}^{2\Nindt} \Lambda_m^N \MM{M,\Nindt,\tau_q^{-1},\Tau_q^{-1}\Ga_q^9}
    \end{align}
\end{enumerate}
\end{lemma}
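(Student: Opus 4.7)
\textbf{Proof plan for Lemma \ref{lem:phi:p:1}.}

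The plan is to unfold the definition \eqref{eq.def.upgrade1} and reduce everything to the pointwise bounds and support properties already established for the five families of pressure currents assembled into $\phi_{m,q+1}$. Writing (with $\tilde m$ as the dummy summation variable to distinguish it from the outer index $m$ in the lemma statement)
\[
\phi_{P1}^{m,l} \;=\; \frac32\sum_{\tilde m=m}^{q+\bn} A_{\tilde m,q}\,\phi^{m,l}_{\tilde m,q+1}, \qquad \phi_{P1}^{m,*}\;=\;\frac32\sum_{\tilde m=m}^{q+\bn} A_{\tilde m,q}\,\phi^{m,*}_{\tilde m,q+1},
\]
each $\phi^{m,l}_{\tilde m,q+1}$ decomposes via \eqref{defn:phi.m.m'} into at most five ingredients (stress, oscillation, transport--Nash, divergence corrector, and when $\tilde m=\qbn$ the velocity pressure increment). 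For each of these, the relevant \emph{local} pointwise bound is of the form
\[
\bigl|\psi_{i,q}D^N\Dtq^M \phi^{m,l}_{\bullet}\bigr| \;\les\; \Ga_{m}^{-100}(\pi_q^{m})^{\sfrac32}r_{m}^{-1}(\la_{m}\Ga_{m}^2)^N \MM{M,\Nindt,\tau_q^{-1}\Ga_q^{i+17},\Tau_q^{-1}\Ga_q^{9}},
\]
which is exactly \eqref{s:p:c:pt.stress}, \eqref{eq:desert:estimate:1}, \eqref{eq:desert:estimate:1:TN}, \eqref{eq:desert:estimate:1:dc}, and (for $\tilde m=\qbn$) \eqref{pr:current:vel:loc:pt:q}. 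Crucially, the pressure on the right-hand side is $\pi_q^{m}$ (the outer index), not $\pi_q^{\tilde m}$, so the bound is uniform in $\tilde m$.

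The bookkeeping then reduces to absorbing the combinatorial factor $A_{\tilde m,q}$. From \eqref{defn:A}, using that $\de_{k+\bn}\le\de_{\tilde m+\bn}$ for $k\ge\tilde m$, one gets $A_{\tilde m,q}\le\Npr\, 2^{\Npr}\Ga_{\tilde m}^{9}\le \Npr\,2^{\Npr}\Ga_{q+\bn}^{9}$. Summing over the at most $\sfrac{\bn}{2}$ values of $\tilde m\in\{m,\dots,q+\bn\}$ and using $\Ga_{q+\bn}\le\Ga_m^{b^{\bn}}\le \Ga_m^2$ by \eqref{ineq:b:second}, together with a sufficiently large choice of $a_0$ (item~\eqref{i:choice:of:a}) to absorb $\bn^2 2^{\Npr}$, yields $\sum_{\tilde m=m}^{q+\bn}A_{\tilde m,q}\le\Ga_m^{20}$ and hence the desired $\Ga_m^{-100}\cdot\Ga_m^{20}=\Ga_m^{-80}$ prefactor in \eqref{est:phi.up1.pt}. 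The derivative costs match after absorbing $\Ga_m^2\le\Ga_m^{(b-1)\varepsilon_\Ga}\La_m/\la_m$ into $\La_m$ and bumping the material derivative indices from $\Ga_q^{i+17}$ to $\Ga_q^{i+18}$ and from $\Ga_q^9$ to $\Ga_q^{11}$ to cover the spare factors.

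The support property \eqref{supp.current.p1} follows immediately from the disjointness statements \eqref{s:p:c:supp.stress}, \eqref{eq:desert:dodging}, \eqref{eq:desert:dodging:TN}, \eqref{eq:desert:dodging:dc}, and \eqref{supp:pr:vel:q}, each of which asserts that the support of $\phi^{m,l}_{\bullet}$ avoids a $\sfrac12\la_{q'}^{-1}\Ga_{q'+1}$-neighborhood of $\supp\hat w_{q'}$ for every $q+1\le q'\le m-1$; since $\Ga_{q'+1}\ge\Ga_{q'}$ the stated weaker conclusion is immediate, and the property is preserved under the finite linear combination defining $\phi_{P1}^{m,l}$. Finally, the nonlocal bound follows in the same way from the nonlocal estimates \eqref{e:p:c:nonlocal.stress}, \eqref{eq:desert:estimate:2}, \eqref{eq:desert:estimate:21:TN}--\eqref{eq:desert:estimate:22:TN}, \eqref{eq:desert:estimate:2:dc}, and \eqref{pr:current:vel:nonloc:infty1:q}, whose common right-hand side $\Tau_{\qbn}^{2\Nindt}\de_{q+3\bn}^{\sfrac32}$ absorbs both the $A_{\tilde m,q}$ factors and the summation over $\tilde m$ thanks to the super-polynomial smallness of $\de_{q+3\bn}^{\sfrac32}$.

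The only mildly delicate step is verifying that the prefactor $A_{\tilde m,q}$, which blows up like $2^{\Npr}\Ga_{q+\bn}^9$, is absorbed by the $\Ga_m^{-100}$ gains of the constituent estimates; all other manipulations are bookkeeping. Hence the proof is essentially mechanical once the five constituent lemmas are in hand.
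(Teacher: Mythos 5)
Your proof is correct and takes essentially the same approach as the paper: unfold \eqref{eq.def.upgrade1}, apply the five constituent pressure-current estimates (which already carry the uniform $\Ga_m^{-100}(\pi_q^m)^{\sfrac32}r_m^{-1}$ amplitude in the outer shell index), and absorb the finite sum of $A_{\tilde m,q}$ prefactors into the $\Ga_m^{-100}$ gain — the paper records the absorption simply as $A_{m,q}\leq\Ga_m^{10}$. The support and nonlocal bounds are likewise inherited directly from the constituent lemmas.
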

\begin{remark} 
Applying Lemma \ref{lem:upgrading.material.derivative} to $F^\ell = \phi_{P1}^{k,l}$ and $F^*= \phi_{P1}^{k,*}$, for $q+\half{+1} \leq k \leq q+\bn$ we can upgrade the material derivatives in the estimates \eqref{est:phi.up1.pt} to obtain that for $N+M \leq 2\Nind$,
\begin{align}
        \left|\psi_{i,k-1} D^N D_{t,k-1}^M \phi_{P1}^k\right| &< \Ga_{k}^{-60} (\pi_{q}^{k})^{\sfrac32} {r_{k}^{-1}} \Lambda_k^N \MM{M,\Nindt,\tau_{k-1}^{-1}\Ga_{k-1}^i, \Tau_{k-1}^{-1}} \, .
        \label{est:phi.up1.pt:upgraded}
\end{align}
\end{remark}
\begin{proof}[Proof of Lemma~\ref{lem:phi:p:1}]
From the definition \eqref{defn:A} of $A_{m,q}$, we have that $A_{m,q}\leq \Ga_m^{10}$. Recalling the definition \eqref{defn:phi.m.m'} of  $\phi^{m'}_{m,q+1}$, it is immediate that $\phi^{m'}_{m,q+1}$ satisfy the same properties delineated in Lemma~\ref{lem:pr.current.vel.inc} (for the current errors associated to velocity pressure increments), Lemma~\ref{lem:stress:pressure:current.stress} (for the current errors associated to stress error pressure increments), and Lemmas~\ref{lem:currentoscillation:pressure:current}, \ref{lem:ctn:pressure:current}, and \ref{lem:currentdivergencecorrector:pressure:current} (for the  current errors associated to current error pressure increments). Therefore, the lemma follows from the definition \eqref{eq.def.upgrade1}.
\end{proof}

\subsubsection{Pressure current error II}
Here we deal with the error
\begin{align*}
   \div \phi_{P2} &= \div (\sigma_{q+1} (u_{q+1}-\hat u_{q+1})) \, .
\end{align*}
Recall from \eqref{eq:hat:no:hat} at level $q+1$ that $$u_{q+1} - \hat {u}_{q+1} = \sum_{q'=q+2}^{q+\bn} \hat {w}_{q'} \, .$$ 
By the definition of $\si_{q+1} $ in \eqref{def.new.si} and the dodging properties \eqref{supp.si.m}, \eqref{supp.si.m+}, and \eqref{supp.sik}, we first write
\begin{align*}
\sigma_{q+1}\sum_{q'=q+2}^{q+\bn} \hat {w}_{q'}
&= \sum_{q'=q+{\sfrac {\bn}2}+1}^{q+\bn}\sigma_{q+1} \hat {w}_{q'}\\
&=\sum_{m=q+\half+1}^{q+\bn}
\sum_{q'=m+1}^{q+\bn}
\td\sigma_{m,q+1}^{+} \hat {w}_{q'}
+ \sum_{m=q+{\sfrac {\bn}2}+1}^{q+\bn}\td\sigma_{m,q+1}^{+}\hat {w}_{m}
-\sum_{q'=q+{\sfrac {\bn}2}+1}^{q+\bn}\sigma_{q+1}^- \hat {w}_{q'}\, .
\end{align*}
Since $\div \left(\sigma_{q+1} \sum_{q'=q+2}^{q+\bn}\hat {w}_{q'}\right)$ has zero mean, we recall the identity \eqref{exp.w.q'} at level $q+1$ and set
\begin{align}
\phi^{q'}_{P2}
&:= \underbrace{\left[ \sum_{m=q+\half+1}^{q'-1}
(\divH+ \divR)(\na \td\sigma_{m,q+1}^{+} \cdot (\div^\dpot \hat \upsilon_{q'}) ) \right]}_{=:\phi_{P21}^{q'}}
+ \underbrace{\td\sigma_{q',q+1}^{+}\hat {w}_{q'}}_{=:\phi_{P22}^{q'}} -
\underbrace{(\divH+ \divR)(\na \sigma_{q+1}^-\cdot (\div^\dpot \hat \upsilon_{q'}))}_{=:\phi_{P23}^{q'}} \notag\\
&\qquad \qquad + \underbrace{\left[ \sum_{m=q+\half+1}^{q'-1}
\divR(\na \td\sigma_{m,q+1}^{+} \cdot \hat e_{q'}) \right] -
\divR(\na \sigma_{q+1}^-\cdot \hat e_{q'})}_{=:\phi_{P*}^{q'}} \label{with:a:tag} \\
\phi_{P2} &:= \sum_{q'=q+\half+1}^{q+\bn} \phi_{P2}^{q'} \, . \notag 
\end{align}\index{$\phi_{P2}$}
As before, the terms including $\divR$ are non-local and the rest are local. 

\begin{lemma}[\bf Properties of $\sigma_{P2}^m$]\label{lem:nezezzary}
For $q+\half +1 \leq m\leq q+\bn$, the error $\phi^{m}_{P2}$ has no pressure increment; for $N+M \leq{2\Nind}$, it satisfies
\begin{align}
\left|\psi_{i,m-1} D^N D_{t,m-1}^M \phi^{m}_{P2} \right| &< \Ga_{m}^{-80} \left(\pi^{m}_{q+1}\right)^{\sfrac32}r_m^{-1} \La_{m}^N \MM{M,\Nindt, \tau_{m-1}^{-1} \Ga_{m-1}^{i+3}, \Tau_{m-1}^{-1}\Ga_{m-1}^2} \, .
\label{est:phi.up2.pt:upgraded}    
\end{align}
\end{lemma}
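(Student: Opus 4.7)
The approach is to bound each of the four constituent pieces of
$\phi^m_{P2} = \phi^m_{P21} + \phi^m_{P22} - \phi^m_{P23} + \phi^m_{P*}$ from \eqref{with:a:tag} separately. The new ingredients, now available at level $q+1$, are the pressure dodging Hypothesis~\ref{hyp:dodging5} (verified just above) together with the pointwise potential bound \eqref{est.upsilon.ptwise} (verified in Lemma~\ref{prop:velocity:domination}), supplemented by the inverse divergence algorithm from Lemma~\ref{rem:no:decoup:inverse:div2} and the pressure scaling law \eqref{eq:ind.pr.inc.anticipated}. Throughout, the relation $\td\sigma^+_{m,q+1} = A_{m,q} \sigma^+_{m,q+1}$ and the identity $a_{m,q,m}\sigma^+_{m,q+1} \leq \sigma^{m}_{q+1} \leq \pi^m_{q+1}$ coming from Definitions~\ref{def:new:pressure}--\ref{def:new:presh} allow me to dominate $\td\sigma^+_{m,q+1}$ pointwise by a harmless constant multiple of $\pi^m_{q+1}$, with the same scaling propagating to derivatives via \eqref{est.td.si.m.pl.ptwise}.

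The simplest term is $\phi^m_{P22} = \td\sigma^+_{m,q+1}\hat w_m$. Using the bound $\td\sigma^+_{m,q+1} \leq (A_{m,q}/a_{m,q,m})\pi^m_{q+1}$ with $A_{m,q}/a_{m,q,m} \lesssim 2^{\Npr}$, and applying Hypothesis~\ref{hyp:dodging5} at $q+1$ with $k=k'=m$, I obtain $|\hat w_m\, \pi^m_{q+1}| \lesssim \Gamma_{q+1}\Gamma_m^{-100}(\pi^m_{q+1})^{\sfrac32} r_m^{-1}$ times the appropriate material-derivative cost, and the prefactor $\Gamma_{q+1}$ is absorbed into $\Gamma_m^{-100}$ using $\Gamma_{q+1} \leq \Gamma_m$. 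This yields the required $\Gamma_m^{-80}$ gain directly, with the correct $D_{t,m-1}$ structure.

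For the two inverse-divergence terms $\phi^m_{P21}$ and $\phi^m_{P23}$, the integrand has the form $\nabla\sigma \cdot \div^\dpot\hat\upsilon_m$, with $\sigma$ being either $\td\sigma^+_{m',q+1}$ ($m' < m$) or $\sigma^-_{q+1}$. Since $\div^\dpot\hat\upsilon_m = \hat w_m - \hat e_m$ is divergence-free up to the vanishingly small error $\hat e_m$, the integrand is essentially mean-free and Lemma~\ref{rem:no:decoup:inverse:div2} applies. I would take $G = \nabla\sigma$ with derivative cost $\Lambda_{m-1}$ (respectively $\Lambda_{q+\half}$) and pointwise bound from \eqref{est.td.si.m.pl.ptwise:comm}, \eqref{ineq:billystrings:1}, and $\hat\upsilon_m$ as the high-frequency potential with the pointwise bound \eqref{est.upsilon.ptwise}. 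The inverse divergence gains $\lambda_m^{-1}$, which combines with the gradient loss $\lambda_{m'}$ and the pressure scaling $\pi^{m'}_{q+1} \leq 2^{m'-m}(\delta_{m'+\bn}/\delta_{m+\bn})\pi^m_{q+1}$ from \eqref{eq:ind.pr.inc.anticipated} to produce the target structure $(\pi^m_{q+1})^{\sfrac32} r_m^{-1}$, while the spatial support property of $\hat\upsilon_m$ from \eqref{supp.upsilon.e.ind} confines the local output to a set disjoint from $\hat w_{q'}$ for $q+1 \leq q' \leq m-1$. The nonlocal remainder $\phi^m_{P*}$, which carries the error $\hat e_m$, is controlled immediately by the super-polynomial smallness in \eqref{est.e.inf}.

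The last step is to upgrade material derivatives from $D_{t,q}$ (as naturally produced by the inverse-divergence algorithm) to $D_{t,m-1}$ via Lemma~\ref{lem:upgrading.material.derivative}, using the dodging of $\hat\upsilon_m$ against $\hat w_{q'}$ for $q+1 \leq q'\leq m-1$ from \eqref{supp.upsilon.e.ind} at $q+1$ together with the support property \eqref{supp.sik} of $\sigma^\pm_{q+1}$. The main obstacle is the scaling balance in $\phi^m_{P21}$: because $\pi^{m'}_{q+1}$ may be much larger than $\pi^m_{q+1}$ while $\nabla$ on $\td\sigma^+_{m',q+1}$ only costs $\lambda_{m'}$, one must verify that the inverse-divergence gain $\lambda_m^{-1}$ and the intermittency factor $r_{m-\bn}/r_m$ jointly dominate the amplitude growth $(\delta_{m'+\bn}/\delta_{m+\bn})$ with $\Gamma_m^{-80}$ to spare. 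This is precisely the place where the parameter inequalities in Section~\ref{sec:params}, especially those coming from \eqref{ineq:b} and the choice of $\bn$, enter essentially.
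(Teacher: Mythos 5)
Your overall decomposition $\phi^m_{P2} = \phi^m_{P21} + \phi^m_{P22} - \phi^m_{P23} + \phi^m_{P*}$, the use of the inverse divergence algorithm from Lemma~\ref{rem:no:decoup:inverse:div2} for $\phi^m_{P21}$ and $\phi^m_{P23}$, the treatment of $\phi^m_{P*}$ via the smallness of $\hat e_{q'}$, and the identification of the scaling balance (inverse-divergence gain versus $\delta_{m'+\bn}/\delta_{m+\bn}$ growth controlled by the pressure scaling law) all match the paper's argument. However, there is a genuine gap in your treatment of $\phi^m_{P22}=\td\sigma^+_{m,q+1}\hat w_m$.

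The gap is this: Hypothesis~\ref{hyp:dodging5} at level $q+1$ with $k=k'=m$ gives a bound on $D^ND_{t,m-1}^M\big(\hat w_m\,\pi^m_{q+1}\big)$ for all $N,M\leq 2\Nind$, but you want a bound on $D^ND_{t,m-1}^M\big(\hat w_m\,\td\sigma^+_{m,q+1}\big)$. The pointwise inequality $\td\sigma^+_{m,q+1}\lesssim 2^{\Npr}\pi^m_{q+1}$ compares function values only — it transfers a bound on $|\hat w_m\,\pi^m_{q+1}|$ to one on $|\hat w_m\,\td\sigma^+_{m,q+1}|$ at $N=M=0$, but it does not transfer bounds on derivatives of the product, since $D^N D^M_{t,m-1}(\hat w_m\,\td\sigma^+_{m,q+1})$ involves derivatives of $\td\sigma^+_{m,q+1}$, which are not controlled by derivatives of $\pi^m_{q+1}$. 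Your parenthetical invocation of \eqref{est.td.si.m.pl.ptwise} is the right instinct, but then Hypothesis~\ref{hyp:dodging5} is superfluous: once you have derivative bounds on $\td\sigma^+_{m,q+1}$ from \eqref{est.td.si.m.pl.ptwise} (which bound $|D^ND_{t,m-1}^M\sigma^+_{m,q+1}|$ by a multiple of $\sigma^+_{m,q+1}+\delta_{q+3\bn}$), you should simply apply the Leibniz rule to $\td\sigma^+_{m,q+1}\hat w_m$ and combine with the velocity bound \eqref{eq:ind:velocity:by:pi} at level $q+1$, converting $\td\sigma^+_{m,q+1}$ to $\pi^m_{q+1}$ at the end via Definitions~\ref{def:new:pressure}--\ref{def:new:presh} and absorbing the $\Ga_m$ losses via \eqref{ineq:r's:eat:Gammas}; this is precisely what the paper does in Step 1.

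Two smaller points. First, for the inverse-divergence terms, the paper upgrades material derivatives on the low-frequency input $\nabla\sigma^\pm$ via Lemma~\ref{rem:upgrade.material.derivative.end} \emph{before} applying Lemma~\ref{rem:no:decoup:inverse:div2} with $v=\hat u_{q'-1}$, so that the algorithm directly produces $D_{t,q'-1}$ estimates; your plan to upgrade after the inverse divergence should also work but is a slightly heavier route. Second, the pressure scaling law you need is \eqref{eq:ind.pr.anticipated} at level $q+1$ (for the $\pi^k_{q+1}$ themselves), not \eqref{eq:ind.pr.inc.anticipated} (which concerns $\sigma^k_{q+1}$), and the derivative cost of $\nabla\td\sigma^+_{m',q+1}$ is $\lambda_{m'}\Gamma_{m'}$ rather than $\Lambda_{m-1}$.
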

\begin{proof} We divide the proof up into cases based on the decomposition in \eqref{with:a:tag}.
\smallskip

\noindent\texttt{Step 1: Estimate of $\phi_{P22}^m$.} By \eqref{est.td.si.m.pl.ptwise}, the definition of $A_{m,q}$ and $\tilde \sigma_{m,q+1}^+$ in Definition~\ref{def:new:pressure}, Definition~\ref{def:new:presh}, which shows that $\pi_{q+1}^m \geq \sigma_{m,q+1}^+ \geq \td \sigma_{m,q+1} \Ga_m^{-25}$, \eqref{eq:ind:velocity:by:pi} at level $q+1$ to bound $\hat w_m$, and \eqref{ineq:r's:eat:Gammas} to absorb errant factors of $\Ga_m$, we have that for $N+M\leq 2\Nind$ and $q+\half+1\leq m \leq q+\bn$,
\begin{align*}
    \left|\psi_{i,m-1}D^N D_{t,m-1}^M (\td\sigma_{m,q+1}^{+}\hat {w}_{m})\right|
    &\lec 
    \sum_{\substack{N_1+N_2=N\\M_1+M_2=M}}
    \left|\psi_{i,m-1} D^{N_1} D_{t,m-1}^{M_1}\td\sigma_{m,q+1}^{+}\right| \left|\psi_{i,m-1}
    D^{N_2} D_{t,m-1}^{M_2}
    \hat {w}_{m}\right|\\
    &\lec (\td\sigma_{m,q+1}^{+}+\de_{q+3\bn}) \left( \pi_{q+1}^m \right)^{\sfrac12} \Ga_{q+1}r_{m-\bn}^{-1}  \Lambda_m^N \\
    &\qquad \times\MM{M, \Nindt, \tau_{m-1}^{-1}\Ga_{m-1}^{i}, \Tau_{m-1}^{-1}\Ga_{m-1}^2}\\
    &\lec \Ga_m^{-101} \left( \pi_{q+1}^m \right)^{\sfrac32} r_{m}^{-1}\Lambda_m^N \MM{M, \Nindt, \tau_{m-1}^{-1}\Ga_{m-1}^{i}, \Tau_{m-1}^{-1}\Ga_{m-1}^2} \, .
\end{align*}
Therefore, $\phi_{P22}^m$ satisfies the desired pointwise estimate.
\smallskip

\noindent\texttt{Step 2: Estimates of $\phi_{P21}^{q'}$, $\phi_{P23}^{q'}$, and $\phi_{P*}^{q'}$.}  We first carry out the preliminary step of upgrading material derivatives on $\nabla\sigma_{m,q+1}^{\pm}$, which will be required for all three terms. We apply Lemma~\ref{rem:upgrade.material.derivative.end} inductively to $v = \hat u_{m-1}$, $f= \na \si_{m,q+1}^\pm$, and  $w = \hat w_{m}, \dots , \hat w_{q'-1}$, and $\Omega = \supp(\psi_{i,q'-1})$. The assumptions in the remark are satisfied due to
\eqref{eq:nasty:D:vq:old}, Remark~\ref{upgrade.mat.sim.plus}, and \eqref{eq:nasty:D:wq:old}.  As a result, we have that for $q+\half+1 \leq m \leq q'-1$,
\begin{subequations}\label{tuesday:afternoon}
\begin{align}
    \left| \psi_{i,q'-1} D^N D_{t,q'-1}^M\na \si_{m,q+1}^+ \right|
    &\lec (\si_{m,q+1}^+ + \delta_{q+3\bn}) \La_{q'-1}^N
    \MM{M, \Nindt, \Ga_{q'-1}^{i+3}\tau_{q'-1}^{-1}, \Ga_{q'-1}\Tau_{q'-1}^{-1} } \\
    \left| \psi_{i,q'-1} D^N D_{t,q'-1}^M\na \si_{m,q+1}^+ \right|
    &\lec \Ga_{q+\half}^{-100} \pi_{q+1}^{q+\half} \La_{q'-1}^N
    \MM{M, \Nindt, \Ga_{q'-1}^{i+3}\tau_{q'-1}^{-1}, \Ga_{q'-1}\Tau_{q'-1}^{-1} }
\end{align}
\end{subequations}
for $N, M <\sfrac{\Nfin}{200}$. In a similar way, we have
\begin{subequations}\label{prep:last:pm}    
\begin{align}
    \norm{D^N D_{t,q'-1}^M\na \td\si_{q+1}^{m,+}}_{\infty}
    &\lec  \la_m\Ga_m^{\badshaq+2} \La_{q'-1}^N \MM{M, \Nindt, \tau_{q'-1}^{-1}\Ga_{q'-1}^{\imax+3}, \Tau_{q'-1}^{-1}\Ga_{q'-1}^{-1}}\\
    \norm{D^N D_{t,q'-1}^M\na \si^-_{q+1}}_{\infty}
    &\lec \la_{q+\half}\Ga_{q+\half}^{\badshaq-50} \La_{q'-1}^N \MM{M, \Nindt, \tau_{q'-1}^{-1}\Ga_{q'-1}^{\imax+3}, \Tau_{q'-1}^{-1}\Ga_{q'-1}^{-1}}
\end{align}
\end{subequations}
for $N,M<\sfrac{\Nfin}{200}$ and $q+\half\leq m \leq q+\bn$. The first inequality holds for $m+1\leq q' \leq q+\bn$, while the second one holds for $ q+\half+1\leq q' \leq q+\bn$.

\smallskip

\noindent\texttt{Step 2a:} We now estimate $\phi_{P21}^{m'}$ by applying Lemma \ref{rem:no:decoup:inverse:div2} with 
\begin{align*}
    &G = \na \td\si_{m,q+1}^+,\quad \vartheta = \hat\upsilon_{q'}, \quad \pi = \la_m\Ga_m^{25} \pi_{q+1}^m, \quad \pi' =\Ga_{q+1} \Ga_{q'} (\pi_{q+1}^{q'})^{\sfrac12}r_{q'-\bn}^{-1} \, , \quad M_t = \Nindt, \quad v=\hat u_{q'-1},\\
    & \Omega = \supp(\hat \upsilon_{q'}\psi_{i,q'-1}) \, , \quad \la =\la' = \la_m\Ga_m,\quad
    \Upsilon=\la_{q'}, \quad  \Lambda = \la_{q'}\Ga_{q'}, \quad
    \nu = \Ga_{q'-1}^{i+3} \tau_{q'-1}^{-1}, \quad
    \nu' = \Tau_{q'-1}^{-1}\Ga_{q'-1}^2,\\
    &\const_{G,\infty} = \la_m\Ga_m^{\badshaq+1}, \quad  \quad \const_{*,\infty} = \Ga_{q+1} \Ga_{q'}(\Ga_{q+1}\Ga_{q'}^{\badshaq+1})^{\sfrac12} r_{q'-\bn}^{-1} \,, \quad \dpot\textnormal{ as in \eqref{i:par:10}/\eqref{exp.w.q+1}} \,,  \\
    & M_\circ = N_\circ = 2\Nind \, , \quad K_\circ \textnormal{ as in \eqref{ineq:K_0}}\, ,\quad N_*=M_*=\sfrac{\Nfin}{300}   \, .
\end{align*}
Then we have that \eqref{eq:DDv} is satisfied due to \eqref{eq:nasty:D:vq:old} at level $q+1$, \eqref{eq:inv:div:extra:pointwise:noflow} is satisfied due to and Definitions~\ref{def:new:presh} and \ref{def:new:pressure} and \eqref{est.td.si.m.pl.ptwise:comm}, \eqref{eq:inv:div:extra:pointwise2:noflow} is satisfied due to \eqref{est.upsilon.ptwise} at level $q+1$, all assumptions from item~\eqref{item:nonlocal:v} in Part 4 of Proposition~\ref{prop:intermittent:inverse:div} are satisfied due to Remark~\ref{rem:lossy:choices}, \eqref{eq:inverse:div:DN:G:noflow} and \eqref{eq:DN:Mikado:density:noflow} are satisfied due to \eqref{eq:pressure:inductive} at level $q+1$, and \eqref{dpot:noflow} is satisfied due to \eqref{ineq:dpot:1}. Then from \eqref{eq:inv:div:pointwise:local}, \eqref{ineq:r's:eat:Gammas}, \eqref{i:par:11}, and \eqref{eq:ind.pr.anticipated} at level $q+1$, we have that for $q+\half + 1 \leq m \leq q+\bn$ and $m+1\leq q'\leq q+\bn$ and $N+M\leq 2\Nind$,
\begin{align*}
&\left|\psi_{i,q'-1} D^ND_{t,q'-1}^{M} \divH(\na \td\sigma_{q+1}^{m,+} \cdot \div^{\dpot}\hat \upsilon_{q'})\right| \\
&\qquad \lec \la_m\Ga_m^{25} \pi_{q+1}^m \cdot \la_{q'}^{-1} \Ga_{q+1} \Ga_{q'}\left(\pi_{q+1}^{q'}\right)^{\sfrac12} r_{q'-\bn}^{-1} (\la_{q'}\Ga_{q'})^N
     \MM{M, \Nindt, \Ga_{q'-1}^{i+3} \tau_{q'-1}^{-1}, \Tau_{q'-1}^{-1}\Ga_{q'-1}^2}\\
&\qquad \lec \Ga_{q'}^{-100} \left(\pi_{q+1}^{q'}\right)^{\sfrac32} r_{q'}^{-1} \left(\la_{q'}\Ga_{q'}\right)^N
     \MM{M, \Nindt, \Ga_{q'-1}^{i+3} \tau_{q'-1}^{-1}, \Tau_{q'-1}^{-1}\Ga_{q'-1}^2} \, .
\end{align*}    
From \eqref{eq:inverse:div:error:stress:bound:no:flow} and for the same range of $N$ and $M$, we also have that
\begin{align*}
\norm{D^ND_{t,q'-1}^{M} \divR(\na \td\sigma_{q+1}^{m,+} \cdot \div^{\dpot}\hat \upsilon_{q'})}_\infty
&\les \Ga_{\qbn}^{-100}\de_{q+3\bn}^\frac32 (\la_{q+\bn}\Ga_\qbn)^N \MM{M, \NindRt, \tau_{{q+\bn}-1}^{-1} ,  \Tau_{{q+\bn-1}}^{-1} \Ga_{\qbn-1}^2 } \, ,
\end{align*}    
concluding the proof of the desired estimates for $\phi_{P21}^{q'}$.
\smallskip

\noindent\texttt{Step 2b:} In the case of $\phi_{P23}^{q'}$, we instead set
\begin{align*}
    &G = \na \si_{q+1}^-,\quad \const_{G,\infty} = \Ga_{q+1}\la_{q+\half}\Ga_{q+\half}^{\badshaq+2}, \quad \la =\la' = \la_{q+\half}\Ga_{q+\half},\quad \pi = \la_{q+\half}\Ga_{q+\half}^{25} \pi_{q+1}^{q+\half} \,,
\end{align*}
while the remaining parameters stay the same. Concluding again as before, we have that
\begin{align*}
&\left|\psi_{i,q'-1} D^ND_{t,q'-1}^{M}\divH(\na \sigma_{q+1}^{-} \cdot \div^\dpot\hat \upsilon_{q'})\right|\\
&\qquad \lec \la_{q+\half}\Ga_{q+\half} \pi_{q+1}^{q+\half} \cdot \la_{q'}^{-1} 
\Ga_{q+1}\Ga_{q'}
\left(\pi_{q+1}^{q'}\right)^{\sfrac12} 
r_{q'-\bn}^{-1} \la_{q'}^N
     \MM{M, \Nindt, \Ga_{q'-1}^{i+3} \tau_{q'-1}^{-1}, \Tau_{q'-1}^{-1}\Ga_{q'-1}^2}\\
&\qquad \leq \Ga_{q'}^{-100} \left(\pi_{q+1}^{q'}\right)^{\sfrac32} r_{q'}^{-1} \left(\la_{q'}\Ga_{q'}\right)^N
     \MM{M, \Nindt, \Ga_{q'-1}^{i+3} \tau_{q'-1}^{-1}, \Tau_{q'-1}^{-1}\Ga_{q'-1}^2}\,, \\
&\norm{ D^ND_{t,q'-1}^{M}\divR(\na \sigma_{q+1}^{-} \cdot \div^\dpot\hat \upsilon_{q'})}_\infty
\leq \Ga_{q'}^{-100}\de_{q'+\bn}^\frac32 r_{q'}^{-1}\la_{q'+\bn}^N \MM{M, \NindRt, \Gamma_{{q'+\bn}-1}^{i+3} \tau_{{q'+\bn}-1}^{-1} ,  \Tau_{{q'+\bn-1}}^{-1}\Ga_{q'+\bn-1}^2 }\,,
\end{align*}
for $q+\half+1\leq q' \leq q+\bn$, where the range of $N$ and $M$ are the same as before.
\smallskip

\noindent\texttt{Step 2c:} Finally, we must estimate $\phi_{P*}^{q'}$. By Remark \ref{rem:inverse.div.spcial} and using \eqref{prep:last:pm}, \eqref{est.e.inf}, and \eqref{est.e.inf} for $q\mapsto q+1$ (verified in Proposition~\ref{prop:inductive:velocity:bdd:verified}), we have that for $N+M\leq 2\Nind$, 
\begin{align*}
\sum_{m=q+\half}^{q+\bn}
\sum_{q'=m+1}^{q+\bn} 
&\norm{D^ND_{t,q'-1}^{M} \divR(\na \td\sigma_{q+1}^{m,+} \cdot \hat e_{q'})}_\infty 
+
\sum_{q'=q+\floor{\sfrac {\bn}2}+1}^{q+\bn}\norm{ D^ND_{t,q'-1}^{M}\divR(\na \sigma_{q+1}^{-} \cdot \hat e_{q'})}_\infty\\
&\hspace{2cm}\leq \Ga_{q+\bn}^{-100}\de_{q+2\bn}^\frac32 r_q^{-1} \La_{q'}^N 
\MM{M, \NindRt,  \tau_{{q'+\bn}-1}^{-1} ,  \Tau_{q'-1}^{-1}\Ga_{q'-1}^2 }\,.
\end{align*}

\end{proof}

\subsection{{Inductive estimates on the new errors}}

\begin{lemma}[\bf Inductive pointwise error estimates]\label{lem:verify.ind.pressure2}
The inductive assumptions \eqref{eq:ind:stress:by:pi}, \eqref{eq:ind:current:by:pi}, and \eqref{eq:Rnnl:inductive:dtq}
are satisfied at level $q+1$.
\end{lemma}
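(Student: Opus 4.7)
The plan is to verify each of the three inductive pointwise estimates by decomposing the new stress/current errors into the inductive pieces inherited from stage $q$ and the newly generated pieces from the convex integration step, and then dominating each piece by the new intermittent pressure $\pi_{q+1}^k$ defined in Definition~\ref{def:new:presh}. The two structural facts that do all the work are: (a) the inequality $\pi_{q+1}^k \geq \pi_q^k$ for all $k \neq q+\sfrac \bn 2$, and $\pi_{q+1}^{q+\sfrac{\bn}{2}} \geq \sfrac 12 \pi_q^{q+\sfrac \bn 2}$ (already used in \eqref{eq:billystrings:p} in the proof of Lemma~\ref{prop:velocity:domination}); and (b) the key domination $a_{m,q,m}^{-1}\sigma_{q+1}^m \geq \sigma_{m,q+1}^+$, which together with Definition~\ref{def:new:presh} gives $\sigma_{m,q+1}^+ \leq 2^{q+1-m}\Ga_m^{-9}\pi_{q+1}^m$, and more generally, the anticipated-pressure scaling \eqref{eq:ind.pr.inc.anticipated} for $m < k$.

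For \eqref{eq:ind:stress:by:pi}, I write $R_{q+1}^{m,l} = R_q^{m,l} + S_{q+1}^{m,l}$ and split into three sub-cases for the new stress: (i) at $m=q+1$ invoke \eqref{eq:lo:upgrade:2}, which gives an amplitude of $\Ga_{q+1}^{-50}\pi_q^{q+1}$, well below $\Ga_{q+1}^{-7}\pi_{q+1}^{q+1}$; (ii) for $q+2 \leq m \leq q+\sfrac{\bn}{2}$ invoke \eqref{eq:lo:upgrade:1}, similarly dominated by $\pi_q^m \leq \pi_{q+1}^m$; (iii) for $q+\sfrac{\bn}{2}+1 \leq m \leq q+\bn$ invoke \eqref{eq:stress:Linfty:upgraded}, which bounds $S_{q+1}^{m,l}$ by the sum of the positive pressure increments $\sigma_{S_O^m}^+,\sigma_{S_C^{m,l}}^+$, etc., plus possibly $\sigma_{S_{TN}}^+$ and $\sigma_\upsilon^+$ when $m=q+\bn$. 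Each such positive piece is a summand of $\sigma_{m,q+1}^+$ from \eqref{defn:si.m}, and the domination above delivers the required prefactor $\Ga_m^{-8}$ with a spare factor of $\Ga_m^{-1}$ to absorb implicit constants and the error term $\delta_{q+3\bn}$ (using $\pi_{q+1}^k \geq \delta_{k+\bn}$ from \eqref{low.bdd.pi} at level $q+1$). The inductive contribution $R_q^{m,l}$ is handled directly by \eqref{eq:ind:stress:by:pi} at stage $q$ and the above comparison between $\pi_q^k$ and $\pi_{q+1}^k$, using the growth $\Ga_q \leq \Ga_{q+1}$ to absorb the factor $2$ in case $k=q+\sfrac \bn 2$.

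For \eqref{eq:ind:current:by:pi}, I decompose $\varphi_{q+1}^m = \varphi_q^m + \ov\phi_{q+1}^m + \phi_{P1}^m + \phi_{P2}^m$ using \eqref{defn:primitive:current} and the definition of $\phi_{q+1}$ in the proof of Lemma~\ref{lem:relaxed:new}. The three constituent pieces are exactly controlled by Lemma~\ref{lem:oscillation.current:general:estimate2} (for $\ov\phi_{q+1}^m$; use \eqref{eq:osc.current.upgraded.special} at $m=q+1$, \eqref{eq:osc.current.upgraded} at $m=q+\sfrac \bn 2$, and \eqref{eq:osc.current.upgraded.general} for $m \geq q+\sfrac \bn 2 +1$), Lemma~\ref{lem:phi:p:1} (or its upgraded version \eqref{est:phi.up1.pt:upgraded}) for $\phi_{P1}^m$, and Lemma~\ref{lem:nezezzary} for $\phi_{P2}^m$. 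In every case the amplitude is of the form $\Ga_m^{-\alpha}(\sigma_{m,q+1}^+ + \delta_{q+3\bn})^{\sfrac 32}r_m^{-1}$ with $\alpha \geq 50 \gg 12$, and the same rescaling argument as in the stress case yields $\Ga_{q+1}\Ga_m^{-12}(\pi_{q+1}^m)^{\sfrac 32}r_m^{-1}$. The inductive term $\varphi_q^m$ uses \eqref{eq:ind:current:by:pi} at stage $q$ together with $\pi_q^m \leq 2\pi_{q+1}^m$ as before.

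Finally, the nonlocal bound \eqref{eq:Rnnl:inductive:dtq} is verified by writing $R_{q+1}^{m,*} = R_q^{m,*} + S_{q+1}^{m,*}$ for $q+1 \leq m \leq q+\bn-1$ and adding \eqref{eq:Rnnl:inductive:dtq} at stage $q$ to \eqref{eq:nlstress:upgraded}, with the extra $\Ga_{q+1}/\Ga_q$ room absorbing constants. For $m = q+\bn$ one must additionally absorb the trace correction $\frac 23(\bmu_T+\bmu_N+\bmu_L+\bmu_{M1}+\bmu_{M2}+\bmu_{P1})\Id$ added in \eqref{eq:sat:night:7:42}, but each of these functions of time obeys, via \eqref{sat:morn:10:57}, \eqref{eq:sat:evening}, \eqref{eq:sat:evening:6:32}, \eqref{th:billys:3}, and the corresponding statements hidden in $\bmu_{P1} = \frac 32 \sum_m A_{m,q}\bmu_{m,q+1}$ (which inherits them from the pressure-increment lemmas of Sections~\ref{sec:new.pressure.stress} and~\ref{sec:RLE:errors}), a uniform-in-$q$ bound with prefactor $(\max(1,T))^{-1}\delta_{q+3\bn}$, hence negligible compared with the target $\Tau_\qbn^{2\Nindt}\delta_{q+3\bn}$. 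The main obstacle is purely bookkeeping: one must carefully track that every amplitude produced in Sections~\ref{stuff:from:part:1}--\ref{sec:RLE:errors} is either a rescaling of some $\sigma_{m,q+1}^+$ that has been folded into $\sigma_{q+1}^k$ via $a_{m,q,k}$, or the negligible remainder $\delta_{q+3\bn}$, and that the rescaling factor $a_{m,q,k}$ precisely delivers the prefactor $\Ga_k^{-8}$ (resp.~$\Ga_k^{-12}$) required by the inductive hypotheses, including the cross-shell cases $m \leq k$ governed by \eqref{eq:ind.pr.inc.anticipated}.
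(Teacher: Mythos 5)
Your proposal follows essentially the same route as the paper: decompose $R_{q+1}^{k} = R_q^k + S_{q+1}^k$ and $\varphi_{q+1}^m = \varphi_q^m + \ov\phi_{q+1}^m + \phi_{P1}^m + \phi_{P2}^m$, bound the new pieces by the relevant pressure increments, and pass to $\pi_{q+1}^k$ via the $a_{m,q,k}$ rescaling from Definition~\ref{def:new:pressure} and the inequality $\pi_q^k \leq 2\pi_{q+1}^k$ of \eqref{eq:billystrings:p}. The only real stylistic difference is that you go through \eqref{eq:lo:upgrade:1}--\eqref{eq:lo:upgrade:2} and \eqref{eq:stress:Linfty:upgraded} from Lemma~\ref{l:divergence:stress:upgrading} and then manually track the $\sigma_{S_O^m}^+,\sigma_{S_C^{m,l}}^+,\dots$ pieces, whereas the paper goes directly through \eqref{est.S.k.pt.sik} and \eqref{est.phi.k.pt.sik} from Lemma~\ref{lem:saturday:one} (which already package everything in terms of the aggregated $\sigma_{q+1}^k$) and then applies Lemma~\ref{lem:upgrading.material.derivative} once. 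Both chains close, and neither is shorter in a meaningful way.

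Two small imprecisions worth flagging. First, your phrase ``each such positive piece is a summand of $\sigma_{m,q+1}^+$ from \eqref{defn:si.m}'' is not quite right: $\sigma_{S_O^m}^+$ and $\sigma_{S_C^{m,l}}^+$ are sub-pieces of $\sigma_{S^m}^+$, and $\sigma_{S^m}^+$ is the actual summand of $\sigma_{m,q+1}^+$; the chain $\sigma_{S_O^m}^+ + \sigma_{S_C^{m,l}}^+ + \dots \leq \sigma_{S^m}^+ \leq \sigma_{m,q+1}^+$ needs one more step. Second, the claim that ``in every case the amplitude is of the form $\Ga_m^{-\alpha}(\sigma_{m,q+1}^+ + \delta_{q+3\bn})^{\sfrac 32}r_m^{-1}$ with $\alpha \geq 50$'' is inaccurate for the $\ov\phi_{q+1}^m$ piece: \eqref{eq:osc.current.upgraded.general} has no $\Ga_m^{-\alpha}$ prefactor at all, and the needed gain comes entirely from $\sigma_{m,q+1}^+ \leq a_{m,q,m}^{-1}\sigma_{q+1}^m = 2^{q+1-m}\Ga_m^{-9}\sigma_{q+1}^m$, which after the $\sfrac 32$ power gives only $\Ga_m^{-13.5}$ — still comfortably beneath the target $\Ga_m^{-12}$, so the conclusion stands, but the bookkeeping is off. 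You should also note that the intermediate shells $q+2 \leq m \leq q+\half-1$ of $\ov\phi_{q+1}^m$ vanish identically (cf.\ Lemma~\ref{lem:ct:general:estimate}), which is why Lemma~\ref{lem:oscillation.current:general:estimate2} only records bounds at $m=q+1$, $m=q+\half$, and $m\geq q+\half+1$; without this remark your enumeration looks incomplete.
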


\begin{proof}
We recall from \eqref{defn:primitive.stress} and \eqref{billystrings:eq:1} the definition of the stress error
$$ R_{q+1} = \ov R_{q+1}  + \frac 23 \left( \bmu_{\ov\phi_{q+1}} + \bmu_{P1} \right)\Id = \sum_{m=q+1}^{\qbn} \left( R_q^m + S_{q+1}^m \right)  + \frac 23 \left( \bmu_{\ov\phi_{q+1}} + \bmu_{P1} \right)\Id \, . $$ 
Recall also the definition of $$ \varphi_{q+1}  = \ov \varphi_{q+1} + \phi_{P1} + \phi_{P2} = \sum_{m=q+1}^{\qbn} \left( \varphi_q^m + \ov\phi_{q+1}^m \right) + \phi_{P1} + \phi_{P2}  $$
from \eqref{defn:primitive:current}, \eqref{eq:LEI:semi.final}, \eqref{eq.def.upgrade1}, and \eqref{with:a:tag}.  We therefore define the new current errors $\phi_{q+1}^k$ by
\begin{align*}
    \ph_{q+1}^k = \bar\varphi_{q+1}^k + \phi_{P1}^k + \phi_{P2}^k \, . 
\end{align*}
In order to prove \eqref{eq:ind:stress:by:pi} and \eqref{eq:ind:current:by:pi} at level $q+1$, we first consider the cases $q+1\leq k\leq q+\half$. Recall from Lemma~\ref{l:divergence:stress:upgrading} and Lemma~\ref{lem:oscillation.current:general:estimate2} that
\begin{align*}
    \left|\psi_{i,k-1} D^N D_{t,k-1}^M S_{q+1}^k \right| 
    &\lec \Ga^{-10}_{k} \pi_{q}^{k}
    \La_{k}^N \MM{M,\Nindt, \Ga_{k-1}^{i+19}\tau_{k-1}^{-1} , \Tau_{k-1}^{-1}\Ga_{k-1}^9}\\
    \left|\psi_{i,k-1} D^N D_{t,k-1}^M \ov\phi_{q+1}^k \right| 
    &\lec \Ga^{-10}_{k} (\pi_{q}^{k})^{\sfrac32}r_k^{-1}
    \La_{k}^N \MM{M,\Nindt, \Ga_{k-1}^{i+19}\tau_{k-1}^{-1} , \Tau_{k-1}^{-1}\Ga_{k-1}^9} \, ,
\end{align*}
where the first bound holds for $N+M\leq 2\Nind$, the second holds for $N+M\leq  \sfrac{\Nind}{4}$, and we have used the lower bound on $\pi_q^k$ given in \eqref{low.bdd.pi}. Then \eqref{eq:ind:stress:by:pi} and \eqref{eq:ind:current:by:pi} at level $q+1$ follow from the definitions recalled at the beginning of the proof, \eqref{eq:ind:stress:by:pi} and \eqref{eq:ind:current:by:pi} at level $q$, the estimates just recorded, and \eqref{eq:billystrings:p}.

In the cases when $q+\half+1 \leq k\leq q+\bn$, we upgrade the material derivatives in \eqref{est.S.k.pt.sik} and \eqref{est.si.k.pt} applying Lemma \ref{lem:upgrading.material.derivative} to $F:= S_{q+1}^k = S_{q+1}^{k,l}+S_{q+1}^{k,*}=:F^l + F^*$ and $F:= \bar\phi_{q+1}^k = \bar\phi_{q+1}^{k,l}+\bar\phi_{q+1}^{k,*}=:F^l + F^*$, obtaining that
\begin{subequations}
\begin{align}
    \left|\psi_{i,k-1}D^N D_{t,k-1}^M S^k_{q+1}\right|
    &\leq \Ga_k^{-8} \left(\si_{q+1}^k + \de_{k+\bn}\right)  (\lambda_k\Ga_k)^N  \MM{M, \Nindt, \Ga_{k-1}^{i+18} \tau_{k-1}^{-1}, \Tau_{k-1}^{-1}\Ga_{k-1}^{-1}} \label{est.S.k.pt.sik:upgrade} \\
    \left|\psi_{i,{k-1}}D^N D_{t,{k-1}}^M \ov \phi^k_{q+1} \right|
    &\leq \Ga_k^{-13} \left(\si_{q+1}^k + \mathbf{1}_{m=\qbn}\Ga_\qbn^{-50}\pi_q^{\qbn} + \mathbf{1}_{m=q+1}\Ga_{q+1}^{-50}\pi_q^{q+1} \de_{q+2\bn}\right)^{\sfrac32} r_k^{-1} \notag\\
    &\qquad \qquad \times (\lambda_k\Ga_k)^N 
    \MM{M, \Nindt, \Ga_{k-1}^{i} \tau_{k-1}^{-1}, \Tau_{k-1}^{-1}\Ga_{k-1}^{-1}} \label{est.phi.k.pt.sik:upgrade}
\end{align}
\end{subequations}
for $N + M \leq 2\Nind$. In addition, recalling the definitions in \eqref{defn:bmu.m}, \eqref{eq:monday:morning}, and \eqref{eq:meanz:def}, and the estimates given in Lemmas~\ref{lem:pr.inc.vel.inc.pot}, \ref{lem:prop.si.pre.stress}, \ref{lem:currentoscillation:pressure:current}, \ref{lem:ct:general:estimate}, \ref{lem:ctn:pressure:current}, \ref{lem:lin:current:error}, and \ref{lem:currentdivergencecorrector:pressure:current}, we have that for $M\leq 2\Nind$, 
\begin{equation}
    \left| \frac{d^M}{dt^M} \left(\bmu_{\ov\phi_{q+1}} + \bmu_{P1}\right) \right| \leq \delta_{q+\sfrac{5\bn}{2}} \MM{M,\Nindt,\tau_q^{-1},\Tau_{q+1}^{-1}} \, . \notag
\end{equation} Then, using \eqref{est:phi.up1.pt:upgraded}, \eqref{est:phi.up2.pt:upgraded}, Definitions~\ref{def:new:pressure} and \ref{def:new:presh}, and \eqref{eq:billystrings:p}, we conclude the proofs of \eqref{eq:ind:stress:by:pi} and \eqref{eq:ind:current:by:pi}.

Finally, we note that the nonlocal bounds in \eqref{eq:Rnnl:inductive:dtq} follow from \eqref{eq:sat:night:7:42}, \eqref{eq:nlstress:upgraded} from Lemma~\ref{l:divergence:stress:upgrading}, and the estimate just above.
\end{proof}

\section{Parameters}\label{sec:params}
In this section, we treat various parameters appearing in the proof that were not previously specified in subsection~\ref{sec:not.general}. We also record several associated consequences of parameter choices. For further details including the arithmetic which justifies the validity of various choices, we refer the reader to \cite[Section~11]{GKN23}.

\subsection{Definitions and inequalities}\label{sec:para.q.ind}
\begin{enumerate}[(i)]
    \item\label{i:choice:ep} Recalling the definition of $\Ga_q$ in \eqref{eq:deffy:of:gamma} and using the definitions in section~\ref{sec:not.general}, we may now choose $0<\varepsilon_\Gamma\ll (b-1)^2<1$ sufficiently small so that a number of conditions hold. For the full list of these conditions, we refer to \cite[equation~11.10]{GKN23}.  The ones which will be relevant for this paper are
\begin{subequations}
\begin{align}
    \label{eq:dodging:parameterz}
     \Gamma_{q+\bn}^3 \Gamma_q^{-2} \frac{\lambda_{q'+\half}\lambda_{q+\half}}{\lambda_q \lambda_{q'+\bn}} &\leq 1 \qquad \textnormal{for all $q'$ such that $q+\half +1 - \bn \leq q' \leq q$} \, , \\
    \label{la.beats.de}
    \left(\frac{\la_q}{\la_{q'}}\right)^{\sfrac23} \Ga_{q+\bn}^{2000+10\CLebesgue} &< \left(\frac{\de_q}{\de_{q'}}\right)^{-1} \\
    \left( \frac{r_{q+1}}{r_q} \right) \Ga_\qbn^{1000+10\CLebesgue} &\leq 1 \label{ineq:r's:eat:Gammas} \\
    \Ga_\qbn^{1000} &< \min\left(\la_q \la_\qbn^{-1}  r_q^{-2}, \la_q^{-\sfrac{1}{10}} \la_{q+1}^{\sfrac{1}{10}}, \delta_{q}^{\sfrac{1}{10}} \delta_{q+1}^{-\sfrac{1}{10}} \right) \label{eq:prepping:badshaq} \\
    \left \lceil \frac{(b^{\sfrac \bn 2-1}+\dots+b+1)^2}{\varepsilon_\Gamma(b^{\bn-1}+\dots+b+1)} \right \rceil &\geq 20 \, , \qquad 2000 \varepsilon_\Gamma b^{\bn} < 1 \, . \label{eq:prepping:badshaq:2}
    \end{align}
    \end{subequations}
    \item\label{i:par:5} Choose $\badshaq$ as
    \begin{equation}\label{eq:badshaq:choice}
        \badshaq = 3\left \lceil \frac{(b^{\sfrac \bn 2}-1)^2}{(b-1)^2\varepsilon_\Gamma(b^{\sfrac \bn 2 -1}+\dots+b+1)}  + \frac{2000 b^{\bn}}{b^{\sfrac \bn 2}-1} + \frac{4b^{\bn-1}}{(b-1)\varepsilon_\Gamma(1+\dots+b^{\sfrac \bn 2 -1})} \right \rceil \, .
    \end{equation}
    As a consequence of this definition and \eqref{eq:prepping:badshaq:2}, we have that for all $q+\sfrac \bn 2 \leq k \leq q+\bn$,
    \begin{subequations}     \label{eq:par:div:2}
    \begin{align}
      \Ga_q^{\badshaq} \la_q^2 \la_k^4 \la_{q+\half}^{-4} \la_k^{2} \la_{k-1}^{-4} < \Ga_{q+\half}^{\badshaq} \, , \quad \Gamma_q^{\badshaq} &\leq \Gamma_{q+\half}^{\badshaq} \Ga_\qbn^{-2000} \, , \\ \Gamma_q^{\badshaq+500} \Lambda_q \left( \sfrac{\la_k}{\la_{q+\half}} \right)^2 \lambda_{k-1}^{-2}\la_k &\leq \Gamma_{q+\half}^{\badshaq} \Ga_\qbn^{-200}  \, .
    \end{align}
    \end{subequations}
    \item\label{item:choice:of:alpha} Choose $\alpha=\alpha(q)\in (0,1)$ such that
    \begin{align}
     \lambda_{q+\bn}^\alpha = \Gamma_q^{\sfrac 1{10}} \, . \label{eq:choice:of:alpha}
    \end{align}
    \item\label{i:par:tau} Choose $\Tau_q$ for lossy material derivative costs, according to the formula
    \begin{align}
    \frac12 \Tau_{q-1}^{-1} = \tau_q^{-1} \Gamma_q^{\badshaq+100} \delta_q^{-\sfrac 12} r_q^{-\sfrac 23}  + \Gamma_q^{\badshaq+100}\delta_q^{-\sfrac 12}r_q^{-1} \Lambda_q^3 \, . \label{v:global:par:ineq}
    \end{align}
    \item\label{i:par:4.5} 
    For $k\geq q+\Npr$, the intermittent pressure $\pi_q^k$ becomes homogeneous in the sense that $\norm{\pi_q^k}_{L^p}\sim \norm{\pi_q^k}_{L^\infty}$. Such number $\Npr$ is chosen to satisfy \index{$\Npr$}
    \begin{align}\label{defn:Npr}
    \Ga_{q+\Npr}\La_{q+\bn}^4 \leq \Ga_{q+\Npr+1} \, .
    \end{align}
    \item\label{i:par:6} The number $\NcutSmall
    $ and $\NcutLarge$ 
    are auxiliary parameters used to define cutoff functions and pressure increments. It is large enough to absorb a ``Sobolev loss", roughly speaking. More precisely, we choose them to satisfy
\index{$\NcutSmall$, $\NcutLarge$}
\begin{subequations}\label{condi.Ncut0}
    \begin{align}
     \NcutSmall &\leq \NcutLarge \, , \label{condi.Ncut0.1} \\
     \la_\qbn^{200}\left(\frac{\Ga_{q-1}}{\Ga_q}\right)^{\frac{\NcutSmall}5}
    &\leq \min\left(\la_{q+\bn}^{-4} \de_{q+3\bn}^2, \Ga_{q+\bn}^{-\badshaq-17-\CLebesgue} \delta_{q+3\bn}^2r_q \right) \, , \label{condi.Ncut0.2} \\
    \delta_{q+\bn}^{-\sfrac 12} r_q^{-1} \Ga_{q+\bn}^{\sfrac{\badshaq}{2}+16+\CLebesgue} \left( \frac{\Ga_{q+\bn-1}}{\Ga_{q+\bn}}\right)^{\NcutLarge} &\leq \Ga_{q+\bn}^{-1} \, . \label{condi.Ncut0.3}
    \end{align}
    \end{subequations}
    \item\label{i:par:7} Choose the number $\Nindt$ of sharp material derivative estimates of velocity and errors propagated inductively
    such that
    \begin{align}\label{condi.Nindt}
        \Nindt \geq \NcutSmall, \quad       
        \Ga_q^{-\Nindt} (\tau_q^{-1}\Ga_q^{i+40})^{-\NcutSmall-1} 
        (\Tau_q^{-1}\Ga_q)^{\NcutSmall+1}\leq 1 \, .
    \end{align}
    \item\label{i:par:8} 
    The numbers of spatial/material derivative estimates of errors propagated inductively are described by $\Nind$, chosen such that\footnote{The parameters $N_g$ and $N_c$ are used in the proof of the mollification results from Lemma~\ref{lem:upgrading}.  For details we refer to \cite[Lemma~3.1, Proposition~A.24]{GKN23}.}
    \begin{subequations}
    \begin{align}
    N_g \leq N_c &\leq \frac{\Nind}{40}
    \Nindt &\leq \Nind \, , \label{eq:Nindsy:1} \\
    \left(\Ga_{q-1}^{\Nind} \Ga_q^{-\Nind}\right)^{\sfrac{1}{10}} &\leq \delta_{q+5\bn}^3 \Gamma_q^{-2\badshaq-3} r_q \, . \label{eq:Nind:darnit}
    \end{align}
    \end{subequations}
    \item\label{i:par:9} Choose $\Ndec$ such that
    \begin{align}\label{condi.Ndec0}
        (\la_{q+\bn+2}\Ga_q)^4 
        &\leq \left(
        \frac{\Ga_q^{\sfrac{1}{10}}}{{4\pi}}
        \right)^\Ndec \, , \qquad \qquad 
        \Nind \leq \Ndec \, .
    \end{align}
    \item\label{i:par:9.5} Choose $K_\circ$ large enough so that
\begin{equation}\label{ineq:K_0}
    \lambda_q^{-K_\circ} \leq \delta_{q+3\bn}^{3} \Tau_{q+\bn}^{{5\Nind}} \lambda_{q+\bn+2}^{-100} \, .
\end{equation}
\item\label{i:par:10} Choose $\dpot$ and $N_{**}$ such that
\begin{subequations}
    \begin{align}
     2\dpot + 3 &\leq N_{**} \, , \label{ineq:Nstarstar:dpot} \\
     \lambda_{q+\bn}^{100} \Gamma_q^{-\sfrac{\dpot}{200}} \Lambda_{q+\bn+2}^{5+K_\circ} \left( 1 + \frac{\max(\la_\qbn^2\Tau_q^{-1},\Lambda_q^{\sfrac 12}\Lambda_{q+\bn})}{\tau_q^{-1}} \right)^{20\Nind} &\leq \Tau_\qbn^{200\Nindt} \, , \label{ineq:dpot:1} \\
     \lambda_{q+\bn}^{100} \Gamma_q^{-\sfrac{N_{**}}{20}} \Lambda_{q+\bn+2}^{5+K_\circ} \left( 1 + \frac{\max(\la_\qbn^2\Tau_q^{-1},\Lambda_q^{\sfrac 12}\Lambda_{q+\bn})}{\tau_q^{-1}} \right)^{20\Nind} &\leq \Tau_\qbn^{20\Nindt} \, . \label{ineq:Nstarz:1}
\end{align}
\end{subequations}
\item\label{i:par:11} The number $\Nfin$ quantifies the maximum number of spatial/material derivatives used throughout the scheme. We choose it so that
\begin{subequations}
    \begin{align}
        2\Ndec + 4 + 10\Nind &\leq \sfrac{\Nfin}{40000} - \dpot^2 - 10\NcutLarge - 10\NcutSmall - N_{**} - 300 \, . \label{condi.Nfin0}
        \end{align}
    \end{subequations}
    
\item\label{i:choice:of:a} Having chosen all the parameters mentioned in subsection~\ref{sec:not.general} and  items~\eqref{i:par:5}--\eqref{i:par:11} except for $a$, there exists a sufficiently large parameter $a_*$ such that $a_*^{(b-1)\varepsilon_\Gamma b^{-2\bn}}$ is at least fives times larger than \emph{all} the implicit constants throughout the paper, as well as those which have been suppressed in the computations in this section. Choose $a$ to be any natural number larger than $a_*$.
\end{enumerate}

\subsection{A few more inequalities}
As a consequence of parameter choices, a number of additional inequalities hold.  For the full list, we refer to \cite[section~11.2]{GKN23}.  In this paper, we shall use that all $q+\half-1 \leq m \leq m' \leq  \qbn$,
\begin{subequations}
\begin{align}
 \Ga_q^{500+5\CLebesgue} \la_q \left( \frac{\delta_{q+\bn}}{\delta_{m+\bn}} \right)^{\sfrac 32} \La_q^{\sfrac 23} \left(\la_{m'-1}^{-2} \la_{m'}\right)^{\sfrac 23} \left( \frac{\min(\la_{m},\la_\qbn)\Ga_q}{\la_{q+\half}} \right)^{\sfrac 43}  \la_{m-1}^{-2} \la_{m} &\leq  \Ga_q^{-250} \, , \label{ineq:in:the:afternoon} \\
    \label{ineq:in:the:morning}
    \la_q \Ga_q^{250} \La_q^{\sfrac 23} \left( \frac{r_{q+\half+1}}{r_q} \right)^{\sfrac 23} \la_{q+\half}^{-\sfrac 23} \left( \frac{\la_{q+\half+1}\Ga_q}{\la_{q+\half}} \right)^{\sfrac 43} \la_{q+\half}^{-1} \delta_\qbn^{{\sfrac 32}} &\leq \delta^{{\sfrac 32}}_{q+\bn+\half+1} \, , \\
\label{eq:desert:ineq}
   \delta_\qbn  \Ga_q^{500} \La_q^{\sfrac 23} \left(\la_{m-1}^2\lambda_m^{-1}\right)^{-\sfrac 23} \leq \delta_{m+\bn} \quad \textnormal{for} \quad q+\half-5 \leq m &\leq q+\bn+5 \, .
\end{align}
\end{subequations}

\appendix

\section{Appendix}\label{sec:app}
In this section, we collect a number of useful tools from \cite{BMNV21} and \cite{GKN23}.

\subsection{Commutators and sums and products of operators}

We recall \cite[Lemmas A.6, A.7]{GKN23}; for the original versions we refer to \cite[Lemmas~A.10, A.14]{BMNV21}.
\begin{lemma}
\label{lem:cooper:2}
 Let $p\in [1,\infty]$. 
Fix $N_x,N_t,N_*,M_* \in \N$, let $v$ be a vector field, let $D_t = \partial_t + v\cdot \nabla$, and let $\Omega$ be a space-time domain. Assume that the vector field $v$ obeys 
\begin{align}
\norm{D^N D_t^M D v}_{L^\infty(\Omega)} \les \const_v \MM{N+1,N_x,\lambda_v,\tilde \lambda_v} \MM{M,N_t,\mu_v,\tilde \mu_v}
\label{eq:cooper:2:v:0}
\end{align}
for $N \leq N_*$  and $M \leq M_*$.
Moreover, let $f$ be a function which obeys
\begin{align}
\norm{D^N D_t^M f}_{L^p(\Omega)} \les \const_f \MM{N,N_x,\lambda_f,\tilde \lambda_f} \MM{M,N_t,\mu_f,\tilde \mu_f}
\label{eq:cooper:2:f:0}
\end{align}
for all $N\leq N_*$ and $M \leq M_*$. 
Denote
\begin{align*}
\lambda = \max\{ \lambda_f,\lambda_v\}, \quad \tilde \lambda= \max\{\tilde \lambda_f,\tilde \lambda_v\}, \quad \mu = \max\{\mu_f,\mu_v\}, \quad \tilde \mu = \max\{\tilde \mu_f,\tilde \mu_v\}.
\end{align*}
Let $m,n,\ell \geq 0$ be such that $n+\ell \leq N_*$ and $m\leq M_*$. 
Then the commutator $[D_{t}^m,D^n]$ is bounded by
\begin{align}
\norm{D^\ell \left[ D_t^m,D^n \right] f}_{L^{p}(\Omega)} 
&\les \const_f \MM{\ell+n,N_x,\lambda,\tilde \lambda}   \MM{m ,N_t,\max\{\mu,\const_v \tilde \lambda_v\},\max\{\tilde \mu,\const_v \tilde \lambda_v\}}.
\label{eq:cooper:2:f:1}
\end{align}
\end{lemma}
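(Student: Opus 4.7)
The plan is to establish the commutator estimate by expanding $D^\ell[D_t^m, D^n]f$ combinatorially into a finite sum of terms, each a product of finitely many ``$v$-factors'' of the form $D^{\alpha_i} D_t^{\beta_i} v$ (each with $|\alpha_i|\geq 1$) times a single ``$f$-factor'' $D^{\alpha_\star} D_t^{\beta_\star} f$. First I would set up this expansion by induction, using the base case identity $[D_t,D]f = -(Dv)\cdot Df$, iterating it to $[D_t^m, D]$ (move each $D_t$ past the single $D$, generating an exchange term $(Dv)\cdot D$ each time with appropriate $D_t$'s distributed), and then handling the full $[D_t^m, D^n]$ by induction on $n$ using a Leibniz-type identity. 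The outer $D^\ell$ is folded in by the product rule, redistributing derivatives onto both the $v$-factors and the $f$-factor.

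The next step is to read off the combinatorial constraints: in any term with $k$ $v$-factors, the total spatial-derivative count satisfies $\sum_i |\alpha_i|+\alpha_\star = \ell+n$ (with each $|\alpha_i|\geq 1$), and the total ``time'' count is $\sum_i\beta_i + \beta_\star+k = m$, where the extra $k$ accounts for the $D_t$'s consumed by the $k$ commutator exchanges. Because $\ell+n\leq N_*$ and $m\leq M_*$, the hypotheses \eqref{eq:cooper:2:v:0} and \eqref{eq:cooper:2:f:0} apply to every factor, so that each $v$-factor is bounded in $L^\infty(\Omega)$ by $\const_v\MM{|\alpha_i|,N_x,\lambda_v,\tilde\lambda_v}\MM{\beta_i,N_t,\mu_v,\tilde\mu_v}$ and the $f$-factor is bounded in $L^p(\Omega)$ by $\const_f\MM{\alpha_\star,N_x,\lambda_f,\tilde\lambda_f}\MM{\beta_\star,N_t,\mu_f,\tilde\mu_f}$. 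A single application of H\"older's inequality produces an $L^p$ bound for the whole term.

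To assemble these into the claimed single-factor bound, I would exploit the submultiplicativity of the $\MM{\cdot}$ notation: for any common arguments with $\lambda\leq\tilde\lambda$, one has $\MM{n_1,N_*,\lambda,\tilde\lambda}\MM{n_2,N_*,\lambda,\tilde\lambda}\lesssim \MM{n_1+n_2,N_*,\lambda,\tilde\lambda}$, and this carries through upon replacing $(\lambda_f,\lambda_v)$ by $\lambda=\max\{\lambda_f,\lambda_v\}$ and similarly for the other parameters. The spatial part then collapses to $\MM{\ell+n,N_x,\lambda,\tilde\lambda}$. For the time part, the $k$ exchange factors each cost at most a single power of $\|Dv\|_{L^\infty}\leq \const_v\tilde\lambda_v$, so together with the remaining $\MM{\sum_i\beta_i+\beta_\star,N_t,\mu,\tilde\mu}=\MM{m-k,N_t,\mu,\tilde\mu}$ they combine to the stated bound $\MM{m,N_t,\max\{\mu,\const_v\tilde\lambda_v\},\max\{\tilde\mu,\const_v\tilde\lambda_v\}}$. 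Finally, the number of terms in the combinatorial expansion is bounded solely in terms of $m,n,\ell$ and is absorbed into the implicit constant.

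The main obstacle is purely bookkeeping: verifying that the aggregate spatial cost from many $v$-factors and the single $f$-factor collapses cleanly into one $\mathcal{M}$-factor at total order $\ell+n$, and that the ``per-exchange'' cost of $\const_v\tilde\lambda_v$ propagates through $\MM{\cdot,N_t,\cdot,\cdot}$ to yield exactly the max structure in the second argument of the conclusion. Both rely on elementary monotonicity and sub-additivity identities of the two-base geometric function $\MM{n,N_*,\tau^{-1},\Tau^{-1}}$ from Remark~\ref{not:M}, which I would record as an auxiliary lemma at the start of the argument (if not already present among the appendix preliminaries cited from \cite{BMNV21, GKN23}). No further analytic input beyond H\"older's inequality and the two-base $\mathcal{M}$ algebra is needed.
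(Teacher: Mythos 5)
Your overall strategy (expand the commutator combinatorially into products of $v$-factors and a single $f$-factor, apply H\"older, then exploit the sub-multiplicativity of the two-base geometric notation $\MM{\cdot,N_*,\cdot,\cdot}$) is indeed the approach used in the cited source \cite[Lemma~A.14]{BMNV21}; this paper does not reprove the lemma, it only recalls it from \cite{GKN23,BMNV21}. However, your derivative bookkeeping contains an error that, as written, would break the estimate.

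You assert that for a term with $k$ $v$-factors, the total spatial count satisfies $\sum_i |\alpha_i| + \alpha_\star = \ell + n$. This is not the correct identity. Each commutator exchange $[D_t, D]f = -(Dv)\cdot Df$ consumes one $D_t$ but produces \emph{two} spatial derivatives where before there was one: the $D$ migrates to $v$, and the $v\cdot\nabla$ part of $D_t$ deposits a fresh $\nabla$ on $f$. After $k$ exchanges one therefore has $\sum_i |\alpha_i| + \alpha_\star = \ell + n + k$, not $\ell + n$. (You can check this already on $[D_t^2,D]f$, whose expansion contains $(Dv)^2(Df)$ with spatial total $3$ even though $\ell+n=1$.) The constraint $n+\ell \leq N_*$ then does not directly guarantee that every factor is within the admissible derivative range, but a one-line check shows that $|\alpha_i|\leq N_*+1$ and $\beta_i\leq M_*$ are still automatic, so the hypotheses do apply — the real issue is assembling the bound. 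With the correct count, a naive collapse gives the too-large factor $\MM{\ell+n+k,N_x,\lambda,\tilde\lambda}$. The fix is to peel exactly one power of $\tilde\lambda_v$ off each of the $k$ $v$-factors, $\MM{\alpha_i,N_x,\lambda_v,\tilde\lambda_v}\leq \tilde\lambda_v\,\MM{\alpha_i-1,N_x,\lambda_v,\tilde\lambda_v}$, which produces the amplitude $(\const_v\tilde\lambda_v)^k$ and leaves a residual spatial total of exactly $\ell+n$; this residual then collapses to $\MM{\ell+n,N_x,\lambda,\tilde\lambda}$, and $(\const_v\tilde\lambda_v)^k\,\MM{m-k,N_t,\mu,\tilde\mu}$ is absorbed into $\MM{m,N_t,\max\{\mu,\const_v\tilde\lambda_v\},\max\{\tilde\mu,\const_v\tilde\lambda_v\}}$ as you say. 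In contrast, your write-up first asserts the spatial total is already $\ell+n$ and then separately charges an extra $\|Dv\|_{L^\infty}\leq\const_v\tilde\lambda_v$ per exchange; taken literally this either double-counts the $v$-factor amplitudes or relies on an identity that is false.

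One further point to be careful about in the full induction: the hypotheses \eqref{eq:cooper:2:v:0} and \eqref{eq:cooper:2:f:0} are stated for the specific ordering $D^N D_t^M$ (spatial derivatives outermost). The commutator expansion, especially after the Leibniz redistribution of $D_t^j$ across the products, naturally produces $v$-factors and $f$-factors with interleaved $D$ and $D_t$'s. You therefore either need to carry a slightly strengthened induction hypothesis that controls mixed orderings, or re-commute each factor into canonical order and absorb the resulting lower-order terms into the same combinatorial hierarchy. This is exactly the kind of bookkeeping that makes the proof in \cite{BMNV21} longer than one might expect, and it is worth flagging explicitly rather than folding it into ``product rule.''
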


\begin{lemma}\label{rem:upgrade.material.derivative.end}
Fix $p\in [1,\infty]$, $N_x,N_t,N_*  \in \N$, and a space-time domain $\Omega \in \T^d \times \R$. Assume that for all $N,M \leq N_*$, the vector field $v$ and function $f$ obey \eqref{eq:cooper:2:v:0} and \eqref{eq:cooper:2:f:0}. Define $D_t = \pa_t + (v\cdot \na)$. Next, let $w$ be a vector field such that for $k\geq 1$ and $\alpha,\beta \in \N^k$ with $|\alpha|+ |\beta| \leq N_*$, we have that
\begin{align}
\norm{ \left(\prod_{i=1}^k D^{\alpha_i} {D_t^{\beta_i}} \right) w}_{L^\infty(\Omega)} \les \const_w \MM{|\alpha|,N_x,\lambda_w,\tilde \lambda_w} \MM{|\beta|,N_t,\mu_w,\tilde \mu_w}
\label{eq:cooper:w}
\end{align}
for some $\const_w\geq 0$, $1\leq \lambda_w \leq \tilde \lambda_w$, and $1\leq \mu_w \leq \tilde \mu_w$.
Then, we have that for all $N+M\leq N_*$, 
\begin{align}
\norm{D^N (D_t + (w\cdot \na))^M f}_{ L^p (\Omega)} 
\les \const_f  \MM{N,N_x,\lambda,\tilde \lambda}  \MM{M,N_t, \mu, \tilde \mu } \, ,
\label{eq:cooper:f:mat}
\end{align}
where 
\begin{align*}
\lambda = \max\{ \lambda_f,\lambda_v, \lambda_w\}, \quad \tilde \lambda&= \max\{\tilde \lambda_f,\tilde \lambda_v,  \tilde \lambda_w\}, \quad \mu = \max\{\mu_f,\mu_v, \mu_w, 
\const_v\td\la_v,
\const_w \td\la_w \}, \\
\tilde \mu &= \max\{\tilde \mu_f,\tilde \mu_v, \tilde \mu_w, \const_v\td\la_v, \const_w \td\la_w\} \, .
\end{align*}
\end{lemma}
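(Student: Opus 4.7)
The plan is to reduce the claim to the already-established commutator estimate in Lemma~\ref{lem:cooper:2}, combined with a Leibniz-rule expansion. First I would expand the mixed operator combinatorially, writing
\[
(D_t + w\cdot\nabla)^M = \sum_{\boldsymbol\epsilon\in\{0,1\}^M} \prod_{j=1}^M \mathcal{O}_{\epsilon_j},
\]
where $\mathcal{O}_0 = D_t$ and $\mathcal{O}_1 = w\cdot\nabla$. There are $2^M \leq 2^{N_*}$ terms, and each contributes a composition whose number of $D_t$'s and $w\cdot\nabla$'s sum to $M$. Hitting this with the outer $D^N$ and distributing spatial derivatives onto the various factors of $w$ and the inner $f$ via Leibniz produces a finite sum (with multinomial combinatorial weight bounded by $C(N,M)$) of terms of the schematic form
\[
\Bigl(\prod_{l=1}^k D^{\alpha_l} D_t^{\beta_l} w\Bigr) \cdot D^{n_0} D_t^{m_0}\Bigl[D^{n_1} D_t^{m_1}\bigl[\cdots D^{n_r} D_t^{m_r} f\bigr]\Bigr],
\]
where $k$ counts the $w\cdot\nabla$-factors among the $\mathcal{O}_{\epsilon_j}$'s, $|\alpha|+|\beta|+\sum(n_i+m_i)\leq N+M$, and the $D_t$'s interior to $f$ are not yet reordered. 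Crucially, the $w$-product is in exactly the form controlled by hypothesis~\eqref{eq:cooper:w}, giving the factor $\const_w \MM{|\alpha|,N_x,\lambda_w,\tilde\lambda_w}\MM{|\beta|,N_t,\mu_w,\tilde\mu_w}$.

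Next I would reorganize the inner composition on $f$ so that all $D$'s sit on the outside of all $D_t$'s. For each pair $(n_i,m_i)$ this requires invoking the commutator estimate \eqref{eq:cooper:2:f:1} from Lemma~\ref{lem:cooper:2}: iterating it a bounded number of times converts $D^{n_0}D_t^{m_0}\cdots D^{n_r}D_t^{m_r} f$ into $D^{\sum n_i} D_t^{\sum m_i} f$ plus commutator terms, each of which satisfies the same $\MM{\cdot,N_x,\lambda,\tilde\lambda}\MM{\cdot,N_t,\max\{\mu,\const_v\tilde\lambda_v\},\max\{\tilde\mu,\const_v\tilde\lambda_v\}}$ bound. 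The hypothesis \eqref{eq:cooper:2:f:0} on $f$ then supplies the factor $\const_f\MM{N-|\alpha|,N_x,\lambda_f,\tilde\lambda_f}\MM{M-|\beta|,N_t,\mu_f,\tilde\mu_f}$. Multiplying the two bounds and using the multiplicative property $\MM{n_1,N_x,\lambda,\tilde\lambda}\MM{n_2,N_x,\lambda,\tilde\lambda}\leq \MM{n_1+n_2,N_x,\lambda,\tilde\lambda}$ with the definitions of $\lambda,\tilde\lambda,\mu,\tilde\mu$ as the corresponding maxima delivers \eqref{eq:cooper:f:mat} after summing the finite combinatorial sum into the implicit constant.

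The main obstacle I anticipate is bookkeeping of two types simultaneously: (i) ensuring that the commutator cost $\const_v\tilde\lambda_v$ from Lemma~\ref{lem:cooper:2} is properly absorbed into the new $\mu,\tilde\mu$ in the statement, which works because $\mu\geq\max\{\mu_v,\const_v\tilde\lambda_v\}$ and likewise for $\tilde\mu$; and (ii) tracking how the analogous commutator cost $\const_w\tilde\lambda_w$ (which would arise if one were tempted to commute $D_t$'s past $w\cdot\nabla$'s directly) is handled by our chosen expansion, where we never commute $D_t$ past $w\cdot\nabla$ but instead expand via Leibniz so $D_t$'s only land on either $w$ or on $f$, with hypothesis \eqref{eq:cooper:w} absorbing the former. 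Verifying that no intermediate term exceeds the total derivative budget $N_*$ is immediate since $|\alpha|+|\beta|+\sum n_i+\sum m_i \leq N+M \leq N_*$ throughout.
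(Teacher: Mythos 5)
Your overall framework — expanding $(D_t+w\cdot\nabla)^M$ into the $2^M$ non-commuting compositions of $D_t$ and $w\cdot\nabla$, distributing the outer $D^N$ and the $\nabla$'s from $w\cdot\nabla$ by Leibniz, then reordering mixed $D,D_t$ derivatives on $f$ via Lemma~\ref{lem:cooper:2} — is the right one. But there is a concrete gap in your accounting of the $w$-factors, and a related misconception in your closing paragraph, which together mean the sketch does not actually deliver the conclusion.

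Hypothesis~\eqref{eq:cooper:w} controls the \emph{composition} $D^{\alpha_1}D_t^{\beta_1}\cdots D^{\alpha_k}D_t^{\beta_k}$ applied to a \emph{single} copy of $w$ (note the single prefactor $\const_w$ for every $k$). After Leibniz, a term containing $k$ occurrences of $w\cdot\nabla$ instead carries a \emph{pointwise product} of $k$ distinct copies of $w$, each with its own composed derivatives; H\"older on each factor gives $\const_w^k\,\MM{|\alpha|,N_x,\lambda_w,\tilde\lambda_w}\MM{|\beta|,N_t,\mu_w,\tilde\mu_w}$, not the single $\const_w$ you assert. At the same time, the $k$ copies of $\nabla$ from $w\cdot\nabla$ add $k$ extra spatial derivatives to the budget, so the $f$-factor carries $N+k-|\alpha|$ spatial derivatives, not $N-|\alpha|$, while only $M-k-|\beta|$ material derivatives land on $f$. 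The excess $\const_w^k$ and the excess $k$ spatial derivatives are precisely what the term $\const_w\tilde\lambda_w$ in the definitions of $\mu$ and $\tilde\mu$ is there to absorb: each material-derivative slot occupied by a $w\cdot\nabla$ rather than a $D_t$ charges at most $\mu^{-1}\leq(\const_w\tilde\lambda_w)^{-1}$ in the target $\MM{M,N_t,\mu,\tilde\mu}$, which is exactly enough to pay for one copy of $\const_w$ and one extra spatial derivative (since $\tilde\lambda_w\geq\lambda_w$ and, in all intended applications, $\tilde\lambda_w$ dominates the other frequency parameters). Your final paragraph instead claims that $\const_w\tilde\lambda_w$ ``would arise if one were tempted to commute $D_t$'s past $w\cdot\nabla$'s directly'' but is not needed for your Leibniz-based expansion. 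That is backwards: the absorption of $\const_w^k$ and the surplus spatial derivatives into $\MM{M,N_t,\mu,\tilde\mu}$ is the entire bookkeeping content of the lemma and is unavoidable no matter how one organizes the expansion. Without carrying out this exchange explicitly, the sketch does not close.
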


\subsection{Upgrading material derivatives}
We now recall \cite[Lemma~A.23]{GKN23}, which upgrades material derivatives for functions decomposed into a ``localized'' piece with support assumptions and a ``nonlocal'' remainder.
\begin{lemma}[\bf Upgrading material derivatives]\label{lem:upgrading.material.derivative}
Fix $p\in [1, \infty]$ and a positive integer $N_\star\leq \sfrac{3\Nfin}4$. Assume that a tensor $F$ is given with a decomposition $F = F^l + F^*$ which satisfy
\begin{subequations}
\begin{align} 
\left\| \psi_{i,q} D^N D_{t,q}^M F^l \right\|_{p}
&\lesssim 
\const_{p,F}  \lambda_{F}^N  \MM{M,\Nindt,\Gamma_{q}^{i+c} \tau_{q}^{-1}, \Gamma_{q}^{-1}\Tau_{q}^{-1} }
\label{eq:before.upgraded}\\
\left\| D^N D_{t,q}^M F^* \right\|_{\infty} 
&\lesssim \const_{*,F}\Tau_{q+\bn}^{\Nindt}\lambda_{F}^N\tau_q^{-M} 
\label{eq:before.upgraded.uniform}
\end{align}
\end{subequations}
for all $M+N\leq N_{\star}$, an absolute constant {$c\leq 20$}, and constants $\const_{p,F}$ and $\const_{*,F}$. Assume furthermore that there exists $k$ such that $q+1<k\leq q+\bn$ and
\begin{align}\label{supp.F}
    \supp(\hat w_{q'}, \la_{q'}^{-1}{\Ga_{q'}}) \cap \supp(F^l) =\emptyset \quad \forall q+1\leq q' < k \, .
\end{align}
Finally, assume that
\begin{equation}\label{eq:timescale:upgrading}
\la_F \Ga_{q+\bn}^{\imax+2}\de_{q+\bn}^{\frac12}r_q^{-\frac13}\leq \Tau_{q+\bn}^{-1} \, .
\end{equation}
Then $F$ obeys the following estimate with an upgraded material derivative for all $M+N\leq N_{\star}$;
\begin{align}
&\left\| \psi_{i,k-1} D^N D_{t,k-1}^M F \right\|_{p} \lesssim \
(\const_{p,F} + \const_{*,F}) \max(\lambda_{F}, \La_{k-1})^N
  \MM{M,\Nindt,\Gamma_{k-1}^{i} \tau_{k-1}^{-1}, \Gamma_{k-1}^{-1}\Tau_{k-1}^{-1} } \, .
\label{eq:after.upgraded}
\end{align}
In particular, the nonlocal part $F^*$ obeys better estimate
\begin{align}\label{est:nonlocal:upgrade}
\norm{D^N D_{t,k-1}^M F^*}_\infty
    \lec \const_{*,F}\max(\lambda_{F}, \la_{k-1}\Ga_{k-1})^N 
    \MM{M, \Nindt, \tau_{k-1}^{-1},  \Tau_{k-1}^{-1}\Gamma_{k-1}^{-1}}
\end{align}
for $N+M\leq N_\star$. Similarly, if instead of \eqref{eq:before.upgraded}, $F^l$ satisfies 
\begin{align}
    \left|\psi_{i,q} D^N D_{t,q}^M F^l \right| 
&\lesssim 
\pi_{F}  \lambda_{F}^N  \MM{M,\Nindt,\Gamma_{q}^{i+c} \tau_{q}^{-1}, \Gamma_{q}^{-1}\Tau_{q}^{-1} }
\label{eq:before.upgraded.pt}
\end{align}
for all $M+N\leq N_{\star}$, an absolute constant ${c\leq 24}$, and a positive function $\pi_F$ with $\pi_F \geq \const_{*,F}$, we have
\begin{align}
&\left| \psi_{i,k-1} D^N D_{t,k-1}^M F \right| \lesssim \
\pi_{F}\max(\lambda_{F}, \La_{k-1})^N
  \MM{M,\Nindt,\Gamma_{k-1}^{i} \tau_{k-1}^{-1}, \Gamma_{k-1}^{-1}\Tau_{k-1}^{-1} }
\label{eq:after.upgraded.pt}
\end{align}
for all $M+N\leq N_{\star}$, provided that \eqref{eq:timescale:upgrading} holds.
\end{lemma}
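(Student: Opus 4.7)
The plan is to handle the local and nonlocal pieces $F^l$ and $F^*$ separately, as the dodging hypothesis~\eqref{supp.F} provides a very clean simplification for $F^l$ while $F^*$ must be handled by a general commutator-upgrading argument. The key observation for $F^l$ is that $D_{t,k-1} = D_{t,q} + \sum_{q'=q+1}^{k-1}\hat w_{q'}\cdot\nabla$, and that for every $q'\in[q+1,k-1]$ the support hypothesis~\eqref{supp.F} forces $\hat w_{q'}$ to vanish on $\supp F^l$. Since $\supp(D_{t,q}^\ell F^l)\subseteq \supp F^l$ for every $\ell\geq 0$, an induction through the expansion of $D_{t,k-1}^M$ via the product rule shows that every term containing at least one factor of $\hat w_{q'}\cdot\nabla$ vanishes identically, so $D^N D_{t,k-1}^M F^l = D^N D_{t,q}^M F^l$ on all of space-time. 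Inserting the partition of unity $\sum_{i'}\psi_{i',q}^6\equiv 1$ from~\eqref{eq:inductive:partition} and using $\psi_{i',q}^6\le \psi_{i',q}$ pointwise then reduces the desired bound to~\eqref{eq:before.upgraded}; the inductive timescale ordering~\eqref{eq:inductive:timescales} guarantees that on $\supp(\psi_{i,k-1}\psi_{i',q})$ one has $\tau_q^{-1}\Gamma_q^{i'+c}\leq \tau_{k-1}^{-1}\Gamma_{k-1}^i\,\Gamma_q^{c-25}\leq \tau_{k-1}^{-1}\Gamma_{k-1}^i$ since $c\leq 20$, and the secondary scale $\Gamma_q^{-1}\Tau_q^{-1}$ is dominated by $\Gamma_{k-1}^{-1}\Tau_{k-1}^{-1}$ via~\eqref{v:global:par:ineq}. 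Finally, at each space-time point only finitely many $i'$ are active (at most two consecutive ones by~\eqref{eq:inductive:partition}), so the $L^p$-sum over $i'$ costs only an absolute constant.

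\textbf{Nonlocal part.} For the remainder $F^*$, the natural move is to invoke Lemma~\ref{rem:upgrade.material.derivative.end} with $v=\hat u_q$, $w=\hat u_{k-1}-\hat u_q=\sum_{q'=q+1}^{k-1}\hat w_{q'}$, $f=F^*$, $p=\infty$, and $\Omega=\mathbb{T}^3\times\mathbb{R}$. The hypothesis~\eqref{eq:cooper:2:v:0} on $v$ is verified from \eqref{eq:nasty:D:vq:old}--\eqref{eq:bob:old}, the hypothesis~\eqref{eq:cooper:2:f:0} on $f$ is precisely~\eqref{eq:before.upgraded.uniform} (with $\const_f=\const_{*,F}\Tau_{q+\bn}^{\Nindt}$, $\lambda_f=\tilde\lambda_f=\lambda_F$, $\mu_f=\tilde\mu_f=\tau_q^{-1}$), and \eqref{eq:cooper:w} for the combined $w$ follows by summing the bounds~\eqref{eq:nasty:D:wq:old} for each $\hat w_{q'}$, the worst individual contribution being dominated by $\const_w\,\tilde\lambda_w\lesssim \Gamma_{q+\bn}^{\imax+2}\delta_{q+\bn}^{\sfrac 12}r_q^{-\sfrac 13}\lambda_F$. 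The crucial timescale hypothesis~\eqref{eq:timescale:upgrading} is exactly what forces this new lossy cost to be $\leq \Tau_{q+\bn}^{-1}\leq \Tau_{k-1}^{-1}$. The conclusion~\eqref{eq:cooper:f:mat} then yields
\[
 \left\|D^N D_{t,k-1}^M F^*\right\|_\infty \lesssim \const_{*,F}\,\Tau_{q+\bn}^{\Nindt}\max(\lambda_F,\Lambda_{k-1})^N \MM{M,\Nindt,\tau_{k-1}^{-1},\Tau_{k-1}^{-1}},
\]
and the extra prefactor $\Tau_{q+\bn}^{\Nindt}$ is absorbed into a single factor of $\Gamma_{k-1}^{-1}$ on the secondary timescale through the parameter inequalities of the type~\eqref{condi.Nindt} and~\eqref{v:global:par:ineq}, producing~\eqref{est:nonlocal:upgrade}.

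\textbf{Assembly, pointwise version, and main obstacle.} Combining the two previous steps, the amplitude constants $\const_{p,F}$ and $\const_{*,F}$ add cleanly and the two families of $N$-costs $\lambda_F^N$ and $\Lambda_{k-1}^N$ merge into $\max(\lambda_F,\Lambda_{k-1})^N$, yielding \eqref{eq:after.upgraded}. For the pointwise version \eqref{eq:after.upgraded.pt} under the stronger hypothesis \eqref{eq:before.upgraded.pt}, the argument for $F^l$ transfers verbatim to the pointwise level, and the $L^\infty$ bound for $F^*$ from the second step is absorbed into the $\pi_F$-weighted pointwise bound using the assumed domination $\pi_F\geq \const_{*,F}$. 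The principal obstacle is the bookkeeping for the two families of cutoffs $\psi_{i,q}$ and $\psi_{i,k-1}$: one must verify that the $25$-power margin built into~\eqref{eq:inductive:timescales} comfortably absorbs the absolute constant $c\leq 20$ appearing in the hypothesis, that \eqref{eq:timescale:upgrading} is tight enough to convert $w\cdot\nabla$ into a $\Tau_{k-1}^{-1}$-scale operator, and that all the lossy $\Tau_{q+\bn}^{\Nindt}$-type prefactors are absorbed using the $\Gamma$-gains from Section~\ref{sec:para.q.ind}. Beyond this bookkeeping the proof is essentially the standard material-derivative-upgrading machinery combined with the distinctive dodging identity that makes the local term a trivial exercise.
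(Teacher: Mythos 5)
The paper defers the proof of this lemma to \cite[Lemma~A.23]{GKN23}, so there is no written-out argument in this manuscript to compare against; the review below is of your argument on its own merits.

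Your local-part argument is correct and captures the key idea: the dodging hypothesis~\eqref{supp.F} together with the fact that $\supp(D^N D_{t,q}^M F^l)\subseteq\supp F^l$ for all $N,M$ yields $D^ND_{t,k-1}^M F^l=D^ND_{t,q}^M F^l$ identically, after which \eqref{eq:inductive:timescales} converts the $\psi_{i',q}$-weighted rate $\Gamma_q^{i'+c}\tau_q^{-1}$ into $\Gamma_{k-1}^i\tau_{k-1}^{-1}$ with a spare factor $\Gamma_q^{25-c}\geq 1$ left over, and the aggregation over the boundedly many overlapping indices $i'$ is harmless. This part is likely identical in spirit to the source argument.

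The nonlocal-part reasoning contains a genuine error and a substantial gap. The chain $\Tau_{q+\bn}^{-1}\leq\Tau_{k-1}^{-1}$ is \emph{backwards}: from \eqref{v:global:par:ineq} the rates $\Tau_q^{-1}$ are strictly increasing in $q$, so for $k\leq q+\bn$ one has $\Tau_{k-1}^{-1}<\Tau_{q+\bn}^{-1}$, and a fortiori $\Gamma_{k-1}^{-1}\Tau_{k-1}^{-1}<\Tau_{q+\bn}^{-1}$. Consequently the one-shot invocation of Lemma~\ref{rem:upgrade.material.derivative.end} cannot directly output the rates $\tau_{k-1}^{-1},\Tau_{k-1}^{-1}$ as your display asserts: the new material-derivative cost $\mu$ in \eqref{eq:cooper:f:mat} is the max of $\mu_f=\tau_q^{-1}$ and the Lipschitz-type quantities $\const_v\tilde\lambda_v$, $\const_w\tilde\lambda_w$, and \eqref{eq:timescale:upgrading} only pins $\|w\|_\infty\lambda_F$ down at the level $\Tau_{q+\bn}^{-1}$, which exceeds \emph{both} target rates. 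What actually closes the gap is the smallness of the amplitude prefactor $\Tau_{q+\bn}^{\Nindt}$ in \eqref{eq:before.upgraded.uniform}, which is a \emph{gain} (your phrase ``absorbed into a single factor of $\Gamma_{k-1}^{-1}$'' mischaracterizes its role): it has to be spent to pay for the excess of the upgraded primary rate over $\tau_{k-1}^{-1}$ on the first $\Nindt$ material derivatives, and one must separately verify that the secondary rates $\tilde\mu_v,\tilde\mu_w$ for $q'\leq k-1$ sit below $\Gamma_{k-1}^{-1}\Tau_{k-1}^{-1}$ (using the margin $\Tau_{k-2}^{-1}\ll\Gamma_{k-1}^{-1}\Tau_{k-1}^{-1}$). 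You gesture at parameter inequalities but do not carry out this accounting, and as written the absorption is not justified. A secondary technical issue: applying Lemma~\ref{rem:upgrade.material.derivative.end} in one step with $w=\hat u_{k-1}-\hat u_q$ requires \eqref{eq:cooper:w} with the material derivative $D_{t,q}$, whereas the natural inductive bounds \eqref{eq:nasty:D:wq:old} for $\hat w_{q'}$ are phrased with $D_{t,q'-1}$; iterating the lemma one step at a time ($v=\hat u_{q'}$, $w=\hat w_{q'+1}$, $q'=q,\dots,k-2$) matches \eqref{eq:nasty:D:wq:old} exactly at each stage and is the cleaner route. The pointwise version rests on the (correct) local part together with the $L^\infty$ bound for $F^*$, so it inherits whatever gap remains in the $F^*$ estimate.
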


\subsection{Synthetic Littlewood-Paley decomposition}\label{sec:LP}
A synthetic Littlewood-Paley decomposition replaces the convolution kernel from the standard Littlewood-Paley decomposition with a compactly supported kernel, so that the spatial support of the output is controlled. We then replace the standard Littlewood-Paley decomposition of a function into frequency shells with a decomposition using so-called synthetic Littlewood-Paley projectors, notated by
\begin{equation}
    \tilde{\mathbb{P}}_{\lambda_0} (\rho) + \left( \sum_{k=1}^{K}  \tilde{\mathbb{P}}_{(\lambda_{k-1},\lambda_k]} (\rho) \right) + \left( \Id -  \tilde{\mathbb{P}}_{\lambda_K} \right)(\rho) \, . \label{eq:decomp:showing} 
\end{equation}
As we recall in this section, each synthetic projector above obeys similar derivative estimates as the standard frequency projectors, but has controlled spatial support.  We recall from \cite[Section~4.3]{GKN23} the definition and give two lemmas showing that effective ``inverse divergences'' can be defined for functions to which the synthetic Littlewood-Paley decomposition has been applied.

\begin{definition}[\bf Synthetic Littlewood-Paley projector]\label{def:synth:LP} \index{synthetic Littlewood-Paley projector}
Let $\bph\in C_c^\infty(\sfrac{-1}{\sqrt 2}, \sfrac{1}{\sqrt 2})$ have unit mass $\int_{\R} \bph d s =1$ and have vanishing moments $\int_{\R} s^n \bph ds =0$ for $n= 1, \dots, 10\Nfin$. Define $\bph_\la(\cdot) = \la \bph (\la \cdot)$ and set $\ph_\la(x) = \bph_\la(x_1)\bph_\la(x_2)$. For $f \in C^\infty(\T^2)$ (which we identify with a periodic function defined on $\R^2$), we define the \emph{synthetic Littlewood-Paley projectors} by
\begin{align}
\tP_{\la}f(x):= \int_{\R^2} \ph_\la (y) f(x-y)  dy \, , \qquad
\tP_{(\la_1, \la_2]}f 
:= (\tP_{\la_2}- \tP_{\la_1})f \, . \label{eq:synth:LP} 
\end{align} \index{$\tP_{\la}$, $\tP_{(\la_1, \la_2]}$}
\end{definition}

\begin{lemma}[\bf Inverse divergence with synthetic LP projectors: lowest and highest shells]\label{lem:special:cases}
Fix \\ $q\in [1,\infty]$. Let $\Nblank$ a positive integer, $N_{**}\leq \sfrac{\Nblank}{2}$ a positive integer, $r,\lambda$ such that $\lambda r, \lambda \in \mathbb{N}$, and $\rho:(\sfrac{\T}{\lambda r})^2\rightarrow \R$ a smooth function such that there exists a constant $\const_{\rho,q}$ with
\begin{equation}\label{eq:moll:1:as}
 \left\| D^N \rho \right\|_{L^q(\T^2)} \lesssim \const_{\rho,q}  \lambda^N \, .
\end{equation}
for $N\leq \Nblank$.  Let $\lambda_0,\lambda_K$ be given with $\lambda r < \lambda_0 < \lambda < \lambda_K$. If the kernel $\overline{\varphi}$ used in Definition~\ref{def:synth:LP} has $N_{**}$ vanishing moments, then for $p\in[q,\infty]$ we have that
\begin{subequations}
\begin{align}
    \left\| D^N \left( \tilde{\mathbb{P}}_{\lambda_0} \rho \right) \right\|_{L^p} &\lesssim \const_{\rho,q} \left( \frac{\lambda_0}{\lambda r} \right)^{\sfrac 2q -\sfrac 2p} \lambda_0^N \qquad \qquad \qquad \forall N \leq \Nblank \, , \label{eq:lowest:shell:estimates}\\
    \left\| D^N \left( \left( \Id - \tilde{\mathbb{P}}_{\lambda_K} \right) \rho \right) \right\|_{L^\infty} &\lesssim \left(\frac{\lambda}{\lambda_K}\right)^{N_{**}} \const_{\rho,q} \lambda^{N+3} \qquad \forall N\leq \Nblank-N_{**}-3 \, .  \label{eq:remainder:estimates}
\end{align}
\end{subequations}
Furthermore, for any chosen positive even integer $\Dpot$ and any small positive number $\alpha$, there exist rank-$\Dpot$ tensor potentials $\vartheta_0$ and $\vartheta_K$ such that for $0\leq k \leq \Dpot$ and $N$ in the same range as above,
\begin{subequations}
\begin{align}
    \div^\Dpot \vartheta_0 &= \tilde{\mathbb{P}}_{\lambda_0} \mathbb{P}_{\neq 0}\rho \, , \qquad\qquad  \left\| D^N \div^{k} \vartheta_0 \right\|_{L^p} \lesssim \lambda_0^\alpha \const_{\rho,q} \left( \frac{\lambda_0}{\lambda r} \right)^{\sfrac 2q -\sfrac 2p} (\lambda r)^{k-\Dpot} \MM{N,\Dpot-k,\lambda r,\lambda_0} \, , \label{eq:lowest:shell:inverse} \\
    \div^\Dpot \vartheta_K &= (\Id - \tilde{\mathbb{P}}_{\lambda_K}) \rho \, , \qquad \left\| D^N \div^{k} \vartheta_K \right\|_{L^\infty} \lesssim \left(\frac{\lambda}{\lambda_K}\right)^{N_{**}} \const_{\rho,q}\lambda^{3} (\lambda r)^{k-\Dpot} \MM{N,\Dpot-k,\lambda r,\lambda}  \, .  \label{eq:remainder:inverse}
\end{align}
\end{subequations}
The implicit constants above depend on $\alpha$ but do not depend on $\lambda$, $\lambda_0$, $\lambda_K$, or $r$.
\end{lemma}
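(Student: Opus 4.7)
My plan is to prove the four claims sequentially, exploiting two structural facts about $\rho$: its $\left(\sfrac{\T}{\lambda r}\right)^2$-periodicity forces its nonzero Fourier modes into $\{|\xi|\geq\lambda r\}$, while the kernel $\bar\varphi$ has $10\Nfin$ vanishing moments and is compactly supported at scale $\lambda_0^{-1}$ or $\lambda_K^{-1}$. For the low-shell $L^p$ estimate~\eqref{eq:lowest:shell:estimates}, I would write $D^N\tilde{\mathbb{P}}_{\lambda_0}\rho = (D^N\varphi_{\lambda_0})\ast\rho$ and apply Young's inequality with $\sfrac 1s + \sfrac 1q = 1 + \sfrac 1p$ on a single fundamental cell of $\rho$. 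A direct scaling computation gives $\|D^N\varphi_{\lambda_0}\|_{L^s(\R^2)} = \lambda_0^{N+2/q-2/p}$, while converting $L^p$ and $L^q$ norms on $\T^2$ to those on the fundamental cell contributes a factor $(\lambda r)^{2/p-2/q}$; together they produce the claimed prefactor $(\lambda_0/(\lambda r))^{2/q-2/p}$.

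The tail estimate~\eqref{eq:remainder:estimates} is a Taylor-with-remainder argument. I would start from
\begin{align*}
(\Id-\tilde{\mathbb{P}}_{\lambda_K})\rho(x) = \int_{\R^2} \varphi_{\lambda_K}(y)\bigl[\rho(x)-\rho(x-y)\bigr]\,dy
\end{align*}
and Taylor-expand $\rho(x-y)$ to order $N_{**}$. Because $\varphi = \bar\varphi(x_1)\bar\varphi(x_2)$ and $\bar\varphi$ kills polynomial moments up to degree $10\Nfin \geq N_{**}$, every monomial $y^\alpha$ with $1\leq|\alpha|<N_{**}$ integrates to zero against $\varphi_{\lambda_K}$. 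The Taylor remainder then yields $|D^N(\Id-\tilde{\mathbb{P}}_{\lambda_K})\rho(x)|\lesssim \lambda_K^{-N_{**}}\|D^{N+N_{**}}\rho\|_{L^\infty}$, and the right-hand side is bounded by the Sobolev embedding $W^{2,q}(\text{cell})\hookrightarrow L^\infty(\text{cell})$ on a fundamental cell combined with the assumed $L^q$-derivative control on $\rho$; the Sobolev loss is at most $\lambda^{2}$, comfortably absorbed by the $\lambda^{N+3}$ prefactor.

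For the potentials, I would define
\begin{align*}
\vartheta_0^{i_1\ldots i_\Dpot} &:= \delta^{i_1i_2}\cdots\delta^{i_{\Dpot-1}i_\Dpot}\,\Delta^{-\Dpot/2}\bigl(\tilde{\mathbb{P}}_{\lambda_0}\mathbb{P}_{\neq 0}\rho\bigr),\\
\vartheta_K^{i_1\ldots i_\Dpot} &:= \delta^{i_1i_2}\cdots\delta^{i_{\Dpot-1}i_\Dpot}\,\Delta^{-\Dpot/2}\bigl((\Id-\tilde{\mathbb{P}}_{\lambda_K})\rho\bigr),
\end{align*}
noting that the Laplacian inverse for $\vartheta_K$ is well-defined because $\int\varphi_{\lambda_K}=1$ implies $(\Id-\tilde{\mathbb{P}}_{\lambda_K})\rho$ has zero mean. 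The identities $\div^\Dpot\vartheta_0 = \tilde{\mathbb{P}}_{\lambda_0}\mathbb{P}_{\neq 0}\rho$ and $\div^\Dpot\vartheta_K = (\Id-\tilde{\mathbb{P}}_{\lambda_K})\rho$ are immediate since the paired Kronecker deltas contract every two divergences into a Laplacian ($\Dpot$ even is essential). For the derivative bounds, $D^N\div^k\vartheta$ corresponds in Fourier to the multiplier $|\xi|^{N+k-\Dpot}$ applied to the underlying scalar, whose Fourier support lies in $\{|\xi|\geq\lambda r\}$ and is effectively truncated above at $\lambda_0$ (respectively $\lambda$). Hence the multiplier is dominated by $(\lambda r)^{N+k-\Dpot}$ when $N+k\leq\Dpot$ and by $\lambda_0^{N+k-\Dpot}$ (respectively $\lambda^{N+k-\Dpot}$) otherwise---exactly the combination $(\lambda r)^{k-\Dpot}\,\MM{N,\Dpot-k,\lambda r,\lambda_0}$ or $(\lambda r)^{k-\Dpot}\,\MM{N,\Dpot-k,\lambda r,\lambda}$ appearing in the statement.

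The main obstacle will be making the Fourier multiplier heuristic rigorous at the level of $L^p$ norms. Because $\tilde{\mathbb{P}}_{\lambda_0}$ is built from a merely $C^{10\Nfin}$-smooth compactly supported kernel, not a Schwartz function, the standard Bernstein/Mikhlin-type bounds for the symbol $|\xi|^{N+k-\Dpot}\widehat\varphi_{\lambda_0}(\xi)$ carry a polynomial loss; this is precisely the origin of the $\lambda_0^\alpha$ factor in~\eqref{eq:lowest:shell:inverse}. To handle this cleanly I would cut the multiplier off outside the effective frequency range $[\lambda r, \lambda_0]$ by a smooth partition of unity and control the residual tail using the decay of $\widehat{\bar\varphi}$ coming from its $C^{10\Nfin}$ regularity, picking up the harmless $\lambda_0^\alpha$ loss in the process. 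For $\vartheta_K$ the corresponding loss is absorbed by the much larger prefactor $(\lambda/\lambda_K)^{N_{**}}\lambda^3$ already established in~\eqref{eq:remainder:estimates}, so no such small power appears in~\eqref{eq:remainder:inverse}.
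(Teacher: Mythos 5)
Your proposal is correct and matches the expected approach: Young's inequality on a fundamental cell of $\rho$ to produce the Bernstein-type prefactor $(\lambda_0/(\lambda r))^{2/q-2/p}$, Taylor expansion against the $N_{**}$ vanishing moments of $\bar\varphi$ for the remainder $\Id-\tilde{\mathbb{P}}_{\lambda_K}$, and the iterated Kronecker-delta/inverse-Laplacian construction $\vartheta=\delta^{i_1i_2}\cdots\delta^{i_{\Dpot-1}i_\Dpot}\Delta^{-\Dpot/2}(\cdot)$ for the potentials, with the $\lambda_0^\alpha$ loss traced correctly to the polynomial (rather than Schwartz) frequency decay of the compactly supported kernel. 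The only points worth tightening when writing it up are that the Sobolev step at the $q=1$ endpoint rests on the borderline embedding $W^{2,1}(\T^2)\hookrightarrow L^\infty$ (which is why the statement's prefactor is $\lambda^{N+3}$ rather than $\lambda^{N+2}$, leaving room to spare), and that for \eqref{eq:lowest:shell:inverse} one should make explicit that the cross-over in $\MM{N,\Dpot-k,\lambda r,\lambda_0}$ (with $\lambda_0$, not $\lambda$, as the upper base) comes from commuting $\tilde{\mathbb{P}}_{\lambda_0}$ past the multiplier and then summing a dyadic decomposition of $[\lambda r,\lambda_0]$ with the high tail controlled by $\widehat{\bar\varphi}$'s decay.
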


\begin{lemma}[\bf Inverse divergence with synthetic LP projectors: intermediate shells]\label{lem:LP.est}
Fix \\ $q\in [1,\infty]$. Let $\rho:\T^2\rightarrow \R$ be a smooth function which is $\left( \sfrac{\T}{\lambda r} \right)^2$-periodic and for $N\leq 2\Nfin$ satisfies
\begin{equation}
    \left\| D^N \rho \right\|_{L^q(\T^2)} \lesssim \const_{\rho,q} \lambda^N \, . \label{eq:gen:id:assump}
\end{equation}
Then for any $\lambda r < \lambda_1 < \lambda_2$ and a given $\Dpot\geq 1$, there exists a tensor field
$\Theta_\rho^{\la_1, \la_2}:\T^2 \to \R^{(2^\Dpot)}$ such that for $p\in[q,\infty]$, $0\leq k \leq \Dpot$, $0<\alpha\ll 1$, and $N\leq \Nfin$, we have
\begin{subequations}
\begin{align}
    \left(\lambda_1^{-1}\div\right)^{(\Dpot)}\Theta_\rho^{\la_1, \la_2} &= \tP_{(\la_1, \la_2]}(\rho) = \tP_{(\la_1, \la_2]}(\rho- \langle \rho \rangle)
     \label{eq:LP:equality} \\
    \norm{D^N\pa_{i_1\cdots i_{\Dpot-k}}
    (\lambda_1^{-\Dpot}\Theta_{\rho}^{\la_1, \la_2})^{(i_1, \cdots, i_\Dpot)}}_{L^p(\T^2)} &\lec_{\Dpot,\alpha} \const_{\rho,q} \left(\frac{\min\left(\lambda,\la_2\right)}{\la r}\right)^{\frac 2q -\frac 2p+\alpha}  \la_1^{-k} \min\left(\lambda,\la_2\right)^N  \, , \label{eq:LP:div:estimates} \\
    \supp(\Theta_{\rho}^{\la_1, \la_2}) &\subset B(\supp(\rho) , \la_1^{-1}) \, .  \label{eq:LP:div:support}
\end{align}
\end{subequations}
The implicit constants above depend on $\alpha$ but do not depend on $\lambda$, $\lambda_1$, $\lambda_2$, or $r$.
\end{lemma}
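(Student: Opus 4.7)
The plan is to construct $\Theta_\rho^{\la_1,\la_2}$ by inverting the $\Dpot$-fold divergence at the level of the convolution kernel. First, I would consider the kernel $\mathcal{K} := \varphi_{\la_2} - \varphi_{\la_1}$, which by Definition~\ref{def:synth:LP} is a compactly supported (in a ball of radius $\le \la_1^{-1}$, since $\varphi_\la$ is supported in a ball of radius $\le \la^{-1}$) function on $\R^2$ with the following two crucial properties: its integral vanishes (both $\varphi_{\la_j}$ integrate to one), and it inherits the $10\Nfin$ vanishing moments of $\bph$. These two properties let me construct a symmetric tensor-valued compactly supported kernel $\mathcal{K}^{(i_1,\dots,i_\Dpot)}$ with $\supp\mathcal{K}^{(i_1,\dots,i_\Dpot)} \subset B(0,\la_1^{-1})$ such that $\partial_{i_1}\cdots\partial_{i_\Dpot}\mathcal{K}^{(i_1,\dots,i_\Dpot)}=\mathcal{K}$. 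The construction proceeds by induction on $\Dpot$: a compactly supported mean-zero function can be written as $\partial_i g^i$ with $g^i$ compactly supported (a two-dimensional antiderivative argument), and if the original function has enough vanishing moments, one may arrange that the $g^i$'s also have enough vanishing moments to iterate. Moreover the standard scaling accounting gives $\|\partial^m\mathcal{K}^{(i_1,\dots,i_\Dpot)}\|_{L^1(\R^2)}\lesssim \la_1^{-\Dpot+m}$ for $m\leq\Dpot$ (and using $\|\partial^m \mathcal{K}\|_{L^1} \lesssim \la_2^{m-\Dpot'}\la_1^{\Dpot'}$ for derivatives pushed onto the kernel beyond the $\Dpot$ being inverted, exploiting that $\mathcal{K}$ itself is frequency-localized to scale $\la_2$).

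With this in hand I would then define $\Theta_\rho^{\la_1,\la_2} := \la_1^{\Dpot}\, \mathcal{K}^{(\cdot)} * \rho$, so that $(\la_1^{-1}\div)^{(\Dpot)}\Theta_\rho^{\la_1,\la_2} = \mathcal{K}*\rho = \tP_{(\la_1,\la_2]}\rho$, immediately giving \eqref{eq:LP:equality}; the equality with $\rho - \langle\rho\rangle$ follows because $\tP_{\la_j}$ preserves constants, so $\mathcal{K}*1 = 0$. The support property \eqref{eq:LP:div:support} is immediate from $\supp\mathcal{K}^{(i_1,\dots,i_\Dpot)}\subset B(0,\la_1^{-1})$ and the convolution identity. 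For the derivative estimates \eqref{eq:LP:div:estimates}, the expression $\la_1^{-\Dpot}\partial_{i_1}\cdots\partial_{i_{\Dpot-k}}\Theta = (\partial^{\Dpot-k}\mathcal{K})*\rho$ gains a factor $\la_1^{-k}$ from the $L^1$ bound on $\partial^{\Dpot-k}\mathcal{K}$. Taking the further $N$ derivatives, I can distribute them between $\mathcal{K}$ (each costing $\la_2$, since $\mathcal{K}$ is frequency-localized at scale $\la_2$) and $\rho$ (each costing $\la$), producing the sharp cost $\min(\la,\la_2)^N$.

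The final input is the Bernstein-type improvement factor $(\min(\la,\la_2)/(\la r))^{2/q - 2/p + \alpha}$ when passing from $L^q$ to $L^p$. The key observation is that the convolution $\mathcal{K}^{(\cdot)}*\rho$ inherits the $(\T/\la r)^2$-periodicity of $\rho$ and is furthermore essentially frequency-localized between $\la_1$ and $\min(\la,\la_2)$. Because of the periodic lattice of scale $(\la r)^{-1}$, Young's inequality applied in a single periodic cell coupled with a standard Bernstein embedding for functions whose spectrum on that cell is contained in a ball of radius $\min(\la,\la_2)$ produces the claimed $L^q$-to-$L^p$ gain, with the small $\alpha$-loss coming from summing over frequency annuli (and absorbed into $\la_1^{\alpha}$ via \eqref{i:par:9}). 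The main obstacle I expect is the kernel-level construction in the first paragraph: the vanishing-moments-preserving iterated antiderivative must be carried out in a manner compatible with the compact support constraint, and all constants in the resulting bounds on $\partial^m\mathcal{K}^{(i_1,\dots,i_\Dpot)}$ must be tracked to ensure both the $\la_1^{-\Dpot+m}$ scaling from the $\la_1$-scale support and the sharp $\la_2^m$ cost when derivatives are pushed onto the kernel, the tension between these two being resolved by the optimal choice $\min(\la_2,\la_{q+\bn})$ in the shell endpoints. The remaining verification is largely bookkeeping and reduces to the abstract template of \cite[Section~4.3]{GKN23}.
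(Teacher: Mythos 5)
Your approach — defining $\Theta_\rho^{\la_1,\la_2}$ by convolving $\rho$ against a compactly supported tensor potential for the difference kernel $\varphi_{\la_2}-\varphi_{\la_1}$ (built using the vanishing moments of $\bph$ so that the iterated antiderivatives stay compactly supported in $B(0,\la_1^{-1})$ with the correct $L^1$ scaling), and then reading off \eqref{eq:LP:equality}, \eqref{eq:LP:div:support}, and the two-frequency derivative cost $\la_1^{-k}\min(\la,\la_2)^N$ from the kernel, with the $L^q\to L^p$ improvement following from the $(\sfrac{\T}{\la r})^2$-periodicity — is essentially the construction the paper uses. One small correction: the $\alpha$-loss is not something to be ``absorbed into $\la_1^\alpha$ via \eqref{i:par:9}''; it is intrinsic to the fact that $\tP_{(\la_1,\la_2]}$ is not an exact frequency truncation (its kernel has compact spatial support and hence Fourier tails, controlled only polynomially by the vanishing moments of $\bph$), so the Bernstein-type factor $(\min(\la,\la_2)/\la r)^{2/q-2/p}$ is attained only up to the extra factor $(\min(\la,\la_2)/\la r)^\alpha$, with the implicit constant depending on $\alpha$ as stated.
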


\subsection{Inversion of the divergence}
Our general strategy for solving the divergence equation $\div \varphi = g - \langle g \rangle$ follows the approach of \cite[Propositions~A.17, A.18]{BMNV21}. Specifically, we consider $g = G \varrho\circ\Phi$, where $G$ has a significantly lower effective frequency than the minimum frequency of $\varrho$. The solution includes a local part $\divH g$, and a nonlocal part $\divR g$ that is negligible in size. For the proofs we refer the reader to \cite[Propositions~A.17, A.18]{BMNV21} or \cite[Section~A.3]{GKN23}.

\begin{proposition}[\bf Main inverse divergence operator]\index{inverse divergence}
\label{prop:intermittent:inverse:div}
Let $n\geq 2$ and $p\in[1,\infty]$ be free parameters.
\smallskip

\noindent\textbf{Part 1: Low-frequency assumptions}
\begin{enumerate}[(i)]
\item\label{item:cond.G.inverse.div} Let $G$ be a vector field and assume there exist a constant $\const_{G,p} > 0$ and parameters 
\begin{equation}\label{eq:inv:div:NM}
N_*\geq M_*\geq 1 \, ,
\end{equation}
$M_t$, and $\lambda, \nu,\nu' \geq 1$ such that for all $N \leq N_*$ and $M \leq M_*$,
\begin{align}
\norm{D^N D_{t}^M G}_{L^p}&\lesssim \const_{G,p} \lambda^N\MM{M,M_{t},\nu,\nu'} \, .
\label{eq:inverse:div:DN:G}
\end{align}
\item Fix an incompressible vector field $v(t,x):\R\times\T^n\rightarrow \R^n$ and denote its material derivative by $D_t = \partial_t + v\cdot\nabla$. Let $\Phi$ be a volume preserving diffeomorphism of $\T^n$ such that 
\begin{align}
D_t \Phi = 0 \,
\qquad \mbox{and} \qquad
\norm{\nabla \Phi - \Id}_{L^\infty(\supp G)} \leq \sfrac 12 \,. \label{eq:DDpsi2}
\end{align}
Denote by $\Phi^{-1}$ the inverse of the flow $\Phi$,  which is the identity at a time slice which intersects the support of $G$.
Assume that  the velocity field $v$ and the flow functions $\Phi$ and $\Phi^{-1}$ satisfy the bounds 
\begin{subequations}
\begin{align}
\norm{D^{N+1}   \Phi}_{L^{\infty}(\supp G)} + \norm{D^{N+1}   \Phi^{-1}}_{L^{\infty}(\supp G)} 
&\les \lambda'^{N}
\label{eq:DDpsi}\\
\norm{D^ND_t^M D v}_{L^{\infty}(\supp G)}
&\les \nu \lambda'^{N}\MM{M,M_{t},\nu,\nu'}
\label{eq:DDv}
\,,
\end{align}
\end{subequations}
for all $N \leq N_*$, $M\leq M_*$, and some $\lambda'>0$. 
\end{enumerate}
\smallskip

\noindent\textbf{Part 2: High-frequency assumptions}
\begin{enumerate}[(i)]
\item\label{item:inverse:i} Let $\varrho \colon \T^n \to \R$ be a zero mean scalar function such that there exists a large positive even integer $\dpot \gg 1$ and a smooth, mean-zero, tensor potential\footnote{We use $i_j$ for $1\leq j \leq \dpot$ to denote any number in the set $\{1,\dots,n\}$.} $\vartheta^{(i_1,\dots, i_\dpot)}:\T^n \rightarrow \R^{\left(n^\dpot\right)}$ such that $\varrho(x) = \partial_{i_1}\dots\partial_{i_\dpot} \vartheta^{(i_1 \dots i_\dpot)}(x)$.
\item \label{item:inverse:ii} There exists a parameter $\mu\geq 1$ such that $\varrho$ and $\vartheta$ are $(\sfrac{\T}{\mu})^n$-periodic.
\item \label{item:inverse:iii} There exist parameters $1 \ll \Upsilon \leq \Upsilon' \leq \Lambda$, $\const_{*,p}>0$ such that for all $0\leq N \leq {N_*}$ and all $0\leq k \leq \dpot$,
\begin{align}
\norm{D^N \partial_{i_1}\dots \partial_{i_k} \vartheta^{(i_1,\dots, i_\dpot)}}_{L^p} \les \const_{*,p} \Upsilon^{k-\dpot} \MM{N, \dpot - k , \Upsilon', \Lambda} \, .
\label{eq:DN:Mikado:density}
\end{align} 
\item\label{item:inverse:iv} There exists $\Ndec$ such that the above parameters satisfy
\begin{align}
 \lambda', \lambda \ll \mu \leq \Upsilon \leq \Upsilon' \leq \Lambda  \,, \qquad \max(\lambda,\lambda') \Upsilon^{-2} \Upsilon' \leq 1 \, , \qquad N_*-\dpot \geq 2\Ndec + n+1 \, , 
 \label{eq:inverse:div:parameters:0}
\end{align}
where by in the first inequality in \eqref{eq:inverse:div:parameters:0} we mean that 
\begin{align}
\Lambda^{n+1} \left(\frac{\mu}{2\pi \sqrt{3} \max(\lambda,\lambda')}\right)^{-\Ndec} \leq 1
\,.
\label{eq:inverse:div:parameters:1}
\end{align}
\end{enumerate}
\smallskip

\noindent\textbf{Part 3: Localized output}
\begin{enumerate}[(i)] \index{$\divH$}
\item\label{item:div:local:0} There exists a vector field $R=:\divH( G \varrho \circ \Phi)$ and scalar-valued function $E$ such that
\begin{align}
G \; \varrho\circ \Phi  &=  \div R + E  
=: \div\left( \divH \left( G \varrho \circ \Phi \right) \right) + E \, . \label{eq:inverse:div}
\end{align}
\item\label{item:div:local:i} The support of $R$ is a subset of $\supp G \cap \supp \vartheta$.
\item\label{item:div:local:ii} There exists an explicitly computable positive integer $\const_\divH$, an explicitly computable function $r(j):\{0,1,\dots,\const_\divH\}\rightarrow \mathbb{N}$ and explicitly computable tensors
\begin{align*}
    &\rho^{\beta(j)} \, , \qquad \beta(j)=(\beta_1,\beta_2,\dots,\beta_{r(j)})\in  \{1,\dots,n\}^{r(j)} \, , \\
    &H^{\alpha(j)} \, , \qquad \alpha(j)=(\alpha_1,\alpha_2,\dots,\alpha_{r(j)},k,\ell) \in \{1,\dots,n\}^{r(j)+1} \, 
\end{align*}
of rank $r(j)$ and $r(j)+1$, respectively, all of which depend only on $G$, $\varrho$, $\Phi$, $n$, $\dpot$, such that the following holds.  The error $R$ can be decomposed into a sum of localized errors as
\begin{align}\label{eq:divH:formula}
    \divH^{k} (G \varrho \circ \Phi) = R^{k} = \sum_{j=0}^{\const_\divH}  H^{\alpha(j)} \rho^{\beta(j)} \circ \Phi \, ,
\end{align}
where the contraction is on the first $r(j)$ indices. Furthermore, we have that
\begin{align}
\supp H^{\alpha(j)} \subseteq \supp G  \, , \qquad \supp \rho^{\beta(j)} \subseteq \supp \vartheta \,  . \label{eq:inverse:div:linear}
\end{align}
\item\label{item:div:local:iii} For all $N \leq N_* - \sfrac \dpot 2$, $M\leq M_*$, and $j\leq \const_\divH$, we have the subsidiary estimates
\begin{subequations}\label{eq:inverse:div:sub:main}
\begin{align}
    \left\| D^N \rho^{\beta{(j)}} \right\|_{L^p} &\les \const_{*,p} \Upsilon^{-2} \Upsilon' \MM{N,1,\Upsilon',\Lambda} \label{eq:inverse:div:sub:1} \\
    \left\|D^N D_{t}^M H^{\alpha{(j)}}\right\|_{L^p} &\lesssim \const_{G,p} \left(\max(\lambda,\lambda')\right)^N \MM{M,M_{t},\nu,\nu'} \, . \label{eq:inverse:div:sub:2} 
\end{align}
\end{subequations}
\item\label{item:div:local:iv} For all $N \leq N_* - \sfrac \dpot 2$ and $M\leq M_*$, we have the main estimate
\begin{align}
\norm{D^N D_{t}^M R}_{L^p}
 &\les  \const_{G,p} \const_{*,p}  \Upsilon' \Upsilon^{-2} \MM{N,1,\Upsilon',\Lambda} \MM{M,M_{t},\nu,\nu'} 
\label{eq:inverse:div:stress:1}
\end{align}
\item\label{item:div:nonlocal} For $N \leq N_* - \sfrac \dpot 2 $ and $M\leq M_*$ the error term $E$  in \eqref{eq:inverse:div} satisfies
\begin{align}
\norm{D^N D_{t}^M E}_{L^p}  
\les \const_{G,p} \const_{*,p}   \max(\lambda,\lambda')^{\sfrac \dpot 2} \left( \Upsilon' \Upsilon^{-2}\right)^{\sfrac \dpot 2} \Lambda^{N} \MM{M,M_{t},\nu,\nu'} 
\,.
\label{eq:inverse:div:error:1}
\end{align}
\end{enumerate}
\smallskip

\noindent\textbf{Part 4: Nonlocal assumptions and output}
\begin{enumerate}[(i)]
\item 
\label{item:nonlocal:v}
Let $K_\circ$ be a positive integer, and let $N_\circ, M_\circ$ be integers such that 
\begin{equation}\label{eq:inv:div:wut}
1 \leq M_\circ \leq N_\circ \leq \sfrac{M_*}{2} \, .
\end{equation}
Assume that in addition to the bound \eqref{eq:DDv} we have the following global lossy estimates
\begin{align}
\norm{D^N \partial_t^M v}_{L^\infty}\les  \const_v \lambda'^N \nu'^M
\label{eq:inverse:div:v:global}
\end{align}
for all  $M \leq M_\circ$ and $N+M \leq N_\circ + M_\circ$, where 
\begin{align}
\const_v \lambda' \les \nu' 
\,.
\label{eq:inverse:div:v:global:parameters}
\end{align}
\item Assume that $\dpot $ is large enough so that
\begin{align}
\const_{G,p} \const_{*,p} \max(\lambda,\lambda')^{\sfrac \dpot 4} (\Upsilon' \Upsilon^{-2})^{\sfrac \dpot 4}  \Lambda^{n+2+K_\circ} \left(1 + \frac{\max\{ \nu', \const_v \Lambda \}}{\nu 
}\right)^{M_\circ}
\leq 1
\, .
\label{eq:riots:4}
\end{align}
\end{enumerate}
Then there exists a vector field $\RR_{\rm nonlocal} = \divR(G \varrho \circ \Phi)$\index{$\divR$} such that
\begin{align}
E = \div \RR_{\rm nonlocal} + \fint_{\T^3} G \varrho \circ \Phi \, dx =: \div \left(\divR(G \varrho \circ \Phi)\right) + \fint_{\T^3} G \varrho \circ \Phi \,  dx \, .
\label{eq:inverse:div:error:stress}
\end{align} 
In addition, for $N \leq N_\circ$ and $M\leq M_\circ$, $\RR_{\rm nonlocal}$ satisfies the bounds
\begin{align}
\norm{D^N D_{t}^M \RR_{\rm nonlocal} }_{L^\infty}  
\leq  \frac{1}{\Lambda^{K_\circ}} \max(\lambda,\lambda')^{\sfrac \dpot 4} (\Upsilon' \Upsilon^{-2})^{\sfrac \dpot 4} \Lambda^N \nu^M \, .
\label{eq:inverse:div:error:stress:bound}
\end{align}
\end{proposition}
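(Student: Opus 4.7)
The proof follows the strategy developed in \cite[Propositions~A.17, A.18]{BMNV21} and adapted in \cite[Section~A.3]{GKN23}, so my plan is to describe how one would iteratively invert the divergence on products of low-frequency and high-frequency (composed with flow) functions. The fundamental single-step identity is the following: since $\Phi$ is volume-preserving, setting $A := (\nabla\Phi)^{-1}$ we have $\partial_j(GA_i^j\, f\circ\Phi) = GA_i^j \partial_j\Phi^k(\partial_k f)\circ\Phi + \partial_j(GA_i^j)\, f\circ\Phi = G\,(\partial_i f)\circ\Phi + \partial_j(GA_i^j)\,f\circ\Phi$, which allows us to transfer one derivative of $\vartheta\circ\Phi$ into a divergence (the local part) plus a term where the derivative has landed on the slow coefficients $GA_i^j$. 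Parts 1 and 2 of the statement package precisely the bounds needed to iterate this identity: every time a derivative lands on $\vartheta$ we gain a factor $\Upsilon^{-2}\Upsilon'$ from \eqref{eq:DN:Mikado:density} (since we replace $\partial^{\dpot}$ by $\partial^{\dpot - 1}$), while every derivative that lands on $G$ or on $A$ costs at most $\max(\lambda,\lambda')$; the parameter inequality $\max(\lambda,\lambda')\Upsilon^{-2}\Upsilon'\leq 1$ in \eqref{eq:inverse:div:parameters:0} is precisely the condition that guarantees net gain at each step.

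For Part 3, I would iterate this identity exactly $\dpot/2$ times. Expanding by the Leibniz rule, at each step one produces a sum of terms of the schematic form ``$H^{\alpha(j)}\cdot \rho^{\beta(j)}\circ\Phi$'' where $\rho^{\beta(j)}$ collects the remaining derivatives on $\vartheta$ and $H^{\alpha(j)}$ collects derivatives on $G$, $A$, and $\partial\Phi$; after $\dpot/2$ steps one arrives at \eqref{eq:divH:formula}, and the support property \eqref{eq:inverse:div:linear} is automatic because all differentiations preserve containment in $\supp G\cap \supp\vartheta\circ\Phi$. The subsidiary bounds \eqref{eq:inverse:div:sub:main} follow by pairing \eqref{eq:inverse:div:DN:G} with \eqref{eq:DDpsi}--\eqref{eq:DDv} via the Fa\`a di Bruno formula (applied to differentiating $\vartheta\circ\Phi$ as a composition), and then \eqref{eq:inverse:div:stress:1} is obtained by multiplying the two. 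The remainder $E$ in \eqref{eq:inverse:div} collects the final round of terms in which $\dpot/2$ derivatives have hit $\vartheta$ and $\dpot/2$ have hit $GA$; its estimate \eqref{eq:inverse:div:error:1} follows by applying $\dpot/2$ Leibniz counts on each side with the aforementioned gain per step.

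For Part 4, I would construct $\divR$ using a Fourier-based inverse divergence adapted to the frequency separation $\max(\lambda,\lambda')\ll \mu$. Concretely, writing $E - \langle E\rangle$ and noting that $\varrho\circ\Phi$ has an ``effective'' minimum frequency of order $\mu/\text{Lip}(\Phi)\gtrsim \mu$ on the support, one uses a standard Bernstein/stationary-phase argument: expand $\vartheta\circ\Phi$ in a Fourier series in $\T^n$, then each nonzero mode $k$ satisfies $|k|\gtrsim \mu$; the inverse divergence at such frequencies gains a factor of order $\mu^{-1}$, and iterating $N_\circ+n+2+K_\circ$ times produces \eqref{eq:inverse:div:error:stress:bound}, where the massive negative power $\Lambda^{-K_\circ}$ is afforded by the assumption \eqref{eq:riots:4} (which is how the parameter $\dpot$ enters). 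The identity \eqref{eq:inverse:div:error:stress} is then just the definition of $\divR E$. This is the same argument used for ``non-local stress errors'' throughout the paper.

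The main obstacle will be the bookkeeping required to convert sharp material derivative bounds through the iteration: each $D_t$ commutes with $\vartheta\circ\Phi$ (because $D_t\Phi=0$), but not with $\nabla$ on $A$ or $G$, so one must invoke Lemmas~\ref{lem:cooper:2} and \ref{rem:upgrade.material.derivative.end} at every step to upgrade from the sharp cost $\nu$ of $D_t$ on $G$ to a mixed sharp/lossy cost $\MM{M,M_t,\nu,\nu'}$. The global assumption \eqref{eq:inverse:div:v:global}--\eqref{eq:inverse:div:v:global:parameters} is needed precisely because the nonlocal inversion in Part 4 requires $M_\circ$ lossy material derivatives of $v$ beyond the sharp range $M_*$; the compatibility \eqref{eq:riots:4} between $M_\circ$, $K_\circ$, and $\dpot$ ensures everything closes. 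Beyond bookkeeping, there is no conceptual difficulty beyond what is already contained in \cite[Section~A]{BMNV21}.
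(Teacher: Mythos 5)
Your description of Parts 1--3 is accurate and follows the same strategy as the cited references \cite[Proposition~A.17]{BMNV21}, \cite[Section~A.3]{GKN23} (to which this paper defers for the proof): iterate the pointwise identity $\partial_j(GA_i^j\, f\circ\Phi) = G(\partial_i f)\circ\Phi + \partial_j(GA_i^j)\,f\circ\Phi$ with $A=(\nabla\Phi)^{-1}$ to transfer derivatives from the slow side to the fast potential, gaining the factor $\max(\lambda,\lambda')\Upsilon^{-2}\Upsilon'\leq 1$ per step, with the support properties \eqref{eq:inverse:div:linear} automatic and the subsidiary bounds \eqref{eq:inverse:div:sub:main} coming from Leibniz/Fa\`a di Bruno applied to the hypotheses in Parts 1 and 2.

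Your Part 4, however, misidentifies the mechanism. You invoke a Bernstein/stationary-phase argument asserting that $\divR$ ``gains a factor of order $\mu^{-1}$'' at each mode, and that ``iterating $N_\circ+n+2+K_\circ$ times'' produces \eqref{eq:inverse:div:error:stress:bound}. Neither of these is how the nonlocal inversion works, and neither is reflected in the estimate: \eqref{eq:inverse:div:error:stress:bound} contains no $\mu^{-1}$ factor whatsoever, and Part 4 involves no additional iteration. What actually happens is that the nonlocal operator $\divR$ is a \emph{crude}, lossy Calder\'on--Zygmund inverse divergence applied to $E-\langle E\rangle$: it loses roughly $\Lambda^{n+2}$ (a Sobolev embedding to get $L^\infty$ control) and an additional commutator factor $(1+\max\{\nu',\const_v\Lambda\}/\nu)^{M_\circ}$ from commuting $D_t^M$ through the nonlocal operator. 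The entire gain comes from the fact that $E$ is already negligibly small by \eqref{eq:inverse:div:error:1} -- it carries the factor $(\max(\lambda,\lambda')\Upsilon'\Upsilon^{-2})^{\dpot/2}$ from the Part 3 iteration -- and the hypothesis \eqref{eq:riots:4} is precisely calibrated so that half of this smallness, namely $(\max(\lambda,\lambda')\Upsilon'\Upsilon^{-2})^{\dpot/4}$, absorbs the loss $\Lambda^{n+2}(1+\cdots)^{M_\circ}$ and yields the spare factor $\Lambda^{-K_\circ}$, while the other half appears explicitly in \eqref{eq:inverse:div:error:stress:bound}. The high-frequency structure of $\varrho\circ\Phi$ is exploited exclusively in the \emph{local} part $\divH$ via the iteration; once one reaches $E$, the frequency structure is degraded (derivatives have been redistributed onto $G$ and $\nabla\Phi$, and commuting with $D_t$ smears frequencies further), and no Bernstein-type gain survives. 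Your attributing the $\Lambda^{-K_\circ}$ factor to \eqref{eq:riots:4} is correct, but the rest of the Part 4 mechanism you propose would not reproduce the stated bound.
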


\begin{remark}[\bf Estimates for $\overline{R}_{\rm nonlocal}$]\label{rem:lossy:choices}
Our choice of parameters implies that
\begin{equation}\label{eq:R:nonlocal:in:practice}
   \left\| D^N D_t^M \overline{R}_{\rm nonlocal} \right\|_{L^\infty} \leq \lambda_{q+\bn}^{-10} \delta_{q+3\bn}^2 {\Tau_{q+\bn}^{4\Nindt}} \la_\qbn^N \tau_q^{-M} 
\end{equation}
for $N,M\leq 2\Nind$. By choosing $N_\circ=M_\circ=2\Nind$ and $K_\circ$ large enough so that $\lambda_q^{-K_\circ} \leq \delta_{q+3\bn}^{2} \Tau_{q+\bn}^{4\Nindt}\la_\qbn^{-100}$, which follows from \eqref{ineq:K_0}. We refer to \cite[Remark~A.14]{GKN23} for further details.
\end{remark}

\begin{remark}[\bf Mean of the error term]\label{rem:est.mean} From \cite[Remark~A.17]{GKN23}, under the same choice of parameters suggested in Remark \ref{rem:lossy:choices}, we have
\begin{align*}
    \bigg|\frac{d^M}{dt^M} \langle  G(\varrho\circ\Phi) \rangle
    \bigg|\leq \la_{q+\bn}^{-10}\de_{q+3\bn}^2 \Tau_{q+\bn}^{4\Nindt}\tau_q^{-M} \qquad \qquad \textnormal{for $M\leq 2\Nind$} \, .
\end{align*}
\end{remark}

\begin{remark}[\bf Inverse divergence with pointwise bounds]\label{rem:pointwise:inverse:div}
From \cite[Remark~A.19]{GKN23}, if in addition there exists a smooth, non-negative function $\pi$ such that
\begin{align}\label{eq:inv:div:extra:pointwise}
\left| D^N D_{t}^M G \right| &\lesssim \pi \lambda^N\MM{M,M_{t},\nu,\nu'}
\end{align}
for $N\leq N_*$ and $M\leq M_*$, then we may additionally conclude that for $N\leq N_*- \lfloor\sfrac \dpot 2 \rfloor$ and $M\leq M_*$,
\begin{align}\label{eq:inv:div:extra:conc}
       \left|D^N D_{t}^M H^{\alpha_{(j)}}\right| &\lesssim \pi \left(\max(\lambda,\lambda')\right)^N \MM{M,M_{t},\nu,\nu'}  \, .
\end{align}
\end{remark}

\begin{remark}[\bf Special case for negligible error terms] \label{rem:inverse.div.spcial}
The inverse divergence operator defined in the proposition can be applied to an input without the structure of low and high frequency parts when $\varrho=1$ and $\const_{G,p}$ are sufficiently small. More precisely, we 
keep the low-frequency assumptions (Part 1), replace the high-frequency assumptions (Part 2) with $\varrho =1$, and set $\Upsilon= \Upsilon'= \Lambda = \max(\la,\la')$, $\const_{*,p}=1$, $\dpot =0$. Then as long as $\const_{G,p}$ satisfies \eqref{eq:riots:4}, the conclusions in Part 4 hold. In particular, $\divR G$ satisfies
\begin{align*}
    G = \div \divR G + \fint_{T^3} G \, dx \, .
\end{align*}
\end{remark}

We require the following variant of the inverse divergence operator; details may be found in \cite[Lemma~A.22]{GKN23}.
\begin{lemma}[\bf Inverse divergence without flow map]\label{rem:no:decoup:inverse:div2}
Fix dimension $n\geq 2$. Let $G$ be a smooth scalar field and let $\dpot$ be a non-negative integer such that there exists a smooth scalar field $\varrho$ and tensor field $\vartheta$ defined on $\R\times \T^n$ and satisfying $\varrho = \partial_{i_1}\dots\partial_{i_\dpot}\vartheta^{(i_1\dots i_\dpot)}(x)$. 
\smallskip

\noindent\textbf{Part 1: Algorithm for inverse divergence}\\
We have a decomposition
\begin{align}\label{decomp:noflow}
    G \varrho =: \div (\divH(G\varrho)) + E
\end{align}
where the vector field $\divH(G\varrho)$ and scalar field $E$ are defined by
\begin{align}\label{defn.divHR:noflow}
    \divH(G\varrho)^\bullet
    &:= \sum_{k=0}^{\dpot-1}
    (-1)^{\dpot-k+1}
     \pa_{i_{k+2}} \dots\pa_{i_{\dpot}} G \, \underbrace{\div^{(k)}}_{{\pa_{i_1},\dots,\pa_{i_k}}}\vartheta^{(i_1,\dots,i_k,\bullet, i_{k+2},\dots, i_\dpot)}, \quad
    E = (-1)^\dpot \na^{\dpot}G: \vartheta\, ,
\end{align}
where we use the convention $\pa_{i_{k+2}} \cdots\pa_{i_{\dpot}} G=G$ and $\vartheta^{(i_1,\dots,i_k,\bullet, i_{k+2},\dots, i_\dpot)}=\vartheta^{(i_1,\dots,i_{\dpot-1},\bullet)}
$ when $k=\dpot-1$.
\smallskip

\noindent\textbf{Part 2: Localized assumptions and output}\\
Fix a set $\Omega\subset \R\times \T^n$.  Let parameters $N_*\geq M_*\geq 1$ be given. Define $v$ and $D_t$ as in Part 1 of Proposition~\ref{prop:intermittent:inverse:div}, where $v$ satisfies \eqref{eq:DDv} with $\lambda',\nu,\nu',N_*,M_*$ and $L^\infty(\supp G)$ replaced with $L^\infty(\Omega)$. Let smooth, non-negative functions $\pi$ and $\pi'$ be given such that
\begin{subequations}
\begin{align}
\left| D^N D_{t}^M G \right| &\lesssim \pi \lambda^N\MM{M,M_{t},\nu,\nu'} \qquad \textnormal{on }\Omega \, 
\label{eq:inv:div:extra:pointwise:noflow}\\
\Upsilon^{\dpot-k} \left| D^N D_{t}^M \partial_{i_1}\dots \partial_{i_k} \vartheta^{(i_1,\dots, i_\dpot)}\right| &\lesssim \pi' \Lambda^N\MM{M,M_{t},\nu,\nu'} \qquad \textnormal{on }\Omega
\label{eq:inv:div:extra:pointwise2:noflow}
\end{align}
\end{subequations}
for $N\leq N_*$ and $M\leq M_*$, where the parameters satisfy
\begin{align}\label{parameter:noflow}
    \la', \la \leq \Upsilon\leq \La, \quad
    \max(\la, \la') \Upsilon^{-1}\leq 1, \quad
    N_* \geq \dpot, \quad \la, \nu, \nu' \geq 1 \, .
\end{align}
Then $\divH(G\varrho)$ satisfies 
\begin{equation}\label{eq:div:no:flow:support}
\supp(\divH(G\varrho))\subseteq \supp(G\vartheta) \, ,
\end{equation}
and for $N\leq N_*-\dpot$ and $M\leq M_*$,
\begin{align}
\left| D^N D_{t}^M \divH(G\varrho) \right| &\lesssim 
\pi \pi' \Upsilon^{-1}\Lambda^N\MM{M,M_{t},\nu,\nu'} \,  \quad \textnormal{on }\Omega\, .
\label{eq:inv:div:pointwise:local}
\end{align}
\smallskip

\noindent\textbf{Part 3: Nonlocal assumptions and output}\\
Finally, we assume that all assumptions from \eqref{item:nonlocal:v} in Part 4 in Proposition~\ref{prop:intermittent:inverse:div} hold.  Next, we assume that
\begin{subequations}
\begin{align}
\norm{D^N D_{t}^M G}_{L^{\infty}}&\lesssim \const_{G,\infty} \lambda^N (\nu')^M \, ,
\label{eq:inverse:div:DN:G:noflow}\\
\norm{D^ND_t^M \partial_{i_1}\dots \partial_{i_k} \vartheta^{(i_1,\dots, i_\dpot)}}_{L^\infty} &\les \const_{*,\infty} \Upsilon^{k-\dpot} \Lambda^N(\nu')^M \, 
\label{eq:DN:Mikado:density:noflow}
\end{align}
\end{subequations}
for $N\leq N_*$ and $M\leq M_*$. Also, we choose $\dpot$ large enough to satisfy
\begin{align}\label{dpot:noflow}
    \const_{G,\infty} \const_{*,\infty} (\max(\lambda,\lambda')\Upsilon^{-1})^{{\halfd}}   \Lambda^{K_\circ} \left(1 + \frac{\max\{ \nu', \const_v \Lambda \}}{\nu
}\right)^{M_\circ}\
\leq 1
\, .
\end{align}
Then we may write  
\begin{align}
E =: \div \left(\divR(G \varrho )\right) + \fint_{\T^3} G \varrho \, dx \, ,
\label{eq:inverse:div:error:stress:no:flow}
\end{align}
where $\divR(G \varrho )$ is a vector field which for $N \leq N_\circ$ and $M\leq M_\circ$ satisfies
\begin{align}
\norm{D^N D_{t}^M \divR(G \varrho ) }_{L^\infty}  
\lec  \frac{1}{\Lambda^{K_\circ}} (\max(\lambda,\lambda')\Upsilon^{-1})^{{\halfd}}\Lambda^N \nu^M \, .
\label{eq:inverse:div:error:stress:bound:no:flow}
\end{align}
\end{lemma}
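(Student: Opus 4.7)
The plan is to treat this as a ``flowless'' analogue of Proposition~\ref{prop:intermittent:inverse:div}: Part 1 is a purely algebraic identity obtained by iterated integration by parts, Part 2 follows from product-rule estimates combined with the parameter inequalities \eqref{parameter:noflow}, and Part 3 is obtained by feeding the small error $E$ into the negligible-input variant of the inverse divergence operator from Remark~\ref{rem:inverse.div.spcial}. For Part 1, starting from $G\varrho = G\,\partial_{i_1}\cdots\partial_{i_\dpot}\vartheta^{(i_1\dots i_\dpot)}$ and applying the Leibniz rule once in the $i_1$ direction gives $G\,\partial_{i_1}(\cdots) = \partial_{i_1}(G\cdots) - (\partial_{i_1}G)\cdots$. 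Iterating on the second term $\dpot-1$ further times peels off one spatial derivative at a time and produces precisely the telescoping sum \eqref{defn.divHR:noflow} with alternating signs; the leftover residual after $\dpot$ iterations is $(-1)^\dpot \nabla^\dpot G : \vartheta$, which is the stated $E$.

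For Part 2, I would fix the index $k$ in \eqref{defn.divHR:noflow} and apply $D^N D_{t}^M$ to the $k$-th summand via the Leibniz rule, handling the noncommutativity of $D^N$ with $D_{t}^M$ through Lemma~\ref{lem:cooper:2} applied on $\Omega$. Placing $N_1$ spatial and $M_1$ material derivatives on $\partial_{i_{k+2}}\cdots\partial_{i_\dpot}G$ yields via \eqref{eq:inv:div:extra:pointwise:noflow} the bound $\pi\,\lambda^{\dpot-k-1+N_1}\MM{M_1,M_t,\nu,\nu'}$, while the remaining derivatives on $\partial_{i_1}\cdots\partial_{i_k}\vartheta^{(\cdots)}$ yield via \eqref{eq:inv:div:extra:pointwise2:noflow} the bound $\pi'\,\Upsilon^{k-\dpot}\Lambda^{N-N_1}\MM{M-M_1,M_t,\nu,\nu'}$. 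The crucial telescoping is $\lambda^{\dpot-k-1}\Upsilon^{k-\dpot} \leq \Upsilon^{-1}$, uniform in $k$, which uses $\lambda\leq\Upsilon$ from \eqref{parameter:noflow}; combined with $\lambda,\lambda'\leq\Lambda$ this yields \eqref{eq:inv:div:pointwise:local} after summing the $\dpot$ terms. The support property \eqref{eq:div:no:flow:support} is immediate from the product structure in \eqref{defn.divHR:noflow}.

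For Part 3, I would apply the negligible-input inverse divergence (Remark~\ref{rem:inverse.div.spcial}, i.e.\ Proposition~\ref{prop:intermittent:inverse:div} with $\varrho\equiv 1$, $\dpot=0$, $\Phi\equiv\Id$) to the scalar $E$. A direct Leibniz estimate using the global bounds \eqref{eq:inverse:div:DN:G:noflow} and \eqref{eq:DN:Mikado:density:noflow} gives, for $N\leq N_*-\dpot$ and $M\leq M_*$,
\[
\|D^N D_{t}^M E\|_{L^\infty} \lesssim \const_{G,\infty}\const_{*,\infty}\bigl(\max(\lambda,\lambda')\Upsilon^{-1}\bigr)^{\dpot}\Lambda^N (\nu')^M,
\]
which is the effective amplitude to be plugged into the smallness condition. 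Hypothesis \eqref{dpot:noflow} is precisely the translation of \eqref{eq:riots:4} for this input, so the remark produces a vector field $\divR(G \varrho)$ with the claimed decomposition \eqref{eq:inverse:div:error:stress:no:flow} and bound \eqref{eq:inverse:div:error:stress:bound:no:flow}; the gain $(\max(\lambda,\lambda')\Upsilon^{-1})^{\dpot/2}$ in \eqref{eq:inverse:div:error:stress:bound:no:flow} corresponds to retaining only half of the available smallness of $E$, the other half being absorbed into the $\Lambda^{-K_\circ}$ loss via \eqref{dpot:noflow}.

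The main bookkeeping obstacle is in Part 2: one must verify that in each of the $\dpot$ summands the combination $\lambda^{\dpot-k-1}\Upsilon^{k-\dpot}$ collapses cleanly to $\Upsilon^{-1}$ uniformly in $k\in\{0,\dots,\dpot-1\}$, and that the material-derivative redistribution produces $\MM{M,M_t,\nu,\nu'}$ without extra factors beyond harmless constants depending on $\dpot$ alone; both use \eqref{parameter:noflow} in an essential way. The only subtlety in Part 3 is carefully matching the amplitude constant produced by Leibniz on $E$ with the smallness required by \eqref{eq:riots:4} in the form invoked by Remark~\ref{rem:inverse.div.spcial}.
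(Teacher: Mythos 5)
Parts 1 and 2 of your proposal are correct and take essentially the standard route. The telescoping integration by parts does verify the identity: if you write $\div$ of the $k$-th summand in \eqref{defn.divHR:noflow} and rename the free index $\bullet$ as $i_{k+1}$, the two Leibniz terms telescope against the adjacent summands, leaving only the boundary terms $G\varrho$ (from $k=\dpot$) and $(-1)^{\dpot+1}\nabla^{\dpot}G:\vartheta$ (from $k=0$), which is exactly $G\varrho - E$. Your Part 2 Leibniz estimate, with the observation that $\lambda^{\dpot-k-1}\Upsilon^{k-\dpot}=(\lambda/\Upsilon)^{\dpot-k-1}\Upsilon^{-1}\leq\Upsilon^{-1}$ uniformly in $k$ thanks to $\lambda\leq\Upsilon$ from \eqref{parameter:noflow}, is also right, with Lemma~\ref{lem:cooper:2} handling the $[D_t,\partial]$ commutators (which is where $\lambda'$ enters). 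Support property \eqref{eq:div:no:flow:support} is indeed immediate.

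Part 3 has a genuine gap in how you invoke Remark~\ref{rem:inverse.div.spcial}. That remark, applied to $G_{\rm new}=E$ with $\varrho\equiv 1$ and $\dpot_{\rm new}=0$ (and you should not set $\Phi\equiv\Id$, since $v$ may be nonzero; $\varrho\equiv 1$ already makes $\Phi$ irrelevant for the high-frequency part), outputs only the bound $\Lambda^{-K_\circ}\Lambda^N\nu^M$ on $\divR$ once \eqref{eq:riots:4} is verified. This bound does \emph{not} carry the extra gain $(\max(\lambda,\lambda')\Upsilon^{-1})^{\sfrac{\dpot}{2}}$ appearing in \eqref{eq:inverse:div:error:stress:bound:no:flow}: the remark's output is independent of $\const_{G,\rm new}$, which is used only to ensure \eqref{eq:riots:4} holds, not to shrink the conclusion. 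To recover that factor you would have to apply the remark with an enlarged $K_\circ'\approx K_\circ+(\sfrac{\dpot}{2})\log_\Lambda(\Upsilon/\max(\lambda,\lambda'))$, but then \eqref{eq:riots:4} would require $\const_{G,\infty}\const_{*,\infty}(\max(\lambda,\lambda')\Upsilon^{-1})^{\sfrac{\dpot}{2}}\Lambda^{n+2+K_\circ}(\cdots)^{M_\circ}\leq 1$, which is strictly stronger than \eqref{dpot:noflow} by a factor of $\Lambda^{n+2}$, and \eqref{parameter:noflow} only gives $\max(\lambda,\lambda')\Upsilon^{-1}\leq 1$ without strict inequality. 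Your closing heuristic (``retain half the smallness in the output, absorb the other half against $\Lambda^{K_\circ}$'') is a correct description of what must happen, but it does not follow from the remark as you invoked it: you need to run the nonlocal inverse divergence argument directly on $E$, carrying the $L^\infty$ amplitude $\approx\const_{G,\infty}\const_{*,\infty}(\max(\lambda,\lambda')\Upsilon^{-1})^{\dpot}$ explicitly through the Fourier-side inversion and the $D_t$-commutator estimates, then reserving $(\max(\lambda,\lambda')\Upsilon^{-1})^{\sfrac{\dpot}{2}}$ for the output and checking that what remains, combined with \eqref{dpot:noflow}, covers the $\Lambda^{K_\circ}$ and commutator losses $(1+\max\{\nu',\const_v\Lambda\}/\nu)^{M_\circ}$.
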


\subsection{Abstract construction of intermittent pressure}
In this subsection, we recall the abstract construction of an intermittent pressure which can be used to dominate the inverse divergence (see the preceding subsection) of an error term $G \varrho \circ \Phi$. For the proof we refer the reader to \cite[Proposition~7.5]{GKN23}.
\begin{proposition}[\bf Pressure increment and upgrade error from current error]\label{lem.pr.invdiv2.c}
\noindent\textbf{Part 1: Preliminary assumptions}
\begin{enumerate}[(i)]
    \item\label{i:st:sample:1:c} There exists a scalar field $G$, constants $\const_{G,p}$ for $p=1,\infty$, and parameters $M_t,\lambda,\nu,\nu',N_*,M_*$ such that \eqref{eq:inv:div:NM} and \eqref{eq:inverse:div:DN:G} are satisfied. There exists $\pi$ smooth and non-negative and $r_G$ such that 
    \begin{align*}
        \left|D^N D_{t}^M G\right| 
    &\les \pi^{\sfrac 32} r_G^{-1} \la ^N\MM{M,M_{t},\nu,\nu'}\, . 
    \end{align*}
    \item\label{i:st:sample:2:c} There exists an incompressible vector field $v$ with material derivative $D_t=\partial_t + v\cdot \nabla$, a volume preserving diffeomorphism $\Phi$, inverse flow $\Phi^{-1}$, and parameter $\lambda'$ such that \eqref{eq:DDpsi2}--\eqref{eq:DDv} are satisfied.
    \item\label{i:st:sample:3:c} There exists a zero mean scalar function $\varrho$, a mean-zero tensor potential $\vartheta$, constants $\const_{*,p}$ for $p=1,\infty$, and parameters $\mu,{\Upsilon,\Upsilon'},\Lambda,\Ndec,\dpot$ such that \eqref{item:inverse:i}--\eqref{item:inverse:iii} of Part 2 in Proposition~\ref{prop:intermittent:inverse:div} and \eqref{eq:DN:Mikado:density}--\eqref{eq:inverse:div:parameters:1} are satisfied.
    \item\label{i:st:sample:5:c} The current error $\varphi=\divH(G\varrho\circ\Phi)$ and nonlocal error $E$ satisfy the conclusions in \eqref{eq:inverse:div}, \eqref{item:div:local:i}--\eqref{item:div:nonlocal} of Part 3 in Proposition~\ref{prop:intermittent:inverse:div}, as well as \eqref{eq:inv:div:extra:conc} from Remark~\ref{rem:pointwise:inverse:div} with $\pi$ replaced by $\pi^{\sfrac 32} r_G^{-1}$.
    \item\label{i:st:sample:6:c} There exist integers $N_\circ,M_\circ,K_\circ$ such that \eqref{eq:inv:div:wut}--\eqref{eq:riots:4} hold, which in turn imply \eqref{eq:inverse:div:error:stress}--\eqref{eq:inverse:div:error:stress:bound}.
\end{enumerate}
\smallskip

\noindent\textbf{Part 2: Additional assumptions}
\begin{enumerate}[(i)]
\item There exists a large positive integer $N_{**}$ and positive integers $\NcutLarge, \NcutSmall$ such that
\begin{subequations}
\begin{align}
N_*-2\dpot - \NcutLarge - N_{**} - 3 &\geq M_* \, , \label{i:st:sample:wut:c} \\
M_*-\NcutSmall - 1 \geq 2 N_\circ \, , &\qquad {M_t > \NcutSmall}\, ,\label{i:st:sample:wut:wut:c} \\
N_{**} &\geq {2\dpot} + 3
\label{i:st:sample:wut:wut:wut:c}
\end{align}
\end{subequations}
\item\label{i:st:sample:7:c} There exist parameters $\Gamma=\Lambda^\alpha$ for $0<\alpha\ll 1$, $\de_{\rm tiny}, r_\phi$, and $\delta_{\phi,p}$ for $p=1,\infty$ satisfying
\begin{subequations}
\begin{align}
0< r_\phi \leq 1 \, , &\qquad \delta_{\phi,p}^{\sfrac 32} = \const_{G,p} \const_{\ast,p}{\Upsilon'\Upsilon^{-2}} r_\phi \, , \label{eq:sample:prop:de:phi:c} \\
\NcutSmall&\leq \NcutLarge \, , \label{eq:sample:prop:Ncut:1:c} \\
\left(\const_{G,\infty} + 1 \right) \left( {\const_{*,\infty}\Upsilon'\Upsilon^{-2}} + 1 \right) \Gamma^{-\NcutSmall} &\leq {\delta_{\rm tiny}}^{\sfrac 32} \, , \const_{G,1} \, , {\const_{*,1}\Upsilon'\Upsilon^{-2}} \, , \label{eq:sample:prop:Ncut:2:c} \\
\qquad 2 \Ndec + 4 &\leq N_* - N_{**} - \NcutLarge - 3 \dpot - 3  \, , \label{eq:sample:prop:Ncut:3:c} \\
\label{eq:sample:prop:decoup:c}
(\Lambda \Gamma)^{4}  &\leq  \left( \frac{\mu}{2 \pi \sqrt{3} \Gamma \max(\lambda,\lambda')}\right)^{\Ndec} \, .
\end{align}
\end{subequations}
\item\label{i:st:sample:8:c} There exists a parameter $\bar{m}$ and an increasing sequence of frequencies $\{\mu_0, \cdots, \mu_{\bar m}\}$ satisfying
\begin{subequations}
\begin{align}
\mu < \mu_0 &< \cdots< \mu_{\bar m -1} \leq \Lambda <\Lambda\Gamma < \mu_{\bar m} \, , \label{eq:sample:prop:par:00:c} \\
\max(\lambda,&\lambda') \Ga \left(  \mu_{m-1}^{-2} \mu_m + {\mu}^{-1} \right) \leq 1 \, , 
\label{eq:sample:prop:parameters:0:c} \\
\left(\const_{G,1} \const_{*,1} r_\phi\right)^{\sfrac 23}& \nu \Gamma (\max(\lambda,\lambda')\Ga)^{\lfloor \sfrac \dpot {{4}} \rfloor} \left(\max\left({\mu}^{-1},\mu_m \mu_{m-1}^{-2}\right) \right)^{\lfloor \sfrac \dpot {{4}} \rfloor} \notag\\
&\qquad \times (\mu_{\bar m})^{5+K_\circ} \left(1 + \frac{\max\{\nu', \const_v \mu_{\bar m} \}}{\nu
}\right)^{M_\circ} \leq 1 \,, \label{eq:sample:riots:4:c}\\
\left(\const_{G,1}\const_{*,1}r_\phi\right)^{\sfrac 23}  \nu \Gamma &\left( \frac{\Lambda\Gamma}{\mu_{\bar m}} \right)^{N_{**}} (\mu_{\bar m})^{8+K_\circ} \left(1 + \frac{\max\{\nu', \const_v \mu_{\bar{m}} \}}{\nu
}\right)^{M_\circ} \leq 1 \, , \label{eq:sample:riot:4:4:c}
\end{align}
\end{subequations}
for all ${1\leq m \leq \bar m}$.
\end{enumerate}
\smallskip

\noindent\textbf{Part 3: Pressure increment} \index{pressure increment}

Define the pressure increment $\si_\phi$ by
\begin{subequations}\label{heatsie:2}
\begin{align}
    \si_{\phi} &= r_\phi^{\sfrac23} \pr(H) \left(\pr(\rho)\circ\Phi-\langle 
    \pr(\rho)\rangle\right) \, , \qquad \qquad 
    \sigma_\phi^+ := r_\phi^{\sfrac 23} \pr(H) \pr(\rho)\circ \Phi \, , \label{eq:cu:d:2}\\
\pr(H) &:= \left( (\const_{G,\infty}\Ga^{-\NcutSmall})^2 + \sum_{N=0}^{\NcutLarge}\sum_{M=0}^{\NcutSmall} (\la \Ga)^{-2N}  (\nu\Ga) ^{-2M} |D^N D_{t}^M H|^2 \right)^\frac13 - (\const_{G,\infty}\Ga^{-\NcutSmall})^{\sfrac23} \, , \label{eq:cu:d:3} \\
\pr(\rho) &:= \left( (\const_{\rho,\infty}\Ga^{-\NcutSmall})^2 + \sum_{N=0}^{\NcutLarge} (\Lambda\Gamma )^{-2N}   |D^N \rho|^2 \right)^\frac13 -  (\const_{\rho,\infty}\Ga^{-\NcutSmall})^{\sfrac 23} \, , \label{eq:cu:d:4}
\end{align}
\end{subequations}
\begin{enumerate}[(i)]
\item $\si_\varphi$ has a synthetic Littlewood-Paley decomposition
\begin{equation}\label{d:press:stress:sample:c}
\si_\varphi=\si_\varphi^+ - \si_\vp^- = \sigma_\vp^* + \sum_{m = 0}^{\bar{m}} \si_\vp^{m} \, .
\end{equation}
\item\label{sample2.item2:c} $\si_\vp^{+}$ dominates derivatives of $\varphi$ with suitable weights, so that for all $N \leq N_* - \floor{\sfrac{\dpot}{2}}$, $M\leq M_*$,
\begin{align}\label{est.S.by.pr.final2:c}
    \left|D^N D_{t}^M \varphi\right|
    \lec \left((\si_\varphi^{+})^{\sfrac 32} r_\phi^{-1}  + \de_{\rm tiny}\right) \left(\Lambda\Ga\right)^N\MM{M,M_{t},\nu\Gamma,\nu'\Gamma} \, .
\end{align}
\item\label{sample2.item4:c} $\si_\vp^+$ dominates derivatives of itself with suitable weights, so that for all $N \leq N_* - \floor{\sfrac{\dpot}{2}} - \NcutLarge $, $M\leq M_* - \NcutSmall$,
\begin{align}
    \label{est.S.prbypr.pt:c}
    \left|D^N D_{t}^M \si_\vp^{+}\right|
    &\lec (\si_\vp^{+}+ \de_{\rm tiny}) \left(\Lambda\Ga\right)^N\MM{M,M_{t}-\NcutSmall,\nu\Ga,\nu'\Ga} \, .
\end{align}
\item\label{sample2.item1:c} $\si_\vp^+$ and $\si_\vp^-$ have size comparable to $\varphi$, so that
    \begin{align}
        \label{est.S.pr.p:c}
        \norm{\si_\vp^{+}}_{\sfrac 32},\, \norm{\si_\vp^{-}}_{\sfrac 32}&\lec \delta_{\phi,1} \, , \qquad \norm{\si_\vp^{+}}_{\infty},\, \norm{\si_\vp^{-}}_{\infty}\lec \delta_{\phi,\infty} \, .
    \end{align}
\item\label{sample2.item5:c} $\pi$ dominates derivatives of $\si_\varphi^-$ with suitable weights, so that for all $N \leq N_* - \floor{\sfrac{\dpot}{2}} - \NcutLarge $, $M\leq M_* - \NcutSmall$,
\begin{align}
    \label{est.S.prminus.pt:c}
    \left|D^N D_{t}^M \si_\varphi^{-}\right|
    &\lec \left( \frac{r_\phi}{r_G} \right)^{\sfrac 23} \left({\const_{*,1} {\Upsilon^{-2}} \Upsilon'}\right)^{\sfrac 23} \pi
    (\max (\la,\la')\Ga)^N\MM{M,M_{t}-\NcutSmall,\nu\Gamma,\nu'\Gamma} \, .
\end{align}
\item\label{sample2.item7:c} 
We have the support properties 
\begin{align}
    \supp(\si_\vp^+) &\subseteq \supp(\varphi) \, , \quad
    \supp(\si_\vp^-) \subseteq \supp(G) \label{est.S.pr.p.support:1:c} \, .
\end{align}
\end{enumerate}
\smallskip

\noindent\textbf{Part 4: Current error}
\begin{enumerate}[(i)]
\item There exists a current error $\phi_{\ph}$, where we have the decomposition and equalities
\begin{subequations}\label{d:cur:error:stress:sample:c}
\begin{align}
\phi_{\ph} &= \phi_\vp^* + \sum_{m=0}^{\bar m} \phi_\vp^m 
\label{S:pr:current:dec}\\
\div \phi^\bullet_\vp(t,x) &= D_t \sigma^{\bullet}_\vp(t,x) - \int_{\T^3} D_t \sigma_\vp^\bullet(t,x') \,dx' \, ,  \qquad \bullet= m, * \, .
\end{align}
\end{subequations}
\item\label{sample2.item3:c} $\phi^m_{\vp}$ can be written as $\phi^m_\vp = \phi^{m,l}_\vp + \phi^{m,*}_\vp$ and {for $1\leq m \leq \bar m$} these satisfy
\begin{subequations}
\begin{align}
\label{est.S.phi.1:c}
\norm{D^N D_{t}^M \phi_{\vp}^m}_{\sfrac 32}
&\lec {\nu\Ga^2 \left(\const_{G,1} \const_{*,1}  {\Upsilon'\Upsilon^{-2}} r_\phi \right)^{\sfrac 23} \mu_{m-1}^{-2}\mu_m}  \left(\min(\mu_m,\Lambda\Gamma)\right)^N\MM{M,M_{t}-\NcutSmall-1,\nu\Ga,\nu'\Ga}\, ,\\
\norm{D^N D_{t}^M \phi_{\vp}^m}_{\infty}
&\lec {\nu\Ga^2 \left(\const_{G,\infty} \const_{*,1} {\Upsilon'\Upsilon^{-2}} r_\phi\right)^{\sfrac 23} \left(\frac{\min(\mu_m,\Lambda\Gamma)}{ \mu}\right)^{\sfrac 43} \mu_{m-1}^{-2}\mu_m} \notag\\
&\qquad \qquad \qquad \times \left(\min(\mu_m,\Lambda\Gamma)\right)^N\MM{M,M_{t}-\NcutSmall-1,\nu\Ga,\nu'\Ga}\, , \label{est.S.phi.infty:c} \\
\left|D^N D_{t}^M \phi_{\vp}^{m,l}\right|
&\lec {\nu\Ga^2 \pi \left( \frac{r_\phi}{r_G}\right)^{\sfrac 23} \left(\const_{*,1} {\Upsilon'\Upsilon^{-2}} \right)^{\sfrac 23} \left(\frac{\min(\mu_m,\Lambda\Gamma)}{ \mu}\right)^{\sfrac 43} \mu_{m-1}^{-2}\mu_m}  \notag\\
&\qquad \qquad \qquad \times \left(\min(\mu_m,\Lambda\Gamma)\right)^N\MM{M,M_{t}-\NcutSmall-1,\nu\Ga,\nu'\Ga}\, , \label{est.S.by.pr.final3:c}
\end{align}
\end{subequations}
for all $N \leq N_* - 2\dpot-{\NcutLarge}$, $M\leq M_* - {\NcutSmall}-1$. For {$m=0$} and the same range of $N$ and $M$, $\phi_\varphi^{m}$ and $\phi_\varphi^{m,l}$ satisfy identical bounds but with $\mu_{m-1}^2\mu_m$ replaced with ${\Gamma\mu^{-1}}$ and $\min(\mu_m,\Lambda\Gamma)$ replaced with {$\mu_0$} in all three bounds. Furthermore, for all $N \leq N_\circ, M\leq M_\circ$, the nonlocal and remainder portions satisfy the improved estimates
\begin{align}
\label{est.S.by.pr.final4:c}
    \norm{D^N D_{t}^M \phi_{\vp}^{m,*}}_\infty
    &\lec  \left(\min(\mu_m,\Lambda\Gamma)\right)^{N-K_\circ} 
    {(\max(\lambda,\lambda')\Ga)^{\lfloor \sfrac \dpot {{4}} \rfloor} \left(\max\left({\mu}^{-1},\mu_m \mu_{m-1}^{-2}\right) \right)^{\lfloor \sfrac \dpot {{4}} \rfloor}}
    (\nu\Gamma)^M \, , \\
\label{est.S.by.pr.final.star:c}
    \norm{D^N D_{t}^M \phi_{\vp}^{*}}_\infty
    &\lec (\Lambda\Gamma)^{-K_\circ}  
    {(\max(\lambda,\lambda')\Ga)^{\lfloor \sfrac \dpot {{4}} \rfloor} \left(\max\left({\mu}^{-1},\mu_m \mu_{m-1}^{-2}\right) \right)^{\lfloor \sfrac \dpot {{4}} \rfloor}}
    \left(\Lambda\Ga\right)^N (\nu\Gamma)^M \, .
\end{align}
\item\label{sample2.item4:c:redux} We have the support properties
\begin{align}
    \supp(\phi^{m,l}_\varphi)
    &\subseteq \supp G \cap B\left( \supp \vartheta, 2\mu_{m-1}^{-1} \right) \circ \Phi \,  \textnormal{ for } \, 1\leq m \leq \bar{m} \, , \qquad \supp\left(\phi_\varphi^{0,l} \right) \subseteq \supp G \,  . \label{est.S.pr.p.support:2:c}
\end{align}

\item\label{sample2.item5:c:redux} 
For all $M\leq M_*-\NcutSmall-1$, we have that the mean $\langle D_t \si_S \rangle$ satisfies
\begin{equation}\label{est:mean.Dtsiph}
    \left|\frac{d^M}{dt^M}\langle D_t \si_\ph \rangle\right|\les
    (\La\Ga)^{-K_\circ}
    {(\max(\lambda,\lambda')\Ga)^{\lfloor \sfrac \dpot {{4}} \rfloor} \mu^{-\lfloor \sfrac \dpot {{4}} \rfloor}}
    \MM{M,M_t - \NcutSmall-1, \nu\Ga, \nu'\Ga}
\end{equation}
\end{enumerate}
\end{proposition}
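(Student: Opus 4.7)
The overall plan is to build $\sigma_\varphi = \sigma_\varphi^+ - \sigma_\varphi^-$ as a product of two ``pointwise pressure replacements'' $\pr(H)$ and $\pr(\rho)\circ\Phi$, where $H$ is understood as a generic one of the low-frequency tensors $H^{\alpha(j)}$ from the explicit formula \eqref{eq:divH:formula} for $\varphi = \divH(G\varrho\circ\Phi)$. I will first verify three basic pointwise properties of $\pr(H)$ and $\pr(\rho)$: (a) each pressure replacement dominates its underlying tensor and all weighted derivatives up to $(\NcutLarge,\NcutSmall)$, which is immediate from the definitions \eqref{eq:cu:d:3}--\eqref{eq:cu:d:4} after taking the $3/2$ power; (b) each pressure replacement dominates its own weighted derivatives, by a Fa\`a di Bruno argument applied to $x\mapsto (x+c)^{1/3}$ together with the observation that the relevant weighted square sum already controls all derivatives up to the cutoff; and (c) for derivatives past the cutoff, \eqref{eq:sample:prop:Ncut:2:c} lets one trade the spare factors $\Gamma^{-\NcutSmall}$ against $\delta_{\rm tiny}$, producing the additive $\delta_{\rm tiny}$ tails in \eqref{est.S.by.pr.final2:c} and \eqref{est.S.prbypr.pt:c}.

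Next I would prove the bounds in Part~3. The domination \eqref{est.S.by.pr.final2:c} follows by expanding $\varphi$ using \eqref{eq:divH:formula} and applying Leibniz plus the chain-rule inequalities \eqref{eq:DDpsi}--\eqref{eq:DDv} for $\Phi$, noting that $\max(\lambda,\lambda')\Upsilon^{-1}\leq 1$ and $\mu\leq \Upsilon$, and then using the subsidiary bounds \eqref{eq:inverse:div:sub:main} together with (a). Raising to the $3/2$ power and multiplying by $r_\phi^{-1}$ produces the claimed upper bound by exactly $\sigma_\varphi^+$. The self-domination \eqref{est.S.prbypr.pt:c} is (b) above, transported through $\Phi$ and through $r_\phi^{2/3}$. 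The support properties \eqref{est.S.pr.p.support:1:c} are immediate from \eqref{eq:cu:d:3}--\eqref{eq:cu:d:4} and \eqref{eq:inverse:div:linear}. The Lebesgue bounds \eqref{est.S.pr.p:c} follow by H\"older plus the definitions of $\delta_{\phi,p}$, noting that $\|\pr(H)\|_p\lesssim \const_{G,p}^{2/3}$ and $\|\pr(\rho)\|_p\lesssim (\const_{*,p}\Upsilon'\Upsilon^{-2})^{2/3}$; for $\sigma_\varphi^-$ one additionally uses that $\Phi$ is volume-preserving. Finally, \eqref{est.S.prminus.pt:c} follows from the pointwise bound $|D^N D_t^M G|\lesssim \pi^{3/2}r_G^{-1}\lambda^N \MM{M,M_t,\nu,\nu'}$ in item~\eqref{i:st:sample:1:c}, which upgrades $\pr(H)\lesssim \pi\, r_G^{-2/3}$ pointwise, combined with the fact that $\langle\pr(\rho)\rangle\lesssim (\const_{*,1}\Upsilon'\Upsilon^{-2})^{2/3}$ and the parameter ratio $r_\phi/r_G$ from \eqref{eq:sample:prop:de:phi:c}.

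For Part~4, I will construct $\phi_\varphi$ by applying the inverse divergence to $D_t\sigma_\varphi$ after a \emph{synthetic} Littlewood-Paley decomposition of $\pr(\rho)$ adapted to the scales $\mu_0<\mu_1<\dots<\mu_{\bar m}$ of item~\eqref{i:st:sample:8:c}, using Lemma~\ref{lem:special:cases} for the lowest and highest shells and Lemma~\ref{lem:LP.est} for intermediate shells (all applied with the periodicity scale $\mu$ of $\rho$, which is inherited by $\pr(\rho)$). On each shell I invoke Proposition~\ref{prop:intermittent:inverse:div} with $G$ replaced by the low-frequency piece $\pr(H)\cdot D_t(\text{geometric factors of }\Phi)+ \text{(derivative on $\pr(H)$ side)}$, both of which are controlled by $\pr(H)\pr(\rho)\circ\Phi$ times an extra cost $\nu\Gamma^2$ thanks to the self-domination (b) and \eqref{eq:DDv}. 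This produces the local bounds \eqref{est.S.phi.1:c}--\eqref{est.S.by.pr.final3:c} with the divergence gain $\mu_{m-1}^{-2}\mu_m$ and $L^\infty$ intermittency cost $(\min(\mu_m,\Lambda\Gamma)/\mu)^{4/3}$. The nonlocal bounds \eqref{est.S.by.pr.final4:c}--\eqref{est.S.by.pr.final.star:c} are the outputs \eqref{eq:inverse:div:error:stress:bound} applied with $K_\circ$ large enough, and \eqref{eq:sample:riots:4:c}--\eqref{eq:sample:riot:4:4:c} are exactly the smallness needed to absorb the prefactors. The last piece \eqref{est:mean.Dtsiph} on $\langle D_t\sigma_\varphi\rangle$ is read off from the same nonlocal remainder estimate, since $\langle D_t \sigma_\varphi\rangle$ is the zero-mode that the inverse divergence strips away.

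The main technical obstacle will be the self-domination (b): the Fa\`a di Bruno expansion of $(x+c)^{1/3}$ mixes all $(N,M)$-derivatives up to the cutoff, and one must check that the weighted-square-sum structure and the specific exponents $1/3$ (for $\pr(H)$, adapted to the $L^1$-$L^1$-$L^1$ trilinear normalization of the current error) and $1/3$ (for $\pr(\rho)$) combine to give back a quantity bounded by $\pr(H)$ or $\pr(\rho)$ with only the mild loss $\Gamma^{N+M}$ -- rather than $\Gamma^{2(N+M)}$ or worse -- on the two derivative costs. A parallel subtlety is the delicate book-keeping of $(\Lambda\Gamma)$ versus $(\lambda\Gamma)$ and $(\nu\Gamma)$ weights as the Leibniz expansion in \eqref{eq:divH:formula} redistributes spatial and material derivatives between $H^{\alpha(j)}$ and $\rho^{\beta(j)}\circ\Phi$, which forces the use of \eqref{eq:sample:prop:parameters:0:c} and the spare $\Gamma$ repeatedly. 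Once (b) is in hand, the rest of the proof is assembling standard inverse-divergence applications at the scales $\mu_m$.
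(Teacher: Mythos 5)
Your proposal follows essentially the route that the companion paper (the cited reference for this proposition) takes: define the two pointwise pressure replacements $\pr(H)$ and $\pr(\rho)$ via weighted Sobolev-type square sums raised to the $\sfrac 13$ power (the exponent encoding the $(\textnormal{pressure})^{\sfrac 32}$ scaling of current errors), multiply them to form $\sigma_\varphi$, establish self-domination via a Fa\`a di Bruno expansion together with the spare $\Gamma$-factors from \eqref{eq:sample:prop:Ncut:2:c}, and then build the current error $\phi_\varphi$ by applying a synthetic Littlewood--Paley decomposition of $\pr(\rho)$ adapted to the scales $\mu_0,\dots,\mu_{\bar m}$ (via Lemmas~\ref{lem:special:cases} and~\ref{lem:LP.est}), feeding each shell into Proposition~\ref{prop:intermittent:inverse:div}. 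You also correctly single out the self-domination argument as the main technical obstacle, and you correctly read off the $L^{\sfrac 32}\!-\!L^\infty$ intermittency loss exponent $\sfrac 43$ and the shell-wise inverse divergence gain $\mu_{m-1}^{-2}\mu_m$ from \eqref{eq:LP:div:estimates}.

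One small imprecision worth flagging: in your description of the Part~4 application you write that the low-frequency input to the inverse divergence is ``$\pr(H)\cdot D_t(\text{geometric factors of }\Phi)+\text{(derivative on $\pr(H)$ side)}$.'' Because $D_t\Phi=0$ and $\langle\pr(\rho)\rangle$ is constant, we have $D_t(\pr(\rho)\circ\Phi-\langle\pr(\rho)\rangle)\equiv 0$, so $D_t\sigma_\varphi = r_\phi^{\sfrac 23}(D_t\pr(H))(\pr(\rho)\circ\Phi-\langle\pr(\rho)\rangle)$ consists of exactly one Leibniz term. The derivatives of the Lagrangian geometric quantities do appear, but only as constituents of $H^{\alpha(j)}$ inside $D_t\pr(H)$, not as a separate additive term; this does not change the structure of the estimate (the cost is still $\nu\Gamma$ from $D_t$ hitting $\pr(H)$, plus a further $\Gamma$ from losses), but the bookkeeping should be phrased through the chain rule applied to the definition \eqref{eq:cu:d:3} rather than as a sum of two distinct sources.

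One further point you leave implicit but should be made explicit: the support inclusion $\supp(\sigma_\varphi^+)\subseteq\supp(\varphi)$ in \eqref{est.S.pr.p.support:1:c} requires more than $\supp(\pr(H))\subseteq\supp(G)$ and $\supp(\pr(\rho))\subseteq\supp(\vartheta)$; one must also use \eqref{eq:inverse:div:linear} to see that $\sigma_\varphi^+$ vanishes wherever any one of the explicit formula components $H^{\alpha(j)}\rho^{\beta(j)}\circ\Phi$ in \eqref{eq:divH:formula} vanishes, which is why the definitions \eqref{eq:cu:d:3}--\eqref{eq:cu:d:4} must sum over all the $H^{\alpha(j)}$ and $\rho^{\beta(j)}$ rather than using a single representative (this is the notational point you touch on but gloss over).
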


\printindex

\medskip

\noindent\textsc{Department of Mathematics, ETH Z\"urich, Z\"urich, Switzerland.}
\vspace{.03in}
\newline\noindent\textit{Email address}: \href{mailto:vikramaditya.giri@math.ethz.ch}{vikramaditya.giri@math.ethz.ch}.
\smallskip

\noindent\textsc{Department of Mathematics, ETH Z\"urich, Z\"urich, Switzerland.}
\vspace{.03in}
\newline\noindent\textit{Email address}: \href{hyunju.kwon@math.ethz.ch}{hyunju.kwon@math.ethz.ch}.
\smallskip

\noindent\textsc{Department of Mathematics, Purdue University, West Lafayette, IN, USA.}
\vspace{.03in}
\newline\noindent\textit{Email address}: \href{mailto:mdnovack@purdue.edu}{mdnovack@purdue.edu}.

\end{document}